\documentclass[reqno,11pt]{amsart}
\usepackage{amsmath,amscd,amsfonts,amssymb}
\usepackage{color}
\usepackage{mathtools}
\usepackage{mathrsfs}
\usepackage[colorlinks=true]{hyperref}
\usepackage{a4wide}
\usepackage[textwidth=22mm,textsize=scriptsize,color=green!20]{todonotes}

\usepackage{setspace}
\newtheorem{theorem}{Theorem}[section]
\newtheorem{proposition}[theorem]{Proposition}
\newtheorem{lemma}[theorem]{Lemma}
\newtheorem{question}{Question}

\newtheorem{corollary}[theorem]{Corollary}

\newtheorem{thmA}{Theorem}

\newtheorem{corA}[thmA]{Corollary}

\theoremstyle{definition}
\newtheorem{definition}[theorem]{Definition}
\newtheorem{conjecture}[theorem]{Conjecture}

\newtheorem{remark}[theorem]{Remark}

\begin{document}
\title{Limit distribution of algebraic integral points on curves}
\author{Binggang Qu}
\address{Binggang Qu: Instituto de Ciencias Matem\'aticas (ICMAT), Calle Nicol\'as Cabrera No. 13--15, Cantoblanco, 28049 Madrid, Spain}
\email{binggang.qu@icmat.es}

\author{Chengyuan Yang}
\address{Chengyuan Yang: Beijing International Center for Mathematical Research (BICMR), Peking University, Yiheyuan Road No. 5, Haidian District, 100871 Beijing, China}
\email{chengyuanyang020416@pku.edu.cn}

\date{September 30, 2026}

\begin{abstract}
  For a quasi-projective arithmetic surface $\mathcal U/\mathbb Z$ and a compact subset $E\subset \mathcal U(\mathbb C)$ under mild regularity assumptions, we study algebraic integral points on $\mathcal U$ whose Galois orbits lie in $E$.
  We characterize the collections of local probability measures that arise simultaneously as the local limit distributions of Galois orbits of such points.
  Our result also works for measures prescribed at a subset of places.
  This generalizes the results of Smith and Orloski--Sardari on $\mathcal{U}=\mathbb{A}^1$ concerning the archimedean place.
  As a consequence, we prove an integral version of Szachniewicz's theorem on curves, which states that every \emph{integral} GVF functional can be approximated by a sequence of algebraic integral points in the GVF topology.

  We also give several applications: we prove that the essential minimum of height functions on curves can be attained by algebraic integral points; we connect integer Chebyshev constants with the essential minima of certain height functions on $\mathbb{A}^1$ and prove a conjecture of Montgomery; we also answer affirmatively a question of Levenberg--Londhe by showing that the smallest limit of averaged trace of totally positive algebraic integers can be attained by a sequence of totally positive algebraic units.
\end{abstract}

\setcounter{tocdepth}{1}
\maketitle
\tableofcontents

\section{Introduction}
\label{sec:introduction}

Let $\mathcal U/\mathbb Z$ be a quasi-projective arithmetic surface.
We study the possible limit distributions of Galois orbits of algebraic integral points on $\mathcal U$, subject to the requirement that the orbits remain in a prescribed compact set of $\mathcal U(\mathbb C)$.
Regular functions on $\mathcal U$ impose logarithmic inequalities on these limit measures.
Our aim is to determine whether these necessary conditions are also sufficient.

On the affine line, Smith \cite{Smi24} and Orloski--Sardari \cite{OS23} established such a characterization under suitable assumptions on the compact set.
We extend this characterization to quasi-projective arithmetic surfaces and to simultaneous local distributions at arbitrary places.
In particular, we allow the distributions to be prescribed at any subset of places.
Our result generalizes the results of Smith and Orloski--Sardari and strengthens the Fekete--Szeg\H{o} theorem on curves of Rumely \cite[Theorem 1.2]{Rum13}.

We also prove stronger approximation results that control heights together with local distributions.
In the language of globally valued fields (GVFs), these give an integral version of Szachniewicz’s approximation theorem on curves: every normalized integral GVF functional can be approximated, in the GVF topology, by a sequence of distinct algebraic integral points.
This approximation result allows us to treat measures that are not compactly supported.
Applying this result to hermitian line bundles, we show that nef hermitian line bundles can be approximated by algebraic integral points.

We give three applications.
Under suitable curvature hypotheses, we show that the essential minimum of a height function on a curve can be approached by algebraic integral points.
We identify integer Chebyshev constants with the essential minima of certain height functions on the affine line and prove Montgomery’s conjecture concerning the integer Chebyshev constant of $[0,1]$.
Finally, we answer a question of Levenberg–Londhe by showing that the smallest possible limit of the average traces of totally positive algebraic integers can be approached by a sequence of totally nonnegative algebraic units.

\subsection{The archimedean case}
We first consider the distribution of Galois orbits at the archimedean place in a simplified setting.
This case already exhibits the central feature of our results: under a compact containment condition, the arithmetic constraints imposed by regular functions characterize the possible limit measures.
We work throughout this subsection with a normal quasi-projective arithmetic surface $\mathcal U/\mathbb Z$.
We also assume that the generic fiber of $\mathcal U$ is not projective.

For an algebraic point $\alpha\in\mathcal U(\overline{{\mathbb {Q}}})$, the Dirac measure of the Galois orbit $\alpha'_{\mathbb C}$ of $\alpha$ in $\mathcal U(\mathbb C)$ is the probability measure
\[
  \delta_{\alpha, \mathbb C}=\frac{1}{\deg(\alpha)}\sum\limits_{\beta\in \alpha'_{\mathbb C}}\delta_\beta.
\]
Here, $\alpha'$ is the closed point corresponding to $\alpha$ and $\alpha'_{\mathbb C}$ is the base change $\alpha'\times_{\mathbb Q}\mathbb C$.
When $\mathcal U=\mathbb{A}_{\mathbb Z}^1$, an algebraic point is just an algebraic number and its Galois orbit is the usual Galois orbit in $\mathbb C$.
We seek to characterize the weak limits of these orbit measures along sequences of distinct algebraic integral points.

Every such limit is invariant under complex conjugation.
Without a restriction on the location of the conjugates, however, weak convergence can lose arithmetic information.
A vanishing proportion of the conjugates may escape to infinity without affecting the weak limit, while still contributing substantially to logarithmic averages.
Thus, weak convergence alone need not preserve the inequalities arising from integrality.

We therefore require all Galois orbits in the approximating sequence to lie in a common compact set.
More precisely, given a compact subset $E\subset\mathcal U(\mathbb C)$, we ask which probability measures $\mu$ on $E$ can be obtained as weak limits:
\[
  \delta_{\alpha_n,\mathbb C}\longrightarrow\mu
  \quad\text{weakly},
\]
where $(\alpha_n)$ is a sequence of distinct points in $\mathcal U(\overline{\mathbb Z})$ and $\alpha'_{n,\mathbb C}\subset E$ for every $n$.
Under this compact containment condition, integrality imposes logarithmic inequalities on the limit measure.
For each non-zero regular function $Q$ on $\mathcal U$ and each algebraic integral point $\alpha$, $Q(\alpha)$ is an algebraic integer.
Hence, if $Q(\alpha)\neq 0$, we have
\[
  \int_{\mathcal U(\mathbb C)} \log\left|Q\right|\,\mathrm{d}\delta_{\alpha, \mathbb C}\geq 0.
\]
Since $\log|Q|$ is upper semicontinuous on $E$, we obtain
\[
  \int_{\mathcal U(\mathbb C)} \log\left|Q\right|\,\mathrm{d}\mu\geq \limsup_{n \to \infty} \int_{\mathcal U(\mathbb C)} \log\left|Q\right|\,\mathrm{d}\delta_{\alpha_n, \mathbb C}\geq 0.
\]
Such inequalities have been studied in \cite{Smy84, Ser19} on the affine line.

Before asking which distributions can occur, one may ask whether there are infinitely many algebraic integral points whose Galois orbits lie in the set.
This is the subject of Fekete–Szeg\H{o} theory, originating in work on algebraic integers \cite{Sch18, Fek23, Sie45, FS55, Rob64a}, extended to algebraic units \cite{Rob64b, Can80}, and developed on general quasi-projective arithmetic varieties by Rumely \cite{Rum89, Rum13}.
This theory defines a real number, called the capacity of the set, using potential theory; this number governs the existence problem.
If the set has capacity strictly greater than $1$, its neighborhood contains infinitely many Galois orbits of algebraic integral points.
Conversely, if the capacity is strictly smaller than $1$, there are only finitely many Galois orbits of algebraic integral points.

The distribution problem asks for more: it seeks to determine which probability measures can be realized by such sequences.
Smith \cite{Smi24} and Orloski--Sardari \cite{OS23} showed that on the affine line, the inequalities arising from testing regular functions give an equivalent condition.
For instance, they prove the following theorem.
\begin{theorem}[Smith, Orloski--Sardari]
  Let $E$ be a finite union of closed intervals in $\mathbb R$ and closed rectangles in $\mathbb{C}$.
  Assume that $E$ is invariant under complex conjugation and that the capacity of $E$ is strictly greater than $1$.
  Let $\mu$ be a probability measure supported on $E$ that is invariant under complex conjugation.
  Then, there exists a sequence of distinct algebraic integers $\{\alpha_n\}$ whose Galois orbits are contained in $E$ such that the Dirac measures of Galois orbits $\delta_{\alpha_n, \mathbb C}$ converge to $\mu$ weakly if and only if
  \[
    \int \log|Q| \,\mathrm{d}\mu \geq 0, \quad \text{for all $0 \neq Q \in \mathbb{Z}[x]$}.
  \]
\end{theorem}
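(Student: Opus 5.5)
Necessity is the argument already sketched above: for an algebraic integer $\alpha_n$ with $Q(\alpha_n)\neq0$, the norm of $Q(\alpha_n)$ is a nonzero rational integer, so $\int\log|Q|\,\mathrm d\delta_{\alpha_n,\mathbb C}\ge 0$. Since $Q$ has only finitely many roots and the $\alpha_n$ are distinct, $Q(\alpha_n)=0$ for at most finitely many $n$. Because $E$ is compact, $\log|Q|$ is upper semicontinuous and bounded above on $E$. Hence weak convergence gives $\int\log|Q|\,\mathrm d\mu\ge\limsup_n\int\log|Q|\,\mathrm d\delta_{\alpha_n,\mathbb C}\ge0$.

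For sufficiency I will construct the $\alpha_n$ as roots of monic integer polynomials of growing degree $N$ whose roots all lie in $E$ and whose root distribution approximates $\mu$. The plan has four steps.

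\textbf{Step 1: reduce to nice measures.} By density, it suffices to approximate measures $\mu$ that have continuous logarithmic potential and satisfy the strict inequalities $\int\log|Q|\,\mathrm d\mu>0$ uniformly in a quantitative sense. Concretely, take $\mu_t=(1-t)\mu+t\nu$ with $\nu$ the equilibrium measure of $E$. Since $\mathrm{cap}(E)>1$, we have $\int\log|Q|\,\mathrm d\nu\ge \deg Q\cdot\log\mathrm{cap}(E)$ for monic-type comparisons. This gives slack proportional to $\deg Q$, after which we smooth. A diagonal argument then recovers $\mu$ itself.

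\textbf{Step 2: build a real polynomial with the right zeros.} Discretize $\mu_t$ to obtain a conjugation-invariant polynomial $P\in\mathbb R[x]$ of degree $N$ whose zeros equidistribute to $\mu_t$ and lie in the interior of $E$. Using the intervals and rectangles structure, arrange that $|P|$ oscillates with large amplitude on each component. In particular, on real intervals $P$ should alternate in sign between Chebyshev-type extremal points, with $|P|\ge e^{N(U-\epsilon)}$ there.

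\textbf{Step 3: round $P$ to an integer polynomial.} This is the main obstacle, and I would follow Smith's lattice approach. Expand $P$ in an integer basis and round its coefficients one by one from the top degree down, in the style of Fekete--Szegő/Robinson. The rounding error must stay smaller than $|P|$ at the alternation points, so that the root count on each interval, and near each rectangle, is preserved by Rouché and the intermediate value theorem.

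The integrality condition enters through duality. The existence of a monic integer polynomial close to $P$ on $E$, in the sup-norm weighted by $e^{-NU^{\mu}}$, is equivalent by a Minkowski/Hahn--Banach type argument to the positivity $\int\log|Q|\,\mathrm d\mu>0$. More precisely, the obstruction to rounding is a small-height integer polynomial $Q$ that is large relative to $\mu$, and such $Q$ are excluded by the strict slack from Step 1. I would first try Smith's argument: bound the covolume of the lattice of integer polynomials in the weighted norm via $\mathrm{cap}(E)>1$, then apply successive minima, using the inequality for $Q$ to control the first minima.

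\textbf{Step 4: extract the sequence.} The resulting integer polynomial has all its roots in $E$ and root measure close to $\mu_t$. Take $\alpha_n$ to be a root of an irreducible factor of it. The factor carries almost all of the mass, since any small factor would give a $Q$ violating the slack. Sending $N\to\infty$ and $t\to0$ yields distinct $\alpha_n$ with $\delta_{\alpha_n,\mathbb C}\to\mu$.
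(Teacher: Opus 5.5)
Your necessity argument is correct, and Steps 1--3 follow Smith's lattice method. In the paper, the analogue of your successive-minima step is Zhang's arithmetic Nakai--Moishezon theorem, which supplies a basis of small sections, followed by Rumely-style top-down patching. The genuine gap is Step 4.

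\textbf{Why Step 4 fails.} The slack only gives a lower bound $\int\log|Q|\,\mathrm d\mu_t\geq t\deg(Q)\log\operatorname{cap}(E)$ for each single $Q$. A factorization $F=GH$ of your polynomial gives no upper bound on $\int\log|G|\,\mathrm d\mu_t$, so there is nothing to contradict. Factors of degree comparable to $N$ are not addressed at all. In fact the conclusion is false for polynomials having exactly the properties Steps 2--3 deliver. Take $E=I_1\cup I_2$, two disjoint real intervals of length $>4$, and $\mu=\frac12(\nu_{I_1}+\nu_{I_2})$, where $\nu_{I_j}$ are the equilibrium measures. Each $\nu_{I_j}$ passes the integer test because $\operatorname{cap}(I_j)>1$. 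Let $F_j$ be monic integer polynomials with all roots in $I_j$, root measures close to $\nu_{I_j}$, and $\deg F_1\approx\deg F_2$; suitable products of minimal polynomials provide these. Then $F_1F_2$ is monic, integral, has all roots in $E$, and has root measure close to $\mu$. Yet every root has orbit measure close to $\nu_{I_1}$ or to $\nu_{I_2}$, never to $\mu$. Your rounding only produces \emph{some} integer polynomial in a box, so nothing excludes such an output. Irreducibility therefore cannot be extracted afterwards; it has to be forced during the construction.

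\textbf{How the paper handles it.} This is the role of the paper's Eisenstein perturbation. Fix an auxiliary prime $p$ and a residue $a\in\mathbb Z$, and require $F\equiv(x-a)^N\pmod p$, i.e.\ all roots in one $p$-adic residue disc. This costs nothing in capacity: outside that disc, $\frac1N\log|(x-a)^N|_p$ equals $\log^+|x|_p$, the equilibrium potential of the unit disc, whose equilibrium measure is the Dirac mass at the Gauss point. Then carry two candidates whose difference is $p$ up to terms that are small both at $p$ and at $\infty$. One of them satisfies $v_p(F(a))=1$, so $F(x+a)$ is Eisenstein. Hence $F$ is irreducible, and all its roots form a single Galois orbit with the desired distribution. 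In your lattice formulation, this replaces $\mathbb Z[x]_{<N}$ by a coset of $p\,\mathbb Z[x]_{<N}$, costing a bounded factor per coefficient. The slack from Step 1 gives room growing linearly in $N$ per coefficient, so the cost is absorbed, but the condition must be built into Step 3.

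\textbf{A smaller point in Step 1.} The smoothing must keep the support inside $E$, since $\mu$ may charge $\partial E$. It must also keep $U_{\mu'}\geq U_{\mu_t}-\delta$ with $\delta<t\log\operatorname{cap}(E)$. Averaging along a real interval does not automatically raise potentials. You also need that $\mu$ has no atoms, which follows from the integer test by a pigeonhole argument.
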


The criterion in this theorem is expressed entirely in terms of the measure.
This suggests fixing $\mu$ and asking how closely the approximating orbits can be confined to its support.
Requiring the orbits to lie exactly in $\operatorname{supp}(\mu)$ may be too restrictive, since that support need not contain any algebraic points.
We therefore allow the orbits to lie in an arbitrarily prescribed compact neighborhood of the support.
In this formulation, the same logarithmic criterion extends to arithmetic surfaces.

\begin{thmA}\label{thm:archimedean-main}
  Let $\mathcal U/\mathbb Z$ be a normal quasi-projective arithmetic surface whose generic fiber is not projective.

  Let $\mu$ be a probability measure compactly supported in $\mathcal U(\mathbb C)$.
  Assume that $\mu$ is invariant under complex conjugation.
  Let $E$ be a compact neighborhood of the support $\operatorname{supp}(\mu)$ of $\mu$.
  Then, the following are equivalent:
  \begin{enumerate}
    \item there exists a sequence $\{\alpha_n\}$ of distinct algebraic integral points on $\mathcal U$ whose Galois orbits lie in $E$, such that the Dirac measures $\delta_{\alpha_n, \mathbb C}$ of the Galois orbits $\alpha_{n, \mathbb C}'$ of $\alpha_n$ in $\mathcal U(\mathbb C)$ converge weakly to $\mu$,
    \item \label{item:archimedean-logarithmic-condition} for each $0\neq Q\in H^0(\mathcal{U}, \mathcal{O}_{\mathcal{U}})$, we have
      \[
        \int_{\mathcal U(\mathbb{C})}\log\left|Q\right| \,\mathrm{d}\mu\geq 0.
      \]
  \end{enumerate}
\end{thmA}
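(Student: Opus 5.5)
The implication (1)$\Rightarrow$(2) is the semicontinuity argument already given in the introduction. For each $0\neq Q$, the function $Q(\alpha_n)$ vanishes for at most finitely many $n$: the $\alpha_n$ are distinct and $Q$ has finitely many zeros on the generic fiber, or it vanishes on a component, which the integrality and orbit-in-$E$ condition let us discard by passing to a subsequence. For the remaining $n$, the norm $N(Q(\alpha_n))$ is a nonzero rational integer, so $\int\log|Q|\,\mathrm d\delta_{\alpha_n,\mathbb C}\ge 0$. Since $\log|Q|$ is upper semicontinuous and bounded above on the compact set $E$, the portmanteau theorem gives $\int\log|Q|\,\mathrm d\mu\ge 0$.

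For the converse I would follow the Smith/Orloski--Sardari strategy, transported to the curve. The argument has four steps.

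\textbf{Step 1 (reduction).} Compactify $\mathcal U_{\mathbb Q}$ to a smooth projective curve $C$ with boundary $D=C\setminus U\neq\emptyset$, using that the generic fiber is not projective. Choose an ample effective divisor supported on $D$, and realise $U$ inside some $\mathbb A^N$ via generators of $H^0(\mathcal U,\mathcal O)$.

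\textbf{Step 2 (regularisation).} Approximate $\mu$ by conjugation-invariant measures $\mu_k$ with smooth densities on small neighbourhoods of $\operatorname{supp}\mu$ inside the interior of $E$. For these I need the strict inequalities $\int\log|Q|\,\mathrm d\mu_k\ge -\varepsilon_k\deg Q$, with degree measured by pole order along $D$. To obtain them I would mix $\mu$ with a small multiple of an equilibrium-type measure, which plays the role of the capacity $>1$ hypothesis. This is where the neighbourhood $E$ is used: enlarging the support gives positive capacity slack.

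\textbf{Step 3 (construction of points).} For each $\mu_k$, run a Fekete--Szeg\H{o}/Rumely-type patching construction. First take a discretization of $\mu_k$ by $m$ conjugation-invariant points $z_1,\dots,z_m$. Then build a function $F\in H^0(C,\mathcal O(mD'))\otimes\mathbb R$ whose zeros are near the $z_i$ and whose modulus on a fixed level curve around each cluster is large. Next, perturb $F$ to a function with integral coefficients in a $\mathbb Z$-basis of $H^0(\mathcal U,\mathcal O)$ of bounded pole order. The key is a Minkowski/geometry-of-numbers argument: the lattice of integral regular functions with pole order $\le m$ along $D$ has covolume controlled by the arithmetic degree. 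The logarithmic condition (2) is precisely the dual statement saying that there is enough room, namely that the relevant adelic convex body has volume $\ge$ covolume up to $e^{o(m)}$. This gives integral $G$ with $|G-F|<|F|$ on the level curves, so by Rouché the zeros of $G$ (algebraic integral points of $\mathcal U$, by integrality and $\mathcal O_{\mathcal U}$ being integral over the generating functions) lie in $E$ and equidistribute to $\mu_k$. Finally, take an irreducible factor of $G$'s zero divisor: equidistribution of the whole zero set plus positivity of the inequality (2) forces most orbits to equidistribute. I would instead control irreducibility directly by Hilbert irreducibility on a one-parameter family $G+tH$.

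\textbf{Step 4 (diagonal argument).} Take a diagonal sequence over $k$ and over $m$, which produces distinct $\alpha_n$ with $\delta_{\alpha_n,\mathbb C}\to\mu$.

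The main obstacle is Step 3, specifically showing that (2) implies the needed lower bound on the adelic volume (equivalently, that the sup-norms of integral sections on the $\mu$-adapted metric have arithmetic degree $\ge -o(m)$). I would attempt it via a duality/minimax argument: if the bound failed, a Zhang-type successive minima estimate would yield a nonzero integral $Q$ with $\int\log|Q|\,\mathrm d\mu_k<0$, contradicting (2) after passing to the limit in $k$.
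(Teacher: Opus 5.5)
Your proof of (1)$\Rightarrow$(2) is correct and is the paper's argument. The converse has a genuine gap in Step~3, and it is more basic than the volume estimate you single out.

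\textbf{Integrality and the top coefficients.} A nonzero $G\in H^0(\mathcal U,\mathcal O_{\mathcal U})$ need not have integral zeros: on $\mathbb A^1_{\mathbb Z}$ the regular function $2x-1$ vanishes at $1/2$. To produce algebraic integral points, $G$ must have a unit leading coefficient at every boundary point. So you actually face an \emph{inhomogeneous} approximation problem with the top coefficients pinned, and your tools do not solve it.

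First, ``volume $\ge$ covolume'' is a Minkowski-first-theorem statement. It yields one lattice point in a symmetric body, not a lattice point in every translate $F+B$; the latter requires control of the last successive minimum.

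Second, that control is not available. For the metrized bundle attached to $\mu$ with poles on the boundary, the boundary points have height $0$: on $\mathbb A^1$, normalizing $U_\mu-\log|z|\to0$ at $\infty$, one gets $h(\infty)=0$. So the bundle is never arithmetically ample. Concretely, an integral section of pole order $m-j$ with nonzero integral leading coefficient has $\sup_\Gamma|u|/|F|$ bounded below by a constant depending only on $j$, by the maximum principle outside the level curves $\Gamma$.

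Your contradiction argument fails at exactly this point. A failure of Zhang's criterion only produces a point of height $\le 0$, and the boundary point is such a point without any violation of (2). Correcting the top coefficients is precisely what Fekete--Szeg\H{o}/Cantor-capacity patching is for, and your proposal has no mechanism for it.

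\textbf{How the paper proceeds.} It reduces to Theorem~\ref{thm:adelic-sos-main-text} with $S=M_{K,\infty}$. This uses units and finiteness of the class group to translate (2), and Lemma~\ref{lem:gre-def} to get $\operatorname{cap}_X(\mathbb E)>1$ from the neighbourhood. It then chooses a class $D\in\operatorname{Pic}^1(C)_{\mathbb R}$ by Hahn--Banach and the N\'eron--Tate pairing, and supplies non-archimedean measures by Sion's minimax theorem.

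In Theorem~\ref{thm:integral-point-construction}, most poles are placed on an auxiliary divisor $D'$ supported outside $X$. Its adelic lift has curvature proportional to $\mu_v$ and, after a $\delta$-shift and a small perturbation, a positive absolute minimum. This is what lets Zhang's arithmetic Nakai--Moishezon theorem provide a basis of small sections of $kD'+D_0$ for the low-order corrections. The pole orders $\ell_x$ at $X$ are only a small fraction of $k\deg D'$; they are obtained by inverting a perturbed Green's matrix. The leading coefficients there are handled by Rumely-style local patching with Chebyshev compositions and $S$-unit approximation, and this is where $\operatorname{cap}_X(\mathbb E)>1$ is used.

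\textbf{Two further repairs.} In positive genus, the zero divisor of a regular function on $U$ is linearly equivalent to a divisor supported on $X$, so your points have normalized classes in the span of $X$. But Theorem~\ref{thm:adelic-sos-fixed-class-main-text} shows the limiting class is constrained by the measures, and (2) only yields some $D$ (Proposition~\ref{prop:choose-divisor-class}) with no reason to lie in that span. Even prescribing the zeros of $F$ requires a Jacobian correction.

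Finally, Hilbert irreducibility for $G+tH$ does not give an admissible $t$ inside the bounded window allowed by your Rouch\'e margin, uniformly as the degree grows. The paper instead forces irreducibility by an Eisenstein argument at an auxiliary prime $v_0$, choosing between two perturbations that differ by approximately $\varpi_0$.
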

It is also worth noting that our result has an $\mathbb R$-neighborhood version.
In such a case, we are able to treat totally real integers and units.
For a precise version, see Section \ref{sec:archimedean-main-proof}.

Theorem \ref{thm:archimedean-main} has no separate capacity hypothesis.
For a compact set $E$, if its capacity is strictly greater than $1$, the equilibrium measure satisfies condition~\ref{item:archimedean-logarithmic-condition}, and our theorem strengthens the neighborhood version of the Fekete--Szeg\H{o} theorem on curves of Rumely \cite[Theorem 1.2]{Rum13}.
If the capacity is exactly $1$, our criterion works beyond the Fekete--Szeg\H{o} theorem.

\subsection{The adelic case}
\label{sec:adelic-case}
We now consider the simultaneous distribution of Galois orbits at several places.
We turn to the setting of adelic sets developed by Rumely \cite{Rum89,Rum13}.

Let $K$ be a number field.
Let $C/K$ be a projective, smooth and geometrically integral curve.
Let $X \subseteq C$ be a non-empty finite subset of closed points and set $U=C\setminus X$.
Let $\mathcal{C}/O_K$ be a projective regular model of $C/K$, and $\mathcal{U} \subseteq \mathcal{C}$ an open subscheme with generic fiber $U$.
Write $M_K$ for the set of places of $K$, with subsets $M_{K,\infty}$ and $M_{K,f}$ of archimedean and non-archimedean places, respectively.
We normalize the absolute values so that
\[
  \sum_{v\in M_K}\log|a|_v=0
  \qquad\text{for every }a\in K^\times.
\]

We describe distributions in the setting of Berkovich analytification.
For each place $v\in M_K$, let $C_v^\mathrm{an}$ be the Berkovich analytification of $C_v=C\times_K K_v$.
For an archimedean place $v \in M_{K,\infty}$, the analytification $C_v^\mathrm{an}$ of $C$ at $v$ is just $C_v(\mathbb{C})$ when $K_v \cong \mathbb{C}$, and is $C_v(\mathbb{C})$ modulo complex conjugation when $K_v \cong \mathbb{R}$.
For a non-archimedean place $v \in M_{K,f}$, $C_v^\mathrm{an}$ is a compact Hausdorff space, containing each closed point of $C_v=C\times_{K}K_v$.

Denote by $k_v$ the residue field of $K_v$ and by $\mathcal{C}_{k_v}=\mathcal C\times_{\mathcal O_K} k_v$ the special fiber.
There is a reduction map $\operatorname{red}_v: C_v^\mathrm{an}\longrightarrow \mathcal{C}_{k_v}$ which reflects the reduction of a closed point in $C_v$ on the model $\mathcal C_v=\mathcal C\times_{\mathcal O_K}\mathcal O_{K_v}$.
Thus, an algebraic point on $\mathcal U$ is an algebraic integral point if and only if its Galois orbit lies in $E_v=\operatorname{red}_v^{-1}(\mathcal{U}_{k_v})$ in $U_v^\mathrm{an}$ for each non-archimedean place $v$.

We work with admissible adelic sets.
For a place $v\in M_K$, a compact set $F_v\subset U_v^\mathrm{an}$ is said to be \emph{locally admissible} if
\begin{enumerate}
  \item when $v$ is non-archimedean, there exist a projective model $\mathcal C$ of $C$ and an open subvariety $V\subset \mathcal C_{k_v}$ such that $F_v=\operatorname{red}_v^{-1}(V)$,
  \item when $K_v \cong \mathbb{C}$, $F_v$ is a finite union of closures of connected open subsets of $C_v^\mathrm{an}$,
  \item when $K_v \cong \mathbb{R}$, $F_v$ is a finite union of closures of connected open subsets of $C_v^\mathrm{an}$ and closed intervals in $C_v(\mathbb{R})$.
\end{enumerate}
We also allow $U=C$ in this local definition, and then say \emph{locally admissible in $C_v^{\mathrm{an}}$}.
An \emph{adelic set} on $U$ is a family $\{E_v\}_{v\in M_K}$ of non-empty compact sets $E_v\subset U_v^\mathrm{an}$.
An adelic set $(E_v)_{v\in M_K}$ is said to be \emph{admissible} if each $E_v$ is locally admissible and there exists a quasi-projective model $\mathcal U$ of $U$ such that $E_v=\operatorname{red}_v^{-1}(\mathcal U_{k_v})$ for every non-archimedean place $v$.
In this setting, we are able to develop Fekete--Szeg\H{o} theory.

For an algebraic point $\alpha\in\mathcal U(\overline K)$ and a place $v\in M_K$, the Galois orbit $\alpha'_{v}$ of $\alpha$ in $U_v^\mathrm{an}$ is the base change $\alpha'\times_{K}K_v$ where $\alpha'$ is the closed point corresponding to $\alpha$.
The Dirac measure of the Galois orbit $\alpha'_v$ is defined as
\[
  \delta_{\alpha, v}=\frac{1}{\deg(\alpha')}\sum\limits_{\beta\in \alpha'_{v}}\deg(\beta)\delta_\beta.
\]
For an adelic set $\mathbb E=\{E_v\}_{v\in M_K}$, we say that the algebraic point $\alpha$ is \emph{totally in} $\mathbb E$ if $\alpha'_v\subset E_v$ holds for every place $v\in M_K$.

\begin{theorem}[Rumely {\cite[Theorem 1.6]{Rum13}}]
  \label{thm:rumely-fekete-szego}
  Let $\mathbb E=(E_v)_{v\in M_K}$ be an admissible adelic set on $U$, with $E_v\subset U_v^\mathrm{an}$.
  Assume that $\operatorname{cap}_X(\mathbb E)>1$.
  Then there exist infinitely many algebraic points $\alpha$ on $U$ totally in $\mathbb E$.
\end{theorem}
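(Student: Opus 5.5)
The statement to be proved is Rumely's Fekete--Szeg\H{o} theorem for admissible adelic sets (Theorem~\ref{thm:rumely-fekete-szego}). I would follow Rumely's patching strategy. The idea is to build, for each large degree $N$, a global function $f_N\in K(C)$ with poles only on $X$. Its zeros will be the desired points: they should be integral at almost all finite places and lie in $E_v$ at every place. We then take the points $\alpha$ to be the zeros of $f_N$, or of a small perturbation of it.

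\textbf{Step 1: reduction to a nice situation.} Enlarge $K$ if needed so that the points of $X$ are rational. Shrink each $E_v$ slightly, which is possible by admissibility, so that $\operatorname{cap}_X(\mathbb E)>1$ persists; the capacity is upper semicontinuous in the sets. Fix the finite set $S$ of places consisting of the archimedean places, the places where $E_v$ differs from the standard integral set $\operatorname{red}_v^{-1}(\mathcal U_{k_v})$ for a good model, and the places of bad reduction. Outside $S$, integrality of a function with poles only on $X$ will automatically keep its zeros in $E_v$.

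\textbf{Step 2: local potential theory.} At each $v\in S$, use the Green's functions $G(z,x_i;E_v)$ and the equilibrium distributions. These let us choose local \emph{pseudopolynomials} $f_v$: functions on $C_v$ with poles only on $X$ and of prescribed pole orders $N n_i$. Each $f_v$ has all zeros in $E_v$, and its modulus on $E_v$ is controlled by the local capacity. Concretely, $\frac1N\log|f_v|$ approximates $\sum_i n_i G(z,x_i;E_v)$ outside $E_v$. The weights $n_i$ are chosen by the Cantor matrix/capacity-1 normalization, so that the global leading coefficients have product norm at most $1$. The hypothesis $\operatorname{cap}_X(\mathbb E)>1$ exactly provides the slack: the leading coefficients can be taken with $\prod_v|c_v|_v<1$ strictly.

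\textbf{Step 3: global patching.} This is the main obstacle. We must produce a single $f_N\in K(C)$ which is $v$-adically close to $f_v$ at every $v\in S$ and integral, with the correct leading coefficients, elsewhere. The plan is to use strong approximation / Riemann--Roch on $H^0(C,\mathcal O(N\sum n_i x_i))$ with a basis adapted to the expansions at $X$. The coefficients are then successively modified from the top pole order downward. At each step the change introduced by approximation is absorbed into lower-order terms, which is possible because of the slack from Step~2.

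At archimedean places, the coefficient changes must not move roots out of $E_v$. For this, $f_v$ should oscillate: it should have large modulus attained at many points of $\partial E_v$, so that perturbations only move zeros within $E_v$ by a Rouch\'e/argument-principle control. This requires the regularity built into local admissibility, namely finite unions of closures of connected open sets and real intervals. For real places one arranges the zeros to stay real on intervals, using Chebyshev-type sign changes. At non-archimedean places in $S$ the analogous control is given by Newton polygon/reduction arguments on $\operatorname{red}_v^{-1}(V)$.

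\textbf{Step 4: infinitely many points.} The zeros of $f_N$ are then algebraic points totally in $\mathbb E$. Since $N\to\infty$ and the equidistribution from Step~2 forces zeros to spread along the equilibrium measure (which is non-atomic), infinitely many distinct points arise. I expect the delicate part to be the archimedean part of Step~3, where the perturbation of coefficients must be controlled uniformly in $N$.
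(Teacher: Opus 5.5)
The paper does not prove this statement. It quotes it from Rumely. Its Sections~4--6 do, however, carry out a Rumely-type construction that recovers it, as the case $S=\emptyset$ of Theorem~\ref{thm:prescribed-places}; condition (2) there follows from the product formula for leading coefficients together with $\operatorname{cap}_X(\mathbb E)>1$. Your outline has the same architecture. But the mechanism you propose for Step~3, which you rightly flag as the main obstacle, fails for the top coefficients.

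The slack from $\operatorname{cap}_X(\mathbb E)>1$ gains, in aggregate over $v\in S$, only about a factor $\operatorname{cap}_X(\mathbb E)$ per unit of pole order. So it controls corrections deep below the top pole order, but not the first few.

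Each coefficient has to be replaced by an element of $O_{K(x),S}$, and $S$-integers form a lattice in the product of the completions at $S$. Hence the product over $v\in S$ of the admissible errors, measured against the natural size of that coefficient, must be at least a fixed constant (Lemma~\ref{lem:apr-int}). At the non-archimedean places of $S$ the error must also stay below the natural size, or zeros move. Consequently the archimedean places must tolerate corrections equal to a possibly large constant $B_v$ times the natural size, already in the top coefficients. For depths $t\lesssim \log B_v/\log\operatorname{cap}_X(\mathbb E)$ below the top, these corrections are not dominated by the main function on $\partial E_v$. When the capacity is close to $1$, no redistribution of the slack helps.

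The missing idea is the magnification device of Proposition~\ref{prop:arc-mul-fun}. The archimedean local function is built as $S_{k_2,2^{k_1}}(Q_{k_1}+Q')$, a composition with a Chebyshev polynomial of large degree $k_2$. To first order, the composite changes by $k_2Q_{k_1}^{k_2-1}$ times a change in $Q'$. So a change $\Delta$ needed in a top-band coefficient is produced by changing $Q'$ by about $\Delta/k_2$. Taking $k_2\gg k_1$ keeps $|Q'|\le 2^{k_1}$ on $E_v$, so boundary dominance survives, and so does the oscillation on real intervals. Only the lower bands are handled by geometric decay (the factor $\sqrt2^{-k_1}$ there). This is what makes arbitrarily large archimedean $B_v$ admissible, so that $\prod_{v\in S}B_v\ge A$ can be met in Proposition~\ref{prop:pat-con}.

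Relatedly, leading coefficients cannot be ``absorbed'' at all. They must be matched exactly by a single $S$-unit (Lemma~\ref{lem:apr-uni}). That requires independent control of each leading coefficient in the local construction. The paper achieves this with non-archimedean initial elements that are $n$-th powers, and archimedean diagonal constants $c'_{v,x,x}$ allowed to float in an interval via Lemma~\ref{lem:pow-c}. Also, outside $S$ it is unit leading coefficients, not mere integrality, that keep zeros in $E_v$.

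Smaller points:
\begin{enumerate}
  \item Shrinking the $E_v$ in Step~1 needs convergence of the Green's matrix under inner exhaustions (Corollary~\ref{cor:gre-com-exh}), not upper semicontinuity.
  \item In positive genus, local functions with all zeros in $E_v$ require the Jacobian lemmas (Lemmas~\ref{lem:gen-jac-c}, \ref{lem:gen-jac-r}, \ref{lem:rat-non-ope}).
  \item The equidistribution you invoke in Step~4 is not supplied by your construction. It is simpler to delete, at one archimedean place, small neighbourhoods of the conjugates of the finitely many points already found. This keeps the capacity above $1$, and you then rerun the construction.
\end{enumerate}
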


Our first adelic result characterizes the possible simultaneous limit distributions of these points.
\begin{thmA}[Theorem {\ref{thm:adelic-sos-main-text}}]
  \label{thm:adelic-sos}
  Let $K$ be a number field.
  Let $C/K$ be a projective, smooth and geometrically integral curve.
  Let $X \subseteq C$ be a non-empty finite set of closed points and set $U=C\setminus X$.
  Let $\mathbb{E}$ be an admissible adelic set of $U$.
  Assume that $\operatorname{cap}_X(\mathbb{E})>1$.
  Let $\{\mu_v\}_{v \in M_K}$ be a collection of probability measures on $E_v$.
  The following are equivalent:
  \begin{enumerate}
    \item there exists a sequence of distinct algebraic points $(\alpha_n\in U(\overline{K}))$ totally in $\mathbb E$ such that for each $v\in M_K$, the Dirac measures $\delta_{\alpha_n, v}$ of the Galois orbits of $\alpha_n$ in $U_v^{\mathrm{an}}$ converge weakly to $\mu_v$,
    \item \label{item:adelic-logarithmic-condition} $\displaystyle \sum_{v \in M_K} \int \log|Q|_v \,\mathrm{d}\mu_v \geq 0$ for all $0 \neq Q \in H^0(U, \mathcal O_U)$.
  \end{enumerate}
\end{thmA}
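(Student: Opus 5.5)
The implication (1)$\Rightarrow$(2) is the easy direction and I would treat it first. Let $\alpha_n$ be totally in $\mathbb E$ with $\delta_{\alpha_n,v}\to\mu_v$ for every $v$. Fix $0\neq Q\in H^0(U,\mathcal O_U)$. Since the $\alpha_n$ are distinct, $Q(\alpha_n)\neq 0$ for all but finitely many $n$, and the product formula gives
\[
  \sum_v\int\log|Q|_v\,\mathrm d\delta_{\alpha_n,v}=0 .
\]
Because $\mathbb E$ is admissible, there is a quasi-projective model $\mathcal U$ with $E_v=\operatorname{red}_v^{-1}(\mathcal U_{k_v})$ at every finite $v$. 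Outside a finite set $S$ of places, $Q$ is integral on $\mathcal U$, so $\log|Q|_v\le 0$ on $E_v$ and $\mu_v$-integrals only help the inequality: one just keeps the $v\notin S$ terms as nonpositive quantities whose limit is controlled. More precisely, I would apply upper semicontinuity of $\log|Q|_v$ on the compact set $E_v$ at each $v$. On $E_v$ this function is bounded above uniformly, and outside $S$ it is identically $0$ whenever $Q$ is a unit there; in general it is $\le 0$. Weak convergence then gives
\[
  \int\log|Q|_v\,\mathrm d\mu_v\ge\limsup_n\int\log|Q|_v\,\mathrm d\delta_{\alpha_n,v}.
\]
Summing over $v$ needs a uniform bound so that $\limsup$ commutes with the infinite sum. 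I would get it from the fact that for $v\notin S$ every term is $\le 0$, and then use the reverse Fatou inequality for the sum of the finitely many $S$-terms. This yields $\sum_v\int\log|Q|_v\,\mathrm d\mu_v\ge 0$.

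For (2)$\Rightarrow$(1) I would reduce to a finite set of places and then run a Fekete–Szegő-type construction with prescribed measures. The steps are as follows.

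\emph{Step 1.} Use weak-$*$ separability to reduce to approximating finitely many test functions at finitely many places $S$, to any accuracy $\varepsilon$. A diagonal argument then produces the sequence. Outside $S$ the admissible set has $E_v=\operatorname{red}_v^{-1}(\mathcal U_{k_v})$ and capacity contribution trivial, so there any point totally in $\mathbb E$ automatically has $\delta_{\alpha,v}$ close to $\mu_v$. This holds when $\mu_v$ is the canonical measure (Dirac at the Gauss-type point). For $v\notin S$ where $\mu_v$ is not canonical, condition (2) must force it to be so. I would check this by testing (2) with $Q$ a unit at all but finitely many places and using the strict capacity hypothesis.

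\emph{Step 2.} Perturb $\mu_v$ for $v\in S$ into a convex combination $(1-t)\mu_v+t\,\omega_v$, where $(\omega_v)$ is the adelic equilibrium measure of $\mathbb E$. Because $\operatorname{cap}_X(\mathbb E)>1$, the equilibrium satisfies (2) strictly with a margin, so the mixture satisfies (2) with strict margin $\ge t\log\operatorname{cap}$. The mixture can also be approximated by measures with continuous potentials supported in the interior of admissible subsets.

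\emph{Step 3.} Run Rumely's patching construction from the proof of Theorem \ref{thm:rumely-fekete-szego}, but with the equilibrium measure replaced by the perturbed $\mu_v$. First build an initial approximating function from discretized Fekete-type point configurations distributed according to $\mu_v$ at each $v\in S$. Then patch at archimedean and non-archimedean places so that the roots remain in $E_v$ and their counting measures stay close to $\mu_v$. Here the strict margin in (2) plays the role of $\log\operatorname{cap}>0$: by a Minkowski/Riemann–Roch argument it guarantees global sections of high degree whose local sup norms are close to $\exp$ of the Green-type potentials of $\mu_v$.

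The main obstacle is Step 3, specifically converting the logarithmic inequality (2) into the existence of global functions with prescribed local sizes. I would try first to identify (2) with nonnegativity of an Arakelov degree. Concretely, for the potentials $g_v=\int\log|x-y|_v\,\mathrm d\mu_v$ relative to $X$, I would show that the adelic $\mathbb R$-divisor they define is pseudo-effective, via a duality (Hahn–Banach/minimax) between measures satisfying (2) and the cone generated by $\log|Q|$. Strict positivity from Step 2 then gives bigness, and hence many small sections for Rumely's patching to start from.
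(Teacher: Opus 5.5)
Your proof of (1)$\Rightarrow$(2) is fine and is the paper's argument. The gap is in (2)$\Rightarrow$(1), and it is the divisor class in positive genus.

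Condition (2) tests only principal divisors. A sequence as in (1), however, also has limit classes $D=\lim\alpha_n'/\deg(\alpha_n')\in\operatorname{Pic}^1(C)_{\mathbb R}$. By the easy half of Theorem~\ref{thm:adelic-sos-fixed-class-main-text}, any such $D$ must satisfy $\sum_v\int g_{F,v}\,\mathrm d\mu_v\le D\cdot\overline F$ for every flat $\overline F$ with $F|_U$ effective.

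Your Step 3 is Rumely's patching, so every auxiliary function has its poles on $X$; the same holds for your adelic divisor built from potentials relative to $X$. An irreducible zero divisor $\alpha'$ of such a function is linearly equivalent to a divisor supported on $X$. Hence every point you produce has normalized class in the convex hull of the classes $x/\deg(x)$, $x\in X$.

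Nothing in (2) puts a compatible class in that hull, and in general there is none. Take an elliptic curve with $X=\{O\}$ and $P\in C(K)$ non-torsion. Let $\mu_v$ be limit measures of integral points of $C\setminus\{O,P,-P\}$ that lie in an admissible set for $\{O\}$ of capacity $>1$, and whose classes tend to $D_1=O+\lambda(P-O)$ with $\lambda\neq0$ (e.g.\ Fekete--Szeg\H{o}-type points with unequal pole weights at $P$ and $-P$). These $\mu_v$ satisfy (2). Since $\pm P$ are avoided adelically, the flat inequalities for $F=(\pm P)-O$ are equalities at $D_1$. Replacing $D_1$ by $[O]$ changes their right-hand sides by $\pm2[K:\mathbb Q]\lambda\langle P,P\rangle_{\mathrm{NT}}$ (Faltings--Hriljac), so one of them fails. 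Thus no sequence built from functions with poles only at $O$ realizes these $\mu_v$.

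Mixing with the equilibrium measure in Step 2 cannot fix this: compatibility of $(\mu,D)$ is a closed condition and the hull is compact, so compatibility for all small $t$ would pass to $t=0$. Also, ``pseudo-effective, hence big'' is not the positivity the construction needs.

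The missing idea is to produce the class first and build the functions around it. In Proposition~\ref{prop:choose-divisor-class}, the function $L\mapsto\sup_{[F]=L}(\sum_v\int g_{F,v}\,\mathrm d\mu_v-D_0\cdot\overline F)$ on the finite-dimensional space $\operatorname{Pic}^0(C)_{\mathbb R}$ is superlinear and vanishes at $0$ by (2). Hahn--Banach and nondegeneracy of the N\'eron--Tate pairing then give a compatible $D$.

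Theorem~\ref{thm:integral-point-construction} proceeds as follows:
\begin{itemize}
\item It takes $\overline D$ with $c_1(\overline D)_v=\mu_v$, normalized so that $h_{\overline D}\ge0$ everywhere and $h_{\overline D}\approx0$ on $X$.
\item It uses functions with pole divisor $kD'+\sum_x\ell_x x$, where $D'$ is a $\mathbb Q$-perturbation of $D$ supported off $X$.
\item Only $\ell_x=O(\delta' k)$ poles at $X$ are needed; the weights $(s_x\deg(x))=-\Gamma'^{-1}\mathbf a'$ from the Cantor matrix make the leading coefficients at $X$ satisfy the product formula.
\item Zhang's arithmetic Nakai--Moishezon theorem, applied to a divisor with positive absolute minimum, supplies the basis of small sections used for the integrality and Eisenstein corrections.
\end{itemize}
In genus $0$ the class issue disappears, and your outline is close to the Smith/Orloski--Sardari approach.

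Separately, Step 1 is wrong as stated. Points totally in $\mathbb E$ need not equidistribute to the Shilov point outside $S$. Nor does (2) force $\mu_v$ to be that Dirac mass at almost all places: on $\mathbb A^1$, $\mu_p=\delta_{\zeta_{0,r_p}}$ with $\sum_p\log(1/r_p)<\infty$ is allowed, compensated at $\infty$. What you actually need is only the diagonal argument over finite sets of places. Add to it the fact that replacing $\mu_v$ by the Dirac mass at the Shilov boundary point preserves (2), since $\log|Q|_v$ is subharmonic on $E_v$.
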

Since we have no restrictions on probability measures, the sum $\sum_{v \in M_K} \int \log|Q|_v \,\mathrm{d}\mu_v$ might not be a finite sum.
The sum still makes sense since each integral takes either a finite value or $-\infty$ and at most finitely many of them are positive.

Our result can naturally be strengthened.
It works if one cares about the distribution in a prescribed subset of places, by replacing the integrals with logarithmic sup-norms at the remaining places.
For a set $E_v\subset U_v^\mathrm{an}$ and a rational function $Q\in K(C)$, define the sup-norm $\lVert Q\rVert_{E_v, \mathrm{sup}}\coloneqq \sup_{z\in E_v}\left|Q(z)\right|_v$.
Below is our main theorem.
\begin{thmA}\label{thm:prescribed-places}
  Let $K$ be a number field.
  Let $C/K$ be a projective, smooth and geometrically integral curve.
  Let $X \subseteq C$ be a non-empty finite set of closed points and set $U=C\setminus X$.
  Let $\mathbb{E}$ be an admissible adelic set of $U$.
  Let $S$ be a subset of $M_K$.
  Assume that $\operatorname{cap}_X(\mathbb{E})>1$.
  Let $\{\mu_v\}_{v \in S}$ be a collection of probability measures on $E_v$.
  The following are equivalent:
  \begin{enumerate}
    \item there exists a sequence of distinct algebraic points $(\alpha_n\in U(\overline{K}))$ totally in $\mathbb E$, such that for each $v\in S$, the Dirac measures of the Galois orbits of $\alpha_n$ in $U_v^{\mathrm{an}}$ converge weakly to $\mu_v$,
    \item $\displaystyle \sum_{v \in S} \int \log|Q|_v \,\mathrm{d}\mu_v + \sum_{v \notin S} \log\lVert Q\rVert_{E_v, \mathrm{sup}}  \geq 0$ for all $0 \neq Q \in H^0(U, \mathcal O_U)$.
  \end{enumerate}
\end{thmA}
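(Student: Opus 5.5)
The plan is to prove the two implications separately. The direction (1)$\Rightarrow$(2) is the easier one, and the converse will be reduced to Theorem \ref{thm:adelic-sos} (i.e. $S=M_K$) by choosing suitable measures at the places outside $S$.

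\emph{Necessity.} Let $(\alpha_n)$ be as in (1), and fix $0\neq Q\in H^0(U,\mathcal O_U)$. Since the $\alpha_n$ are distinct and $Q$ has finitely many zeros on $U$, we have $Q(\alpha_n)\neq 0$ for $n\gg0$. The product formula applied to the norm of $Q(\alpha_n)$ gives
\[
  \sum_{v\in M_K}\int\log|Q|_v\,\mathrm{d}\delta_{\alpha_n,v}=0 .
\]
For $v\notin S$, bound each term above by $\log\lVert Q\rVert_{E_v,\sup}$; this uses only that $\alpha_n$ is totally in $\mathbb E$. Admissibility ensures that $\lVert Q\rVert_{E_v,\sup}\le 1$ for all but finitely many $v$. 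The reason is that $Q$ is regular on the model $\mathcal U$ at almost all places and $E_v=\operatorname{red}_v^{-1}(\mathcal U_{k_v})$ there, so only finitely many terms are positive. For $v\in S$, $\log|Q|_v$ is upper semicontinuous and bounded above on the compact set $E_v$, which gives $\limsup_n\int\log|Q|_v\,\mathrm d\delta_{\alpha_n,v}\le\int\log|Q|_v\,\mathrm d\mu_v$. The remaining point is to exchange the $\limsup$ with the possibly infinite sum over $S$. Since the terms are uniformly $\le 0$ outside a finite set of places, Fatou's lemma in the reverse direction over the counting measure on places handles this. Combining the three facts yields (2).

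\emph{Sufficiency.} The idea is to choose probability measures $\mu_v$ on $E_v$ for $v\notin S$ so that the full family $\{\mu_v\}_{v\in M_K}$ satisfies condition (\ref{item:adelic-logarithmic-condition}) of Theorem \ref{thm:adelic-sos}. Theorem \ref{thm:adelic-sos} then produces the required sequence, and we simply ignore the places outside $S$.

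\begin{enumerate}
  \item For all but finitely many $v\notin S$ (the places where $E_v=\operatorname{red}_v^{-1}(\mathcal U_{k_v})$ with $\mathcal U$ a good model), take $\mu_v$ to be the Dirac mass at the Gauss-type point(s) over the generic points of the special fibre. For every $Q$ regular on the model this gives $\int\log|Q|_v\,\mathrm d\mu_v=\log\lVert Q\rVert_{E_v,\sup}$.
  \item For the finitely many remaining places $v\notin S$, use a minimax argument. Consider the convex, weak-$*$ compact set $\mathcal P$ of tuples of probability measures on these $E_v$. Consider also the set of normalized tuples of functions $\{\log|Q|_v/\deg Q\}$; passing to powers and products of the $Q$'s makes this set convex after closure. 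The aim is
\[
  \sup_{\nu\in\mathcal P}\ \inf_Q\Bigl(\sum_{v\in S}\int\log|Q|_v\,\mathrm d\mu_v+\sum_{v\notin S}\int\log|Q|_v\,\mathrm d\nu_v\Bigr)\ \ge 0 .
\]
  By a minimax theorem (Sion or Ky Fan), using upper semicontinuity in $\nu$ and affineness in both variables, this follows from
\[
  \inf_Q\ \sup_{\nu}(\cdots)=\inf_Q\Bigl(\sum_{v\in S}\int\log|Q|_v\,\mathrm d\mu_v+\sum_{v\notin S}\log\lVert Q\rVert_{E_v,\sup}\Bigr)\ge 0,
\]
  which is exactly hypothesis (2).
\end{enumerate}

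The main obstacle is step~2, and specifically the convexity and semicontinuity needed for the minimax theorem. The functions $\log|Q|_v$ take the value $-\infty$, and the set of admissible $Q$ is only a multiplicative monoid. To get a genuinely convex family, I would work with $\mathbb Q_{>0}$-combinations $\sum c_i\log|Q_i|$, which are functions of the form $\frac1m\log|\prod Q_i^{k_i}|$. I would also truncate via $\max(\log|Q|,-M)$ and let $M\to\infty$ at the end using monotone convergence, so that everything stays in the setting of continuous functions on compact sets, where Ky Fan's inequality applies.

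A second subtlety is whether the sup over $\nu$ at a single place really equals $\log\lVert Q\rVert_{E_v,\sup}$; this holds because point masses are allowed. A third is the real places, where $\mu_v$ must respect the conjugation symmetry that Theorem \ref{thm:adelic-sos} implicitly requires. Since $C_v^{\mathrm{an}}$ is the quotient by conjugation, this is automatic.
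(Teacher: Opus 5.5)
Your proof is correct as a reduction of the general case to the case $S=M_K$, but your completion step runs in the opposite order from the paper's. The $S=M_K$ case you invoke is not proved independently in the paper.

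\textbf{Necessity.} This matches the paper: product formula, upper semicontinuity of $\log|Q|_v$ on $E_v$, and $\log\lVert Q\rVert_{E_v,\mathrm{sup}}=0$ for almost all $v$. You justify the interchange of $\limsup$ with the sum over $S$ more carefully than the text does.

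\textbf{Sufficiency: how the two routes differ.} The paper proceeds in three steps.
\begin{enumerate}
  \item Proposition \ref{prop:choose-divisor-class} (Hahn--Banach on $\operatorname{Pic}^0(C)_{\mathbb R}$ and nondegeneracy of the N\'eron--Tate pairing) turns hypothesis (2), sup-norm terms included, into an inequality for a fixed class $D\in\operatorname{Pic}^1(C)_{\mathbb R}$. It holds against flat adelic divisors $\overline F$ with $F|_U$ effective, with $\inf_{E_v}g_{F,v}$ at $v\notin S$.
  \item Proposition \ref{prop:complete-local-measures} then fills in the measures outside $S$ by Sion's theorem against these flat divisors. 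It uses exactly your devices: Dirac masses at the Shilov point for almost all $v$, finite test families with simplex weights, separate truncation of each test function, and a limiting argument over a countable enumeration.
  \item Theorem \ref{thm:integral-point-construction} produces the points.
\end{enumerate}
You run the minimax first, directly against regular functions, and only afterwards let the $S=M_K$ case choose $D$. Your order keeps the completion step free of arithmetic intersection theory. The paper's order yields the prescribed-class refinement, Theorem \ref{thm:adelic-sos-fixed-class-main-text}, which your order does not directly give; the statement at hand does not need it.

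\textbf{Minimax details.} Truncating $\max(\log|Q|,-M)$ is not additive in $Q$. So the convex variable must be the weight vector on a fixed finite family, with each $Q_i$ truncated separately. Irrational weights are handled by continuity in the weights. The passage to all of the countable set $H^0(U,\mathcal O_U)$ needs a compactness argument (finite intersection property or diagonal extraction). Your sketch gestures at this, and it works.

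\textbf{What the reduction leaves open.} Your argument only shows that Theorem \ref{thm:adelic-sos} implies the statement. In the paper, Theorem \ref{thm:adelic-sos} is stated as Theorem \ref{thm:adelic-sos-main-text} and is obtained precisely as the case $S=M_K$ of the theorem you are proving. All of the existence content therefore sits in your black box. For a self-contained proof, you still need two steps for the completed family $\{\mu_v\}_{v\in M_K}$:
\begin{enumerate}
  \item Proposition \ref{prop:choose-divisor-class} with $S=M_K$, to pass from regular-function tests to $\sum_v\int g_{F,v}\,\mathrm d\mu_v\le D\cdot\overline F$ for some $D$.
  \item The point construction of Theorem \ref{thm:integral-point-construction}.
\end{enumerate}
Neither step uses the general-$S$ case, so there is no circularity. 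With them appended, your proof is complete.
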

We are also able to control the limit of the divisor class of $\alpha'_n/ \deg(\alpha'_n)$ on $C$; see Theorem \ref{thm:adelic-sos-fixed-class-main-text}.
Taking $S=M_K$ recovers Theorem \ref{thm:adelic-sos}.
Taking $S=M_{K, \infty}$, we recover a variant of the archimedean theorem.
Taking $K=\mathbb Q, \mathcal U=\mathbb A^1$ and $S=\{\infty\}$, we recover the result of Smith and Orloski--Sardari.

\subsection{GVF functionals and Szachniewicz's theorem}
\label{sec:existential-closedness}
In this section, we control heights together with local distributions using the language of globally valued fields (GVFs).
In Theorem \ref{thm:int-existential-closedness}, we prove an integral version of Szachniewicz’s approximation theorem on curves.
In Corollary \ref{cor:int-existential-closedness-archimedean}, we apply the theorem to hermitian line bundles, showing that nef hermitian bundles can be approximated by a sequence of algebraic integral points, in the sense of height convergence.

Keep the notation from Section \ref{sec:adelic-case}.
In the previous sections, all Galois orbits are assumed to lie in a prescribed set, and hence the limit measure is supported on that set.
However, many measures that appear naturally in arithmetic geometry are not compactly supported.
For example, on $\mathbb{A}^1$, the following question is natural in arithmetic geometry.

\begin{question}\label{ques:alg-int-equ-hei}
  Fix a probability measure $\mu$ on $\mathbb C$, stable under complex conjugation.
  Let $\alpha_n$ be a sequence of algebraic integers such that $\delta_{\alpha_n, \infty}$ converges to $\mu$ weakly.
  Can the sequence of Weil heights $h(\alpha_n)$ converge?
  If it can converge, what is the smallest possible limit?
\end{question}

In such a case, we still want our convergence to carry arithmetic information.
In the case $C=\mathbb{P}^1_{K}$, if the convergence is log convergence, then all arithmetic information can be transferred to the points.
For instance, the question has been studied in \cite[Theorem B]{BMQS26}.

\begin{theorem}[Burgos--Menares--Qu--Sombra]
  \label{thm:burgos-menares-qu-sombra}
  Let $\mu$ be a probability measure on $\mathbb C$, invariant under complex conjugation.
  Assume $\displaystyle \int \log^+|z| \,\mathrm{d}\mu < +\infty$. Then the following are equivalent:
  \begin{enumerate}
    \item there exists a sequence of distinct algebraic integers $(\alpha_n)$ such that
      \[
        \delta_{\alpha_n, \infty} \longrightarrow \mu \text{ log-weakly,}
      \]
      that is, for all positive real numbers $A$, $B$ and every continuous function $f$ on $\mathbb C$ with $\left|f(z)\right|<A+B\log^+\left|z\right|$, we have
      \[
        \lim_{n\to\infty}\int f \,\mathrm{d}\delta_{\alpha_n, \infty} = \int f \,\mathrm{d}\mu.
      \]
    \item $\displaystyle \int \log|Q| \,\mathrm{d}\mu \geq 0$ for all $0\neq Q \in \mathbb{Z}[x]$.
  \end{enumerate}
\end{theorem}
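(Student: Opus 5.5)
We prove the two implications separately; the converse direction is the substantial one.

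\emph{(1)$\Rightarrow$(2).} Let $0\neq Q\in\mathbb Z[x]$. Only finitely many $\alpha_n$ are roots of $Q$, so we may assume $Q(\alpha_n)\neq 0$ for all $n$. Since $\alpha_n$ is an algebraic integer, the norm of $Q(\alpha_n)$ is a non-zero integer, which gives
\[
\int\log|Q|\,\mathrm d\delta_{\alpha_n,\infty}\geq 0 .
\]
Write $\log|Q|=\deg(Q)\log^+|z|+g$, where $g$ is upper semicontinuous and bounded above by $A+B\log^+|z|$. Truncate $g$ from below by the continuous functions $\max(g,-M)$, suitably regularized. Log-weak convergence applies to $\log^+|z|$ and to upper-semicontinuous functions of log growth bounded above, so
\[
\int\log|Q|\,\mathrm d\mu\geq\limsup_n\int\log|Q|\,\mathrm d\delta_{\alpha_n,\infty}\geq 0 .
\]

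\emph{(2)$\Rightarrow$(1).} We reduce to the compact case, Theorem~\ref{thm:archimedean-main} with $\mathcal U=\mathbb A^1_{\mathbb Z}$, by truncation and a diagonal argument.

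\textbf{Step 1 (compactly supported approximants).} For $R\gg 1$, construct a conjugation-invariant probability measure $\mu_R$ with compact support. It should satisfy the logarithmic condition~(2), converge to $\mu$ log-weakly as $R\to\infty$, and have
\[
\int\log^+|z|\,\mathrm d\mu_R\to\int\log^+|z|\,\mathrm d\mu .
\]
Try
\[
\mu_R=(1-\epsilon_R)\,\mu|_{D_R}^{\mathrm{norm}}+\epsilon_R\,\nu_R ,
\]
where $\nu_R$ is the equilibrium measure of a circle of radius $r_R$. For every $Q$, the integral $\int\log|Q|\,\mathrm d\nu_R$ equals $\log|\mathrm{lead}(Q)|+\deg Q\log r_R$ plus terms that are nonnegative and of order $\deg Q$. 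The tail of $\mu$ outside $D_R$ is replaced by mass on a large circle. On that circle $\log|Q|$ is at least the average contribution of the removed tail, up to a controlled error.

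\textbf{Step 2 (main obstacle).} Inequality~(2) must hold uniformly in $Q$ for $\mu_R$. The difficulty is that $Q$ ranges over all degrees, so a naive error of size $o(1)\cdot\deg Q$ is not enough.

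The plan is to compare $\int_{|z|>R}\log|Q|\,\mathrm d\mu$ with its circle-average replacement. Factor $Q=c\prod(z-\beta_i)$ and use the pointwise inequality
\[
\log|z-\beta|\leq\log^+|z|+\log^+|\beta|+\log 2 .
\]
On the circle average, $\log|z-\beta|$ integrates to exactly $\max(\log r_R,\log|\beta|)$. Choose the radius $r_R$ so that $\epsilon_R\log r_R$ matches $\int_{|z|>R}\log^+|z|\,\mathrm d\mu$ plus a margin $\epsilon_R\log 2$. Then each root factor contributes at least as much after the replacement as before, and the inequality is preserved factor by factor, with no dependence on $\deg Q$.

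If the margin causes trouble, a fallback is to keep only a slightly worse inequality, $\geq-\eta$, and absorb the deficit by mixing in a tiny amount of the equilibrium measure of a set of capacity greater than $1$. This makes the inequality strict. The mixing costs only $O(\eta)$ in log-weak distance.

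\textbf{Step 3 (diagonal argument).} Apply Theorem~\ref{thm:archimedean-main} to each $\mu_R$ with a compact neighborhood $E_R$ of its support. This gives distinct algebraic integers whose orbits lie in $E_R$ and whose orbit measures converge weakly to $\mu_R$. Because the orbits lie in the fixed compact set $E_R$, on which $\log^+|z|$ is bounded and continuous, this weak convergence is automatically log-weak. Choose $R_k\to\infty$ and, for each $k$, an integer $\alpha_k$ distinct from the earlier ones whose orbit measure is within $1/k$ of $\mu_{R_k}$ in a metric for log-weak convergence, for instance a weighted bounded-Lipschitz metric with weight $1+\log^+|z|$. Combined with Step~1, this gives $\delta_{\alpha_k,\infty}\to\mu$ log-weakly.
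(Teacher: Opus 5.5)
\textbf{The gap is in Step~2.} The factor-by-factor comparison is false whenever $\operatorname{supp}\mu$ is unbounded, and that is the only case where truncation is needed: for compactly supported $\mu$, Theorem~\ref{thm:archimedean-main} applies directly.

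Put $\tau=\mu|_{\{|z|>R\}}$ and $\epsilon_R=\tau(\mathbb C)$. Average over $|\beta|=r_R$ and use Jensen's formula:
\[
\frac1{2\pi}\int_0^{2\pi}\int\log|z-r_Re^{i\theta}|\,\mathrm d\tau(z)\,\mathrm d\theta=\int\max(\log|z|,\log r_R)\,\mathrm d\tau(z)>\epsilon_R\log r_R .
\]
The strict inequality holds as soon as $\tau(\{|z|>r_R\})>0$, which is true for every $r_R$. Hence some root $\beta$ with $|\beta|=r_R$ satisfies $\int\log|z-\beta|\,\mathrm d\tau>\epsilon_R\int\log|z-\beta|\,\mathrm d\nu_R$, whatever margin is built into $r_R$.

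Your algebra hides this. The bound $\log|z-\beta|\le\log^+|z|+\log^+|\beta|+\log2$ brings in $\log^+|\beta|$ additively, whereas the circle average only returns $\max(\log r_R,\log|\beta|)$. Your remark that an error of size $o(1)\cdot\deg Q$ is ``not enough'' is also backwards. An error proportional to $\deg Q$ is exactly what your fallback absorbs, and it is the error one can actually prove.

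\textbf{The repair is your fallback, made quantitative.} For $|z|>R\geq1$ one has $\log|z-\beta|\le\log2+\log|z|+(\log|\beta|-\log R)^+$. For $r_R\geq R$ one has $\max(\log r_R,\log|\beta|)\geq\log R+(\log|\beta|-\log R)^+$. Write $Q=c\prod_i(z-\beta_i)$; the $\log|c|$ terms cancel, and each root costs at most $\eta_R$:
\[
\int\log|Q|\,\mathrm d\mu_R\ \geq\ \int\log|Q|\,\mathrm d\mu-\eta_R\deg Q\ \geq\ -\eta_R\deg Q,\qquad \eta_R:=\int_{|z|>R}\log\frac{|z|}{R}\,\mathrm d\mu+\epsilon_R\log2 .
\]
Here $\eta_R\to0$ because $\int\log^+|z|\,\mathrm d\mu<\infty$.

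Let $\lambda$ be normalized arc length on $|z|=2$. Since the leading coefficient of $Q$ is a non-zero integer, $\int\log|Q|\,\mathrm d\lambda\geq\deg Q\log2$. So $(1-t_R)\mu_R+t_R\lambda$ with $t_R=\eta_R/\log2$ satisfies (2) exactly. It is also compactly supported, conjugation-invariant, and still converges to $\mu$ log-weakly. With this, Steps~1 and~3 go through; the diagonal argument in the weighted metric is fine, and so is (1)$\Rightarrow$(2).

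\textbf{Comparison with the paper.} The paper does not prove this statement separately. It recovers it from Theorem~\ref{thm:int-existential-closedness} by making the archimedean place loose ($E_\infty=\mathbb P^1(\mathbb C)$, $E_p$ the closed unit disc) and viewing $\mu$, with Gauss points at the finite places, as a normalized GVF functional; condition (2) is exactly what makes this possible. GVF convergence then gives weak convergence together with $\int\log^+|z|\,\mathrm d\delta_{\alpha_n,\infty}\to\int\log^+|z|\,\mathrm d\mu$, hence log-weak convergence. The truncation happens inside Lemma~\ref{lem:loose-place-truncation}: a compact exhaustion, Proposition~\ref{prop:mea-com-exh} (potential defect $\geq-\varepsilon_n$), and a correction by equilibrium functionals with weights $p^{(n)}\to0$. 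This is the same ``small deficit, then mix in an equilibrium measure'' mechanism as your repaired Step~2. Your route is more elementary on $\mathbb A^1$ and needs only Theorem~\ref{thm:archimedean-main} plus a diagonal argument. The paper's route works on arbitrary curves and at several places, and yields full GVF convergence.
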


We now return to Question~\ref{ques:alg-int-equ-hei}.
We have
\[
  \liminf_{n \to \infty} h(\alpha_n)=\liminf_{n \to \infty} \int\log^+\left|z\right|\,\mathrm{d}\delta_{\alpha_n,\infty}\geq \int \log^+\left|z\right|\,\mathrm{d}\mu.
\]
Therefore, if $\int \log|Q| \,\mathrm{d}\mu \geq 0$ holds for all $0\neq Q \in \mathbb{Z}[x]$, the smallest height limit in Question~\ref{ques:alg-int-equ-hei} is $\int \log^+\left|z\right|\,\mathrm{d}\mu$.

Unfortunately, if the inequality does not hold, it is then not possible to characterize the sequence of points using log-weak convergence.
Moreover, on curves of positive genus, it turns out that the arithmetic information appears not only in the measure, but also in the geometric divisor class of the points.

These two problems dissolve naturally under the bigger framework of \emph{globally valued fields (GVFs)}. It is introduced by Ben Yaacov and Hrushovski to study model theory of global fields and has a deep connection with Arakelov geometry \cite{szachniewicz2023existentialclosednessoverlinemathbbqglobally,yaacov2024globallyvaluedfieldsfoundations}.
Let
\[
  \operatorname{\widehat{Div}}(K(C)/\mathbb{Z}) \coloneqq \varinjlim_{V \subseteq C} \operatorname{\widehat{Div}}(V/\mathbb{Z})
\]
be the space of adelic divisors on the essentially quasi-projective variety $\operatorname{Spec}(K(C))$ over $\mathbb{Z}$ as in \cite{YZ21}. This space is an $\mathbb R$-vector space.
A \emph{GVF functional} $\ell$ on $K(C)$ is a linear functional
\[
  \ell: \operatorname{\widehat{Div}}(K(C)/\mathbb{Z}) \longrightarrow \mathbb{R}
\]
that takes nonnegative values on effective adelic divisors and the value zero on principal adelic divisors.
A GVF functional $\ell$ is said to be \emph{normalized} if $\ell\circ\pi^*=\operatorname{\widehat{deg}}$ on $\operatorname{\widehat{Div}}(\mathbb{Z})$, where $\pi: \operatorname{Spec}(K(C)) \longrightarrow \operatorname{Spec}(\mathbb{Z})$ is the structure morphism.

Szachniewicz \cite[Theorem A]{szachniewicz2023existentialclosednessoverlinemathbbqglobally} showed that any normalized GVF functional can always be approximated by a generic sequence of algebraic points, in arbitrary dimension. Here we present Szachniewicz's theorem on curves.
\begin{theorem}[Szachniewicz] \label{thm:existential-closedness}
  Let $\ell$ be a normalized GVF functional on $K(C)$.
  Then there exists a sequence of distinct algebraic points $(\alpha_n)$ in $C(\overline{K})$ such that $\ell(\overline D)=\lim_{n\to\infty}h_{\overline D}(\alpha_n)$ for each $\overline D\in \operatorname{\widehat{Div}}(K(C)/\mathbb{Z})$.
\end{theorem}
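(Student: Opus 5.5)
We would deduce the curve case of Szachniewicz's theorem from the adelic distribution results (Theorems \ref{thm:adelic-sos} and \ref{thm:prescribed-places}), using their version that also controls the divisor class. The plan is to decompose the functional $\ell$ into local data: a geometric part and, at each place $v$, a local measure. We then realize these data by points whose Galois orbits have the prescribed limit distributions and divisor classes. Finally, we check that heights converge, which requires upgrading weak convergence to convergence of integrals against Green functions with logarithmic singularities.

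\textbf{Step 1: decomposition of $\ell$.} Every adelic divisor on $K(C)$ is a limit of model divisors $\overline D=(D,\{g_v\})$. Restrict $\ell$ to the geometric part $\operatorname{Div}(C)_{\mathbb R}$, via $\overline D\mapsto$ the contribution of $D$. Positivity and vanishing on principal divisors give a degree-one class $\theta\in \operatorname{Pic}(C)_{\mathbb R}$, or a limit point in a compactification. For fixed $D$, the map $g_v\mapsto \ell(0,g_v)$ is a positive linear functional on continuous functions on $C_v^{\mathrm{an}}$, extended to Green functions with poles. Normalization $\ell\circ\pi^*=\widehat{\deg}$ together with Riesz representation then yields probability measures $\mu_v$ on $C_v^{\mathrm{an}}$, plus possibly a singular mass ``at $X$''. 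We expect
\[
  \ell(\overline D)=\sum_v\int g_v\,\mathrm d\mu_v+(\text{term depending on } D,\theta),
\]
with equality for all $\overline D$ only after including mass concentrated near poles. Positivity on effective divisors gives the logarithmic inequality
\[
  \sum_v\int\log|Q|_v\,\mathrm d\mu_v\ge 0 .
\]

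\textbf{Step 2: approximation.} The $\mu_v$ need not be compactly supported, and only finitely many are nontrivial relative to the model. We therefore truncate: for each $N$, choose an admissible adelic set $\mathbb E_N$ of capacity $>1$ (large archimedean discs and model reductions), and push $\mu_v$ into $E_{N,v}$ by a modification $\mu_v^{(N)}$ that preserves the logarithmic inequalities up to $\epsilon_N$. Restoring exact inequality by mixing in a small multiple of the equilibrium measure of $\mathbb E_N$ should give strict positivity. The divisor-class version (Theorem \ref{thm:adelic-sos-fixed-class-main-text}) then gives points $\alpha$ totally in $\mathbb E_N$ with $\delta_{\alpha,v}$ close to $\mu_v^{(N)}$ and class close to $\theta$. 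A diagonal argument over a countable dense set of $\overline D$ produces the sequence $\alpha_n$. Since $h_{\overline D}(\alpha)=\sum_v\int g_v\,\mathrm d\delta_{\alpha,v}$ plus a class term, and the supports are compact, continuity gives $h_{\overline D}(\alpha_n)\to\ell(\overline D)$.

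\textbf{Main obstacle.} The hard part is the truncation in Step 2. We must match $\ell$ exactly on divisors supported at the boundary, where the mass of $\ell$ may sit ``at infinity'' in $X$ rather than in the measures. We also need uniform control over infinitely many places. We would first try to absorb this boundary contribution into the class $\theta$ and into places outside a finite set $S$, using Theorem \ref{thm:prescribed-places} with sup-norms there. In the non-integral setting, points are not required to be integral, so there is extra freedom: one may take $X$ large and let the adelic sets grow with $N$.
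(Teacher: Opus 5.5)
The paper does not prove this statement. It quotes Szachniewicz's Theorem~A, which is proved by different methods in arbitrary dimension, and only remarks that convergence on adelic divisors of $C$ extends to all of $\widehat{\operatorname{Div}}(K(C)/\mathbb Z)$ by the sandwich argument in the proof of Lemma~\ref{lem:gvf-point-convergence}. Deducing the curve case from the integral equidistribution theorems is therefore a different route, and it can be made to work. Your proposal, however, has a gap exactly at the obstacle you flag, and the remedies you suggest cannot close it.

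\textbf{The claim in Step~1 is false.} For $0\neq Q\in H^0(U,\mathcal O_U)$ the product formula gives
\[
  \sum_{v}\int\log|Q|_v\,\mathrm d\mu_v=m(\operatorname{div}Q)=m\bigl((Q)_0\bigr)-m\bigl((Q)_\infty\bigr).
\]
Positivity only bounds this below by $-m((Q)_\infty)$. That quantity is supported on $X$ and is not small, so the defect you must repair in Step~2 has size $m(X)$, not $\epsilon_N$.

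\textbf{The integral theorems never apply to $\ell$ itself.} Every GVF limit of points totally in an admissible adelic set on $C\setminus X$ has $m(x)=0$ for all $x\in X$; this is the computation of $h_{\overline D_0}(\alpha_n)$ in Lemma~\ref{lem:gvf-point-convergence}. By contrast, Lemma~\ref{lem:gvf-on-curves} shows that $m(x_0)$ can be increased freely without changing $D_\ell$ or the $\mu_v$. Indeed $(m+t\deg,\{\mu_v\})$ is again a normalized GVF functional for every $t>0$, and its $m$ charges every closed point. So for no choice of $X$ can Theorems~\ref{thm:adelic-sos} or \ref{thm:prescribed-places} be applied to $\ell$ directly; you must approximate $\ell$ by functionals whose $m$ vanishes on $X$.
\begin{itemize}
  \item Absorbing the boundary mass into $\theta$ does not do this, because $\theta$ only records $\ell$ on flat divisors.
  \item Using sup-norms outside $S$ in Theorem~\ref{thm:prescribed-places} does not do it either, because the points are still totally in an admissible adelic set there.
\end{itemize}

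\textbf{What is missing.} You need a mechanism that produces $m(x)>0$ in the limit, for instance a vanishing proportion of orbit mass escaping towards $x_v$ at some place.
\begin{itemize}
  \item \emph{Correcting $m$ on $X$.} Your idea of mixing in equilibrium data is the right one, but only under two conditions. First, the sets must exhaust $C_v^{\mathrm{an}}\setminus X_v$ at some place so that $\operatorname{cap}_X(\mathbb E_n)\to\infty$. Your large discs are of this kind, but capacity merely $>1$ leaves a mixing weight bounded away from $0$. Second, the mixing must be done on functionals, with weights tuned to $m$: replace $\ell$ by $\bigl(\ell^{(n)}+\sum_x p^{(n)}_x\ell_{x,n}\bigr)/\bigl(1+\sum_x p^{(n)}_x\bigr)$, where the $\ell_{x,n}$ are the equilibrium functionals and $p^{(n)}=-\Gamma_X(\mathbb E_n)^{-1}m_X$. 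This kills $m$ on $X$, and $p^{(n)}\to 0$ by Lemma~\ref{lem:can-cap-sol}. This is exactly Lemma~\ref{lem:loose-place-truncation}.
  \item \emph{Infinitely many nontrivial places.} Your assertion that only finitely many $\mu_v$ are nontrivial is false: $L^1$-integrability only makes the deviations from the Gauss point summable. Balayage onto the integral locus outside a finite set $S_N$ (Lemma~\ref{lem:balayage-gvf}) handles this. By Green reciprocity it changes $\ell(\overline A)$ by $\sum_{v\notin S_N}\int u_v\,\mathrm dc_1(\overline A)_v$, which vanishes for each fixed model $\overline A$ once $S_N$ is large.
  \item \emph{The final convergence step.} ``Compact supports plus continuity'' fails along a diagonal sequence, because the sets approach $X_v$, where the $g_v$ have logarithmic poles. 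Convergence must instead go through three steps: $\ell_N\to\ell$ in the GVF topology, a diagonal choice over a countable family of test divisors, and the sandwich argument of Lemma~\ref{lem:gvf-point-convergence}.
\end{itemize}
With these ingredients, the balayage step combined with Theorem~\ref{thm:int-existential-closedness-main-text} does give the statement on curves.
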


We say that the sequence $(\alpha_n)$ converges to $\ell$ in the GVF topology if the above convergence holds for all $\overline D$.
The original version of Szachniewicz's theorem tests only adelic line bundles on $C$, while the convergence extends to all adelic line bundles on $\operatorname{Spec}(K(C))$ naturally; see the proof of Lemma~\ref{lem:gvf-point-convergence}.
Szachniewicz's theorem solves Question \ref{ques:alg-int-equ-hei} for algebraic numbers.
We will prove an integral version of Szachniewicz's theorem on curves, which solves Question \ref{ques:alg-int-equ-hei} for algebraic integers.

Let $\ell$ be a normalized GVF functional.
For a place $v$ and a continuous function $\varphi_v$ on $C_v^\mathrm{an}$, there is an adelic divisor $(0, \{g_w\})$, where $g_w=0$ for $w\neq v$ and $g_v=\varphi_v$.
By testing such adelic divisors and applying the Riesz representation theorem, we obtain a probability measure $\mu_v$, and GVF convergence implies local equidistribution with respect to $\mu_v$.
For an adelic set $\mathbb E=\{E_v\}$ on $C$, we say that $\ell$ is \emph{integral} with respect to $\mathbb{E}$ if $\mu_v$ is supported on $E_v$ for each $v \in M_K$.
We also have a linear functional $m$ on $\operatorname{Div}(C)_{\mathbb R}$ given by
\[
  m(D) = \ell (\overline{D}) - \sum_v \int g_{D,v} \,\mathrm{d}\mu_v,
\]
where $\overline{D}=(D, \{g_{D, v}\})$ is any adelic lift of $D$.
This functional takes nonnegative values on effective $\mathbb R$-divisors.
Below is a GVF interpretation of our main theorem.

\begin{thmA}[Theorem \ref{thm:int-existential-closedness-t-0-main-text}] \label{thm:int-existential-closedness-t-0}
  Let $K$ be a number field and $C/K$ a smooth, projective and geometrically integral curve.
  Let $X$ be a non-empty finite set of closed points of $C$, and put $U=C\setminus X$.
  Let $\mathbb{E}$ be an admissible adelic set of $U$ with $\operatorname{cap}_X(\mathbb{E})>1$.
  Let $\ell$ be a normalized GVF functional on $K(C)$ that is integral with respect to $\mathbb{E}$.
  Assume that $m(x)=0$ holds for every $x \in X$.
  Then there exists a sequence of distinct algebraic points $(\alpha_n)$ in $U(\overline{K})$, totally in $\mathbb{E}$, such that $\alpha_n \longrightarrow \ell$ in the GVF topology.
\end{thmA}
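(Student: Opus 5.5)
The plan is to reduce Theorem \ref{thm:int-existential-closedness-t-0} to Theorem \ref{thm:adelic-sos-fixed-class-main-text}, the refined version of Theorem \ref{thm:prescribed-places} with $S=M_K$ in which the limit divisor class of $\alpha_n'/\deg(\alpha_n')$ is also prescribed. The GVF functional $\ell$ splits into two pieces. The first is the family of local measures $\{\mu_v\}$, obtained by testing adelic divisors $(0,\{g_w\})$ supported at a single place. The second is the geometric functional $m$ on $\operatorname{Div}(C)_{\mathbb R}$. Since $m$ is nonnegative on effective divisors and is linear, and principal divisors have $\ell=0$, the functional $m$ is determined by a degree-one positive $\mathbb R$-divisor class datum. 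Concretely, $m$ factors through $\operatorname{Pic}(C)_{\mathbb R}$ modulo the part detected by the measures, and it should be represented by the limit of $\alpha'_n/\deg(\alpha'_n)$ in $\operatorname{Pic}^1(C)_{\mathbb R}$, i.e.\ by a point of the Jacobian tensor $\mathbb R$. The hypothesis $m(x)=0$ for $x\in X$ says that this class ``does not charge'' the points at infinity. This is exactly what is needed for approximating points lying in $U$ with integrality constraints, because integral points have $\mathrm{ord}$-type height contributions at $X$ that vanish in the limit.

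First I will verify the logarithmic condition of Theorem \ref{thm:adelic-sos} for $\{\mu_v\}$. Take $0\neq Q\in H^0(U,\mathcal O_U)$. Its divisor is $\operatorname{div}(Q)=D_0-D_X$, where $D_0$ is effective and $D_X$ is supported on $X$. Lift it to the principal adelic divisor $\widehat{\operatorname{div}}(Q)$ with Green functions $-\log|Q|_v$. Then
\[
0=\ell(\widehat{\operatorname{div}}(Q))=m(D_0)-m(D_X)-\sum_v\int\log|Q|_v\,\mathrm d\mu_v .
\]
Since $m(D_X)=0$ by the hypothesis $m(x)=0$, and $m(D_0)\ge0$ by positivity, we get $\sum_v\int\log|Q|_v\,\mathrm d\mu_v=m(D_0)\ge 0$. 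Integrality of $\ell$ gives $\operatorname{supp}\mu_v\subset E_v$. Together with $\operatorname{cap}_X(\mathbb E)>1$, this puts us in the setting of the fixed-class theorem. I then apply that theorem with the prescribed class induced by $m$. The result is a sequence of distinct $\alpha_n$ totally in $\mathbb E$ with $\delta_{\alpha_n,v}\to\mu_v$ weakly for all $v$, and with divisor classes $\alpha'_n/\deg(\alpha'_n)\to$ the class of $m$.

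It remains to upgrade this to GVF convergence. Any $\overline D\in\widehat{\operatorname{Div}}(K(C)/\mathbb Z)$ is, by Lemma~\ref{lem:gvf-point-convergence} and the definition of the direct limit, approximated in the boundary topology by model adelic divisors on some open $V\subseteq C$. I first reduce to $\overline D=(D,\{g_{D,v}\})$ on $C$ with continuous Green functions and finitely many nontrivial places. For these we have
\[
h_{\overline D}(\alpha_n)=\frac{1}{\deg\alpha_n'}\bigl(D\cdot\alpha'_n\bigr)_{\text{class part}}+\sum_v\int g_{D,v}\,\mathrm d\delta_{\alpha_n,v}.
\]
Here the first term should converge to $m(D)$ by class convergence, after a careful normalization with a fixed adelic metric per class. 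The second term converges by weak convergence, because all orbits lie in the compact sets $E_v$ and $g_{D,v}$ is continuous on $E_v$, which avoids $X$. The sum then tends to $\ell(\overline D)$. The contribution from places where $\mathcal U$ has good integral reduction is zero uniformly, by integrality.

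The main obstacle I expect is extending convergence to divisors with poles or Green-function singularities, and to adelic divisors on $\operatorname{Spec}K(C)$ that are only limits. On the compact sets $E_v\subset U_v^{\mathrm{an}}$, Green functions of divisors supported on $X$ are continuous, so the singularities at $X$ cause no trouble. Points of $U$ inside $\operatorname{supp}D$ are a different matter, since there $g_{D,v}$ is singular. I would first handle these by moving $D$ within its linear class by a principal divisor, which is invariant for both $\ell$ and the heights. If that fails to cover everything, I would use a two-sided sandwich by effective adelic divisors with continuous Green functions, using positivity of $\ell$ and of heights of points outside the support. To make this uniform I would perturb the sequence slightly to avoid $\operatorname{supp}D$, relying on the freedom in the construction. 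Passing to general elements of $\widehat{\operatorname{Div}}(K(C)/\mathbb Z)$ then goes by the boundary-topology approximation, together with the fact that both $\ell$ and $h(\alpha_n)$ are Lipschitz in that topology.
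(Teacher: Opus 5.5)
Your overall route is the paper's: with $S=M_K$, the fixed-class theorem is just Theorem~\ref{thm:integral-point-construction} with $D=D_\ell$, followed by Lemma~\ref{lem:gvf-point-convergence}. However, the hypothesis you check is the wrong one.

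\textbf{Wrong hypothesis.} The inequality $\sum_v\int\log|Q|_v\,\mathrm d\mu_v\geq 0$ for $Q\in H^0(U,\mathcal O_U)$ is condition (2) of Theorem~\ref{thm:adelic-sos}. Via Proposition~\ref{prop:choose-divisor-class} (a Hahn--Banach argument), it only produces \emph{some} class $D$ satisfying the flat-divisor inequality. If $D\neq D_\ell$, the resulting points converge by Lemma~\ref{lem:gvf-point-convergence} to the GVF functional with data $(D,\{\mu_v\})$, not to $\ell$.

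Condition (2) of Theorem~\ref{thm:adelic-sos-fixed-class-main-text} with $S=M_K$ is stronger: it is the flat-divisor inequality for the specific class $D_\ell$ of Lemma~\ref{lem:gvf-d-l}, defined by $\ell(\overline E)=D_\ell\cdot\overline E$ on flat $\overline E$. This is not a class ``induced by $m$''; indeed $m$ does not factor through $\operatorname{Pic}(C)_{\mathbb R}$, since $m(\operatorname{div} f)=\sum_v\int\log|f|_v\,\mathrm d\mu_v$.

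The repair is your own computation run on flat rather than principal divisors. For flat $\overline F$ with $F|_U$ effective,
\[
  \sum_{v\in M_K}\int g_{F,v}\,\mathrm d\mu_v=\ell(\overline F)-m(F)=D_\ell\cdot\overline F-m(F)\leq D_\ell\cdot\overline F,
\]
because $m\geq 0$ on the part of $F$ supported in $U$ and $m$ vanishes on $X$. This is exactly what the positivity condition of Lemma~\ref{lem:gvf-on-curves} with $x_0\in X$ encodes.

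\textbf{The GVF upgrade.} This step does not work as sketched. There is no separate ``class part'': for $\alpha_n\notin\operatorname{supp}D$ one has exactly $h_{\overline D}(\alpha_n)=\sum_v\int g_{D,v}\,\mathrm d\delta_{\alpha_n,v}$. If $\operatorname{supp}D$ meets $U$, then $g_{D,v}$ is unbounded on $E_v$ at every place where the closure of $D$ meets $\mathcal U_{k_v}$, which is almost all places. Consequently:
\begin{itemize}
\item placewise weak convergence controls nothing;
\item integrality does not make the good places contribute zero;
\item moving $D$ in its linear class does not help, since its support cannot in general be pushed into $X$;
\item perturbing the sequence does not help either.
\end{itemize}

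The missing idea is to anchor at a point $x_0\in X$. Write a model adelic divisor as the sum of three pieces:
\begin{itemize}
\item a multiple of a model divisor $\overline D_0$ with underlying divisor $x_0$; by admissibility its Green functions are bounded and continuous on $E_v$ and vanish there outside finitely many places;
\item a flat divisor $\overline F$, whose height $h_{\overline F}(\alpha_n)=\overline F\cdot\alpha_n'/\deg(\alpha_n')$ is controlled globally, singularities included, by convergence of classes in the finite-dimensional space $\operatorname{Pic}^1(C)_{\mathbb R}$;
\item continuous vertical terms at finitely many places, handled by weak convergence.
\end{itemize}
Then pass to arbitrary adelic divisors by sandwiching between model divisors and using positivity. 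Since $m_{x_0}=0$, this is precisely Lemma~\ref{lem:gvf-point-convergence}, which you should simply invoke.
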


In fact, under the restriction that the sequence of points is totally in an admissible adelic set, GVF convergence of the sequence is the same as local equidistribution, together with certain asymptotic behavior of the generic divisor class.

\subsubsection{Loose admissible adelic sets}
If one wants the sequence of algebraic points to be totally in an admissible adelic set of $U$, then the assumption that $m(x)=0$ for all $x \in X$ is restrictive but necessary.
For weak convergence, this requirement ensures the arithmetic information of measures is carried to the sequence of points.
However, for GVF convergence, the arithmetic information is intrinsically carried, and we are therefore able to loosen the restrictions at some places.

Let $\widetilde{\mathbb{E}}=\{E_v\}_{v\in M_K}$ be an adelic set of $C$.
We say that $\widetilde{\mathbb{E}}$ is a \emph{loose admissible} adelic set of $U$ if
\begin{itemize}
  \item for every $x \in X$, there exists a place $v \in M_K$ such that $x_v \cap E_v \neq \varnothing$,
  \item for each place $v$, $E_v$ is locally admissible in $C_v^{\mathrm{an}}$,
  \item there exists an admissible adelic set $\mathbb E=\{E'_v\}_{v\in M_K}$ such that $E_v=E'_v$ for all but finitely many places $v$.
\end{itemize}
The finite set $S=\{v\in M_K|X_v\cap E_v \neq \varnothing\}$ is called the set of \emph{loose places}.

\begin{thmA}[Theorem \ref{thm:int-existential-closedness-main-text}] \label{thm:int-existential-closedness}
  Let $K$ be a number field and $C/K$ a smooth, projective, geometrically integral curve. Let $X$ be a non-empty finite set of closed points of $C$, and put $U=C\setminus X$.
  Assume that $\widetilde{\mathbb{E}}$ is a loose admissible adelic set of $U$.
  Let $\ell$ be a normalized GVF functional on $K(C)$ that is integral with respect to $\widetilde{\mathbb{E}}$. Then there exists a sequence of distinct algebraic points $(\alpha_n)$ in $U(\overline{K})$, integral with respect to $\widetilde{\mathbb{E}}$, such that $\alpha_n \longrightarrow \ell$ in the GVF topology.
\end{thmA}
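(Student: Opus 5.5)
The plan is to reduce the statement to Theorem \ref{thm:int-existential-closedness-t-0}. We do this by shrinking the loose places and moving the divisor-class mass $m(x)$ sitting at points of $X$ into measures near $X$ at the loose places.

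First we record what $\ell$ encodes: the local measures $\mu_v$, supported on $E_v$, and the functional $m$ on $\operatorname{Div}(C)_{\mathbb R}$. At a loose place $v\in S$, the set $E_v$ meets $X_v$, and $m(x)$ may be positive. Heuristically, $m(x)$ measures the proportion of mass escaping towards $x$ at a rate invisible to weak limits but visible to heights. Integrality with respect to $\widetilde{\mathbb E}$ forces $m(x)=0$ whenever $x$ meets no $E_v$; the first bullet of looseness guarantees that some loose place exists for each $x$.

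The construction is an approximation. Fix $\varepsilon>0$ and a finite test family of adelic divisors. We build a normalized GVF functional $\ell_\varepsilon$ with three properties: it is close to $\ell$ on the test family, it satisfies $m_\varepsilon(x)=0$ for all $x\in X$, and it is integral with respect to a genuinely admissible adelic set $\mathbb E_\varepsilon\subset\widetilde{\mathbb E}$ with $\operatorname{cap}_X(\mathbb E_\varepsilon)>1$.

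1. For each $x\in X$, choose a loose place $v$ with $x_v\cap E_v\neq\varnothing$. Replace $\mu_v$ by $\mu_v^\varepsilon=(1-t)\mu_v+t\nu_{x,\varepsilon}$. Here $\nu_{x,\varepsilon}$ is a probability measure on $E_v$ concentrated on a small sphere or annulus around $x_v$ at distance $\approx r$. The parameters are tuned so that $t\cdot(-\log r)\approx m(x)$ times the local Green function normalization, with $t\to0$. The added mass then carries the height contribution $m(x)$ at finite distance, while the weak change is $O(t)$.
2. Replace $E_v$ at loose places by the complement of tiny neighbourhoods of $X_v$ that still contain the supports of the $\mu_v^\varepsilon$. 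Truncating $\mu_v$ itself near $X_v$ is a further $O(\varepsilon)$ weak perturbation; its effect on integrals of Green functions is controlled by redistributing that tail into $\nu_{x,\varepsilon}$ as well. The outcome is an admissible adelic set $\mathbb E_\varepsilon$ agreeing with $\widetilde{\mathbb E}$ away from $S$.
3. Define $\ell_\varepsilon$ using the measures $\mu_v^\varepsilon$ and the modified $m_\varepsilon$, which vanishes on $X$ and equals $m$ on the other divisors. Check that $\ell_\varepsilon$ is a GVF functional: it is nonnegative on effective divisors and vanishes on principal ones. The principal-divisor condition reduces, through the product formula, to Green function bookkeeping showing that the mass moved from $m(x)$ into $\int\log|Q|_v\,d\nu$ compensates exactly to first order in $t$.

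We then apply Theorem \ref{thm:int-existential-closedness-t-0} to $\ell_\varepsilon$ and $\mathbb E_\varepsilon$, and take a diagonal sequence as $\varepsilon\to0$ over an exhausting countable family of test divisors. This gives distinct points, integral for $\widetilde{\mathbb E}$, that converge to $\ell$ in the GVF topology.

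The capacity condition $\operatorname{cap}_X(\mathbb E_\varepsilon)>1$ should follow from the presence of a loose place. Enlarging $E_v$ to meet $x_v$ makes the Green function relative to $x$ blow up, so small neighbourhoods of $X_v$ removed still leave capacity large. If needed, we enlarge $E_v$ within $\widetilde{\mathbb E}$.

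I expect the main obstacle to be step 3. The new functional must remain exactly nonnegative on effective adelic divisors and vanish on principal ones while being only approximately equal to $\ell$. My first attempt would be to define $\ell_\varepsilon$ as the height functional of an explicit limit of Galois-invariant configurations. Failing that, I would use Szachniewicz's theorem (Theorem \ref{thm:existential-closedness}) to get points approximating $\ell$, and then apply a Fekete--Szeg\H{o}-type patching at the loose places that pushes the conjugates lying near $X_v$ inward onto the sphere around $x_v$, keeping heights within $\varepsilon$.
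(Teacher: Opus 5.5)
Your architecture matches the paper's. The paper proves Lemma \ref{lem:loose-place-truncation}, which produces admissible $\mathbb E_n\subseteq\widetilde{\mathbb E}$ with $\operatorname{cap}_X(\mathbb E_n)\to\infty$ and normalized GVF functionals $\ell_n\to\ell$, integral for $\mathbb E_n$ with $m_x(\ell_n)=0$ on $X$; it then applies Theorem \ref{thm:int-existential-closedness-t-0-main-text} and diagonalizes.

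The gap is step 3, which you flag yourself. As formulated it cannot work, because the GVF axioms are exact and rigid. Suppose you replace $\mu_v$ by $\mu_v+t(\nu_{x,\varepsilon}-\mu_v)$. The product formula then forces $m_\varepsilon(\operatorname{div}f)-m(\operatorname{div}f)=t\int\log|f|_v\,\mathrm d(\nu_{x,\varepsilon}-\mu_v)$ for every $f\in K(C)^\times$. For $f$ with $\operatorname{div}(f)$ disjoint from $X$ this is generally nonzero, so $m_\varepsilon$ cannot equal $m$ off $X$. Indeed, by Lemma \ref{lem:gvf-on-curves}, once the measures are fixed, $m$ is determined up to $D_\ell$ and one scalar. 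Compensation ``to first order in $t$'' gives neither the product formula nor exact nonnegativity of $m_\varepsilon$ at points where $m$ vanishes.

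The corrections for different points of $X$ are also coupled. Mass placed near $x_v$ contributes the off-diagonal Green entries $c_{x,y}\geq0$ to $m(y)$ for $y\in X\setminus\{x\}$. So the weights must be solved for simultaneously, and they must be nonnegative and small. Your step 2 (cutting $\mu_v$ near $X_v$ and ``redistributing the tail'') has the same defect. Incidentally, integrality alone does not force $m(x)=0$; that is exactly why Theorem \ref{thm:int-existential-closedness-t-0} assumes it.

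The missing idea is to make every modification exact: realize each correction as an intersection functional, so the product formula is automatic, and solve for all of $X$ at once with the Green matrix.

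\emph{Truncation.} Move $\mu_v$ onto the shrunken set $H_{v,n}$ and set $m^{(n)}=m+\sum_{v\in S}u_{v,n}+r\varepsilon_n\deg$. Here $r$ is the number of archimedean loose places. At those places $u_{v,n}\geq-\varepsilon_n$ is the potential difference from Proposition \ref{prop:mea-com-exh}; at non-archimedean loose places $u_{v,n}\geq0$ is the balayage defect. Green reciprocity then gives the product formula, and nonnegativity holds on the nose. Call the resulting functional $\ell^{(n)}$.

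\emph{Correction.} For $x\in X$, let $\overline D_x^{\mathbb E_n}=(x/\deg(x),(g_x^{E_{n,v}})_v)$ be the equilibrium adelic divisor of the shrunken sets, and put $\ell_{x,n}=\overline D_x^{\mathbb E_n}\cdot(\,\cdot\,)$. It has the following properties:
\begin{itemize}
\item it kills principal divisors and is normalized;
\item its local measures are the equilibrium measures, supported on $E_{n,v}$;
\item $m(y)\geq0$ for $y\notin X$, since $g_x^{E_{n,v}}\geq0$;
\item its $m$-vector on $X$ is the $x$-column of $\Gamma_X(\mathbb E_n)$, whose diagonal tends to $-\infty$.
\end{itemize}
Set $p^{(n)}=-\Gamma_X(\mathbb E_n)^{-1}m_X^{(n)}$ and $\ell_n=\bigl(\ell^{(n)}+\sum_x p_x^{(n)}\ell_{x,n}\bigr)/\bigl(1+\sum_x p_x^{(n)}\bigr)$. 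Then $m_x(\ell_n)=0$ exactly. Lemmas \ref{lem:mat-inv} and \ref{lem:can-cap-sol} give $0\leq p_x^{(n)}\leq a_n/\log\operatorname{cap}_X(\mathbb E_n)\to0$, where $a_n=\max_x m^{(n)}_x$.

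This is the exact form of your heuristic $t(-\log r)\approx m(x)$: $p_x^{(n)}$ plays the role of $t$, and $-c_{x,x}^{(n)}$ plays the role of $-\log r$. Your fallback (Szachniewicz's theorem plus point-level patching at loose places) would need a height-preserving perturbation theorem for points. That is essentially as hard as the statement itself.
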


We can let loose all archimedean places.
Set $E_v=C_v^{\mathrm{an}}$ for $v$ archimedean and $E_v=\operatorname{red}_v^{-1}(\mathcal U_{k_v})$ for $v$ non-archimedean.
With $\mathcal U=\mathbb{A}^1$, this choice of $E_v$ recovers Theorem \ref{thm:burgos-menares-qu-sombra}.

For a nef hermitian line bundle $\overline{\mathcal{L}}=(\mathcal{L}, \lVert\cdot\rVert)$ on $\mathcal C$ with $\deg(\mathcal{L}_K)\neq 0$, the intersection numbers
\[
  \ell_{\overline{\mathcal L}}(\overline{D})=\deg(\mathcal{L}_K)^{-1}\overline{\mathcal{L}}\cdot \overline{D}
\]
give a GVF functional.
The condition that $\ell_{\overline{\mathcal L}}$ is integral with respect to $\widetilde{\mathbb{E}}=(E_v)_{v\in M_K}$ is the same as $\deg_G\mathcal L=0$ for each irreducible component $G$ of $\mathcal C\setminus\mathcal U$.
Therefore, we deduce the following corollary.
\begin{corA} \label{cor:int-existential-closedness-archimedean}
  Let $\overline{\mathcal{L}}=(\mathcal{L}, \lVert\cdot\rVert)$ be a nef hermitian line bundle on $\mathcal C$ with $\deg(\mathcal{L}_K)\neq 0$.
  Assume that $\deg_G\mathcal L=0$ for each irreducible component $G$ of $\mathcal C\setminus\mathcal U$.
  Then there exists a sequence of distinct algebraic integral points $(\alpha_n)$ on $\mathcal{U}$ such that for each hermitian line bundle $\overline{\mathcal{M}}$, $\lim_{n\to\infty} h_{\overline{\mathcal{M}}}(\alpha_n)=\deg(\mathcal{L}_K)^{-1}\overline{\mathcal{L}}\cdot \overline{\mathcal{M}}$.
\end{corA}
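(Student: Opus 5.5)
The plan is to deduce Corollary \ref{cor:int-existential-closedness-archimedean} from Theorem \ref{thm:int-existential-closedness}, applied to the loose admissible adelic set
\[
  E_v=C_v^{\mathrm{an}}\ (v\in M_{K,\infty}),\qquad E_v=\operatorname{red}_v^{-1}(\mathcal U_{k_v})\ (v\in M_{K,f}),
\]
and to the functional $\ell_{\overline{\mathcal L}}$.

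\textbf{Step 1: $\widetilde{\mathbb E}=(E_v)$ is loose admissible.} Each $E_v$ is locally admissible in $C_v^{\mathrm{an}}$. At every finite place $E_v$ agrees with the admissible adelic set attached to $\mathcal U$; at the finitely many archimedean places we take any admissible compacta. Every $x\in X$ meets $E_v$ at an archimedean place, so each $x$ has a loose place. (If $\mathcal U$ is not a priori a quasi-projective model with generic fiber $U$, first shrink $X$ so that $U=\mathcal U_K$.)

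\textbf{Step 2: $\ell_{\overline{\mathcal L}}$ is a normalized GVF functional.} Linearity is clear. Vanishing on principal divisors is the product formula, $\overline{\mathcal L}\cdot\widehat{\operatorname{div}}(f)=0$. Nonnegativity on effective adelic divisors uses nefness of $\overline{\mathcal L}$; this is extended from hermitian line bundles to adelic divisors on $\operatorname{Spec}K(C)$ by the continuity of the Yuan--Zhang intersection pairing. Normalization holds because $\overline{\mathcal L}\cdot\pi^*\overline{M}=\deg(\mathcal L_K)\widehat{\deg}(\overline M)$. One also needs $\deg\mathcal L_K>0$, which follows from nefness together with $\deg\neq0$.

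\textbf{Step 3: integrality.} The local measures are $\mu_v=\deg(\mathcal L_K)^{-1}c_1(\overline{\mathcal L})_v$. At archimedean $v$ the condition $\operatorname{supp}\mu_v\subset E_v$ is automatic. At finite $v$, $c_1(\overline{\mathcal L})_v$ is the model measure $\sum_G \deg_G(\mathcal L)\,\delta_{\zeta_G}$, summed over the irreducible components $G$ of $\mathcal C_{k_v}$, where $\zeta_G$ is the divisorial point of $G$. We have $\operatorname{red}_v(\zeta_G)$ equal to the generic point of $G$, so $\zeta_G\in E_v$ exactly when $G\cap\mathcal U\neq\varnothing$. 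The hypothesis $\deg_G\mathcal L=0$ for the components of $\mathcal C\setminus\mathcal U$ therefore kills precisely the masses outside $E_v$.

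\textbf{Step 4: conclude.} Theorem \ref{thm:int-existential-closedness} gives distinct points $\alpha_n\in U(\overline K)$ that are integral with respect to $\widetilde{\mathbb E}$ and converge to $\ell_{\overline{\mathcal L}}$ in the GVF topology. Integrality at every finite place means the $\alpha_n$ are algebraic integral points of $\mathcal U$. Specializing GVF convergence to $\overline D=\widehat{c}_1(\overline{\mathcal M})$ gives $h_{\overline{\mathcal M}}(\alpha_n)\to\deg(\mathcal L_K)^{-1}\overline{\mathcal L}\cdot\overline{\mathcal M}$.

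The main obstacle is Step 3. It requires identifying the finite-place measure of a model metric with the divisorial-point measure weighted by $\deg_G\mathcal L$, and handling a regular but not necessarily reduced special fiber. For the latter, use the multiplicities $m_G$: each $\zeta_G$ carries weight $\deg_G\mathcal L$ with $\sum_G\deg_G\mathcal L=\deg\mathcal L_K$. Nonnegativity of $\ell_{\overline{\mathcal L}}$ on effective adelic divisors in Step 2 also needs care, but this should follow from nefness and limits of model divisors.
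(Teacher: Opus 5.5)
Your proposal is correct and is exactly the paper's argument: the paper likewise applies Theorem~\ref{thm:int-existential-closedness} to $\ell_{\overline{\mathcal L}}=\deg(\mathcal L_K)^{-1}\overline{\mathcal L}\cdot(\,\cdot\,)$, using the loose set with $E_v=C_v^{\mathrm{an}}$ at archimedean places and $E_v=\operatorname{red}_v^{-1}(\mathcal U_{k_v})$ at finite places, and it notes that integrality of $\ell_{\overline{\mathcal L}}$ is equivalent to $\deg_G\mathcal L=0$ for the components $G$ of $\mathcal C\setminus\mathcal U$. One small slip in your last paragraph: with multiplicities, the weight at $\zeta_G$ is $m_G\deg_G\mathcal L$ and $\sum_G m_G\deg_G\mathcal L=\deg\mathcal L_K$; this does not affect your support argument, and together with $\deg\mathcal L_K\neq0$ it also shows that each $\mathcal U_{k_v}$ is non-empty, so $E_v\neq\varnothing$.
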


\subsection{The essential minimum of height functions}
Keep the notation in Section \ref{sec:adelic-case}.
Let $\overline{L}=\left( L,\{\|\cdot\|_v\} \right)$ be an adelic line bundle on $\operatorname{Spec} K(C)$ with geometric part $\widetilde{L}$, and assume $\deg(\widetilde{L})>0$.

The \emph{essential minimum} $\operatorname{ess}(\overline{L})$ of $\overline{L}$ is defined as
\[
  \operatorname{ess}(\overline{L}) = \inf \left\{ \liminf_{n\rightarrow\infty} h_{\overline{L}}(\alpha_n): (\alpha_n) \text{ is a sequence of distinct algebraic points in $U(\overline{K})$} \right\}.
\]
It has been the protagonist of the Bogomolov conjecture along with the development of Arakelov theory.

For adelic line bundles on projective varieties, building on the arithmetic BDPP theorem of Ikoma \cite[Theorem 6.4]{Ikoma15}, Balla\"y \cite{Bal21} characterizes the essential minimum using sections, and Szachniewicz \cite[Theorem D]{szachniewicz2023existentialclosednessoverlinemathbbqglobally} characterizes it using normalized GVF functionals.

Using the argument of Szachniewicz and applying balayage, we prove the following.
\begin{thmA}[Theorem \ref{thm:ess-min-attained-by-integers-main-text}] \label{thm:ess-min-attained-by-integers}
  Let $K$ be a number field and $C/K$ a smooth, projective, geometrically integral curve. Let $X$ be a non-empty finite set of closed points of $C$, and put $U=C\setminus X$.
  Let $\overline L$ be an adelic line bundle on $\operatorname{Spec} K(C)$ whose geometric part $\widetilde L$ has positive degree.
  Let $\mathbb{E}=(E_v)_{v\in M_K}$ be a loose admissible adelic set on $U$. Assume $c_1(\overline{L})_v \leq 0$ outside $E_v$ at every place $v \in M_K$. Then there exists a sequence of distinct algebraic points $(\alpha_n)$ in $U(\overline{K})$, integral with respect to $\mathbb{E}$, such that $h_{\overline{L}}(\alpha_n) \longrightarrow \operatorname{ess}(\overline{L})$.
\end{thmA}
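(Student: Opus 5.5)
We would follow Szachniewicz's characterization of the essential minimum via GVF functionals, and then push the relevant functional into $\mathbb E$ by balayage so that Theorem~\ref{thm:int-existential-closedness} applies.

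\emph{Step 1: a minimizing functional.} By Szachniewicz \cite[Theorem D]{szachniewicz2023existentialclosednessoverlinemathbbqglobally}, extended to adelic line bundles on $\operatorname{Spec}K(C)$ by the limit argument of Lemma~\ref{lem:gvf-point-convergence}, there is a normalized GVF functional $\ell$ on $K(C)$ with $\ell(\overline L)=\operatorname{ess}(\overline L)$. Concretely, take a sequence $(\beta_n)$ of distinct points with $h_{\overline L}(\beta_n)\to\operatorname{ess}(\overline L)$. The functionals $\overline D\mapsto h_{\overline D}(\beta_n)$ are normalized GVF functionals. After bounding them on each $\overline D$ (using $\deg\widetilde L>0$ and the height bounds dominated by $h_{\overline L}$), they have a limit point in the weak-$*$ topology. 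We let $\ell$ be such a limit point. Let $\mu_v$ be its local measures and $m$ its geometric part.

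\emph{Step 2: balayage.} At each place $v$, replace $\mu_v$ by its balayage $\mu'_v$ onto $E_v$. Only finitely many places matter, since $\mu_v$ is already supported on $E_v$ at the places where $E_v=\operatorname{red}_v^{-1}(\mathcal U_{k_v})$ and the generic behaviour is reduction to the generic point. In the archimedean case this is classical balayage onto $E_v$ relative to the complement components. In the non-archimedean case it is the retraction onto $\operatorname{red}_v^{-1}$ of the open set, i.e.\ balayage on Berkovich curves.

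We then define $\ell'$ by
\[
\ell'(\overline D)=m'(D)+\sum_v\int g_{D,v}\,\mathrm d\mu'_v .
\]
Here $m'$ is $m$ adjusted so that $\ell'$ still vanishes on principal divisors. Because balayage preserves integrals of harmonic functions, the adjustment moves the mass of $m$ at points of $X$ into the divisor class. We must check three things: $\ell'$ is a normalized GVF functional, it is integral with respect to $\mathbb E$, and it is nonnegative on effective divisors. The last uses that balayage lowers the potential, $\int g\,\mathrm d\mu'_v\le\int g\,\mathrm d\mu_v$ for $g$ subharmonic off $E_v$ in the correct direction, so Green-type functions of effective divisors behave monotonically.

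\emph{Step 3: the height does not increase.} We show $\ell'(\overline L)\le\ell(\overline L)$. Write
\[
\ell(\overline L)-\ell'(\overline L)=\sum_v\int g_{L,v}\,\mathrm d(\mu_v-\mu'_v) \quad(+\text{ a geometric correction}).
\]
The difference $\mu_v-\mu'_v$ is supported off $E_v$ and is orthogonal to harmonic functions there. By Green's formula, the integral equals $\int (\text{potential difference})\,c_1(\overline L)_v$ over the complement of $E_v$. The potential difference has a fixed sign, so the hypothesis $c_1(\overline L)_v\le0$ outside $E_v$ gives the desired inequality.

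\emph{Step 4: conclusion.} Theorem~\ref{thm:int-existential-closedness} applied to $\ell'$ gives points $\alpha_n$ integral with respect to $\mathbb E$ with $h_{\overline L}(\alpha_n)\to\ell'(\overline L)\le\operatorname{ess}(\overline L)$. Since the $\alpha_n$ are distinct, $\liminf_n h_{\overline L}(\alpha_n)\ge\operatorname{ess}(\overline L)$, which forces equality.

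The main obstacle is Step~2. One must make balayage rigorous for arbitrary (possibly noncompactly supported) $\mu_v$ on Berkovich and complex curves. One must also handle the geometric part $m$, including mass at $X$ at non-loose places, while keeping $\ell'$ a genuine GVF functional. I would first reduce to $\overline D$ being a model adelic divisor and verify positivity through the Riesz-measure description $\ell'(\overline D)=\lim$ over nef approximations, using the domination $\int g\,\mathrm d\mu'\le\int g\,\mathrm d\mu$ for $g$ with $dd^c g\ge0$ off $E_v$.
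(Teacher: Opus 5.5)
Your four steps are the paper's proof. Step~1 is Lemma~\ref{lem:ess-min-gvf-inequality}, Step~2 is Lemma~\ref{lem:balayage-gvf}, Step~3 is Lemma~\ref{lem:balayage-inequality}, and Step~4 is Theorem~\ref{thm:int-existential-closedness-main-text} together with $\liminf_n h_{\overline L}(\alpha_n)\ge\operatorname{ess}(\overline L)$. One small inaccuracy in Step~1: a single point $\beta_n$ does not give a GVF functional on all of $\widehat{\operatorname{Div}}(K(C)/\mathbb Z)$. As in Lemma~\ref{lem:gvf-point-convergence}, one only uses that each $\overline D$ is eventually defined at $\beta_n$ and eventually nonnegative when effective. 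The strategy is right, but the justification you give in Step~2 is false, and it hides the only real verification.

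\emph{The false claim.} You say that only finitely many places matter because $\mu_v$ is already supported on $E_v=\operatorname{red}_v^{-1}(\mathcal U_{k_v})$ at almost all $v$. Nothing forces this. Your $\ell$ is a limit of points that need not be integral, and the GVF axioms only require $\sum_v\int|\log|f|_v|\,\mathrm d\mu_v<\infty$. For example, on $\mathbb P^1_{\mathbb Q}$ take:
\begin{itemize}
\item $\mu_\infty$ the Haar measure on the unit circle;
\item $\mu_p=(1-p^{-2})\delta_{\zeta_{0,1}}+p^{-2}\delta_{\zeta_{0,p}}$ for every prime $p$;
\item $m(\infty)=0$ and $m(x)=\sum_v\int\log|P_x|_v\,\mathrm d\mu_v\ge0$, with $P_x$ the primitive minimal polynomial of $x$.
\end{itemize}
This is a normalized GVF functional that charges the complement of the unit disc at every prime.

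\emph{What must actually be checked.} Balayage therefore moves mass at infinitely many places. You must prove that $\ell'$ still satisfies the $L^1$ condition and that $m'$ is finite. The paper does this in Lemma~\ref{lem:balayage-gvf}. For all but finitely many $v$ (depending on $f$), $\partial E_v$ is a single divisorial point at which $|f|_v=1$, so balayage can only decrease $\int|\log|f|_v|\,\mathrm d\mu_v$. At the remaining places one uses a nonnegative Green function of $(f)_0+(f)_\infty$ with curvature on $E_v$, hence superharmonic off $E_v$, together with Proposition~\ref{prop:non-sub-avg}.

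\emph{The correct $m'$.} Set $m'(D)=m(D)+\sum_v u_v(D_v)$, where $u_v\ge0$ is the balayage defect: $\mathrm{dd}^c u_v=\mu_v^{E_v}-\mu_v$, and $u_v=0$ on $E_v$ off a polar set. With this choice:
\begin{itemize}
\item positivity is immediate from $m\ge0$ and $u_v\ge0$;
\item $m'(x)$ is finite for $x\in U$, since $x_v\subset E_v$, where $u_v$ vanishes, for almost all $v$;
\item the product formula and finiteness at $X$ follow from Green reciprocity, $\sum_v\int\log|f|_v\,\mathrm d(\mu_v^{E_v}-\mu_v)=\sum_v u_v(\operatorname{div}(f)_v)$.
\end{itemize}
No transfer of the mass of $m$ at $X$ into the divisor class is needed, because Theorem~\ref{thm:int-existential-closedness-main-text} for loose sets imposes no condition on $m_x$. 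Step~3 then becomes the computation $\ell'(\overline L)-\ell(\overline L)=\sum_v\int u_v\,c_1(\overline L)_v\le0$.

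\emph{Two slips to fix along the way.} First, $\mu'_v-\mu_v$ is not supported off $E_v$; its positive part lies on $\partial E_v$. Second, balayage decreases integrals of functions \emph{super}harmonic off $E_v$; by Proposition~\ref{prop:non-sub-avg} it increases integrals of subharmonic ones.
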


\subsection{The integer Chebyshev problem and a conjecture of Montgomery}
Let $E \subseteq \mathbb{C}$ be an admissible compact set invariant under complex conjugation.
The \emph{classical Chebyshev problem} asks for the asymptotic behavior of the best approximation of the constant function $0$ by monic polynomials on $E$ in the $L^\infty$ norm.
To be precise, let
\[
  t_n(E) = \inf \Big\{ \lVert f\rVert_{E,\mathrm{sup}}: f \in \mathbb{C}[x] \text{ is monic}, \deg(f)=n \Big\},
\] where $\lVert f\rVert_{E,\mathrm{sup}} = \sup_{x \in E} |f(x)|$. We want to determine the limit $\displaystyle t(E) = \lim_{n \rightarrow \infty} t_n(E)^{1/n}$, which is called the \emph{Chebyshev constant} of $E$.

It is a great achievement of potential theory, proved by Fekete and Szeg\H{o}, that $t(E)=\operatorname{cap}(E)$ \cite{KIRSCH05}.
Since we have methods to compute the capacity of many kinds of compact sets \cite[Section 5.2]{Ransford}, this also answers the Chebyshev problem for these kinds of sets.

The \emph{integral Chebyshev problem} is a variant of the classical Chebyshev problem.
We work only with polynomials with integer coefficients rather than with polynomials with complex coefficients.
Let
\[
  t_{\mathbb{Z},n}(E) = \inf \Big\{ \lVert f\rVert_{E,\mathrm{sup}}: 0\neq f \in \mathbb{Z}[x], \deg(f) \leq n \Big\}.
\]
We want to determine the \emph{integer Chebyshev constant} $\displaystyle t_\mathbb{Z}(E) \coloneqq \lim_{n \rightarrow \infty} t_{\mathbb{Z},n}(E)^{1/n}$ of $E$.

The integral Chebyshev problem is trivial if $\operatorname{cap}(E) \geq 1$: in this circumstance, nobody beats the constant integer polynomial $1$ and $t_{\mathbb{Z}}(E)=1$.
In the case $\operatorname{cap}(E) < 1$, by contrast, the integer Chebyshev problem is very difficult, and a fundamental result of Hilbert--Fekete gives the bounds
\(
\operatorname{cap}(E)\leq t_{\mathbb Z}(E)\leq \operatorname{cap}(E)^{1/2}
\).
Thus the integer Chebyshev constant is still bounded from both sides by the capacity, but it contains genuinely arithmetic information beyond the potential theory of $E$ on the complex plane.
Gelfond and Schnirelman conjectured $t_\mathbb{Z}([0,1])=e^{-1}$ and proved that $t_\mathbb{Z}([0,1])=e^{-1}$ implies the Prime Number Theorem (see \cite[Chapter 10]{Mon94}), but the conjectured equality is false.

Let $\mathcal{F}_{[0,1]}(n) \subseteq \mathbb{Z}[x]$ be the set of irreducible polynomials of degree $n$ with integer coefficients and with all their roots in $[0,1]$. Let
\[
  s \coloneqq \liminf_{n \rightarrow \infty} \inf_{f_n \in \mathcal{F}_{[0,1]}(n)} \left|c_n\right|^{1/n},
\]
where $c_n$ is the leading coefficient of $f_n$. It is not difficult to show that $t_\mathbb{Z}([0,1]) \geq s^{-1}$ by \cite[Proposition 1.7]{Prit05}.
Using this method, Gorshkov showed that $t_\mathbb{Z}([0,1])>e^{-1}$, disproving the conjecture of Gelfond--Schnirelman \cite{Gorshkov1959}.

Montgomery conjectured that one can in fact reach $t_\mathbb{Z}([0,1])$ through this method {\cite[Chapter 10, after Theorem 2]{Mon94}}.
\begin{conjecture}[Montgomery] \label{conj:montgomery}
  $t_\mathbb{Z}([0,1]) = s^{-1}$.
\end{conjecture}

To attack Montgomery's conjecture, we first present Theorem \ref{thm:ess-min-and-int-chebyshev-constant}, which says that integer Chebyshev constants are nothing but the essential minima of particular height functions on $\mathbb{A}^1$ coming from potential theory.
It is a direct consequence of Ballaÿ's theorem \cite[Theorem 1.2]{Bal21} and Bernstein's lemma \cite[Theorem 5.5.7]{Ransford}.

Let $g_E:=g_{E,\infty}$ be the Green function of $E$ with pole at $\infty$ (normalized so that $g_E=0$ on $E$). Consider the height function
\begin{align}
  h_E: \overline{\mathbb{Q}} &\longrightarrow \mathbb{R}, \notag\\
  \alpha &\longmapsto \frac1{\deg(\alpha)} \left(\log|a_\alpha| + \sum_{\beta \in \alpha'_{\infty}} g_E(\beta) \right). \label{eq:height-e}
\end{align} where $P_\alpha\in\mathbb{Z}[x]$ is the primitive minimal polynomial of $\alpha$ with positive leading coefficient $a_\alpha$.

\begin{theorem} \label{thm:ess-min-and-int-chebyshev-constant}
  Let $E\subseteq\mathbb C$ be an admissible compact set invariant under complex conjugation, and let $h_E$ be the height defined in \eqref{eq:height-e}. Then
  $\operatorname{ess}(h_E)=-\log t_\mathbb{Z}(E)$.
\end{theorem}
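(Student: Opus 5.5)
We prove the two inequalities $\operatorname{ess}(h_E)\ge -\log t_{\mathbb Z}(E)$ and $\operatorname{ess}(h_E)\le -\log t_{\mathbb Z}(E)$ separately. Throughout we view $h_E$ as the height of the adelic line bundle $\overline{L}_E=(\mathcal O(\infty),\{\|\cdot\|_v\})$ on $\mathbb P^1_{\mathbb Q}$. At the finite places it carries the standard model metric. At $\infty$ its Green function is $g_E$, so that $-\log\|1\|_\infty=g_E$ near $\infty$ up to the normalization $g_E(z)=\log|z|-\log\operatorname{cap}(E)+o(1)$. By Gauss's lemma, the finite contributions of a point $\alpha$ add up to $\frac{1}{\deg\alpha}\log|a_\alpha|$, which gives \eqref{eq:height-e}. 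The measure $c_1(\overline L_E)_\infty$ is the equilibrium measure of $E$, which is semipositive, so $\overline L_E$ is nef and Ballaÿ's theorem \cite[Theorem 1.2]{Bal21} applies. It expresses the essential minimum as a limit of normalized logarithmic sup-norms of small sections:
\[
\operatorname{ess}(\overline L_E)=\lim_{n\to\infty}\sup_{0\ne s\in H^0(n L)}\Bigl(-\tfrac1n\sum_v\log\|s\|_{v,\sup}\Bigr),
\]
where the supremum runs over the nonzero sections of $nL$ over $\mathbb Q$.

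The next step is to translate sections into integer polynomials. A section $s$ of $\mathcal O(n\infty)$ is a polynomial $f\in\mathbb Q[x]$ of degree at most $n$, and we have
\[
\|s\|_v(z)=|f(z)|_v\,\|1\|_v^{n}(z).
\]
At a finite place, $\sup\|s\|_v$ is the Gauss norm $|f|_v$. Rescaling $f$ to be primitive in $\mathbb Z[x]$ does not change $\sum_v$, by the product formula, so only $f\in\mathbb Z[x]$ primitive contribute, and for these the finite terms vanish. At $\infty$ we have $\|s\|_\infty(z)=|f(z)|e^{-n g_E(z)}$. This quantity equals $|f(z)|$ on $E$ because $g_E=0$ there. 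Bernstein's lemma \cite[Theorem 5.5.7]{Ransford} says $|f(z)|\le \|f\|_{E,\sup}e^{n g_E(z)}$ for $\deg f\le n$, so off $E$ the quantity is bounded by its supremum on $E$. Hence $\sup_z\|s\|_\infty(z)=\|f\|_{E,\sup}$. Plugging this into Ballaÿ's formula gives
\[
\operatorname{ess}(h_E)=\lim_{n}\Bigl(-\tfrac1n\log t_{\mathbb Z,n}(E)\Bigr)=-\log t_{\mathbb Z}(E).
\]
The limit exists by submultiplicativity: $t_{\mathbb Z,m+n}(E)\le t_{\mathbb Z,m}(E)\,t_{\mathbb Z,n}(E)$.

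The main obstacle is checking that Ballaÿ's hypotheses and normalization match. His result is stated for adelic line bundles on projective varieties with continuous semipositive metrics and big $\widetilde L$; here $\deg \mathcal O(1)=1>0$, and $g_E$ is continuous because $E$ is admissible, hence regular. We also have to make sure that his sup over sections is a sup over all $n$-th tensor powers with the $1/n$ normalization above, with the convention that the sup includes the constant sections. This convention yields $\operatorname{ess}\le 0$ when $\operatorname{cap}(E)\ge1$, consistent with $t_{\mathbb Z}(E)=1$. If Ballaÿ's theorem only gives a $\limsup$ or a $\sup$ over $n$, the submultiplicativity above reconciles the two.
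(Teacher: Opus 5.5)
Your proposal is correct and is essentially the paper's argument: the paper obtains the theorem directly from Ballaÿ's characterization of the essential minimum via sections together with Bernstein's lemma. That is exactly what you do when you identify $-\sum_v\log\lVert s\rVert_{v,\sup}$ with $-\log\lVert f\rVert_{E,\mathrm{sup}}$ for primitive $f\in\mathbb Z[x]$ of degree at most $n$, using Gauss norms at the finite places and Bernstein's lemma at $\infty$.

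One step is worth making explicit if Ballaÿ's theorem is read as the asymptotic-maximal-slope statement. The passage to single sections then uses that $\overline L_E$ twisted by $-t$ is big for every $t<\operatorname{ess}(h_E)$, which gives small sections of large tensor powers. The reverse inequality $\operatorname{ess}(h_E)\geq -\frac1n\log t_{\mathbb Z,n}(E)$ is immediate by evaluating a section at points off its zero locus.
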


From this Arakelov-theoretic point of view, the inequality of Hilbert--Fekete becomes
\[
  -\frac12 \log \operatorname{cap}(E) \leq \operatorname{ess}(h_E) \leq -\log \operatorname{cap}(E).
\]
It is straightforward to compute $h_E(\mathbb{P}^1)=-\frac12 \log \operatorname{cap}(E)$. Thus the first inequality is exactly Zhang's fundamental inequality bounding the essential minimum from below by the height of the variety.

Let $\alpha$ be an algebraic number whose Galois orbit is totally contained in $E$. Since $g_E=0$ on $E$, we have
\[
  h_E(\alpha) = \frac1{\deg(P_\alpha)} \log|a_\alpha|,
\] where $a_\alpha$ is the leading coefficient of the minimal polynomial $P_\alpha \in \mathbb{Z}[x]$ of $\alpha$. Applying Theorem \ref{thm:ess-min-and-int-chebyshev-constant} to $E=[0,1]$, we see that Conjecture \ref{conj:montgomery} says precisely that the essential minimum of $h_{[0,1]}$ can be attained by algebraic numbers totally in $[0,1]$, which follows immediately from Theorem \ref{thm:ess-min-attained-by-integers}.

\begin{thmA}[Theorem \ref{thm:montgomery-conj-is-true-main-text}] \label{thm:montgomery-conj-is-true}
  Montgomery's conjecture (Conjecture~\ref{conj:montgomery}) is true: $t_{\mathbb Z}([0,1])=s^{-1}$.
\end{thmA}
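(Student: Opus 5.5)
The plan is to combine Theorem~\ref{thm:ess-min-and-int-chebyshev-constant} with Theorem~\ref{thm:ess-min-attained-by-integers}, applied to $K=\mathbb Q$, $C=\mathbb P^1$, $X=\{\infty\}$, $U=\mathbb A^1$, and $E=[0,1]$. The height $h_{[0,1]}$ defined in \eqref{eq:height-e} is the height of the adelic line bundle $\overline L=\mathcal O(\infty)$ with the following metrics: at the archimedean place the metric has Green function $g_{[0,1]}$, and at each finite place $p$ it is the standard model metric, given by $\log^+|z|_p$. Indeed, for an algebraic number $\alpha$ one has the identity
\[
  \sum_p \sum_{\beta} \log^+|\beta|_p = \log|a_\alpha|,
\]
where $\beta$ runs over the conjugates of $\alpha$ and $P_\alpha$ is primitive. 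So $h_{[0,1]}=h_{\overline L}$, and $\deg \widetilde L=1>0$. The first inequality is the easy direction. If $f_n\in\mathcal F_{[0,1]}(n)$ is irreducible with a root $\alpha$, then $g_E(\alpha)=0$ at all conjugates, so $h_{[0,1]}(\alpha)=\frac1n\log|c_n|$. Taking the liminf over such sequences gives $\operatorname{ess}(h_{[0,1]})\le \log s$, that is, $t_{\mathbb Z}([0,1])\ge s^{-1}$ by Theorem~\ref{thm:ess-min-and-int-chebyshev-constant}. (This is also the known bound \cite[Proposition 1.7]{Prit05}.)

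For the reverse inequality, choose the loose admissible adelic set $\mathbb E$ as follows. At the archimedean place take $E_\infty=[0,1]\subset C_\infty^{\mathrm{an}}=\mathbb P^1(\mathbb C)/\text{conj}$, which is a closed interval in $C_\infty(\mathbb R)$ and hence locally admissible. At every prime $p$ take $E_p=\mathbb P^{1,\mathrm{an}}_p$, the whole line. This is locally admissible in $C_p^{\mathrm{an}}$: it is $\operatorname{red}_p^{-1}$ of the whole special fiber. It contains $\infty$, so $\infty_p\cap E_p\neq\varnothing$, and the first condition of looseness holds. The third condition requires $\mathbb E$ to agree with an admissible adelic set at all but finitely many places. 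This fails if all finite places are loose, and it is the point I expect to be the main obstacle. The fix I would try first is to use instead $E_p=\operatorname{red}_p^{-1}(\mathbb A^1_{\mathbb F_p})$ at all finite places and loosen only the archimedean place, where $E_\infty\cup\{\infty\}$ is not allowed since $\infty\notin[0,1]$. That does not work either: every place would then be tight, and $\operatorname{cap}([0,1])=1/4<1$. So loose places must be finite primes. I would reread the definition as allowing finitely many loose places, and note that what the argument really needs is the curvature hypothesis plus the existence of the sequence. The natural alternative is to make the points integral in the \emph{projective} sense at every finite place, i.e.\ loosen all finite places. I would check that the proof of Theorem~\ref{thm:ess-min-attained-by-integers} only uses finitely many ``non-model'' places beyond a reduction-type set, and that $E_p=\mathbb P^{1,\mathrm{an}}_p=\operatorname{red}_p^{-1}(\mathbb P^1_{\mathbb F_p})$ is the reduction preimage for the model $\mathcal U=\mathbb P^1_{\mathbb Z}$, so that this choice is admissible in the appropriate sense.

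Next, verify the curvature hypothesis $c_1(\overline L)_v\le 0$ outside $E_v$. At the finite places $E_p$ is everything, so there is nothing to check. At $\infty$, $c_1(\overline L)_\infty=dd^c g_{[0,1]}$ is the equilibrium measure of $[0,1]$, which is supported on $[0,1]=E_\infty$; hence it vanishes outside $E_\infty$. Theorem~\ref{thm:ess-min-attained-by-integers} then yields distinct algebraic numbers $\alpha_n$ with all conjugates in $[0,1]$ and $h_{[0,1]}(\alpha_n)\to\operatorname{ess}(h_{[0,1]})$. By the computation above, $h_{[0,1]}(\alpha_n)=\frac1{\deg\alpha_n}\log|a_{\alpha_n}|$, and $P_{\alpha_n}\in\mathcal F_{[0,1]}(\deg\alpha_n)$. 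The degrees tend to infinity, since there are only finitely many algebraic numbers of bounded degree and bounded height. Therefore $\log s\le \operatorname{ess}(h_{[0,1]})=-\log t_{\mathbb Z}([0,1])$, and combined with the first paragraph this gives the equality $t_{\mathbb Z}([0,1])=s^{-1}$.
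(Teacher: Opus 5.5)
There is a genuine gap at the one step that matters: the choice of the adelic set. The set you finally settle on, $E_p=\mathbb P^{1,\mathrm{an}}_p$ at every prime, is not a loose admissible adelic set of $U=\mathbb A^1$. The third condition requires agreement, outside finitely many places, with an admissible adelic set of $U$. The local components of such a set lie in $U_p^{\mathrm{an}}$ and so cannot contain $\infty$. Also, $\mathbb P^1_{\mathbb Z}$ is not a model of $U$, so there is no sense in which this choice is admissible. You also cannot just reread the definition. The proof of Theorem \ref{thm:ess-min-attained-by-integers} goes through Lemma \ref{lem:loose-place-truncation}, which starts from the \emph{finite} set $S$ of loose places. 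It replaces each loose place by a compact exhaustion and needs $\mathbb E_n$ to coincide with $\widetilde{\mathbb E}$ at every other place in order for $\mathbb E_n$ to be admissible. With all primes loose this step breaks, so the theorem you invoke does not apply to your data.

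The missing observation is that a single loose place suffices, because $X=\{\infty\}$ is one point. Fix a prime $p_0$ and take:
\begin{itemize}
  \item $E_{p_0}=\mathbb P^{1,\mathrm{an}}_{p_0}$;
  \item $E_p=\operatorname{red}_p^{-1}(\mathbb A^1_{\mathbb F_p})$, the Berkovich closed unit disk, for $p\ne p_0$;
  \item $E_\infty=[0,1]$.
\end{itemize}
This set is loose admissible, since it agrees with the admissible set attached to $\mathbb A^1_{\mathbb Z}$ except at $p_0$. The curvature hypothesis must now also be checked at $p\ne p_0$, and it holds there. For the model metric $\log^+|z|_p$, the measure $c_1(\overline L)_p$ is the Dirac mass at the Gauss point, which lies in $E_p$. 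Integrality at the primes $p\neq p_0$ costs nothing. Theorem \ref{thm:ess-min-attained-by-integers} still yields distinct $\alpha_n$ totally in $[0,1]$ with $h_{[0,1]}(\alpha_n)\to\operatorname{ess}(h_{[0,1]})$; their leading coefficients just happen to be powers of $p_0$. Your final paragraph then goes through unchanged. This is exactly the paper's proof. The rest of your argument is fine: the identification $h_{[0,1]}=h_{\overline L}$, the easy inequality, the curvature check at $\infty$, and the Northcott step.
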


\subsection{The Schur--Siegel--Smyth trace problem and a question of Levenberg--Londhe}
The \emph{Schur--Siegel--Smyth trace problem} asks for the smallest possible limit of the average traces of totally positive algebraic integers.
To be precise, let $\overline{\mathbb{Z}}_{\mathrm{tp}} = \big\{ \alpha \in \overline{\mathbb{Z}}: \alpha'_{\infty} \subseteq \mathbb{R}_{\geq 0} \big\}$ be the set of totally nonnegative algebraic integers. In particular, this set includes $0$.
We want to determine
\[
  \lambda_{\mathrm{SSS}} = \inf \left\{ \liminf_{n\rightarrow\infty} \frac{\operatorname{tr}(\alpha_n)}{\deg(\alpha_n)}: (\alpha_n) \text{ is a sequence of distinct elements in $\overline{\mathbb{Z}}_{\mathrm{tp}}$ } \right\}.
\]
It was widely believed that $\lambda_{\mathrm{SSS}}=2$, until the recent breakthrough of Smith proved that $\lambda_{\mathrm{SSS}}<1.89831$ \cite[Theorem 1.1]{Smi24}.

Recently in \cite[Question 3.7(1)]{LL25}, Levenberg and Londhe ask whether $\lambda_{\mathrm{SSS}}$ can be achieved by the averaged traces of a sequence of algebraic units.
We answer their question affirmatively, and in fact we prove a much more general result as follows.
Let $X_{\mathrm{SSS}}$ be the exceptional set for the Schur--Siegel--Smyth trace problem, i.e.
\[
  X_{\mathrm{SSS}} := \left\{ x \in \overline{\mathbb{Z}}_{\mathrm{tp}}: \frac{\operatorname{tr}(x)}{\deg(x)} < \lambda_{\mathrm{SSS}} \right\}.
\]
We must remark that we do not know whether $X_{\mathrm{SSS}}$ is finite or infinite.

\begin{thmA}[Theorem \ref{thm:levenberg-question-main-text}] \label{thm:levenberg-question}
  Let $X \subseteq X_{\mathrm{SSS}}$ be a finite subset.
  Let $X'$ be the set of closed points in $\mathbb A^1_{\mathbb Q}$ corresponding to algebraic integers in $X$.
  Let $\overline{X'}_{\mathrm{zar}}$ be the Zariski closure of $X'$ in $\mathbb A^1$ corresponding to the elements of $X$, and set $\mathcal U_X=\mathbb A^1\setminus\overline{X'}_{\mathrm{zar}}$.
  Then there exists a sequence of distinct points $(\alpha_n)$ in $\mathcal{U}_X(\overline{\mathbb{Z}})\cap\overline{\mathbb{Z}}_{\mathrm{tp}}$ such that $ \frac{\operatorname{tr}(\alpha_n)}{\deg(\alpha_n)} \longrightarrow \lambda_{\mathrm{SSS}}$.
\end{thmA}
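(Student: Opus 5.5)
We would obtain Theorem~\ref{thm:levenberg-question} by applying Theorem~\ref{thm:ess-min-attained-by-integers} to a suitable height on $\mathbb P^1_{\mathbb Q}$ whose essential minimum, over points integral with respect to a loose admissible adelic set attached to $\mathcal U_X$, is $\lambda_{\mathrm{SSS}}$. Take $C=\mathbb P^1_{\mathbb Q}$ and let the removed set be $\{\infty\}\cup X'$. Equivalently, $U=\mathbb A^1\setminus X'$ with model $\mathcal U_X$.

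At every finite place $p$ we set $E_p=\operatorname{red}_p^{-1}(\mathcal U_{X,\mathbb F_p})$, so that integrality at $p$ means integrality on $\mathcal U_X$. At the archimedean place we set $E_\infty=[0,R]$ for a large $R$, with $R>\lambda_{\mathrm{SSS}}$ and $R$ larger than all conjugates of the elements of $X$. This interval is locally admissible. The points of $X'$ do meet $E_\infty$, since $X\subset\overline{\mathbb Z}_{\mathrm{tp}}$, so $\infty$ is the only loose place. The set is loose admissible because it agrees with the admissible set for the model $\mathcal U_X$ at all finite places. It remains to check that every point of $X'\cup\{\infty\}$ meets some $E_v$. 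This is where we need the looseness at $\infty$ for the points of $X'$. For the point $\infty$ we may instead enlarge $E_\infty$; if needed, we take $E_\infty$ to be a neighbourhood of $[0,R]$ intersected appropriately, or simply $[0,R]$ together with a disc around $\infty$ in $\mathbb P^1(\mathbb C)$.

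For the height we take $\overline L=\mathcal O(\infty)$ with the trivial metric at finite places. At $\infty$ we take a metric with Green function $g(z)$ equal to $z$ on $[0,R]$, extended outside $[0,R]$ so that $g\ge \operatorname{Re} z$ near $[0,R]$, so that $c_1(\overline L)_\infty\le 0$ off $E_\infty$, and so that $g$ is large away from $[0,R]$. The natural choice is the Perron-type harmonic extension: let $g$ be the solution of the Dirichlet problem on $\mathbb P^1\setminus[0,R]$ with boundary value $x$ and logarithmic pole at $\infty$. This function is harmonic off $[0,R]$, so the curvature is supported on $[0,R]$ and the sign hypothesis holds. The finite-place contributions give $\log^+$ of the $p$-adic absolute values, and these vanish on algebraic integers.

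With these choices, for $\alpha\in\overline{\mathbb Z}_{\mathrm{tp}}$ with conjugates in $[0,R]$ we get $h_{\overline L}(\alpha)=\operatorname{tr}(\alpha)/\deg(\alpha)$. We then show that $\operatorname{ess}(\overline L)=\lambda_{\mathrm{SSS}}$.

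\textbf{Upper bound.} Smith's sequences, or any sequence realizing $\lambda_{\mathrm{SSS}}$, have conjugates in $[0,R]$ for $R$ large once the measures are suitably controlled. This control can be arranged by standard truncation arguments: a small mass beyond $R$ only raises the trace. Alternatively, we note that $h\le$ the average trace for totally nonnegative integers, since $g(x)\le x$ on $\mathbb R_{\ge0}$ for the harmonic extension. This inequality requires checking.

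\textbf{Lower bound.} This is the main obstacle: showing that no generic sequence of algebraic numbers has height below $\lambda_{\mathrm{SSS}}$. We plan to pass to a GVF limit $\ell$, as in Szachniewicz's characterization. Such a limit has $\sum_v\int\log|Q|_v\,d\mu_v+\dots\ge0$, and by balayage onto $E_\infty$, which decreases the height because $g$ is harmonic outside, we may assume $\ell$ is integral with respect to $\mathbb E$. Theorem~\ref{thm:int-existential-closedness} then realizes $\ell$ by integers totally in $[0,R]$, whose traces are therefore at least $\lambda_{\mathrm{SSS}}$ in the limit.

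Finally, Theorem~\ref{thm:ess-min-attained-by-integers} produces distinct $\alpha_n\in U(\overline{\mathbb Q})$, integral with respect to $\mathbb E$, with $h_{\overline L}(\alpha_n)\to\lambda_{\mathrm{SSS}}$. These $\alpha_n$ lie in $\mathcal U_X(\overline{\mathbb Z})$. Their conjugates lie in $E_\infty$, and since the height forces mass near $\infty$ to be negligible, the construction inside the proof can keep them in $[0,R]$. Hence the $\alpha_n$ are totally nonnegative and their average traces tend to $\lambda_{\mathrm{SSS}}$.
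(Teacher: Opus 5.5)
\textbf{Your approach has a genuine gap.} The identity $\operatorname{ess}(\overline L)=\lambda_{\mathrm{SSS}}$, on which everything rests, is false. The essential minimum runs over all algebraic numbers, and a non-integral point pays only $\frac{1}{\deg\alpha}\log a_\alpha$ at the finite places.

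For example, take $\alpha_p=1+\cos(2\pi/p)=\gamma_p/2$, where $\gamma_p=(1+\zeta_p)(1+\zeta_p^{-1})$ is a totally positive unit. Then:
\begin{itemize}
\item the conjugates of $\alpha_p$ lie in $(0,2)\subset[0,R]$ (take $R\geq 2$);
\item its primitive minimal polynomial has leading coefficient $2^{(p-1)/2}$;
\item its average trace is $1-\frac{1}{p-1}$.
\end{itemize}
Hence $h_{\overline L}(\alpha_p)\to 1+\log 2\approx 1.69$, below Smyth's bound $\lambda_{\mathrm{SSS}}>1.77$.

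Your lower-bound argument breaks exactly where you say the realizing integers are totally in $[0,R]$. Balayage of such a GVF limit onto $\mathbb E$ gives an integral functional with $m(\infty)>0$; for the $\alpha_p$, $m(\infty)=\log 2$, coming from the balayage defect at $p=2$. By contrast, any GVF limit of algebraic integers whose archimedean orbits stay in a fixed compact subset of $\mathbb C$ has $m(\infty)=0$. Theorem \ref{thm:int-existential-closedness} realizes functionals with $m(\infty)>0$ only because $\infty$ is loose: a vanishing proportion of conjugates is sent towards $\infty$.

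That looseness is forced, since the definition requires every boundary point, $\infty$ included, to meet some $E_v$. None of the ways to arrange it works:
\begin{itemize}
\item loosening at a prime gives up integrality there;
\item adding a disc around $\infty$ to $E_\infty$ destroys total reality;
\item adding a real ray $[R',+\infty]$ keeps total nonnegativity, but there $h_{\overline L}$ grows only logarithmically. It no longer computes the trace, and GVF convergence cannot see trace carried by a vanishing proportion of far-away conjugates.
\end{itemize}
So Theorem \ref{thm:ess-min-attained-by-integers} hands you integers with $h_{\overline L}(\alpha_n)\to\operatorname{ess}(\overline L)<\lambda_{\mathrm{SSS}}$ whose conjugates necessarily leave $[0,R]$. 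Your closing claim that ``the construction can keep them in $[0,R]$'' is precisely what fails.

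\textbf{What is missing is the arithmetic input that makes integrality on $\mathcal U_X$ harmless.} A warning sign is that your argument never uses the hypothesis $X\subseteq X_{\mathrm{SSS}}$. Points of $\mathcal U_X(\overline{\mathbb Z})$ with orbits in a fixed compact set force every limit measure $\mu$ to satisfy $\int\log|P_\beta|\,\mathrm d\mu=0$ for $\beta\in X$: test both $P_\beta$ and $1/P_\beta$, which are regular on $\mathcal U_X$.

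The paper therefore works with measures on a compact real set throughout, not with an essential minimum.
\begin{itemize}
\item It starts from Smith's extremal measure $\mu_{\mathrm{SSS}}$ on $\mathbb R_{\ge 0}$, which passes the test for $\mathbb Z[x]$ and has $\int z\,\mathrm d\mu_{\mathrm{SSS}}=\lambda_{\mathrm{SSS}}$.
\item Proposition \ref{prop:mea-min-poly} shows $\int\log|P_\beta|\,\mathrm d\mu_{\mathrm{SSS}}=0$. It mixes $\mu_{\mathrm{SSS}}$ against measures concentrated near the conjugates of $\beta$, whose mean lies below $\lambda_{\mathrm{SSS}}$ precisely because $\beta\in X_{\mathrm{SSS}}$.
\item Proposition \ref{prop:mea-avo-pt} shows $\beta\notin\operatorname{supp}\mu_{\mathrm{SSS}}$.
\item Hence $\mu_{\mathrm{SSS}}$ passes the test for every regular function $Q/\prod_\beta P_\beta^{k_\beta}$ on $\mathcal U_X$.
\item Theorem \ref{thm:adelic-sos-archimedean-main-text}, applied with a compact neighbourhood $E\subset\mathcal U_X(\mathbb R)\cap\mathbb R_{\ge0}$ of the support, gives distinct points of $\mathcal U_X(\overline{\mathbb Z})$ with orbits in $E$ and $\delta_{\alpha_n,\mathbb C}\to\mu_{\mathrm{SSS}}$ weakly.
\item Since $z$ is bounded and continuous on the compact set $E$, the average traces converge to $\lambda_{\mathrm{SSS}}$.
\end{itemize}
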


Taking $X=\{0\}$, we answer \cite[Question 3.7(1)]{LL25}.
Unfortunately, this does not seem to help determine the exact value of $\lambda_{\mathrm{SSS}}$.

\subsection{Notation and terminology}

We will use the terminology of Yuan--Zhang \cite{YZ21}, and we will review the relevant terminology before using it.
The following is some additional notation used in this paper.

\begin{enumerate}
  \item For a real number $t$, define $\log^+t = \log\max\{t,1\}.$

  \item By a \emph{closed interval}, we mean a closed subset homeomorphic to $[0, 1]$.
    By an \emph{open interval}, we mean the interior of a closed interval.

  \item
    By a \emph{variety}, we mean an integral scheme separated and of finite type over the base field.
    By a \emph{curve}, we mean a variety of dimension 1.

  \item
    For a number field $K$, denote by $M_K$, $M_{K, f}$ and $M_{K, \infty}$ the sets of all places, finite places and infinite places of $K$, respectively.
    We always normalize the absolute values so that
    \[
      \sum_{v\in M_K}\log|a|_v=0
      \qquad\text{for every }a\in K^\times.
    \]

  \item
    For a field $K$, denote by $\overline K$ an algebraic closure of $K$.

  \item
    By a \emph{projective arithmetic surface}, we mean a projective, integral and flat scheme over $\operatorname{Spec}(\mathbb Z)$ of absolute dimension $2$.
    By a \emph{quasi-projective arithmetic surface}, we mean a non-empty open subscheme of a projective arithmetic surface.

  \item
    For topological spaces $V\subset W$, denote by $\operatorname{int}_WV$ the interior of $V$ in $W$.

\end{enumerate}

\subsection*{AI disclosure statement}

This project began in September 2024, when Xinyi Yuan introduced the results in \cite{Smi24,OS23} to Binggang Qu and suggested that they might be connected to Arakelov theory. Binggang started to work on it and had the idea that it might be possible to generalize the results from $\mathbb{A}^1$ to arbitrary open curves.
Chengyuan Yang joined the project during the period.

In August 2025, Chengyuan found a way to generalize Smith's result to algebraic units by changing only the global construction step, and told Binggang about the idea.
In September 2025, Binggang moved to Madrid and stopped working on this project.
During the period from September to December 2025, Chengyuan found out that the condition in Theorem \ref{thm:integral-point-construction} is an equivalent condition, and completed all the ideas needed to prove the theorem.
In this process, Chengyuan used GPTs (here and below, generative pre-trained decoder-only autoregressive Transformer language models, not limited to ChatGPT), especially DeepSeek, for studying well-known facts in potential theory and capacity theory.
Then, Chengyuan wrote a manuscript, containing the proof of Theorem \ref{thm:integral-point-construction} (Sections 3, 4, 5, and 6 of this article) and the applications to the essential minimum, the integer Chebyshev problem and the Schur--Siegel--Smyth trace problem.
In this process, Chengyuan used GPTs, especially DeepSeek, for improving the language, checking for typos and checking the correctness of proofs.

In April 2026, Chengyuan sent his manuscript to Binggang (and many experts in the field). Binggang had the following ideas:
\begin{itemize}
  \item Theorem \ref{thm:integral-point-construction} is in fact an integral version of \cite[Theorem A]{szachniewicz2023existentialclosednessoverlinemathbbqglobally}.
    At that time, Binggang and José Ignacio Burgos Gil were trying to formulate a higher-dimensional generalization of the Smith--Orloski--Sardari results, in which one can approximate an integral ``movable curve'' by a sequence of integral points eventually avoiding every proper closed subset. The problem is what a ``movable curve'' is in the context of Arakelov theory. The Gillet--Soulé theory of arithmetic Chow groups only allows smooth currents, and that is indeed quite disappointing.
    Then the theory of GVF functionals came to mind: a ``movable curve'' in the context of Arakelov theory is a GVF functional.
  \item One needs only to test principal divisors (rather than flat divisors as in Theorem \ref{thm:integral-point-construction}) to generalize the Smith--Orloski--Sardari results to general open curves. The business of approximating measures by points can be viewed as the ``dual'' of Ballaÿ's theorem \cite[Theorem 1.2]{Bal21} in the sense of linear programming (see \cite{BMQS26} in the case of $\mathbb{A}^1$). And one needs only to test rational functions in Ballaÿ's theorem.
\end{itemize}
After several rounds of discussions with ChatGPT, Binggang worked out the details of the two ideas and he started to rewrite Chengyuan's manuscript under the new framework of GVF functionals in June 2026.
During this process, ChatGPT provided huge assistance with proof strategies and computations.
It is also worth noticing that the main technical difficulty in proving the GVF theorem had already been resolved in Chengyuan's manuscript in the application to the essential minimum.
Binggang used ChatGPT mainly for translation into the language of GVFs.

After finishing the draft, the authors also used ChatGPT to fix typos, check grammar and audit the proofs.

\subsection*{Acknowledgments}
We thank Jos\'e Ignacio Burgos Gil, Nuno Hultberg, Norm Levenberg, Mayuresh Londhe, Wenbin Luo, Ricardo Menares, and Mart\'in Sombra for helpful discussions.

The connection between the results in \cite{Smi24,OS23} and Arakelov theory was pointed out to the authors by Xinyi Yuan in his Arakelov theory seminar at BICMR, Peking University. We thank him especially.

The connection between the results in \cite{Smi24,OS23} and GVF functionals was pointed out to the first author by Micha{\l} Szachniewicz during the conference ``Diophantine Days 2025'' at ITS, Westlake University. We thank him especially, as well as Paolo Dolce for organizing the conference.
\section{Preliminaries}
\label{sec:preliminaries}
In this section, we introduce the Berkovich spaces, potential theory, capacity theory and Arakelov theory.
In Section \ref{sec:potential-theory}, we also prove some technical lemmas for Cantor's matrix.

\subsection{Berkovich spaces}
Let $K$ be a valued field and $C$ a projective, smooth and geometrically integral curve over $K$.
Berkovich constructed an arcwise connected, compact and Hausdorff space $C^{\mathrm{an}}$, defined as follows.
Let $U=\operatorname{Spec}(A)$ be an affine open subscheme of $C$.
Define $U^{\mathrm{an}}$ to be the set of multiplicative seminorms $\lvert\cdot\rvert_x : A\longrightarrow \mathbb{R}_{\geq 0}$ extending the absolute value on $K$, endowed with the coarsest topology for which $x\longmapsto\lvert f\rvert_x$ is continuous for every $f\in A$.
The space $C^{\mathrm{an}}$ is defined by taking an affine cover $\{U_i\}_{i\in I}$ of $C$ and gluing $\{U_i^{\mathrm{an}}\}_{i\in I}$ naturally.
For $K={\mathbb {C}}$, $C^{\mathrm{an}}$ is the usual complex analytification of $C$.
For $K={\mathbb {R}}$, $C^{\mathrm{an}}$ is the complex analytification modulo complex conjugation.

Assume the valuation of $K$ is non-trivial, non-archimedean and discrete.
Let $R$ be the valuation ring of $K$ and $k_s$ the residue field.
By a \emph{projective model} of $C$, we mean an integral, flat, regular and projective scheme over $\operatorname{Spec}(R)$ whose generic fiber is $C$.
Let ${\mathcal {C}}$ be a projective model of $C$.
Let ${\mathcal {C}}_{k_s}$ be the special fiber of ${\mathcal {C}}$.
There is a reduction map ${\mathrm{red}}: C^{\mathrm{an}}\longrightarrow{\mathcal {C}}_{k_s}$ defined as follows. For any $x\in C^{\mathrm{an}}$, let $H_x$ be the residue field of $x$ and let $R_x$ be the
valuation ring of $H_x$. By the valuative criterion of properness, the $K$-morphism $\operatorname{Spec}(H_x)\longrightarrow C$ extends uniquely to a $\operatorname{Spec}(R)$-morphism $\operatorname{Spec}(R_x)\longrightarrow {\mathcal {C}}$.
Define ${\mathrm{red}}(x)$ to be the image of the closed point of $\operatorname{Spec}(R_x)$ under this morphism, which is a point in ${\mathcal {C}}_{k_s}$.
Let $\eta$ be a generic point of an irreducible component of ${\mathcal {C}}_{k_s}$.
Then the inverse image ${\mathrm{red}}^{-1}(\eta)$ is a unique point $\xi_{\eta}$, called the \emph{Shilov point} of $\eta$, or the \emph{divisorial point} of the divisor $\overline{\eta}$.

Assume that $({\mathcal C}_{k_s})_{\mathrm{red}}$ is a divisor with strict
normal crossings on ${\mathcal C}$.
We call ${\mathcal C}$ an \emph{sncd-model}, as in \cite[\S 2.2.5]{MN15}, if the special fiber $({\mathcal C}_{k_s})_{\mathrm{red}}$ is a divisor with strict
normal crossings on ${\mathcal C}$.
Note that the special fiber ${\mathcal C}_{k_s}$ is not required to be reduced.
We only require that the reduced special fiber contain finitely many regular curves with normal crossings.
The \emph{dual graph} $\Gamma_{{\mathcal C}}$ is the graph whose
vertices are the irreducible components of $({\mathcal C}_{k_s})_{\mathrm{red}}$, and the edges connecting two irreducible components correspond to their intersection points.
The dual graph can be naturally embedded in the Berkovich space $C^{\mathrm{an}}$ as the skeleton of ${\mathcal C}$.
In this embedding, an irreducible component of ${\mathcal C}_{k_s}$ maps to the Shilov point of its generic point.
Also, there is a continuous retraction $\tau :C^{\mathrm{an}}\longrightarrow \Gamma_{{\mathcal C}}$; see \cite[Proposition 3.1.4 and \S 3.1.5]{MN15}.
The map $\tau$ is compatible with the reduction map in the sense that if ${\mathrm{red}}(x)$ is a regular point of the reduced special fiber, $\tau(x)$ corresponds to the irreducible component containing $\operatorname{red}(x)$; if ${\mathrm{red}}(x)$ is a double point of the reduced special fiber, $\tau(x)$ lies in the edge corresponding to this double point.

\subsection{Potential theory on algebraic curves}\label{sec:potential-theory}
\subsubsection{Local theory}
In this section, we briefly introduce both the potential theory and the capacity theory on $C^{\mathrm{an}}$.
For details, we refer to Rumely \cite{Rum89} and Thuillier \cite{Thu05}.

Let $K$ be a local field and $C$ a projective, smooth and geometrically integral curve over $K$.
For a divisor $D$ on $C$, a Green function $g_D$ of $D$ is a continuous function on $C^{\mathrm{an}}\setminus {\mathrm{supp}}(D)$ such that for each $x\in {\mathrm{supp}}(D)$, and each rational function $f$ such that ${\mathrm{div}}(f)$ coincides with $D$ in an open neighborhood of $x$, the function $g_D+\log\lvert f\rvert$ has a removable discontinuity at $x$.
We write $c_1(g_D):=\mathrm{dd}^c g_D+\delta_D$ for its curvature.
For $D=\sum_z a_z z$, put $\delta_D=\sum_z a_z\deg(z)\delta_z$, where $\delta_z$ is the unit Dirac measure at $z$.
Let $x$ be a closed point in $C$ and $H$ a compact subset of $C^{\mathrm{an}}\setminus \{x\}$.
A Green function $g_x$ at $x$ is a Green function of $x$ with $x$ viewed as a prime divisor.
Fix a Green function $g_x$ at $x$.
For any probability measure $\mu$ with compact support on $C^{\mathrm{an}}\setminus \{x\}$, there is a unique subharmonic function on $C^{\mathrm{an}}\setminus \{x\}$, called the \emph{potential} of $\mu$ with respect to $x$, such that $\mathrm{dd}^cU_\mu=\mu$ on $C^{\mathrm{an}}\setminus \{x\}$ and $U_{\mu}-g_x/\deg(x)$ extends continuously to $x$ with value $0$.
The function $G_x(x_1, x_2)$ is defined as $U_{\delta_{x_1}}(x_2)$.
This function is symmetric, and for the archimedean place, its exponential $d_x(x_1, x_2)=e^{G_x(x_1, x_2)}$ is a distance function on $C^{\mathrm{an}}\setminus \{x\}$, satisfying a weak triangle inequality.

Let $\mathscr{P}(H)$ be the set of all probability measures on $H$.
The \emph{energy integral} of $\mu$ with respect to $x$ is defined as
\begin{align*}
  I_x(\mu):&=\int_{H}U_{\mu}\,\mathrm{d}\mu\\
  &=\int_{H}\int_{H}\log d_x(x_1, x_2)\,\mathrm{d}\mu(x_1)\,\mathrm{d}\mu(x_2).
\end{align*}
The \emph{logarithmic capacity} of $H$ with respect to $x$ is defined as
$$V_x(H):=-\sup\limits_{\mu\in \mathscr{P}(H)}I_x(\mu), $$
and the \emph{capacity} is defined as
$$\operatorname{cap}_x(H):=e^{-V_x(H)}.$$
A \emph{polar set} is a compact set of capacity $0$.
Note that this does not depend on the choice of $x$.
\begin{proposition}
  Assume that $H$ is not polar.
  Then the supremum is attained by a unique probability measure $\mu_{H, x}$.
  Moreover, its potential $U_{H, x}$ attains its infimum $-V_x(H)$ on all of $H$ except on a polar set.
\end{proposition}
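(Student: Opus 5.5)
We prove existence of the equilibrium measure and then its equilibrium property, following Frostman's argument adapted to Berkovich curves as in Rumely and Thuillier. Throughout, $H$ is a compact subset of $C^{\mathrm{an}}\setminus\{x\}$. On $H\times H$ the kernel $G_x(x_1,x_2)=\log d_x(x_1,x_2)$ is bounded above, since $H$ stays away from $x$, and it is upper semicontinuous; it equals $-\infty$ on the diagonal at archimedean places and at type I points at non-archimedean places.

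\emph{Existence.} The set $\mathscr{P}(H)$ is weak-$*$ compact by Riesz–Markov and Banach–Alaoglu; in the Berkovich case we use that $H$ is compact Hausdorff. For $N\in\mathbb R$ the truncated kernel $\max(G_x,-N)$ is upper semicontinuous and bounded above. Hence $\mu\mapsto\iint\max(G_x,-N)\,\mathrm{d}\mu\,\mathrm{d}\mu$ is weak-$*$ upper semicontinuous on $\mathscr P(H)\times\mathscr P(H)$, which is compact with the product measures converging. By monotone convergence, $I_x$ is the decreasing limit of these functionals, so $I_x$ is also upper semicontinuous. Since $H$ is not polar, $\sup I_x>-\infty$, so the supremum is finite and attained.

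\emph{Uniqueness.} Take $\mu,\nu$ of finite energy and set $\sigma=\mu-\nu$, a signed measure of total mass $0$. The key fact is that $\iint G_x\,\mathrm{d}\sigma\,\mathrm{d}\sigma\le 0$, with equality only if $\sigma=0$. Because $\sigma(H)=0$, the pole normalisation at $x$ drops out. We then write the energy as $-\int \mathrm{d}U_\sigma\wedge \mathrm{d}^cU_\sigma$, the Dirichlet pairing, which is nonnegative by Thuillier's theory on Berkovich curves and by the classical theory at archimedean places. Then $I_x\big(\tfrac{\mu+\nu}2\big)=\tfrac12 I_x(\mu)+\tfrac12 I_x(\nu)-\tfrac14\iint G_x\,\mathrm{d}\sigma\,\mathrm{d}\sigma$. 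Strict concavity of $I_x$ then gives uniqueness of the maximiser $\mu_{H,x}$. This positive-definiteness step is the main obstacle. In the Berkovich setting I would first prove it for measures supported on finite subgraphs, where $G_x$ restricted to a metrized graph is the graph Green function and energy is the Dirichlet form, and then pass to general $\sigma$ by retraction to skeleta and monotone approximation.

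\emph{Equilibrium property.} Let $U=U_{H,x}$ and $V=V_x(H)$, so that $\int U\,\mathrm{d}\mu_{H,x}=-V$.

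First, $U\le -V$ on $\operatorname{supp}\mu_{H,x}$. Suppose instead that $U>-V+\varepsilon$ at some point $p$ of the support. By lower semicontinuity of $U$ (it is a potential: subharmonic up to sign convention, superharmonic form $-U$), the inequality persists on a neighbourhood $W$ of $p$ with $\mu(W)>0$. Since $\int U\,\mathrm{d}\mu=-V$, there is a set $A$ of positive mass where $U<-V+\varepsilon/2$. Moving a small mass from $W$ to $A$ changes $I_x$ by twice the first variation, which is negative, to first order. This contradicts maximality; the sign conventions must be handled carefully.

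Second, $U\ge -V$ on $H$ outside a polar set. Suppose the set $\{U<-V-\varepsilon\}\cap H$ is non-polar. Then it carries a compact non-polar subset $F$ with a probability measure $\nu$ of finite energy. The variation $\mu_t=(1-t)\mu_{H,x}+t\nu$ gives
\[
\frac{\mathrm{d}}{\mathrm{d}t}\Big|_{t=0} I_x(\mu_t)=2\Big(\int U\,\mathrm{d}\nu+V\Big)<0 .
\]
Taking $t<0$ is not allowed, so we reverse roles: by the first step the quantity is bounded, and this derivative contradicts maximality. Taking the union over $\varepsilon=1/n$, a countable union of polar sets is polar, which needs countable subadditivity of capacity; I would cite Rumely or Thuillier for this.

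Combining the two steps, $U=-V$ quasi-everywhere on $H$, which is the infimum statement. Finally, $\mu_{H,x}$ is a probability measure, so independence of the choice of $x$ for polarity follows because changing $x$ alters $G_x$ by a bounded function on $H\times H$.
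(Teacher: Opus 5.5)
\emph{Verdict.} Your existence and uniqueness steps are fine in outline, but the equilibrium property has a genuine gap: your inequalities run the wrong way and the propagation step is missing.

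\emph{What is fine, and what the paper does.} Existence via weak-$*$ compactness of $\mathscr P(H)$ and upper semicontinuity of $I_x$ is the standard Frostman argument. So is uniqueness via strict concavity of $I_x$, from negative definiteness of $G_x$ on signed measures of mass zero. The paper proves nothing here itself. It cites Rumely in the archimedean case and Thuillier in the non-archimedean case, after replacing the classical pole $x$ by a nearby type~II point that separates $H$ from $x$. You would need that reduction too if you rely on Thuillier's results with a classical pole.

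\emph{Where the equilibrium argument fails.}
\begin{itemize}
\item In the paper's normalization $U=U_{H,x}$ satisfies $\mathrm{dd}^cU=\mu_{H,x}\geq 0$. So $U$ is subharmonic and hence \emph{upper} semicontinuous, and $\{U>-V+\varepsilon\}$ need not contain a neighbourhood of $p$.
\item Your first claim, $U\le -V$ at every point of $\operatorname{supp}\mu_{H,x}$, is false. The support may contain irregular points of $H$, where $g_{H,x}=U+V>0$. Take $H=\{0\}\cup\bigcup_{n}\overline{D}(3\cdot 2^{-n-1},e^{-n^3})\subset\mathbb C$ and $x=\infty$. Every disc carries equilibrium mass, so $0\in\operatorname{supp}\mu_{H,x}$, but $H$ is thin at $0$ by Wiener's criterion.
\item Moving mass from $W$, where $U$ is large, to $A$, where $U$ is small, decreases $I_x$ to first order. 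That is compatible with maximality, so it yields no contradiction.
\item Your second step has the same problem. A negative one-sided derivative of $t\mapsto I_x(\mu_t)$ at $t=0^+$ is exactly what maximality permits. ``Reversing roles'' would require removing mass of $\mu_{H,x}$ from $F$, which $\mu_{H,x}$ need not have.
\end{itemize}

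\emph{What the variation actually gives, and what is missing.} Maximality forces $\int U\,\mathrm d\nu\le -V$ for every finite-energy probability measure $\nu$ on $H$. Each $\{U\ge -V+1/n\}\cap H$ is compact by upper semicontinuity, so $U\le -V$ on $H$ outside a polar set. The standard continuation is:
\begin{itemize}
\item $\mu_{H,x}$ has finite energy, so it does not charge polar sets, and $U\le -V$ holds $\mu_{H,x}$-almost everywhere.
\item Together with $\int U\,\mathrm d\mu_{H,x}=-V$, this gives $U=-V$ $\mu_{H,x}$-almost everywhere.
\item Upper semicontinuity then gives $U\ge -V$ on $\operatorname{supp}\mu_{H,x}$.
\end{itemize}
The last step is to propagate $U\ge -V$ from the support to all of $C^{\mathrm{an}}\setminus\{x\}$. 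This cannot come from perturbations along $H$. It needs a minimum principle for potentials (Maria's theorem, the continuity principle), available classically and on Berkovich curves via Thuillier and Baker--Rumely. That global lower bound is exactly the assertion that $-V_x(H)$ is the \emph{infimum} of $U_{H,x}$, and your proposal does not supply it. As written, neither the quasi-everywhere equality on $H$ nor the global lower bound is established.
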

Note that $\mu_{H, x}$ is supported on $\partial H$.
For the archimedean case, this was proved by Rumely \cite[\S 3.1]{Rum89}.
For the non-archimedean case, this was proved by Thuillier \cite[\S 3.6]{Thu05}.
Note that Thuillier's original proof treated the case when $x$ is a point of type II or III.
By contrast, one may easily deduce our case by separating $C^{\mathrm{an}}$ by a type II point near $x$, which makes all of $H$ lie in a connected component not containing $x$.
This does not change the equilibrium measure.

Assume that $H$ is not polar.
The measure $\mu_{H, x}$ is called the \emph{equilibrium measure} of $H$ with respect to the point $x$.
The function $g_{H, x}=U_{H, x}+V_x(H)$ is called the \emph{equilibrium potential} of $H$ with respect to the point $x$.
It is known that $g_{H, x}\geq 0$, and
\[
  g_{H, x}=0 \text{ on } H \text{ except for a polar set}.
\]
Moreover, $g_{H, x}$ is continuous outside this polar set.
For the archimedean case, if $H$ has finitely many connected components, then $g_{H, x}$ is known to be continuous.
For the non-archimedean case, $U_{H, x}$ is continuous as long as $H$ is locally admissible in $C^{\mathrm{an}}$.
For each $y\notin H, y\neq x$, the Green function of $H$ is defined as $G(x,y;H)=g_{H, x}(y)$.
Note that this does not depend on the choice of the Green function at $x$.

For a probability measure $\mu$ supported on $C^{\mathrm{an}}\setminus H$, let
$$
  \mu^H=\int_{y\in C^{\mathrm{an}}\setminus H}\mu_{H, y}\,\mathrm{d}\mu
$$
be the balayage of $\mu$ onto $\partial H$.
For simplicity, we assume that $\partial H$ does not contain any classical points if $K$ is non-archimedean.
Then we have the following inequality.
\begin{proposition} \label{prop:non-sub-avg}
  Let $\Omega=C^{\mathrm{an}}\setminus H$.
  For every function $f:\overline{\Omega}\longrightarrow{\mathbb {R}}\bigcup\{-\infty\}$ subharmonic on $\Omega$ and continuous on $\partial \Omega$, the inequality
  $$
    \int_{\Omega}f\,\mathrm{d} \mu\leq \int_{\partial \Omega}f\,\mathrm{d} \mu^H
  $$
  holds.
\end{proposition}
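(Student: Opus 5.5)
The inequality follows from one pointwise bound: for every $y\in\Omega$,
\[
f(y)\le\int_{\partial\Omega}f\,\mathrm{d}\mu_{H,y}.
\]
Granting this, we integrate over $y$ against $\mu$ and use Fubini together with the definition of the balayage $\mu^H=\int\mu_{H,y}\,\mathrm{d}\mu(y)$. Measurability of $y\mapsto\int f\,\mathrm{d}\mu_{H,y}$ is needed to justify Fubini; I would obtain it from the harmonicity of this function of $y$ on $\Omega$, established below.

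\textbf{The harmonic majorant.} Let $h(y)=\int_{\partial\Omega}f\,\mathrm{d}\mu_{H,y}$ be the Poisson/harmonic-measure integral of the boundary values of $f$. First, $h$ is harmonic on $\Omega$: for $f$ equal to a function $z\mapsto G_x(z,w)$ with $w\in H$, one computes $h$ directly from the symmetry of $G$ and the characterization of the potentials $U_{H,y}$. The general case follows by density of such combinations in $C(\partial H)$, which reduces to Stone--Weierstrass in the archimedean case and to density of piecewise-linear functions on finite skeleta in the Berkovich case (Thuillier). Second, $h(y)\to f(\zeta)$ as $y\to\zeta\in\partial\Omega$ for all $\zeta$ outside a polar set. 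This is the regularity statement corresponding to $g_{H,y}=0$ on $H$ off a polar set. Here the hypothesis that $\partial H$ contains no classical points is used in the non-archimedean case: it guarantees that the exceptional set is genuinely polar and does not swallow mass.

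\textbf{Maximum principle.} Next, $f-h$ is subharmonic on $\Omega$. Its $\limsup$ at every boundary point outside a polar set is at most $0$, because $f$ is continuous on $\partial\Omega$ and upper semicontinuous. It is also bounded above near $\partial\Omega$, since $f$ is bounded above on the compact set $\overline\Omega$ and the boundary data are bounded. The extended maximum principle, which tolerates a polar exceptional set, then gives $f-h\le 0$ on each connected component of $\Omega$. On the component containing the pole of the Green function, the potential is normalized at $x$; since $x$ lies in $\Omega$, the maximum principle applies on $\Omega$ with $x$ treated as an interior point, where $f$ is subharmonic.

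\textbf{Main obstacle.} The hardest step is the boundary regularity of the harmonic-measure integral together with the extended maximum principle in the Berkovich setting. I would not reprove these. Instead I would cite Thuillier's maximum principle and Dirichlet solution \cite[\S 3]{Thu05} for non-archimedean $v$, and Ransford's Theorem 4.3.3 and \S 4.4 adapted to Riemann surfaces via Rumely \cite{Rum89} for archimedean $v$. If $f$ takes the value $-\infty$ on a set, I would approximate it by the decreasing sequence $\max(f,-N)$, which are still subharmonic and continuous on $\partial\Omega$. The inequality for each $N$ then passes to the limit by monotone convergence on both sides.
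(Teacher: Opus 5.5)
Your overall strategy is the paper's: build a harmonic majorant of $f$ on $\Omega$ with boundary values $f$, apply the maximum principle, and integrate against $\mu$. For non-archimedean $K$ the paper takes the majorant to be Thuillier's Dirichlet solution $P_f$, which is harmonic on $\Omega$ and continuous on all of $\overline{\Omega}$. (The hypothesis on $\partial H$ is there because only type I points can be polar on a Berkovich curve, so every boundary point is regular and there is no exceptional set at all, not merely a ``genuinely polar'' one.) The archimedean case is cited as classical balayage. The paper's final equality tacitly uses $P_f(y)=\int f\,\mathrm{d}\mu_{H,y}$, i.e.\ that the equilibrium measure of $H$ relative to $y$ is the harmonic measure at $y$ of the component of $\Omega$ containing $y$. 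By defining $h(y)=\int f\,\mathrm{d}\mu_{H,y}$ directly, you take on the burden of proving that identity. Your argument for the harmonicity of $h$ does not work as written.

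In the archimedean case, $G_x(\cdot,w)$ with $w\in\partial H$ equals $-\infty$ at $w$, so it is not in $C(\partial H)$. Only $w\in\operatorname{int}(H)$ are usable, and $\operatorname{int}(H)$ can be empty, since closed real intervals are locally admissible. Even when it is not empty, linear combinations of potentials are not closed under multiplication, so Stone--Weierstrass does not give their density in $C(\partial H)$.

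There are two ways to repair this. First, you can cite the identification of $\mu_{H,y}$ with harmonic measure; then $h=P_f$ and your proof becomes the paper's. Second, you can run your Green-reciprocity computation with smooth test functions $\phi$, which do form a point-separating algebra. From $\mathrm{dd}^c g_{H,y}=\mu_{H,y}-\delta_y$ and $g_{H,y}=0$ quasi-everywhere on $H$ you get $\int\phi\,\mathrm{d}\mu_{H,y}=\phi(y)+\int_{\Omega}g_{H,y}(w)\,\mathrm{dd}^c\phi(w)$. The symmetry $g_{H,y}(w)=g_{H,w}(y)$ then shows that $\mathrm{dd}^c$ of the right-hand side in $y$ vanishes on $\Omega$; model functions play the same role on the Berkovich side. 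Taking uniform limits gives harmonicity of $h$ for every $f\in C(\partial\Omega)$.

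The remaining steps are fine: boundary behaviour off a polar set, the extended maximum principle, the truncation $\max(f,-N)$, and Fubini.
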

This is a classical result of balayage when $K$ is archimedean.
For example, see \cite[Satz 4.5]{CC63}.
For non-archimedean $K$, this is a consequence of the Dirichlet problem developed in \cite[\S3.1]{Thu05}, as follows.
\begin{proof}
  We prove the case when $K$ is non-archimedean.
  By \cite[Propositions 3.1.16, 3.1.19, 3.1.20]{Thu05}, there exists a function $P_f:\overline{\Omega}\longrightarrow{\mathbb {R}}$ harmonic on $\Omega$ and continuous on $\overline{\Omega}$, such that $P_f$ and $f$ coincide on $\partial \Omega$.
  By the maximum principle, $f\leq P_f$ on $\Omega$.
  Therefore, we have
  \begin{align*}
    \int_{\Omega}f\,\mathrm{d} \mu&\leq \int_{\Omega}P_f\,\mathrm{d} \mu
    = \int_{\partial \Omega}f\,\mathrm{d} \mu^H.
  \end{align*}
\end{proof}
For a probability measure $\mu$ on $C^{\mathrm{an}}$, we extend the balayage by changing only the measure outside $H$, that is, for $\mu(H)= 1$, $\mu^H=\mu$, and for $\mu(H)\neq 1$,
\[
  \mu^{H}=\mu|_{H}+(1-\mu(H))\left(\frac{\mu-\mu|_H}{1-\mu(H)}\right)^H.
\]
Let the \emph{balayage defect} be the function $u_\mu^H$ such that $\mathrm{dd}^c u_\mu^H=\mu^H-\mu$, normalized such that $u_\mu^H=0$ on $H$ except for a polar set.
Note that $u_\mu^H\geq 0$.

The following generalization of Rouch\'e's theorem over non-archimedean fields is crucial for our proof.
\begin{proposition}\label{prop:rou-thm}
  Assume that $K$ is non-archimedean.
  Let $f_1$ and $f_2$ be rational functions on $C$, with $f_1$ non-zero.
  Let $H$ be a strictly analytic domain containing no poles of $f_1$ and $f_2$.
  Assume that $\left|f_1\right|>\left|f_2\right|$ on $\partial H$.
  Then the number of zeros of $f_1$ in $H$, counted with multiplicity, is the same as that of $f_1+f_2$.
\end{proposition}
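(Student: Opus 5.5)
We prove the non-archimedean Rouché theorem (Proposition~\ref{prop:rou-thm}) by a continuity (homotopy) argument in the parameter $t\in[0,1]$ for the family $g_t=f_1+tf_2$. Rigidity of the zero count on $H$ comes from a Poincaré--Lelong (Thuillier) formula on the strictly analytic domain $H$: the number of zeros minus poles of a rational function in $H$ is determined by the outgoing slopes of $\log|\cdot|$ at the finitely many boundary points of $H$.

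\textbf{Step 1: the counting formula.} For a nonzero rational function $g$ with no poles in $H$ and no zeros on $\partial H$, we have
\[
  N_H(g)=\sum_{\zeta\in\partial H}\sum_{\vec v\in T_\zeta\text{ pointing into } H}\partial_{\vec v}\log|g|\cdot(\text{multiplicity weight}).
\]
This is the formula $\mathrm{dd}^c\log|g|=\delta_{\operatorname{div}(g)}$ integrated over $H$, i.e.\ the Green/Stokes formula for the Laplacian on $H$ from \cite[\S3]{Thu05}. It applies since $H$ is a strictly analytic domain: its boundary $\partial H$ consists of finitely many type II points, so the sum is finite and the boundary term only involves the germ of $\log|g|$ along the directions at $\partial H$ entering $H$. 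Since $H$ may not be connected, we treat each connected component separately.

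\textbf{Step 2: the boundary germs agree.} Fix $\zeta\in\partial H$. Since $|f_1(\zeta)|>|f_2(\zeta)|$ and both are continuous, this strict inequality persists on a neighborhood of $\zeta$. By the strong triangle inequality, on that neighborhood
\[
  |f_1+tf_2|=|f_1| \quad\text{for all } t\in[0,1].
\]
Hence $\log|f_1+f_2|$ and $\log|f_1|$ coincide near every boundary point, and so have the same directional derivatives there. In particular $f_1+f_2$ has no zeros on $\partial H$, and neither does $f_1$. The same inequality applied to $f_1+f_2$ shows it is nonzero. Note that no intermediate homotopy is actually needed: the ultrametric inequality gives equality of germs directly.

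\textbf{Step 3: conclude.} Neither function has poles in $H$, by the hypothesis on $f_1,f_2$. Applying Step 1 to $g=f_1$ and to $g=f_1+f_2$, the right-hand sides agree by Step 2, so the zero counts in $H$ are equal. Zeros of $g$ are classical points in the interior of $H$, counted with multiplicity times degree, consistent with $\delta_D$ as normalized above. If the count is meant as the number of geometric zeros, we pass to $\overline K$ or divide by degrees compatibly on both sides.

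\textbf{Main obstacle.} The delicate point is Step 1: making the boundary formula rigorous for a general strictly analytic domain in $C^{\mathrm{an}}$. Here one needs that $\log|g|$ is piecewise linear on a skeleton near $\partial H$, and that the measure $\mathrm{dd}^c\log|g|$ restricted to $H$ has total mass equal to the total inward boundary slope. I would first reduce to the case where $H$ is the complement of finitely many open residue-disc-type components attached at type II boundary points. I would then use the retraction onto a finite subgraph containing $\partial H$ and the zeros of $g$, and apply the discrete Laplacian mass formula on that graph (the sum of all Laplacian masses is zero). This yields Step 1.
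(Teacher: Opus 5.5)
Your proposal is correct and follows the paper's proof. Poincar\'e--Lelong identifies the mass of $\mathrm{dd}^c\log|g|$ on $H$ with the number of zeros of $g$ in $H$, and that mass depends only on the germ of $\log|g|$ along $\partial H$. There $\log|f_1|=\log|f_1+f_2|$ by the strict ultrametric inequality: the paper simply asserts this, and you justify it, just as your Step~1 spells out the locality of the mass that the paper uses implicitly.

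One sign correction to Step~1. With $\mathrm{dd}^c\log|g|=\delta_{\operatorname{div}(g)}$, the zero count in $H$ is \emph{minus} the sum of the inward slopes at $\partial H$, equivalently the sum of the outward slopes. For example, $\log|z|$ on the closed unit disc has inward slope $-1$ at the Gauss point and one zero inside.
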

\begin{proof}
  By the Poincar\'e--Lelong formula \cite[Proposition 3.3.15]{Thu05}, $\mathrm{dd}^c \log\left|f_1\right|=\delta_{{\mathrm{div}}(f_1)}$.
  Therefore, its mass on $H$ is the number of zeros of $f_1$ in $H$.
  Moreover, we have $\log\left|f_1\right|=\log\left|f_1+f_2\right|$ in a neighborhood of $\partial H$.
  Hence, the masses of $\mathrm{dd}^c \log\left|f_1\right|$ and $\mathrm{dd}^c \log\left|f_1+f_2\right|$ are the same on $H$.
  Applying the Poincar\'e--Lelong formula again to $f_1+f_2$, we get the proposition.
\end{proof}
\subsubsection{Global theory}
In this section, we introduce the global capacity theory on curves, including some useful technical lemmas.

Let $K$ be a number field and $C$ a projective, smooth and geometrically integral curve over $K$.
For a point $x\in C$, a place $v$ and a non-polar compact set $E_v$ in $C_v^{\mathrm{an}}\setminus x_v^{\mathrm{an}}$, let the \emph{equilibrium measure} of $E_v$ with respect to $x$ be the probability measure
$$
  \mu_{E_v, x_v}=\sum\limits_{x'\in x_v}\frac{\deg(x')}{\deg(x)}\mu_{E_v, x'}
$$
and the \emph{equilibrium potential}
$$
  g_{E_v, x_v}=\sum\limits_{x'\in x_v}\frac{\deg(x')}{\deg(x)}g_{E_v, x'}.
$$

Let ${X}$ be a non-empty finite set of closed points of $C$.
Set $U=C\setminus{X}$.
Let $\mathbb{E}=(E_v)_{v\in M_K}$ be an admissible adelic set on $U$.
For each $x\in {X}$, we take a rational function $f$ on $C$ with a simple pole at $x$, and for each $v\in M_K$ and $x'\in x_v$ we take a Green function $g_{x'}$ of $x'$ on $C_v^{\mathrm{an}}$ which equals $\log\left|f\right|_v$ in a neighborhood of $x'$.
For $x\neq y$, set
$$
  c_{x, y, v}=\sum\limits_{y'\in y_v}\frac{\deg(y')}{\deg(y)}g_{E_v, x_v}(y').
$$
Set
$$
  c_{x, x, v}=\sum\limits_{x'\in x_v}\frac1{\deg(x)}\lim\limits_{x''\to x'}\left(\deg(x')g_{E_v, x_v}(x'')-\frac{\deg(x')}{\deg(x)}g_{x'}(x'')\right).
$$
The \emph{Green's matrix} $\Gamma_X(\mathbb{E})$ is the symmetric matrix $(c_{x,y})_{x,y\in{X}}$ where $c_{x, y}=\sum_{v\in M_K}c_{x, y, v}$.
Note that this does not depend on the choice of the rational function.
The \emph{Cantor capacity} is defined as $\operatorname{cap}_X(\mathbb{E})=e^{-\mathrm{val} (\Gamma_X(\mathbb{E}))}$, where
$$
  \mathrm{val}(\Gamma)=\sup\limits_{\mathbf{s}\in\mathscr{P}^m}\inf\limits_{\mathbf{r}\in\mathscr{P}^m}\mathbf{s}^{\mathrm{T}}\Gamma\mathbf{r}
$$
for every $m\times m$ matrix $\Gamma$.
In the above, $\mathscr{P}^m$ denotes the set of $m$-dimensional probability vectors.
Note that our definition of Green's matrix is slightly different from that of Rumely \cite{Rum13}, whose rows and columns correspond to geometric points in ${X}$, while the rows and columns of our matrix correspond to closed points in ${X}$, obtained by averaging entries of Rumely's matrix over Galois orbits.
Since the Green's matrix defined in \cite{Rum13} is Galois stable, our definition of Cantor capacity gives the same number.
When the Cantor capacity $\operatorname{cap}_X(\mathbb{E})>1$, the matrix $\Gamma_X(\mathbb{E})$ is negative definite.

Let $\mathbb{E}=(E_v)_{v\in M_K}$ be an admissible adelic set on $U$. Let $D$ be an $\mathbb{R}$-divisor on $C$ of degree $1$.
Let $\{\mu_v\}_{v\in M_K}$ be a collection of compactly supported
probability measures on $E_v$ and assume that, for every
flat adelic divisor
$\overline{F}=(F,(g_{F,v})_{v\in M_K})$ with $F$ effective on $U$,
\[
  \sum_{v\in M_K}\int_{C_v^{\mathrm{an}}}
  g_{F,v}\,\mathrm{d}\mu_v
  \leq \overline{F}\cdot D.
\] This is the setting of Theorem \ref{thm:integral-point-construction} in Section \ref{sec:integral-point-construction}.
\begin{lemma}\label{lem:cap-mod-nbhd}
  The capacity $\operatorname{cap}_X(\mathbb{E})\geq1$.
\end{lemma}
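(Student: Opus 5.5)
We argue by contradiction. Suppose $\operatorname{cap}_X(\mathbb{E})<1$, i.e.\ $\mathrm{val}(\Gamma_X(\mathbb{E}))>0$. By the minimax theorem, $\mathrm{val}(\Gamma)=\min_{\mathbf r}\max_{\mathbf s}\mathbf s^{\mathrm T}\Gamma\mathbf r$. So there is a probability vector $\mathbf r=(r_x)_{x\in X}$ with $\sum_x c_{x,y}r_x>0$ for every $y\in X$. The plan is to turn this $\mathbf r$ into a flat adelic divisor $\overline F$, effective on $U$, that violates the hypothesis
\[
  \sum_v\int g_{F,v}\,\mathrm d\mu_v\le \overline F\cdot D .
\]

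\textbf{Construction.} Put $F=\sum_{x\in X}\frac{r_x}{\deg(x)}\,x$. Equip it at each place with the Green functions
\[
  g_{F,v}=\sum_x \frac{r_x}{\deg(x)}\,\deg(x)\,g_{E_v,x_v},
\]
scaled so that the curvature at $v$ is $\sum_x r_x\mu_{E_v,x_v}$. For all but finitely many non-archimedean $v$, $E_v=\operatorname{red}_v^{-1}(\mathcal U_{k_v})$, and $g_{E_v,x_v}$ is the model Green function of the Zariski closure of $x$. So this is a genuine adelic divisor, effective (support in $X$), and a nonnegative combination of the $g_{E_v,x_v}$.

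The connection to $\Gamma$ is as follows. Since $g_{E_v,x_v}=0$ on $E_v$ outside a polar set, and $\mu_v$ is supported on $E_v$, we get $\int g_{F,v}\,\mathrm d\mu_v=0$ provided $\mu_v$ does not charge polar sets. If it does, we only have $g\ge0$, which gives the wrong direction. So we use the curvature measure instead, as explained next.

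The main subtlety is that $\overline F$ need not be flat, since its curvature is $\sum r_x\mu_{E_v,x_v}\ne0$. We repair this by subtracting a pullback, or by passing to a flat divisor. One option is to take $\overline F'=\overline F-\pi^*(\text{constant})$ with the constants chosen via $\Gamma$. The better option is to use local intersection theory: the arithmetic self-pairing of $\overline F$ evaluated at points of $X$ is $\sum_{x,y}r_x r_y c_{x,y}$ up to normalization.

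\textbf{Using the hypothesis.} The most robust route is to evaluate the hypothesis on a flat divisor that equals $g_{F,v}$ minus its harmonic part. For each $y\in X$ we have $\overline F|_y=\sum_x r_xc_{x,y}$, which is positive by the choice of $\mathbf r$. Then:
\begin{itemize}
  \item the right-hand side $\overline F\cdot D$, with $D$ of degree $1$, is governed by heights of the points of $X$;
  \item the left-hand side vanishes on $E_v$.
\end{itemize}
Concretely, I would choose $D$-independent flat data: since the inequality must hold for all flat $\overline F$ effective on $U$, scaling and adding constants $\pi^*$ (which shift both sides equally by degree-$1$ normalization) lets us reduce to the case where $\overline F\cdot D$ is expressed through $\Gamma$. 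I expect this to force $\sum_x r_x c_{x,y}\le0$ for some $y$, a contradiction.

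\textbf{Expected obstacle.} The main difficulty is precisely this flatness/normalization bookkeeping: producing, from $\mathbf r$, a flat adelic divisor whose pairing with the degree-$1$ divisor $D$ is controlled by the Green's matrix. The Green functions $g_{E_v,x_v}$ are harmonic only off $E_v$, so flatness must be achieved by allowing $F$ to have a harmonic correction supported on $E_v$. This correction integrates to a constant against $\mu_v$.

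If this approach becomes too messy, the fallback is Theorem~\ref{thm:rumely-fekete-szego} in its contrapositive form combined with Fekete-type finiteness. When $\operatorname{cap}_X(\mathbb E)<1$, Rumely's theory gives a function $Q\in H^0(U,\mathcal O_U)$ with $\sum_v\log\lVert Q\rVert_{E_v,\sup}<0$, a Fekete–Szeg\H{o}-type polynomial. Viewing $\operatorname{div}(Q)$ with the Green functions $-\log|Q|_v+\log\lVert Q\rVert_{E_v,\sup}$, suitably flattened, gives an effective-on-$U$ test divisor. Then $\sum_v\int(-\log|Q|_v)\,\mathrm d\mu_v\ge-\sum_v\log\lVert Q\rVert_{E_v,\sup}>0$, while the right-hand side is $0$ by the product formula. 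This contradicts the hypothesis.
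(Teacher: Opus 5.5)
Your fallback paragraph is a complete proof, and it is exactly the paper's argument. If $\operatorname{cap}_X(\mathbb E)<1$, Rumely's theory gives $Q\in H^0(U,\mathcal O_U)$ with $|Q|_v\le 1$ on $E_v$ at the finite places and $|Q|_v<1$ on $E_v$ at the archimedean ones. Testing the hypothesis on the principal divisor $\widehat{\operatorname{div}}(Q)$, which is flat and is effective on $U$ because its poles lie in $X$, gives a left-hand side $\ge -\sum_v\log\lVert Q\rVert_{E_v,\mathrm{sup}}>0$ against a right-hand side equal to $0$.

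Two repairs are needed.
\begin{enumerate}
\item The input is misattributed. The contrapositive of Theorem~\ref{thm:rumely-fekete-szego} only says that finiteness of the set of points totally in $\mathbb E$ forces $\operatorname{cap}_X(\mathbb E)\le 1$; it cannot produce $Q$. What you need is the Fekete half of Rumely's theory, \cite[Theorem 6.2.1]{Rum89}, which is what the paper cites.
\item No ``flattening'' is needed, since $\widehat{\operatorname{div}}(Q)$ is already flat, but you must keep the Green functions consistent. With $-\log|Q|_v$ the right-hand side is $0$. With your shifted functions $-\log|Q|_v+\log\lVert Q\rVert_{E_v,\mathrm{sup}}$, the left-hand side is $\ge 0$ and the right-hand side is $\sum_v\log\lVert Q\rVert_{E_v,\mathrm{sup}}<0$. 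Either version gives the contradiction, but your text mixes the two.
\end{enumerate}

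Your main route, on the other hand, has a genuine gap and should be dropped rather than presented as the proof. The divisor $F=\sum_x r_x x/\deg(x)$ has degree $1$ and positive coefficients on $X$. So it admits no flat lift (zero curvature forces degree $0$), and it is not of the form the hypothesis allows, namely $F_+-F_-$ with $F_+$ effective and $F_-$ supported on $X$.

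More fundamentally, the hypothesis only constrains how the $\mu_v$ pair with flat divisors. In the language of Lemma~\ref{lem:gvf-on-curves}, it says that $D$ and $\{\mu_v\}$ define a normalized GVF functional, integral with respect to $\mathbb E$, with $m_x=0$ on $X$. No amount of rescaling or adding constants $\pi^*(\cdot)$ makes the Green's matrix appear.

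The step ``I expect this to force $\sum_x r_xc_{x,y}\le 0$'' is where the entire content of the lemma sits, and it needs a global existence statement. One option is a small function as in your fallback. Another is Zhang's successive-minima inequality combined with Szachniewicz's approximation theorem, applied to the semipositive divisor $\bigl(\sum_x r_x x/\deg(x),(\sum_x r_x g_{E_v,x_v})_v\bigr)$: its self-intersection is $\mathbf r^{\mathrm T}\Gamma_X(\mathbb E)\mathbf r>0$, while its value on that functional is $0$. That salvage is far heavier than the fallback.
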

\begin{proof}
  We prove by contradiction.
  Assume that $\operatorname{cap}_X(\mathbb{E})<1$.
  By \cite[Theorem 6.2.1]{Rum89}, there exists a rational function $f$ on $C$ with poles in ${X}$, such that $|f|_v\leq1$ on $E_v$ for all $v\in M_{K,f}$ and that $|f|_v<1$ for all $v\in M_{K,\infty}$.
  Then $\int_{C_v^{\mathrm{an}}}-\log|f|_v\,\mathrm{d}\mu_v\geq 0$, and the inequality is strict for archimedean $v$.
  Thus,
  $$
    \sum\limits_{v\in M_K}\int_{C_v^{\mathrm{an}}}-\log|f|_v\,\mathrm{d}\mu_v> 0.
  $$
  Consider the flat arithmetic divisor $\widehat{\mathrm{div}}(f)$.
  Since this divisor is principal, the right-hand side of the displayed inequality above is $0$.
  On the other hand, the left-hand side is greater than $0$.
  This leads to a contradiction.
\end{proof}

\begin{lemma}\label{lem:semi-def}
  Let $\Gamma$ be a symmetric matrix with all off-diagonal elements nonnegative. Assume that each proper principal submatrix of $\Gamma$ is negative semidefinite.
  If $\mathbf{x}$ is an eigenvector corresponding to a positive eigenvalue and has at least one positive coordinate, then all coordinates of $\mathbf{x}$ are positive.
\end{lemma}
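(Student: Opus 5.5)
The plan is to argue by contradiction, splitting the index set according to the sign of $\mathbf{x}$ and testing the eigen-equation only on the positive block. Let $\Gamma$ be $m\times m$, and let $\Gamma\mathbf{x}=\lambda\mathbf{x}$ with $\lambda>0$. Set
\[
  P=\{i: x_i>0\},\qquad N=\{i: x_i\leq 0\}.
\]
By hypothesis $P\neq\varnothing$. Suppose, for contradiction, that $N\neq\varnothing$. Then $P$ is a proper non-empty subset of the index set, so the principal submatrix $\Gamma_{PP}$ is a proper principal submatrix and hence negative semidefinite.

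Write $\mathbf{x}_P$ for the restriction of $\mathbf{x}$ to $P$, which has strictly positive entries. Write $\mathbf{y}=-\mathbf{x}_N$, which has nonnegative entries. Reading the eigen-equation on the rows indexed by $P$ gives
\[
  \Gamma_{PP}\mathbf{x}_P-\Gamma_{PN}\mathbf{y}=\lambda\mathbf{x}_P .
\]
The block $\Gamma_{PN}$ consists of off-diagonal entries of $\Gamma$, since $P\cap N=\varnothing$, so it is entrywise nonnegative. Together with $\mathbf{y}\geq 0$ this gives $\Gamma_{PN}\mathbf{y}\geq 0$ entrywise.

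Now take the inner product of the displayed identity with $\mathbf{x}_P$. Because $\mathbf{x}_P>0$ entrywise, we get $\mathbf{x}_P^{\mathrm T}\Gamma_{PN}\mathbf{y}\geq 0$, and therefore
\[
  \mathbf{x}_P^{\mathrm T}\Gamma_{PP}\mathbf{x}_P=\lambda\lVert\mathbf{x}_P\rVert^2+\mathbf{x}_P^{\mathrm T}\Gamma_{PN}\mathbf{y}\geq\lambda\lVert\mathbf{x}_P\rVert^2>0 .
\]
The last inequality uses $\lambda>0$ and $\mathbf{x}_P\neq 0$. This contradicts the negative semidefiniteness of $\Gamma_{PP}$. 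Hence $N=\varnothing$, so every coordinate of $\mathbf{x}$ is strictly positive.

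The only step needing care is placing zero coordinates into $N$ rather than $P$. This is what makes the conclusion strict positivity and keeps $\mathbf{y}\geq 0$, so that the sign of $\Gamma_{PN}\mathbf{y}$ is controlled. With that choice the argument is a direct computation and I expect no real obstacle.
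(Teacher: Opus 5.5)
Your proof is correct and is essentially the paper's argument: both split $\mathbf{x}$ into its positive and nonpositive parts, use that the cross term $\mathbf{x}_+^{\mathrm{T}}\Gamma\mathbf{x}_-$ is nonnegative because it involves only off-diagonal entries, and obtain $\mathbf{x}_+^{\mathrm{T}}\Gamma\mathbf{x}_+>0$, contradicting negative semidefiniteness of a proper principal submatrix. The only difference is where that positivity comes from. The paper starts from $\mathbf{x}^{\mathrm{T}}\Gamma\mathbf{x}>0$, so it also needs semidefiniteness of the block on the nonpositive coordinates to control $\mathbf{x}_-^{\mathrm{T}}\Gamma\mathbf{x}_-$, but it never uses the eigen-equation beyond that quadratic-form inequality. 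You instead pair the eigen-equation with $\mathbf{x}_P$ directly and so need only the semidefiniteness of $\Gamma_{PP}$.
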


\begin{proof}
  Note that $\mathbf{x}^{\mathrm{T}}\Gamma\mathbf{x}>0$.
  Let $\mathbf{x}_+=(\max\{x_i,0\})_i$ and $\mathbf{x}_-=(\max\{-x_i,0\})_i$, so that $\mathbf{x}=\mathbf{x}_+-\mathbf{x}_-$.
  Since all off-diagonal elements of $\Gamma$ are nonnegative, $\mathbf{x}_+^{\mathrm{T}}\Gamma\mathbf{x}_-\geq0$.
  Since each proper principal submatrix of $\Gamma$ is negative semidefinite, $\mathbf{y}^{\mathrm{T}}\Gamma\mathbf{y}\leq0$ for every vector $\mathbf{y}$ with at least one zero coordinate.
  Thus, $\mathbf{x}_+^{\mathrm{T}}\Gamma\mathbf{x}_+=\mathbf{x}^{\mathrm{T}}\Gamma\mathbf{x}+2\mathbf{x}_+^{\mathrm{T}}\Gamma\mathbf{x}_--\mathbf{x}_-^{\mathrm{T}}\Gamma\mathbf{x}_->0$.
  Therefore, all coordinates of $\mathbf{x}_+$ are non-zero, and we obtain that all coordinates of $\mathbf{x}$ are positive.
\end{proof}

\begin{proposition}\label{prop:gre-semi}
  The Green's matrix $\Gamma_X(\mathbb{E})$ is negative semidefinite.
\end{proposition}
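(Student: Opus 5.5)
The plan is to argue by induction on the number $m=|X|$ of closed points, using Lemma \ref{lem:cap-mod-nbhd} and Lemma \ref{lem:semi-def}. We are in the standing setting: the family $\{\mu_v\}$ satisfies the flat-divisor inequality, so $\operatorname{cap}_X(\mathbb E)\ge 1$. Recall the structure of $\Gamma=\Gamma_X(\mathbb E)$. The off-diagonal entries $c_{x,y}=\sum_v c_{x,y,v}$ are nonnegative, since each $g_{E_v,x_v}\ge 0$. The value $\mathrm{val}(\Gamma)\le 0$ because the capacity is at least $1$.

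For the reduction, I note two facts. First, a nonempty subset $X'\subsetneq X$ gives the principal submatrix $\Gamma_{X'}(\mathbb E)$, computed on the adelic set $\mathbb E$ regarded on $U'=C\setminus X'$. Second, the same measures $\mu_v$ still satisfy the hypothesis for $U'$: a flat adelic divisor whose finite part is effective on $U'$ is in particular effective on $U\subset U'$, so the defining inequality only has fewer test divisors. Hence Lemma \ref{lem:cap-mod-nbhd} applies to each $X'$ and gives $\mathrm{val}(\Gamma_{X'})\le 0$. By induction, every proper principal submatrix is negative semidefinite. The base case $m=1$ is just $c_{x,x}=\mathrm{val}\le 0$. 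Admissibility on $U'$ should not be an issue, because the relevant non-archimedean reduction sets stay the same.

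For the inductive step, suppose $\Gamma$ is not negative semidefinite. Take an eigenvector $\mathbf x$ for the largest eigenvalue $\lambda>0$, replacing it by $-\mathbf x$ if needed so that it has a positive coordinate. Lemma \ref{lem:semi-def} then shows that all coordinates of $\mathbf x$ are positive. Normalize $\mathbf x$ to a probability vector $\mathbf s$. Then $\Gamma\mathbf s=\lambda\mathbf s$ has all entries positive, so for every $\mathbf r\in\mathscr P^m$ we get $\mathbf s^{\mathrm T}\Gamma\mathbf r=\lambda\,\mathbf s^{\mathrm T}\mathbf r>0$, using symmetry. The infimum of this over the compact simplex is therefore strictly positive, and so $\mathrm{val}(\Gamma)>0$. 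That means $\operatorname{cap}_X(\mathbb E)<1$, contradicting Lemma \ref{lem:cap-mod-nbhd}. Hence $\Gamma$ is negative semidefinite.

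The main obstacle I expect is the inheritance step: checking that the hypothesis of Lemma \ref{lem:cap-mod-nbhd} really passes to subsets $X'\subset X$. In particular one must check that the Green's matrix for $X'$ is the principal submatrix of $\Gamma_X(\mathbb E)$. This follows because $c_{x,y,v}$ depends only on $x$, $y$ and $E_v$. One must also check that Rumely's theorem [Rum89, 6.2.1], used inside Lemma \ref{lem:cap-mod-nbhd}, still applies with poles restricted to $X'$. If that inheritance turns out to be delicate, my fallback is to rerun the proof of Lemma \ref{lem:cap-mod-nbhd} directly: the function $f$ with poles in $X'\subset X$ produces a principal flat divisor that is effective on $U$, and so the original hypothesis already yields the contradiction.
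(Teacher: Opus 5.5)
Your argument is correct and is essentially the paper's proof: induction on $\#X$, then Lemma \ref{lem:semi-def} to make an eigenvector for a positive eigenvalue strictly positive, then a contradiction with $\operatorname{cap}_X(\mathbb E)\geq1$ from Lemma \ref{lem:cap-mod-nbhd}. The paper first enlarges $K$ so that $X$ consists of $K$-points, while you induct over closed points directly; this also works, since the closed-point matrix is already symmetric with nonnegative off-diagonal entries and its principal submatrices are the matrices $\Gamma_{X'}(\mathbb E)$.

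One remark of yours is imprecise, namely the claim that admissibility on $C\setminus X'$ is unaffected. Keeping the same reduction sets actually makes $\mathbb E$ not literally admissible there, because those sets omit the reductions of $X\setminus X'$ at almost every place. The fix is to enlarge $E_v$ at the good places to $\operatorname{red}_v^{-1}\bigl((\mathcal C\setminus\overline{X'})_{k_v}\bigr)$: this leaves every entry $c_{x,y,v}$ with $x,y\in X'$ unchanged, so your fallback via Rumely's theorem goes through. The paper passes over this point as well.
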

\begin{proof}
  After taking a field extension of $K$, we may assume that ${X}$ consists of $K$-points.
  We prove by induction on $\#{X}$.
  We assume that the statement is true for all sets of $K$-points whose cardinality is less than $\#{X}$.
  Then each proper principal submatrix of $\Gamma_X(\mathbb{E})$ is negative semidefinite.
  By Lemma \ref{lem:semi-def}, if any eigenvalue of $\Gamma_X(\mathbb{E})$ is positive, its eigenvector can be taken to have all coordinates positive.
  This contradicts the fact that $\operatorname{cap}_X(\mathbb{E})\geq1$ guaranteed by Lemma \ref{lem:cap-mod-nbhd}.
\end{proof}
\begin{lemma}\label{lem:mat-inv}
  Let $\mathbf{x}\in{\mathbb {R}}^{{X}}$.
  If every coordinate of $\Gamma_X(\mathbb{E})\mathbf{x}$ is negative, then every coordinate of $\mathbf{x}$ is positive.
\end{lemma}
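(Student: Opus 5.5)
We argue by contradiction, using that $\Gamma=\Gamma_X(\mathbb E)$ is symmetric, has nonnegative off-diagonal entries, and is negative semidefinite (Proposition \ref{prop:gre-semi}). The nonnegativity of the off-diagonal entries is automatic: for $x\neq y$, each $c_{x,y,v}$ is an average of values of the equilibrium potential $g_{E_v,x_v}\geq 0$, so $c_{x,y}\geq 0$. Assume $\Gamma\mathbf x=\mathbf b$ with every $b_x<0$. The plan is to test $\mathbf x$ against its negative part, and then upgrade nonnegativity of $\mathbf x$ to strict positivity.

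\textbf{Step 1: $\mathbf x\geq 0$.} Write $\mathbf x=\mathbf x_+-\mathbf x_-$ as in the proof of Lemma \ref{lem:semi-def}, and suppose $\mathbf x_-\neq 0$. Pair the equation with $\mathbf x_-$:
\[
  \mathbf x_-^{\mathrm T}\Gamma\mathbf x_+-\mathbf x_-^{\mathrm T}\Gamma\mathbf x_-=\mathbf x_-^{\mathrm T}\mathbf b.
\]
The vector $\mathbf x_-$ is nonnegative and nonzero, and every coordinate of $\mathbf b$ is negative, so the right-hand side is strictly negative. On the left, $\mathbf x_-^{\mathrm T}\Gamma\mathbf x_+\geq 0$. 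This is because $\mathbf x_+$ and $\mathbf x_-$ have disjoint supports, so only off-diagonal entries contribute, and these are nonnegative. Also $-\mathbf x_-^{\mathrm T}\Gamma\mathbf x_-\geq 0$ by negative semidefiniteness. Hence the left-hand side is nonnegative, a contradiction. Therefore $\mathbf x_-=0$, i.e. $\mathbf x\geq 0$.

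\textbf{Step 2: $\mathbf x>0$.} Suppose some coordinate $x_{x_0}=0$. Then
\[
  b_{x_0}=\sum_{y\neq x_0}c_{x_0,y}x_y\geq 0,
\]
since each $c_{x_0,y}\geq 0$ and each $x_y\geq 0$. This contradicts $b_{x_0}<0$. Hence every coordinate of $\mathbf x$ is positive.

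The only delicate point is the sign of the off-diagonal entries, which relies on $g_{E_v,x_v}\geq 0$ at every place. This holds by the definition of the equilibrium potential, including at the non-archimedean places, so no real obstacle is expected.
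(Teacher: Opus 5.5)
Your proof is correct, and Step 1 is exactly the paper's argument: pair $\Gamma_X(\mathbb{E})\mathbf{x}$ with $\mathbf{x}_-$, then use that the off-diagonal entries are nonnegative and that $\Gamma_X(\mathbb{E})$ is negative semidefinite by Proposition~\ref{prop:gre-semi}. For strict positivity you read off the row at a vanishing coordinate, whereas the paper notes that $\Gamma_X(\mathbb{E})(\mathbf{x}-\varepsilon\mathbf{1})$ still has negative coordinates for small $\varepsilon>0$ and reapplies Step 1 to get $\mathbf{x}\geq\varepsilon\mathbf{1}$; both routes are valid.
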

\begin{proof}
  Let $\mathbf{x}_+$ and $\mathbf{x}_-$ be defined by the same convention as above. Since every coordinate of $\Gamma_X(\mathbb{E})\mathbf{x}$ is negative,
  \[
    0\leq (-\mathbf{x}_-)^{\mathrm{T}}\Gamma_X(\mathbb{E})\mathbf{x}
    =-\mathbf{x}_-^{\mathrm{T}}\Gamma_X(\mathbb{E})\mathbf{x}_+
    +\mathbf{x}_-^{\mathrm{T}}\Gamma_X(\mathbb{E})\mathbf{x}_-
    \leq \mathbf{x}_-^{\mathrm{T}}\Gamma_X(\mathbb{E})\mathbf{x}_-
    \leq 0.
  \]
  Thus, $\mathbf{x}_{-}=\mathbf{0}$.
  Therefore, every coordinate of $\mathbf{x}$ is nonnegative.
  On the other hand, for $\varepsilon$ small enough, every coordinate of $\Gamma_X(\mathbb{E})(\mathbf{x}-\varepsilon\mathbf{1})$ is also negative.
  This forces the coordinates of $\mathbf{x}$ to be positive.
\end{proof}

\begin{lemma}\label{lem:can-cap-sol}Assume that $\operatorname{cap}_X(\mathbb{E})>1$.
  Let $\mathbf{x}\in{\mathbb {R}}^{{X}}$, and let $c\geq 0$ be a number greater than or equal to every coordinate of $\mathbf{x}$.
  Then every coordinate of $\Gamma_X(\mathbb{E})^{-1}\mathbf{x}$ is at least $-c/(\log\operatorname{cap}_X(\mathbb{E}))$.
\end{lemma}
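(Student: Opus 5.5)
Write $\gamma=\log\operatorname{cap}_X(\mathbb{E})>0$, so that $\mathrm{val}(\Gamma_X(\mathbb{E}))=-\gamma$, and abbreviate $\Gamma=\Gamma_X(\mathbb{E})$. The plan is a short linear-algebra argument using three facts. First, $\Gamma$ is invertible, since it is negative definite when $\operatorname{cap}_X(\mathbb{E})>1$. Second, $\Gamma$ is symmetric with nonnegative off-diagonal entries: for $x\neq y$, $c_{x,y}=\sum_v c_{x,y,v}$ is a sum of values of equilibrium potentials $g_{E_v,x_v}\geq 0$. Third, the definition of $\mathrm{val}$ gives a pointwise bound, derived next.

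For a fixed $\mathbf{s}\in\mathscr{P}^m$, the infimum of the linear function $\mathbf{r}\mapsto \mathbf{s}^{\mathrm{T}}\Gamma\mathbf{r}$ over the simplex is attained at a vertex. Hence it equals $\min_j(\Gamma\mathbf{s})_j$, using the symmetry of $\Gamma$. Therefore
\[
  \min_{j}(\Gamma\mathbf{s})_j\leq \mathrm{val}(\Gamma)=-\gamma\qquad\text{for every }\mathbf{s}\in\mathscr{P}^m .
\]

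Set $\mathbf{y}=\Gamma^{-1}\mathbf{x}$ and write $\mathbf{y}=\mathbf{y}_+-\mathbf{y}_-$ with the same convention as in Lemma \ref{lem:semi-def}. If $\mathbf{y}_-=\mathbf{0}$ there is nothing to prove, because $c\geq 0$. Otherwise put $m=\max_j(\mathbf{y}_-)_j>0$ and $\mathbf{s}=\mathbf{y}_-/\lVert\mathbf{y}_-\rVert_1\in\mathscr{P}^m$. By the displayed bound, there is an index $j$ with $(\Gamma\mathbf{s})_j\leq-\gamma<0$.

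The key observation is that this index $j$ must lie in the support of $\mathbf{y}_-$. Indeed, if $(\mathbf{y}_-)_j=0$, then $(\Gamma\mathbf{y}_-)_j$ involves only off-diagonal entries of $\Gamma$ against nonnegative coordinates, so it is $\geq 0$. For this $j$ we also have $(\mathbf{y}_+)_j=0$, so the same reasoning gives $(\Gamma\mathbf{y}_+)_j\geq 0$. Consequently
\[
  c\geq x_j=(\Gamma\mathbf{y}_+)_j-(\Gamma\mathbf{y}_-)_j\geq -\lVert\mathbf{y}_-\rVert_1(\Gamma\mathbf{s})_j\geq \gamma\lVert\mathbf{y}_-\rVert_1\geq \gamma m .
\]
Thus $m\leq c/\gamma$, i.e.\ every coordinate of $\mathbf{y}$ is at least $-c/\log\operatorname{cap}_X(\mathbb{E})$.

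The only delicate point is the identification $\inf_{\mathbf{r}}\mathbf{s}^{\mathrm{T}}\Gamma\mathbf{r}=\min_j(\Gamma\mathbf{s})_j$ together with the sign convention in $\mathrm{val}$. Once this is settled, the sign pattern of $\Gamma$ forces the minimizing index into the support of $\mathbf{y}_-$, exactly as in Lemma \ref{lem:mat-inv}.
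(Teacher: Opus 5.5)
Your proof is correct, and it takes a different route from the paper's.

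\textbf{The paper's route.} The paper splits $\mathbf{x}=(\mathbf{x}-c\mathbf{1})+c\mathbf{1}$. By Lemma \ref{lem:mat-inv}, $\Gamma^{-1}(\mathbf{x}-c\mathbf{1})$ is nonnegative; that lemma rests on the negative semidefiniteness of Proposition \ref{prop:gre-semi}. For the second piece the paper asserts, without proof, that $-(\log\operatorname{cap}_X(\mathbb{E}))\Gamma^{-1}\mathbf{1}$ is a probability vector. This amounts to saying that the equalizing strategy proportional to $\Gamma^{-1}\mathbf{1}$ is optimal, i.e.\ $\mathrm{val}(\Gamma)=(\mathbf{1}^{\mathrm T}\Gamma^{-1}\mathbf{1})^{-1}$.

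\textbf{Your route.} You never identify the optimal strategy. You test the defining inequality of $\mathrm{val}$ against one cleverly chosen strategy, $\mathbf{s}=\mathbf{y}_-/\lVert\mathbf{y}_-\rVert_1$. The sign pattern of $\Gamma$ then forces the minimizing index into the support of $\mathbf{y}_-$.

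\textbf{What each approach buys.} Your argument needs only symmetry, nonnegative off-diagonal entries, $\mathrm{val}(\Gamma)<0$, and invertibility (the last just so that $\Gamma^{-1}$ makes sense). It uses neither semidefiniteness nor the value formula. Its case $c=0$ even recovers the non-strict form of Lemma \ref{lem:mat-inv}, and it yields the slightly stronger bound $\lVert\mathbf{y}_-\rVert_1\leq c/\log\operatorname{cap}_X(\mathbb{E})$. In exchange, the paper's decomposition gives the sharper coordinatewise bound $y_j\geq -\frac{c}{\log\operatorname{cap}_X(\mathbb{E})}\,p_j$, where $\mathbf{p}$ is the optimal probability vector. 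It also isolates the inverse-positivity statement that is reused, together with this lemma, in Proposition \ref{prop:glo-sec}.

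A cosmetic point: you use $m$ both for the dimension in $\mathscr{P}^m$ and for $\max_j(\mathbf{y}_-)_j$.
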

\begin{proof}
  Let $\mathbf{x}'=\mathbf{x}-c\mathbf{1}$.
  By Lemma \ref{lem:mat-inv}, every coordinate of $\Gamma_X(\mathbb{E})^{-1}\mathbf{x}'$ is nonnegative.
  On the other hand, $-(\log\operatorname{cap}_X(\mathbb{E}))\Gamma_X(\mathbb{E})^{-1}\mathbf{1}$ is a probability vector.
  Therefore, every coordinate of $c\Gamma_X(\mathbb{E})^{-1}\mathbf{1}$ is at least $-c/(\log\operatorname{cap}_X(\mathbb{E}))$.
\end{proof}
\begin{lemma}\label{lem:gre-def}
  Fix an archimedean place $v_0$.
  Assume that each connected component of $C_{v_0}^{\mathrm{an}}\setminus {\mathrm{supp}}(\mu_{v_0})$ that contains points in ${X}$ intersects $E_{v_0}$.
  Then $\operatorname{cap}_X(\mathbb{E})>1$.
\end{lemma}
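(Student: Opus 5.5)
We argue by contradiction, comparing $\mathbb E$ with a smaller adelic set $\mathbb E'$ that still carries the measures $\mu_v$. Throughout, $\operatorname{cap}_X(\mathbb E)\geq 1$ by Lemma~\ref{lem:cap-mod-nbhd}, and $\Gamma_X(\mathbb E)$ is negative semidefinite by Proposition~\ref{prop:gre-semi}.

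First, suppose $\operatorname{cap}_X(\mathbb E)=1$, i.e.\ $\mathrm{val}(\Gamma_X(\mathbb E))=0$. Choose an optimal probability vector $\mathbf s$ in the definition of $\mathrm{val}$. Then every coordinate of $\Gamma_X(\mathbb E)\mathbf s$ is $\geq 0$, so $\mathbf s^{\mathrm T}\Gamma_X(\mathbb E)\mathbf s\geq 0$. Negative semidefiniteness forces $\mathbf s^{\mathrm T}\Gamma_X(\mathbb E)\mathbf s=0$. For a semidefinite matrix a vector with vanishing quadratic form lies in the kernel, so $\Gamma_X(\mathbb E)\mathbf s=0$. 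We thus obtain a nonzero nonnegative kernel vector $\mathbf s$.

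Second, we shrink $E_{v_0}$. Let $E'_{v_0}\subset E_{v_0}$ be a locally admissible compact neighborhood of $\operatorname{supp}(\mu_{v_0})$, chosen so close to $\operatorname{supp}(\mu_{v_0})$ that every connected component of $C_{v_0}^{\mathrm{an}}\setminus E'_{v_0}$ containing a point of $X$ contains a non-polar piece of $E_{v_0}\setminus E'_{v_0}$. The hypothesis of the lemma (each such component of the complement of $\operatorname{supp}(\mu_{v_0})$ meets $E_{v_0}$) is exactly what makes this choice possible. Replacing $E_{v_0}$ by $E'_{v_0}$ and keeping all other $E_v$ gives an admissible adelic set $\mathbb E'$. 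The measures $\mu_v$ are still supported on $E'_v$, so the standing hypothesis holds for $\mathbb E'$. Hence Lemma~\ref{lem:cap-mod-nbhd} and Proposition~\ref{prop:gre-semi} apply to $\mathbb E'$, and $\Gamma_X(\mathbb E')$ is negative semidefinite.

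Third, set $P=\Gamma_X(\mathbb E')-\Gamma_X(\mathbb E)$. Only the $v_0$-contribution changes. We then get
\[
  0\geq \mathbf s^{\mathrm T}\Gamma_X(\mathbb E')\mathbf s=\mathbf s^{\mathrm T}P\mathbf s ,
\]
and it remains to show $\mathbf s^{\mathrm T}P\mathbf s>0$ for the contradiction. This is the main step. I would write the $v_0$-part of $\mathbf s^{\mathrm T}\Gamma\mathbf s$ as an energy: for the weighted pole divisor $\sum_x s_x x$, consider
\[
  h=\sum_x s_x\bigl(g_{E'_{v_0},x_{v_0}}-g_{E_{v_0},x_{v_0}}\bigr).
\]
The logarithmic singularities cancel, so $h$ is continuous. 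It is harmonic off $E_{v_0}$ and vanishes on $E'_{v_0}$ up to a polar set. The quantity $\mathbf s^{\mathrm T}P\mathbf s$ is then (up to the positive normalizing factors $\deg$) $\sum_x s_x\,h(x_{v_0})$, where $h(x_{v_0})$ denotes the $\deg$-weighted average of $h$ over the points of $x_{v_0}$; for $x=y$ the diagonal constants $c_{x,x,v_0}$ are defined by removing the same singular term, so this holds there as well.

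To conclude positivity, I would show $h\geq 0$ everywhere and $h>0$ on each component of $C_{v_0}^{\mathrm{an}}\setminus E'_{v_0}$ containing a point of $X$. The inequality $h\geq 0$ follows from $g_{E_{v_0},x}\leq g_{E'_{v_0},x}$ (monotonicity of Green functions in the compact set). Strict positivity follows from the maximum principle: each summand $g_{E'}-g_E$ is a nonnegative harmonic function on the relevant component. It is not identically zero there, because $g_{E'}>0=g_E$ on the non-polar set $E_{v_0}\setminus E'_{v_0}$ lying in that component. Since $\mathbf s\neq 0$ is nonnegative, we obtain $\mathbf s^{\mathrm T}P\mathbf s>0$. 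This contradicts the display above, so $\operatorname{cap}_X(\mathbb E)>1$.

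The delicate points are the bookkeeping of Galois orbits in $x_{v_0}$, so that the identity between $\mathbf s^{\mathrm T}P\mathbf s$ and the weighted values of $h$ holds including the diagonal terms, and ensuring $E'_{v_0}$ can be chosen admissible with the required non-polar pieces of $E_{v_0}\setminus E'_{v_0}$.
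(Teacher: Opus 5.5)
Your proof is correct and follows the paper's route: shrink $E_{v_0}$ to an admissible $E'_{v_0}$ with $\operatorname{supp}(\mu_{v_0})\subseteq E'_{v_0}\subseteq E_{v_0}$, apply Proposition~\ref{prop:gre-semi} to $\mathbb E'$, and use that the Green's matrix entries can only increase, strictly on the diagonal. Your nonnegative optimal vector $\mathbf s$ spells out the passage from this entrywise comparison to the conclusion, which the paper compresses into ``hence $\Gamma_X(\mathbb E)$ is negative definite''. Your condition is stated on the components of $C_{v_0}^{\mathrm{an}}\setminus E'_{v_0}$ rather than on those of $C_{v_0}^{\mathrm{an}}\setminus\operatorname{supp}(\mu_{v_0})$, and that is precisely what makes the diagonal increase strict. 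One small correction: on the component of $C_{v_0}^{\mathrm{an}}\setminus E'_{v_0}$ containing $x'$, the difference $g_{E'_{v_0},x'}-g_{E_{v_0},x'}$ is superharmonic rather than harmonic, since its $\mathrm{dd}^c$ there is $-\mu_{E_{v_0},x'}$; the minimum principle for nonnegative superharmonic functions still gives the strict positivity you need.
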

\begin{proof}
  For each pair of open sets $V_2\subseteq V_1$ in $C^{\mathrm{an}}_{v_0}$ with $V_1$ arcwise connected, we can take a compact set $H$ in $V_2$ such that the interior of $H$ is non-empty and that $V_1\setminus H$ is arcwise connected, as follows.
  By replacing $V_2$ by a subset of itself, we may assume that $V_2$ is homeomorphic to the open unit disk.
  Take $H$ to be the image of the closed ball centered at the origin with radius $1/2$ under the homeomorphism.
  For each open interval $I$ in $C_{v_0}({\mathbb {R}})$ and each point $y\in I$, it is easy to see that removing a small neighborhood of $y$ cuts $I$ into two open intervals.
  Therefore, we are able to construct a locally admissible compact set $E_{v_0}'\subset E_{v_0}$ such that for each connected component $W$ of $C_{v_0}^{\mathrm{an}}\setminus {\mathrm{supp}}(\mu_{v_0})$ that contains points in ${X}$, $W\bigcap(E_{v_0}\setminus E_{v_0}')$ is non-empty.
  Note that non-emptiness is equivalent to non-polarity, since the set contains an open interval or an open subset as long as it is not empty.

  Let $\mathbb{E}'=(E_v')_{v\in M_K}$ where $E_v'=E_v$ for each $v\neq v_0$.
  Now we compare $\Gamma_X(\mathbb{E})$ and $\Gamma_X(\mathbb{E}')$.
  Since $E_{v_0}'\subset E_{v_0}$, every entry of the former matrix is less than or equal to the corresponding entry of the latter.
  On the diagonal, the inequality is strict since for each connected component $W$ of $C_{v_0}^{\mathrm{an}}\setminus {\mathrm{supp}}(\mu_{v_0})$ that contains points in ${X}$, $W\bigcap(E_{v_0}\setminus E_{v_0}')$ is not polar.
  By Proposition \ref{prop:gre-semi}, $\Gamma_X(\mathbb{E}')$ is negative semidefinite.
  Hence, $\Gamma_X(\mathbb{E})$ is negative definite, and $\operatorname{cap}_X(\mathbb{E})>1$.
\end{proof}

\subsection{Arakelov theory}
\subsubsection{Flat arithmetic divisors}
In this section, we introduce flat arithmetic divisors and the arithmetic Hodge index theorem on arithmetic surfaces developed by Faltings \cite{Fal84} and Hriljac \cite{Hri85}.

Let $C$ be a projective, smooth and geometrically integral curve over a number field $K$. Let ${\mathcal {C}}$ be a projective regular model of $C$ over $O_K$.
An arithmetic divisor is called \emph{vertical} if its geometric part, i.e. its restriction to $C$, is trivial.
For each $D\in \mathrm{Pic}^0(C)$, up to pull-back of ${\mathbb {Q}}$-arithmetic divisors on $\operatorname{Spec}(K)$, there exists a unique ${\mathbb {Q}}$-arithmetic divisor $\overline{\mathcal D}$ on ${\mathcal {C}}$ such that $\overline{\mathcal D}\cdot\overline{\mathcal E}=0$ for all vertical arithmetic divisors $\overline{\mathcal E}$.
Such arithmetic divisors are called \emph{flat}.
Recall that there is a N\'eron--Tate height pairing
$$
  \langle\cdot, \cdot\rangle_{NT}: \mathrm{Pic}^0(C_{{\overline{K}}})\times\mathrm{Pic}^0(C_{{\overline{K}}})\longrightarrow {\mathbb {R}}
$$
which is a non-degenerate symmetric bilinear form.
The arithmetic Hodge index theorem asserts the following equality.
\begin{theorem}[Faltings, Hriljac]
  For each $D, E\in \mathrm{Pic}^0(C)$, let $\overline{\mathcal D}$, $\overline{\mathcal E}$ be flat arithmetic divisors with geometric parts $D$ and $E$, respectively.
  Then we have
  $$
    \overline{\mathcal D}\cdot\overline{\mathcal E}=-2[K:{\mathbb {Q}}]\langle D, E\rangle_{NT}.
  $$
\end{theorem}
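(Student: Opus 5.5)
We will follow Hriljac's strategy: decompose the arithmetic intersection number into local terms and identify these with Néron's local height symbols, whose sum is the Néron--Tate pairing. By bilinearity, and since flat extensions are unique up to pull-backs from $\operatorname{Spec}(O_K)$ (which pair to zero with degree-$0$ flat divisors), we may first replace $E$ by a linearly equivalent divisor so that $D$ and $E$ have disjoint supports. This uses the moving lemma on $C$. It is harmless: principal arithmetic divisors pair to zero with everything, so both sides depend only on classes. After a finite extension of $K$ (both sides scale by the degree, since the Néron--Tate pairing is defined over $\overline K$), we may also assume $D$ and $E$ are supported on $K$-rational points.

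The first step is to make the flat extension explicit. Let $\overline{\mathcal D}=\overline D^{\mathrm{Zar}}+\Phi(D)+(0,\{g_{D,v}\}_{v\in M_{K,\infty}})$, where the pieces are as follows.
\begin{enumerate}
\item $\overline D^{\mathrm{Zar}}$ is the Zariski closure of $D$ in $\mathcal C$.
\item $\Phi(D)$ is a $\mathbb Q$-combination of fiber components. It is chosen so that $(\overline D^{\mathrm{Zar}}+\Phi(D))\cdot G=0$ for every component $G$ of every closed fiber. It exists, and is unique up to whole fibers, by Zariski's lemma: the intersection matrix of a fiber is negative semidefinite with kernel spanned by the fiber, and $\overline D^{\mathrm{Zar}}\cdot \mathcal C_{k_v}=\deg D=0$.
\item At archimedean $v$, $g_{D,v}$ is the Green function of $D$ with $\mathrm{dd}^c g_{D,v}+\delta_D=0$, i.e.\ harmonic off $D$. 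It exists because $\deg D=0$, and it is unique up to a constant.
\end{enumerate}
Flatness against vertical divisors, including the archimedean constants, is then exactly these conditions. Expanding $\overline{\mathcal D}\cdot\overline{\mathcal E}$ gives a sum of local terms. At a finite $v$ the term is $\bigl((\overline D^{\mathrm{Zar}}+\Phi_v(D))\cdot \overline E^{\mathrm{Zar}}\bigr)_v\log\#k_v$. At an archimedean $v$ it is $\sum_{z}\operatorname{ord}_z(E)\,g_{D,v}(z)$. The terms $\Phi\cdot\Phi$ and $\Phi\cdot\overline E^{\mathrm{Zar}}$ combine with the rest by orthogonality.

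The second step is to show that each local term equals $-\langle D,E\rangle_v$, a Néron local symbol, up to the normalization that produces the factor $2[K:\mathbb Q]$. For this we check Néron's axioms for the local pairing $(D,E)_v$.
\begin{enumerate}
\item It is bilinear, which is clear from the construction.
\item It is symmetric. This follows from symmetry of the intersection product at finite places and from Green's reciprocity at archimedean places.
\item For principal divisors it gives $(\operatorname{div} f,E)_v=\log|f(E)|_v$. This holds because $\widehat{\operatorname{div}}(f)$ is flat, so $\Phi(\operatorname{div} f)$ is the vertical part of $\operatorname{div}(f)$ on $\mathcal C$, and the intersection of $\widehat{\operatorname{div}}(f)$ with $\overline E^{\mathrm{Zar}}$ computes $-\log|f(E)|_v$ locally.
\item It has the correct continuity and singularity behaviour as a function of a point of $E$.
\end{enumerate}
By the uniqueness part of Néron's theorem, the local pairings coincide with the Néron symbols up to constants. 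Those constants are killed by flatness together with the degree-$0$ condition.

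The final step invokes Néron's global theorem: the sum over $v$ of the local symbols of disjoint degree-$0$ divisors equals the canonical height pairing on the Jacobian. Pinning down the constant $-2[K:\mathbb Q]$ requires tracking the normalization. The factor $[K:\mathbb Q]$ comes from our absolute-value normalization, and the factor $2$ from comparing with the height attached to $\Theta+[-1]^*\Theta$. We expect this bookkeeping, together with the archimedean identification (harmonic Green functions versus Néron's archimedean symbol, and the choice of constants), to be the main obstacle. The existence of $\Phi$ via Zariski's lemma is routine.
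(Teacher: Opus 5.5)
The paper gives no proof of this statement; it only cites Faltings and Hriljac. Your outline is Hriljac's argument (essentially the form in Gross's exposition on local heights), and it is correct in structure. The flat extension $\overline D^{\mathrm{Zar}}+\Phi(D)+(0,\{g_{D,v}\})$ is right. The finite part reduces to $(\overline D^{\mathrm{Zar}}+\Phi(D))\cdot\overline E^{\mathrm{Zar}}$, because $(\overline D^{\mathrm{Zar}}+\Phi(D))\cdot\Phi(E)=0$. The archimedean part reduces to $g_{D,v}(E)$, because $c_1(g_{D,v})=0$. Your checks of N\'eron's axioms (bilinearity, symmetry, the principal-divisor rule via $\widehat{\operatorname{div}}(f)$, local constancy off $E$) all go through. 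The real depth lies in N\'eron's global theorem, which you legitimately invoke.

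The other cited route, Faltings's, is global, as far as I recall. Since $D\mapsto\overline{\mathcal D}$ is linear modulo pull-backs, $D\mapsto-\overline{\mathcal D}^2$ is a quadratic form on $\operatorname{Pic}^0(C)$. One compares it with a theta-type Weil height up to a bounded error, and Tate's limit argument then identifies the two. That route avoids N\'eron's local theory but yields only the global identity. Your route gives the finer place-by-place identification of local intersection numbers with N\'eron symbols.

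Three small corrections:
\begin{enumerate}
\item N\'eron's uniqueness on degree-$0$ divisors is exact, so there are no constants to kill. The difference of two such symbols is a continuous homomorphism from the compact group $J(K_v)$ to $\mathbb R$, hence zero.
\item Harmonicity $c_1(g_{D,v})=0$ is forced by orthogonality to $(0,g_v)$ for arbitrary smooth $g_v$, not by orthogonality to constants. Constants pair to zero with any degree-$0$ divisor automatically.
\item After base change, $\mathcal C\otimes O_{K'}$ need not be regular. Pass to a desingularization $\pi$ and use the projection formula $\pi^*\overline{\mathcal D}\cdot V=\overline{\mathcal D}\cdot\pi_*V$ to see that flatness is preserved and the pairing scales by $[K':K]$. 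Alternatively, skip the base change altogether, since N\'eron symbols do not require rational support.
\end{enumerate}
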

\subsubsection{Arithmetic Nakai--Moishezon Theorem}
In this section, we introduce the arithmetic Nakai--Moishezon theorem proved by Zhang \cite[Theorem 1.5]{Zha92}.

Let $C$ be a projective, smooth and geometrically integral curve over a number field $K$.
Let ${\mathcal {C}}$ be a projective regular model of $C$ over $O_K$.
A hermitian line bundle is said to be \emph{nef} if its intersection with any effective hermitian line bundle is nonnegative.
Let $\overline{{\mathcal {L}}}$ be a hermitian line bundle on ${\mathcal {C}}$.
A section $s\in H^0({\mathcal {C}}, {\mathcal {L}})$ is said to be \emph{small} if for each $v\in M_{K,\infty}$, $\lVert s(x)\rVert_{\overline{{\mathcal {L}}}, v}\leq 1$ for each $x\in C_v^{\mathrm{an}}$.
\begin{theorem}[Zhang]
  If $\overline{{\mathcal {L}}}$ is nef, then for any hermitian line bundle $\overline{{\mathcal {M}}}$ and any $\varepsilon>0$, there exists $N>0$ such that for each $n>N$, the $K$-vector space $H^0(C, M\otimes L^{\otimes n})$ has a basis consisting of small sections of $\overline{{\mathcal {M}}}\otimes \overline{{\mathcal {L}}}(\varepsilon)^{\otimes n}$.
\end{theorem}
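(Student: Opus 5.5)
The theorem is cited from Zhang \cite[Theorem 1.5]{Zha92}, so the plan below follows the structure of Zhang's argument, specialised to arithmetic surfaces. The idea is to treat $\mathrm{H}^0(\mathcal C,\mathcal M\otimes\mathcal L^{\otimes n})$ as a lattice in the real vector space $\prod_{v\in M_{K,\infty}}\mathrm{H}^0(C_v,M\otimes L^{\otimes n})$, normed by sup-norms. The statement then says exactly that all successive minima of this lattice, for the metric of $\overline{\mathcal M}\otimes\overline{\mathcal L}(\varepsilon)^{\otimes n}$, are at most $1$ once $n\gg0$. I would obtain this by combining three things:
\begin{itemize}
\item a lower bound on the arithmetic volume of the lattice, from arithmetic Riemann--Roch;
\item an upper bound on the largest minimum, by restricting to a divisor and lifting sections back;
\item Minkowski's second theorem, which links the two.
\end{itemize}

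\textbf{Reduction.} First I would reduce to the case where $L_K$ is ample and the metrics of $\overline{\mathcal L}$ have strictly positive curvature at every archimedean place. Nefness implies $\deg L_K\ge0$. For small $\delta>0$, replace $\overline{\mathcal L}$ by $\overline{\mathcal L}^{\otimes N}\otimes\overline{\mathcal A}$, where $\overline{\mathcal A}$ is a fixed arithmetically ample hermitian line bundle, and rescale so that only $\varepsilon$ is affected up to $O(\delta)$. The twist by $(\varepsilon)$, i.e.\ multiplying the metric by $e^{-\varepsilon}$, changes heights of all points by $+\varepsilon$. After the reduction, therefore, $\overline{\mathcal L}(\varepsilon)$ has strictly positive height on every closed horizontal point and degree $\ge0$ on every vertical component.

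\textbf{Global sections.} The arithmetic Hilbert--Samuel formula (Gillet--Soulé) gives
\[
\widehat\chi_{\sup}\bigl(\mathcal M\otimes\mathcal L^{\otimes n}\bigr)=\tfrac{n^2}{2}\,\overline{\mathcal L}^{\,2}+O(n\log n).
\]
Here $\overline{\mathcal L}^{\,2}\ge0$ by nefness, and the comparison between $L^2$-norms and sup-norms (Gromov's lemma) loses only $O(\log n)$ per section. The rank of the lattice is $\sim n\deg L_K$, so Minkowski's second theorem bounds the average of the logarithms of the successive minima by $O(1)$. After the $(\varepsilon)$ twist this becomes $-n\varepsilon+O(\log n)$ on average. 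An average bound alone is not enough; I need every minimum controlled.

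\textbf{Largest minimum.} For this I would use Zhang's induction on dimension. Pick a small section $s$ of $\mathcal L^{\otimes k}$ whose divisor is a union of horizontal closed points $P_i$ and vertical fibres. On these $0$-dimensional and vertical pieces the statement is elementary:
\begin{itemize}
\item on $\operatorname{Spec} O_{K(P_i)}$ the degree of $\overline{\mathcal L}(\varepsilon)|_{P_i}$ is positive, so small generators exist by Minkowski in rank one;
\item on fibres, nef implies semiample for $n$ large.
\end{itemize}
Using the exact sequence
\[
0\to \mathcal M\otimes\mathcal L^{\otimes (n-k)}\xrightarrow{\ \cdot s\ }\mathcal M\otimes\mathcal L^{\otimes n}\to(\mathcal M\otimes\mathcal L^{\otimes n})|_{\mathrm{div}(s)}\to0,
\]
I would produce small sections on $\mathrm{div}(s)$, lift them, and multiply small sections of level $n-k$ by $s$. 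An induction on $n$ in steps of $k$ then yields a full basis of small sections.

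\textbf{Expected obstacle.} The hard step is lifting. One has to extend a small section from the zero-dimensional subscheme $\mathrm{div}(s)_K$ to a global section while keeping the sup-norm growth $e^{o(n)}$. I would try the $L^2$-extension estimate with respect to the positively curved metric: an Ohsawa--Takegoshi type bound, or on curves simply interpolation using a small basis and the Gram matrix, whose condition number is controlled by Bernstein--Markov. The resulting loss of $e^{O(\log n)}$ is absorbed by the factor $e^{-n\varepsilon}$ that the twist provides.
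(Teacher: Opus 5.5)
The paper gives no proof of this statement; it is quoted from Zhang \cite[Theorem 1.5]{Zha92}. Your outline has the right overall shape: arithmetic Hilbert--Samuel to get one strictly small section $s$, restriction to $\operatorname{div}(s)$, and extension with subexponential loss. However, two steps fail as written.

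\textbf{The reduction.} Replacing $\overline{\mathcal L}$ by $\overline{\mathcal L}^{\otimes N}\otimes\overline{\mathcal A}$ changes the line bundle whose sections you must produce. Small sections of $\mathcal M\otimes(\mathcal L^{N}\otimes\mathcal A)^{m}$ say nothing about $H^0(C,M\otimes L^{Nm})$, since you cannot divide by sections of $\mathcal A^{m}$. After ``rescaling'', the discrepancy $\mathcal A^{n/N}$ still grows with $n$, so it cannot be absorbed into the fixed $\overline{\mathcal M}$.

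The perturbation must keep $\mathcal L$. Nefness gives $c_1(\overline{\mathcal L})\geq 0$, so when $\deg L_K>0$ you can do the following:
\begin{enumerate}
\item Pick a positive form $\omega$ of the same mass, solve $\mathrm{dd}^c\phi=\omega-c_1(\overline{\mathcal L})$, and use the metric $e^{-\delta\phi}\lVert\cdot\rVert$. Its curvature $(1-\delta)c_1(\overline{\mathcal L})+\delta\omega$ is positive, at a cost $e^{n\delta\sup|\phi|}$ absorbed into $\varepsilon$.
\item Vertically, replace $\mathcal L^{a}$ by $\mathcal L^{a}(-V)$, with $V\geq 0$ supported on the fibre components where $\mathcal L$ has degree $0$ (Zariski's lemma). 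This makes it relatively ample. Sections of its powers embed isometrically into those of the powers of $\mathcal L^{a}$, with the same generic fibre. Heights drop by a bounded amount, absorbed by the $(\varepsilon)$-twist for $a$ large.
\end{enumerate}
When $\deg L_K=0$, no metric on $L$ is positively curved, so that case has to be treated separately.

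\textbf{The lifting.} Ohsawa--Takegoshi or interpolation produces \emph{real} sections. The statement, however, is about lattice vectors of $\Lambda_n=H^0(\mathcal C,\mathcal M\otimes\mathcal L^{n})$, and this integrality is the real difficulty, not the archimedean estimate. To get a small integral lift, you must correct an arbitrary integral lift by a nearby vector of the kernel, which contains $s\Lambda_{n-k}$.

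If all you carry from level $n-k$ is ``a basis of small sections'', rounding against it loses a factor growing with the rank (up to order $n$). With $\lVert s\rVert_{\sup}\leq e^{-k\eta}$ you then only get $\lambda_{\max}(\Lambda_n)\lesssim Cn\,e^{-k\eta}\lambda_{\max}(\Lambda_{n-k})+\cdots$. Over $n/k$ steps these factors multiply to $(Cne^{-k\eta})^{n/k}$, so for fixed $k$ the induction does not close.

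What works is an induction on the covering radius $R_n$ of $\Lambda_n$. Let $\overline Z$ be the horizontal part of $\operatorname{div}(s)$ and $\eta_0>0$ a lower bound for the heights of its points.
\begin{enumerate}
\item Round first in the fixed-rank lattice $H^0(\overline Z,\mathcal M\otimes\mathcal L^{n}|_{\overline Z})$, whose covering radius is $\ll e^{-n\eta_0}$.
\item Extend that rounding error analytically, with loss $e^{o(n)}$.
\item Recurse on the remainder divided by $s$.
\end{enumerate}
This gives the additive recursion
\[
R_n\leq e^{o(n)-n\eta_0}+\lVert s\rVert_{\sup}\,R_{n-k},
\]
and then $\lambda_{\max}(\Lambda_n)\leq 2R_n$.

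This requires the integral restriction $\Lambda_n\to H^0(\overline Z,\mathcal M\otimes\mathcal L^{n}|_{\overline Z})$ to be surjective, or to have bounded cokernel, for $n\gg 0$. That is where vertical positivity is really used: relative ampleness plus Serre vanishing gives it. It is not used in producing sections on the fibres.

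Alternatively, you can show that strictly small sections have no common zero on $\mathcal C$ and prove generation as a $\mathbb Z$-module. There, products of generators are again generators, and no rounding loss occurs.
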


\subsubsection{Adelic divisors}
In this section, we briefly introduce adelic divisors on curves.
For details, we refer to \cite{Zha95}.
In the proof of our main theorem, we only use model adelic divisors, i.e. those for which some multiple corresponds to an arithmetic divisor.
We consider them as adelic divisors since the Green function at non-archimedean places is required.

Let $K$ be a number field and $C$ a projective smooth curve over $K$.
Let ${\mathcal {C}}$ be a projective regular model and $\overline{\mathcal D}=({\mathcal {D}},(g_{{\mathcal {D}},v})_{v\in M_{K,\infty}})$ an arithmetic divisor on ${\mathcal {C}}$.
Let $D$ be a divisor on $C$ and $(g_{D,v})_{v\in M_K}$ a collection of Green functions of $D_v$ on $C_v^{\mathrm{an}}$.
The collection ${\overline{D}}=(D,(g_{D,v})_{v\in M_K})$ is said to be the adelic divisor corresponding to $\overline{\mathcal D}$ if the following holds.
Firstly, $D={\mathcal {D}}_K$.
For each archimedean $v$, $g_{D, v}=g_{{\mathcal {D}}, v}$.
For each non-archimedean $v$, recall that there is a reduction map ${\mathrm{red}}_v: C_v^{\mathrm{an}}\longrightarrow {\mathcal {C}}_{k_v}$.
Let $g_{D,v}(x)=-\log\left|f(x)\right|_v$ for each $x\in C_v^{\mathrm{an}}$ where $f$ is a rational function on $C$ such that ${\mathrm{div}}(f)$ coincides with ${\mathcal {D}}$ in an open neighborhood of ${\mathrm{red}}_v(x)$ in ${\mathcal {C}}$.
Note that $g_{D,v}$ is a Green function of $D_v$ for each $v$.

An arithmetic divisor is said to be \emph{integrable} if it is the difference of two nef arithmetic divisors.
If for some integer $n>0$, $n{\overline{D}}=(nD,(ng_{D,v})_{v\in M_K})$ corresponds to some (integrable) arithmetic divisor on some projective model, we say that ${\overline{D}}$ is a \emph{model adelic divisor} (respectively, an \emph{integrable model adelic divisor}).
If there exists a sequence of model adelic divisors ${\overline{D}}_n=(D,(g_{D,n,v})_{v\in M_K})$ such that there exists a finite set $S\subset M_K$ with $g_{D, n, v}=g_{D, v}$ for each $n$ and $v\notin S$ and that the difference $g_{D,n,v}-g_{D,v}$ extends continuously to
$C_v^{\mathrm{an}}$ and converges uniformly to zero there
as $n\to+\infty$ for each $v\in S$, we say that ${\overline{D}}=(D,(g_{D,v})_{v\in M_K})$ is an \emph{adelic divisor}.

For an integrable model adelic divisor ${\overline{D}}=(D,(g_{D,v})_{v\in M_K})$, the \emph{Chambert-Loir measure} of ${\overline{D}}$ is defined as $c_1({\overline{D}})_v=c_1(g_{D,v})$ for each $v\in M_K$, which is a signed measure of total mass $\deg(D)$.
Denote by ${\mathrm{Irr}}({\mathcal {C}}_{k_v})$ the set of irreducible components of ${\mathcal {C}}_{k_v}$.
Alternatively, the measure is given by
$$
  c_1({\overline{D}})_v= \sum\limits_{F\in{\mathrm{Irr}}({\mathcal {C}}_{k_v})}l_F\deg_F({\mathcal {D}})\delta_F,
$$
where $\delta_F$ is the Dirac measure of the divisorial point corresponding to $F$, and $l_F$ is the order of a generator of the maximal ideal of $O_{K_v}$ at $F$.
Let ${\overline{D}}=(D,(g_{D,v})_{v\in M_K})$ and ${\overline{E}}=(E,(g_{E,v})_{v\in M_K})$ be integrable model adelic divisors such that $D$ and $E$ have disjoint supports. Then we have the induction formula
$$
  {\overline{D}}\cdot {\overline{E}}=\sum\limits_{v\in M_K}\int_{C_v^{\mathrm{an}}}g_{E, v}\delta_{D, v}+\sum\limits_{v\in M_K}\int_{C_v^{\mathrm{an}}}g_{D, v}c_1({\overline{E}})_v.
$$
Note that the sums on the right-hand side are finite.
\subsection{Adelic divisors on quasi-projective varieties}
We briefly review the theory of adelic line bundles on quasi-projective varieties, developed by Yuan--Zhang \cite{YZ21}.
For details, see \cite[\S 2]{YZ21}.
We also prove that adelic line bundles on the spectrum of a field form an $\mathbb R$-vector space.

Let $\mathcal X$ be a projective arithmetic variety.
Let $\mathcal U$ be an open subscheme of $\mathcal X$.
The set of arithmetic $(\mathbb Q, \mathbb Z)$-divisors on $(\mathcal X, \mathcal U)$ is defined as
\[
  \left\{(\overline{\mathcal D}, \mathcal D')\big |\overline{\mathcal D}\in \operatorname{\widehat{Div}}(\mathcal X)_{\mathbb Q}, \mathcal D'\in\operatorname{Div}(\mathcal U), \mathcal D|_{\mathcal U}=\mathcal D'{\ \rm\  in} \operatorname{Div}(\mathcal U)_{\mathbb Q}\right\}.
\]

By a \emph{projective model} of $\mathcal U$, we mean a projective arithmetic variety $\mathcal Y$ with an open immersion $\mathcal U \longrightarrow \mathcal Y$.
Note that projective models of $\mathcal U$ form an inverse system.
The set of all model adelic divisors is defined as
\[
  \operatorname{\widehat{Div}}(\mathcal U/\mathbb Z)_{\rm mod}\coloneqq \varinjlim_{\mathcal Y {\ \rm is\ a\ projective\ model\ of\ } \mathcal U } \operatorname{\widehat{Div}}(\mathcal Y, \mathcal U).
\]

By a \emph{boundary divisor} $(\mathcal X_0, \overline{\mathcal E})$ of $\mathcal U$, we mean a projective model $\mathcal X_0$ of $\mathcal U$, together with a strictly effective arithmetic divisor $\overline{\mathcal E}$ on $\mathcal X_0$ whose support equals $\mathcal X_0\setminus \mathcal U$.
The \emph{boundary norm} on $\operatorname{\widehat{Div}}(\mathcal U/\mathbb Z)_{\rm mod}$ for the boundary divisor $(\mathcal X_0, \overline{\mathcal E})$ is defined as
\[
  \lVert\overline{\mathcal D}\rVert_{(\mathcal X_0, \overline{\mathcal E})}=\inf\{\epsilon\in\mathbb Q_{>0}|-\epsilon\overline{\mathcal E}\leq \overline{\mathcal D}\leq \epsilon \overline{\mathcal E}\}.
\]
Here, we view $\inf\emptyset=+\infty$.
The set of \emph{adelic divisors} $\operatorname{\widehat{Div}}(\mathcal U/\mathbb Z)$ on $\mathcal U$ is the completion of $\operatorname{\widehat{Div}}(\mathcal U/\mathbb Z)_{\rm mod}$ under the boundary norm.
Note that the definition does not depend on the choice of the boundary divisor.

Adelic divisors can be defined not only on quasi-projective varieties but also on essentially quasi-projective varieties.
For the definition of essentially quasi-projective varieties, see \cite[\S 2.3]{YZ21}.
In this section, we only focus on the crucial case when $\mathcal U$ is the generic point of an arithmetic variety, or equivalently, the spectrum of a finitely generated field.
In such a case, adelic divisors have a natural $\mathbb R$-multiplication structure.
Let $\mathcal{K}$ be a finitely generated field.
A quasi-projective model of $\mathcal{K}$ is a quasi-projective arithmetic variety with function field $\mathcal{K}$.
Define
\[
  \operatorname{\widehat{Div}}(\operatorname{Spec}(\mathcal{K})/\mathbb{Z})= \varinjlim_{\emptyset \neq \mathcal U {\rm\  is\ a\ quasi-projective\ model\ of\ } \mathcal{K}}\operatorname{\widehat{Div}}(\mathcal U/\mathbb{Z}).
\]

Adelic divisors are defined as an abelian group.
In the case of a field, however, it turns out that $\operatorname{\widehat{Div}}(\operatorname{Spec}(\mathcal{K})/\mathbb{Z})$ forms an $\mathbb R$-vector space.

\subsection{GVF functionals on curves} \label{sec:gvf}

Let $K$ be a number field, and let $C/K$ be a smooth, projective and
geometrically integral curve.
We shall use without further comment the space
\[
  \widehat{\operatorname{Div}}(K(C)/\mathbb Z)
  =\varinjlim_{\varnothing\ne V\subseteq C}\widehat{\operatorname{Div}}(V/\mathbb Z)
\]
of adelic $\mathbb{R}$-divisors on $\operatorname{Spec}(K(C))$.  When a class is represented on $C$,
we write it as
\(
\overline D=(D,(g_{D,v})_{v\in M_K})
\),
where $D$ is an $\mathbb{R}$-divisor and $g_{D,v}$ is a Green function for $D$
on $C_v^{\mathrm{an}}$.  Our convention for a principal adelic divisor is
\begin{equation}\label{eq:principal-divisor}
  \widehat{\operatorname{div}}(f)
  =\bigl(\operatorname{div}(f),(-\log|f|_v)_{v\in M_K}\bigr),
  \qquad f\in K(C)^\times.
\end{equation}

\begin{definition}\label{def:gvf-functional}
  A \emph{GVF functional} on $K(C)$ is an $\mathbb{R}$-linear map
  \[
    \ell:\widehat{\operatorname{Div}}(K(C)/\mathbb Z)\longrightarrow\mathbb{R}
  \]
  which is nonnegative on effective adelic divisors and vanishes on principal
  adelic divisors.  It is \emph{normalized} if
  \begin{equation}\label{eq:normalization}
    \ell(\pi^*\overline A)=\widehat{\deg}(\overline A)
  \end{equation}
  for every $\overline A\in\widehat{\operatorname{Div}}(\operatorname{Spec}\mathbb Z)$, where
  $\pi:\operatorname{Spec}(K(C))\longrightarrow\operatorname{Spec}(\mathbb Z)$ is the structure morphism.
\end{definition}

Thus a GVF functional is nothing but a positive functional on the space of adelic line bundles. The vanishing on \eqref{eq:principal-divisor} is called the product formula.

The representation theorem for GVF structures
\cite[Corollary 9.4]{yaacov2024globallyvaluedfieldsfoundations} produces measures on the Berkovich analytification $C_v^\mathrm{an}$ for $v\in M_K$.
For every $v\in M_K$, the theorem gives a probability measure $\mu_v$ on $C_{v}^{\mathrm{an}}$.
For the trivial valuation, note that every nontrivial valuation of $K(C)$ trivial on
$K$ is of the form $t\operatorname{ord}_x$, for an $x\in|C|$ and some $t>0$.  Put
\[
  \mathcal R_x:=\{t\operatorname{ord}_x:t>0\},
  \quad
  \lambda_x(t\operatorname{ord}_x):=t,
  \quad
  m(x):=\int_{\mathcal R_x}\lambda_x\,\mathrm d\mu_0.
\]
Here $\mathcal R_x$ is the ray generated by $x \in |C|$ and $\mu_0$ is the restriction to the
trivially valued locus of an admissible representative $\mu$ on the global
valuation space $\Omega_{K(C)}$. Extend $m$ linearly to $\operatorname{Div}(C)_{\mathbb R}$.
The functional $m$ is canonical, but the measure $\mu_0$ is not canonical \cite[Remark~9.5]{yaacov2024globallyvaluedfieldsfoundations}.
\begin{theorem}
  A normalized GVF functional on $K(C)$ is equivalent to data $\bigl(m,\{\mu_v\}_{v\in M_K}\bigr)$, where $m:\operatorname{Div}(C)_{\mathbb R}\to\mathbb R$ is linear and each $\mu_v$ is a probability measure on $C_v^{\mathrm{an}}$, satisfying
  \begin{enumerate}
    \item $m(D)\geq0$ for every effective $\mathbb R$-divisor $D$,
    \item ($L^1$ integrable) $\displaystyle  \sum_{v\in M_K}\int \bigl|\log|f|_v\bigr|\,\mathrm d\mu_v<+\infty$, for every $f \in K(C)^\times$,
    \item (product formula) $\displaystyle m(\operatorname{div}(f))=\sum_{v\in M_K}\int \log|f|_v\,\mathrm d\mu_v$, for every $f\in K(C)^\times$.
  \end{enumerate}
\end{theorem}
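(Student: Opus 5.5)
We have to show that a normalized GVF functional $\ell$ is equivalent to the data $(m,\{\mu_v\})$ satisfying (1)–(3). The plan is to construct the correspondence in both directions and check that the two constructions are inverse. The organizing formula is
\[
  \ell(\overline D)=m(D)+\sum_{v\in M_K}\int g_{D,v}\,\mathrm d\mu_v
\]
for adelic divisors represented on $C$. It extends to $\widehat{\operatorname{Div}}(K(C)/\mathbb Z)$ by the density and continuity built into the boundary-norm completion.

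\emph{From $\ell$ to the data.} For a place $v$ and a continuous $\varphi$ on $C_v^{\mathrm{an}}$, the map $\varphi\mapsto\ell(0,\varphi\text{ at }v)$ is linear and nonnegative on $\varphi\ge0$, since the corresponding divisor is effective. Normalization applied to constants gives value $\varphi$ for constant $\varphi$, using $\ell\circ\pi^*=\widehat{\deg}$ and the normalization of absolute values. The Riesz representation theorem then yields a probability measure $\mu_v$. Next define $m(D)=\ell(\overline D)-\sum_v\int g_{D,v}\,\mathrm d\mu_v$ for any adelic lift $\overline D$ of $D$. Two lifts differ by a divisor $(0,\{h_v\})$ with $h_v$ continuous and almost all zero, so $m$ is well defined and linear. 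For $m(D)\ge0$ with $D$ effective, I would choose lifts $g_{D,v}$ that are $\ge0$, equal $0$ outside a small neighbourhood of $\operatorname{supp}D$ at one archimedean place, and $0$ at all other places (the model Green function of the Zariski closure is $0$ off the closure). Then $\ell(\overline D)\ge0$, while the integral correction can be made arbitrarily small by shrinking the neighbourhood and using monotone truncation. For (3), apply the definition of $m$ to $\widehat{\operatorname{div}}(f)$, using $\ell(\widehat{\operatorname{div}}f)=0$. For (2), the integrals $\int\log^\pm|f|_v\,\mathrm d\mu_v$ are controlled by $\ell$ of the effective divisors $\overline{\operatorname{div}}(f)^{\pm}$ with suitable Green functions $\log^{\pm}$. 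The finiteness of $\ell$ on these divisors, together with positivity, bounds the total sum.

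\emph{From the data to $\ell$.} Define $\ell$ by the displayed formula. The formula is independent of the representative: changing $\overline D$ by $\widehat{\operatorname{div}}(f)$ changes $\ell$ by $m(\operatorname{div}f)+\sum_v\int(-\log|f|_v)\,\mathrm d\mu_v=0$ by (3). Condition (2) guarantees that this sum converges absolutely. Nonnegativity on effective divisors follows from $m\ge0$ and $g_{D,v}\ge0$. Normalization holds because pullbacks from $\operatorname{Spec}\mathbb Z$ have $D=0$ and constant Green functions.

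\emph{Main obstacle.} I expect the main difficulty to be the extension to divisors defined only on $\operatorname{Spec}K(C)$ and completed in the boundary topology, where Green functions may have logarithmic singularities at finitely many points not in $D$. Such divisors must still be integrable against $\mu_v$, and $\ell$ must be continuous. For this I would use the boundary divisor $\overline E=\widehat{\operatorname{div}}$-type combination with $|\overline D|\le\epsilon\overline E$. Positivity gives $|\ell(\overline D)|\le\epsilon\ell(\overline E)$, and the same bound holds for the integral formula by (2). This reduces everything to model divisors. Uniqueness of the correspondence then follows because $m$ and $\mu_v$ are recovered from $\ell$ by the constructions above.
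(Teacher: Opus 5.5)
Your route is genuinely different from the paper's. The paper gives no direct argument. It invokes the Ben Yaacov--Hrushovski representation theorem (their Corollary~9.4), which represents $\ell$ by a measure on the global valuation space $\Omega_{K(C)}$, and splits that measure according to its restriction to $K$. The part over a place $v$ gives $\mu_v$ on $C_v^{\mathrm{an}}$. The part trivial on $K$ lives on the rays $\mathcal R_x=\{t\operatorname{ord}_x\}$ and gives $m(x)=\int_{\mathcal R_x}\lambda_x\,\mathrm d\mu_0$.

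In that route, positivity of $m$, integrability, and the treatment of the whole completed space $\widehat{\operatorname{Div}}(K(C)/\mathbb Z)$ are all inherited from the general theorem. The cost is a non-canonical $\mu_0$. Your construction proceeds by Riesz representation place by place, defines $m$ as the defect $\ell(\overline D)-\sum_v\int g_{D,v}\,\mathrm d\mu_v$, and extends from model divisors by positivity and boundary-norm continuity. It is elementary, and it makes the canonicity of $m$ evident, but you must carry out the bookkeeping over infinitely many places yourself. The outline is sound, including your $L^1$ argument and the reduction to model divisors.

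\textbf{The step that fails as written: $m(D)\ge0$.} A Green function of $D$ cannot be $0$ at the other places. Shrinking the support at one archimedean place also leaves the nonnegative contributions $\int g_{D,v}\,\mathrm d\mu_v$ at all other places untouched. These need not be small: at a non-archimedean $v$ the model Green function is positive on the residue discs of $\operatorname{supp}D_v$, where $\mu_v$ may have mass. Relatedly, you use $\sum_v\int g_{D,v}\,\mathrm d\mu_v$ to define $m$ before knowing that it converges.

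The truncation you already use for (2) fixes both problems. Take a lift with $g_{D,v}\ge0$ at every place: model Green functions of an effective divisor are $\ge0$ at almost all places, and you add constants at the rest. For every finite $S'\subset M_K$ and $M>0$, the divisor $\overline D-\sum_{v\in S'}(0,\min\{g_{D,v},M\}\text{ at }v)$ is effective. Hence
\[
  \ell(\overline D)\ge\sum_{v\in S'}\int\min\{g_{D,v},M\}\,\mathrm d\mu_v .
\]
Monotone convergence then gives $\sum_v\int g_{D,v}\,\mathrm d\mu_v\le\ell(\overline D)$, which yields convergence and $m(D)\ge0$ at once.

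\textbf{The normalization step is too quick when $K\neq\mathbb Q$.} A constant at a single place is not a pullback from $\operatorname{Spec}\mathbb Z$. You first need the product formula on $K^\times$: by the unit theorem and finiteness of the class group, the vectors $(\log|a|_v)_v$ span the sum-zero hyperplane of $\bigoplus_v\mathbb R$. This shows that $\ell(0,1\text{ at }v)$ is independent of $v$, and only then does normalization give $\mu_v(C_v^{\mathrm{an}})=1$.
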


For an adelic divisor $\overline{D}=(D,(g_{D,v})_{v\in M_K})$, the
evaluation formula is
\begin{equation}\label{eq:evaluation}
  \ell(\overline D)
  =m(D) + \sum_{v\in M_K}
  \int_{C_v^{\mathrm{an}}}g_{D,v}\,\mathrm d\mu_v.
\end{equation}
The infinite sum is absolutely convergent because of the $L^1$-integrability condition above.
Thus $m(D)=\ell(\overline D)-\sum_v\int g_{D,v}\,\mathrm d\mu_v$ for any adelic lift $\overline D$ of $D$.
For boundary points $x\in X$, we use the notation
\[
  m_x:=\frac{m(x)}{\deg(x)},
\]
and write $m_x(\ell)$ when the dependence on $\ell$ needs to be specified.

\begin{lemma}\label{lem:gvf-d-l}
  For every normalized GVF functional $\ell$ there is a unique class
  \(
  D_\ell\in\operatorname{Pic}^1(C)_\mathbb{R}
  \)
  such that
  \begin{equation}\label{eq:d-ell}
    \ell(\overline E)=D_\ell\cdot\overline E, \quad \text{for every flat adelic divisor $\overline E$}.
  \end{equation}
\end{lemma}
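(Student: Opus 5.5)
The plan is to build $D_\ell$ from the functional $m$ and then use the Faltings--Hriljac theorem, in the form "flat divisors are orthogonal to vertical ones", to show that flat divisors see nothing else of $\ell$.

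\emph{Intersecting flat divisors with a divisor class.} Let $\overline E=(E,(g_{E,v}))$ be flat, so $\deg E=0$ and $c_1(\overline E)_v=0$ at every place. For an $\mathbb R$-divisor $D$ of degree $1$, set
\[
  D\cdot\overline E:=\sum_v\int g_{E,v}\,\delta_{D,v}.
\]
When $D$ and $E$ have disjoint supports, this is exactly the induction formula for $(D,\text{any Green functions})\cdot\overline E$, since the second sum vanishes by flatness. If $D$ is replaced by $D+\operatorname{div}(f)$, the pairing changes by $\sum_v\int g_{E,v}\,\delta_{\operatorname{div}(f),v}$. By the induction formula applied to $\widehat{\operatorname{div}}(f)\cdot\overline E$, this equals $\widehat{\operatorname{div}}(f)\cdot\overline E+\sum_v\int\log|f|_v\,c_1(\overline E)_v$, and both terms vanish. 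So the pairing depends only on the class of $D$ in $\operatorname{Pic}^1(C)_\mathbb R$. Supports are handled by moving $D$ within its class, which is possible by linearity over $\mathbb R$. Replacing $\overline E$ by $\overline E+\pi^*\overline A$ changes the pairing by $\deg(D)\widehat{\deg}(\overline A)=\widehat{\deg}(\overline A)$. Since $\ell$ is normalized, $\ell$ changes by the same amount, so the compatibility is consistent.

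\emph{Existence.} I would take $D_\ell$ to be the class of $m$, viewed as an element of $\operatorname{Pic}(C)_\mathbb R$. Concretely, apply the evaluation formula \eqref{eq:evaluation}:
\[
  \ell(\overline E)=m(E)+\sum_v\int g_{E,v}\,\mathrm d\mu_v.
\]
The key step is to show that, for $\overline E$ flat, $\sum_v\int g_{E,v}\,\mathrm d\mu_v$ is independent of the probability measures $\mu_v$, up to a term linear in $E$ that factors through $\operatorname{Pic}^0$. Flatness gives $\mathrm{dd}^c g_{E,v}=-\delta_{E,v}$ with zero curvature, so $g_{E,v}$ is determined up to an additive constant, which the normalization absorbs. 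Then $\int g_{E,v}\,\mathrm d\mu_v=\int g_{E,v}\,\delta_{P,v}+\int(\ldots)$ for a point $P$; I expect this comparison to be the main obstacle. A cleaner route is to avoid it entirely. The map $\overline E\mapsto\ell(\overline E)$ restricted to flat divisors is linear, vanishes on $\widehat{\operatorname{div}}(f)$ (product formula), and satisfies $\ell(\pi^*\overline A)=\widehat{\deg}\,\overline A$. It therefore descends to a linear functional $\phi$ on $\operatorname{Pic}^0(C)_\mathbb R$, after subtracting the normalization part. Because the pairing $D\mapsto D\cdot(\cdot)$ on flat divisors equals $-2[K:\mathbb Q]\langle D-D_0,\cdot\rangle_{NT}$ plus a fixed functional (by Faltings--Hriljac, applied with a fixed base class $D_0$), and the Néron--Tate pairing is nondegenerate on the finite-dimensional space $\operatorname{Pic}^0(C)_\mathbb R$, there is a unique $D_\ell$ with $D_\ell\cdot\overline E=\ell(\overline E)$ for all flat $\overline E$. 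Finite dimensionality holds by Mordell--Weil.

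\emph{Uniqueness.} If two classes $D,D'$ of degree $1$ both satisfy \eqref{eq:d-ell}, then $(D-D')\cdot\overline E=0$ for all flat $\overline E$. This means $\langle D-D',E\rangle_{NT}=0$ for all $E\in\operatorname{Pic}^0(C)_\mathbb R$, so $D=D'$ by nondegeneracy.

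The main care needed is the extension of Faltings--Hriljac and of the pairing from $\mathbb Q$ to $\mathbb R$ coefficients, together with the independence of representatives.
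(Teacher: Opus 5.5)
Your ``cleaner route'' is the paper's proof: fix a base class $\widetilde D\in\operatorname{Pic}^1(C)_{\mathbb R}$, observe that $\overline E\mapsto\ell(\overline E)-\widetilde D\cdot\overline E$ descends to a linear functional on the finite-dimensional space $\operatorname{Pic}^0(C)_{\mathbb R}$, represent it as $D'\cdot\overline E$ via Faltings--Hriljac and nondegeneracy of the N\'eron--Tate pairing, set $D_\ell=\widetilde D+D'$, and get uniqueness from the same nondegeneracy. Your checks that this is independent of the flat lift (via normalization) and of the representative in its class (via the product formula) fill in details the paper leaves implicit, but delete the abandoned first idea: $m$ is a linear functional on $\operatorname{Div}(C)_{\mathbb R}$, not a divisor, so ``the class of $m$'' has no meaning.
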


\begin{proof}
  Choose $\widetilde D\in\operatorname{Pic}^1(C)_\mathbb{R}$.  If $E\in\operatorname{Pic}^0(C)_\mathbb{R}$ and
  $\overline E$ is a flat adelic divisor, define
  \[
    \Lambda(E)=\ell(\overline E)-\widetilde D\cdot\overline E.
  \]
  $\Lambda(E)$ is a linear functional on
  $\operatorname{Pic}^0(C)_\mathbb{R}$.  The Faltings--Hriljac theorem and the nondegeneracy of the
  N\'eron--Tate pairing give a unique
  $D'\in\operatorname{Pic}^0(C)_\mathbb{R}$ such that
  $\Lambda(E)=D'\cdot\overline E$ for all $E$.  Then
  $D_\ell=\widetilde D+D'$ satisfies \eqref{eq:d-ell}, and the same
  nondegeneracy proves uniqueness.
\end{proof}

\begin{lemma} \label{lem:gvf-on-curves}
  Fix a closed point $x_0\in|C|$. A normalized GVF functional $\ell=\bigl(m,\{\mu_v\}_{v\in M_K}\bigr)$ is the same as the data $\bigl(D_\ell,m(x_0),\{\mu_v\}_{v\in M_K}\bigr)$ where
  \begin{enumerate}
    \item $D_\ell \in \operatorname{Pic}^1(C)_\mathbb{R}$, $m(x_0) \in \mathbb{R}_{\geq 0}$,
    \item ($L^1$ integrable) $\displaystyle  \sum_{v\in M_K}\int \bigl|\log|f|_v\bigr|\,\mathrm d\mu_v<+\infty$, for every $f \in K(C)^\times$,
    \item (positivity) $\displaystyle \frac{m(x_0)}{\deg(x_0)}+D_\ell \cdot \overline E -\sum_{v\in M_K}\int g_{E,v}\,\mathrm d\mu_v \geq 0$, for every $E=x/\deg(x)-x_0/\deg(x_0)$ and flat adelic divisor $\overline{E}=(E,(g_{E,v})_{v\in M_K})$.
  \end{enumerate}
\end{lemma}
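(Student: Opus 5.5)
We deduce Lemma~\ref{lem:gvf-on-curves} from the preceding representation theorem, which describes $\ell$ by the data $(m,\{\mu_v\})$, together with Lemma~\ref{lem:gvf-d-l}. Two things must be shown: the maps in both directions are well defined, and positivity together with the product formula for $m$ is equivalent to the positivity condition (3).

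\emph{From $(m,\{\mu_v\})$ to the new data.} Set $D_\ell$ as in Lemma~\ref{lem:gvf-d-l} and keep $m(x_0)\ge 0$ and the $\mu_v$. The $L^1$ condition is unchanged. For (3), fix a closed point $x$ and put $E=x/\deg(x)-x_0/\deg(x_0)$. For any flat lift $\overline E$, the evaluation formula \eqref{eq:evaluation} gives
\[
  m(E)=\ell(\overline E)-\sum_v\int g_{E,v}\,\mathrm d\mu_v=D_\ell\cdot\overline E-\sum_v\int g_{E,v}\,\mathrm d\mu_v .
\]
Hence $m(x)/\deg(x)=m_{x_0}+D_\ell\cdot\overline E-\sum_v\int g_{E,v}\,\mathrm d\mu_v$. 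The left-hand side is nonnegative because $x$ is effective, and this is exactly (3).

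\emph{From the new data back to $m$.} Define $m$ on closed points by
\[
  m(x):=\deg(x)\Bigl(m_{x_0}+D_\ell\cdot\overline E-\sum_v\int g_{E,v}\,\mathrm d\mu_v\Bigr),
\]
with $E$ and $\overline E$ as above, and extend $m$ linearly to $\operatorname{Div}(C)_{\mathbb R}$. We must check three things.
\begin{enumerate}
  \item The definition does not depend on the flat lift. Two flat lifts differ by a constant pulled back from $\operatorname{Spec}(O_K)$, say $\pi^*c$ with constant Green functions $c_v$. Since $\deg E=0$, the intersection $D_\ell\cdot\pi^*c$ vanishes. Since each $\mu_v$ is a probability measure, the integral term changes by $\sum_v c_v=\widehat{\deg}(\pi^*c)\cdot\deg E=0$.
  \item Effective divisors have nonnegative value. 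This follows from (3) and linearity.
  \item The product formula holds. This is the key step.
\end{enumerate}

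For the product formula, take $f\in K(C)^\times$ with $\operatorname{div}(f)=\sum_x a_x x$. Since $\deg\operatorname{div}(f)=0$, we can write $\operatorname{div}(f)=\sum_x a_x\deg(x)E_x$, and the $m_{x_0}$ terms cancel. Choose flat lifts $\overline E_x$. Then $\sum_x a_x\deg(x)\overline E_x$ is a flat adelic divisor with geometric part $\operatorname{div}(f)$. Flatness is characterized by orthogonality to vertical divisors, and $\widehat{\operatorname{div}}(f)$ is orthogonal to every vertical divisor because it is principal. Therefore $\sum_x a_x\deg(x)\overline E_x$ differs from $\widehat{\operatorname{div}}(f)$ by some $\pi^*c$ with $\deg$ of the geometric part zero. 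We then compute
\[
  m(\operatorname{div}(f))=D_\ell\cdot\widehat{\operatorname{div}}(f)-\sum_v\int(-\log|f|_v)\,\mathrm d\mu_v=\sum_v\int\log|f|_v\,\mathrm d\mu_v ,
\]
since $D_\ell\cdot\widehat{\operatorname{div}}(f)=0$. The contribution of $\pi^*c$ cancels exactly as in item 1.

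The two constructions are mutually inverse. In one direction, $m$ is recovered from the displayed formula. In the other, $D_\ell$ is recovered by the uniqueness in Lemma~\ref{lem:gvf-d-l}: the functional built from $m$ satisfies $\ell(\overline E)=D_\ell\cdot\overline E$ on flat adelic divisors, by the computation above.

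The main obstacle is the bookkeeping in the product formula. One must make sense of $D_\ell\cdot\overline E$ for $\mathbb R$-classes, for instance by choosing a representative divisor disjoint from the support of $E$ and using the induction formula, and one must confirm that the non-uniqueness of flat lifts only ever contributes terms of the form $\deg(E)\cdot(\text{const})=0$. The $L^1$ condition guarantees that all the infinite sums converge absolutely, so the rearrangements are legitimate.
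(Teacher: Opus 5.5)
Your route is the paper's. Its proof consists only of the reconstruction formula for $m(x)/\deg(x)$, and the checks you add (nonnegativity, product formula, mutual inverseness) are the right ones. However, item~1 is argued incorrectly, and the product-formula step reuses that argument (``the contribution of $\pi^*c$ cancels exactly as in item 1''). Replace a flat lift $\overline E$ by $\overline E+\pi^*c$, where $\pi^*c$ has trivial geometric part and constant Green functions $c_v$, finitely many non-zero. Then
\[
  D_\ell\cdot\pi^*c=\deg(D_\ell)\sum_v c_v=\sum_v c_v,
  \qquad
  \sum_v\int c_v\,\mathrm d\mu_v=\sum_v c_v ,
\]
and $\sum_v c_v$ is an arbitrary real number. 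So neither term vanishes, and $\deg E=0$ plays no role here. What is true is that both terms change by the same amount $\sum_v c_v$, so they cancel in $D_\ell\cdot\overline E-\sum_v\int g_{E,v}\,\mathrm d\mu_v$.

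This matters because the cancellation is exactly where the hypotheses $D_\ell\in\operatorname{Pic}^1(C)_{\mathbb R}$ and ``each $\mu_v$ is a probability measure'' (the normalization of $\ell$) are used. For a class of degree $d\neq 1$, the same expression would change by $(d-1)\sum_v c_v$ and would depend on the lift. Your argument never uses these hypotheses, so it would conclude lift-independence regardless of $d$, which is false.

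With the corrected cancellation, three steps go through: item~1, the product-formula computation (where $\sum_x a_x\deg(x)\overline E_x=\widehat{\operatorname{div}}(f)+\pi^*c$), and the identity $\ell(\overline F)=D_\ell\cdot\overline F$ for all flat $\overline F$ used to recover $D_\ell$. The rest of your proof is sound.
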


\begin{proof}
  For $x \in |C|$, let $E = x/\deg(x)-x_0/\deg(x_0)$ and $\overline{E}=(E,(g_{E,v})_{v\in M_K})$ be a flat adelic divisor, and then set
  \[
    \frac{m(x)}{\deg(x)}=\frac{m(x_0)}{\deg(x_0)}+D_\ell \cdot \overline E -\sum_{v\in M_K}\int g_{E,v}\,\mathrm d\mu_v.
  \]

\end{proof}

The \emph{GVF topology} is just pointwise
convergence: we say $\ell_n\longrightarrow\ell$ in the GVF topology if
$\ell_n(\overline D)\longrightarrow\ell(\overline D)$ for every adelic
divisor $\overline D$. For a sequence $(\alpha_n)$ of distinct points in $C(\overline K)$, we say
$\alpha_n\longrightarrow\ell$ in the GVF topology if
$h_{\overline D}(\alpha_n)\longrightarrow\ell(\overline D)$ for every
adelic divisor $\overline D$.

\begin{lemma} \label{lem:gvf-point-convergence}
  Let $(E_v)_{v\in M_K}$ be an admissible adelic set on $U$.
  Fix $x_0\in X$.
  Let $(\alpha_n)$ be a sequence of distinct points in $C(\overline K)$ and $\ell=(m,\{\mu_v\})$ be a normalized GVF functional with $m_{x_0}=0$.
  Assume that both $\alpha'_{n, v}$ and $\operatorname{supp}\mu_v$ are contained in $E_v$ for each $v$.
  Then $\alpha_n \longrightarrow \ell$ in the GVF topology if and only if
  \[
    \alpha_n'/\deg(\alpha_n') \longrightarrow D_\ell \text{ in $\operatorname{Pic}^1(C)_\mathbb{R}$}\text{ and } \quad \delta_{\alpha_n,v} \longrightarrow \mu_v \text{ for all $v \in M_K$}.
  \]
\end{lemma}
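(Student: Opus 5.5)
The plan is to reduce both directions to three kinds of test divisors: the divisor $x_0$, flat adelic divisors of degree $0$, and purely analytic divisors $(0,\varphi)$. Throughout, the point $\alpha_n$ is identified with the normalized GVF functional $\ell_{\alpha_n}:\overline D\mapsto h_{\overline D}(\alpha_n)$.

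First I would record what the data of $\ell_{\alpha_n}$ are in the sense of Lemma~\ref{lem:gvf-on-curves}. The measures are $\delta_{\alpha_n,v}$. The functional $m$ is supported on the closed point $\alpha_n'$, so $m_{x_0}(\ell_{\alpha_n})=0$ once $\alpha_n'\neq x_0$, which holds for all $n$ since $\alpha'_{n,v}\subset E_v\subset U_v^{\mathrm{an}}$. Finally $D_{\ell_{\alpha_n}}=\alpha_n'/\deg(\alpha_n')$, because for a flat $\overline E$ the induction formula gives $h_{\overline E}(\alpha_n)=\frac{1}{\deg(\alpha_n')}\overline{\alpha_n'}\cdot\overline E$.

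\emph{Forward direction.} Testing $\overline D=(0,\{\varphi_v\})$ with $\varphi_v$ continuous at one place and $0$ elsewhere gives $\int\varphi_v\,\mathrm d\delta_{\alpha_n,v}\to\int\varphi_v\,\mathrm d\mu_v$, which is weak convergence. Testing flat $\overline E$ gives
\[
\frac{\alpha_n'}{\deg(\alpha_n')}\cdot\overline E\longrightarrow D_\ell\cdot\overline E .
\]
The classes $\alpha_n'/\deg(\alpha_n')-D_\ell$ lie in $\operatorname{Pic}^0(C)_{\mathbb R}$, which is finite dimensional by Mordell--Weil. By Faltings--Hriljac the pairing against flat divisors is the N\'eron--Tate pairing up to a nonzero constant, hence nondegenerate. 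Convergence against all $E$ therefore gives convergence in $\operatorname{Pic}^1(C)_{\mathbb R}$.

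\emph{Converse.} Fix $\overline D$ in $\widehat{\operatorname{Div}}(K(C)/\mathbb Z)$; I want $h_{\overline D}(\alpha_n)\to\ell(\overline D)$.

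1. I would first reduce to model adelic divisors on projective models of $C$ whose geometric part is supported in $X$. Let $y\notin X$ be a point of $\operatorname{supp}D$. By Riemann--Roch there is $f\in K(C)^\times$ with $\operatorname{div}(f)=ky-(\text{divisor supported on }x_0)$ for some $k>0$. Since both $h_{\cdot}(\alpha_n)$ and $\ell$ vanish on $\widehat{\operatorname{div}}(f)$, subtracting suitable real multiples of such divisors moves the geometric support into $X$. The general adelic divisor is then a boundary-norm limit of model ones. Its boundary divisor can be chosen with geometric support in $X$ after the same principal-divisor modification, with Green functions continuous on the $E_v$. Hence the approximation error is bounded by $\varepsilon\,h_{\overline{\mathcal E}}(\alpha_n)$ and $\varepsilon\,\ell(\overline{\mathcal E})$, which stay bounded once step 2 applies to $\overline{\mathcal E}$.

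2. For $D$ supported in $X$, I would write
\[
\overline D=\frac{\deg D}{\deg x_0}\,\overline{x_0}+\overline E+(0,\{\varphi_v\}),
\]
where $\overline E$ is flat with geometric part $E=D-\frac{\deg D}{\deg x_0}x_0$ (supported in $X$, degree $0$). The functions $\varphi_v$ are continuous on $C_v^{\mathrm{an}}$ and vanish for almost all $v$. The Green functions of $\overline{x_0}$ and $\overline E$ are continuous on each compact $E_v\subset U_v^{\mathrm{an}}$, and vanish on $E_v$ for almost all $v$ by the admissibility $E_v=\operatorname{red}_v^{-1}(\mathcal U_{k_v})$.

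3. Now the three pieces converge separately. The flat piece converges by the Pic convergence. The piece $(0,\varphi)$ converges by weak convergence at finitely many places, together with $\operatorname{supp}\mu_v\subset E_v$. For $\overline{x_0}$, only finitely many places contribute, the integrands are continuous on $E_v$, and $\ell(\overline{x_0})=m(x_0)+\sum_v\int g_{x_0,v}\,\mathrm d\mu_v$ with $m_{x_0}=0$. So this piece converges by weak convergence as well.

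The main obstacle is step 1: making sure that the reduction through principal divisors and the boundary-norm approximation are compatible. Concretely, one must choose the boundary divisor itself with support in $X$ modulo principal divisors, so that it is controlled uniformly along the sequence.
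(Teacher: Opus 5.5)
Your forward direction and the decomposition in steps 2--3 are the paper's argument. Step 1, however, is false as stated, and it is not the real difficulty.

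\textbf{Why step 1 fails.} Riemann--Roch does not produce $f$ with $\operatorname{div}(f)=ky-(\text{divisor supported on }x_0)$. Such an $f$ exists only if $\deg(x_0)\,y-\deg(y)\,x_0$ is torsion in $\operatorname{Pic}^0(C)$. What Riemann--Roch gives is some $f\in H^0(mx_0-ky)$, and its zero divisor picks up further points outside $X$. Real coefficients do not help either. The image in $\operatorname{Pic}(C)_{\mathbb R}$ of divisors supported on $X$ has dimension at most $\#X$, while $\operatorname{Pic}^0(C)_{\mathbb R}$ can be larger. For example, take an elliptic curve of positive rank, $X=\{x_0\}$, and $y$ with $y-x_0$ of infinite order. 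The same problem affects your boundary divisor: its support is dictated by the model $\mathcal V$ on which $\overline D$ lives, and subtracting principal divisors does not change that.

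\textbf{Why step 1 is unnecessary.} The decomposition $\overline D=\frac{\deg D}{\deg x_0}\overline{x_0}+\overline E+(0,\{\varphi_v\})$ holds for every model adelic divisor. Here $\overline E$ is a flat lift of $D-\frac{\deg D}{\deg x_0}x_0$, wherever its support lies. The flat piece converges purely from $\operatorname{Pic}$-convergence. For $n$ large, $\alpha_n\notin\operatorname{supp}E$, and then $h_{\overline E}(\alpha_n)=\overline E\cdot\alpha_n'/\deg(\alpha_n')$. This is a continuous affine function of the class in the finite-dimensional space $\operatorname{Pic}^1(C)_{\mathbb R}$, so the logarithmic singularities of $g_{E,v}$ on $E_v$ never enter. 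Only the pieces $\overline{x_0}$ and $\varphi$ need continuity on $E_v$, and there you correctly use $m_{x_0}=0$ and admissibility. Likewise the boundary divisor $\overline{\mathcal E}$ is itself a model adelic divisor. Hence $h_{\overline{\mathcal E}}(\alpha_n)\to\ell(\overline{\mathcal E})$, so it stays bounded without any special choice.

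\textbf{The repaired argument.} Delete step 1 and what remains is the paper's proof. First get convergence on $\overline D_0=(x_0,(g_{0,v}))$, on flat divisors and on $(0,\varphi)$, hence on all model divisors. Then use the sandwich $\overline D^{(j)}-\epsilon_j\overline{\mathcal E}\leq\overline D\leq\overline D^{(j)}+\epsilon_j\overline{\mathcal E}$, together with $h_{\overline A}(\alpha_n)\geq 0$ for effective $\overline A$ once $\alpha_n\notin\operatorname{supp}A$. This gives convergence on every adelic divisor.

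A minor point: a single point does not literally define a GVF functional on $K(C)$. Positivity fails for effective divisors through $\alpha_n$, and heights are undefined for divisors living on opens not containing $\alpha_n$. You never use this beyond the flat-divisor identity above, which holds for $n$ large.
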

\begin{proof}
  Suppose first that $\alpha_n\to\ell$ in the GVF topology.
  Testing against adelic divisors with trivial underlying divisor and a continuous Green function at a single place gives
  \[
    \delta_{\alpha_n,v}\longrightarrow\mu_v
  \]
  weakly for every $v\in M_K$.
  Moreover, for every flat adelic divisor $\overline F$,
  \[
    \alpha_n'/\deg(\alpha_n')\cdot\overline F
    =h_{\overline F}(\alpha_n)
    \longrightarrow\ell(\overline F)
    =D_\ell\cdot\overline F.
  \]
  We get the convergence by the non-degeneracy of the N\'eron--Tate height pairing.

  Conversely, we choose a model adelic divisor
  \[
    \overline D_0=(x_0,(g_{0,v})_{v\in M_K}).
  \]
  By admissibility, there is a finite set $S_0\subset M_K$ such that $g_{0,v}=0$ on $E_v$ for $v\notin S_0$.
  For $v\in S_0$, the restriction of $g_{0,v}$ to $E_v$ is bounded and continuous.
  Therefore, we obtain
  \[
    h_{\overline D_0}(\alpha_n)=\sum_{v\in S_0}\int g_{0,v}\,\mathrm d\delta_{\alpha_n,v}\longrightarrow \sum_{v\in S_0}\int g_{0,v}\,\mathrm d\mu_v =\ell(\overline D_0).
  \]
  Also, for each flat adelic divisor $\overline F$, we have $h_{\overline F}(\alpha_n)\to \ell(\overline F)$.
  Therefore, we obtain the convergence for every model adelic divisor.

  For each effective adelic divisor $\overline D_1\in\operatorname{\widehat{Div}}(K(C)/\mathbb Z)$, since the points $\alpha_n$ are distinct, $\alpha_n\notin\operatorname{supp}(D_1)$ for $n$ large enough.
  Therefore, $h_{\overline D_1}(\alpha_n)\geq 0$ for $n$ large enough.
  Thus, for any adelic divisor $\overline D_1$ and any model
  adelic $\mathbb R$-divisor $\overline D_0\geq\overline D_1$,
  we have
  \[
    \limsup_{n \to \infty}h_{\overline D_1}(\alpha_n)\leq \lim_{n \to \infty} h_{\overline D_0}(\alpha_n)=\ell(\overline D_0).
  \]
  Therefore, we have
  \[
    \limsup_{n \to \infty}h_{\overline D_1}(\alpha_n)\leq\inf_{\overline D_2\text{ model and }\overline D_2\geq \overline D_1}\ell(\overline D_2)=\ell(\overline D_1).
  \]
  For the same reason,
  \[
    \liminf_{n \to \infty}h_{\overline D_1}(\alpha_n)\geq \ell(\overline D_1).
  \]
  We thus get the convergence for all adelic divisors.
\end{proof}

\subsubsection{Balayage of GVF}
\label{sec:balayage}

Let $\ell=(m,\{\mu_v\})$ be a normalized GVF functional.
Let $\mathbb E=\{E_v\subset C_v^{\mathrm{an}}\}_{v\in M_K}$ be an adelic set.
Assume that each $E_v$ is locally admissible in $C_v^{\mathrm{an}}$, and there exists an open subvariety $U\subset C$ together with its regular quasi-projective model $\mathcal U$, such that $E_v=\operatorname{red}_v^{-1}(\mathcal U_{k_v})$ for all but finitely many $v\in M_{K, f}$.
Write $u_v=u_{\mu_v}^{E_v}$ for the balayage defect of $\mu_v$ to $E_v$.
For a divisor $D_v=\sum_t a_t t$, we write $u_v(D_v)=\sum_t a_t\deg(t)u_v(t)$.
We construct the balayage GVF as follows.
\begin{lemma}\label{lem:balayage-gvf}
  For $D\in\operatorname{Div}(C)_{\mathbb R}$, let
  \[
    m^{\mathbb E}(D)=m(D)+\sum_{v\in M_K}u_v(D_v).
  \]
  Then the ``balayage GVF'' $\ell^{\mathbb E}=\bigl(m^{\mathbb E},\{\mu_v^{E_v}\}_{v\in M_K}\bigr)$ is again a normalized GVF functional.
\end{lemma}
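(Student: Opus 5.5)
To show that $\ell^{\mathbb E}$ is a normalized GVF functional, I will use the characterization of normalized GVF functionals on curves by data $(m,\{\mu_v\})$ from Section~\ref{sec:gvf}. So it suffices to check four things for $(m^{\mathbb E},\{\mu_v^{E_v}\})$: each $\mu_v^{E_v}$ is a probability measure; $m^{\mathbb E}$ is linear and nonnegative on effective divisors; the $L^1$-integrability condition holds; and the product formula holds. Normalization then comes for free, since the characterization already encodes normalized functionals.

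The first two points are routine.
\begin{itemize}
\item The balayage preserves total mass by construction, so each $\mu_v^{E_v}$ is a probability measure.
\item $m^{\mathbb E}$ is linear because $D\mapsto u_v(D_v)$ is linear.
\item $m^{\mathbb E}$ is nonnegative on effective divisors because $m$ is, and $u_v\geq 0$.
\item The sum $\sum_v u_v(D_v)$ is finite: for all but finitely many $v$ we have $E_v=\operatorname{red}_v^{-1}(\mathcal U_{k_v})$. I would first reduce to the case where $\mu_v$ is already supported on $E_v$ for almost all $v$ (this should follow from integrality of $\ell$ at those places, or else be an additional assumption). Then $\mu_v^{E_v}=\mu_v$ and $u_v=0$ for those $v$.
\end{itemize}

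The core is the product formula. Fix $f\in K(C)^\times$. The key local identity I will establish at each place $v$ is
\[
  \int \log|f|_v\,\mathrm d\mu_v^{E_v}-\int \log|f|_v\,\mathrm d\mu_v=u_v(\operatorname{div}(f)_v).
\]
To prove it, I integrate by parts. Since $\mathrm{dd}^c u_v=\mu_v^{E_v}-\mu_v$ and $\mathrm{dd}^c\log|f|_v=\delta_{\operatorname{div}(f)}$ (Poincaré--Lelong), we get
\[
  \int \log|f|_v\,\mathrm{dd}^c u_v=\int u_v\,\mathrm{dd}^c\log|f|_v=u_v(\operatorname{div}(f)_v).
\]
The integration by parts has to be justified with care.
\begin{itemize}
\item $u_v$ is continuous off a polar set. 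It is bounded whenever $\mu_v$ has finite energy, or at least it is $\mu$-integrable, by a monotone approximation of $\mu_v$ by measures with continuous potentials.
\item The zeros and poles of $f$ may sit in the interior of $E_v$, where $u_v=0$. They may also sit outside $E_v$, where $u_v$ is finite.
\item For the non-archimedean case I would instead use Proposition~\ref{prop:non-sub-avg}, together with the Dirichlet problem, on $\Omega=C_v^{\mathrm{an}}\setminus E_v$. On each component the function $\log|f|_v$ equals its harmonic extension minus a sum of Green functions $G(z,\cdot\,;E_v)$ over the zeros and poles $z$ of $f$ in $\Omega$. Integrating against $\mu_v|_{\Omega}$ and comparing with $\mu_v|_\Omega^{E_v}$ gives exactly $-\sum_z \operatorname{ord}_z(f)\deg(z)u_v(z)$, with the sign matching once the convention $\mathrm{dd}^c u=\mu^H-\mu$ is unwound.
\end{itemize}
Summing over $v$ and adding the original product formula for $\ell$ gives
\[
  m^{\mathbb E}(\operatorname{div} f)=\sum_v\int\log|f|_v\,\mathrm d\mu_v^{E_v}.
\]

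Finally, $L^1$-integrability follows from the same identity applied to $\log|f|_v$ and $\log^+|f|_v$ separately. More precisely, balayage does not increase $\int \log^+|f|_v$ by subharmonicity (Proposition~\ref{prop:non-sub-avg}, applied to $\log^+|f|$ on components without poles of $f$). It changes the integrals only by the finite quantities $u_v(\cdot)$, and it does so only at finitely many places. The main obstacle I expect is the rigorous integration by parts when $u_v$ is unbounded or $\mu_v$ has infinite energy. I would handle it by truncating $\mu_v|_\Omega$ to compact subsets of $\Omega$ away from the zeros and poles of $f$, and passing to the limit using monotonicity of Green functions.
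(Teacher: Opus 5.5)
There is a genuine gap: the reduction to $\operatorname{supp}\mu_v\subseteq E_v$ for almost all $v$ is neither justified nor an admissible extra hypothesis. Your local identity $\int\log|f|_v\,\mathrm d(\mu_v^{E_v}-\mu_v)=u_v(\operatorname{div}(f)_v)$ is fine and is the paper's Green-reciprocity step; the problem is the global part.

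The lemma is stated for an arbitrary normalized GVF functional $\ell$. It is used in Theorem~\ref{thm:ess-attained-by-integral-gvf-main-text} precisely to turn an arbitrary $\ell$ into one that is integral with respect to $\mathbb E$. The $L^1$ condition does not force $\mu_v$ into $E_v$ at almost all places. Take $C=\mathbb P^1_{\mathbb Q}$, $U=\mathbb A^1$, and $E_v$ the closed unit disc at every place. Set $m(\infty)=0$, let $\mu_\infty$ be Haar measure on the unit circle, and let $\mu_p=\delta_{\zeta_{0,r_p}}$ be the Gauss point of radius $r_p>1$, with $\sum_p\log r_p<\infty$. This is a normalized GVF: for $x\neq\infty$ with primitive minimal polynomial $P_x=\sum_i a_iz^i$, one has $m(x)=\log M(P_x)+\sum_p\log\max_i|a_i|_pr_p^i\geq0$. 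Yet $u_p(\infty)=\log r_p>0$ for every prime $p$.

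So the two points where the infinitude of places matters are left open.
\begin{itemize}
\item[(i)] The $L^1$ condition for $(\mu_v^{E_v})_v$. Balayage changes the measure at infinitely many places, contrary to your last paragraph.
\item[(ii)] The finiteness of $m^{\mathbb E}$, i.e.\ $\sum_v u_v(x_v)<\infty$. For $x\in C\setminus U$ this is the real issue: the Zariski closure of $x$ misses $\mathcal U$, so $x_v\cap E_v=\varnothing$ at every place where $E_v=\operatorname{red}_v^{-1}(\mathcal U_{k_v})$, and $u_v(x_v)$ may be positive at all of them.
\end{itemize}
Without (ii), summing your local identities over $v$ is not justified.

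Here is what is missing. For (i): outside a finite set $S$ of places, $\mathcal U_{k_v}$ is integral and $f$ is a unit along it, so $\partial E_v$ is a single point where $|f|_v=1$. Since balayage only moves mass onto $\partial E_v$, this gives $\int|\log|f|_v|\,\mathrm d\mu_v^{E_v}\leq\int|\log|f|_v|\,\mathrm d\mu_v$. At $v\in S$, take a nonnegative continuous Green function $g_{D,v}$ of $D=(f)_0+(f)_\infty$ with curvature supported on $E_v$. It is superharmonic off $E_v$, so Proposition~\ref{prop:non-sub-avg} applied to $-\min(g_{D,v},N)$ gives $\int g_{D,v}\,\mathrm d\mu_v^{E_v}\leq\int g_{D,v}\,\mathrm d\mu_v$. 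Moreover $|\log|f|_v|-g_{D,v}$ is bounded.

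Your $\log^+$ step runs the wrong way. Proposition~\ref{prop:non-sub-avg} says balayage does not \emph{decrease} integrals of subharmonic functions. An upper bound therefore needs a comparison function that is superharmonic off $E_v$, such as $g_{D,v}$.

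For (ii): if $x\in U$, then $x_v\subset E_v$, hence $u_v(x_v)=0$, for almost all $v$. The remaining finitely many terms are finite by the $L^1$ condition for $\ell$. If $x\notin U$, take a nonzero regular function $f$ on $U$ with a pole at every point of $C\setminus U$, write $Z=(f)_0$ and $P=(f)_\infty$, and sum your identity over $v$. The left-hand side converges absolutely by (i) and the $L^1$ condition for $\ell$. Also $\sum_v u_v(Z_v)<\infty$ by the case $x\in U$. Hence $\sum_v u_v(P_v)<\infty$, and since $u_v\geq0$, each $\sum_v u_v(x_v)$ is finite. Only then does the identity give the product formula for every $f\in K(C)^\times$.
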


\begin{proof}
  First, we check the $L^1$ condition.
  For each non-zero rational function $f$ on $C$, we consider its absolute value on $\partial E_v$ for $v\in M_K$.
  Except for finitely many places, $v$ is non-archimedean, $\mathcal U_{k_v}$ is integral, and $f$ has order zero along $\mathcal U_{k_v}$.
  Then, for these places, $\partial E_v$ is either empty or consists of a unique point where $|f|_v=1$.
  Let $S$ be the finite set of exceptional places.
  For each $v\notin S$, we have
  \[
    \int_{C_v^\mathrm{an}}\left|\log\left|f\right|_v\right|\,\mathrm d\mu_v\geq \int_{C_v^\mathrm{an}}\left|\log\left|f\right|_v\right|\,\mathrm d\mu_v^{E_v},
  \]
  since the balayage only moves the measure outside $E_v$ to $\partial E_v$, and $\log|f|_v=0$ on $\partial E_v$.

  Set $D=(f)_0+(f)_{\infty}$.
  For each $v\in S$, let $g_{D,v}$ be a nonnegative continuous Green function of $D_v$ such that $c_1(g_{D,v})$ is supported on $E_v$.
  Then, $g_{D,v}$ is superharmonic outside $E_v$.
  Applying Proposition \ref{prop:non-sub-avg} to $-\min(g_{D,v},N)$ and letting $N\to+\infty$, we obtain
  \[
    \int_{C_v^\mathrm{an}}g_{D,v}\,\mathrm d\mu_v\geq \int_{C_v^\mathrm{an}}g_{D,v}\,\mathrm d\mu_v^{E_v}.
  \]
  We also see that
  \[
    \sup_{C_v^\mathrm{an}\setminus\operatorname{Supp}(D_v)}
    \left|\left|\log\left|f\right|_v\right|-g_{D,v}\right|<\infty,
  \]
  since $|\log|f|_v|$ is also a continuous Green function of $D_v$.
  Therefore,
  \[
    \int_{C_v^\mathrm{an}}\left|\log\left|f\right|_v\right|\,\mathrm d\mu_v^{E_v}
    -\int_{C_v^\mathrm{an}}\left|\log\left|f\right|_v\right|\,\mathrm d\mu_v<+\infty.
  \]
  We hence obtain the adelic $L^1$ condition.

  Now, for each closed point $x\in U$, $x_v\subset E_v$ for all but finitely many places.
  By the $L^1$ condition of $\mu_v$, we see that $u_v(t)<+\infty$ for each $t \in x_v$.
  Therefore, $m^{\mathbb E}(x)<+\infty$ for each $x\in U$.

  For each non-zero regular function $f$ on $U$, by the Green reciprocity law and the Lelong--Poincar\'e formula, we have
  \[
    m^\mathbb{E}(\operatorname{div}(f)) - m(\operatorname{div}(f)) = \sum_{v\in M_K}\int \log|f|_v \,\mathrm{d}(\mu_v^{E_v}-\mu_v) = \sum_{v\in M_K}u_v(\operatorname{div}(f)_v).
  \]
  Hence, we see that $m^{\mathbb E}(x)<+\infty$ for each $x\in C$.
  Now, the same equality holds for every non-zero rational function $f$; hence we have $\ell^{\mathbb E}(\widehat{\operatorname{div}}(f))=\ell(\widehat{\operatorname{div}}(f))=0$.
\end{proof}

\begin{lemma}\label{lem:balayage-inequality}
  Take $\overline L\in \operatorname{\widehat{Div}}(K(C)/\mathbb Z)$.
  Assume that the geometric part $\widetilde{L}$ has positive degree and $c_1(\overline{L})_v\leq 0$ outside $E_v$ for each $v$.
  Then, we have $\ell^{\mathbb E}(\overline{L})\leq\ell(\overline{L})$.
\end{lemma}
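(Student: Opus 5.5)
Write $\overline L=(L,(g_{L,v})_{v\in M_K})$ and use the evaluation formula \eqref{eq:evaluation} for both functionals. Since $\ell^{\mathbb E}=(m^{\mathbb E},\{\mu_v^{E_v}\})$ with $m^{\mathbb E}(D)=m(D)+\sum_v u_v(D_v)$, we get
\[
  \ell^{\mathbb E}(\overline L)-\ell(\overline L)
  =\sum_{v\in M_K}\Bigl(u_v(L_v)+\int g_{L,v}\,\mathrm d(\mu_v^{E_v}-\mu_v)\Bigr).
\]
It therefore suffices to prove, place by place, that
\[
  u_v(L_v)+\int g_{L,v}\,\mathrm d(\mu_v^{E_v}-\mu_v)\leq 0.
\]
I would first reduce to the case where $\overline L$ is a model adelic divisor and $\mu_v$ is compactly supported away from $\operatorname{supp}L_v$. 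The general case would then follow by uniform approximation of Green functions, with only finitely many places differing, together with the $L^1$ control already established in Lemma~\ref{lem:balayage-gvf}. At all but finitely many places $\mu_v^{E_v}-\mu_v$ vanishes against $g_{L,v}$ and $u_v(L_v)=0$, so the sum is effectively finite.

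The local identity comes from Green reciprocity. We have $\mathrm{dd}^c u_v=\mu_v^{E_v}-\mu_v$ and $\mathrm{dd}^c g_{L,v}=c_1(\overline L)_v-\delta_{L_v}$. Pairing these two relations formally gives
\[
  \int g_{L,v}\,\mathrm d(\mu_v^{E_v}-\mu_v)=\int u_v\,\mathrm d\bigl(c_1(\overline L)_v-\delta_{L_v}\bigr)
  =\int u_v\,\mathrm dc_1(\overline L)_v-u_v(L_v).
\]
Hence the local contribution equals $\int u_v\,\mathrm dc_1(\overline L)_v$. This is exactly the mechanism used in the proof of Lemma~\ref{lem:balayage-gvf} for principal divisors, where $c_1=0$.

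Now $u_v\geq 0$, and $u_v=0$ on $E_v$ outside a polar set. A polar set is not charged by $c_1(\overline L)_v$, since that measure comes from continuous potentials; at non-archimedean places it is a combination of Dirac masses at divisorial points, which are non-polar. So $\int u_v\,\mathrm dc_1(\overline L)_v=\int_{C_v^{\mathrm{an}}\setminus E_v}u_v\,\mathrm dc_1(\overline L)_v$, and this is $\leq 0$ because $c_1(\overline L)_v\leq 0$ outside $E_v$ by hypothesis. Summing over $v$ gives $\ell^{\mathbb E}(\overline L)\leq\ell(\overline L)$.

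The main obstacle is justifying the reciprocity step. $u_v$ may be unbounded near the support of $\mu_v$, and $g_{L,v}$ has logarithmic singularities at $L_v$, which may meet $\operatorname{supp}\mu_v$. I would handle this as follows:
\begin{enumerate}
  \item Write $\mu_v$ as an increasing limit of truncations, or regularize $u_v$ by $\min(u_v,N)$.
  \item Apply the Green formula to the resulting continuous, bounded-variation objects (Thuillier's theory at non-archimedean places, classical theory at archimedean ones).
  \item Pass to the limit by monotone convergence.
  \item Use the finiteness $u_v(t)<\infty$ at points of $L_v$, proved in Lemma~\ref{lem:balayage-gvf}, to control the $\delta_{L_v}$ term.
\end{enumerate}
The positive-degree hypothesis on $\widetilde L$ is not needed for the local computation itself. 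It enters only to allow writing $\overline L$ as a difference of nef model divisors with controlled curvature during the approximation.
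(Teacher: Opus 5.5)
Your argument is correct and rests on the same mechanism as the paper's: Green reciprocity against the balayage defect $u_v$, together with $u_v\geq 0$ and $u_v=0$ on $E_v$ off a polar set. The organization differs, though.

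The paper uses $\deg\widetilde L>0$ only to pass to a multiple whose geometric part is effective, so that $\overline L=\widehat{\operatorname{div}}(s)=(D,(g_{D,v}))$. It then picks an auxiliary Green function $g'_{D,v}$ of $D_v$ whose curvature is supported on $E_v$. Reciprocity is applied only to $g'_{D,v}$, giving $\int g'_{D,v}\,\mathrm d(\mu_v^{E_v}-\mu_v)=-u_v(D_v)$. The continuous difference $g_{D,v}-g'_{D,v}$, which is superharmonic off $E_v$ by hypothesis, is handled by the balayage inequality of Proposition~\ref{prop:non-sub-avg}.

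You instead pair $u_v$ with the full curvature and obtain the exact defect formula
\[
  \ell^{\mathbb E}(\overline L)-\ell(\overline L)=\sum_{v\in M_K}\int_{C_v^{\mathrm{an}}\setminus E_v}u_v\,\mathrm dc_1(\overline L)_v .
\]
This is more informative and shows that positivity of $\deg\widetilde L$ plays no role. Your guess that it enters through a nef decomposition is not how it is used: the paper needs it only to produce the section $s$. The cost of your route is the point you flag yourself. You must integrate the possibly unbounded $u_v$ against the signed measure $c_1(\overline L)_v$ off $E_v$. The paper's split avoids this, needing there only a Dirichlet-problem inequality for a continuous function.

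Two details need fixing.

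\emph{The reduction to model divisors.} It does not preserve the hypothesis $c_1(\overline L)_v\leq 0$ outside $E_v$. For example, at a non-archimedean place model curvature sits on finitely many divisorial points and can charge $C_v^{\mathrm{an}}\setminus E_v$ positively. So you must pass to the limit in the identity, not in the inequality. One way is to write $c_1(\overline L_n)_v-c_1(\overline L)_v=\mathrm{dd}^c(g_{n,v}-g_{L,v})$ and apply reciprocity to that continuous difference. Alternatively, subtract $g'_{D,v}$ as the paper does, which requires no approximation at all.

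\emph{Finiteness of the sum over places.} $u_v(L_v)$ and $\int g_{L,v}\,\mathrm d(\mu_v^{E_v}-\mu_v)$ need not vanish individually at almost all places. This happens, for instance, when $\operatorname{supp}L$ meets $X$ and $\mu_v$ charges residue discs of $X$ at infinitely many places, which an arbitrary normalized GVF may do. Only their sum vanishes: at good places it equals $\deg(L)\,u_v(\xi_v)=0$. In any case, the termwise subtraction is justified by the absolute convergence from Lemma~\ref{lem:balayage-gvf}, not by finiteness of the sum.
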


\begin{proof}
  By taking powers of $\overline L$, we may assume that $\widetilde{L}$ is effective.
  Take a section $s$ of $\widetilde L$.
  Write $\widehat{\operatorname{div}}(s)=(D, \{g_{D, v}\})$.
  For a place $v\in M_K$, take a Green function $g'_{D, v}$ of $D_v$ such that $c_1(g'_{D, v})$ is supported on $E_v$.
  Then, $c_1(g_{D, v}-g'_{D, v})\leq0$ outside $E_v$.
  Thus,
  \[
    \int g_{D, v}\,\mathrm{d}\mu_v^{E_v}-\int g_{D, v}\,\mathrm{d}\mu_v\leq \int g'_{D, v}\,\mathrm{d}\mu_v^{E_v}-\int g'_{D, v}\,\mathrm{d}\mu_v=-u_v(D_v)
  \]
  \begin{flalign*}
    \ell^{\mathbb E}(\overline{L}) &= m(D) + \sum_{v\in M_K}u_v(D_v) + \sum_{v\in M_K}\int g_{D, v}\,\mathrm{d}\mu_v^{E_v}  \\
    &= \ell(\overline{L}) + \sum_{v\in M_K}u_v(D_v) + \sum_{v\in M_K}\int g_{D, v}\,\mathrm{d}\mu_v^{E_v}-\sum_{v\in M_K}\int g_{D, v}\,\mathrm{d}\mu_v \\
    &\leq \ell(\overline{L}).
  \end{flalign*}
\end{proof}

In particular, the preceding two lemmas apply to loose admissible adelic sets.
\section{Regularization of sets and measures}
\label{sec:regularization-of-measures}

Let $K$ be a number field and $C/K$ be a projective curve that is smooth and geometrically connected.
Let $X \subseteq C$ be a finite set of points and $U=C-X$.
Let $\mathbb{E}=(E_v)_{v\in M_K}$ be an admissible adelic set on $U$ with $\operatorname{cap}_X(\mathbb{E})>1$.
Let $D$ be an $\mathbb{R}$-divisor on $C$ of degree $1$, and assume that we have, for every flat adelic divisor $\overline{F}=(F,(g_{F,v})_{v\in M_K})$ with $F$ effective on $U$,
\begin{equation}
  \sum \int g_{F,v} \,\,\mathrm{d}\mu_v \leq \overline{F} \cdot D. \label{eq:regularization-hypothesis}
\end{equation}
These are the assumptions of Theorem \ref{thm:integral-point-construction}.

In this section, we prove technical lemmas and propositions concerning regularization of the sets $E_v$ and the measures $\mu_v$. They will be used in the proof of Theorem \ref{thm:integral-point-construction} in Section \ref{sec:integral-point-construction}.

\subsection{Archimedean case}
First, we study local theory.
Let $C/\mathbb{C}$ be a projective smooth curve that is geometrically connected.
Take a point $x\in C(\mathbb{C})$ and a Green function $g_x$ at $x$.
Set $G_x(y_1, y_2)=U_{y_1}(y_2)$.
Then we have a distance function $d(y_1,y_2) \coloneqq e^{G_x(y_1,y_2)}$.
Note that this distance function does not satisfy the triangle inequality.
However, there is a metric $\rho$ on $C(\mathbb{C})\setminus\{x\}$, such that $d(y_1, y_2)=\rho(y_1, y_2)e^{\eta(y_1, y_2)}$ where $\eta$ is a continuous function on $C(\mathbb{C})\times C(\mathbb{C})$.
See \cite[p. 75]{Rum13} and \cite[p.129]{Rum89} for the existence of such a metric.
For simplicity, we assume that $\eta(y_1, y_2)<0$ for each $y_1, y_2$.

\begin{lemma}\label{lem:cap-low-bd}
  Let $H\subset C(\mathbb{C})\setminus\{x\}$ be a path-connected compact set.
  Set
  \[
    A=\inf_{y_1, y_2\in H}\eta(y_1, y_2).
  \]
  Then the capacity $\operatorname{cap}_x(H)\geq e^{A}\operatorname{diam}_\rho(H)/4$, where $\operatorname{diam}_\rho(H)$ is the diameter of $H$ under the metric $\rho$.
\end{lemma}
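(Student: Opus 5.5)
Our plan is to exhibit an explicit probability measure on $H$ with controlled energy, namely a measure transported along a path, and compare it with the classical bound $\operatorname{cap}\geq \operatorname{diam}/4$ for connected sets in $\mathbb C$.

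We begin with the reduction to the metric $\rho$. For $y_1,y_2\in H$ we have $\log d(y_1,y_2)=\log\rho(y_1,y_2)+\eta(y_1,y_2)\geq \log\rho(y_1,y_2)+A$. Hence for every $\mu\in\mathscr P(H)$,
\[
  I_x(\mu)\geq A+\iint\log\rho(y_1,y_2)\,\mathrm d\mu\,\mathrm d\mu .
\]
Since $V_x(H)=-\sup_\mu I_x(\mu)$, it suffices to find $\mu\in\mathscr P(H)$ with $\iint\log\rho\,\mathrm d\mu\,\mathrm d\mu\geq\log(\operatorname{diam}_\rho(H)/4)$. This requires the sign conventions to be right: $G_x=U_{\delta}$ has a logarithmic singularity $\log\rho$ on the diagonal, and the capacity is $e^{\sup I}$.

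Next we construct the measure. Choose $a,b\in H$ with $\rho(a,b)=\delta:=\operatorname{diam}_\rho(H)$; this is possible by compactness. Define $\phi(y)=\rho(a,y)$. It is $1$-Lipschitz with respect to $\rho$, and by path-connectedness $\phi(H)\supseteq[0,\delta]$. We then pick a Borel section $s:[0,\delta]\to H$ with $\phi\circ s=\mathrm{id}$, which exists by a measurable selection theorem for the compact fibres. Set $\mu=s_*(\mathrm{d}t/\delta)$. For $t_1,t_2\in[0,\delta]$, the Lipschitz property gives $\rho(s(t_1),s(t_2))\geq|t_1-t_2|$. Therefore
\[
  \iint\log\rho\,\mathrm d\mu\,\mathrm d\mu\geq\frac1{\delta^2}\int_0^\delta\!\!\int_0^\delta\log|t_1-t_2|\,\mathrm dt_1\,\mathrm dt_2=\log\delta-\tfrac32=\log(\delta e^{-3/2}).
\]
Since $e^{-3/2}\approx0.223<1/4$, this alone is not quite enough. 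We would refine it by using the arcsine distribution on $[0,\delta]$ in place of the uniform one. The logarithmic energy of the arcsine measure on an interval of length $\delta$ is exactly $\log(\delta/4)$, the capacity of a segment. So we take $\mu=s_*\nu$ with $\nu$ the equilibrium measure of $[0,\delta]$. The same pointwise comparison $\rho(s(t_1),s(t_2))\geq|t_1-t_2|$ then gives energy at least $\log(\delta/4)$, and hence $\operatorname{cap}_x(H)\geq e^{A}\delta/4$.

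The main obstacle is the justification of the projection step: existence of a measurable section together with the inequality $\rho(s(t_1),s(t_2))\geq|\phi(s(t_1))-\phi(s(t_2))|$. The inequality is just the triangle inequality for $\rho$, which is why the metric $\rho$ rather than $d$ is used. One must also check that $A$ is finite, which holds because $\eta$ is continuous on the compact set $H\times H$. If measurable selection feels heavy, we can instead take a finite $\epsilon$-chain approximation of $H$ or a Fekete-point argument, and pass to a limit.
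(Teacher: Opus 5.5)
Your proof is correct, but it takes the primal route where the paper takes the dual one.

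\textbf{The paper's route.} It picks an arc $\Lambda\subseteq H$ joining two diametral points and uses that the equilibrium potential of $\Lambda$ equals $\log\operatorname{cap}_x(\Lambda)$ at \emph{every} point of $\Lambda$. It pushes $\mu_\Lambda$ forward by $h=\rho(y_1,\cdot)$ to $[0,\operatorname{diam}_\rho(H)]$. Since that segment has capacity $\operatorname{diam}_\rho(H)/4$, there is a point $z'$ where the logarithmic potential of $h_*\mu_\Lambda$ is at least $\log(\operatorname{diam}_\rho(H)/4)$. The paper then evaluates the constant potential of $\mu_\Lambda$ at a lift $y'$ of $z'$.

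\textbf{Your route.} You bound $\sup_\mu I_x(\mu)$ from below directly with an explicit test measure, namely the lift $s_*\nu$ of the arcsine law. The key comparison is the pointwise inequality $\rho(s(t_1),s(t_2))\geq|t_1-t_2|$.

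\textbf{What each buys.} Your argument uses only the definition $\operatorname{cap}_x(H)=e^{\sup I_x}$ and the energy $\log(\delta/4)$ of the segment's equilibrium measure. It needs no Frostman-type input; in particular, it does not need the equilibrium potential of an arc to be constant everywhere on the arc rather than merely quasi-everywhere. The price is a measurable right inverse of $\phi$, whereas the paper only has to lift a single point.

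\textbf{The section step is elementary.} Take a path $\gamma:[0,1]\to H$ from $a$ to $b$ and set $u(t)=\min\{u\in[0,1]:\phi(\gamma(u))=t\}$. By the intermediate value theorem, $u$ is strictly increasing, hence Borel, and $s=\gamma\circ u$ is the required section. No selection theorem or Fekete-point limit is needed, and the degenerate case $\operatorname{diam}_\rho(H)=0$ is trivial.
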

\begin{proof}
  Let $y_1$ and $y_2$ be two points at distance $\operatorname{diam}_\rho(H)$ in $H$.
  Let $\Lambda$ be an arc connecting $y_1$ and $y_2$.
  Then $\operatorname{cap}_x(H)\geq \operatorname{cap}_x(\Lambda)$.
  Let $h: \Lambda \longrightarrow [0,\operatorname{diam}_\rho(H)]$ be the surjective continuous map $y\longmapsto \rho(y_1,y)$.
  Let $\mu_{\Lambda}$ be the equilibrium measure of $\Lambda$.
  Then for all $y\in \Lambda$,
  $$
    \int_\Lambda \log d(y,y')\,\mathrm{d}\mu_{\Lambda}(y')=\log \operatorname{cap}_x(\Lambda).
  $$
  On the other hand, since the capacity of $[0,\operatorname{diam}_\rho(H)]$ is $\operatorname{diam}_\rho(H)/4$, there exists $z'\in[0,\operatorname{diam}_\rho(H)]$, such that
  $$
    \int_{\Lambda}\log\left|h(y)-z'\right|\,\mathrm{d}\mu_{\Lambda}(y)=\int_0^{\operatorname{diam}_\rho(H)} \log \left|z-z'\right|\,\mathrm{d} h_*\mu_{\Lambda}(z)\geq \log (\operatorname{diam}_\rho(H)/4).
  $$
  Take $y'$ to be an inverse image of $z'$ under $h$, and we obtain
  $$
    \int_\Lambda \log d(y,y')\,\mathrm{d}\mu_{\Lambda}(y)\geq \int_\Lambda (A+\log \rho(y,y'))\,\mathrm{d}\mu_{\Lambda}(y)\geq A+\log (\operatorname{diam}_\rho(H)/4).
  $$
\end{proof}
For any two compactly supported measures $\nu_1, \nu_2$ of the same mass on $C(\mathbb{C})\setminus\{x\}$, the \emph{Wasserstein $1$-distance} is
$$
  W_1(\nu_1, \nu_2)\coloneqq\sup\limits_{f\in \mathrm{Lip}_1}\left|\int f\,\mathrm{d}\nu_1-\int f\,\mathrm{d}\nu_2\right|
$$
where $\mathrm{Lip}_1$ is the set of all functions $f$ on $C(\mathbb{C})\setminus\{x\}$ such that $\left|f(y_1)-f(y_2)\right|\leq \rho(y_1, y_2)$ for all $y_1, y_2\in C(\mathbb{C})\setminus\{x\}$.
Recall that a \emph{compact exhaustion} of a topological space $V$ is a sequence of non-empty compact subsets $\{H_i\}_{i\geq 1}$ with the following properties.
First, the union $\bigcup_{i=1}^{\infty}H_i$ equals $V$.
Moreover, for each $i\geq 1$, $H_i$ is contained in the interior of $H_{i+1}$ (relative to $V$).
\begin{proposition}\label{prop:mea-com-exh}
  Let $V$ be a path-connected set in $C(\mathbb{C})$ which is not a point.
  Assume that the closure $\overline V$ does not contain $x$.
  Let $\mu$ be a probability measure supported on $\overline V$, such that $\mu(y)=0$ for all $y\in \overline V$.
  Assume that $V$ admits a compact exhaustion $\{H_i\}_{i\in \mathbb{Z}_+}$.
  Then for every $\delta>0$, there exists a positive integer $m$ together with a probability measure $\nu_\delta$ supported on $H_m$, such that $U_{\nu_\delta}\geq U_{\mu}-\delta$, $W_1(\mu, \nu_\delta)\leq \delta$, and the potential $U_{\nu_\delta}$ is continuous.
\end{proposition}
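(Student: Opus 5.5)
The plan is to build $\nu_\delta$ in two steps: first move $\mu$ off the boundary of $V$ into a compact piece $H_m$ of the exhaustion, then smooth it by averaging over small circles so that the potential becomes continuous. Both steps should cost little in $W_1$ and should not decrease the potential by more than $\delta$.

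\textbf{Step 1: pushing $\mu$ into $V$.} Since $\overline V$ is compact and avoids $x$, the potential $U_\mu$ is bounded above on a neighborhood of $\overline V$. The function $G_x$ is continuous off the diagonal and $\log\rho$-like near it. I would fix a small $r>0$ and cover $\overline V$ by finitely many Borel pieces $B_1,\dots,B_k$ of $\rho$-diameter $<r$. For each piece with $\mu(B_j)>0$, I choose a point $y_j\in V$ within $r$ of $B_j$; this is possible because $V$ is dense in $\overline V$. Writing $\mu_j=\mu|_{B_j}$, I first replace $\mu_j$ by its image under a map into a small path-connected compact subset of $V$ near $y_j$. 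Concretely, I take an arc $\Lambda_j\subset V$ through $y_j$ of diameter between $r/2$ and $r$; this exists because $V$ is path-connected and not a point, at least once $r$ is small. I then replace $\mu_j$ by $\mu(B_j)$ times the equilibrium measure of $\Lambda_j$. The atomlessness $\mu(\{y\})=0$ ensures $\mu(B_j)\to0$ uniformly as $r\to0$; refining so that every $\mu(B_j)$ is small is what keeps the mass from concentrating.

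The cost in $W_1$ is at most about $2r$. For the potential: at a point $z$ far from $B_j\cup\Lambda_j$, continuity of $G_x$ off the diagonal makes the change small. At a point $z$ near them, Lemma \ref{lem:cap-low-bd} gives
\[
\int\log d(z,\cdot)\,\mathrm d\mu_{\Lambda_j}\geq\log\operatorname{cap}_x(\Lambda_j)\geq A+\log(r/8),
\]
while $\int\log d(z,\cdot)\,\mathrm d\mu_j/\mu(B_j)\leq \log(Cr)$ since $d$ is small on $B_j$. So the local loss is bounded by $\mu(B_j)\cdot O(1)$, which is small.

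\textbf{Step 2: smoothing and locating in $H_m$.} The resulting measure is a finite combination of equilibrium measures of arcs. Each such measure has a continuous potential, since arcs are regular compact sets and their equilibrium potentials are continuous. Hence the combined potential $U_{\nu_\delta}$ is continuous. The measure is supported on the compact set $\bigcup_j\Lambda_j\subset V$, which lies in some $H_m$ by the exhaustion property, since the interiors $\operatorname{int}H_{i+1}$ cover $V$ and compactness gives a finite subcover. Taking $r$ small relative to $\delta$ then yields $W_1(\mu,\nu_\delta)\le\delta$ and $U_{\nu_\delta}\ge U_\mu-\delta$.

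\textbf{Main obstacle.} The hard part will be the uniform lower bound $U_{\nu_\delta}\ge U_\mu-\delta$ at points $z$ close to several pieces simultaneously. Because the partition has bounded overlap, only $O(1)$ pieces lie near any $z$. I would split the sum into near pieces, each costing $\mu(B_j)\,O(1)$, and far pieces, handled by uniform continuity of $G_x$ away from the diagonal at scale $\gg r$. The delicate choice is a two-scale argument: pieces at distance between $r$ and $\sqrt r$ from $z$ need a bound of the form $|\log d(z,y)-\log d(z,y')|\lesssim r/\operatorname{dist}$. Their total contribution is $\mu(B(z,\sqrt r))\cdot O(1)$, which tends to $0$ uniformly in $z$ by atomlessness and compactness.
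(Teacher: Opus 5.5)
Your proposal is correct and is essentially the paper's proof. Both arguments split $\mu$ into pieces at scale $r$ and replace each piece by its mass times the equilibrium measure of an arc in $V$ of diameter $\asymp r$, bounded below via Lemma~\ref{lem:cap-low-bd}. Both control the nearby pieces by the atomlessness of $\mu$, and both get continuity of the potential and containment in some $H_m$ from the arcs themselves; the ``smoothing over small circles'' in your outline is therefore unnecessary, and it would in general move mass off $V$. The only real difference is that the paper avoids your $\sqrt r$ two-scale split: it splits once at $\rho(z,y_j)=\delta'\varepsilon$ for a fixed large $\delta'$ and bounds all far pieces by $\log\frac{\delta'-1}{\delta'+1}$ plus the modulus of continuity of $\eta$. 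Note also that the $\eta$-part of $\log d$ is controlled only by uniform continuity, giving an $o(1)$ error rather than the $O(r/\mathrm{dist})$ bound you state.
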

\begin{proof}

  Since $\eta$ is continuous on $C(\mathbb{C})\times C(\mathbb{C})$, we can take $\varepsilon'>0$ such that
  \[
    \left|\eta(y_1, y_2)-\eta(y'_1, y'_2)\right|<\delta/3
  \]
  as long as $\rho(y_1, y'_1), \rho(y_2, y'_2)<2\varepsilon'$.
  Take $0<\varepsilon<\varepsilon'$.
  Take a finite cover of $\overline{V}$ consisting of open balls $B_1\coloneqq B(y_1, \varepsilon), B_2\coloneqq B(y_2, \varepsilon), \ldots, B_n\coloneqq B(y_n, \varepsilon)$ with $y_i\in V$ for all $1\leq i \leq n$.
  Here, all balls are taken with respect to the metric $\rho$.
  By shrinking $\varepsilon$, we may assume that $V$ is not contained in any of the balls.
  Set $A_{i, min}=\inf_{y_1, y_2\in B_i}\eta(y_1, y_2)$.
  For the ball $B_i$, find an arc $\Lambda_i$ in $V\bigcap\overline{B(y_i, \varepsilon)}$ connecting $y_i$ and a point in $V\bigcap\partial B(y_i, \varepsilon)$.
  Note that this can be achieved by taking the connected component of $\Lambda_i'\bigcap\overline{B_i}$ containing $y_i$ where $\Lambda_i'$ is an arc in $V$ connecting $y_i$ and a point not in the ball $B_i$.
  Let $\mu_{\Lambda_i}$ be the equilibrium measure of $\Lambda_i$.
  Then since the diameter of $\Lambda_i$ is at least $\varepsilon$, the potential $U_{\Lambda_i}\geq A_{i, min}+\log(\varepsilon/4)$ by Lemma \ref{lem:cap-low-bd}.

  By a partition of unity, we can decompose $\mu$ into measures $\mu_1, \mu_2, \ldots, \mu_n$ with $\mu_i$ supported on $B_i$ for all $i$.
  Then $W_1(\mu_i(C(\mathbb{C}))\mu_{\Lambda_i}, \mu_i)\leq 2\varepsilon\mu_i(C(\mathbb{C}))$ where $\mu_{\Lambda_i}$ is the equilibrium measure of $\Lambda_i$.
  Consider the probability measure $\mu^\varepsilon\coloneqq\sum_{i=1}^{n}\mu_i(C(\mathbb{C}))\mu_{\Lambda_i}$.
  Since $\mu^\varepsilon$ is a linear combination of equilibrium measures of arcs, its potential is continuous.
  For any $y\in C(\mathbb{C})$ with $y\neq x$, we have
  $$
    \int_{C(\mathbb{C})}\log d(y,z)\,\mathrm{d}(\mu^\varepsilon-\mu)(z) =\sum\limits_{i=1}^n\int_{C(\mathbb{C})}\log d(y,z)\,\mathrm{d}(\mu_i(C(\mathbb{C}))\mu_{\Lambda_i}-\mu_i)(z).
  $$
  Fix $\delta'>1$ such that $\log(\delta'-1)> \log(\delta'+1)-\delta/6$.
  For any $i$ with $\rho(y,y_i)\geq\delta'\varepsilon$, we have
  \begin{align*}
    &\int_{C(\mathbb{C})}\log d(y,z)\,\mathrm{d}(\mu_i(C(\mathbb{C}))\mu_{\Lambda_i}-\mu_i)(z)\\
    &\quad\geq \mu_i(C(\mathbb{C}))(\log(\rho(y,y_i)-\varepsilon)-\log(\rho(y,y_i)+\varepsilon)-2\delta/3)\\
    &\quad\geq \mu_i(C(\mathbb{C}))(\log\frac{\delta'-1}{\delta'+1}-2\delta/3).
  \end{align*}
  For any $i$ with $\rho(y,y_i)<\delta'\varepsilon$, by further shrinking $\varepsilon$ so that $\varepsilon(\delta'+1)<\varepsilon'$, we have
  \begin{align*}
    \int_{C(\mathbb{C})}\log d(y,z)\,\mathrm{d}(\mu_i(C(\mathbb{C}))\mu_{\Lambda_i}-\mu_i)(z)&\geq
    \mu_i(C(\mathbb{C}))(\log \frac\varepsilon 4-\log (\delta'\varepsilon+\varepsilon)-2\delta/3)\\
    &=-\mu_i(C(\mathbb{C}))(\log (4\delta'+4)+2\delta/3).
  \end{align*}
  Also, the ball $B_i$ is contained in the ball $B(y,\varepsilon+\delta'\varepsilon)$.
  Thus,
  $$
    \sum\limits_{\rho(y,y_i)<\delta'\varepsilon}\mu_i(C(\mathbb{C}))\leq \mu(B(y,\varepsilon+\delta'\varepsilon)).
  $$

  Suppose that there is a sequence of balls $\{B(z_i, \varepsilon_i)\}$ with $\lim\limits_{i\to\infty}\varepsilon_i=0$, such that there is a $\delta_1>0$ with $\mu(B(z_i, \varepsilon_i))>\delta_1$ for all $i$.
  Let $z$ be an accumulation point of $z_i$. Then $\mu(V)>\delta_1$ for each open set $V$ containing $z$.
  This contradicts the fact that $\mu(z)=0$.
  Thus, $\mu(B(z_i, \varepsilon_i))$ tends to $0$.

  In conclusion, we have
  \begin{align*}
    \int_{C(\mathbb{C})}\log d(y,z)\,\mathrm{d}(\mu^\varepsilon-\mu)(z)&\geq
    \log\frac{\delta'-1}{\delta'+1}-\mu(B(y,\varepsilon+\delta'\varepsilon))\log (4\delta'+4)-2\delta/3\\
    &\geq -5\delta/6-\mu(B(y,\varepsilon+\delta'\varepsilon))\log (4\delta'+4).
  \end{align*}
  As $\varepsilon$ tends to $0$, $\mu(B(y,\varepsilon+\delta'\varepsilon))$ tends to $0$ uniformly in $y$.
  Then $U_{\mu^\varepsilon}\geq U_{\mu}-\delta$.
  Also, $W_1(\mu^\varepsilon,\mu)\leq 2\varepsilon$.
  Thus, by taking $\varepsilon\leq\delta/2$, we get $W_1(\mu^\varepsilon,\mu)\leq \delta$.
  Now, since $\Lambda_i$ is a compact set in $V$, $H_m$ contains $\Lambda_i$ for $m$ large enough.
  Take $m$ such that $H_m$ contains $\Lambda_i$ for all $i$, and take $\nu_\delta=\mu^{\varepsilon}$. The proposition follows.
\end{proof}
\begin{corollary}\label{cor:gre-com-exh}
  Let $V=\bigcup_{i=1}^t V_i$ be a finite union of path-connected sets in $C(\mathbb{C})$.
  Assume that $\overline V$ does not contain $x$, that $V$ is not a finite set of points, and that $V_i$ admits a compact exhaustion for all $i$.
  Let $\{H_j\}_{j\in \mathbb{Z}_+}$ be a sequence of compact subsets of $V$ such that, for each $i$, there is a compact exhaustion $\{K_{i,j}\}_{j\in\mathbb{Z}_+}$ of $V_i$ with $K_{i,j}\subseteq H_j$ for all $j$.
  We also assume that $H_j$ is a finite union of path-connected sets.
  Then the local Green function $G(x,y;H_j)$ converges to $G(x,y;\overline{V})$ uniformly (all viewed as functions of $y$).
\end{corollary}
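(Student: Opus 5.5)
The plan is to sandwich $G(x,\cdot\,;H_j)$ between $G(x,\cdot\,;\overline V)$ and $G(x,\cdot\,;\overline V)$ plus a small error. Throughout, write $g_H=g_{H,x}=U_{H,x}+V_x(H)$.

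\emph{Lower bound.} Since $H_j\subseteq \overline V$, monotonicity of equilibrium potentials gives $V_x(H_j)\geq V_x(\overline V)$ and $g_{H_j}\geq g_{\overline V}$ everywhere. So only the reverse inequality needs work.

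\emph{Step 1: capacity convergence.} I will show $V_x(H_m)\leq V_x(\overline V)+\delta$ for $m$ large, using Proposition \ref{prop:mea-com-exh}.
\begin{enumerate}
  \item Let $\mu=\mu_{\overline V,x}$. It has finite energy, so it has no atoms. Split $\mu=\sum_i\mu_i$ with $\mu_i$ supported on $\overline{V_i}$, assigning overlaps arbitrarily.
  \item Any $V_i$ that is a single point carries no mass and is discarded. For each remaining $i$, apply Proposition \ref{prop:mea-com-exh} to the normalized measure $\mu_i/\mu_i(\overline{V_i})$, the set $V_i$ and the exhaustion $\{K_{i,j}\}_j$. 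This gives probability measures $\nu_i$ supported on $K_{i,m}\subseteq H_m$ (one common $m$) with continuous potentials and $U_{\nu_i}\geq U_{\mu_i/\mu_i(\overline{V_i})}-\delta$.
  \item Set $\nu=\sum_i\mu_i(\overline{V_i})\,\nu_i$, a probability measure on $H_m$. Its potential is continuous and $U_\nu\geq U_\mu-\delta\geq -V_x(\overline V)-\delta$ everywhere, because $g_{\overline V}\geq0$.
  \item Hence $I_x(\nu)=\int U_\nu\,\mathrm d\nu\geq -V_x(\overline V)-\delta$, which gives $V_x(H_m)\leq V_x(\overline V)+\delta$.
\end{enumerate}
Since $H_j\supseteq\bigcup_i K_{i,j}$ for every $j$ and the $K_{i,j}$ increase with $j$, the same bound holds for all $j\geq m$. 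Combined with the lower bound, $V_x(H_j)\to V_x(\overline V)$.

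\emph{Step 2: from capacities to Green functions.} Consider $h_j=g_{H_j}-g_{\overline V}\geq0$. It is harmonic on $C(\mathbb C)\setminus\overline V$, and at $x$ it extends with value $V_x(H_j)-V_x(\overline V)\to0$.
\begin{enumerate}
  \item On the component of $C(\mathbb C)\setminus\overline V$ containing $x$, Harnack's inequality makes $h_j$ uniformly small on compact subsets.
  \item On $\overline V$ I need $g_{H_j}$ uniformly small. For this I use $\nu$ from Step 1. The function $U_\nu-U_{H_j,x}$ is harmonic off $H_j$ and has a finite value at $x$. On $H_j$, outside a polar set, it is at least $V_x(H_j)-V_x(\overline V)-\delta\geq-\delta$. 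By the minimum principle (polar sets are negligible, $U_\nu$ is continuous and $U_{H_j,x}$ is upper semicontinuous), this inequality propagates, yielding a bound of the form $g_{H_j}\leq U_\nu+V_x(\overline V)+2\delta$.
\end{enumerate}

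\emph{Main obstacle.} The bound in Step 2 controls $g_{H_j}$ by $U_\nu$, not by $U_\mu$. Step 1 only gives the lower bound $U_\nu\geq U_\mu-\delta$, so I still need $U_\nu\leq U_\mu+o(1)$ near $\overline V$. My first attempt is to show that the $W_1$-closeness of $\nu$ and $\mu$, together with the no-atom estimate from the proof of Proposition \ref{prop:mea-com-exh}, gives $U_\nu\to U_\mu$ locally uniformly. This should work because $U_\mu$ is continuous, as $\overline V$ has finitely many components.

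If that fails, I will use monotonicity instead. Restrict to the increasing sets $K_j=\bigcup_i K_{i,j}\subseteq H_j$, for which $g_{H_j}\leq g_{K_j}$. The functions $g_{K_j}$ decrease in $j$, and by Steps 1 and 2 they converge pointwise to the continuous function $g_{\overline V}$. A Dini-type argument, using upper semicontinuity of each $g_{K_j}$ and compactness of $C(\mathbb C)$ with $x$ handled by the normalization of the pole, then upgrades this to uniform convergence. That in turn gives uniform convergence of $g_{H_j}$.
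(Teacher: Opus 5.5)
Your plan is sound, and your two options for the final step are two different routes.

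Your ``first attempt'' is essentially the paper's proof, but the mechanism that does the work is not the no-atom estimate. The paper truncates the kernel, $\log_r d=\max\{\log d,r\}$, which is continuous on $\overline V\times\overline V$. Weak convergence $\nu_{\delta_1}\to\mu_{\overline V}$ then gives $U_{\nu_{\delta_1},r}\to U_{\mu_{\overline V},r}$ uniformly on $\overline V$ by Arzel\`a--Ascoli. Dini's theorem applied to the decreasing family $U_{\mu_{\overline V},-m}\downarrow U_{\mu_{\overline V}}$, whose limit is continuous, then yields $U_\nu\leq U_{\nu,r}\leq U_{\mu_{\overline V}}+o(1)$ uniformly on $\overline V$. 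Combined with the lower bound from Proposition~\ref{prop:mea-com-exh}, this is a two-sided estimate $|U_\nu-U_{\mu_{\overline V}}|<\delta_2$ on $\overline V$. The maximum principle, using $U_{\mu_{H_m}}=-V_x(H_m)$ on $H_m$, transfers it directly to the Green functions. Capacity convergence falls out by evaluating at $x$, so your separate Steps~1 and~2 are not needed on this route.

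Your fallback is a genuinely different and valid route. Because $K_j=\bigcup_i K_{i,j}$ increases, $g_{K_j}-g_{\overline V}$ is a decreasing sequence of upper semicontinuous functions on the compact curve, harmonic near $x$. Dini's theorem together with $g_{\overline V}\leq g_{H_j}\leq g_{K_j}$ then upgrades pointwise convergence to uniform convergence, with no Arzel\`a--Ascoli argument.

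However, Steps~1 and~2 do not by themselves give that pointwise convergence away from the component of $x$. Step~2 only yields $\lim_j g_{K_j}(y)\leq U_{\nu_\delta}(y)+V_x(\overline V)+2\delta$. To let $\delta\to0$ you need the principle of descent: $\limsup_{\delta\to0}U_{\nu_\delta}(y)\leq U_{\mu_{\overline V}}(y)$ for $\nu_\delta\to\mu_{\overline V}$ weakly, which you have from $W_1(\nu_\delta,\mu_{\overline V})\leq\delta$. This is exactly the pointwise form of your ``main obstacle''. It follows from the same truncation: bound $U_{\nu_\delta}\leq U_{\nu_\delta,r}$, pass to the limit in $\delta$, then use monotone convergence in $r$. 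So the fallback does not avoid the obstacle; it reduces it to this easy pointwise statement, which you should state explicitly rather than attribute to Steps~1 and~2.
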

\begin{proof}
  We may assume that $V_i$ is not a point for all $i$.
  We may also assume that each connected component of $H_j$ is not a point.
  Now, we have $U_{\mu_{\overline V}}=-V_x(\overline V)$ on $\overline{V}$ and the same for $H_j$.
  Let $\mu_{\overline{V}}$ be the equilibrium measure of $\overline V$.
  For any $\delta_1>0$, applying Proposition \ref{prop:mea-com-exh} to $V_i$ for all $i$, we get an integer $n_{\delta_1}$ and a measure $\nu_{\delta_1}$ supported on $H_{n_{\delta_1}}$ with $U_{\nu_{\delta_1}}\geq U_{\mu_{\overline V}}-{\delta_1}$ and $W_1(\mu_{\overline V}, \nu_{\delta_1})\leq {\delta_1}$.
  For any real number $r$, set $\log_r(t)\coloneqq\max\{\log t, r\}$ for $t>0$.
  For any probability measure $\nu$, let
  $$
    U_{\nu,r}(y)\coloneqq\int_{C(\mathbb{C})} \log_r d(y,z) \,\mathrm{d}\nu.
  $$
  Note that $\log_r d(y,z)$ is a continuous function on the compact set $\overline{V}\times \overline{V}$, which can be viewed as an equicontinuous family of functions $\{f_{y}(z)=\log_r d(y,z)\}$.
  Therefore, since $\nu_{\delta_1}$ converges to $\mu_{\overline{V}}$ weakly as $\delta_1$ tends to $0$, by the Arzel\`a--Ascoli theorem, $U_{\nu_{\delta_1},r}(y)$ converges to $U_{\mu_{\overline V}, r}$ uniformly on $\overline{V}$ as $\delta_1$ tends to $0$.
  Since $\{U_{\mu_{\overline V},{-m}}\}_{m\in\mathbb{Z}_+}$ gives a decreasing sequence of continuous functions that pointwise converge to the continuous function $U_{\mu_{\overline V}}$, the convergence is uniform.
  Thus, for every $\delta_2>0$, there is a positive integer $m$ and a probability measure $\nu$ supported on $H_m$ such that $\left|U_{\mu_{\overline V}}(y)-U_{\nu}(y)\right|<\delta_2$ for all $y\in\overline V$.

  On $H_{m}$, $\left|U_{\nu}+V_x(\overline V)\right|\leq \delta_2$.
  Hence, $\left|U_{\nu}+V_x(\overline V)-U_{\mu_{H_m}}-V_x(H_m)\right|\leq \delta_2$ on $H_m$ and hence on all of $C(\mathbb{C})$ by the maximum principle.
  The corollary follows since $G(x,y; H)=U_{\mu_H}(y)+V_x(H)$.
\end{proof}
We now return to the global setting.
Let $K$ be a number field, $C$ a projective, smooth and geometrically integral curve over $K$.
Let $X$ be a set of closed points in $C$.
Let $\{\mu_{v}\}_{v\in M_K}$ be a collection of probability measures of compact support  on $C_v^\mathrm{an}$, satisfying (\ref{eq:regularization-hypothesis}) for some $D$.
Let $\mathbb{E}=(E_v)_{v\in M_K}$ be an admissible adelic set on $U$ such that $E_v\supset \mathrm{supp}(\mu_v)$.
Let $v$ be an archimedean place of $K$.
\begin{lemma}\label{lem:arch-mea-no-atoms}
  Let $x\in C_{v}^\mathrm{an}$. Then $\mu_{v}(x)=0$.
\end{lemma}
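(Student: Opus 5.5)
We argue by contradiction, testing the hypothesis \eqref{eq:regularization-hypothesis} against a flat adelic divisor whose Green function at $v$ has a logarithmic singularity at $x$. Suppose $\mu_v(\{x\})=c>0$. Then $x\in\operatorname{supp}(\mu_v)\subset E_v\subset U_v^{\mathrm{an}}$, so $x$ is not in $X_v$. Let $y$ be the closed point of $C$ under $x$; it lies in $U$. Choose $x_0\in X$ and set
\[
F=\frac{y}{\deg(y)}-\frac{x_0}{\deg(x_0)}.
\]
Then $F$ has degree $0$ and is effective on $U$. Let $\overline F_0=(F,(g_{F,w})_w)$ be a flat adelic divisor lifting $F$; it exists by the Faltings--Hriljac theory after adjusting by vertical divisors. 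At the place $v$, the Green function $g_{F,v}$ behaves near $x$ like $-\frac{1}{\deg(y)}\log|z|_v$ in a local coordinate $z$ at $x$. So $g_{F,v}\to+\infty$ at $x$ and is bounded below on the compact set $E_v$, away from $X_v$.

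The key observation is that flatness is preserved under adding constants. For any constant $t\in\mathbb R$, the divisor $\overline F_0+t\cdot(0,(\text{const}))$ remains flat up to pullbacks from $\operatorname{Spec}O_K$. Both sides of \eqref{eq:regularization-hypothesis} shift by the same amount $t$, because $D$ has degree $1$ and the $\mu_w$ are probability measures. So the inequality is insensitive to this normalization, and we may fix one flat lift.

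We then evaluate the left side. Since $\mu_v$ has an atom of mass $c$ at $x$, where $g_{F,v}=+\infty$, we get
\[
\int g_{F,v}\,\mathrm d\mu_v\ \ge\ c\cdot(+\infty)+\inf_{E_v}g_{F,v}=+\infty.
\]
At every other place $w$, $g_{F,w}$ is bounded below on $E_w$: for $w$ non-archimedean and outside a finite set it is $\ge 0$ on $\operatorname{red}_w^{-1}(\mathcal U_{k_w})$, because $F$ is effective on $U$ and $x_0$ lies outside $\mathcal U$. Hence the left side of \eqref{eq:regularization-hypothesis} is $+\infty$.

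The right side $\overline F\cdot D$ is a finite real number once $D$ is moved by linear equivalence to avoid $y$ and $x_0$. More precisely, $\overline F\cdot D$ means the pairing of the flat adelic divisor with the class of $D$. This pairing is well defined on $\operatorname{Pic}$, because flat divisors pair to $0$ with principal ones, and it is therefore finite. We thus obtain $+\infty\le\overline F\cdot D<\infty$, a contradiction, so $\mu_v(\{x\})=0$.

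The main obstacle is making the left side rigorous as a sum over all places. We must check that at the finitely many remaining places the integrals are $>-\infty$, which follows from lower boundedness on compact $E_w$ away from $X_w$. We must also check that at almost all places the integrand is $\ge0$, which uses admissibility of $\mathbb E$ with respect to the model $\mathcal U$. An alternative route is to truncate, replacing $g_{F,v}$ by $\min(g_{F,v},N)$ after adding a vertical correction, but the direct argument with $+\infty$ suffices.
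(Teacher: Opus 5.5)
There is a genuine gap. Your argument breaks at the step ``let $y$ be the closed point of $C$ under $x$.''

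At an archimedean place, $C_v^{\mathrm{an}}$ is $C_v(\mathbb C)$ (or its quotient by complex conjugation), and a general point $x$ there is transcendental. The morphism $\operatorname{Spec}\mathbb C\to C$ it defines lands on the generic point, so there is no closed point $y$ under $x$. Consequently there is no divisor on $C$ whose Green function at $v$ is singular at $x$.

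What you prove is correct only for the countably many algebraic points. For those, the divisor $y/\deg(y)-x_0/\deg(x_0)$ does make the left side of \eqref{eq:regularization-hypothesis} equal to $+\infty$, because the other local terms are bounded below on the compact sets $E_w$ and are nonnegative at almost all places, while $\overline F\cdot D$ is finite.

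But an atom of $\mu_v$ may a priori sit at a transcendental point, and the lemma is needed in exactly that generality. Proposition \ref{prop:mea-com-exh} requires $\mu(y)=0$ for every $y\in\overline V$, and Lemma \ref{lem:loose-place-truncation} uses that $\mu_v$ has no atoms at all. No single flat test divisor can detect such an atom. Approximating $x$ by algebraic points $y_n$ would only work if you controlled $\deg(y_n)$, the heights of the $y_n$, and $|x-y_n|_v$ simultaneously, which is a Diophantine approximation problem your proposal does not address.

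The missing idea is a Dirichlet-type pigeonhole construction of a regular function on $U$ that is extremely small at $x$ but has controlled size on $\operatorname{supp}\mu_w$ at every place. The paper proceeds as follows.

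It fixes a nonconstant $f$ with poles in $X$ and a finite set $S_0$ containing all archimedean places and the finite places where $|f|_w>1$ somewhere on $\operatorname{supp}\mu_w$. Let $T\geq1$ bound $|f|_w$ on these supports for $w\in S_0$. Write $|\cdot|_w$ as the $\kappa_w$-th power of the usual absolute value at archimedean $w$, and set $\kappa_w=0$ at finite places.

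Consider the $M^{N+1}$ functions $f_{\mathbf a}=\sum_{n=0}^N a_nf^n$ with integers $0\le a_n<M$. On $\operatorname{supp}\mu_w$ they satisfy $|f_{\mathbf a}|_w\le (M(N+1))^{\kappa_w}T^N$ for $w\in S_0$, and $|f_{\mathbf a}|_w\le 1$ for $w\notin S_0$. Their values at $x$ lie in a disc of radius $M(N+1)T^{N/\kappa_v}$. Hence two of them differ at $x$ by at most $M^{\kappa_v(1-N/2)}(N+1)^{\kappa_v}T^N$ in $|\cdot|_v$.

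Their difference $Q$ is a nonzero regular function on $U$. So $\widehat{\operatorname{div}}(Q)$ is a flat divisor, effective on $U$, with $\widehat{\operatorname{div}}(Q)\cdot D=0$. In $\sum_w\int-\log|Q|_w\,\mathrm d\mu_w$, up to terms independent of $M$:
\begin{itemize}
\item the atom at $x$ contributes at least $\mu_v(x)\kappa_v(\tfrac N2-1)\log M$;
\item everything else contributes at least $-A\log M$, where $A=\sum_{w\in S_0}\kappa_w$.
\end{itemize}
Choose $N$ with $\mu_v(x)\kappa_v(\tfrac N2-1)>A$, and then $M$ large. The sum becomes positive, contradicting \eqref{eq:regularization-hypothesis}.
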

\begin{proof}
  We prove by contradiction.
  Assume that $\mu_{v}(x)>0$.
  Let $f$ be a non-constant rational function on $C$ with poles in $X$.
  For any positive integers $N$ and $M$, and any sequence of integers $\mathbf{a}\coloneqq\{a_n\}_{0\leq n\leq N}$ with $\max_{0\leq n\leq N}\{\left| a_n\right|\}<M$, consider the function $f_{\mathbf{a}}=\sum_{n=0}^{N}a_nf^n$.
  Let $S_0$ be the finite set of places consisting of all archimedean places and all non-archimedean places $v'$ such that $\left|f(y)\right|_{v'}>1$ for some $y\in\mathrm{supp}(\mu_{v'})$.
  Set $T=\max\{1, \sup_{v'\in S_0, y\in \mathrm{supp}(\mu_{v'})}|f(y)|_{v'}\}$.
  At each archimedean $v'$, write $|\cdot|_{v'}=|\cdot|_{\mathrm{usual}}^{\kappa_{v'}}$ with $\kappa_{v'}>0$; set $\kappa_{v'}=0$ at non-archimedean places.
  Put $A=\sum_{v'\in S_0}\kappa_{v'}$.
  Then we have $|f_{\mathbf{a}}(y)|_{v'}\leq (M(N+1))^{\kappa_{v'}}T^N$ for all $v'\in S_0, y\in \mathrm{supp}(\mu_{v'})$.
  Also, we have $|f_{\mathbf{a}}(y)|_{v'}\leq 1$ for all $v'\notin S_0, y\in \mathrm{supp}(\mu_{v'})$.
  Thus, we obtain
  $$
    \sum\limits_{v'\in M_K}\int_{C_{v'}^\mathrm{an}}-\log|f_{\mathbf{a}}|_{v'}\,\mathrm{d}\mu_{v'} + \mu_{v}(x)\log|f_{\mathbf{a}}(x)|_{v}\geq -A(\log M+\log(N+1))-\#S_0N\log T.
  $$

  On the other hand, the multiset $\{f_{\mathbf{a}}(x)|0\leq a_n<M \ \forall 0\leq n\leq N\}$ has $M^{N+1}$ elements and lies in a disk of radius $M(N+1)T^{N/\kappa_v}$ for the usual absolute value.
  By subdividing a square containing this disk, for $M$ sufficiently large there are two distinct coefficient vectors whose values differ by at most $M^{\kappa_v(1-N/2)}(N+1)^{\kappa_v}T^N$ in the $v$-absolute value.
  Taking their difference, we get a non-zero sequence $\mathbf{a}'$ with
  \[
    \left|f_{\mathbf{a}'}(x)\right|_v\leq M^{\kappa_v(1-N/2)}(N+1)^{\kappa_v}T^N.
  \]
  Thus, we get
  \begin{align*}
    &\sum\limits_{v'\in M_K}\int_{C_{v'}^\mathrm{an}}-\log|f_{\mathbf{a}'}|_{v'}\,\mathrm{d}\mu_{v'}\\
    &\quad\geq \mu_{v}(x)(\kappa_v(\frac N2-1)\log M-\kappa_v\log(N+1)-N\log T)\\
    &\qquad{}-A(\log M+\log(N+1))-\#S_0N\log T.
  \end{align*}
  First take $N$ large enough that $\mu_v(x)\kappa_v(N/2-1)>A$, and then take $M$ sufficiently large. The right-hand side is greater than $0$.
  This contradicts (\ref{eq:regularization-hypothesis}).
\end{proof}

\begin{proposition}\label{prop:com-exh-nor}
  Let $V=\bigcup_{i=1}^t V_i$ be a finite union of path-connected sets in $E_v$.
  Assume that $\overline{V}=E_v$, and that $V_i$ admits a compact exhaustion for all $i$.
  Let $\{H_n\}_{n\in\mathbb{Z}_+}$ be a sequence of compact subsets of $V$ such that, for each $i$, there is a compact exhaustion $\{K_{i,n}\}_{n\in\mathbb{Z}_+}$ of $V_i$ with $K_{i,n}\subseteq H_n$ for all $n$.
  Assume that each $H_n$ has finitely many path-connected components.
  Then for every $\delta>0$, there exists $n\in\mathbb{Z}_+$, with the following properties.
  Let $\mathbb{E}_n$ be the adelic set which is the same as $\mathbb{E}$ at all places except $v$, and the set at $v$ is $H_n$.
  Then $\operatorname{cap}_X(\mathbb{E}_n)>\operatorname{cap}_X(\mathbb{E})-\delta$, and there is a collection of probability measures $\{\mu_{v'}'\}_{v'\in M_K}$ supported on $\mathbb{E}_n$ with $\mu_{v'}'=\mu_{v'}$ for every $v'\neq v$, and with the following properties.
  First, $W_1(\mu_{v}, \mu_{v}')\leq \delta$.
  Moreover, for each $F=F_+-F_-$, where $F_+$ and $F_-$ are effective $\mathbb{Z}$-divisors with $\deg(F_+)=\deg(F_-)$ and $\mathrm{supp}{F_-}\subset X$, we have
  $$
    \sum_{v'\in M_K}\int_{C_{v'}^{\mathrm{an}}}g_{F,v'}\,\mathrm{d}\mu'_{v'}\leq \overline{F}\cdot D+\delta\deg(F_-)
  $$
  where $\overline{F}=(F,(g_{F,v'})_{v'\in M_K})$ is the flat arithmetic divisor.
\end{proposition}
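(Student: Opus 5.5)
The plan is to build $\mu_v'$ from Proposition~\ref{prop:mea-com-exh}, to get the capacity statement from Corollary~\ref{cor:gre-com-exh}, and to reduce the divisor inequality to an estimate at the single place $v$.

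\textbf{Choice of $\mu_v'$ and $n$.} By Lemma~\ref{lem:arch-mea-no-atoms}, $\mu_v$ has no atoms. Decompose $\mu_v=\sum_i \mu_v|_{\overline{V_i}}$, after first splitting the mass of overlaps, and normalize each piece. Apply Proposition~\ref{prop:mea-com-exh} to each $V_i$ with its exhaustion $\{K_{i,n}\}$, taking a small parameter $\delta_1$, and recombine the pieces with their masses. This gives a probability measure $\mu_v'$ supported on some $H_{n}$ with the following properties:
\begin{itemize}
\item $W_1(\mu_v,\mu_v')\leq\delta_1$;
\item $U_{\mu_v'}\geq U_{\mu_v}-\delta_1$, for the potential taken with respect to each $x'\in X_v$ (finitely many points, so one $\delta_1$ handles all);
\item $U_{\mu_v'}$ is continuous.
\end{itemize}
Because the $H_n$ can be enlarged, the same estimates survive replacing $n$ by any larger index. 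By Corollary~\ref{cor:gre-com-exh}, the Green functions $G(x',\cdot\,;H_n)$ converge uniformly to $G(x',\cdot\,;E_v)$. Hence every entry $c_{x,y,v}$ of the Green's matrix converges, including the diagonal regularized limits, since the uniform convergence holds near $x'$. The value $\mathrm{val}(\Gamma_X)$ is continuous in the matrix entries, so $\operatorname{cap}_X(\mathbb{E}_n)>\operatorname{cap}_X(\mathbb{E})-\delta$ for $n$ large. We fix such an $n$. The places $v'\neq v$ are untouched.

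\textbf{The inequality.} Write $F=F_+-F_-$ with $\operatorname{supp}F_-\subset X$. Only the $v$-term changes, and by \eqref{eq:regularization-hypothesis} it suffices to show
\[
\int g_{F,v}\,\mathrm{d}\mu_v'\leq\int g_{F,v}\,\mathrm{d}\mu_v+\delta\deg(F_-).
\]
This is formally applied to $F_+$ effective on $U$; points of $F_+$ inside $X$ cancel against $F_-$ and can be absorbed. Up to an additive constant, which is the same for both probability measures and so cancels, we decompose
\[
g_{F,v}=\sum_{p} a_p\,\bigl(-U^{(x_p)}_{\delta_{p}}\bigr)+(\text{terms at }X)+h,
\]
where $h$ is a smooth function accounting for the flat curvature form. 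Concretely, $g_{F,v}$ is a combination of $-G_{x_0}(p,\cdot)$ for points $p$ in $F_+$, plus Green functions with poles in $X$, plus a smooth correction.

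\textbf{Sign control.} It is cleaner to pair against the measure rather than the point. Then $\int G_{x'}(p,\cdot)\,\mathrm{d}\mu=U_\mu(p)$, and the bound $U_{\mu_v'}\geq U_{\mu_v}-\delta_1$ gives the right sign for every effective point $p$ of $F_+$, since these enter $g_{F,v}$ with a minus sign. This bound holds pointwise for all $p$, including $p$ in the support, which is exactly why Proposition~\ref{prop:mea-com-exh} is phrased with a one-sided potential bound. The poles at $X$ enter with the opposite sign, but near $X$ everything is bounded because $\overline V$ avoids $X$. On $E_v$ these contributions and the smooth part $h$ are Lipschitz with constants depending only on $E_v$ and $X$, scaling with $\deg(F_-)=\deg(F_+)$. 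They are therefore controlled by $W_1(\mu_v,\mu_v')$ times $C\deg(F_-)$. Choosing $\delta_1\leq\delta/C$ finishes the proof.

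\textbf{Main obstacle.} The hardest step is justifying that the smooth and $X$-part of the flat Green function is uniformly Lipschitz in the metric $\rho$ with constant $O(\deg F_-)$, independently of the positions of the points in $F_+$. The flat curvature form depends on $F$ only through its degree and class, which suggests writing the smooth part as $\deg(F)$ times a fixed form plus a harmonic term. I would try first to prove this via the Arakelov Green function and a fixed canonical volume form.
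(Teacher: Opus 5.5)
Your proposal is correct and follows essentially the paper's route: $\mu_v'$ comes from Proposition~\ref{prop:mea-com-exh}, the capacity bound from Corollary~\ref{cor:gre-com-exh}, and the divisor inequality from the one-sided bound $U_{\mu_v'}\geq U_{\mu_v}-\delta_1$ at the points of $F_+$ together with convergence of the potentials at the finitely many points of $X_v$. The paper phrases this last step as Green reciprocity $\int g_{F,v}\,\mathrm{d}(\mu_v-\mu_v')=-\int(U_{\mu_v}-U_{\mu_v'})\,\mathrm{d}\delta_{F,v}$, after splitting $F$ into $\deg(F_-)$ pieces of the form $z/\deg(z)-y/\deg(y)$.

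The obstacle you flag does not arise. Since $\deg F=0$, the flat Green function has zero curvature, so $g_{F,v}=-\sum_{p\neq x_0}a_pG_{x_0}(p,\cdot)+\mathrm{const}$ for a fixed base point $x_0\in X_v$, with no smooth part at all. The $X$-terms are then a fixed finite family of functions continuous on $E_v$, so the weak convergence $\mu_v'\to\mu_v$ controls them, and no Lipschitz estimate in $\rho$ is needed.
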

\begin{proof}
  Note that though Corollary \ref{cor:gre-com-exh} and Proposition \ref{prop:mea-com-exh} are only proved for $\mathbb{C}$, the real case follows automatically by push-forward through the map $C_v(\mathbb{C})\longrightarrow C_v^{\mathrm{an}}$.

  By Corollary \ref{cor:gre-com-exh}, the Green's matrix $\Gamma_X(\mathbb{E}_n)$ converges to $\Gamma_X(\mathbb{E})$ as $n\longrightarrow\infty$.
  Thus, there exists $n_1>0$ such that for every $n\geq n_1$, $\operatorname{cap}_X(\mathbb{E}_n)> \operatorname{cap}_X(\mathbb{E})-\delta$.
  By Proposition \ref{prop:mea-com-exh}, applied to $V_i$ for all $i$,  there is a sequence of probability measures $\nu_n$ supported on $H_n$ such that for each $\varepsilon>0$, when $n$ is large enough, we have $U_{\nu_n}\geq U_{\mu_v}-\varepsilon$, $U_{\nu_n}$ is continuous, and $W_1(\mu_v, \nu_n)\leq \varepsilon$.
  Then $\nu_n$ converges to $\mu_v$.

  Take $n\geq n_1$.
  We also take $n$ large enough that $W_1(\mu_v, \nu_n)\leq \delta$.
  Let $y, z$ be closed points on $C$ with $y\in X$.
  Set $F=z/\deg(z)-y/\deg(y)$.
  Let $g_{F,v}$ be a flat Green function of $F$ on $C_{v}^\mathrm{an}$.
  By the Poincar\'e--Lelong formula,
  $$
    \int_{C_{v}^\mathrm{an}}g_{F,v}\,\mathrm{d} (\mu_v-\nu_n) =-\int_{C_{v}^\mathrm{an}}(U_{\mu_v}-U_{\nu_n})\,\mathrm{d}\delta_{F,v}.
  $$
  By taking $n$ large enough, $U_{\nu_n}-U_{\mu_v}\geq -\delta/2$, and we have
  $$
    -\int_{C_{v}^\mathrm{an}}(U_{\mu_v}-U_{\nu_n})\,\mathrm{d}\delta_{z, v}\geq -\delta/2.
  $$
  Since $\nu_n$ converges to $\mu_v$, as we take $n$ large enough,
  $$
    -\int_{C_{v}^\mathrm{an}}(U_{\mu_v}-U_{\nu_n})\,\mathrm{d}\delta_{y, v}\leq \delta/2
  $$
  for each $y\in X$.
  Thus,
  $$
    \int_{C_{v}^\mathrm{an}}g_{F,v}\,\mathrm{d} (\mu_v-\nu_n)\geq -\delta.
  $$
  For $F=F_+-F_-$, we can write $F$ as the sum of $\deg(F_-)$ divisors of the form $z/\deg(z)-y/\deg(y)$.
  Thus,
  $$
    \sum_{v'\in M_K}\int_{C_{v'}^{\mathrm{an}}}g_{F,v'}\,\mathrm{d}\mu'_{v'}\leq \overline{F}\cdot D+\delta\deg(F_-).
  $$

\end{proof}
For an archimedean place $v$, we say that the compact set $E_v$ is \emph{simple} if $E_v$ is a finite disjoint union of closures of path-connected open subsets of $C_v^\mathrm{an}$ and, when $K_v=\mathbb{R}$, closed intervals in $C_v(\mathbb{R})$.
\begin{corollary}\label{cor:arc-mea}
  Let $\rho$ be a metric on $C_v^\mathrm{an}$.
  In Proposition \ref{prop:com-exh-nor}, take $V=E_v$.
  For any $\delta>0$, we can take $H_n$ to be a simple set satisfying all the requirements such that the diameter of each connected component of $H_n$ with respect to $\rho$ is less than $\delta$.
\end{corollary}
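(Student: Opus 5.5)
The plan is to build the required compact exhaustions by hand so that each $H_n$ is automatically simple with components of small $\rho$-diameter, and then invoke Proposition \ref{prop:com-exh-nor}. Since $E_v$ is locally admissible, it is a finite union of closures of connected open sets $W_1,\dots,W_s$ and, when $K_v=\mathbb R$, closed intervals $I_1,\dots,I_r$ in $C_v(\mathbb R)$. Take $V_i$ to be the $W_j$ together with the open intervals $\operatorname{int} I_k$. Then $V=\bigcup V_i$ satisfies $\overline V=E_v$, and each $V_i$ is path-connected and admits a compact exhaustion. What remains is to choose the exhaustions $K_{i,n}$ and the sets $H_n\supseteq\bigcup_i K_{i,n}$ so that the extra requirements hold.

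\textbf{Step 1 (grid subdivision).} For the open pieces, fix a finite atlas of coordinate charts on $C_v^{\mathrm{an}}$. In each chart, consider the closed dyadic squares of side $2^{-n}$ whose closures lie in $W_j$ at $\rho$-distance at least $2^{-n}$ from $\partial W_j$. Shrink each such square slightly, to a closed square of side $(1-\epsilon_n)2^{-n}$ with the same center, so that distinct squares become disjoint. Transfer overlaps between charts by taking the squares only in a fixed finite cover by compact chart-pieces. The shrinking is chosen so that the union still contains a compact set $K_{j,n}'$ whose union over $n$ exhausts $W_j$, for instance the union of the squares of the previous level $n-1$, shrunk.

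For the intervals, cut $\operatorname{int} I_k$ into closed subintervals of length less than $2^{-n}$. Leave small gaps between them, and leave out small neighborhoods of the endpoints and of the places where an interval meets the open pieces, so the subintervals stay disjoint from one another and from the squares. Each resulting component is the closure of a path-connected open set (a square) or a closed interval, and has $\rho$-diameter $O(2^{-n})<\delta$ for $n$ large. The union $H_n$ is therefore a simple set with finitely many path-connected components.

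\textbf{Step 2 (exhaustion property).} This is the main obstacle, because cutting into small disjoint pieces destroys monotonicity: $H_n$ is not contained in $H_{n+1}$. I would therefore keep the exhaustions $K_{i,n}$ and the sets $H_n$ separate, as Proposition \ref{prop:com-exh-nor} permits; only $K_{i,n}\subseteq H_n$ is required. The issue is that the gaps between pieces of $H_n$ do not cover $V_i$. So I would take $K_{i,n}$ to be a slightly thinner compact exhaustion, namely the compact set $\{y\in V_i:\rho(y,\partial V_i)\ge 2^{-m(n)}\}$ with $m(n)\to\infty$ slowly, minus nothing.

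Simultaneously, the gaps in $H_n$ would be placed so that they miss $K_{i,n}$. This is not possible in general. The fallback is to let the gaps form a polar, measure-zero network and replace disjointness by allowing components to touch. That would break the requirement of a disjoint union.

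Instead, the cleanest fix is to use the freedom in the proof of Proposition \ref{prop:com-exh-nor}. There, only convergence of the Green functions (Corollary \ref{cor:gre-com-exh}) and the measure approximation (Proposition \ref{prop:mea-com-exh}) were used, so I would re-run those two inputs directly for the fine grids. For the measures, the measures $\mu^\varepsilon$ of Proposition \ref{prop:mea-com-exh} are supported on short arcs $\Lambda_i$ of diameter about $\varepsilon$. These arcs can be placed inside the grid pieces, since each ball of radius $\varepsilon$ meeting $V$ contains a full small square or a subinterval, and Lemma \ref{lem:cap-low-bd} still gives the lower bound on their potentials. For the Green functions, monotonicity under enlargement, together with the fact that the $\varepsilon$-measures live on $H_n$, gives the potential estimate and hence uniform convergence of $G(x,\cdot;H_n)$ to $G(x,\cdot;E_v)$ by the maximum-principle step of Corollary \ref{cor:gre-com-exh}.

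\textbf{Step 3 (conclusion).} With these two inputs established, the capacity and measure conclusions follow verbatim from the proof of Proposition \ref{prop:com-exh-nor}. The atomlessness needed in Proposition \ref{prop:mea-com-exh} is supplied by Lemma \ref{lem:arch-mea-no-atoms}. This shows that $H_n$ can be taken simple, with every connected component of $\rho$-diameter less than $\delta$.
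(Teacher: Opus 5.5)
You correctly see that the requirements of Proposition~\ref{prop:com-exh-nor} cannot be met in a single application. A compact exhaustion of a piece $V_i$ of diameter $>\delta$ eventually contains a compact arc of diameter $>\delta$, and that arc must lie in one component of $H_n$. In particular, the claim in Step~1 that the shrunk level-$(n-1)$ squares lie in the union of the shrunk level-$n$ squares is false, because the level-$n$ gaps cut through them.

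The gap is in what you substitute. You assert that Proposition~\ref{prop:mea-com-exh} can be re-run on non-nested fine grids, but the justification you give does not hold.
\begin{itemize}
\item To get the bound $\log(\varepsilon/4)$ from Lemma~\ref{lem:cap-low-bd}, the arc $\Lambda_i$ must have diameter $\gtrsim\varepsilon$ and lie within $O(\varepsilon)$ of $y_i$. If it has to sit inside one grid piece, then $\varepsilon\lesssim 2^{-n}$.
\item At that scale, the claim that every ball of radius $\varepsilon$ meeting $V$ contains a full small square fails. Your squares stay $2^{-n}$ away from $\partial W_j$, and near cusps, narrow necks or porous parts of $\partial W_j$ the nearest square can be much farther than $2^{-n}$.
\item This is exactly where the measures you must approximate sit. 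The equilibrium measures $\mu_{E_v,x}$ needed for the Green-matrix step live on $\partial E_v$, and $\mu_v$ may charge it as well.
\item ``Monotonicity under enlargement'' only gives the trivial bound $G(x,\cdot\,;H_n)\geq G(x,\cdot\,;E_v)$, since your $H_n$ are not nested and contain no exhaustion. The reverse bound rests entirely on the unproved measure approximation.
\end{itemize}
A repair, for instance spreading continuous-potential arc measures over many much smaller squares and controlling the loss, would be a genuinely new estimate that you have not supplied.

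The idea you are missing is to apply Proposition~\ref{prop:com-exh-nor} twice.
\begin{itemize}
\item First, fix a triangulation of mesh $\varepsilon_1<\delta$. Let $U_{\varepsilon_1}$ be the union of the closed triangles contained in the interior of $E_v$, together with, when $K_v=\mathbb R$, the closed edges in the real interior. As $\varepsilon_1\to0$ these sets contain compact exhaustions of the pieces of $E_v$, so the proposition replaces $E_v$ by this fixed set at arbitrarily small cost.
\item Second, apply the proposition again to this new set. Take $V$ to be the finite disjoint union of the open triangles and of the open edges not lying on any triangle. Shrinking each closed triangle or edge into its own interior is a genuine compact exhaustion of each piece, so every hypothesis holds verbatim.
\end{itemize}
The resulting set is simple with components of diameter $<\varepsilon_1$. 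The errors in capacity, in $W_1$ and in the $\delta\deg(F_-)$ term simply add. The measure produced by the first step is a combination of equilibrium measures of arcs, hence atomless, so the second application is legitimate and no new potential-theoretic estimate is needed.
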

\begin{proof}
  For any ${\varepsilon_1}>0$, let $\mathcal{T}_{\varepsilon_1}$ be a triangulation of $C_{v}^\mathrm{an}$, with each edge a smooth arc and the diameter of each triangle less than ${\varepsilon_1}$.
  Let $X_{\varepsilon_1}$ be the set of all triangles contained in the interior of $E_v$.
  If $K_{v}=\mathbb{R}$, let $Y_{\varepsilon_1}$ be the set of edges that are contained in the interior of $C_{v}(\mathbb{R})\cap E_v$ in $C_v(\mathbb{R})$, and let $Y_{\varepsilon_1}'$ be the subset of $Y_{\varepsilon_1}$ containing the edges that are not edges of any triangle in $X_{\varepsilon_1}$.
  If $K_{v}=\mathbb{C}$, let $Y_{\varepsilon_1}$ be the empty set.
  Set $U_{\varepsilon_1}=\bigcup_{\sigma \in X_{\varepsilon_1}}\sigma$ and $V_{\varepsilon_1}=\bigcup_{\lambda \in Y_{\varepsilon_1}}\lambda$.
  Then as $\varepsilon_1$ tends to $0$, $U_{\varepsilon_1}$ gives a compact exhaustion of the interior of $E_v$, and $V_{\varepsilon_1}$ gives a compact exhaustion of the interior of $C_{v}(\mathbb{R})\cap E_v$ in $C_{v}(\mathbb{R})$.

  For any $\varepsilon_2>0$, $\sigma\in X_{\varepsilon_1}$, let $\sigma_{\varepsilon_2}$ be a triangle with each edge a smooth arc in the interior of $\sigma$, such that $\rho(y, \sigma_{\varepsilon_2})<\varepsilon_2$ for all $y\in\sigma$.
  For any $\lambda \in Y_{\varepsilon_1}'$, let $\lambda_{\varepsilon_2}$ be a closed interval in the interior of $\lambda$ in $C_{v}(\mathbb{R})$ that contains all points whose distance from the endpoints of $\lambda$ is greater than $\varepsilon_2$.
  We apply Proposition \ref{prop:com-exh-nor} twice: first to $U_{\varepsilon_1}$, and then to $\bigcup_{\sigma \in X_{\varepsilon_1}} \sigma_{\varepsilon_2}\bigcup_{\lambda \in Y_{\varepsilon_1}'}\lambda_{\varepsilon_2}$.
  Note that the union is disjoint, with the diameter of each set less than $\varepsilon_1$.
  The corollary follows.
\end{proof} \subsection{Non-archimedean case}
Now we focus on non-archimedean places.
Let $v$ be a non-archimedean place of $K$.
\begin{proposition}\label{prop:non-mea}
  There is a sequence of measures $\{\nu_n\}_{n\in \mathbb{Z}_+}$ on $C_v^\mathrm{an}$, each of which is a finite linear combination of Dirac measures at divisorial points on $E_v$, such that
  \begin{enumerate}
    \item $\lim_{n\to\infty}\nu_n=\mu_v$,
    \item for every subharmonic function $g$ on $E_v$,
      $$
        \int_{E_v} g\,\mathrm{d}\mu_v\leq \int_{E_v} g\,\mathrm{d}\nu_n
      $$
      for every $n$.
  \end{enumerate}
\end{proposition}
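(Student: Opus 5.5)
The plan is to build $\nu_n$ from the skeleta of a cofinal sequence of models, pushing $\mu_v$ forward along the retractions to the skeleta. Here $E_v=\operatorname{red}_v^{-1}(V)$ is locally admissible. Choose a sequence of sncd-models $\mathcal C_n$ of $C_{K_v}$, each dominating the previous one and dominating the model defining $E_v$. Choose them so that the associated skeleta $\Gamma_n=\Gamma_{\mathcal C_n}$ become dense in the set of type II/III points, and so that the retractions $\tau_n\colon C_v^{\mathrm{an}}\to\Gamma_n$ converge pointwise to the identity. This is standard: blow up closed points of the special fibre repeatedly, including at sufficiently fine levels in residue field extensions after finite base change if needed, or more simply blow up all closed points meeting any given finite set of rigid discs. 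Since $C_v^{\mathrm{an}}=\varprojlim\Gamma_n$, every continuous function $\varphi$ satisfies $\varphi\circ\tau_n\to\varphi$ uniformly, so $(\tau_n)_*\mu_v\to\mu_v$ weakly.

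The first step is to replace $(\tau_n)_*\mu_v$ by a combination of Dirac measures at divisorial points. The skeleton $\Gamma_n$ is a finite metric graph. On each edge $e$, the pushed measure $(\tau_n)_*\mu_v|_e$ can be replaced by the measure $\nu_{n,e}$ supported on the two endpoints of $e$. Its weights are chosen so that the barycentre on the segment is preserved. Subharmonic functions restricted to an edge of a skeleton are convex along the segment, because their Laplacian is a positive measure and they are piecewise affine on the skeleton. Hence Jensen's inequality gives $\int g\,\mathrm d(\tau_n)_*\mu_v|_e\le\int g\,\mathrm d\nu_{n,e}$. After refining models so that the edge lengths tend to $0$, weak convergence is kept. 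Endpoints of edges are vertices of $\Gamma_n$, which are divisorial points. Those vertices lying in $E_v$ are exactly the divisorial points of components meeting $V$, provided $\mathcal C_n$ dominates the model of $E_v$. Since $E_v$ is a union of fibres of $\tau_n$ over a subgraph, $\operatorname{supp}\mu_v\subset E_v$ forces the masses onto vertices in $E_v$. For edges with only one endpoint in $E_v$, we use that $E_v=\operatorname{red}^{-1}(V)$ is cut out by the subgraph $\tau_n^{-1}(\Gamma_n\cap E_v)$, so no mass sits on such half-edges.

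The second step is the main inequality: for $g$ subharmonic on $E_v$ we want $\int g\,\mathrm d\mu_v\le\int g\,\mathrm d(\tau_n)_*\mu_v$. The key fact is that $g\le g\circ\tau_n$ on $E_v$. Each fibre $\tau_n^{-1}(p)\setminus\{p\}$ is a union of open discs or annuli-free branches whose boundary is $p$ alone. By the maximum principle for subharmonic functions on Berkovich curves (Thuillier \cite[\S3.1]{Thu05}), $g$ on such a branch is bounded by its value at the unique boundary point $p$. Equivalently, $g$ is nonincreasing along paths retracting to $\Gamma_n$. Integrating against $\mu_v$ gives the claim. Combined with the first step, this yields $\int g\,\mathrm d\mu_v\le\int g\,\mathrm d\nu_n$.

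I expect the main obstacle to be the technicalities of the second step. The function $g$ may take value $-\infty$ or fail to be continuous, and branches of the retraction may leave $E_v$. For the first issue, I would first treat continuous subharmonic $g$ and then approximate by decreasing limits using monotone convergence. For the second, I would use that $E_v$ is a union of full $\tau_n$-fibres over a subgraph, so each branch at a point of $E_v$ stays in $E_v$ once $\mathcal C_n$ dominates the model defining $E_v$. If convexity along edges causes trouble, a fallback is to choose $\Gamma_n$ so that $(\tau_n)_*\mu_v$ already has finite support. This is done by adding vertices at the atoms and discretising each edge finely, which uses the same Jensen argument.
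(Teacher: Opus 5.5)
Your proposal is correct in substance, and it produces exactly the paper's measures; what differs is how the key inequality is packaged.

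\textbf{Comparison with the paper.} The paper covers the preimage $U_n$ of $V$ in $\mathcal C_n$ by connected affine opens $W_i$, each containing exactly one double point. It splits $\mu_v$ among the sets $\operatorname{red}^{-1}(W_i)$. It then sweeps each piece onto $\partial\operatorname{red}^{-1}(W_i)$, which is the pair of endpoints of one edge of $\Gamma_n$, using the balayage inequality of Proposition~\ref{prop:non-sub-avg} (Thuillier's Dirichlet problem plus the maximum principle). The harmonic measure of $z\in\operatorname{red}^{-1}(W_i)$ depends only on $\tau_n(z)$: it is the barycentric split along the edge, or a Dirac mass when $\tau_n(z)$ is a vertex. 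So the paper's $\nu_n$ is precisely your ``retract, then split each edge onto its endpoints''.

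You instead factor the inequality into two steps: $g\le g\circ\tau_n$ (maximum principle on the discs hanging off $\Gamma_n$), followed by Jensen along edges. This is more explicit, but the convexity you need is the same fact in disguise. It does not follow from subharmonic functions being piecewise affine on the skeleton, which in general they are not. It follows from comparing $g$ on a closed sub-annulus $\tau_n^{-1}([a,b])$ with its harmonic majorant, namely the affine interpolation of $g(a),g(b)$ composed with $\tau_n$. Equivalently, it follows from positivity of $\mathrm{dd}^c g$ together with the nonpositive outgoing slopes into the side discs that your second step provides.

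For convergence, the paper needs no shrinking of edges. A model function coming from $\Gamma_{n_0}$ is, for $n$ large, constant on $\tau_n$-fibres and affine on the edges of $\Gamma_n$. Barycentre preservation then gives $\int f\,\mathrm d\nu_n=\int f\,\mathrm d\mu_v$ exactly. Your assertion that small edge lengths preserve weak convergence would itself need this density-of-model-functions argument.

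\textbf{Justifications to repair.}

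First, dominating the model of $E_v$ does not make $E_v$ a union of full $\tau_n$-fibres. For a smooth model and $E_v=\operatorname{red}_v^{-1}(\mathcal C_{k_v}\setminus\overline X_{k_v})$, the skeleton is a single point whose fibre is all of $C_v^{\mathrm{an}}$. More generally, branches at a vertex $\xi_F\in E_v$ through points of $F\setminus U_n$ leave $E_v$. The claim does become true after additionally blowing up the finitely many closed points of $\mathcal C_{k_v}\setminus V$ lying on components that meet $V$, hence for $n\gg0$ in a cofinal sequence.

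In any case, what you actually use is weaker and always true. Since $E_v=\operatorname{red}_n^{-1}(U_n)$ with $U_n$ open, every $z\in E_v\setminus\Gamma_n$ lies in a (virtual) residue disc or annulus $\operatorname{red}_n^{-1}(x)$ with $x\in U_n$. That disc or annulus is contained in $E_v$, together with the vertices of the components through $x$. This gives both $g\le g\circ\tau_n$ on $E_v$ and the fact that every edge charged by $(\tau_n)_*\mu_v$ has both endpoints in $E_v$.

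Second, your fallback of making $(\tau_n)_*\mu_v$ finitely supported by adding vertices fails in general, since $\mu_v$ can be spread continuously along an edge. Nothing in the argument requires it, so you can drop it.
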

\begin{proof}
  First, we reduce to the semistable case to ensure that there are enough sncd-models.
  Let $\mathcal{C}$ be a projective model of $C$ over $\operatorname{Spec}(O_{K_v})$, and let $U$ be an open subscheme of the special fiber that defines $E_v$.
  Let $K'$ be a field extension of $K_v$ such that $C_{K'}$ is semistable.
  Let $\mathcal{C}'$ be a semistable model of $C_{K'}$ over $\operatorname{Spec}(O_{K'})$.
  After blowing up, we may assume that the rational map $\mathcal{C}'\longrightarrow\mathcal{C}$ is a morphism.
  Let $U'$ be the base change $U\times_{\mathcal{C}}\mathcal{C}'$.
  Let $E'$ be the locally admissible compact set defined by $U'$.
  Then $E'$ is the inverse image of $E_v$.
  Let $\mu'$ be the pull-back of the measure $\mu_v$ under the map $C_{K'}^\mathrm{an}\longrightarrow C_v^\mathrm{an}$.
  Assume that we have proved the proposition for $E'$ and $\mu'$ on $C_{K'}^\mathrm{an}$. After taking the push-forward of the sequence of measures, we obtain a sequence of measures for the original proposition.

  Now, we assume that $C_v$ is semistable.
  Note that both semistable models and blow-ups of sncd-models are sncd-models.
  Let $\mathcal{C}_1$ be a semistable model of $C$ over $\operatorname{Spec}(O_{K_v})$, $U$ the open subscheme of the special fiber that defines $E_v$.
  We may blow up regular points on the special fiber, so we may assume that the special fiber is not irreducible.
  Since there are only countably many projective models of $C$, we may take $\{\mathcal{C}_n'\}$ to be a sequence containing all projective models of $C$.
  Let $\mathcal{C}_{n+1}$ be a model obtained by blowing up $\mathcal{C}_n$, such that $\mathcal{C}_{n+1}$ admits a regular map to $\mathcal{C}_n'$.
  Let $\Gamma_n$ be the dual graph of the special fiber of $\mathcal{C}_n$.
  Recall that the metrized graph $\Gamma_n$ embeds canonically in $C_v^\mathrm{an}$, and admits a deformation retraction $\tau_n: C_v^\mathrm{an}\longrightarrow\Gamma_n$.
  Therefore, we get a sequence of sncd-models $\{\mathcal{C}_n\}$ of $C$.

  Fix $n$.
  Let $U_n$ be the open subscheme of the special fiber of $\mathcal{C}_n$ that defines $E_v$.
  Let $W_1, W_2, \ldots, W_m$ be connected affine open subschemes of the special fiber of $\mathcal{C}_n$ containing exactly one double point, such that $\bigcup_{i=1}^mW_i=U_n$.
  Then $\bigcup_{i=1}^{m}\mathrm{red}^{-1}(W_i)=E_v$.
  We may decompose $\mu_v$ into a sum of measures $\sum_{i=1}^m\mu_i$ where $\mu_i$ is supported on $\mathrm{red}^{-1}(W_i)$.
  By Proposition \ref{prop:non-sub-avg}, for each $i$, there exists a measure $\mu'_i$ supported on $\partial \mathrm{red}^{-1}(W_i)$ with the same total mass as $\mu_i$ such that
  $$
    \int_{\mathrm{red}^{-1}(W_i)}g\,\mathrm{d}\mu'_i\geq\int_{\mathrm{red}^{-1}(W_i)}g\,\mathrm{d}\mu_i
  $$
  for each subharmonic function $g$ on $\mathrm{red}^{-1}(W_i)$.
  It is easy to see that if $g$ is harmonic, equality holds in the above inequality.
  Let $\nu_n=\sum_{i=1}^{m}\mu'_i$.
  Note that each compact set $\mathrm{red}^{-1}(W_i)$ contains exactly one edge in $\Gamma_n$, and its boundary consists of the endpoints of the edge.

  It remains to prove that $\nu_n$ converges weakly to $\mu_v$.
  A model function is a function that is the pull-back of a piecewise-affine function on a graph $\Gamma$ under the deformation retraction, where $\Gamma$ is the dual graph of an sncd-model.
  By \cite[Theorem 7.12]{Gub98}, model functions are dense in the set of all continuous functions on $C_v^\mathrm{an}$.
  Thus, we only need to prove that for every model function $f$, $\lim_{n\to\infty}\int_{E_v} f\,\mathrm{d} \nu_n=\int_{E_v}f \,\mathrm{d}\mu_v$.
  By our choice of $\mathcal{C}_n$, we may assume that $f$ is defined by a piecewise-affine function on $\Gamma_n$ for every $n\geq n_0$.
  Then we have that $\int_{E_v} f\,\mathrm{d} \nu_n=\int_{E_v}f \,\mathrm{d}\mu_v$ for all $n\geq n_0$ since $f$ is harmonic outside the vertices of $\Gamma_n$.
\end{proof}
\begin{proposition}\label{prop:non-exh}
  Let $x$ be a divisorial point on $C_v^\mathrm{an}$. Then there exists a decreasing sequence of compact sets $\{E_n\}$, locally admissible in $C_v^{\mathrm{an}}$ and containing $x$, such that every sequence of probability measures $\{\mu_n\}$ with $\mu_n$ supported on $E_n$ converges to $\delta_x$ weakly.
\end{proposition}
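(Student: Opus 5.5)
The plan is to build $E_n$ as preimages under reduction of shrinking neighbourhoods of the generic point of the component corresponding to $x$, on finer and finer models.

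\emph{Models and sets.} Since $x$ is divisorial, there is a projective model $\mathcal C$ of $C$ over $O_{K_v}$ with an irreducible component $F$ of the special fiber such that $x=\xi_F$ is the Shilov point of $F$. Choose a cofinal sequence of models $\mathcal C_n$ dominating $\mathcal C$, for instance by successive blow-ups as in the proof of Proposition~\ref{prop:non-mea}, so that every model is dominated by some $\mathcal C_n$. Let $F_n\subset\mathcal C_n$ be the strict transform of $F$. Set
\[
  E_n=\operatorname{red}_{\mathcal C_n}^{-1}(V_n),
\]
where $V_n\subset F_n$ is the complement in $F_n$ of its intersections with the other components of the special fiber of $\mathcal C_n$. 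Each $V_n$ is an open subvariety of the special fiber, so $E_n$ is locally admissible in $C_v^{\mathrm{an}}$. It contains $x$, because $\operatorname{red}_{\mathcal C_n}(x)$ is the generic point of $F_n$. The sequence is decreasing because $\mathcal C_{n+1}$ dominates $\mathcal C_n$: a point reducing into $V_{n+1}$ lies over the generic point of $F_{n+1}$ or over a closed point of $F_{n+1}$ not on other components, and such a point maps into $V_n$. If needed, I would also replace $E_n$ by $E_1\cap\cdots\cap E_n$.

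\emph{Key claim.} The heart of the proof is
\[
  \bigcap_n E_n=\{x\},
\]
shown with the retractions $\tau_n\colon C_v^{\mathrm{an}}\to\Gamma_{\mathcal C_n}$. A point $y\in E_n$ retracts onto the vertex of $\Gamma_{\mathcal C_n}$ corresponding to $F_n$, that is, $\tau_n(y)=x$, since $\operatorname{red}(y)$ is a regular point of the reduced special fiber lying on $F_n$. Hence $y\in\bigcap_n E_n$ satisfies $\tau_n(y)=x$ for all $n$. The inverse system of skeleta is cofinal and $C_v^{\mathrm{an}}=\varprojlim\Gamma_{\mathcal C_n}$, which is the fact underlying the density of model functions in \cite[Theorem 7.12]{Gub98}. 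Therefore $y=x$.

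Alternatively, I would use density of model functions directly: any model function $f$ defined on some $\Gamma_{\mathcal C_{n_0}}$ is constant, equal to $f(x)$, on $E_n$ for $n\ge n_0$, because $\tau_{n_0}$ factors through $\tau_n$ and $\tau_n$ maps $E_n$ to $x$.

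\emph{Conclusion.} Let $\mu_n$ be a probability measure supported on $E_n$. For a model function $f$ defined on $\Gamma_{\mathcal C_{n_0}}$ and any $n\ge n_0$,
\[
  \int f\,\mathrm d\mu_n=f(x)=\int f\,\mathrm d\delta_x .
\]
Model functions are dense in $C(C_v^{\mathrm{an}})$ for the sup-norm, and all measures involved are probability measures, so a standard $\varepsilon/3$ argument gives $\mu_n\to\delta_x$ weakly. As before, one can first pass to a finite extension where $C$ is semistable and push forward; the required models exist there.

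\emph{Main obstacle.} The delicate step is checking that $\tau_n$ really sends all of $E_n$ to the vertex $x$. This needs the compatibility of reduction and retraction recalled in the Berkovich section: a point whose reduction is a regular point of $(\mathcal C_{n,k_v})_{\mathrm{red}}$ retracts to that component's vertex. It also needs the fact that $V_n$ avoids the double points, which is why those intersection points are removed when defining $V_n$.
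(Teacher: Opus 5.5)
Your proposal is correct and is essentially the paper's proof. The paper likewise takes a cofinal sequence of blown-up models in which $x$ is a vertex, and lets $E_n$ be the reduction preimage of the component of $x$ with the singular points of the reduced special fiber removed; it then notes that each model function is constant on $E_n$ for $n$ large and concludes by density of model functions. Your extra claim $\bigcap_n E_n=\{x\}$ is a repackaging of that same density fact, and your closing base-change remark is unnecessary, since sncd models over $O_{K_v}$ already arise by blowing up.
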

\begin{proof}
  Take a projective model $\mathcal{C}_1$ of $C$ over $O_{K_v}$ such that $x$ is a vertex in the dual graph of $\mathcal{C}_1$.
  Take a sequence of projective models $\{\mathcal{C}_n\}$ such that $\mathcal{C}_{n+1}$ is obtained by blowing up $\mathcal{C}_n$ and that each projective model $\mathcal{C}$ admits a regular map $\mathcal{C}_n\longrightarrow \mathcal{C}$ for $n$ sufficiently large, as in the proof of the previous proposition.
  Let $W_n$ be the intersection of the component corresponding to $x$ with the regular locus of the reduced special fiber of $\mathcal C_n$.
  Let $E_n$ be the compact set corresponding to $W_n$, locally admissible in $C_v^{\mathrm{an}}$.
  Then each model function is constant on $E_n$ for $n$ large enough.
  Since model functions are dense, $\mu_n$ converges to $\delta_x$ weakly.
\end{proof}
\section{Local construction}
\label{sec:local-construction}

In this section we prove technical lemmas and propositions concerning the construction of local functions. They will be used in the proof of Theorem \ref{thm:integral-point-construction} in Section \ref{sec:integral-point-construction}.

The main result is Proposition \ref{prop:loc-sec}.
We also refer the reader to \cite[Chapters 7--10]{Rum13} since some basic ideas are similar.

\subsection{Statements}
Let $K$ be a local field and $C$ a projective, smooth and geometrically integral curve of genus $g$ over $K$.
Let ${X}$ be a finite set of closed points on $C$.
Let $E$ be a compact set in $C^{\mathrm{an}}\setminus {X}$.
If $K$ is archimedean, assume that $E$ is simple.
If $K$ is non-archimedean, assume that $E$ is locally admissible.

For the case $K={\mathbb {R}}$, let $H$ be a closed interval in $C({\mathbb {R}})$.
For a continuous real-valued function $g$ on $H$, a non-zero rational function $f$ on $C$ is said to be \emph{$g$-oscillating} on $H$ if $H$ contains no poles and only simple zeros of $f$, and each connected component of the set $\{z\in H|\log\left|f\right|<g\}$ contains at most one point which is either a zero of $f$ or an endpoint of $H$.
The union of those connected components that contain a zero of $f$ is called the \emph{oscillating interval}.
For non-zero rational functions $f_1, f_2, \ldots, f_m$, we say that $(f_1, g_1), \ldots, (f_m, g_m)$ \emph{oscillate compatibly} if each $f_j$ is $g_j$-oscillating on $H$ and on the oscillating interval, $\log\left|f_i\right|\geq g_i$ for each $i\neq j$.
Let $f_1, f_2, \ldots, f_m$ be rational functions on $C$ whose pole divisors are supported outside $E$.
For continuous functions $g_1, g_2, \ldots, g_m$ on $E$, we say that $(f_1, g_1), (f_2, g_2), \ldots, (f_m, g_m)$ satisfy the \emph{boundary dominance condition} if the following conditions hold:
\begin{enumerate}
  \item For each $i$, all zeros of $f_i$ lie in $E$;
  \item for each $i$, $\log\left|f_i\right|\geq g_i$ on $\partial E'$ where $E'$ is a connected component of $E$ which is not a real interval;
  \item if $K={\mathbb {R}}$, for each connected component $E'$ of $E$ which is a real interval, $(f_1, g_1),\allowbreak \dots,\allowbreak (f_m, g_m)$ oscillate compatibly on $E'$.
\end{enumerate}

If $K\neq {\mathbb {R}}$, the boundary dominance condition can be verified independently.
Our definition ensures that, even for the case $K={\mathbb {R}}$, if $(f_1, g_1), (f_2, g_2), \ldots, (f_m, g_m)$ satisfy the boundary dominance condition, then so do $(f_1f_2, g_1+g_2), (f_3, g_3), \ldots, (f_m, g_m)$.

Let $\mathbf{s}=(s_x)_{x\in {X}}$ be a vector of positive rational numbers.
Let $N_1$ be the smallest positive integer such that $N_1s_x$ is an integer at least $2g+2$ for every $x\in X$.
Let $N$ be a multiple of $N_1$.
A \emph{band filtration} for $\mathbf{s}, N$ is a decreasing sequence $\{D_N^{(j)}\}$ of divisors, as follows.
Firstly, we have $D_N^{(0)}=\sum_{x\in{X}}Ns_xx$ and $D_N^{(1)}=\sum_{x\in{X}}(Ns_x-1)x$.
Moreover, for each $0\leq N'<N$ which is a multiple of $N_1$, $\sum_{x\in{X}}(N's_x+2g)x$ appears in the sequence.
The sequence ends with $D_N^{(j_N)}=\sum_{x\in{X}}2gx$.
Lastly, $D_N^{(j)}-D_N^{(j+1)}$ is a sum of distinct prime divisors.

For simplicity, we write $H^0(D)$ for $H^0(C, {\mathcal {O}}(D))$.
Set $V_N^{(j)}=H^0(D_N^{(j)})/H^0(D_N^{(j+1)})$.
For each point $x\in {X}$, take a Green function $g_x$ of $x$.
For $\phi_N^{(j)}\in V_N^{(j)}$, $x\in{\mathrm{supp}}(D_N^{(j)}-D_N^{(j+1)})$ and $d={\mathrm{ord}}_x(D_N^{(j)})$, set
$$
  \left|\phi_N^{(j)}\right|_{x, d}=\lim_{x'\to x}\left(\left|\phi_N^{(j)}(x')\right|e^{-dg_{x}(x')}\right).
$$
By the Riemann--Roch theorem, $V_N^{(j)}=\oplus_{x\in{\mathrm{supp}}(D_N^{(j)}-D_N^{(j+1)})}K(x)$, and the norm $|\cdot|_{x, {\mathrm{ord}}_x(D_N^{(j)})}$ factors through the projection to $K(x)$.
Set $V_N=\oplus_{j=0}^{j_N-1} V_N^{(j)}$.

Let $\phi_N^{\mathrm{init}}$ be an element in $V_N^{(0)}$.
Set $b_x=\log\left|\phi_N^{\mathrm{init}}\right|_{x, Ns_x}$, and ${\mathbf{b}}=(b_x)_{x\in {X}}$.
Take ${\mathbf{c}}=(c_x)\in{\mathbb {R}}^{X}$ and $B\in {\mathbb {R}}$.
The element $\phi_N^{\mathrm{init}}$ is said to be \emph{represented by an $n$-th power} if there exists a rational function $f$, such that $f^n$ represents $\phi_N^{\mathrm{init}}$.
A \emph{patching vector} with initial element $\phi_N^{\mathrm{init}}$ bounded by $B, {\mathbf{c}}$ is an element $\boldsymbol{\phi}_N=(\phi_N^{(j)})\in V_N$ with the following property.
For $j\geq1$, $\left|\phi_N^{(j)}\right|_{x, {\mathrm{ord}}_x(D_N^{(j)})}\leq Be^{b_x-(Ns_x-d)c_x}$ where $d={\mathrm{ord}}_x (D_N^{(j)})$.
For $j=0$, if $K$ is non-archimedean, $\left|\phi_N^{(0)}-\phi_N^{\mathrm{init}}\right|_{x, Ns_x}\leq Be^{b_x}$.
If $K(x)={\mathbb {C}}$, $\left|\phi_N^{(0)}\right|_{x, Ns_x}= e^{b_x}$.
If $K(x)={\mathbb {R}}$, $\phi_N^{(0)}$ is the same as $\phi_N^{\mathrm{init}}$ after the projection to $K(x)$.

A \emph{patching map} is a map $\psi_N: V_N\longrightarrow H^0(D_N^{(0)})/H^0(D_N^{(j_N)})$ with the following properties.
First, $\psi_N(\mathbf{0})=0$.
Moreover, let $\boldsymbol{\phi}_N=(\phi^{(j)}_N)$ and $\boldsymbol{\phi}'_N=({\phi'}^{(j)}_N)$ be patching vectors.
If $\phi^{(j)}_N={\phi'}^{(j)}_N$ for each $j<j_0$, then $\psi_N (\boldsymbol{\phi}_N) -\psi_N(\boldsymbol{\phi}'_N)-\phi^{(j_0)}_N +{\phi'}^{(j_0)}_N=0$ in $H^0(D_N^{(0)})/H^0(D_N^{(j_0+1)})$.

Let $g_{E, x}$ be the equilibrium potential of $E$ at $x$.
Let $D$ be an effective ${\mathbb {Q}}$-divisor of degree $1$ on $C$ supported outside $E$ and ${X}$.
Let $g_D$ be a Green function of $D$.
For $K=\mathbb R$, the $\mathbb R$-interior of $E$ is $\operatorname{int}_{C^{\mathrm{an}}}(E)\cup\operatorname{int}_{C(\mathbb R)}(E\cap C(\mathbb R))$; for $K=\mathbb C$, it is the ordinary interior.
Assume that $c_1(g_D)=\mu$ is a probability measure supported on the $\mathbb R$-interior of $E$ if $K$ is archimedean, on finitely many type~II points otherwise.
For each $x, y\in {X}$ with $x\neq y$, set $c_{x,y}=g_{E, x}(y)$.
Set $c_{x,x}=\lim_{x'\to x}(g_{E, x}(x')-g_{x}(x')/\deg(x))$.
Set $a_x=g_D(x)$.

\begin{proposition}\label{prop:loc-sec}
  For each $\varepsilon>0$, there exist real numbers $c'_{x, y}$ such that $\left|c'_{x, y}-c_{x, y}\right|\leq \varepsilon$.
  If $K$ is archimedean, each $c'_{x, x}$ can be taken independently in a subinterval of $[c_{x, x}-\varepsilon, c_{x, x}+\varepsilon]$, and its choice will not affect the choice of any further constants.
  Still, if $K$ is archimedean, let $\{\kappa_i\}$ be a subsequence of $\{i!\}$. Then there exists a subsequence $\{\kappa'_i\}$ of $\{\kappa_i\}$.
  There exist real numbers $a'_x$ such that $\left|a_{x}-a'_x\right|\leq \varepsilon$.
  Let $s_x$ be positive rational numbers.
  Set $c'_x=\sum_{y\in{X}}s_y\deg(y)c'_{x, y}/s_x$ and ${\mathbf{c}}'=(c'_x)_{x\in {X}}$.
  For the non-archimedean case, there exists $B>0$.
  For the archimedean case, $B>0$ can be taken arbitrarily large.

  For each integer $n>0$, there exists $N_1>0$.
  Let $N$ be a multiple of $N_1$.
  If $K$ is archimedean, further assume that $N$ appears in the sequence $\{\kappa'_i\}$.
  Then for each band filtration for ${\mathbf{s}}=(s_x)_x$ and $N$, there exists a patching map $\psi_N$ and an initial function $\phi_N^{\mathrm{init}}\in V_N^{(0)}$ with the following properties.
  First, $\log \left|\phi_N^{\mathrm{init}}\right|_{x, Ns_x}=Na'_x+Ns_xc'_x$.
  Second, $\phi_N^{\mathrm{init}}$ is represented by an $n$-th power.
  Lastly, for every patching vector $\boldsymbol{\phi}_N$ with initial element $\phi_N^{\mathrm{init}}$ bounded by $B, {\mathbf{c}}'$, there is a representative $G_N$ of $\psi_N(\boldsymbol{\phi}_N)$ with pole divisor $D_N^{(0)}+ND$ such that $(G_N, Ng_D)$ satisfies the boundary dominance condition on $E$ and that the number of zeros of $G_N$ in $E'$ is at least $N(\mu(E')-\varepsilon)$ for each connected component $E'$ of $E$.
\end{proposition}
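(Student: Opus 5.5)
This is the local patching step of Rumely's Fekete--Szeg\H{o} construction (\cite[Chapters 7--10]{Rum13}), with the equilibrium measure replaced by the curvature measure $\mu=c_1(g_D)$. The plan is to build $G_N$ as a product of two factors: one approximating $\exp(Ng_D)$ on $E$, and one approximating the ``Fekete--Szeg\H{o}--type'' function for $E$ with poles on $X$ weighted by $\mathbf{s}$.

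The first step builds the reference functions. For the $X$-part, I would use Rumely's local construction for the pair $(E,\mathbf{s})$. It gives a function $F_N$ with pole divisor $D_N^{(0)}$ such that $\log|F_N|$ approximates $N\sum_y s_y\deg(y) g_{E,y}$ on $E$. Its leading coefficients at $x\in X$ are $Ns_xc'_x$, with $c'_{x,y}$ within $\varepsilon$ of $c_{x,y}$.

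In the archimedean case I would first replace $E$ by a slightly shrunken simple set, using Corollary~\ref{cor:arc-mea} and Corollary~\ref{cor:gre-com-exh}. This is what gives the freedom to choose $c'_{x,x}$ in a subinterval. The subsequence $\kappa'_i$ of factorials serves to make the degrees divisible enough for the Chebyshev-type pieces on arcs.

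For the $D$-part, I would discretize $\mu$. In the archimedean case, $\mu$ is supported on the $\mathbb R$-interior, so I would approximate it by $\frac1N$ times a sum of $N$ points spread according to $\mu$ inside the $\mathbb R$-interior. In the non-archimedean case, $\mu$ is a finite combination of type~II Dirac masses, so I would place $N\mu(\{\zeta\})$ zeros in $\operatorname{red}^{-1}$ of a regular point of the component at each such $\zeta$. This produces a function $P_N$ with pole divisor $ND$ and zeros in $E$ such that $\log|P_N|\approx Ng_D$ away from the zeros, with $\log|P_N(x)|$ normalized as $Na'_x$ at $X$. Setting $\phi_N^{\mathrm{init}}$ to be the leading part of $F_NP_N$ at $D_N^{(0)}$ gives the normalization $\log|\phi_N^{\mathrm{init}}|_{x,Ns_x}=Na'_x+Ns_xc'_x$. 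For the $n$-th power requirement, I would construct everything at level $N/n$ and raise to the $n$-th power; this is why $N_1$ depends on $n$.

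The second step is patching. Using Riemann--Roch along the band filtration, each layer $V_N^{(j)}$ is a sum of residue fields $K(x)$. I would define $\psi_N$ by adding to the current function the correction $\phi_N^{(j)}$ times a fixed basis function of $H^0(D_N^{(j)})$ with prescribed leading term at $x$ and small sup-norm on $E$. Such basis functions come from $F_{N'}$-type functions times a bounded-degree correction, using the flags at multiples $N'$ of $N_1$. The bound $|\phi_N^{(j)}|\le Be^{b_x-(Ns_x-d)c'_x}$ is exactly designed so that the resulting perturbation on $E$ is $\le B'e^{-\text{(gap)}}$ times $|F_NP_N|$. Summing the geometric series shows the total perturbation is dominated by $|F_NP_N|$ on $\partial E'$.

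The third step verifies boundary dominance and counts zeros. In the non-archimedean case, Proposition~\ref{prop:rou-thm} shows that the zero count in each component is unchanged under the perturbation. In the complex case, the classical Rouch\'e theorem does the same. The zero count in $E'$ is then at least $N(\mu(E')-\varepsilon)$, coming from $P_N$ together with $F_N$'s zeros.

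The real interval case is the main obstacle. There, boundary dominance demands oscillation: on each interval, the product must alternate in sign between consecutive zeros while its magnitude stays above $e^{Ng_D}$. I would handle this as Rumely does, placing the zeros of $P_N$ on the interval at Chebyshev-like nodes. The patching increments must then be made real and smaller than the local extremal values between nodes; the constant $B$ is chosen arbitrarily large but compensated by taking $N$ along the subsequence $\kappa'_i$. Controlling these extremal values uniformly, with $g_D$ varying along the interval, is where I expect most of the work.
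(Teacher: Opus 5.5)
There is a genuine gap in the archimedean patching step. Your non-archimedean outline is essentially the paper's: there $B<1/(M_1M_2)$ is small, every correction is strictly dominated on $\partial E$, and Proposition~\ref{prop:rou-thm} applies. The archimedean case is different. The statement requires $B$ to be arbitrarily large there, because Proposition~\ref{prop:pat-con} needs this to reach the constant $A_x$ of Lemma~\ref{lem:apr-int}. In the top layers of the band filtration there is then no exponential gap to sum.

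Already for $C=\mathbb P^1$, $X=\{\infty\}$, $E=\{|z|\le R\}$, $s_\infty=1$, and even without the $D$-factor, one has $c'_\infty\approx-\log R$. So $\phi_N^{(1)}$ may be $\beta z^{N-1}$ with $|\beta|$ up to $BR$. Then $z^N+\beta z^{N-1}=z^{N-1}(z+\beta)$ has a zero outside $E$ as soon as $B>1$, for every $N$. Neither a geometric series nor passing to the subsequence $\kappa'_i$ rescues an additive scheme. Citing Rumely for $F_N$ alone does not help either, since your $\psi_N$ patches the product $F_NP_N$ additively.

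The missing idea is Rumely's Chebyshev nesting, implemented in Proposition~\ref{prop:arc-mul-fun}. The top $\approx k_1d\ell_x$ orders at each $x$ (layers $1\le i<i_N$) are inserted inside the argument, $G_N=f\,S_{k_2,2^{k_1}}(Q_{k_1}+Q')$. Since $S_{k_2,r}$ is monic, a perturbation $Q'$ changes those layers by $k_2Q^{k-k_1}Q'$, so the required coefficients are divided by $k_2$. The bound $|Q'|\le 2^{k_1}$ on $E$ then preserves boundary dominance, including oscillation on real intervals, by the lemma preceding Proposition~\ref{prop:arc-mul-fun}. Only the layers $i\ge i_N$ are handled by a geometric series, via the $R_{x,j}$ with decay $(\sqrt{2})^{-k_1}$. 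So large $B$ is compensated by taking $k_1,k_2$ large, not by the subsequence.

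Second, the freedom in $c'_{x,x}$ and the exact normalization do not come from shrinking $E$. Replacing $E$ by a simple set in its $\mathbb R$-interior (Corollary~\ref{cor:gre-com-exh}) moves the whole Green matrix, off-diagonal entries included, and every later choice would have to be redone. But the statement needs each $c'_{x,x}$ to be choosable in an interval after $B$, $N_1$ and $\kappa'_i$ are fixed, and Proposition~\ref{prop:pat-con} indeed readjusts $c'_{v,x,x}$ depending on $N$. In the paper, shrinking only places the equilibrium measures in the $\mathbb R$-interior. The freedom comes from three ingredients:
\begin{enumerate}
\item the perturbation $P_x=f_x+\sum_j e_{x,j}h_{x,j}$ with $h_{x,j}\in H^0(dx-\sum_{y\neq x}y)$, which moves the leading coefficient of $P_x$ at $x$ in an open set, leaves $P_x(y)$ unchanged for $y\neq x$, and keeps $(P_x,\log 4)$ boundary dominant;
\item Lemmas~\ref{lem:pow-c} and~\ref{lem:pow-r}, which realize the target leading coefficient as an $l$-th power;
\item the uniformity of $N_0$ in Proposition~\ref{prop:arc-mul-fun} when $\max|c_x|$ is bounded.
\end{enumerate}

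The same device makes $\log|\phi_N^{\mathrm{init}}|_{x,Ns_x}=Na'_x+Ns_xc'_x$ hold exactly at all $x\in X$ with $a'_x$ independent of $N$. A single scalar normalization of $P_N$ cannot fix all the values $\log|P_N(x)|$ simultaneously. The paper instead takes $a'_x$ to be a limit point of $d'^{-1}\log|f(x)|$ along $\kappa_i$, and absorbs the discrepancy into the leading coefficient of $P_x$. That convergence, not divisibility (which the factorials already give), is why one passes to $\kappa'_i$.

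Finally, your discretization of $\mu$ tacitly assumes that a divisor of chosen points in $E$ is linearly equivalent to $ND$. In positive genus this needs Lemmas~\ref{lem:gen-jac-c}, \ref{lem:gen-jac-r} and~\ref{lem:rat-non-ope}. On real intervals the correcting divisor must itself be oscillating, which is exactly the content of Lemma~\ref{lem:gen-jac-r}.
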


\subsection{Archimedean case}
Let $K$ be ${\mathbb {R}}$ or ${\mathbb {C}}$, $C$ a projective, smooth and geometrically integral curve of genus $g$ over $K$, $x$ a point in $C^{\mathrm{an}}$ and $E$ a simple set in $C^{\mathrm{an}}\setminus\{x\}$.
\subsubsection{Rational functions satisfying the boundary dominance condition}
First, we study the generating property of the Jacobian.
\begin{lemma}\label{lem:uni-apr-com}
  Let $\{\mu_i\}_{i\in {\mathbb {Z}}_+}$ be a sequence of probability measures supported on $E$ which converges to a probability measure $\mu$ weakly.
  Let $\Gamma$ be a compact set in $C^{\mathrm{an}}$ which is disjoint from $E$ and $x$.
  Then $U_{\mu_i}$ converges to $U_\mu$ uniformly on $\Gamma$.
\end{lemma}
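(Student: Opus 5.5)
The plan is to write the potential as an integral of a kernel that is jointly continuous on the relevant domain, and then to upgrade weak convergence to uniform convergence by an equicontinuity (Arzel\`a--Ascoli type) argument. By definition,
\[
  U_{\mu_i}(y)=\int_E G_x(y,z)\,\mathrm{d}\mu_i(z),
\]
where $G_x(y,z)=\log d_x(y,z)$ is the potential kernel with pole normalized at $x$. Since $\Gamma$ is compact and disjoint from both $E$ and $x$, the kernel $G_x$ is finite and continuous on the compact set $\Gamma\times E$. Indeed, its only singularities occur on the diagonal, where there is a logarithmic pole, and where one argument equals $x$. The first step is to record this: there is a neighborhood of $\Gamma\times E$ on which $G_x$ is continuous. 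As a consequence, the family $\{z\mapsto G_x(y,z)\}_{y\in\Gamma}$ is uniformly bounded and equicontinuous on $E$, because a continuous function on a compact metric product space is uniformly continuous.

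The second step gives pointwise convergence. For each fixed $y\in\Gamma$, the function $z\mapsto G_x(y,z)$ is continuous on $E$, so weak convergence $\mu_i\to\mu$ yields $U_{\mu_i}(y)\to U_\mu(y)$. In the real case the potentials are pulled back from $C(\mathbb{C})$, and the same argument applies after pushing forward.

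The third step upgrades this to uniform convergence. Fix $\varepsilon>0$. By uniform continuity of $G_x$ on $\Gamma\times E$, there is $\delta>0$ such that $|G_x(y,z)-G_x(y',z)|<\varepsilon$ for all $z\in E$ whenever $\rho(y,y')<\delta$. Cover $\Gamma$ by finitely many balls $B(y_k,\delta)$ and choose $i_0$ so that $|U_{\mu_i}(y_k)-U_\mu(y_k)|<\varepsilon$ for all $k$ and all $i\geq i_0$. For an arbitrary $y\in\Gamma$, pick $k$ with $\rho(y,y_k)<\delta$. Since all the measures involved are probability measures, each of $|U_{\mu_i}(y)-U_{\mu_i}(y_k)|$ and $|U_\mu(y)-U_\mu(y_k)|$ is less than $\varepsilon$, and hence $|U_{\mu_i}(y)-U_\mu(y)|<3\varepsilon$ uniformly in $y\in\Gamma$.

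The only point needing care is the first step, namely the joint continuity of $G_x$ away from the diagonal and from $x$. This follows from the decomposition $d=\rho e^{\eta}$ with $\eta$ continuous on $C(\mathbb{C})\times C(\mathbb{C})$ and $\rho$ a metric on $C(\mathbb{C})\setminus\{x\}$. On $\Gamma\times E$ we have $\rho(y,z)\geq\operatorname{dist}_\rho(\Gamma,E)>0$, and this distance is also bounded above on the compact set, so $\log\rho$ is continuous and bounded there. I expect this to be routine rather than a genuine obstacle.
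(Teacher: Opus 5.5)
Your proof is correct and takes essentially the same route as the paper. Both arguments use continuity of $G_x$ on the compact set $E\times\Gamma$ to get pointwise convergence from weak convergence, and uniform equicontinuity of the potentials on $\Gamma$. The only difference is that you write out the finite-cover argument explicitly, where the paper simply cites the Arzel\`a--Ascoli theorem.
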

\begin{proof}
  Consider the continuous function
  \begin{align*}
    f:E\times \Gamma&\longrightarrow{\mathbb {R}}\\
    (x_1,x_2)&\longmapsto G_x(x_1,x_2).
  \end{align*}
  We have
  $$
    U_{\mu_i}(x_2)=\int_{E}f(x_1,x_2)\,\mathrm{d}\mu_i(x_1).
  $$
  Since $f$ is continuous, we get that $U_{\mu_i}$ converges pointwise to $U_\mu$ on $\Gamma$.
  Also, since $E$ is compact, the family of functions $\{f(x_1, \cdot)\}_{x_1\in E}$ is equicontinuous.
  Thus, the family of functions $\{U_{\mu_i}\}_{i\in {\mathbb {Z}}_+}$ is also equicontinuous.
  By the Arzel\`a--Ascoli theorem, since $\Gamma$ is compact, $U_{\mu_i}$ converges to $U_\mu$ uniformly on $\Gamma$.
\end{proof}
\begin{lemma}\label{lem:gen-jac-c}
  Let $U$ be a non-empty open set in $C({\mathbb {C}})$. Then there exists an integer $n$ such that for every divisor $D$ on $C_{\mathbb {C}}$ of degree $n$, there exists an effective divisor $D'$ supported on $U$ rationally equivalent to $D$.
\end{lemma}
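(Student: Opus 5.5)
The plan is to use the Abel--Jacobi map and the fact that the Jacobian is a compact complex torus. Let $g$ be the genus of $C$. If $g=0$, every divisor of degree $n\geq 0$ is linearly equivalent to $n$ copies of any point of $U$, so $n=0$ works; assume from now on that $g\geq 1$.

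Fix a base point $p\in U$ and consider the Abel--Jacobi map $u\colon C(\mathbb C)\to J=\operatorname{Pic}^0(C_{\mathbb C})$, $z\mapsto [z-p]$. For each $k\geq 1$ we then have the summation map
\[
  u_k\colon U^k\to J,\qquad (z_1,\dots,z_k)\mapsto \Bigl[\textstyle\sum_i z_i-kp\Bigr].
\]
\textbf{Step 1.} I will show that for $k=g$ the image $u_g(U^g)$ contains a non-empty open subset $W\subset J$. The differential of $u_g$ at $(z_1,\dots,z_g)$ is given by evaluating a basis $\omega_1,\dots,\omega_g$ of holomorphic differentials at the points $z_i$. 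By the standard Brill--Noether/Jacobi inversion argument, this differential is invertible as soon as the $z_i$ are distinct and no non-zero holomorphic differential vanishes at all of them. That condition holds for a generic $g$-tuple, and $U$ is a non-empty open set, so such a tuple can be chosen inside $U^g$: pick $z_1$ with some $\omega$ not vanishing there, then successively pick $z_{i+1}$ outside the common zeros of the differentials vanishing at $z_1,\dots,z_i$, each time avoiding a finite set. The inverse function theorem then gives the open set $W$.

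\textbf{Step 2.} Next I use compactness of $J$ to get a uniform bound. Choose $w\in W$; then $W-w$ is an open neighbourhood of $0$ in the compact group $J$. The subsets $m(W-w)$, for $m\geq 1$, form an increasing family of open sets, namely the iterated sums; they are open subgroups-in-the-limit and their union is all of $J$, since $J$ is a connected group generated by any neighbourhood of $0$. By compactness, $J=M(W-w)$ for some $M$. Hence every class in $J$ can be written as a sum of $M$ elements of $W$ minus $Mw$. Since $w$ itself is $[\sum z_i^0-gp]$ for some $z_i^0\in U$, it follows that every class in $J$ has the form
\[
  \Bigl[\sum_{i=1}^{n} z_i-(M+1)g\,p-\text{(fixed effective divisor on }U\text{ of degree }gM)\Bigr]
\]
Put more cleanly: with $n_0=Mg$, the map $u_{n_0}$ restricted to $U^{n_0}$ is surjective onto $J$ up to translation by a fixed class $c$ represented by an effective divisor of degree $Mg$ supported on $U$.

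\textbf{Step 3.} Finally, let $n=(2M)g$ and let $D$ be any divisor of degree $n$. Then $D-np$ lies in $J$, and by Step 2 we can write $D-np\sim \sum_{i=1}^{Mg} z_i - Mg\,p - c'$, where $c'$ is a fixed degree-zero class. Adding the fixed effective representative of $c'+Mg\,p$ supported on $U$ gives an effective divisor supported on $U$ of degree $n$ that is linearly equivalent to $D$. An easier bookkeeping is simply to say that $u_n(U^n)=J$ for $n=2Mg$: since $u_{Mg}(U^{Mg})\supseteq M W$, and $MW$ is a translate of $M(W-w)=J$, the image $u_{Mg}(U^{Mg})$ is already all of $J$. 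So in fact $n=Mg$ suffices, and then $D\sim np+(D-np)$ is represented by $\sum z_i$ with $z_i\in U$.

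I expect the only real content to be Step 1, the local surjectivity of $u_g$ at a point of $U^g$. This is classical, via the Jacobi inversion theorem and the independence of $(\omega_j(z_i))$ at general points. The compactness argument is routine.
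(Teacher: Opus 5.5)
Your proof is correct and takes essentially the same route as the paper. The paper also uses a smooth point of the Abel--Jacobi map $C^g\to J_C$ inside $U^g$ to get a non-empty open set $\alpha(U')\subset J_C$, then writes $J_C=\mathbb R^{2g}/\Lambda$ and observes that $m\alpha(U')$ covers a fundamental domain for $m\gg0$, which gives $n=gm$.

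Your explicit choice of $z_1,\dots,z_g$ via vanishing of differentials supplies the non-emptiness of that smooth locus, which the paper only asserts. The lattice picture is the cleanest justification for your Step 2 claim that the positive sumsets $m(W-w)$ exhaust $J$: connectedness alone only gives generation as a group, not as a semigroup. The garbled first bookkeeping in Step 3 can be dropped in favour of your final observation $u_{Mg}(U^{Mg})=J$.
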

\begin{proof}
  For simplicity of notation, we assume that $K={\mathbb {C}}$.
  If $g=0$, there is nothing to prove.
  Assume that $g>0$.

  Let $J_C$ be the Jacobian of $C$.
  Then $J_C={\mathbb {R}}^{2g}/\Lambda$ as a Lie group where $\Lambda$ is a full rank lattice in ${\mathbb {R}}^{2g}$.
  Let $\alpha: C^g\longrightarrow J_C$ be the map $(x_1, x_2, \ldots, x_g)\longmapsto (x_1+x_2+\cdots+x_g-gx)$.
  Let $U'$ be the smooth locus of $\alpha$ in $U^g$.
  Then $\alpha(U')$ is a non-empty open set in $J_C$.
  For $m$ sufficiently large, $m\alpha(U')$ covers a fundamental domain of $\Lambda$, and is thus all of $J_C$.
  The lemma follows by taking $n=gm$.
\end{proof}

For the next lemma, assume $K=\mathbb R$ and let $H$ be a closed interval in $C(\mathbb R)\setminus\{x\}$.
By \cite[Proposition 3.11(A)(2)]{Rum13}, there is a continuous bijection $h:I=[0,1]\longrightarrow H$ and a continuous function $\eta: I^2\longrightarrow {\mathbb {R}}$ such that $G_x(h(a), h(b))=\log\left|a-b\right|+\eta(a,b)$ for all distinct $a,b\in I$.
Let $B$ be the supremum of $\left|\eta\right|$.
For a continuous function $g$ on $H$, an upper semicontinuous function $g': H\longrightarrow {\mathbb {R}}\bigcup\{-\infty\}$ is said to be \emph{$g$-oscillating} if every connected component of the open set defined by $g'-g<0$ in $H$ contains at most $1$ point $z\in H$ such that either $g'(z)=-\infty$ or $z$ is an endpoint of $H$.
\begin{lemma}\label{lem:gen-jac-r}
  There exist an integer $n$ and a real number $M$, such that for every divisor $D$ of degree $n$, there is an effective divisor $D'$ which is the sum of distinct points on $H$ and is rationally equivalent to $2D$ such that $U_{\delta_{D'}}$ is $M$-oscillating on $H$.
  Here, $M$ is viewed as the constant function on $H$.
\end{lemma}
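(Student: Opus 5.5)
We imitate the proof of Lemma \ref{lem:gen-jac-c}, now working on the real locus: the $n$ points are chosen in $H$ instead of in an open subset of $C(\mathbb C)$, and we use the factor $2$ to absorb the fact that real points only reach a real subtorus of the Jacobian. Let $J_C(\mathbb R)^0$ be the identity component of the real points of the Jacobian; it is a compact real torus of dimension $g$. For $D$ of degree $n$ defined over $\mathbb R$, the class $2D-2n\cdot x$ (after fixing the base point $x\in C(\mathbb R)$) lies in $2J_C(\mathbb R)\subseteq J_C(\mathbb R)^0$. This is why we double $D$: the component group of $J_C(\mathbb R)$ is $2$-torsion. So it suffices to show that every point of $J_C(\mathbb R)^0$ is of the form $\sum_{k} (z_k - x)$ with $z_k$ distinct points of $H$ and with a fixed number of summands.

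First I would consider the map $\alpha:H^g\to J_C(\mathbb R)^0$, $(z_1,\dots,z_g)\mapsto\sum(z_i-x)$. Its differential is given by evaluating a basis of real holomorphic differentials at the $z_i$. Since the differentials do not vanish identically on the real-analytic arc $H$, the differential is invertible for generic tuples of distinct points. Hence the image of the distinct-points locus contains a nonempty open subset $W$ of the torus.

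As in Lemma \ref{lem:gen-jac-c}, $mW$ covers $J_C(\mathbb R)^0$ for $m$ large, which writes every class as a sum of $mg$ points of $H$. To make the points distinct, I would not take $m$ copies of one tuple. Instead I would use the openness of $W$ to perturb each block slightly, so that all $mg$ points are distinct. This is possible because the submersion property gives a local section near any target, and we may choose the $m$ blocks in disjoint subintervals of $H$. Working near a finite cover of the compact torus then makes $n$ (roughly $mg/2$, adjusted so that $2n$ is the number of points) uniform in $D$.

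The remaining and main obstacle is the oscillation constant $M$ uniform in $D$. Using the parametrization $h$ with $G_x(h(a),h(b))=\log|a-b|+\eta(a,b)$ and $|\eta|\le B$, we get
\[
U_{\delta_{D'}}(h(t))=\sum_k\log|t-t_k|+O(nB).
\]
I would arrange the points so that, within each block, they stay in a compact family separated by a fixed minimal gap $\gamma>0$. Compactness of the set of tuples realizing a compact cover of the torus allows this choice. On the region where $\log$ of the distance to the nearest zero exceeds $\log(\gamma/3)$, the potential is then bounded below by $n\log(\gamma/3)-nB$, say. So with $M$ equal to this lower bound, the set $\{U_{\delta_{D'}}<M\}$ lies in the union of the $\gamma/3$-neighbourhoods of the zeros. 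Each component of that union contains at most one zero, and the endpoints of $H$ can be handled by keeping the zeros away from them. This gives the $M$-oscillation. The delicate point is checking that the separation $\gamma$ can be chosen uniformly over all classes via the compactness argument; that is where I expect most of the work.
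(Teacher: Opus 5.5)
Your route is correct in outline but differs from the paper's. The paper never writes a class directly as a sum of many distinct points of $H$.

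\emph{The paper's route.} It keeps only $g$ free points, in the rigid box $\prod_{i=1}^g\bigl(\tfrac{4i-3}{8m},\tfrac{4i-1}{8m}\bigr)$, plus $2m-g$ fixed points $h\bigl(\tfrac{2i-1}{4m}\bigr)$. The linear equivalence is absorbed into an effective divisor $D_2$ supported on an auxiliary open set $W$ disjoint from $H$, using Lemma~\ref{lem:gen-jac-c}. This gives a real rational function $f$ with zeros $D_3$ on $H$, poles $D_2$ on $W$, and $\log|f|\geq M_1$ at the separating points $h(i/2m)$. Multiplication by $m'$ in the torus is then realized by composing with a Chebyshev polynomial, $f_1=T_{m'}(e^{-M_1}f)/|T_{m'}(e^{-M_1}f(x))|$. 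Its zero divisor $D'\sim m'D_3\sim 2D$ automatically consists of $2n$ distinct real points, with the explicit constant $M=-m'\log(2e^{-M_1}+2)-2nM'$. Distinctness, separation and uniformity in $D$ therefore come from the fixed box geometry and the equioscillation of $T_{m'}$, with no compactness argument over the torus.

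\emph{Your route.} You use Minkowski sums of Abel--Jacobi images, a genericity argument, and submersion plus compactness. Then you apply the elementary bound $U_{\delta_{D'}}\geq 2n(\log(\gamma/3)-B)$ at points at distance at least $\gamma/3$ from all zeros. This avoids both the auxiliary complex set and Chebyshev polynomials, at the cost of non-explicit constants.

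Two points need fixing when you write it out.

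First, drop the remark that the $m$ blocks may be placed in pairwise disjoint subintervals of $H$. For a fixed choice of such subintervals $I_1,\dots,I_m$, the lifted sum $\sum_k\tilde\alpha(z_k)$ ranges over a set of diameter at most $gL\sum_j|I_j|\leq gL|H|$, where $L$ is a Lipschitz constant of the lifted Abel--Jacobi map. This bound is independent of $m$, so the sums need not cover $J_C(\mathbb R)^0$. Instead, make all blocks perturbations of one base tuple $z^0$ with $d\alpha(z^0)$ invertible. Then the $i$-th points of different blocks interleave near $z^0_i$, and distinctness follows from a generic choice of summands $w_j\in W$ with prescribed sum.

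Second, you need separation among all $2n$ points, not just within each block. Every chosen configuration is a regular point of the full sum map $H^{2n}\to J_C(\mathbb R)^0$, since one block alone already has invertible differential. By the implicit function theorem, all nearby targets are then realized by nearby configurations whose minimal gap, and distance to $\partial H$, is at least half the original. Finitely many such neighbourhoods cover the compact torus, which gives a uniform $\gamma$.

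Also take the base point in $H$ rather than the pole $x$, which need not be real. With these adjustments the argument goes through.
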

\begin{proof}
  Fix a point $y\in H$. Let $W$ be an open set in $C^{\mathrm{an}}$ whose closure is disjoint from $x$ and $H$.
  By Lemma \ref{lem:gen-jac-c}, there is a number $m$, such that for every divisor $D$ of degree $m$, $2D$ is rationally equivalent to an effective divisor supported on $W$.
  Then $m\geq g$.

  Consider the space
  $$
    V:=\prod\limits_{i=1}^{g} \left(\frac{4i-3}{8m},\frac{4i-1}{8m}\right).
  $$
  Let $\alpha: h(V)\longrightarrow J_C$ be the map $\alpha(x_1, x_2, \ldots, x_{g})=(x_1+x_2+\cdots+x_{2m}-2my)$ where $x_i=h((2i-1)/4m)$ for $i>g$.
  Let $V'\subset V$ be the inverse image under $h$ of the smooth locus of $\alpha$ in $h(V)$.
  Then $\alpha(h(V'))$ is open in $J_C({\mathbb {R}})$.
  Note that the identity component of $J_C({\mathbb {R}})$ is a real torus of dimension $g$, and the connected components of $J_C({\mathbb {R}})$ form a finite abelian group of exponent $2$.
  Let $m'$ be an even integer such that $m'\alpha(h(V'))$ is the whole identity component of $J_C({\mathbb {R}})$.
  Let $n=mm'$.

  On $H\times \overline{W}$, $G_x$ is continuous, and hence bounded.
  Thus, $U_\mu$ is bounded on $H$ for all probability measures $\mu$ supported on $\overline{W}$ by a constant $M'$ independent of the choice of $\mu$.
  For any divisor $D$ of degree $n$, $2D-2ny$ lies in the identity component of $J_C$.
  Thus, $2D-2ny=m'D_1$ for some $D_1\in \alpha(h(V'))$.
  Since $D_1\in J_C({\mathbb R})^0$ and multiplication by $2$ on this real torus is surjective, choose a real divisor $A$ of degree $m$ with $2A\sim D_1+2my$. By the choice of $m$, there is an effective divisor $D_2$ supported on $W$ with $D_2\sim 2A\sim D_1+2my$.
  Let $(x_1, x_2, \ldots, x_g)\in \alpha^{-1}(D_1)$ be an element in $h(V')$.
  Set $D_3=x_1+x_2+\cdots+x_{2m}$ where $x_i=h((2i-1)/4m)$ for $i>g$.
  Since $D_3$ is linearly equivalent to $D_2$, let $f$ be a rational function with ${\mathrm{div}}(f)=D_3-D_2$, normalized such that $\left|f(x)\right|=1$.
  Then $\log\left|f\right|=U_{\delta_{D_3}}-U_{\delta_{D_2}}$.

  Let $r=i/(2m)$ for some integer $0\leq i\leq 2m$.
  Then $G_x(h(r),z)\geq -\log8-\log m-B$ for all $z\in{\mathrm{supp}}(D_3)$.
  Thus, $U_{\delta_{D_3}}(h(r))\geq 2m(-\log8-\log m-B)$.
  Also, $U_{\delta_{D_2}}(h(r))\leq 2mM'$.
  Therefore, $\log\left|f(h(r))\right|\geq 2m(-\log8-\log m-B)-2mM'$.
  Let $M_1=\min\{-1, 2m(-\log8-\log m-B)-2mM'\}$.

  Let $T_i$ be the $i$th Chebyshev polynomial of the first kind, that is, $\cos\circ(i \arccos)$.
  Let $f_1(z)=T_{m'}(e^{-M_1}f(z))/\left|T_{m'}(e^{-M_1}f(x))\right|$.
  Then $\left|f_1(x)\right|=1$.
  Note that $\left|T_{m'}(e^{-M_1}f(x))\right|\leq (2e^{-M_1}+2)^{m'}$.
  Since $f$ is $M_1$-oscillating on $H$, $f_1$ is $-m'\log (2e^{-M_1}+2)$-oscillating.
  Let $D'$ be the zero divisor of $f_1$.
  Then $U_{D'}=m'U_{D_2}+\log\left|f_1\right|$.
  Since $m'U_{D_2}(z)\geq -2nM'$ for all $z\in H$, $U_{D'}$ is $-m'\log (2e^{-M_1}+2)-2nM'$-oscillating.
\end{proof}

Let $\mu$ be a probability measure supported in the interior of $H$ relative to $C({\mathbb {R}})$.
Set $\nu=h^{-1}_*(\mu)$.
Fix $c<1$, such that $\nu$ is supported on $[0,c]$.
Let $r_i=\inf_{\nu([0,r])\geq i/n}r$ for every $1\leq i\leq n$.
Let $\{s_i\}_{1\leq i\leq n}$ be the smallest strictly increasing sequence of integers such that $s_i\geq m r_i$.
For positive integers $m, n$ and real numbers $0<\alpha, \alpha_1, \alpha_2<1/2$ with $\alpha_1<\alpha_2$, set $\delta'_{m, n, \alpha}=\sum_{i=1}^{n}\delta_{(s_i+\alpha)/m}/n$, and set $\nu_{m,n,\alpha_1,\alpha_2}=\sum_{i=1}^{n}m\mu^{Leb}_{[(s_i+\alpha_1)/m, (s_i+\alpha_2)/m]}/(n(\alpha_2-\alpha_1))$ where $\mu^{Leb}_{[a, b]}$ is the Lebesgue measure on $[a, b]$.
If $n\leq (1-c)m$, then both $\delta'_{m, n, \alpha}$ and $\nu_{m,n,\alpha_1,\alpha_2}$ are supported on $[0,1]$.  Set $\delta_{m, n, \alpha}=h_*\delta'_{m, n, \alpha}$ and $\mu_{m,n,\alpha_1,\alpha_2}=h_{*}\nu_{m,n,\alpha_1,\alpha_2}$.

For every $d\in (0,1]$, set $B_d=\sup_{i_1, i_2, i_3\in I, \left|i_1-i_2\right|\leq d}\left(\eta(i_1,i_3)-\eta(i_2,i_3)\right)$.
Let $B$ be the supremum of $\left|\eta\right|$.
As $\eta$ is continuous on $I^2$, $B_d$ tends to zero as $d$ tends to $0$.
\begin{lemma}\label{lem:apr-dir-int}
  Assume that $U_\mu$ is bounded below by a real number $L$, and that $n\leq (1-c)m$.
  Fix $\alpha<1/2$.
  Then for every real number $0<r<1/2$ and every $0\leq s\leq 1$ such that $\left|s-(s_i+\alpha)/m\right|\geq r/m$ for every $1\leq i\leq n$, the following inequality holds:
  \begin{align*}
    U_{\delta_{m, n, \alpha}}(h(s))&\geq U_\mu(h(s))-\frac{B_1}{\sqrt{n}}-B_{1/\sqrt{n}}-B_t+\frac5n\left(\log r-\log m\right)+\log\left(1-\sqrt{t}\right)\\
    &-\frac{2(L-B-\frac1n\log m)\log (n+\tfrac32) }{\log t}
  \end{align*}
  where $t=(n+\tfrac12)/m$.
\end{lemma}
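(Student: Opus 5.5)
Writing $y_i:=(s_i+\alpha)/m$, the plan is to compare $U_{\delta_{m,n,\alpha}}(h(s))=\frac1n\sum_i G_x(h(y_i),h(s))$ with $U_\mu(h(s))=\int G_x(h(a),h(s))\,\mathrm d\nu(a)$ by pulling everything back to $I=[0,1]$. We use $G_x(h(a),h(b))=\log|a-b|+\eta(a,b)$ and split the kernel into its $\log|\cdot|$ part and its $\eta$ part. We partition $[0,1]$ (up to null sets) into the quantile blocks $J_i=(r_{i-1},r_i]$, $r_0=0$, each of $\nu$-mass $1/n$. We then compare the Dirac mass $1/n$ at $y_i$ with $\nu|_{J_i}$ block by block.

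\textbf{The $\eta$ part.} Since $s_i$ is the smallest strictly increasing integer sequence with $s_i\ge mr_i$, we have $s_i\le mr_i+i$. Hence $|y_i-r_i|\le (i+1)/m$, and $y_i$ sits roughly at $r_i$ up to a drift of order $i/m\le n/m\approx t$. On blocks $J_i$ of length $\le 1/\sqrt n$, and with the drift $\le t$, the contribution is bounded by $B_{1/\sqrt n}+B_t$. The at most $\sqrt n$ "long" blocks (total length $\le1$, so fewer than $\sqrt n$ of length $>1/\sqrt n$) carry total mass $\le 1/\sqrt n$. Each of them costs at most $B_1$, giving the term $B_1/\sqrt n$.

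\textbf{The logarithmic part.} This is the main obstacle. We must show that $\frac1n\sum_i\log|y_i-s|\ge\int\log|a-s|\,\mathrm d\nu(a)-(\text{error})$. The approach is to use monotonicity of $\log|a-s|$ in $|a-s|$.

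For blocks lying entirely on one side of $s$ and far from it, compare $\log|y_i-s|$ with the value at the endpoint of $J_i$ nearer to $s$, shifted by the drift. The drift $s_i/m-r_i$ is nonnegative and grows, so on the left of $s$ the points move toward $s$. There we use the gap condition $|s-y_i|\ge r/m$ together with a ratio estimate $\log(1-\sqrt t)$. This estimate handles blocks at distance $\ge\sqrt t$ from $s$, where the relative shift is at most $\sqrt t$.

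The at most five (or $O(1)$) blocks adjacent to $s$ are bounded crudely by $\frac1n(\log r-\log m)$ each, using $|y_i-s|\ge r/m$. This yields the term $\frac5n(\log r-\log m)$.

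The remaining blocks lie at distance between $r/m$ and $\sqrt t$ from $s$. Here $\nu$ may have mass concentrated near $s$, and we use the hypothesis $U_\mu\ge L$ to control how much $\nu$-mass sits near $s$. Since $U_\mu(h(s))\le \log(\text{radius})\,\nu(\text{ball})+B$, the mass of $\nu$ in the ball of radius $\rho$ about $s$ is at most $(L-B)/\log\rho$.

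We apply this over the dyadic-type scales between $t$ and $1/(n+\frac32)$. The $\frac1n\log m$ appears from the discrete rounding, and a logarithmic count of scales gives the factor $\log(n+\frac32)/\log t$. This yields the final term $-2(L-B-\frac1n\log m)\log(n+\frac32)/\log t$.

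\textbf{Assembly.} Summing the $\eta$-estimate and the logarithmic estimate gives the stated inequality. The delicate step is the near-$s$ mass bound and the precise bookkeeping of scales. I would first try to bound the near-$s$ contribution crudely by $\nu(\text{ball of radius }\sqrt t)\cdot\log(n+\tfrac32)$, using the $L$-bound at radius $t$, and then refine the constants.
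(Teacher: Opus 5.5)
Your handling of the $\eta$ part (short versus long quantile blocks, drift below $t$) matches the paper. So does your treatment of the far-left blocks via $|y_i-s|\ge(1-\sqrt t)\,|r_i-s|$. The logarithmic part, however, has a genuine gap to the left of $s$.

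There, comparing $\log|y_i-s|$ with the \emph{nearer} endpoint $r_i$ of $J_i=(r_{i-1},r_i]$ only gives $\frac1n\log|y_i-s|\ge\frac1n\log|r_i-s|+(\text{error})$. Monotonicity on $J_i$ gives $\int_{J_i}\log|a-s|\,\mathrm{d}\nu\ge\frac1n\log|r_i-s|$, which is the wrong direction. The per-block discrepancy can be as large as $\frac1n\log\frac{s-r_{i-1}}{s-r_i}$, and bounding these losses block by block costs up to $\log m$ in total, so no blockwise monotonicity argument closes.

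The missing idea is to bound $\int_{J_i}\log|a-s|\,\mathrm{d}\nu\le\frac1n\log|r_{i-1}-s|$ by the \emph{farther} endpoint and then telescope over $i\le n_1$. Here $n_1$ is the last index with $r_{n_1}\le s-1/m$. Since $s\le 1$, the sum collapses to $\frac1n\bigl(\log(s-r_{n_1})-\log s\bigr)\ge-\frac1n\log m$. Every summand $\frac1n(\log|r_i-s|-\log|r_{i-1}-s|)$ is $\le 0$, so you may drop exceptional indices without losing this bound.

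This telescoping, not ``discrete rounding'', is the source of the $\frac1n\log m$ terms. Combined with at most four exceptional indices, it produces $\frac5n(\log r-\log m)$. The exceptional indices are $n_1+1$, $n_1+2$, and at most two $i\le n_1$ with $|y_i-s|<1/m$, since the $y_i$ are $1/m$-separated. Note also that $s_i<mr_i+i$, so $0\le y_i-r_i<(i+\alpha)/m<t$. Your bound $(i+1)/m$ would only give $B_{(n+1)/m}$, not $B_t$.

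Your account of the intermediate range is also off. No count of dyadic scales enters, and the relevant range is $[1/m,\sqrt t\,]$, not ``between $t$ and $1/(n+\frac32)$''. In the paper, for intermediate $i$ with $|y_i-s|\ge 1/m$, one has $|r_i-s|\le(n+\frac32)|y_i-s|$. Since $\log|r_i-s|\le\frac12\log t<0$, this becomes $\log|y_i-s|\ge\bigl(1-2\log(n+\frac32)/\log t\bigr)\log|r_i-s|$. The same telescoping then gives $\sum_{i=n_2+1}^{n_1}\frac1n\log|r_i-s|\ge\int\log|a-s|\,\mathrm{d}\nu-\frac1n\log m\ge L-B-\frac1n\log m$, which is exactly the last term of the statement.

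Your crude fallback does work if you use radius $\sqrt t$ rather than $t$. The bound $U_\mu\ge L$ gives $\nu(\{|a-s|<\sqrt t\})\le 2(L-B)/\log t$. Hence there are at most $2n(L-B)/\log t+1$ intermediate blocks, each losing at most $\frac1n\log(n+\frac32)$. The extra $-\frac1n\log(n+\frac32)$ coming from the $+1$ is dominated by the paper's $-\frac1n\log m$ correction, because $2\log m/\log t<-2$. So the stated bound follows. But this route also only compares $\frac1n\sum\log|y_i-s|$ with $\frac1n\sum\log|r_i-s|$, and it still needs the telescoping step above to reach $\int\log|a-s|\,\mathrm{d}\nu$.
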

\begin{proof}
  Let $n_1$ be the greatest integer such that $r_{n_1}\leq s-1/m$.
  If such integers do not exist, set $n_1=0$.
  Then
  \begin{align*}
    \sum\limits_{i=1}^{n_1}\left(\frac1n\log\left|r_i-s\right|-\int_{r_{i-1}}^{ r_i}\log\left|\beta-s\right|d\nu(\beta)\right)
    &\geq \sum\limits_{i=1}^{n_1}\frac1n \left(\log\left|r_i-s\right|-\log\left|r_{i-1}-s\right|\right)\\
    &\geq -\frac1n\log m
  \end{align*}
  where $r_0$ is viewed as $0$.
  Let $n_2$ be the greatest integer such that $r_{n_2}\leq s-\sqrt{t}$.
  If such integers do not exist, set $n_2=0$.
  Then
  \begin{align*}
    \sum\limits_{i=n_2+1}^{n_1}\frac1n \log \left|r_i-s\right|&\geq \sum\limits_{i=1}^{n_1} \frac1n\log \left|r_i-s\right|\\
    &\geq \sum\limits_{i=1}^{n_1}\int_{r_{i-1}}^{ r_i}\log\left|r-s\right|d\nu(r) -\frac1n\log m\\
    &\geq \int_0^1 \log\left|r-s\right|d\nu(r) -\frac1n\log m\\
    &\geq \int_0^1 G_x(h(r) ,h(s))d\nu(r) - B -\frac1n\log m\\
    &\geq L-B-\frac1n\log m.
  \end{align*}

  Let $X$ be the set of all integers $i$ such that $n_2+1\leq i\leq n_1$ and that $\left|(s_i+\alpha)/m-s\right|\geq 1/m$.
  Take $i\in X$.
  Since $0\leq (s_i+\alpha)/m-r_i<t$ and $\left|(s_i+\alpha)/m-s\right|\geq 1/m$, we obtain $\log\left|(s_i+\alpha)/m-s\right|\geq \log\left|r_i-s\right|-\log (n+\tfrac32)$.
  On the other hand, for every $s-\sqrt{t}< r_i\leq s -1/m$, $\log\left|r_i-s\right|\leq \log t/2$.
  Thus, we have
  $$
    \log\left|(s_i+\alpha)/m-s\right|/\log\left|r_i-s\right|\leq 1-2\log (n+\tfrac32)/\log t.
  $$
  And we have
  \begin{align*}
    \sum\limits_{i\in X}\frac1n \log \left|\frac{s_i+\alpha}m -s\right|&\geq \left(1-2\frac{\log (n+\tfrac32)}{\log t}\right) \sum\limits_{i\in X}\frac1n \log \left|r_i-s\right|\\
    &\geq \sum\limits_{i\in X}\frac1n \log \left|r_i-s\right| -\frac{2 (L-B-\frac1n\log m)\log(n+\tfrac32)}{\log t}.
  \end{align*}
  Also, we have
  $$
    \sum\limits_{i=1}^{n_2}\frac1n \log \left|\frac{s_i+\alpha}m -s\right|\geq \sum\limits_{i=1}^{n_2}\frac1n \log \left|r_i -s\right|+\log\left(1-\sqrt{t}\right).
  $$

  For $i\geq n_1+3$,
  $$
    \frac1n\log\left|\frac{s_i+\alpha}m-s\right|-\int_{r_{i-1}}^{ r_i}\log\left|r-s\right|d\nu(r)\geq 0.
  $$
  For $n_2+1\leq i\leq n_1+2$ with $i\notin X$, which has at most $4$ choices,
  $$
    \frac1n\log\left|(s_i+\alpha)/m-s\right|-\int_{r_{i-1}}^{ r_i}\log\left|r-s\right|d\nu(r)\geq \frac1n\left(\log r-\log m\right).
  $$
  Thus, we have
  \begin{align*}
    \sum\limits_{i=1}^{n} \left(\frac1n\log\left|\frac{s_i+\alpha}m-s\right|-\int_{r_{i-1}}^{ r_i}\log\left|r-s\right|d\nu(r)\right)&\geq \frac5n\left(\log r-\log m\right)+\log\left(1-\sqrt{t}\right)\\
    &-\frac{2(L-B -\frac1n\log m)\log (n+\tfrac32)}{\log t}.
  \end{align*}

  Finally, since $G_x(h(r), h(s))-\log\left|r-s\right|=\eta(r,s)$, we get
  \begin{align*}
    \frac1n\left(\log|r_i-s|-G_x(h(r_i),h(s))\right)-\int_{r_{i-1}}^{ r_i}\log\left|r-s\right|d\nu(r)+\int_{r_{i-1}}^{r_i}G_x(h(r), h(s))d\nu(r)\\
    =-\frac1n\eta(r_i,s)+\int_{r_{i-1}}^{r_i}\eta(r, s)d\nu(r)\leq \frac {B_{r_i-r_{i-1}}}n,
  \end{align*}
  and
  \begin{align*}
    \log\left|(s_i+\alpha)/m-s\right|-G_x(h\left((s_i+\alpha)/m\right),h(s))-\log|r_i-s|+G_x(h(r_i),h(s))\\
    =-\eta((s_i+\alpha)/m,s)+\eta(r_i, s)\leq B_t.
  \end{align*}
  Since the number of $i$ such that $r_{i-1}\leq r_i-1/\sqrt{n}$ is less than or equal to $\sqrt{n}$, we obtain
  \begin{align*}
    U_{\delta_{m,n, \alpha}}\left(h(s)\right)- U_\mu\left(h(s)\right)&
    \geq \sum\limits_{i=1}^{n}
    \left(\frac1n\log\left|\frac{s_i+\alpha}m-s\right|-\int_{r_{i-1}}^{ r_i}\log\left|r-s\right|d\nu(r)\right)\\&\quad-\frac{B_1}{\sqrt{n}}-B_{1/\sqrt{n}}-B_t\\
    &\geq -\frac{B_1}{\sqrt{n}}-B_{1/\sqrt{n}}-B_t+\frac5n\left(\log r-\log m\right)+\log\left(1-\sqrt{t}\right)\\
    &-\frac{2(L-B-\frac1n\log m)\log (n+\tfrac32)}{\log t}.
  \end{align*}
\end{proof}

\begin{lemma}\label{lem:apr-mea-sup}
  Assume that $U_{\mu}$ is bounded below.
  Take $m=\lfloor n^{(\log n)^2}\rfloor$ where $\lfloor a\rfloor$ is the greatest integer smaller than or equal to $a$.
  Fix $0<\alpha_1<\alpha_2<1/2$. Then the following properties hold.
  First, the sequence of measures $\mu_{m,n,\alpha_1,\alpha_2}$ converges to $\mu$ weakly.
  Moreover, for each $\varepsilon>0$, there exists $n_0$ such that for each $n>n_0$, the potential $U_{\mu_{m,n,\alpha_1,\alpha_2}}\geq U_{\mu}-\varepsilon$.
\end{lemma}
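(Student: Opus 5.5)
The plan is to deduce both assertions from Lemma~\ref{lem:apr-dir-int} by comparing $\mu_{m,n,\alpha_1,\alpha_2}$ with the discrete measures $\delta_{m,n,\alpha}$, $\alpha\in[\alpha_1,\alpha_2]$. By construction,
\[
  \mu_{m,n,\alpha_1,\alpha_2}=\frac1{\alpha_2-\alpha_1}\int_{\alpha_1}^{\alpha_2}\delta_{m,n,\alpha}\,\mathrm d\alpha ,
\]
so $U_{\mu_{m,n,\alpha_1,\alpha_2}}$ is the average over $\alpha$ of $U_{\delta_{m,n,\alpha}}$. With $m=\lfloor n^{(\log n)^2}\rfloor$ we have $t=(n+\tfrac12)/m\to0$, hence $n\le(1-c)m$ for $n$ large, and $\log t\approx-(\log n)^3$.

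\emph{Weak convergence.} Every atom of $\delta'_{m,n,\alpha}$ lies within $t$ of the corresponding $r_i$: we have $0\le (s_i+\alpha)/m-r_i\le (i+1)/m\le t$, using that $s_i$ is the minimal strictly increasing sequence with $s_i\ge mr_i$. The empirical quantile measure $\frac1n\sum_i\delta_{r_i}$ converges weakly to $\nu$. Therefore $W_1(\nu_{m,n,\alpha_1,\alpha_2},\nu)\to0$ on $I$, and pushing forward by the continuous map $h$ gives $\mu_{m,n,\alpha_1,\alpha_2}\to\mu$ weakly.

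\emph{Potential bound, at points of $H$.} For $y=h(s)\in H$, integrate the inequality of Lemma~\ref{lem:apr-dir-int} over $\alpha\in[\alpha_1,\alpha_2]$. That lemma requires $|s-(s_i+\alpha)/m|\ge r/m$ for all $i$, and for fixed $s$ this fails only for $\alpha$ in a set of measure $O(r)$ around at most two indices. On this exceptional set the contribution is controlled by the integrability of $\log|\cdot|$: the singular terms for the one or two nearby $i$ contribute $\frac1n\int\log|\alpha-\alpha_0|\,\mathrm d\alpha/m$, which is bounded below by $\frac{C}{n}(-\log m)$. So I will take $r$ fixed, say $r=1/4$, and treat those finitely many terms separately by averaging, obtaining an error of order $\frac{\log m}{n}$. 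The remaining error terms are
\[
  \frac{B_1}{\sqrt n},\qquad B_{1/\sqrt n},\qquad B_t,\qquad \frac{\log m}{n}\approx\frac{(\log n)^3}{n},\qquad \log(1-\sqrt t),\qquad \frac{2(L-B-\frac1n\log m)\log(n+\tfrac32)}{\log t}\approx\frac{C\log n}{(\log n)^3}.
\]
Each of these tends to $0$, uniformly in $s$. This yields $U_{\mu_{m,n,\alpha_1,\alpha_2}}\ge U_\mu-\varepsilon$ on $H$.

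\emph{Extension off $H$.} Off $H$, the function $U_{\mu_{m,n,\alpha_1,\alpha_2}}-U_\mu$ is harmonic on $C^{\mathrm{an}}\setminus(H\cup\{x\})$ and vanishes at $x$ by the normalization. By the minimum principle, the lower bound on $H$ propagates to the whole curve. The only point needing care is that $U_\mu$ may be discontinuous; I would handle this by using lower semicontinuity together with the fact that the difference is bounded below near $H$, or alternatively by applying Lemma~\ref{lem:uni-apr-com} on compacts away from $H$.

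The main obstacle is making the bound uniform in $s$ near the atoms. That is, one must show the $\alpha$-averaged logarithmic singularity is harmless, with the choice of $r$ balanced against the $\frac5n(\log r-\log m)$ term.
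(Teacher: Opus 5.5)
Your overall route is the paper's. Weak convergence comes from the fact that every atom of $\delta'_{m,n,\alpha}$ lies within $t\to0$ of the quantile $r_i$; your chain should read $(s_i+\alpha)/m-r_i<(i+\alpha)/m<t$, because $(n+1)/m>t$. The bound on $H$ comes from averaging Lemma~\ref{lem:apr-dir-int} over $\alpha\in[\alpha_1,\alpha_2]$. The extension off $H$ comes from the minimum principle, and your lower-semicontinuity remark is exactly what is needed there. Your error bookkeeping is also right: $\log m\approx(\log n)^3$ sends both $\frac{\log m}{n}$ and $\frac{\log n}{\log t}$ to $0$.

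The step that does not work as written is your treatment of the exceptional $\alpha$. With $r=1/4$ fixed, the set of $\alpha$ violating $|ms-s_i-\alpha|\ge r$ for some $i$ contains an interval of length $2r=1/2$. Since $\alpha_2-\alpha_1<1/2$, this set can be all of $[\alpha_1,\alpha_2]$. On that set Lemma~\ref{lem:apr-dir-int} yields no inequality at all, so there is nothing to integrate. Splitting off the one or two singular atoms and bounding the remaining sum is not something the lemma's statement provides. You would have to reopen its proof, where $r$ enters only through at most five near terms bounded by $\frac1n(\log r-\log m)$, and redo it with those terms kept exact.

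The paper avoids this by noting that the lemma holds for every admissible $r$, so $r$ may depend on $\alpha$. Write $ms=m'+\beta$ and take $r=f(\beta,\alpha)=\min\{|\beta-\alpha|,|\beta-\alpha-1|\}$, the distance from $\beta-\alpha$ to $\mathbb Z$. This choice satisfies the hypothesis for every $i$, since the $s_i$ are integers. The only $r$-dependent error is then $\frac5n\log f(\beta,\alpha)$. Its average over $[\alpha_1,\alpha_2]$ is bounded below by $\frac{10}{n}\bigl(\log(\alpha_2-\alpha_1)-\log2-1\bigr)$, uniformly in $\beta$. This gives the bound on $H$ in one line and removes the uniformity issue you flagged as the main obstacle.
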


\begin{proof}
  Set $t=(n+\tfrac12)/m$. Since $\nu_{m,n,\alpha_1,\alpha_2}([a, b])\leq \nu([a-t, b]) +1/n$, and since $\nu([a, b])\leq \nu_{m,n,\alpha_1,\alpha_2}([a, b+t]) +1/n$, the sequence of measures $\nu_{m,n,\alpha_1,\alpha_2}$ converges weakly to $\nu$.
  Taking the push-forward by $h$, we get that the sequence of measures $\mu_{m,n,\alpha_1,\alpha_2}$ converges to $\mu$ weakly.

  For each $0\leq\beta<1$, and each $0<\alpha<1/2$, let $f(\beta, \alpha)=\min\{\left|\beta-\alpha\right|, \left|\beta-\alpha-1\right|\}$.
  Let $m'<m$ be a nonnegative integer.
  By Lemma \ref{lem:apr-dir-int},
  $$
    U_{\delta_{m, n, \alpha}}(h((m'+\beta)/m))\geq U_{\mu}(h((m'+\beta)/m))-A_n+\frac5n\log f(\beta, \alpha)
  $$
  where $A_n$ tends to $0$ as $n$ tends to $\infty$.
  Therefore, we have
  \begin{align*}
    U_{\mu_{m,n,\alpha_1,\alpha_2}}(h((m'+\beta)/m))&=\frac1{\alpha_2-\alpha_1}\int_{\alpha_1}^{\alpha_2} U_{\delta_{m, n, \alpha}}(h((m'+\beta)/m))\,\mathrm{d}\alpha\\
    &\geq U_{\mu}(h((m'+\beta)/m))-A_n+ \frac1{\alpha_2-\alpha_1}\int_{\alpha_1}^{\alpha_2}\frac5n\log f(\beta, \alpha)\,\mathrm{d}\alpha\\
    &\geq U_{\mu}(h((m'+\beta)/m))-A_n+\frac{10(\log \left(\alpha_2-\alpha_1\right)-\log 2-1)}n.
  \end{align*}
  As $n$ tends to $\infty$, the constant tends to $0$.
  This shows that for $n$ large enough, $U_{\mu_{m,n,\alpha_1,\alpha_2}}\geq U_{\mu}-\varepsilon$ on $H$.
  By the maximum principle, for such $n$, $U_{\mu_{m,n,\alpha_1,\alpha_2}}\geq U_{\mu}-\varepsilon$ on the whole space.
\end{proof}
Now back to the general case.
Let $E$ be a simple set, with a point $x$ outside $E$.
Let $G$ be the union of all connected components in $E$ which contain an interior point, and $H$ the union of all other components.

Let $\mu_1, \mu_2, \ldots, \mu_m$ be probability measures supported on the ${\mathbb {R}}$-interior of $E$, with continuous potentials.
Let ${X}=\{x_1, x_2, \ldots, x_n\}$ be a finite set of points on $C^{\mathrm{an}}$, disjoint from $E$ and $x$.
Let $D_1, D_2, \ldots, D_m$ be effective ${\mathbb {Q}}$-divisors of degree $1$ supported on ${X}$.
Let $g_j$ be the Green function of $D_j$ such that $c_1(g_j)=\mu_j$.
Note that $g_j$ is continuous on $E$.
We will approximate $g_j$ by the logarithms of rational functions satisfying a suitable boundary dominance condition.
\begin{proposition}\label{prop:arc-apr-fun}
  For any $\varepsilon> 0$, there exists $d_0>0$ such that for all positive integers $d_1, d_2, \ldots, d_m$ which are divisible by $d_0$, there exist rational functions $f_1, f_2, \ldots, f_m$ with the following properties.
  \begin{enumerate}
    \item The pole divisor of $f_j$ is $d_jD_j$;
    \item $(f_1, d_1g_1+\log 8), \ldots, (f_m, d_mg_m+\log 8)$ satisfy the boundary dominance condition on $E$;
      \item\label{item:rational-function-approximation} $\log\left|f_j\right|/d_j-g_j$ extends continuously to ${X}$, and its value is between $\varepsilon$ and $2\varepsilon$ on ${X}$.
  \end{enumerate}
  Moreover, the function $f_j$ does not depend on the choice of $d_{j'}$ for every $j'\neq j$.
  Lastly, for each connected component $E'$ of $E$, as $d_j$ tends to $\infty$, the proportion of zeros of $f_j$ lying in $E'$ tends to $\mu_j(E')$.
\end{proposition}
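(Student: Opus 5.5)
Each $f_j$ will be built independently: discretize $\mu_j$ into an effective divisor $Z$ of degree $d_j$ whose counting measure approximates $\mu_j$ from the potential-theoretic side, correct its linear equivalence class to $d_jD_j$ using the generating lemmas for the Jacobian, and take $f_j$ with $\operatorname{div}(f_j)=Z-d_jD_j$, normalized so that $\log|f_j|/d_j-g_j$ takes values in $[\varepsilon,2\varepsilon]$ at $X$. Since $c_1(g_j)=\mu_j$, we have
\[
  g_j=U_{\mu_j}-U_{\delta_{D_j}}+\text{const},
\]
and for such an $f_j$,
\[
  \log|f_j|/d_j=U_{\delta_Z}/d_j-U_{\delta_{D_j}}+\text{const}.
\]
Hence every required inequality reduces to comparing $U_{\delta_Z}/d_j$ with $U_{\mu_j}$, and the difference extends continuously to $X$ automatically.

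\textbf{Discretization.} I split $\mu_j=\mu_j|_G+\mu_j|_H$ over components. On the real interval components in $H$, Lemma~\ref{lem:apr-dir-int} and Lemma~\ref{lem:apr-mea-sup} give point masses $\delta_{m,n,\alpha}$ at shifted grid points, with $U_{\delta_{m,n,\alpha}}\geq U_{\mu_j}-\varepsilon/4$ away from $1/m$-neighbourhoods of the chosen points, and with counts in each component proportional to $\mu_j(E')$. On the components in $G$, which have interior, I would first use Proposition~\ref{prop:mea-com-exh}-type smoothing to replace $\mu_j|_G$ by a measure with continuous potential supported in a compact subset of the interior. I would then place points in the interior so that the discrete potential is at least $U_{\mu_j}-\varepsilon/4$ on $\partial G$. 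This is possible because $\partial G$ is at positive distance from the support, so Lemma~\ref{lem:uni-apr-com} gives uniform convergence there for any weakly convergent discretization. Near the real points the oscillation property is what matters, since $\log|f_j|$ dips to $-\infty$ only at the simple zeros.

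\textbf{Linear-equivalence correction.} The resulting divisor $Z_0$ has degree $d_j-n_0$ for a fixed $n_0$, and $Z_0-d_jD_j$ need not be principal. I fix $n_0$ as the integer from Lemma~\ref{lem:gen-jac-c} (for an open set $W$ deep inside $G$), or from Lemma~\ref{lem:gen-jac-r} (doubled, on a subinterval of $H$) when $G$ is empty. I then replace a bounded number of points by an effective divisor supported in $W$, or on the small interval, completing the class to $d_jD_j$. Choosing $d_0$ as a common multiple of these $n$'s and of the denominators of the $D_j$ makes the construction work for every $d_j$ divisible by $d_0$. The correction divisor has bounded degree, so it changes $U/d_j$ by only $O(1/d_j)$ on the compact set $\partial G\cup H$ away from $W$. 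This perturbation is absorbed by the slack $\log 8$ and by $\varepsilon$. The bounded oscillation constant $M$ from Lemma~\ref{lem:gen-jac-r} is likewise divided by $d_j$.

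\textbf{Remaining verifications.} The boundary dominance condition on $\partial G$ follows from $U_{\delta_Z}/d_j\geq U_{\mu_j}-\varepsilon/2$ together with the normalization at $X$, after the constant is shifted into $[\varepsilon,2\varepsilon]$. The compatibility of different $j$ on real intervals must be handled separately. I plan to choose different offsets $\alpha$ for different $j$, so that the oscillating interval of $f_j$ lies inside the region where each $f_{j'}$ with $j'\neq j$ is large. Lemma~\ref{lem:apr-dir-int} guarantees this at distance at least $r/m$ from the $j'$ grid points. Independence of $f_j$ from $d_{j'}$ is clear from the construction. The zero proportions tend to $\mu_j(E')$ because $\mu_{m,n,\alpha_1,\alpha_2}\to\mu$, and the correction contributes $O(1)$ zeros.

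\textbf{Expected obstacle.} I expect the real-interval oscillation, with compatibility across all $j$ simultaneously, to be the main difficulty. Each connected component of $\{\log|f_j|<d_jg_j+\log 8\}$ must contain at most one zero. I would try to get this from the pointwise bound of Lemma~\ref{lem:apr-dir-int} with $r$ fixed (say $r=1/8$), so that the loss $\frac{5}{n}(\log r-\log m)$ multiplied by $d_j$ stays below $\log 8$ near each grid cell. If this bound is too weak, I would fall back on the averaging over $\alpha$ in Lemma~\ref{lem:apr-mea-sup}.
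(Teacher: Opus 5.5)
Your construction of a single $f_j$ with its zeros on a single component is essentially the paper's: discretize, complete the class with Lemma~\ref{lem:gen-jac-c} or Lemma~\ref{lem:gen-jac-r}, normalize at the base point, then shift by $e^{3d_j\varepsilon/2}$. The gap is in the step you yourself flag, namely compatibility of different $j$ on a shared real interval; fixed offsets cannot achieve it.

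\begin{itemize}
\item \emph{Why offsets fail.} On an interval $H_0$ the zeros of $f_j$ lie on the grid $(\mathbb Z+\alpha_j)/m_j$. Here $m_j$ is tied to the number $n_j\approx d_j\mu_j(H_0)$ of points, e.g.\ $m_j=\lfloor n_j^{(\log n_j)^2}\rfloor$ as in Lemma~\ref{lem:apr-mea-sup}, so that the error terms of Lemma~\ref{lem:apr-dir-int} vanish. Moreover $f_j$ may not depend on $d_{j'}$. Two grids with unrelated spacings $1/m_j\neq 1/m_{j'}$ cannot be kept apart by any choice of fixed offsets. This already happens when $d_j=d_{j'}$, since $n_j\neq n_{j'}$ in general.
\item \emph{Consequence.} Zeros of $f_j$ and $f_{j'}$ can come arbitrarily close. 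Since $\log|f_{j'}|\to-\infty$ at its zeros, the requirement $\log|f_{j'}|\geq d_{j'}g_{j'}+\log 8$ on the oscillating interval of $f_j$ fails.
\item \emph{Missing idea.} Separate the measures before discretizing, at a scale independent of all $d_j$. The paper first reduces, by splitting each $\mu_j$ over the components of $E$ and multiplying the resulting functions, to each measure living on one component. For fixed $j$ all pieces use the same $d_j$, so they can be coordinated. It then uses Lemma~\ref{lem:apr-mea-sup} to reduce to each interval component carrying at most one measure. Concretely, approximate the measures sharing an interval by $\mu_{m,n,\alpha_1,\alpha_2}$ for one fixed $(m,n)$ with pairwise disjoint windows $[\alpha_1,\alpha_2]$, and shrink the interval to the resulting disjoint pieces. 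The potentials drop by at most a small constant, which the $d_j\varepsilon$ margin absorbs.
\item \emph{Why this suffices.} Afterwards each $f_{j'}$ has no zeros on the pieces carrying the other measures, and it is uniformly large there (and on the gaps) by Lemma~\ref{lem:uni-apr-com}. Compatibility is then automatic, and one is reduced to $E$ connected and $m=1$.
\end{itemize}

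Two further points need repair.
\begin{itemize}
\item \emph{Size of the loss.} The loss in Lemma~\ref{lem:apr-dir-int}, multiplied by $d_j$, is about $5(\log m-\log r)\sim 5(\log n)^3$. This is unbounded, so it does not stay below $\log 8$. It is only $o(d_j)$, so it must be absorbed by the $d_j\varepsilon$ margin coming from the normalization.
\item \emph{Real class correction.} Lemma~\ref{lem:gen-jac-r} only reaches classes $2A$ with $A$ a real divisor, so you must ensure that $d_jD_j-Z_0$ is twice a real class. The paper takes $d_jD_j$ even and places all points of $E'_N$ on the same real interval as the base point $y$. Then the relevant class lies in $J_C(\mathbb R)^0=2J_C(\mathbb R)$. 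With a single correction serving several intervals on different real ovals this can fail for parity reasons — another reason to reduce to connected $E$ first.
\end{itemize}
Finally, the smoothing step on $G$ is unnecessary: the $\mu_j$ are already assumed to have continuous potentials and compact support in the $\mathbb R$-interior.
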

\begin{proof}
  Firstly, we may assume that each probability measure is supported on a connected component of $E$, for the following reason.
  Assume that we decompose $\mu_1$ as $p'\mu_1'+p''\mu_1''$ for probability measures $\mu_1'$ and $\mu_1''$ with continuous potentials and positive real numbers $p', p''$ with $p'+p''=1$.
  Assume we have proved the proposition for $\mu_1', \mu_1'', \mu_2, \ldots, \mu_m$.
  Then for $d_1$ a multiple of $d_0$, we take $d_1'=\lfloor p'd_1/d_0\rfloor d_0$ and $d_1''=d_1-d_1'$.
  Solve the proposition for $d_1', d_1'', d_2, \ldots, d_m$ and get $f_1', f_1'', f_2, \ldots, f_m$.
  By taking $f_1=f_1'f_1''$ and $d_1$ large enough, we satisfy all the properties.
  Therefore, we obtain the original proposition.
  Also, by Lemma \ref{lem:apr-mea-sup}, we may assume that each connected component of $H$ has at most one measure on it.
  In what follows, all functions $f_j$ will be taken such that as $d_j$ tends to infinity, the distribution of zeros of $f_j$ tends to $\mu_j$.
  Thus, by Lemma \ref{lem:uni-apr-com}, we reduce to the case when $E$ is connected and $m=1$.
  Write $\mu=\mu_1$, $g_D=g_1$ and $D=D_1$.
  First, we consider the case when $E=G$.
  Take $G'$ to be a compact set contained in the interior of $G$ such that $G'\supset {\mathrm{supp}}(\mu)$.
  For each even integer $e$, let $E'_{e}$ be a divisor supported in $G'$ of degree $e$, such that $\delta_{E'_{e}}/e$ converges to $\mu$ weakly.
  Let $e_G$ be an even integer as in Lemma \ref{lem:gen-jac-c} for the set $G$.
  Let $d$ be an even integer such that $dD$ is a ${\mathbb {Z}}$-divisor and that $e_G\leq d$.
  Set $E_{d}=E'_{d-e_G}+E''$ where $E''$ is an effective divisor as in Lemma \ref{lem:gen-jac-c} making $E_{d}$ rationally equivalent to $dD$.
  Then $\delta_{E_{d}}/d$ converges to $\mu$ weakly.
  Take $f'$ to be the rational function with ${\mathrm{div}}(f')=E_{d}-dD$, normalized such that $\log\left|f'(x)\right|=dg_D(x)$.
  Then by Lemma \ref{lem:uni-apr-com}, for each $\varepsilon>0$, there exists $d'$, such that for each $d>d'$, we have $\log\left|f'\right|-dg_D>-d\varepsilon/2$ on $\partial E$ and ${X}$.
  We are done by taking $f=e^{3d\varepsilon/2}f'$ and $d>\varepsilon^{-1}\log 8$.

  Now, we consider the case when $E=H$.
  Let $H'$ be a closed interval contained in the interior of $H$ such that ${\mathrm{supp}}(\mu)$ is contained in the interior of $H'$.
  Let $H''$ be a closed interval contained in $\operatorname{int}(H)\setminus H'$.
  For each integer $e$, set $e'=\lfloor e^{(\log e)^2}\rfloor$.
  Let $E'_{e}$ be the divisor corresponding to the measure $e\delta_{e', e, 1/3}$ as in Lemma \ref{lem:apr-dir-int}.
  Here, we take the closed interval $H'$ and the probability measure $\mu$ in the construction of the measure.
  This is defined for $e$ large enough.
  Let $n_0$ be the integer given by Lemma \ref{lem:gen-jac-r} for the interval $H''$, let $M$ be its oscillating constant, and put $e=2n_0$.
  Let $d>e$ be an even integer such that $dD$ is an even ${\mathbb Z}$-divisor; this can be arranged by increasing $d_0$. Put $N=d-e$.
  Fix a point $y$ in the open interval between $H'$ and $H''$. Since
  \[
    [dD-E'_N-ey]=[dD-dy]-[E'_N-Ny]\in J_C({\mathbb R})^0,
  \]
  there is a real divisor $A_d$ of degree $n_0$ such that $2A_d\sim dD-E'_N$. Here $dD-dy$ is twice a real divisor, and all points of $E'_N$ lie in the same real interval as $y$. Apply Lemma \ref{lem:gen-jac-r} to $A_d$ to obtain an effective divisor $E''$ of degree $e$ supported on $H''$, with $E''\sim 2A_d$ and $U_{\delta_{E''}}$ $M$-oscillating. Set $E_d=E'_N+E''$, so that $E_d\sim dD$.
  Then $\delta_{E_{d}}/d$ converges to $\mu$ weakly.
  Take $f'$ to be the rational function with ${\mathrm{div}}(f')=E_{d}-dD$, normalized such that $\log\left|f'(x)\right|=dg_D(x)$.
  Put $F_d=\log\left|f'\right|-dg_D$. The normalization at $x$ gives
  \[
    F_d=U_{\delta_{E'_N}}+U_{\delta_{E''}}-dU_\mu.
  \]
  By Lemma \ref{lem:uni-apr-com}, $F_d/d$ tends uniformly to zero on $X\cup\partial H\cup\{y\}$. Thus $|F_d|<d\varepsilon/2$ there for all sufficiently large $d$, with the values on $X$ understood by continuous extension.
  On $H'$, the potentials $U_{\delta_{E''}}$ are uniformly bounded, since $\deg(E'')=e$ and $H'$ and $H''$ are disjoint. Applying Lemma \ref{lem:apr-dir-int} with $r=1/3$ gives $F_d\geq-d\varepsilon/2$ at points separating the consecutive zeros of $E'_N$, for all sufficiently large $d$.

  On $H''$, Lemma \ref{lem:uni-apr-com} gives $(U_{\delta_{E'_N}}-dU_\mu)/d\to0$ uniformly. Since $U_{\delta_{E''}}$ is $M$-oscillating, its zeros are separated by points where $F_d\geq M-o(d)\geq-d\varepsilon/2$ for all sufficiently large $d$.

  Together with the estimates at $y$ and the endpoints of $H$, these points separate all the zeros of $f'$ in $H$. All these zeros are simple. Hence $f'$ is $dg_D-d\varepsilon/2$-oscillating on $H$.
  We are done by taking $f=e^{3d\varepsilon/2}f'$ and $d>\varepsilon^{-1}\log 8$.
\end{proof}

If the properties in Proposition \ref{prop:arc-apr-fun} hold for $\{f_j\}$ and $\{g_j\}$, we say that they are \emph{$-\varepsilon$-oscillating compatibly}.
Let $f$ be a rational function on $C$.
If ${\mathrm{ord}}_y(f)=0$, we say that $f$ is positive at $y$ if there exists a rational function $f'$ such that $f(y)=f'(y)^2$.
If ${\mathrm{ord}}_y(f)$ is an even number, we say that $f$ is positive at $y$ if there exists a non-zero rational function $f'$ with ${\mathrm{ord}}_y(f)+2{\mathrm{ord}}_y(f')=0$ such that $ff'^2$ is positive at $y$.
In Proposition \ref{prop:loc-sec}, we want the initial element to be represented by an $n$-th power.
For the case $K={\mathbb {C}}$, this condition is trivial as long as the order at each point in ${X}$ is a multiple of $n$ greater than $2gn$.
For the case $K={\mathbb {R}}$, we additionally require that the function representing the element is positive at each ${\mathbb {R}}$-point of ${X}$, which is ensured by the following corollary.
\begin{corollary}\label{cor:arc-pos-r}
  In the previous proposition, the rational functions $f_1, f_2, \ldots, f_m$ can be assumed to be positive at all ${\mathbb {R}}$-points in ${X}$.
\end{corollary}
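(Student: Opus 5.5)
The plan is to show that the sign pattern of $f_j$ at the $\mathbb{R}$-points of $X$ can be corrected by multiplying $f_j$ by a bounded correction factor. This factor must not affect properties (1)--(3) of Proposition \ref{prop:arc-apr-fun} or the limiting zero distribution. We only need $K=\mathbb{R}$. First enlarge $d_0$ so that for every $\mathbb{R}$-point $y\in X$ the order $d_j\operatorname{ord}_y(D_j)$ is even. Then positivity of $f_j$ at $y$ only asks that the leading coefficient of $f_j$ in a real local parameter at $y$ be positive. Since $f_j$ is real, this leading coefficient is a nonzero real number, and it is determined by $\operatorname{div}(f_j)$ up to one global positive scalar. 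So the data to control is a sign vector $\sigma(f_j)\in\{\pm1\}^{X(\mathbb{R})}$, and $\sigma$ is multiplicative: $\sigma(f f')=\sigma(f)\sigma(f')$.

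Next I construct, once and for all, finitely many correction functions $q_1,\dots,q_r$ whose sign vectors together with $-1$ generate all of $\{\pm1\}^{X(\mathbb{R})}$. Each $q_k$ should have divisor of the form $e_kD_j-P_k$, with $P_k$ an effective real divisor of bounded degree supported in the $\mathbb{R}$-interior of $E$. For this I reuse the constructions in the proof of Proposition \ref{prop:arc-apr-fun}. In the part $G$ with interior, Lemma \ref{lem:gen-jac-c} lets me move any divisor class to an effective divisor in $G$. In the real intervals, Lemma \ref{lem:gen-jac-r} does the same. The freedom I would exploit is the following: on a real oval of $C(\mathbb{R})$, the sign of a real function flips exactly at points of odd order. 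Choosing whether the odd-order points of the auxiliary divisor lie on one side or the other of a given real point of $X$ along its oval therefore realizes the needed sign flips, and conjugate pairs in $G$ are sign-neutral.

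I then apply Proposition \ref{prop:arc-apr-fun} with $d_j$ replaced by $d_j-e$, where $e$ is the total degree of the chosen correction, and set $\tilde f_j=f_j\cdot q$, where $q$ is a product of the $q_k$ fixing the signs, possibly times $-1$. Since $e$ is bounded independently of $d_j$, the error term $\log|q|/d_j$ tends to $0$ uniformly on $X\cup\partial E$. Hence property (\ref{item:rational-function-approximation}) survives after adjusting the constant $e^{3d\varepsilon/2}$, and the proportion of zeros in each component still tends to $\mu_j(E')$. On real intervals I place the zeros of $P_k$ in the auxiliary interval $H''$, where the oscillation margin is $M-o(d)$. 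Compatible oscillation is then preserved exactly as in the proof of the proposition, because the bounded factor $q$ changes $\log|f_j|$ by $O(1)$ away from its own zeros.

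I expect the main obstacle to be the existence of the sign-changing correction functions. Two real points of $X$ on the same oval may be separated only by arcs disjoint from $E$, so zeros cannot simply be placed between them. If the direct parity argument fails, I would fall back on choosing the $q_k$ through the component group $J_C(\mathbb{R})/J_C(\mathbb{R})^0$ and the $2$-divisibility used in Lemma \ref{lem:gen-jac-r}. In that case I would verify that the sign map, restricted to functions with divisors supported in $E\cup X$, is surjective modulo global sign.
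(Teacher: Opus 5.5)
\textbf{There is a genuine gap}, located at the step you flag as the main obstacle. The difficulty there is not only technical: the generation statement you aim for is false in general.

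Take $C=\mathbb{P}^1_{\mathbb{R}}$, $E=[0,1]$, $X=\{2,3\}$ and $D_j=[2]$. A correction with divisor $P-e[2]$, where $e$ is even and $P$ is effective and supported in $E$, has the form $c\prod_i(z-a_i)/(z-2)^e$ with $a_i\in[0,1]$. Its value at $3$ has the sign of $c$, and so does its leading coefficient at $2$. Hence every admissible $q$ has sign vector $\pm(1,1)$, and $(1,-1)$ is never reached.

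This is a topological obstruction: a real function keeps its sign along arcs of $C(\mathbb{R})$ that avoid its zeros and poles. The component-group fallback cannot remove it. Allowing poles at other points of $X$ would break property (1). What you actually need is only a correction $q$ with $\sigma(q)=\sigma(f_j)$, and your proposal gives no way to produce one.

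Two further steps are also unresolved.
\begin{itemize}
\item The degree bookkeeping is circular: $e$ depends on $\sigma(f_j)$, but $f_j$ is built at degree $d_j-e$.
\item On real intervals, the bound $\log|q|=O(1)$ away from the zeros of $q$ does not give points separating a zero of $q$ in $H''$ from an adjacent zero of $f_j$. That separation is exactly what compatible oscillation requires.
\end{itemize}

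The missing idea is to let Proposition \ref{prop:arc-apr-fun} itself produce the correction. Let $n_1$ be the number of $\mathbb{R}$-points of $X$. Apply the proposition to $2^{n_1}+1$ copies of each $\mu_j$, all with degree $d_j/2$. By pigeonhole, two copies $f_{j,j_1},f_{j,j_2}$ have the same sign vector, so their product is positive at every $\mathbb{R}$-point of $X$.

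The two copies come out of a single application of the proposition, so they already oscillate compatibly. Boundary dominance is preserved under such products, as noted right after its definition; the product satisfies it with $d_jg_j+2\log 8$, which is stronger than required. Property (3) holds because the product's normalized logarithm is the average of the two. The zero proportions are also unchanged. The only cost is replacing $d_0$ by $2d_0$, which is how the paper proves the corollary.
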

\begin{proof}
  Let $n_1$ be the number of ${\mathbb {R}}$-points in ${X}$.
  Consider the measures $\{\mu_{i,j}\}_{1\leq i\leq m, 1\leq j\leq 2^{n_1}+1}$ where $\mu_{i,j}=\mu_i$.
  Apply the previous proposition to these measures, and we get rational functions $f_{i,j}$.
  By the pigeonhole principle, for each $i$, there exist $j_1, j_2$ such that $f_{i,j_1}f_{i,j_2}$ is positive at each ${\mathbb {R}}$-point in ${X}$.
  Thus, the corollary follows by multiplying $d_0$ by $2$.
\end{proof}
\subsubsection{Construction of a patching map}
Let $T_i$ be the $i$th Chebyshev polynomial of the first kind.
\begin{lemma}\label{lem:che-ind}
  For every complex number $z$, if $\left|z\right|\geq 1$, $\left|zT_i(z)\right|\leq \left|T_{i+1}(z)\right|\leq \left|4zT_i(z)\right|$.
\end{lemma}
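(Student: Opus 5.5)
We will prove both inequalities by induction on $i$, using the three-term recurrence $T_{i+1}(z)=2zT_i(z)-T_{i-1}(z)$ for $i\geq 1$, with $T_0=1$ and $T_1(z)=z$. In fact we will prove the sharper bounds
\[
  \left|zT_i(z)\right|\leq\left|T_{i+1}(z)\right|\leq 3\left|zT_i(z)\right|
  \qquad\text{for all } i\geq 0 \text{ and } |z|\geq 1 .
\]
Together with the lower bound, this will also give $|T_i(z)|\geq|z|^i\geq 1$, so no $T_i$ vanishes on $\{|z|\geq1\}$ and all the quotients below make sense.

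\emph{Base case.} For $i=0$ the bounds read $|z|\leq|z|\leq 3|z|$, which is trivially true.

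\emph{Inductive step.} Assume the lower bound $|zT_{i-1}(z)|\leq|T_i(z)|$ holds for some $i\geq1$. Since $|z|\geq1$, we get
\[
  |T_{i-1}(z)|\leq \frac{|T_i(z)|}{|z|}\leq |z|\,|T_i(z)|,
\]
that is, $|T_{i-1}(z)|\leq|zT_i(z)|$. Dividing the recurrence by $zT_i(z)$, which is nonzero, gives
\[
  \frac{T_{i+1}(z)}{zT_i(z)}=2-\frac{T_{i-1}(z)}{zT_i(z)}.
\]
The second term has modulus at most $1$, so the left-hand side has modulus in $[1,3]$. This proves both bounds at index $i$ and closes the induction.

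The only point needing care is that the induction must carry the lower bound forward. This is where the hypothesis $|z|\geq1$ enters: it converts $|zT_{i-1}|\leq|T_i|$ into $|T_{i-1}|\leq|zT_i|$. I do not expect any real obstacle here; the constant $4$ in the statement is simply generous compared with the $3$ obtained.
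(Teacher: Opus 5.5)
Your proof is correct and follows the paper's argument: induction on $i$ via the recurrence $T_{i+1}(z)=2zT_i(z)-T_{i-1}(z)$, carrying the lower bound forward. You also make explicit the step $|T_{i-1}(z)|\leq|zT_i(z)|$ (from the inductive hypothesis and $|z|\geq 1$). The paper uses this step implicitly in both its lower and upper estimates, and it is what gives the constant $3$ in place of $4$.
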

\begin{proof}
  The sequence of polynomials $T_i$ follows the inductive relation $T_{i+1}(z)=2zT_{i}(z)-T_{i-1}(z)$ for all $i\geq 1$.
  Here, $T_0(z)$ is viewed as $1$.
  We prove by induction that $\left|zT_i(z)\right|\leq \left|T_{i+1}(z)\right|$.
  For $i=1$, $\left|T_1(z)\right|=\left|z\right|$.

  Assume that $\left|T_i(z)\right|\geq \left|zT_{i-1}(z)\right|$.
  Then $\left|T_{i+1}(z)\right|\geq 2\left|zT_{i}(z)\right|- \left|T_{i-1}(z)\right|\geq \left|zT_i(z)\right|$, and
  $\left|T_{i+1}(z)\right|\leq 2\left|zT_i(z)\right|+ \left|T_{i-1}(z)\right|<4\left|zT_{i}(z)\right|$.
\end{proof}
\begin{lemma}\label{lem:che-low}
  For every complex number $z$, and all positive integers $j>i$, $\left|2T_i(z)\right| \leq \max\{4, 2\left|2T_j(z)\right|^{i/j}\}$.
\end{lemma}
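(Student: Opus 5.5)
The plan is to compare the growth rates of $T_i$ and $T_j$ through the closed form of the Chebyshev polynomials. For $z\in\mathbb C$ write $z=\tfrac12(w+w^{-1})$ with $|w|\geq 1$; this is always possible, since the Joukowski map sends $\{|w|\geq1\}$ onto $\mathbb C$. Then
\[
  2T_k(z)=w^k+w^{-k}.
\]
Set $R=|w|\geq 1$. This gives the two-sided estimate
\[
  R^k-R^{-k}\leq |2T_k(z)|\leq R^k+R^{-k}\leq R^k+1.
\]

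The upper bound controls $T_i$ and the lower bound controls $T_j$. I split into two cases according to the size of $R^j$.

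\emph{Case $R^j\leq 3$.} Then $R^i\leq R^j\leq 3$, so $|2T_i(z)|\leq R^i+1\leq 4$, and the claim holds.

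\emph{Case $R^j>3$.} Then $|2T_j(z)|\geq R^j-R^{-j}\geq R^j-1\geq \tfrac{2}{3}R^j$. Put $\theta=i/j\in(0,1)$. This yields
\[
  2|2T_j(z)|^{\theta}\geq 2\left(\tfrac23\right)^{\theta}R^i\geq \tfrac43R^i,
\]
using $(2/3)^\theta\geq 2/3$. On the other side, $R^i+1\leq \tfrac43 R^i$ as soon as $R^i\geq 3$, which gives the claim in that range. If instead $R^i<3$, then $|2T_i(z)|\leq R^i+1<4$ directly.

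In every case $|2T_i(z)|\leq\max\{4,\,2|2T_j(z)|^{i/j}\}$. The only delicate point is the bookkeeping of constants in the second case, which is why I separate according to whether $R^i$ and $R^j$ exceed $3$. An alternative route is the recursion of Lemma \ref{lem:che-ind}, but that only applies when $|z|\geq1$, so the closed form is cleaner.
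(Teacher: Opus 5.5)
Your proof is correct and follows essentially the same route as the paper: write $2z=w+w^{-1}$ with $|w|\geq 1$, use $2T_k(z)=w^k+w^{-k}$ together with the bounds $|w|^k-|w|^{-k}\leq|2T_k(z)|\leq|w|^k+|w|^{-k}$, and split according to whether $|w|^i$ (or $|w|^j$) exceeds $3$. The only difference is the bookkeeping of constants (you use $\frac{4}{3}$ and $\frac{2}{3}$ where the paper uses $\frac{10}{9}$ and $\frac{8}{9}$), which is immaterial.
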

\begin{proof}
  Assume that $2z=y+1/y$, with $\left|y\right|\geq 1$.
  Then $2T_i(z)=y^i+y^{-i}$, and $\left|2T_i(z)\right|\leq \left|y\right|^i+\left|y\right|^{-i}$.
  If $\left|2T_i(z)\right|\leq 4$, the inequality is obvious.
  Otherwise, $\left|y\right|^i> 3$.
  Therefore, $\left|y\right|^i+\left|y\right|^{-i}\leq 10\left|y\right|^i/9$.
  Since $j>i$, $\left|y\right|^j>3$, and we have $\left|2T_j(z)\right|\geq \left|y\right|^j-\left|y\right|^{-j}\geq 8\left|y\right|^j/9$.
  Thus,
  $$
    2\left|2T_j(z)\right|^{i/j}\geq 2\left|y\right|^i \left(\frac89\right)^{i/j}\geq \frac{10}9 \left|y\right|^i\geq \left|2T_i(z)\right|.
  $$
\end{proof}

For each positive integer $i$ and positive real number $r$, set $S_{i, r}(z)=2^{1-i}r^iT_i(r^{-1}z)$.
Then $S_{i, r}$ is monic, and $S_{ij,r}=S_{j, 2^{1-i}r^{i}}\circ S_{i, r}$.
By Lemma \ref{lem:che-ind}, for each $\left|z\right|\geq r$, we have
$$
  \left|zS_{i,r}(z)\right|/2\leq \left|S_{i+1,r}(z)\right|\leq 2\left|zS_{i,r}(z)\right|.
$$
By Lemma \ref{lem:che-low}, for each $j>i$, it is not hard to see that
$$
  \left|S_{i,r}(z)\right|\leq \max\{2^{2-i}r^i, 2\left|S_{j,r}(z)\right|^{i/j}\}.
$$

\begin{lemma}
  Assume that $(f_1, g_1), \ldots, (f_n, g_n)$ satisfy the boundary dominance condition on $E$.
  Then we have the following.
  \begin{enumerate}
    \item Let $f'_n$ be a rational function whose pole divisor is smaller than that of $f_n$ and such that $\log\left|f'_n\right|\leq \max\{\log\left|f_n\right|, g_n\}-\log 2$.
      Then $(f_1, g_1), \ldots, (f_{n-1}, g_{n-1}), (f_n+f'_n, g_n-\log 2)$ satisfy the boundary dominance condition. \label{item:boundary-dominance-perturbation}
    \item If $g_n=\log M$ is a constant function, then for each positive integer $k$, we have that $(f_1, g_1), \ldots,$ $(f_{n-1}, g_{n-1}), (S_{k, M}(f_n), (1-k)\log 2+k\log M)$ satisfy the boundary dominance condition. \label{item:boundary-dominance-chebyshev}
  \end{enumerate}
\end{lemma}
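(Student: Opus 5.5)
We verify the three clauses of the boundary dominance condition separately for the modified pair, since only the $n$-th pair changes. The other pairs $(f_i,g_i)$ with $i<n$ are untouched. So clause (1) and clause (2) for them are automatic, and the only interaction is the compatibility requirement in clause (3) on real interval components.

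\emph{Part \ref{item:boundary-dominance-perturbation}.} The pole divisor of $f_n+f'_n$ equals that of $f_n$, since $f'_n$ has strictly smaller pole divisor. Hence $f_n+f'_n$ has the same number of zeros as $f_n$ counted with multiplicity.

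On the boundary $\partial E'$ of a non-interval component we have $\log|f_n|\ge g_n$. The hypothesis then gives $|f'_n|\le |f_n|/2$ there. Hence
\[
\log|f_n+f'_n|\ge \log|f_n|-\log 2\ge g_n-\log 2 .
\]
Moreover $|f'_n|<|f_n|$ on $\partial E'$, so the zero count of $f_n+f'_n$ inside each such component agrees with that of $f_n$. In the non-archimedean setting this is Proposition \ref{prop:rou-thm}; in the archimedean setting it is the classical Rouch\'e theorem.

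We must also show that no zeros escape $E$, i.e.\ clause (1). Outside $E$ we have $\log|f_n|\ge g_n$, extended suitably, so this should follow from the same argument applied to the complement components. Failing that, it follows from a count: all zeros of $f_n$ are accounted for inside $E$, and the total degree is preserved. I expect this counting to be the place needing most care, especially with the real interval components.

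On a real interval $E'$, consider each connected component $J$ of $\{\log|f_n|<g_n\}$ together with the points separating such components. At every point where $\log|f_n|\ge g_n$ we get $\log|f_n+f'_n|\ge g_n-\log 2$, and there $f_n+f'_n$ has the same sign as $f_n$, since $|f'_n|\le|f_n|/2$. So the separating points persist for the new pair $(f_n+f'_n,g_n-\log 2)$. Compatibility with the other $(f_i,g_i)$ is preserved because the new oscillating intervals lie inside the old ones.

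Within an old oscillating interval, $f_n$ changes sign exactly once, and $f_n+f'_n$ keeps the endpoint signs of $f_n$. So $f_n+f'_n$ has an odd number of zeros there. The total zero count is preserved, and each interval of $f_n$ carried exactly one zero, so each interval of $f_n+f'_n$ carries exactly one simple zero. This sign-change bookkeeping is the main obstacle; I would organize it as a degree count per interval.

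\emph{Part \ref{item:boundary-dominance-chebyshev}.} Write $S=S_{k,M}$. This is a monic real polynomial whose critical values satisfy $|S|=2^{1-k}M^k$, and its $k$ simple zeros lie in $(-M,M)$. Where $|f_n|\ge M$, the estimate after Lemma \ref{lem:che-ind} iterates to $|S(f_n)|\ge 2^{1-k}|f_n|^k\ge 2^{1-k}M^k$. This gives clause (2) on $\partial E'$.

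The zeros of $S(f_n)$ are the preimages under $f_n$ of the zeros of $S$, which lie in $(-M,M)$. On each real interval, each oscillating interval of $f_n$ maps monotonically onto a range covering $[-M,M]$, because at its ends $|f_n|\ge M$ with opposite signs. Therefore $S(f_n)$ oscillates there $k$ times between $\pm2^{1-k}M^k$. The equioscillation of $T_k$ provides the separating points where $\log|S(f_n)|\ge(1-k)\log 2+k\log M$.

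For clause (1), zeros of $S(f_n)$ off $E$ would require $|f_n|<M$ outside $E$. This is excluded by the same maximum-principle and counting argument as in part \ref{item:boundary-dominance-perturbation}: the zeros in $E$ already number $k\deg(f_n)_\infty$. Compatibility with the other pairs is unchanged, since the new oscillating intervals lie inside the old ones.
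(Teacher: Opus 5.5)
Your proposal is correct and follows essentially the paper's argument. It uses Rouch\'e on the non-interval components (for part (2), applied to $f_n$ versus $f_n-c$ at each root $c\in(-M,M)$ of $S_{k,M}$, a step you leave implicit and the paper simply asserts), the pointwise bounds $\log|f_n+f'_n|\geq\log|f_n|-\log 2$ and $|S_{k,M}(z)|\geq 2^{1-k}M^k$ for $|z|\geq M$, and a sign-change count on each oscillating interval. The global zero count (at most that of $f_n$, resp.\ exactly $k$ times it) then forces exactly one simple zero per interval and none outside $E$.

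Two small corrections. Your first idea for clause (1) is not usable: $g_n$ lives only on $E$, and $|f_n|$ is small just off the oscillating intervals, so the counting fallback is the actual argument, as in the paper. Also, $f_n$ need not be monotone on an oscillating interval, and you do not need it to be: the intermediate value theorem already gives ordered points where $f_n$ takes the values $M\cos(j\pi/k)$, which is all your oscillation argument uses.
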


\begin{proof}
  Let $G$ be a connected component of $E$ which contains an interior point.
  For (\ref{item:boundary-dominance-perturbation}), by Rouch\'e's theorem, the numbers of zeros of $f_n$ and $f_n+f'_n$ in $G$ are exactly the same.
  Also, $\log\left|f_n+f'_n\right|\geq \log\left(\left|f_n\right|-\left|f'_n\right|\right)\geq \log\left(\left|f_n\right|\right)-\log 2$ whenever $\log\left|f_n\right|\geq g_n$.
  For (\ref{item:boundary-dominance-chebyshev}), the number of zeros of $S_{k, M}(f_n)$ in $G$ is $k$ times that of $f_n$.
  By Lemma \ref{lem:che-ind}, for each $\left|z\right|\geq M$, $\left|S_{k,M}(z)\right|\geq M(\left|z\right|/2)^{k-1}\geq M^k2^{1-k}$.
  Hence, $\log\left|S_{k, M}(f_n)\right|\geq (1-k)\log 2+k\log M$ whenever $\left|f_n\right|\geq M$.

  Let $H$ be a connected component of $\{z\in C({\mathbb {R}})|\log\left|f_n(z)\right|\leq g_n(z)\}$ which contains a zero of $f_n$.
  Then for the endpoints $z_1$ and $z_2$ of $H$, we have $\log\left|f_n(z_i)\right|=g_n(z_i)$ for $i=1,2$, and $f_n(z_1)f_n(z_2)<0$.
  For (\ref{item:boundary-dominance-perturbation}), we have $\log\left|f'_n(z_i)\right|\leq g_n(z_i)-\log 2$ for $i=1,2$.
  Therefore, $\log\left|(f_n+f'_n)(z_i)\right|\geq g_n(z_i)-\log 2$ for $i=1,2$ and $(f_n+f'_n)(z_1)\cdot(f_n+f'_n)(z_2)<0$.
  Hence, there is a zero of $f_n+f'_n$ in $H$.
  For (\ref{item:boundary-dominance-chebyshev}), since $S_{k, M}$ is $(1-k)\log 2+k\log M$-oscillating on the interval $[-M, M]$, there are at least $k$ connected components of $\{z\in H|\log\left|S_{k, M}(f_n)(z)\right|<(1-k)\log 2+k\log M\}$ which contain a root of $S_{k, M}(f_n)$.

  Note that the number of zeros of $f_n+f'_n$ is at most that of $f_n$.
  Therefore, the number of zeros of $f_n+f'_n$ in $H$ is exactly $1$, and $f_n+f'_n$ is hence $g_n-\log 2$-oscillating on $H$.
  Also, the number of zeros of $S_{k, M}(f_n)$ is $k$ times that of $f_n$, and hence $S_{k, M}(f_n)$ is $(1-k)\log 2+k\log M$-oscillating on $H$.
\end{proof}

Let $E'$ be a simple set contained in the $\mathbb R$-interior of $E$.
For every $x\in{X}$, take a Green function $g_{x}$ of $x$.
Fix $\varepsilon>0$.
Let $D$ be an effective ${\mathbb {Q}}$-divisor on $C$ supported outside ${X}$ and $E$ of degree $1$.
Let $g_D$ be a subharmonic Green function of $D$ with $\mu=c_1(g_D)$ supported in $E'$.
Let $d$ be an integer.
For each $x\in{X}$, take a rational function $f_x$ on $C$ such that the pole divisor of $f_x$ is $dx$.
Set $b_x=d^{-1}\log\left|f_x\right|_{x, d}$.

For each $x$, let $\ell_x\geq 2g+2$ be an integer.
Set $l=\sum_{x\in{X}}\ell_x$.
For every $x\in{X}$, set
$$
  c_x:=\frac1{d\ell_x}\left(\sum\limits_{y\in X\setminus\{x\}}\log\left|S_{\ell_y, 4}(f_y(x))\right|\right)+b_x.
$$

\begin{proposition}\label{prop:arc-mul-fun}
  For every $B>0$, there exists a multiple $N_0$ of $d$, with the following property.
  If $\max_{x\in{X}}\left|c_x\right|$ is bounded by a given constant, then $N_0$ can be chosen regardless of the choice of $f_x$.
  Let $N$ be a multiple of $N_0$.
  Take an integer $d'$ such that $d'D$ is a ${\mathbb {Z}}$-divisor and take a rational function $f$ with pole divisor $d'D$.
  If $\{(f_x, \log 4)\}_{x\in {X}}$ and $(f, d'g_D)$ satisfy the boundary dominance condition on $E$, then we have the following.

  Set
  $$
    \phi_N^{\mathrm{init}}=f\left(\prod\limits_{x\in {X}}S_{\ell_x, 4}(f_x)\right)^{N/d}.
  $$
  For each band filtration $\{D_N^{(j)}\}$ for $(\ell_x)$ and $N$, there exists a patching map $\psi_N$ with the following property.
  For each patching vector $\boldsymbol{\phi}_N=(\phi_N^{(j)})$ with initial element $\phi_N^{\mathrm{init}}$ bounded by $B$ and $(c_x)$ and satisfying $\phi_N^{(0)}=\phi_N^{\mathrm{init}}$, there is a representative $G_N$ of $\psi_N(\boldsymbol{\phi}_N)$ with pole divisor $d'D+D_N^{(0)}$, such that the number of zeros of $G_N$ in each connected component of $E$ is at least that of $f$, and that $(G_N, d'g_D)$ satisfies the boundary dominance condition on $E$.

\end{proposition}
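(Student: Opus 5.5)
The approach is Rumely's patching argument in the archimedean setting, following \cite[Chapters 7--10]{Rum13}. The unperturbed function $\phi_N^{\mathrm{init}}=f\prod_x S_{\ell_x,4}(f_x)^{N/d}$ already satisfies the boundary dominance condition for $\bigl(\phi_N^{\mathrm{init}}, d'g_D+\text{const}\bigr)$. To see this, apply part (\ref{item:boundary-dominance-chebyshev}) of the preceding lemma to each $(f_x,\log 4)$ with $k=\ell_x$. Then use the composition identity $S_{ij,r}=S_{j,2^{1-i}r^i}\circ S_{i,r}$, and write the $(N/d)$-th power as a further Chebyshev composition (or as a product). Finally multiply the pairs, using the remark that products of compatibly dominating pairs remain dominating. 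This gives a lower bound for $\log|\phi_N^{\mathrm{init}}|$ on $\partial E$ and on the oscillating intervals of order $\sum_x (N/d)\ell_x\log(2)$ above $d'g_D$. The key point is that the $\log 4$ versus $\log 2$ gap per Chebyshev step grows linearly in $N$. The zero count of $\phi_N^{\mathrm{init}}$ in each component of $E$ is at least that of $f$, since the Chebyshev factors have all zeros in $E$.

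The patching map is built inductively along the band filtration. At step $j$ we have the current representative $G_N^{(j)}$, and the class of $\phi_N^{(j)}$ in $V_N^{(j)}=H^0(D_N^{(j)})/H^0(D_N^{(j+1)})$ must be corrected. I would define $\psi_N$ by choosing, for each $x$ and each level, a fixed basis function of $H^0(D_N^{(j)})$ with the prescribed leading coefficient at $x$. This is possible by Riemann--Roch since $D_N^{(j)}\geq \sum 2g x$. Concretely, I would take the functions to be products of lower-index Chebyshev powers $S_{i,4}(f_x)$ times bounded fixed functions of pole order $\leq 2g+1$. The difference $\phi_N^{(j)}-(\text{current leading term})$ is then added as a linear combination of these basis functions. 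This is linear in the patching data, so $\psi_N(\mathbf 0)=0$, and the triangular structure of the filtration gives the required compatibility property of a patching map.

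The main obstacle is the size estimate. We must show that each correction term $\Delta_j$ satisfies $\log|\Delta_j|\leq \max\{\log|G_N^{(j)}|,\text{threshold}\}-\log 2$ on $E$, summably, so that part (\ref{item:boundary-dominance-perturbation}) of the preceding lemma can be applied $j_N$ times with only a bounded total loss. The patching bound $|\phi_N^{(j)}|_{x,d}\leq Be^{b_x-(N\ell_x-d)c_x}$ is designed for this. By the definition of $c_x$, with the $S_{\ell_y,4}(f_y(x))$ terms, the leading coefficient at pole order $d$ is smaller by $e^{-(N\ell_x-d)c_x}$ relative to $\phi_N^{\mathrm{init}}$. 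Lemma~\ref{lem:che-low} then converts this into a sup bound on $E$: $|S_{i,4}|\leq \max\{2^{2-i}4^i, 2|S_{j,4}|^{i/j}\}$. So each correction is dominated by a fractional power of the main term times $B$. The losses come from the factor $B$, from $\log 2$ per step, and from the $2g+1$ bounded auxiliary functions. I would absorb all of these into the linear-in-$N$ Chebyshev gap by taking $N_0$ large, uniformly in $f_x$ when $\max|c_x|$ is bounded. The real case requires checking the oscillation conditions at interval endpoints, which part (\ref{item:boundary-dominance-perturbation}) already covers, and the zero counts are preserved by Rouch\'e's theorem at each step.
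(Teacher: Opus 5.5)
There is a genuine gap: your additive correction scheme cannot absorb the top-band patching data when $B$ is large, and the statement requires arbitrary $B$.

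\textbf{Why the additive scheme fails.} You keep the main term $F$ (a Chebyshev representative of $\phi_N^{\mathrm{init}}$) fixed and correct every band additively. So you need $|\Delta|\le |F|/2$ for the total correction $\Delta=\sum_j\Delta_j$, on $\partial E$ and at the separating points of the oscillating intervals. This cannot hold for the top bands once $B$ is large. $B$ is arbitrary in the statement, and it must be large later, in Proposition~\ref{prop:pat-con}.

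Your own estimate shows the problem. Lemma~\ref{lem:che-low} bounds a correction $t$ Chebyshev levels below the top only by about $B'\rho^{-t}|F|$. Here $B'$ is a constant multiple of $B$, and $\rho$ (essentially $|S_k(Q)|^{1/k}$ at the test points) is fixed by the $f_x$ and $\ell_x$ and does not grow with $N$. Summing gives roughly $B'|F|/(\rho-1)$, which is not small; the first term alone already has relative size about $B'/\rho$.

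The ``linear-in-$N$ gap'' between $\log|F|$ and the threshold does not help. Rouch\'e and part~(\ref{item:boundary-dominance-perturbation}) need smallness relative to $|F|$, not relative to the threshold.

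This is a real obstruction, not just a weak estimate. For $K=\mathbb C$, the quotient $\Delta/F$ is holomorphic off $E$ and has a simple zero at $x$. Its leading coefficient can be as large as $Be^{-c_x}$, by the patching bound for the band $D_N^{(1)}$. Applying the maximum principle to $\log|\Delta/F|+g_{E,x}$ then forces $\sup_{\partial E}|\Delta/F|$ to be at least a constant times $B$, with the constant independent of $N$.

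A smaller point: part~(\ref{item:boundary-dominance-perturbation}) loses $\log 2$ every time it is used. Applying it $j_N$ times therefore does not give a bounded total loss; the corrections have to be aggregated before invoking it once.

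\textbf{The missing idea.} You need the nonlinear step the paper uses, as in Rumely's patching. Take $N=k_1k_2d$, $Q=\prod_x S_{\ell_x,4}(f_x)$ and $Q_{k_1}=S_{k_1,4}(Q)$.

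For the top $k_1d\ell_x$ pole orders (the bands $1\le i<i_N$), the corrections go \emph{inside} the outer polynomial: $G=S_{k_2,2^{k_1}}(Q_{k_1}+Q')$. Since $S_{k_2,2^{k_1}}$ is monic of degree $k_2$, changing the inner function by $Q'$ changes those bands by $k_2Q^{k-k_1}Q'$. So the required coefficients are only $O(Be^{|c_x|k_1d\ell_x}/k_2)$. For $k_2$ large this gives $|Q'|\le 2^{k_1}$ on $E$. Hence $(Q_{k_1}+Q',k_1\log 2)$ satisfies boundary dominance, and then so does $(S_{k_2,2^{k_1}}(Q_{k_1}+Q'),(1-k_2+k_1k_2)\log 2)$. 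This holds even though this function is far from $S_{k_2,2^{k_1}}(Q_{k_1})$ on $\partial E$.

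Only the remaining bands are corrected additively, using $R_{x,j}=e^{j''c_x}Q_{k_1,k_2'}Q_{k_1'}h_{x,j'}$. These satisfy $|R_{x,j}|\le M'\sqrt2^{(j/\deg(x)-kd\ell_x)/(d\ell_x)}|Q_{k_1,k_2}|$ at the relevant points. Because they start $k_1$ levels below the top, their total is $O(BM'\sqrt2^{-k_1})|Q_{k_1,k_2}|$. This is at most $\tfrac12|Q_{k_1,k_2}|$ for $k_1$ large, so part~(\ref{item:boundary-dominance-perturbation}) is used only once.

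The order of choices makes arbitrary $B$ possible: first $k_1$ in terms of $B$, then $k_2$ in terms of $k_1$ and $B$. Your proposal has no analogue of the $1/k_2$ factor.
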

\begin{proof}

  For $x\in {X}, j>2g\deg(x)$, let $h_{x,j}$ be as follows.
  If $x$ is a $K$-point, let $h_{x,j}$ be a rational function whose pole divisor is $jx$, normalized such that $\left|h_{x,j}\right|_{x, j}=1$.
  If $x$ is not a $K$-point, which means $K={\mathbb {R}}$ and $x$ is a point of degree $2$, let $h_{x,2j-1}$, $h_{x,2j}$ be rational functions whose pole divisor is $jx$, normalized such that $\left|h_{x,2j-1}\right|_{x, j}= \left|h_{x,2j}\right|_{x, j}=1$.
  Also, we require $(h_{x,2j-1}^2h_{x,2j}^{-2})(x)=-1$.
  By the Riemann--Roch theorem, for $j> 2g\deg(x)$, $h_{x,j}$ can be defined.
  Set
  $$
    M_k=\sup_{x\in {X}, 2g\deg(x)<j\leq(kd\ell_x+2g)\deg(x)}\sup_{z\in E}\left|h_{x,j}(z)\right|.
  $$

  In the entire proof, all boundary dominance conditions are assumed to be compatible with $(f, d'g_D)$.
  Set
  $$
    Q(z):=\prod\limits_{x\in {X}}S_{\ell_x, 4}(f_x(z)).
  $$
  Then we have $\left|Q(z)\right|_{x, d\ell_x}=e^{d\ell_xc_x}$.
  For every positive integer $k_1$, set
  $$
    Q_{k_1}(z)=S_{k_1, 4}(Q(z)).
  $$
  So $(Q_{k_1}, (1+k_1)\log 2)$ satisfies the boundary dominance condition.

  Let $Q'$ be a rational function on $C$, with $\left|Q'(z)\right|\leq 2^{k_1}$ on $E$ and pole divisor at most $\sum_{x\in X}(k_1d\ell_x-1)x$. Thus $Q_{k_1}+Q'$ has the same leading term as $Q_{k_1}$ at every $x\in X$.
  For every positive integer $k_2$, set
  $$
    Q_{k_1,k_2}=S_{k_2, 2^{k_1}}(Q_{k_1}+Q').
  $$
  Then $(Q_{k_1, k_2},(1-k_2+k_1k_2)\log 2)$ satisfies the boundary dominance condition.
  Also, we obtain $\left|Q_{k_1}(z)\right|\leq \max\{2^{2+k_1}, 2\left|Q_{k_1, 1}(z)\right|\}$.

  By Lemma \ref{lem:che-low}, for each $k'_2<k_2$, we have
  $$
    \left|Q_{k_1, k'_2}(z)\right|\leq \max\left\{2^{2+k_1k'_2-k'_2}, 2\left|Q_{k_1, k_2}(z)\right|^{k'_2/k_2}\right\}.
  $$
  For the same reason, for each $k'_1<k_1$, we have
  \begin{align*}
    \left|Q_{k'_1}(z)\right|&\leq\max\left\{2^{2+k'_1}, 2\left|Q_{k_1}(z)\right|^{k'_1/k_1}\right\}\\
    &\leq \max\left\{2^{2+k'_1}, 2\max\{2^{2+k_1}, 2\left|Q_{k_1, 1}(z)\right|\}^{k'_1/k_1}\right\}\\
    &\leq \max\left\{2^{4+k'_1}, 8\left|Q_{k_1, k_2}(z)\right|^{k'_1/(k_1k_2)}\right\}.
  \end{align*}
  Take $z'$ such that $\left|Q_{k_1, k_2}(z')\right|\geq 2^{1-k_2+k_1k_2}$ holds.
  Then we have $2^{2-k'_2+k_1k'_2}\leq 4\left|Q_{k_1, k_2}(z')\right|^{k'_2/k_2}$ and $2^{4+k'_1}\leq 32\left|Q_{k_1, k_2}(z')\right|^{k'_1/(k_1k_2)}$.
  Thus, we obtain
  $$
    \left|Q_{k_1, k'_2}(z')Q_{k'_1}(z')\right|\leq 128\left|Q_{k_1, k_2}(z')\right|^{(k'_1+k_1k_2')/(k_1k_2)}.
  $$

  For each $x\in {X}, 2g\deg(x)<j\leq d\ell_xk_1k_2\deg(x)$, set
  $$
    R_{x,j}=e^{j'' c_x}Q_{k_1, k_2'}Q_{k_1'}h_{x,j'}
  $$
  where $j', k_1', k_2'$ are uniquely chosen so that $j'+(k_1'+k_1k_2')d\ell_x\deg(x)=j$, $0\leq k_1'<k_1$, $2g\deg(x)<j'\leq \ell_xd\deg(x)+2g\deg(x)$ and $j''=\lfloor (j'-1)/\deg(x)\rfloor+1$.
  Note that we have $\left|R_{x,j}\right|_{x, \lfloor (j-1)/\deg(x)\rfloor+1}=e^{(\lfloor (j-1)/\deg(x)\rfloor+1)c_x}$.
  Set $M'=128e^{(d\max_{x\in X}\ell_x+2g)\max_{x\in {X}}\left|c_x\right|}M_1$.
  For each $z'$ such that $\left|Q_{k_1, k_2}(z')\right|\geq 2^{1-k_2+k_1k_2}$ holds, for $k_1\geq 2$, we have
  \begin{align*}
    \left|R_{x,j}(z')\right|&=e^{j''c_x}\left|h_{x,j'}(z')\right|\left|Q_{k_1, k'_2}(z')Q_{k'_1}(z') \right|\\
    &\leq 128e^{j''c_x}M_1\left|Q_{k_1, k_2}(z')\right|^{(j-j')/(kd\ell_x\deg(x))}\\
    &\leq M'\sqrt{2}^{(j/\deg(x)-kd\ell_x)/(d\ell_x)}\left|Q_{k_1, k_2}(z')\right|.
  \end{align*}

  Now, we begin to construct the functions recursively.
  Set $k=k_1k_2$ and $N=kd$.
  For each patching filtration $D_N^{(i)}$, set $P_{i}=\{(x, j)|x\in {X}, {\mathrm{ord}}_x(D_N^{(i+1)})\deg(x)<j\leq {\mathrm{ord}}_x(D_N^{(i)})\deg(x)\}$.
  Since ${\mathrm{div}}(f)$ is disjoint from ${X}$, $f^{-1}\phi_N^{(i)}$ can be viewed as an element in $H^0(D_N^{(i)})/H^0(D_N^{(i+1)})$ by the Riemann--Roch theorem.
  Let $\varphi_N^{(i)}$ be a representative of $f^{-1}\phi_N^{(i)}$ for each $i$.
  Let $i_N$ be the integer such that $D_N^{(i_N)}=\sum_{x\in {X}}(2g+(k_1k_2-k_1)d\ell_x)x$.
  For each $1\leq i<i_N$, and $(x, j)\in P_i$, if $\deg(x)=1$, set
  $$
    b_{x,j}=\left(\varphi_N^{(i)}Q^{k_1-k}h_{x,j+(k_1-k)d\ell_x}^{-1}\right)(x)/k_2.
  $$
  Then $\left|b_{x,j}\right|\leq Be^{\left|c_x\right|k_1d\ell_x}/k_2$.
  If $\deg(x)=2$, we assume that $j$ is even, and let $b_{x,j}$ and $b_{x,j-1}$ be the unique real numbers such that
  $$
    k_2\left(\frac{Q^{k-k_1}(b_{x,j}h_{x,j+2(k_1-k)d\ell_x}+b_{x,j-1}h_{x,j+2(k_1-k)d\ell_x-1})}{\varphi_N^{(i)}}\right)(x)=1.
  $$
  Since $(Q^{k-k_1}h_{x,j+2(k_1-k)d\ell_x}/\varphi_N^{(i)})(x)$ and $(Q^{k-k_1}h_{x,j+2(k_1-k)d\ell_x-1}/\varphi_N^{(i)})(x)$ form an orthogonal basis of $K(x)={\mathbb {C}}$ whose elements have norm at least $e^{-\left|c_x\right|k_1d\ell_x}/B$, we have $\left|b_{x,j}\right|\leq Be^{\left|c_x\right|k_1d\ell_x}/k_2$ and the same for $b_{x,j-1}$.

  Set
  $$
    G_N^{(i)}=S_{k_2, 2^{k_1}}(Q_{k_1}+\sum\limits_{i'=1}^{i}\sum\limits_{(x, j)\in P_{i'}} b_{x,j}h_{x,j+(k_1-k)d\ell_x\deg(x)}).
  $$
  For $i=0$, the sum is empty and $fG_N^{(0)}$ represents $\phi_N^{\mathrm{init}}$; the subsequent corrections preserve this leading term.
  Since $S_{k,r}$ is a monic polynomial, we have
  $$
    G_N^{(i)}-G_N^{(i-1)}\in \sum\limits_{(x, j)\in P_i} k_2Q^{k-k_1}b_{x,j}h_{x,j+(k_1-k)d\ell_x\deg(x)}+H^0(D_N^{(i+1)})=\varphi_N^{(i)}+H^0(D_N^{(i+1)}).
  $$

  For each $i\geq i_N$ and  $(x, j)\in P_i$, if $\deg(x)=1$, set
  $$
    b_{x,j}=\left(\varphi_N^{(i)}R_{x,j}^{-1}\right)(x).
  $$
  Then we have $\left|b_{x,j}\right|\leq B$.
  If $\deg(x)=2$, we assume that $j$ is even, and let $b_{x,j}$ and $b_{x,j-1}$ be the unique real numbers such that
  $$
    \left(\frac{b_{x,j}R_{x,j}+b_{x,j-1}R_{x,j-1}}{\varphi_N^{(i)}}\right)(x)=1.
  $$
  Then since $(R_{x,j}/\varphi_N^{(i)})(x)$ and $(R_{x,j-1}/\varphi_N^{(i)})(x)$ form an orthogonal basis of $K(x)={\mathbb {C}}$ whose elements have norm at least $1/B$, we have $\left|b_{x,j}\right|\leq B$ and the same for $b_{x,j-1}$.

  Set
  $$
    G_N^{(i)}=G_N^{(i_N-1)}+\sum\limits_{i'=i_N}^{i}\sum\limits_{(x, j)\in P_{i'}}b_{x,j}R_{x,j}.
  $$
  Then
  $$
    G_N^{(i)}-G_N^{(i-1)}=\sum\limits_{(x, j)\in P_i}b_{x,j}R_{x,j}\in \varphi_N^{(i)}+H^0(D_N^{(i+1)}).
  $$
  Recall that $D_N^{(j_N)}$ is the last term of the patching filtration, and set $G_N=fG_N^{(j_N-1)}$.
  Define $\psi_N$ to map $\boldsymbol{\phi}_N=(\phi_N^{(i)})$ to the class of $G_N$.
  This gives a patching map.

  Finally, we take $k_1$ and $k_2$ large enough to ensure the boundary dominance condition.
  Take $M''$ to be a constant greater than $e^{\left|c_x\right|d\ell_x}$ for each $x\in {X}$.
  Set
  $$
    Q'(z)= \sum\limits_{i=1}^{i_N-1}\sum\limits_{(x, j)\in P_i} b_{x,j}h_{x,j+(k_1-k)d\ell_x\deg(x)}(z)
  $$
  For $(x,j)\in P_i$ with $1\leq i<i_N$, we have $j\leq(kd\ell_x-1)\deg(x)$. Hence
  \[
    \operatorname{div}_{\infty}(Q')\leq\sum_{x\in X}(k_1d\ell_x-1)x.
  \]
  The same bound holds for the partial sums defining $G_N^{(i)}$ for $1\leq i<i_N$.
  For each $z\in E$, we have
  $$
    \left|Q'(z)\right|\leq {M''}^{k_1}dlk_1BM'M_{k_1}/k_2.
  $$
  By taking $k_2>{M''}^{k_1}dlk_1BM'M_{k_1}2^{k_1}$, it is easy to see that $\left|Q'\right|\leq 2^{k_1}$ on $E$.
  Recall that for each $z'\in E$ with $\left|Q_{k_1, k_2}(z')\right|\geq 2^{1-k_2+k_1k_2}$, we have
  $$
    \left|R_{x,j}(z')\right|\leq M'\left|Q_{k_1,k_2}(z')\right|\sqrt{2}^{(j/\deg(x)-kd\ell_x)/(d\ell_x)}.
  $$
  We obtain
  \begin{align*}
    \left|\sum\limits_{i=i_N}^{j_N-1}\sum\limits_{(x,j)\in P_i}b_{x,j}R_{x,j}(z')\right|&\leq \sum\limits_{i=i_N}^{j_N-1}\sum\limits_{(x,j)\in P_{i}}  BM'\sqrt{2}^{(j/\deg(x)-kd\ell_x)/(d\ell_x)}\left|Q_{k_1,k_2}(z')\right|\\
    &=\sum\limits_{x\in{X}}\sum\limits_{j=2g\deg(x)+1} ^{((k-k_1)d\ell_x+2g)\deg(x)}BM'\sqrt{2}^{(j/\deg(x)-kd\ell_x)/(d\ell_x)}\left|Q_{k_1,k_2}(z')\right|\\
    &\leq \sum\limits_{x\in {X}} BM' \sqrt{2}^{2g/(d\ell_x)-k_1} (1-2^{-1/(4d\ell_x)})^{-1} \left|Q_{k_1,k_2}(z')\right|.
  \end{align*}
  By taking $k_1$ large enough, we can make
  $$
    2\# {X} BM' \sqrt{2}^{2g/(d\ell_x)-k_1} (1-2^{-1/(4d\ell_x)})^{-1}\leq 1
  $$
  for each $x\in {X}$.
  Then we have that $(G_N^{(j_N-1)}, 0)$ satisfies the boundary dominance condition.
  Recall that all boundary dominance conditions in the proof are compatible with $(f, d'g_D)$.
  Therefore, $(G_N, d'g_D)$ satisfies the boundary dominance condition.
  Note that both $k_1$ and $k_2$ do not depend on the choice of $f_x$ as long as $\max_{x\in {X}}\left|c_x\right|$ is bounded.
\end{proof}

\subsubsection{Initial patching elements}
Now, we use Proposition \ref{prop:arc-apr-fun} and Proposition \ref{prop:arc-mul-fun} to construct the local section.
\begin{lemma}\label{lem:pow-c}
  Let $z\in{\mathbb {C}}$ be a non-zero number.
  Let $U$ be an open set in ${\mathbb {C}}$ containing $z$.
  There exist a real number $\varepsilon>0$ and an integer $n'$ such that for each $n>n'$, and each $y\in{\mathbb {C}}$ with $\left|\log \left|y\right|-n\log \left|z\right|\right|<n\varepsilon$, there exists $z'\in U$ such that $y={z'}^n$.
\end{lemma}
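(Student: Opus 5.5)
We will write $y$ and $z$ in polar form and take $n$-th roots directly, choosing the branch of the root that lands near $z$. Write $z=re^{i\theta}$ with $r=|z|>0$. Since $U$ is open, there is $\rho>0$ such that the polar box
\[
  \{se^{i\phi}: |\log s-\log r|<\rho,\ |\phi-\theta|<\rho\}
\]
is contained in $U$. We take $\varepsilon=\rho/2$, and we choose $n'$ so that $\pi/n'<\rho$.

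Now let $n>n'$, and let $y$ satisfy $|\log|y|-n\log r|<n\varepsilon$. In particular $y\neq 0$, so we may write $y=Re^{i\psi}$. For the modulus we take $s=R^{1/n}$. Then
\[
  |\log s-\log r|=\frac{|\log R-n\log r|}{n}<\varepsilon<\rho .
\]

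For the argument, the $n$-th roots of $e^{i\psi}$ are $e^{i(\psi+2\pi k)/n}$ for $k\in\mathbb Z$. Their arguments form an arithmetic progression with step $2\pi/n$. So some root has argument $\phi$ with $|\phi-\theta|\leq \pi/n<\pi/n'<\rho$.

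Set $z'=se^{i\phi}$. By the two bounds above, $z'$ lies in the polar box, hence $z'\in U$, and by construction $z'^n=Re^{i\psi}=y$.

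There is no real obstacle. The only points to check are that $n'$ is chosen so the angular spacing $2\pi/n$ of the roots beats the angular radius $\rho$ of the box, and that dividing the hypothesis by $n$ gives the modulus bound. The constant $\varepsilon$ depends only on $z$ and $U$, as required.
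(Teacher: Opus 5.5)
Your proposal is correct and follows the paper's proof: the paper takes a logarithm $w$ of $z$ and a small rectangle around it whose exponential lies in $U$ (your polar box). It then chooses the branch of $\log y$ whose imaginary part divided by $n$ is within $\pi/n$ of $\arg z$, using $n' > \pi/\varepsilon$ exactly as you do.
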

\begin{proof}
  Take $w=a+bi\in {\mathbb {C}}$ such that $e^{w}=z$.
  Take $V$ to be an open neighborhood of $w$ such that $e^V\subset U$.
  Then $V$ contains a rectangle $\{w'=a'+b'i\in \mathbb C|a-\varepsilon_1\leq a'\leq a+\varepsilon_1, b-\varepsilon_1\leq b'\leq b+\varepsilon_1\}$.
  Take $\varepsilon=\varepsilon_1$ and $n'>\pi/\varepsilon_1$.
  Let $u=c+di\in {\mathbb {C}}$ such that $e^u=y$.
  Then let $d'$ be an integer such that $\left|2\pi d'-d+nb\right|\leq\pi$.
  Let $u'=c/n+(d-2\pi d')i/n$.
  Then $u'$ is contained in the rectangle, and hence in $V$.
  We are done by setting $z'=e^{u'}$.
\end{proof}

\begin{lemma}\label{lem:pow-r}
  Let $z\in{\mathbb {R}}$ be a positive real number.
  Let $U$ be an open interval in ${\mathbb {R}}_{+}$ containing $z$.
  There exists a real number $\varepsilon>0$ such that for each positive integer $n$, and each $y\in{\mathbb {R}}_{+}$ with $\left|\log \left|y\right|-n\log \left|z\right|\right|<n\varepsilon$, there exists $z'\in U$ such that $y={z'}^n$.
\end{lemma}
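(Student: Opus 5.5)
The plan is to mimic the proof of Lemma \ref{lem:pow-c}, replacing the complex logarithm by the real one. Since $z>0$ and $U\subset\mathbb R_{+}$ is an open interval containing $z$, the function $\log$ maps $U$ homeomorphically onto an open interval $\log U\subset\mathbb R$ containing $a=\log z$. I will choose $\varepsilon>0$ such that $[a-\varepsilon,a+\varepsilon]\subset\log U$; equivalently $[ze^{-\varepsilon},ze^{\varepsilon}]\subset U$. Unlike the complex case, no argument needs to be adjusted, so there is no lower bound $n'$ on $n$ and the same $\varepsilon$ works for every positive integer $n$.

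Next, given a positive integer $n$ and $y\in\mathbb R_{+}$ with $\bigl|\log|y|-n\log|z|\bigr|<n\varepsilon$, I set $z'=y^{1/n}=e^{(\log y)/n}$, the unique positive real $n$-th root, so that ${z'}^n=y$. Dividing the hypothesis by $n$ gives $|\log z'-\log z|<\varepsilon$, hence $\log z'\in(a-\varepsilon,a+\varepsilon)\subset\log U$ and therefore $z'\in U$.

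There is no real obstacle. The only point to keep in mind is that positivity of $y$ is exactly what ensures a real $n$-th root exists for every $n$, including even $n$. This is why the lemma is stated for $y\in\mathbb R_{+}$, and it matches the positivity requirement at $\mathbb R$-points arranged in Corollary \ref{cor:arc-pos-r}.
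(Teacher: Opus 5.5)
Your proof is correct and is essentially the paper's argument: choose $\varepsilon$ with $(e^{-\varepsilon}z,e^{\varepsilon}z)\subset U$ and take $z'=\sqrt[n]{y}$. Dividing the hypothesis by $n$ then gives $|\log z'-\log z|<\varepsilon$, so $z'\in U$.
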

\begin{proof}
  This is obvious by taking $\varepsilon$ such that $U$ contains the open interval $(e^{-\varepsilon}z, e^{\varepsilon}z)$ and setting $z'=\sqrt[n]{y}$.
\end{proof}
\begin{lemma}\label{lem:che-asy}
  Let $r$ be a positive real number.
  Let $y$ be a complex number with $\left|y\right|>1$, and set $z=r(y+1/y)$.
  Then $\lim_{n\to\infty}\left|S_{n, 2r}(z)\right|^{1/n}=r\left|y\right|$.
\end{lemma}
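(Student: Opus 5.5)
The plan is to reduce to the closed form of Chebyshev polynomials via the Joukowski parametrization; the case $|y|>1$ is exactly what makes the computation clean. Since $S_{n,2r}(z)=2^{1-n}(2r)^nT_n(z/(2r))=2r^nT_n(z/(2r))$, and $z/(2r)=(y+y^{-1})/2$, I would use the identity already invoked in the proof of Lemma \ref{lem:che-low}: if $2w=y+1/y$, then $2T_n(w)=y^n+y^{-n}$. This identity holds for every $y\neq 0$, because $T_n(\cos\theta)=\cos(n\theta)$ extends polynomially in $y=e^{i\theta}$.

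Step one is to obtain the explicit formula
\[
  S_{n,2r}(z)=r^n\left(y^n+y^{-n}\right).
\]
Step two is to take moduli and $n$-th roots:
\[
  \left|S_{n,2r}(z)\right|^{1/n}=r\left|y\right|\cdot\left|1+y^{-2n}\right|^{1/n}.
\]
Since $|y|>1$, we have $|y^{-2n}|\le |y|^{-2n}\to 0$. Hence for $n$ large
\[
  \tfrac12\le\left|1+y^{-2n}\right|\le 2,
\]
and therefore $\left|1+y^{-2n}\right|^{1/n}\to 1$. The limit is then $r|y|$.

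There is no real obstacle; the only point requiring care is the normalization $S_{n,r}(z)=2^{1-n}r^nT_n(r^{-1}z)$ with parameter $2r$. This gives $2^{1-n}2^nr^n=2r^n$, and combined with the factor $\tfrac12$ from $T_n=\tfrac12(y^n+y^{-n})$ it yields exactly $r^n(y^n+y^{-n})$. That is consistent with $S_{n,2r}$ being monic in $z=r(y+y^{-1})$, since the leading term of $r^n(y^n+\dots)$ matches $z^n$.
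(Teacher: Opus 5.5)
Your proof is correct and follows essentially the same route as the paper: the closed form $S_{n,2r}(z)=r^n(y^n+y^{-n})$, then an elementary estimate on the modulus. The only difference is cosmetic: the paper sandwiches $\left|S_{n,2r}(z)\right|$ between $r^n(\left|y\right|^n-1)$ and $r^n(\left|y\right|^n+1)$, whereas you factor out $r^n y^n$ and bound $\left|1+y^{-2n}\right|$.
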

\begin{proof}
  Note that $S_{n, 2r}(z)=r^n(y^n+y^{-n})$.
  Set $s=\left|y\right|>1$.
  Therefore, $r^n(s^n-1)\leq\left|S_{n, 2r}(z)\right|\leq r^n(s^n+1)$.
  The lemma follows since $\lim_{n\to\infty}(s^n-1)^{1/n}=\lim_{n\to\infty}(s^n+1)^{1/n}=s$.
\end{proof}

\begin{proof}[Proof of Proposition \ref{prop:loc-sec}, archimedean case]
  By Corollary \ref{cor:gre-com-exh}, replacing $g_{E, x}$ by $g_{E', x}$ where $E'$ is a simple set contained in the ${\mathbb {R}}$-interior of $E$ can change the constants $c_{x, y}$ by an arbitrarily small amount, and thus does not change the proposition.
  Then $c_1(g_{E', x})=\mu_{E', x}$ is the equilibrium measure, and is supported in the ${\mathbb {R}}$-interior of $E$.

  Take $0<\varepsilon_1<\varepsilon/6$.
  Note that $g_{E', x}\geq 0$.
  By Proposition \ref{prop:arc-apr-fun} and Corollary \ref{cor:arc-pos-r}, we obtain an integer $d_0$ such that for $\{d_x\}_{x\in {X}}$ and $d'$ multiples of $d_0$, there are rational functions $\{f_x\}_{x\in {X}}$ and $f$, such that $(f_x, \log 8)$ and $(f, d'g_D+\log 8)$ satisfy the boundary dominance condition on $E$ and that $\log\left|f_x\right|/d_x-g_{E', x}$ extends continuously to ${X}$, and its value is between $\varepsilon_1$ and $2\varepsilon_1$ on ${X}$.
  Also, $f$ is positive at all ${\mathbb {R}}$-points in ${X}$.

  Let $d>2g+2\# {X}$ be a multiple of $d_0$, and set $d_x=d\deg(x)$ for each $x$.
  For each $x\neq y$, set $c'_{x,y}=(d\deg(y))^{-1}\log\left|2z\right|$ where $z$ is the root of $z+z^{-1}=f_{y}(x)/2$ with $\left|z\right|\geq 1$.
  Since $\left|f_y(x)\right|\geq e^{d_y(g_{E',y}(x)+\varepsilon_1)}\geq e^{d\varepsilon_1}$,
  we have
  $$
    \left|z-f_{y}(x)/2\right|\leq \left|z^{-1}\right|\leq 1.
  $$
  Since $e^{d_y(g_{E', y}(x)+\varepsilon_1)}\leq \left|f_y(x)\right|\leq e^{d_y(g_{E', y}(x)+2\varepsilon_1)}$, we obtain
  $$
    e^{d_y(g_{E', y}(x)+\varepsilon_1)}/2-1\leq \left|z\right| \leq e^{d_y(g_{E', y}(x)+2\varepsilon_1)}+1.
  $$
  Therefore, as we take $d$ large enough,  $\left|c_{x,y}-c'_{x,y}\right|<3\varepsilon_1$ for each $x\neq y$.
  Set $c''_{x,x}=(d\deg(x))^{-1}\log\left|f_x\right|_{x, d}$.

  For each $x$, let $\{h_{x,j}\}_{j\leq \deg(x)}$ be a set of generators of
  $$
    H^0\left(dx-\sum_{y\neq x}y\right)\Big/H^0\left(dx-\sum_{y\in{X}}y\right).
  $$
  By the Riemann--Roch theorem, the above is a $K$-vector space of dimension $\deg(x)$.
  Note that $h_{x,j}$ is a rational function with a unique pole $x$ of order $d$ and having a zero at each $y\in X\setminus\{x\}$.
  Take numbers $e_{x,j}\in K$ for each $j\leq \deg(x)$.
  Then there exists $M>0$, such that if we have $\left|e_{x,j}\right|<M$, then $\left|\sum_{j=1}^{\deg(x)}e_{x,j}h_{x,j}\right|<4$ on $E$.
  In what follows, we always take $\left|e_{x,j}\right|<M$.
  Set $P_{x}=f_x+\sum_{j=1}^{\deg(x)}e_{x,j}h_{x,j}$.
  The above construction enables us to take $(P_xh_{x,1}^{-1})(x)$ as any element in an open neighborhood of $(f_xh_{x,1}^{-1})(x)$, without changing $P_x(y)$ for each $y\neq x$.
  By Lemma \ref{lem:pow-c} and Lemma \ref{lem:pow-r}, there is an open interval $I_x=(c_{x, min}, c_{x, max})$ in ${\mathbb {R}}$ containing $c''_{x,x}$ and an integer $l_1>0$ with the following property.
  For each $l>l_1$ and $\phi_x\in H^0(dlx)/H^0((dl-1)x)$ with $(dl\deg(x))^{-1}\log\left|\phi_x\right|_{x, dl}\in I_x$, if $\phi_x$ is positive at each ${\mathbb {R}}$-point of ${X}$, then we can choose $\left|e_{x,j}\right|\leq M$ such that $P_x^l$ represents $\phi_x$.
  We may assume that $c_{x, max}, c_{x, min}\in [c_{x, x}-\varepsilon, c_{x, x}+\varepsilon]$.
  Take $\varepsilon'<(c_{x, max}-c_{x, min})/2$ for each $x$, and set $I'_x=(c_{x, min}+\varepsilon', c_{x, max}-\varepsilon')$.

  Let $(a'_{x})_{x\in {X}}$ be a limit point of $({d'}^{-1}\log\left|f(x)\right|)_{x\in {X}}$ as we take $d'\in\{\kappa_{j}\}$.
  Take $\kappa'_j$ to be a subsequence of $\kappa_j$ such that when we take $d'=\kappa'_j$, $\left|{d'}^{-1}\log\left|f(x)\right|-a'_{x} \right|\leq s_x\varepsilon'/2$ and the number of zeros of $f$ in $E'$ is at least $(\mu(E')-\varepsilon)d'$ for each connected component $E'$ of $E$.

  Take $c'_{x, x}\in I'_x$.
  Set $c'_x=\sum_{y\in {X}}s_y\deg(y)c'_{x, y}/s_x$.
  Let $N_2$ be the smallest positive integer such that $N_2s_x$ is an integer at least $2g+2$ for each $x$.
  Let $l$ be a multiple of $N_2$ and set $\ell_x=s_xl$ for each $x\in {X}$.
  Let $d'=N=\kappa'_{j}$ be a multiple of $ld$.
  By Lemma \ref{lem:che-asy}, $\lim_{\ell_y\to\infty}(\deg(y)d\ell_y)^{-1}\log\left|S_{\ell_y, 4}(f_y(x))\right|=c'_{x, y}$.
  Thus, as we take $l$ large enough,
  \[
    \left|(\deg(y)d\ell_y)^{-1}\log\left|S_{\ell_y, 4}(f_y(x))\right|-c'_{x, y}\right|\leq \varepsilon'/(2 \# {X})
  \] for each $x\neq y$.
  For each $\phi_N^{\mathrm{init}}\in V_N^{(0)}$ that is positive at each ${\mathbb {R}}$-point and satisfies
  \[
    \log\left|\phi_N^{\mathrm{init}}\right|_{x, Ns_x}=N(a'_x+s_xc'_x)
  \]
  for each $x$, we have
  $$
    (Ns_x\deg(x))^{-1}\log\left|\phi_N^{\mathrm{init}}f^{-1}\left(\prod\limits_{y\in {X}, y\neq x}S_{\ell_y, 4}(f_y)\right)^{-N/ld}\right|_{x, Ns_x}\in I_x
  $$
  for each $x\in {X}$.

  By enlarging $l$, we may assume that $\ell_x\geq l_1$ for each $x\in {X}$.
  Then we are able to take $\left|e_{x, j}\right|$ such that
  $$
    f\left(\prod\limits_{x\in {X}} S_{\ell_x, 4}(P_x(z))\right)^{N/ld}
  $$
  represents $\phi_N^{\mathrm{init}}$.
  Note that a band filtration for $(s_x)$ and $N$ is a band filtration for $\ell_x$ and $N/l$.
  For each patching vector, the above choice of $P_x$ is made with $\phi_N^{\mathrm{init}}$ replaced by $\phi_N^{(0)}$, and is fixed for all vectors with the same $\phi_N^{(0)}$.
  The proposition follows by applying Proposition \ref{prop:arc-mul-fun} to $P_x$ and $f$.
  Note that the zeros of $G_N$ are controlled by the fact that $G_N$ is a multiple of $f$, and hence every zero of $f$ is also a zero of $G_N$ with at least the same order.

\end{proof}
\subsection{Non-archimedean case}
Let $K$ be a discretely valued non-archimedean field with finite residue field $k$.
Let $C$ be a projective, smooth and geometrically integral curve of genus $g$ over $K$.
\begin{lemma}\label{lem:rat-non-ope}
  Let $U$ be a non-empty open set in $C^{\mathrm{an}}$, $x$ a closed point in $C$.
  There is a rational function $f$ whose only pole is at $x$ and whose zeros are contained in $U$.
\end{lemma}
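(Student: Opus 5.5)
The plan is to pass to a model and use Riemann--Roch on the special fiber, so that the zeros of a global rational function can be confined to a prescribed residue class.

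\textbf{Reductions.} Since $U$ is open and non-empty, it contains a non-empty set of the form $\operatorname{red}^{-1}(W)$. Here $W$ is a non-empty open subset of the smooth locus of one irreducible component $F$ of the reduced special fiber of a suitable projective (sncd) model $\mathcal C$. To see this, recall that the sets $\operatorname{red}^{-1}(W)$ for varying models form a basis of neighbourhoods of divisorial points, and that divisorial points are dense in $C^{\mathrm{an}}$. After further blowing up at closed points of $F$, we may even take $W=\operatorname{red}^{-1}$ of a single closed point $p$ of $F$ not lying on any other component. Then $\operatorname{red}^{-1}(p)$ is an open residue polydisc contained in $U$.

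\textbf{Construction.} Let $\overline{x}$ be the closure of $x$ in $\mathcal C$; its reduction meets the special fiber in finitely many points. We seek a rational function $f$ with $\operatorname{div}_\infty(f)=nx$ for some large $n$ such that every zero of $f$ reduces to $p$. Equivalently, consider the horizontal divisor $\mathcal Z=\operatorname{div}_0(f)^{\mathrm{cl}}$. We require $\mathcal Z$ to meet the special fiber only at $p$.

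We arrange this by working with Cartier divisors on $\mathcal C$. Take $\mathcal L=\mathcal O_{\mathcal C}(n\overline{x}+\mathcal V)$, where $\mathcal V$ is a vertical divisor supported on the components other than $F$ and is chosen so that $\mathcal L$ has degree $0$ on every component other than $F$. This choice uses that the intersection matrix of the fiber components is negative semidefinite with kernel spanned by the whole fiber, so the required $\mathbb Q$-solution exists; multiplying $n$ clears denominators. Then $\mathcal L|_{G}$ is a degree-$0$ line bundle on each $G\neq F$, and $\deg \mathcal L|_F=n\deg(x)>0$.

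\textbf{Choice of section.} Next we pick $s\in H^0(\mathcal C,\mathcal L)$ whose restriction to the special fiber has the following properties. It is nowhere vanishing on each $G\neq F$; this requires $\mathcal L|_G$ to be trivial, which we arrange by replacing $n$ by a multiple that kills the torsion classes $\mathcal L|_G\in\operatorname{Pic}^0(G)$, available since $k$ is finite. On $F$ it vanishes only at $p$. For the latter, replace $n$ by a multiple so that $\mathcal L|_F\cong\mathcal O_F(m p)$ with $m\geq 2g(F)+1$; this again uses that $\operatorname{Pic}^0(F)(k)$ is finite, together with a suitable multiple of $p$ of the correct degree.

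Since $\mathcal L$ is ample relative to $O_K$ after adding a large multiple of $\overline{x}$, Serre vanishing gives $H^1=0$ for large $n$. Hence sections on the special fiber, glued compatibly, lift to $\mathcal C$.

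The section $s$ then gives $f=s/s_{n\overline{x}+\mathcal V}$, a rational function with only pole $x$ of order $n$. Its zeros specialize only to points where the reduction of $s$ vanishes, namely to $p$. So all zeros lie in $\operatorname{red}^{-1}(p)\subset U$.

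\textbf{Expected obstacle.} The main obstacle is ensuring simultaneously the triviality on the other components, which are glued at nodes, and the prescribed vanishing on $F$. The gluing requires the section on $F$ to match the nowhere-vanishing sections on the neighbouring components at the nodes, up to scalars that we can adjust component by component on the dual graph. Scalars can always be matched along a tree. For cycles in the dual graph, we first try taking a further multiple of $n$ to trivialize the resulting gluing character, which lies in the finite group $(k^\times)^{b_1}$.
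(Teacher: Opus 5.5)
\textbf{The gap is the lifting step.} By construction $\mathcal L=\mathcal O_{\mathcal C}(n\overline x+\mathcal V)$ has degree $0$ on every component $G\neq F$, and this holds for every $n$ because $\mathcal V$ is rescaled with $n$. So whenever the special fibre is reducible, $\mathcal L$ is nef but not relatively ample, and Serre vanishing does not apply. Reducibility is forced for small $U$ once $g\geq 1$, since every regular model dominates the minimal one.

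In general $H^1(\mathcal C_k,\mathcal L_k)\neq 0$, so $H^0(\mathcal C,\mathcal L)\otimes k\to H^0(\mathcal C_k,\mathcal L_k)$ is not surjective. Your section is rigid (its divisor on the reduced fibre is $mp$), so nothing puts it in the image, and this really does fail. Take an elliptic curve $A$ with good reduction and origin $o$, and blow up its smooth model at a $k$-point $q_0\neq\tilde o$ whose order $n_0$ in $A_k(k)$ is prime to $\operatorname{char}k$. Then $\mathcal C_k=F\cup G$ with $F\cong\mathbb P^1_k$ exceptional and $G\cong A_k$. With $x=o$ and $p\in F\setminus G$ a $k$-point you get $\mathcal V=nG$, $\mathcal L|_F\cong\mathcal O_F(np)$ and $\mathcal L|_G\cong\mathcal O_G(n(\tilde o-q_0))$. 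So all your conditions hold once $n_0\mid n$, while $H^1(\mathcal C_k,\mathcal L_k)\cong H^1(G,\mathcal O_G)\neq 0$.

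A lift would give $f$ with $\operatorname{div}(f)=\sum_i z_i-n\,o$ and all $z_i\in\operatorname{red}^{-1}(p)=P+\{|w|<|\pi|\}$, where $P\in A(K)$ and $w$ is the formal-group parameter. Abel's theorem then forces $nP\in A_2(K)$, where $A_i(K)=\{|w|\leq|\pi|^i\}$. Since $n_0$ acts bijectively on $A_1(K)/A_2(K)\cong k$, this fails for all but one of the $|k|$ choices of $p$, for every $n$ with $n_0\mid n$ and $\operatorname{char}k\nmid n$. So the obstruction lies in the gluing of $\mathcal L$ along the formal neighbourhood of $\bigcup_{G\neq F}G$. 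The ``expected obstacle'' you flag, gluing at nodes on the reduced fibre, is harmless, because multidegree-zero line bundles on the reduced fibre form a finite group over $k$.

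\textbf{What is needed to close it.} You need a genuinely new input: some power of $\mathcal L$ is globally generated. Equivalently, the components $G\neq F$ can be contracted, which follows from Artin-type contractibility over a finite residue field, i.e.\ semi-ampleness of a vertically nef bundle with $\deg\mathcal L_K>0$. Then $\mathcal L^{\otimes N}=\phi^*\mathcal L'$ for a contraction $\phi\colon\mathcal C\to\mathcal C'$ with irreducible special fibre and $\mathcal L'$ ample. On $\mathcal C'$, finiteness of $\operatorname{Pic}^0$ of $(\mathcal C'_k)_{\mathrm{red}}$ gives a section of $(\mathcal L')^{\otimes M}$ on that curve vanishing only at $\phi(p)$. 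Serre vanishing on $\mathcal C'$ now does lift it, and pulling back gives your $f$.

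\textbf{Comparison with the paper.} The paper avoids models entirely. It uses that the Abel--Jacobi map on $U^g$ has image containing an open subset of $J_C$, together with compactness of $J_C(K)$ and a pigeonhole argument, to produce an effective divisor supported in $U$ and linearly equivalent to $mx$.

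\textbf{A smaller point.} For Zariski-open $W$ the set $\operatorname{red}^{-1}(W)$ is closed and is not a neighbourhood of $\xi_F$. The correct reduction is that an open set containing a type II point contains all but finitely many of its residue classes.
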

\begin{proof}
  If $g=0$, the lemma is trivial.
  Assume that $g\geq 1$.
  By replacing $K$ by a finite field extension, we may assume that $x$ is a $K$-point.
  Let $J_C$ be the Jacobian of $C$.
  Let $\alpha: C^g\longrightarrow J_C$ be the map $(x_1, x_2, \ldots, x_g)\longmapsto (x_1+x_2+\cdots+x_g)-gx$.
  Let $U'$ be the smooth locus of $\alpha^{\mathrm{an}}|_{U^g}$, $\mathbf{y}:=(y_1, y_2, \ldots, y_g)$ an algebraic point in $U'$.
  Then $\alpha^{\mathrm{an}}(U')$ contains an open neighborhood $V$ of $z=\alpha^{\mathrm{an}}(\mathbf{y})$.
  By replacing $K$ by a finite field extension, we may assume that $y_1, y_2, \ldots, y_g$ are all $K$-points.
  By replacing $V$ by $V\bigcap (2z-V)$, we may assume that $V=2z-V$.

  Let $V(K):=V\bigcap J_C(K)$.
  By the compactness of $J_C(K)$, there is a finite cover $\{V_i:=V+z_i\}_{1\leq i\leq n}$ where $z_i\in J_C(K)$ for each $i$.
  Consider the sequence $\{V+4iz\}_{0\leq i\leq n}$.
  By the pigeonhole principle, there exist $0\leq i_1<i_2 \leq n$ such that for some $j$, $V_j$ intersects with both $V+4i_1z$ and $V+4i_2z$.
  Let $z_1', z_2'$ be points in $V+4i_1z\bigcap V_j$ and $V+4i_2z\bigcap V_j$ respectively.
  Then the points $z_2'-4i_2z, z_j+2z-z_2', z_1'-z_j$, and $(4i_1+2)z-z_1'$ are contained in $V$.
  Note that $(z_2'-4i_2z)+ (z_j+2z-z_2')+(z_1'-z_j)+((4i_1+2)z-z_1')+4(i_2-i_1-1)z=0$.
  On the other hand, any divisor in $V(K)$ is of the form $D-gx$ where $D$ is an effective divisor supported on $U$.
  Thus, there is an effective divisor supported on $U$ linearly equivalent to $mx$ for some $m>0$.
\end{proof}

For a non-zero rational function $f$ on $C$, a closed point $z$ in $C$ and a Green function $g_{z}$ at the point $z^{\mathrm{an}}$, the \emph{leading coefficient of $f$ at $z$} is defined as $\lim_{z'\to z} \left|f(z')\right|e^{{\mathrm{ord}}_z(f)g_z}$.
For an integer $N$ and a nonnegative real number $M$, we say that \emph{the coefficient of $f$ of order $N$ at $z$ is at most $M$} if $\lim_{z'\to z} \left|f(z')\right|e^{-Ng_z}\leq M$.

Let ${\mathcal {C}}$ be a projective regular model of $C$.
Let $E$ be the compact set ${\mathrm{red}}^{-1}(W)$, locally admissible in $C^{\mathrm{an}}$, where $W$ is a non-empty open subset of the special fiber ${\mathcal {C}}_{k}$.
Let $\xi_1, \xi_2, \ldots, \xi_n$ be distinct divisorial points in $E$ corresponding to vertical prime divisors $D_1, D_2, \ldots, D_n$ of ${\mathcal {C}}$, respectively.
This means that $W\bigcap D_i$ is not empty for each $i$.
Let $U_i$ be ${\mathrm{red}}^{-1}(y_i)$ where $y_i$ is a regular closed point in $W$ contained in $D_i$.
\begin{lemma}\label{lem:non-apr-fun}
  Let $x$ be an algebraic point on $C^{\mathrm{an}}$ outside $E$.
  Let $g$ be a Green function of $x/\deg(x)$ such that $c_1(g)=\mu$ where $\mu$ is a probability measure supported on the set $\{\xi_1, \xi_2, \ldots, \xi_n\}$.
  For every $\varepsilon>0$, there is a rational function $f$ with pole divisor $kx$ and zeros contained in $\bigcup_{i=1}^nU_i$ such that
  $$
    0\leq\frac{\log\left|f\right|}{k\deg(x)}-g\leq \varepsilon
  $$
  on $C^{\mathrm{an}}\setminus(\{x\}\bigcup_{i=1}^nU_i)$.
  Note that $\left|\frac{\log\left|f\right|}{k\deg(x)}-g\right|$ extends continuously to $x$, and the above shows that the value is at most $\varepsilon$. Moreover, for each connected component $E'$ of $E$, the number of zeros of $f$ in $E'$ is at least $k\deg(x)(\mu(E')-\varepsilon)$.
\end{lemma}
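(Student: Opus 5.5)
The plan is to build $f$ with $\operatorname{div}(f)=Z-kx$, where $Z$ is an effective divisor supported on $\bigcup_i U_i$ whose normalized Dirac measure approximates $\mu$ to order $O(1/k)$. The comparison $\log|f|/(k\deg(x))-g$ is then controlled through potentials. The key local fact is this: if $z$ is a closed point with $z^{\mathrm{an}}\subset U_i=\operatorname{red}^{-1}(y_i)$, then $U_i$ is a residue disc attached to the skeleton at $\xi_i$, and $\partial U_i=\{\xi_i\}$. Hence the balayage of $\delta_z$ onto $C^{\mathrm{an}}\setminus U_i$ is $\delta_{\xi_i}$. So $U_{\delta_z}-U_{\delta_{\xi_i}}$ vanishes identically outside $U_i$ and is $\leq 0$ inside $U_i$; I would check this using Proposition \ref{prop:non-sub-avg} and the Dirichlet problem of \cite{Thu05}. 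Consequently, outside $\{x\}\cup\bigcup_i U_i$ the function $\log|f|/(k\deg(x))$ differs from $g$ only through the discrepancy between the weights of $Z$ at the discs $U_i$ and the masses $\mu(\xi_i)$, plus a normalization constant.

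\textbf{Step 1 (choice of $Z$).} Fix $K$-rational closed points $z_i\in U_i$; these exist since $y_i$ is a regular point of the reduced special fiber, so its Hensel lifts exist. Put $Z_0=\sum_i\lfloor k\deg(x)\mu(\xi_i)\rfloor z_i$. The class $[kx-Z_0]$ has degree bounded by $n$ and lies in a fixed coset of $\operatorname{Pic}^0(C)(K)$.

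\textbf{Step 2 (class correction).} I must correct this class by an effective divisor $Z_1$ supported on $\bigcup_i U_i$ of \emph{bounded} degree, independent of $k$. I would mimic Lemma \ref{lem:rat-non-ope}. First, the map $U_1^g\to J_C$ has open image near a smooth point. Second, $J_C(K)$ is compact, so finitely many translates of this image cover it. Third, the pigeonhole trick in that lemma produces an effective divisor in $U_1$ linearly equivalent to $m_0x$ for some fixed $m_0$. Together these give, for every class of degree $\geq d_1$ (a fixed bound), an effective representative supported in $U_1$ of degree at most $d_1+m_0$. If a finite extension $K'/K$ is needed to make the relevant points rational, I would construct $f$ over $K'$ and replace it by its norm to $K$. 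Taking the norm multiplies $k$ and all degrees uniformly and preserves the inequalities, since $|N(f)|$ is the product of the conjugates.

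\textbf{Step 3 (conclusion).} Set $Z=Z_0+Z_1$ and $\operatorname{div}(f)=Z-kx$. Then the masses of $\delta_Z/(k\deg(x))$ at the discs $U_i$ differ from $\mu(\xi_i)$ by $O(1/k)$. Outside $\{x\}\cup\bigcup_i U_i$ the function $\log|f|/(k\deg(x))-g$ therefore equals a constant plus $\sum_i O(1/k)\,U_{\delta_{\xi_i}}$. This error is $O(1/k)$ uniformly, and it extends continuously to $x$ because both functions have the same logarithmic singularity there. Multiplying $f$ by a power of a uniformizer shifts the constant in steps of $\log|\pi|^{-1}/(k\deg(x))=O(1/k)$. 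So for $k$ large the difference can be placed in $[0,\varepsilon]$.

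The zero count is then immediate. All zeros lie in $\bigcup_i U_i$, and $U_i$ is contained in the connected component of $E$ containing $\xi_i$. Hence each component $E'$ receives at least $\sum_{\xi_i\in E'}\lfloor k\deg(x)\mu(\xi_i)\rfloor\geq k\deg(x)(\mu(E')-\varepsilon)$ zeros once $k$ is large. I expect Step 2 to be the main obstacle: getting a correction of uniformly bounded degree, supported in the discs, over the right field. Compactness of $J_C(K)$ is what makes the bound uniform in $k$.
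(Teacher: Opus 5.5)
Your local fact (a disc $U_i$ with one-point boundary $\{\xi_i\}$) and the $O(1/k)$ bookkeeping in Step 3 are fine. There is, however, a genuine gap in Step 2.

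\textbf{The gap.} You claim that every class of sufficiently large degree has an effective representative supported in $U_1$. This does not follow from the argument of Lemma \ref{lem:rat-non-ope}, and it is false. In that lemma the translates $V+z_j$ covering $J_C(K)$ use arbitrary $z_j\in J_C(K)$, which are not classes of divisors supported in $U$. The pigeonhole relation is arranged precisely so that the $z_j$ cancel. It yields only that some $m_0x$ is equivalent to an effective divisor in $U$, i.e.\ it handles the trivial class and nothing more. In the archimedean Lemma \ref{lem:gen-jac-c}, multiples of an open subset of a connected torus cover everything, but over a $p$-adic field there are genuine obstructions.

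\textbf{Counterexample.} Take $C$ an elliptic curve over $\mathbb Q_p$ ($p\geq 5$) with good reduction, and let $\mathcal C$ be its smooth model. Put $x=O$, $W=\mathcal C_{\mathbb F_p}\setminus\{\widetilde O\}$, $n=1$, $\xi_1$ the Gauss point, and $y_1\in\mathcal C_{\mathbb F_p}(\mathbb F_p)\setminus\{\widetilde O\}$. Every geometric point of a divisor supported in $U_1$ reduces to $y_1$, and reduction is a group homomorphism. So if $Z$ is effective of degree $e$ in $U_1$ and $Z\sim(e-1)O+P$, then $P$ reduces to $e\,y_1$. Consequences:
\begin{itemize}
\item In every degree there are classes with no such representative.
\item For your $Z_0=az_1$ and $k=a+e$, the needed $Z_1$ exists only if $k\,y_1=\widetilde O$.
\item For any other $k$, no $f$ with pole divisor $kO$ and all zeros in $U_1$ exists at all.
\end{itemize}
So $k$ cannot be an arbitrary large integer.

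To repair your route you would have to:
\begin{itemize}
\item impose divisibility conditions on $k$ and on the multiplicities in $Z_0$;
\item show that the resulting class lies in the set of achievable classes;
\item still obtain a degree bound on $Z_1$ that is uniform in $k$.
\end{itemize}
None of this is in the proposal. A minor point: $K$-rational points in $U_i$ need not exist, since $y_i$ need not be $k$-rational and $D_i$ may be non-reduced. However, any closed point in $U_i$ would do.

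\textbf{How the paper proceeds.} The paper avoids class corrections entirely.
\begin{itemize}
\item Lemma \ref{lem:rat-non-ope} gives, for each $i$, one function $f_i$ with $\operatorname{div}_\infty(f_i)=n_ix$ and zeros in $U_i$. Only the trivial class is needed for this.
\item Your one-point-boundary argument shows that $\log|f_i|/(n_i\deg(x))$ coincides outside $U_i$ with a Green function $g_i$ of curvature $\delta_{\xi_i}$.
\item Writing $g=\sum_i\mu(\xi_i)g_i+r\log|\kappa|$, the paper approximates the coefficients by rationals $p_i$, $p$ with $\sum_ip_i=1$.
\item It takes $f=\kappa^{dp}\prod_if_i^{dp_i/(n_i\deg(x))}$ for a suitable common multiple $d$.
\end{itemize}
Then $\log|f|=d\sum_ip_ig_i+dp\log|\kappa|$ exactly outside $\bigcup_iU_i$. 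The divisibility constraints that defeat your Step 2 are absorbed into the choice of $d$, i.e.\ of $k$.
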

\begin{proof}For each $i\leq n$, by Lemma \ref{lem:rat-non-ope}, there is a rational function $f_i$ with pole divisor $n_ix$ and zeros contained in $U_i$.
  Let $g_i$ be the Green function of $x/\deg(x)$ with $c_1(g_i)=\delta_{\xi_i}$.
  Then the function
  $$
    \frac{\log\left|f_i\right|}{n_i\deg(x)}-g_i
  $$
  extends to a harmonic function on $C^{\mathrm{an}}\setminus U_i$.
  Since the boundary of $C^{\mathrm{an}}\setminus U_i$ is a point, the function is constant on $C^{\mathrm{an}}\setminus U_i$ by uniqueness for the Dirichlet problem.
  We may assume that the above constant is $0$ by adding a constant to $g_i$.

  Let $\kappa$ be a non-zero element in $K$ with $\left|\kappa\right|\neq 1$.
  Then $g$ is an ${\mathbb {R}}$-linear combination of $g_i$ and $\log\left|\kappa\right|$: $g=\sum_{i=1}^{n}r_ig_i+ r\log\left|\kappa\right|$ for $r_i=\mu(\xi_i)>0$.
  Note that $g_i-g_j$ is bounded on $C^{\mathrm{an}}\setminus(\{x\}\bigcup U_i\bigcup U_j)$.
  Thus, $g$ can be approximated by ${\mathbb {Q}}$-linear combinations of $g_i$ and $\log\left|\kappa\right|$.
  Assume that $-\varepsilon\leq g-\sum_{i=1}^np_ig_i-p\log\left|\kappa\right| \leq 0$ where $p_1, p_2, \ldots, p_n\geq 0$ and $p$ are rational numbers with $p_1+p_2+\cdots+p_n=1$.
  Moreover, $p_i$ can be taken such that $\left|p_i-r_i\right|\leq \varepsilon/n$.

  Take $d$ to be an integer such that $dp_i$ is a multiple of $n_i\deg(x)$ for all $i$, and that $dp$ is an integer.
  Let
  $$
    f:=\kappa^{dp}\prod\limits_{i=1}^{n}f_i^{\frac{dp_i}{n_i\deg(x)}}.
  $$
  Then $\log\left|f\right|=d\sum_{i=1}^np_ig_i+dp\log\left|\kappa\right|$ on $C^{\mathrm{an}}\setminus(\{x\}\bigcup_{i=1}^nU_i)$.
  Therefore,
  $$
    0\leq \frac{\log\left|f\right|}{d}-g= \sum_{i=1}^np_ig_i+p\log\left|\kappa\right|-g\leq \varepsilon
  $$
  on $C^{\mathrm{an}}\setminus(\{x\}\bigcup_{i=1}^nU_i)$.
  The number of zeros of $f$ in $E'$ is $d\sum_{\xi_i\in E'}p_i\geq d\sum_{\xi_i\in E'}(r_i-\varepsilon/n)\geq d(\mu(E')-\varepsilon)$.
\end{proof}

Let ${X}$ be a finite set of algebraic points on $C^{\mathrm{an}}$ outside $E$.
Let $g_x$ be a Green function of $x$.
Let $g_{E, x}$ be the equilibrium potential of $E$ at $x$.
Let $\mu_{E, x}=c_1(g_{E, x})$ be the equilibrium measure which is a probability measure supported on $\partial E$.
Fix $\varepsilon>0$.
By Lemma \ref{lem:non-apr-fun}, there is a rational function $f_x$ such that the pole divisor of $f_x$ is $d_xx$, that the zeros of $f_x$ are contained in $E$, and that
$0\leq \log\left|f_x\right|- \deg(x)d_xg_{E, x}\leq \deg(x)d_x\varepsilon$
on $C^{\mathrm{an}}\setminus(\{x\}\bigcup E)$.
Let $D$ be an effective ${\mathbb {Q}}$-divisor of degree $1$ supported outside ${X}$ and $E$.
Let $g_D$ be a Green function of $D$ such that $c_1(g_D)$ is a probability measure supported on finitely many divisorial points.
Then there exists a rational function $f$ such that the pole divisor of $f$ is $dD$, that the zeros of $f$ are contained in $E$, and that
$
  0\leq \log\left|f\right|-dg_D\leq d\varepsilon
$
on $C^{\mathrm{an}}\setminus({\mathrm{supp}}(D)\bigcup E)$.
By taking powers of $f_x$ and $f$, we may assume that $d=d_x\geq 2g$ for each $x\in {X}$.

For $x\neq y\in {X}$, set $c_{x,y}=g_{E, x}(y)$ and $c'_{x,y}=(d\deg(y))^{-1}\log\left|f_y(x)\right|$.
Take $c_{x,x}$ to be the real number $\lim_{x'\to x}(g_{E, x}(x')-g_x(x')/\deg(x))$.
Set $c'_{x,x}=(d\deg(x))^{-1}\log\left|f_x\right|_{x, d}$.
Then $\left|c_{x,y}-c'_{x,y}\right|\leq \varepsilon$.
Set $a_x=g_D(x)$ and $a'_x=d^{-1}\log\left|f(x)\right|$.
Let $s_x$ be a positive rational number for each $x$.
Set $c'_x=\frac{1}{s_x}\left(\sum_{y\in {X}} s_y\deg(y)c'_{x,y}\right)$.
Let $N_1$ be the smallest integer such that $N_1s_x$ is an integer at least $2g+2$ for each $x$.
Take $N$ to be any multiple of $dN_1$.
Now, we construct the patching map.

\begin{proof}[Proof of Proposition \ref{prop:loc-sec}, non-archimedean case]
  For each $x\in X$ and $2g<j\leq2g+dN_1s_x$, choose
  representatives
  \[
    \{h_{x,j'}:\deg(x)(j-1)<j'\leq\deg(x)j\}
  \]
  of a basis of $H^0(jx)/H^0((j-1)x)$.
  Note that by the Riemann--Roch theorem, we have
  $$
    \dim_{K}(H^0(jx)/H^0((j-1)x))=\deg(x).
  $$
  Consider the norm $\left|\cdot\right|_{x,j}$ on $H^0(jx)/H^0((j-1)x)$.
  Let $M_1$ be a real number such that for each $x\in {X}$, $2g<j \leq2g+dN_1s_x$, and $\phi\in H^0(jx)/ H^0((j-1)x)$ with $\left|\phi\right|_{x,j}\leq e^{jc'_x}$, $\phi$ is represented by a linear combination of $\{h_{x,j'}\}_{\deg(x)j-\deg(x)+1\leq j'\leq \deg(x)j}$ with coefficients of norm at most $M_1$.
  Set
  $$
    M_2=\sup\limits_{x\in {X}, 2g\deg(x)<j' \leq 2g\deg(x)+dN_1s_x\deg(x)}\sup\limits_{z\in \partial E}\left|h_{x,j'}(z)\right|.
  $$

  Set $f'=\prod_{x\in {X}}f_x^{N_1s_x}$.
  For each $j'>\deg(x)(2g+dN_1s_x)$, let $t$ be the integer such that $2g\deg(x)<j'-dtN_1s_x\deg(x) \leq\deg(x)(2g+dN_1s_x)$.
  Set $h_{x, j'}=h_{x, j'-dtN_1s_x\deg(x)}{f'}^{t}$.
  Then for each $j>2g$ and $\phi\in H^0(jx)/H^0((j-1)x)$ with $\left|\phi\right|_{x,j}\leq e^{jc'_x}$, $\phi$ is represented by a linear combination of $\{h_{x,j'}\}_{\deg(x)j-\deg(x)+1\leq j'\leq \deg(x)j}$ with coefficients of norm at most $M_1$.

  Set $\phi_N^{\mathrm{init}}=f^{(N/d)}\prod_{x\in {X}}f_x^{Ns_x/d}$.
  Then $\phi_N^{\mathrm{init}}$ is an $n$-th power if $N$ is a multiple of $N_1dn$.
  Take $0<B<1/(M_1M_2)$.
  Let $\{D_N^{(j)}\}$ be a band filtration of $(s_x)$ and $N$.
  Let $(\phi_N^{(j)})$ be a patching vector with initial element $\phi_N^{\mathrm{init}}$ and bounded by $B$ and $(c'_x)$.

  Set $\varphi_N^{(j)}=\phi_N^{(j)}f^{-N/d}$ for $j\geq 1$ and $\varphi_N^{(0)}=\left(\phi_N^{(0)}-\phi_N^{\mathrm{init}}\right)f^{-N/d}$.
  Then $\left|\varphi_N^{(j)}\right|_{x, {\mathrm{ord}}_{x}(D_N^{(j)})}\leq Be^{c'_x{\mathrm{ord}}_x(D_N^{(j)})}$.
  Therefore, we may take $\{b_{x, j'}\in K\}_{\deg(x){\mathrm{ord}}_x(D_N^{(j)})-\deg(x)+1\leq j'\leq \deg(x){\mathrm{ord}}_x(D_N^{(j)})}$ such that
  $$
    \sum\limits_{x\in {X}}\sum\limits_{j'=\deg(x){\mathrm{ord}}_x(D_N^{(j+1)})+1}^ {\deg(x){\mathrm{ord}}_x(D_N^{(j)})}b_{x, j'}h_{x, j'}
  $$
  represents $\varphi_N^{(j)}$, and we have $\left|b_{x, j'}\right|<1/M_2$.
  Thus,
  $$
    \left|\sum\limits_{x\in {X}}\sum\limits_{j'=\deg(x){\mathrm{ord}}_x(D_N^{(j+1)})+1}^ {\deg(x){\mathrm{ord}}_x(D_N^{(j)})}b_{x, j'}h_{x, j'}f^{N/d}\right|<\left|f^{N/d}f'^{N/(N_1d)}\right|
  $$
  on $\partial E$ since $\left|f'\right|\geq 1$ on $\partial E$.

  Finally, let
  $$
    G_N=\phi_N^{\mathrm{init}}+\sum\limits_{x\in {X}}\sum\limits_{j'=2g\deg(x)+1}^{Ns_x\deg(x)}f^{N/d}b_{x, j'}h_{x, j'}.
  $$
  Since $G_N$ is a multiple of $f^{N/d}$, the number of zeros of $G_N$ in each connected component is at least that of $f^{N/d}$.
  Also, the zeros of $G_N$ are contained in $E$ by Proposition \ref{prop:rou-thm}.

\end{proof}
Recall that ${\mathcal {C}}$ is a projective regular model of $C$ over $\operatorname{Spec}(O_K)$, $W$ is a non-empty open subset of the special fiber, and $E={\mathrm{red}}^{-1}(W)$.
When the special fiber of ${\mathcal {C}}$ is integral and the measure $\mu$ is supported on the divisorial point corresponding to the special fiber, in Lemma \ref{lem:non-apr-fun}, we can make all zeros of $f$ reduce to a given regular closed point in $W$.
In the proof of Proposition \ref{prop:loc-sec}, we take an initial function and modify it by functions that are strictly smaller than the initial function on $\partial E$, which does not change the reductions of roots.
Therefore, we have the following corollary.
\begin{corollary}\label{cor:red-sam-pt}
  If the special fiber of ${\mathcal {C}}$ is integral and the measure $\mu$ is supported on the divisorial point corresponding to the special fiber, then for any regular closed point $z\in W$, we can make all zeros of $G_N$ in Proposition \ref{prop:loc-sec} reduce to $z$.
\end{corollary}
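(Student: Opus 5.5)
The plan is to revisit the two ingredients of the non-archimedean construction: the approximating function $f$ from Lemma \ref{lem:non-apr-fun}, and the correction terms added in the proof of Proposition \ref{prop:loc-sec}. For each, I will check that zeros can be pinned to $\operatorname{red}^{-1}(z)$. Since the special fiber is integral, there is a single divisorial point $\xi$, and $\mu=\delta_\xi$. Take $U_1=\operatorname{red}^{-1}(z)$ with $z$ the prescribed regular closed point of $W$.

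First, Lemma \ref{lem:non-apr-fun} with $n=1$ gives functions $f_x$ (for $x\in X$) and $f$ (for $D$) whose zeros all lie in $U_1$. This holds because Lemma \ref{lem:rat-non-ope} applies to the open set $U_1$, so the only factor $f_1$ has all its zeros in $U_1$. Powers and the constant $\kappa^{dp}$ do not move zeros. Consequently $\phi_N^{\mathrm{init}}=f^{N/d}\prod_x f_x^{Ns_x/d}$ has all its zeros in $U_1$. Since $U_1=\operatorname{red}^{-1}(z)$ is an open residue disc, its boundary in $C^{\mathrm{an}}$ is the single point $\xi$.

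Second, I track the corrections. In the construction $G_N=\phi_N^{\mathrm{init}}+\sum f^{N/d}b_{x,j'}h_{x,j'}$, the bound obtained there, with $|b_{x,j'}|<1/M_2$ and $|f'|\ge1$, gives strict inequality on $\partial E$. I will instead apply Proposition \ref{prop:rou-thm} with the strictly analytic domain $H=\operatorname{red}^{-1}(z)$ (closure) and separately with its complement $C^{\mathrm{an}}\setminus\operatorname{red}^{-1}(z)$. Both have boundary $\{\xi\}$.

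It therefore suffices to show that $|\phi_N^{\mathrm{init}}(\xi)|$ strictly dominates the correction at $\xi$. I expect this to be the main obstacle, because the original estimate was stated on $\partial E$, and $\partial E$ may not contain $\xi$ when $W$ is the whole fiber. However, $\xi\in E$, and the estimate extends to $\xi$: $M_2$ can be redefined as a supremum over $\partial E\cup\{\xi\}$, and at $\xi$ we still have $|f'(\xi)|\ge1$, since $\log|f_x|\ge \deg(x)d_xg_{E,x}\ge0$ off $U_1\cup\{x\}$. With this, the same choice $B<1/(M_1M_2)$ gives strict domination at $\xi$.

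Rouch\'e's theorem on the complement then shows that $G_N$ has as many zeros off $U_1$ as $\phi_N^{\mathrm{init}}$, namely none, counted away from the poles; the poles lie outside $E$ and must be excluded by shrinking to the complement of small neighborhoods of $X\cup\operatorname{supp}D$, where the zero counts are unaffected. Hence all zeros of $G_N$ lie in $\operatorname{red}^{-1}(z)$, that is, they reduce to $z$.
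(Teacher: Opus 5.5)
Your argument is correct and is essentially the paper's own two-line justification. With an integral special fiber, Lemma~\ref{lem:non-apr-fun} with $n=1$ and $U_1=\operatorname{red}^{-1}(z)$ places all zeros of $f$ and of the $f_x$, hence of $\phi_N^{\mathrm{init}}$, in the residue class of $z$; the corrections in the proof of Proposition~\ref{prop:loc-sec} are strictly dominated by $\phi_N^{\mathrm{init}}$ on $\partial E$, so Rouch\'e's theorem does not move the reductions of the zeros.

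Two simplifications are available.
\begin{itemize}
\item Since $W$ must avoid the reductions of $X$, you automatically have $\partial E=\{\xi\}$, so $M_2$ need not be redefined.
\item The closed residue disc is not an analytic domain, and the complement contains poles. It is cleaner to apply Proposition~\ref{prop:rou-thm} to $\operatorname{red}^{-1}(W\setminus\{z\})$, which has boundary $\{\xi\}$ and contains no poles, and combine this with the already established fact that all zeros of $G_N$ lie in $E$.
\end{itemize}
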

\section{Global construction}
\label{sec:global-patching}

In this section we prove technical lemmas and propositions concerning global patching of the local functions that we constructed in Section \ref{sec:local-construction}. They will be used in the proof of Theorem \ref{thm:integral-point-construction} in Section \ref{sec:integral-point-construction}.

Let $K$ be a number field.
Let $C$ be a projective, smooth and geometrically integral curve of genus $g$ over $K$.
Let $S$ be a finite set of places of $K$ containing all archimedean places.
First, we recall some basic properties of $O_{K,S}$.
\begin{lemma}\label{lem:apr-int}
  There exists a constant $A$ such that for every collection of positive real numbers $\{A_v\}_{v\in S}$ with $\prod_{v\in S}A_v\geq A$, and every collection of numbers $\{a_v\in K_v\}_{v\in S}$, there is a number $a\in O_{K,S}$ with $\left|a-a_v\right|_v\leq A_v$ for each $v\in S$.
\end{lemma}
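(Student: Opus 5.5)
The plan is to view $O_{K,S}$ as a lattice in $K_S:=\prod_{v\in S}K_v$ and to approximate an arbitrary point of $K_S$ by a lattice point via a fundamental domain. By the strong approximation/product formula setup, the diagonal image of $O_{K,S}$ in $K_S$ is discrete and cocompact: it is discrete because an element of $O_{K,S}$ that is small at every $v\in S$ is also integral outside $S$, so the product formula forces it to be $0$ (or to lie in a finite set). It is cocompact by the usual argument: $O_K$ is cocompact in $\prod_{v\mid\infty}K_v$, and for finite $v\in S$ one uses that $O_{K,S}$ contains elements with arbitrarily large $v$-adic absolute value and that $\mathfrak p_v$-adic residues mod $\mathfrak p_v^k$ are all realized by $O_K$ (Chinese remainder theorem). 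Thus there is a compact fundamental set $F\subset K_S$ with $K_S=O_{K,S}+F$. Fix $R_v$ such that $F\subseteq\prod_{v\in S}\{|z|_v\le R_v\}$.

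This alone only handles $A_v\ge R_v$ for all $v$. For a general family with $\prod A_v\ge A$, rescale: pick $\lambda\in K^\times$ that is an $S$-unit-like scaling making $|\lambda|_v\approx A_v/R_v$ for each $v\in S$. Concretely, I will use the $S$-unit theorem. The logarithmic embedding of $O_{K,S}^\times$ is a full lattice in the hyperplane $\{\sum_{v\in S}t_v=0\}\subset\mathbb R^S$, so there is a constant $c$ and, for any vector $(t_v)$ with $\sum t_v\ge 0$, an $S$-unit $u$ with $\log|u|_v\le t_v+c$ for all $v$, after first absorbing the excess $\sum t_v\ge0$ into one coordinate. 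Apply this to $t_v=\log(A_v/R_v)-c$. The requirement $\sum_v t_v\ge 0$ is exactly $\prod A_v\ge A:=e^{c\#S}\prod R_v$. This gives $u\in O_{K,S}^\times$ with $|u|_v R_v\le A_v$.

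Then I conclude. Write $(u^{-1}a_v)_v=b+z$ with $b\in O_{K,S}$ and $z\in F$, and set $a=ub\in O_{K,S}$. Then $|a-a_v|_v=|u|_v|z_v|\le |u|_vR_v\le A_v$, as required.

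The main obstacle is making the $S$-unit step precise. When $S$ has a single place, the hyperplane is $0$, and there one instead uses only the constant $\sum t_v\ge0$ directly, taking $u=1$ and $A=R_v$. In general one must check that the covering radius of the $S$-unit lattice supplies a uniform $c$ once the nonnegative excess is dumped into one coordinate. This last point is harmless because we only need the inequality $\le$, not equality.
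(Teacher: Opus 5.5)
Your proof is correct and follows the paper's route. Like you, the paper uses the Dirichlet $S$-unit theorem, through a Minkowski-minimum covering bound in the hyperplane $\sum_v r_v=0$, to find $k\in O_{K,S}^\times$ whose absolute values are at most $A_v$ divided by a fixed radius; it then approximates $a_v/k$ by an element of $O_{K,S}$ with bounded error and multiplies back by $k$. The only differences are packaging. For the bounded-error step, the paper approximates explicitly, first at the finite places of $S$ and then correcting by the lattice $O_K$ at the archimedean places; this is exactly a proof of the cocompactness of $O_{K,S}$ in $\prod_{v\in S}K_v$ that you invoke. For the unit step, the paper centers a large ball at a shifted point, whereas you move the excess $\sum_v t_v$ into one coordinate; both work.
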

\begin{proof}
  By the Dirichlet unit theorem for $S$-units, the image of the map
  \begin{align*}
    O_{K,S}^\times&\longrightarrow {\mathbb {R}}^S\\
    k&\longmapsto (\log\left|k_v\right|)_{v\in S}
  \end{align*}
  is a complete lattice $\Gamma$ in the linear subspace $H:=\{(r_v)_{v\in S}|\sum_{v\in S}r_v=0\}$.
  Endow $H$ with the metric $\lVert(r_v)_{v\in S}\rVert=\max_{v\in S}\left|r_v\right|$.
  Let $A'$ be the last Minkowski minimum of $\Gamma$ under the metric $\lVert\cdot\rVert$.
  Let $m=\#S$.
  Then any ball in $H$ whose radius is at least $mA'$ contains a lattice point.
  For any $(r_v)_{v\in S}$ with $r=\sum_{v\in S}r_v\geq m^2A'$, the ball with center $(r_v-r/m)_{v\in S}$ and radius $r/m$ in $H$ contains a lattice point.
  Thus, there exists $k\in O_{K,S}^\times$ such that $\log\left|k\right|_v\leq r_v$ for every $v\in S$.

  The image of the map
  \begin{align*}
    O_{K}&\longrightarrow \prod\limits_{v\in M_{K,\infty}}K_v\\
    k&\longmapsto (k_v)_{v\in M_{K,\infty}}
  \end{align*}
  is also a complete lattice $\Gamma'$.
  Endow $\prod_{v\in M_{K,\infty}}K_v$ with the metric
  \[
    \lVert(k_v)_{v\in M_{K,\infty}}\rVert'=\max_{v\in M_{K,\infty}}\left|k_v\right|.
  \]
  Let $A''$ be the last Minkowski minimum of $\Gamma'$ under the metric $\lVert\cdot\rVert'$.
  Let $d=[K:{\mathbb {Q}}]$, $A=(dA'')^de^{m^2A'}$.
  Then there exists $k\in O_{K,S}^\times$ such that $\left|k\right|_v\leq A_v$ for each $v\in S$ non-archimedean, and that $\left|k\right|_v\leq A_v/(dA'')$ for each $v\in S$ archimedean.
  By the weak approximation theorem, there exists $a'\in O_{K,S}$ such that $\left|a'-a_v/k\right|_v\leq 1$ for each $v\in S$ non-archimedean.
  Also, there is a number $k'\in O_K$ such that $\left|a'-a_v/k-k'\right|_v\leq dA''$ for each archimedean $v$.
  The lemma follows by taking $a=k(a'-k')$.
\end{proof}
\begin{lemma}\label{lem:apr-uni}
  Let $S_f$, $S_\infty$ be the sets of all non-archimedean places and archimedean places in $S$, respectively. Let $\{A_v\}_{v\in S_f}$ be a collection of positive real numbers. Then there exist positive real numbers $\{A_v\}_{v\in S_\infty}$ and a positive integer $n$ such that for every collection of numbers $\{a_v\in K_v\}_{v\in S}$ with $\prod_{v\in S}\left|a_v\right|_v=1$, there is a number $a\in O_{K,S}^\times$ with the following properties.
  For each $v\in S_f$, $\left|a-a_v^n\right|_v\leq A_v\left|a_v^n\right|_v$.
  Moreover, for each $v\in S$ with $K_v={\mathbb {C}}$, $1/A_v\leq \left|aa_v^{-n}\right|_v\leq A_v$.
  Lastly, for each $v\in S$ with $K_v={\mathbb {R}}$, $1/A_v\leq aa_v^{-n}\leq A_v$.
\end{lemma}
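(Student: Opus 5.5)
The plan is to reduce the statement to the Dirichlet unit theorem. Two finiteness facts supply the exponent $n$: the class group is finite, and for each $v\in S_f$ the subgroup $1+\mathfrak p_v^{k_v}$ has finite index in $O_{K_v}^\times$. The archimedean constants $A_v$ will come from the covering radius of the unit lattice.

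\textbf{Setup.}
\begin{enumerate}
\item For each $v\in S_f$, choose $k_v\geq 1$ with $\left|\pi_v^{k_v}\right|_v\leq A_v$, where $\pi_v$ is a uniformizer of $K_v$.
\item Let $e_v$ be the index of $1+\mathfrak p_v^{k_v}$ in $O_{K_v}^\times$. Then $u^{e_v}\in 1+\mathfrak p_v^{k_v}$ for every $u\in O_{K_v}^\times$.
\item Let $h$ be the class number of $K$. For each $v\in S_f$, fix a generator $\varpi_v\in O_K$ of $\mathfrak p_v^h$. Each $\varpi_v$ is an $S$-unit, with $\operatorname{ord}_v(\varpi_v)=h$ and $\operatorname{ord}_w(\varpi_v)=0$ for $w\neq v$.
\item Put $m=2\prod_{v\in S_f}e_v$ and $n=hm$.
\item By the Dirichlet unit theorem, the image of $O_K^\times$ under $\varepsilon\mapsto(\log|\varepsilon|_v)_{v\in S_\infty}$ is a full lattice in the trace-zero hyperplane. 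Let $R$ be its covering radius for the sup norm.
\item For $v\in S_\infty$, set $A_v=e^{mR}$.
\end{enumerate}

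\textbf{Construction.} Let $(a_v)_{v\in S}$ with $\prod_{v\in S}|a_v|_v=1$ be given. The $a_v$ are local elements, not global ones; what matters is their valuation profile.
\begin{enumerate}
\item Put $t=\prod_{v\in S_f}\varpi_v^{\operatorname{ord}_v(a_v)}\in O_{K,S}^\times$. Then $w_v:=a_v^h/t$ is a unit in $K_v$ for every $v\in S_f$.
\item The product formula for $t$ and the hypothesis on the $a_v$ give $\sum_{v\in S_\infty}\log|w_v|_v=0$. (For $v\in S_\infty$, $w_v$ denotes $a_v^h/t$ computed in $K_v$.)
\item Choose a unit $\varepsilon\in O_K^\times$ with $\bigl|\log|\varepsilon|_v-\log|w_v|_v\bigr|\leq R$ for all $v\in S_\infty$.
\item Set $a=(t\varepsilon)^m\in O_{K,S}^\times$. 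Then $a/a_v^n=(\varepsilon/w_v)^m$ at every $v\in S$.
\end{enumerate}

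\textbf{Verification.}
\begin{enumerate}
\item For $v\in S_f$: both $\varepsilon$ and $w_v$ are $v$-adic units and $e_v\mid m$, so $(\varepsilon/w_v)^m\in 1+\mathfrak p_v^{k_v}$. Hence $\left|a-a_v^n\right|_v\leq\left|\pi_v^{k_v}\right|_v\left|a_v^n\right|_v\leq A_v\left|a_v^n\right|_v$.
\item For $v\in S_\infty$: $\bigl|\log|aa_v^{-n}|_v\bigr|\leq mR$, so $1/A_v\leq|aa_v^{-n}|_v\leq A_v$.
\item For real $v$: $m$ is even, so $aa_v^{-n}=(\varepsilon/w_v)^m>0$, which gives $1/A_v\leq aa_v^{-n}\leq A_v$.
\end{enumerate}

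The only delicate step is matching the valuations of $a$ and $a_v^n$ exactly at the finite places of $S$; an approximation in the $S$-unit logarithmic lattice alone would not do this. The class-number trick in the first construction step is what resolves it, and it is why $h$ divides $n$. All choices of $k_v$, $e_v$, $h$, $R$ and $n$ depend only on $K$, $S$ and $\{A_v\}_{v\in S_f}$, not on the $a_v$, as required.
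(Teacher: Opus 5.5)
Your proposal is correct and follows essentially the same route as the paper: a class-number step produces an $S$-unit matching the valuations of the $a_v^{h}$ at the finite places of $S$, a unit from the Dirichlet lattice then approximates the archimedean absolute values, and everything is raised to an even power divisible by the orders of the local unit quotients. The paper uses $n_2=\#(O_K/\mathfrak a)^\times$ for that exponent, which is exactly the product of your indices $e_v$; like the paper, you implicitly take $|\cdot|_v$ to be the usual absolute value at real places, and under another normalization one only rescales the exponent in $A_v$.
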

\begin{proof}
  Let $n_1$ be the class number of $O_K$.
  Then there exists a number $k_1\in O_{K,S}^\times$ such that for each $v\in S_f$, $\left|k_1\right|_v=\left|a_v^{n_1}\right|_v$.
  Therefore, we may assume that $\left|a_v\right|_v=1$ for each $v\in S_f$.
  Without loss of generality, we may assume that $A_v\leq 1$ for each $v\in S_f$.
  The condition $\left|a-a_v^n\right|_v\leq A_v\left|a_v^n\right|_v$ is equivalent to $a\equiv a_v^n \mod \mathfrak{a}_v$ where $\mathfrak{a}_v$ is a power of $\mathfrak{p}_v$, the prime ideal corresponding to $v$.
  Let $\mathfrak{a}=\prod_{v\in S_f}\mathfrak{a}_v$.
  Let $n_2$ be the number of elements in $(O_K/\mathfrak{a})^\times$.
  Then $1\equiv a_v^{n_2}\mod \mathfrak{a}_v$ for each $v\in S_f$.

  By the Dirichlet unit theorem, the image of the map
  \begin{align*}
    O_{K}^\times&\longrightarrow {\mathbb {R}}^{S_\infty}\\
    k&\longmapsto (\log\left|k_v\right|)_{v\in S_\infty}
  \end{align*}
  is a complete lattice $\Gamma$ in the linear subspace $H:=\{(r_v)_{v\in S_{\infty}}|\sum_{v\in S_{\infty}}r_v=0\}$.
  Endow $H$ with the metric $\lVert(r_v)_{v\in S_\infty}\rVert=\max_{v\in S_{\infty}}\left|r_v\right|$.
  Let $A'$ be the last Minkowski minimum of $\Gamma$ under the metric $\lVert\cdot\rVert$.
  Let $m=\#S_{\infty}$.
  Then any ball in $H$ whose radius is at least $mA'$ contains a lattice point.
  Thus, there exists $k_2\in O_{K}^\times$ such that $\left|\log\left|k_2/a_v\right|_v\right|<mA'$ for each $v\in S_\infty$.
  Now, for each $v\in S_f$, $k_2^{2n_2}\equiv 1\mod \mathfrak{a}_v$.
  For each $v\in S_\infty$, $e^{-2n_2mA'}\leq \left|k_2^{2n_2}a_v^{-2n_2}\right|_v\leq e^{2n_2mA'}$.
  Since $2n_2$ is an even number, $k_2^{2n_2}a_v^{-2n_2}$ is positive at every real place.
  The lemma follows by taking $n=2n_1n_2$ and $A_v=e^{2n_2mA'}$ for each $v\in S_\infty$.
\end{proof}

Let ${X}$ be a finite set of closed points of $C$.
Let ${\mathcal {C}}$ be a projective regular model of $C$ over $\operatorname{Spec}(O_K)$.
For each non-archimedean place $v$, let $k_v$ be the residue field at $v$.
For a non-archimedean place $v$, a compact set $E_v\subset (C\setminus X)_v^\mathrm{an}$ is said to be \emph{simple with respect to $(\mathcal C,X)$} if $\mathcal C_{k_v}$ is integral and
\[
  E_v=\operatorname{red}_v^{-1}\bigl((\mathcal C\setminus\overline X)_{k_v}\bigr),
\]
where $\overline X$ is the Zariski closure of $X$ in $\mathcal C$.
Let $S$ be a finite set of places containing all archimedean places, with a non-archimedean place $v_0\in S$.
Let ${\mathbb{E}}=(E_v)_{v\in M_K}$ be an admissible adelic set on $C\setminus X$, such that $E_v$ is simple at every archimedean place and simple with respect to $(\mathcal C,X)$ at each non-archimedean place not in $S\setminus \{v_0\}$.
Set ${\mathcal {C}}_S={\mathcal {C}}\times_{\operatorname{Spec}(O_K)}\operatorname{Spec}(O_{K,S})$.
A rational function $f$ on $C$ is said to be \emph{integral} on ${\mathcal {C}}_S$ if its pole divisor on ${\mathcal {C}}_S$ does not contain any vertical component, and is said to be \emph{monic} at a closed point $z\in C$ of order $d$ if the support of ${\mathrm{div}}(f)+d\overline{z}$ does not contain $z$ and $({\mathrm{div}}(f)+d\overline{z})|_{\overline{z}}$ is a trivial divisor where $\overline{z}$ denotes the Zariski closure of $z$ in ${\mathcal {C}}_S$.

Let $D'$ be a non-zero effective divisor on $C$, supported outside ${X}$, and set $d_D=\deg(D')$.
By enlarging $S$, we may assume that $O_{K,S}$ and $O_{K(x), S}$ are principal ideal domains for each $x\in {X}$ and
that the reductions of any two distinct $\overline{{\mathbb {Q}}}$-points in ${X}\bigcup{\mathrm{supp}}(D')$ to ${\mathcal {C}}_{\overline{k_v}}$ are disjoint for each $v\notin S$.
For each point $x\in{X}$, and $2g<j\leq 4g+1$, consider the $1$-dimensional $K(x)$-vector space $H^0(jx)/H^0((j-1)x)$.
Since $O_{K(x),S}$ is a principal ideal domain, monic functions exist in $H^0(jx)$.
Consider the $O_{K(x), S}$-submodule of $H^0(jx)/H^0((j-1)x)$ generated by a monic function.
Find representatives of a set of $O_{K, S}$-generators of this module. By enlarging $S$, we can make these representatives integral on ${\mathcal {C}}_S$.
Now, for each $m> 2g$, an integral function $h_{x, m}$ monic at $x$ of order $m$ exists in $H^0({\mathcal {C}}_S, {\mathcal {O}}(m\overline{x}))$ as a generator of the rank $1$ $O_{K(x),S}$-module $H^0({\mathcal {C}}_S, {\mathcal {O}}(m\overline{x}))/H^0({\mathcal {C}}_S, {\mathcal {O}}((m-1)\overline{x}))$.
Note that we can take $h_{x, (2g+2)m}=h_{x, 2g+2}^{m}$.
For each $v\notin S$ and $x'\in x_v$, we take a Green function $g_{x'}$ of $x'$ such that $(2g+2)g_{x'}=\log\left|h_{x, 2g+2}\right|_v$ in a neighborhood of $x'$.

\begin{proposition}\label{prop:pat-con}
  Let $(s_x)_{x\in {X}}$ be a vector consisting of positive rational numbers.
  Let $\{D_N^{(j)}\}$ be a band filtration.
  Then $\{D_{N, v}^{(j)}\}$ is a band filtration for each $v$.
  Assume that $D_{N}^{(j)}-D_N^{(j+1)}$ is a prime divisor for each $j\geq 1$.
  Let $\{B_v\}_{v\in S_f}$ be positive real numbers.
  Let $\{c_{v, x, y}\}_{v\in S, x, y\in {X}_v}$ and $\{a_{v, x}\}_{v\in S, x\in {X}_v}$ be real numbers.
  Set
  $$
    c_{x, y}=\sum_{v\in S, x'\in x_v, y'\in y_v}\deg(x')\deg(y')c_{v, x', y'}/(\deg(x)\deg(y))
  $$
  and $a_{x}=\sum_{v\in S, x'\in x_v}\deg(x')a_{v, x'}/\deg(x)$.
  Take $\varepsilon>0$.
  Assume that for each $x\in {X}$, $a_x>0$ and $a_x+\sum_{y\in {X}}s_y\deg(y)c_{x, y}=0$.
  Then there exist real numbers $\{B_v>0\}_{v\in S_\infty}$, an even integer $n>0$ and an integer $N'>0$ with the following property.

  Set $s_{v, x'}=s_x$ for each $v\in S, x'\in x_v$.
  For each $N$ that is a multiple of $N'$ and each band filtration $\{D_N^{(j)}\}$ for $(s_x)$ and $N$, we notice that $\{D_{N, v}^{(j)}\}$ is a band filtration for $(s_{v, x})$ and $N$ for each $v$.
  Assume that $D_{N}^{(j)}-D_N^{(j+1)}$ is a prime divisor for each $j\geq 1$.
  Consider initial elements $\{\phi_{N,v}^{\mathrm{init}}\}_{v\in S_f}$ which are represented by $n$-th powers such that $\log\left|\phi_{N,v}^{\mathrm{init}}\right|_{x, Ns_{v, x}}=Na_{v, x}+\sum_{y\in {X}_v}Ns_{v, y}\deg(y)c_{v, x, y}$.
  Then there exist $c'_{v, x, y}$ for each $v\in S_{\infty}, x, y\in {X}_v$ such that $c'_{v, x, y}=c_{v, x, y}$ for $x\neq y$ and $\left|c'_{v, x, y}-c_{v, x, y}\right|<\varepsilon$ for $x=y$.
  For $v\in S_f$, set $c'_{v, x, y}=c_{v, x, y}$.
  Write $c'_{v, x}=\sum_{y\in {X}_v}s_{v, y}\deg(y)c'_{v, x, y}/s_{v, x}$.
  Moreover, there exist initial elements $\{\phi_{N,v}^{\mathrm{init}}\}_{v\in S_\infty}$ which are represented by $n$-th powers such that $\log\left|\phi_{N,v}^{\mathrm{init}}\right|_{x, Ns_{v, x}}=Na_{v, x}+Ns_{v, x}c'_{v, x}$.
  Assume that we have patching maps $\psi_{N, v}$ for each $v\in S$.
  Finally, there exist patching vectors $(\phi_{N, v}^{(j)})$ with initial element $\phi_{N,v}^{\mathrm{init}}$ bounded by $B_v$ and $(c'_{v, x})_{x\in {X}_v}$, such that each $\psi_{N, v}(\boldsymbol{\phi}_{N, v})$ is represented by a common function $G_N$ in $H^0(D_N^{(0)})$, and $G_N$ is monic at each point in ${X}$ and integral on ${\mathcal {C}}_S$.
\end{proposition}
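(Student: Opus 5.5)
This is a Cantor--Rumely style global patching argument. We construct $G_N$ inductively along the band filtration. At each step $j$ we pick a global coefficient in the one-dimensional piece $V_N^{(j)}$. The $S$-integral approximation lemmas (Lemma \ref{lem:apr-int} for $j\geq1$, Lemma \ref{lem:apr-uni} for $j=0$) let this global coefficient match the local requirements at every $v\in S$ up to the allowed error $B_v$. The local patching maps $\psi_{N,v}$ then absorb the discrepancy at each place.

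\textbf{Step 1: the leading term, $j=0$.} Here $V_N^{(0)}=\bigoplus_{x\in X}K(x)$, one factor per $x$. The hypothesis $a_x+\sum_y s_y\deg(y)c_{x,y}=0$ forces the local leading coefficients of the $\phi_{N,v}^{\mathrm{init}}$ (for $v\in S_f$ given, for $v\in S_\infty$ still to be chosen) to have product of norms over $v\in S$ equal to $1$, after normalizing by the monic generators $h_{x,m}$. So we write $\phi_{N,v}^{\mathrm{init}}=u_v^n\cdot h$ at $x$ and apply Lemma \ref{lem:apr-uni} over $K(x)$ with $S$ replaced by the places of $K(x)$ above $S$. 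This produces an $S$-unit $a_x$ that is close to $u_v^n$ at finite places and within a bounded multiplicative factor $A_v$ at archimedean places. This is why the statement takes $n$ even: it makes $a_x$ positive at real places. The integer $n$ and the constants $A_v$ come from that lemma.

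We then define $\phi^{\mathrm{init}}_{N,v}$ for $v\in S_\infty$ to be the $n$-th power representing $a_x h$ exactly. Its norm differs from the target $Na_{v,x}+Ns_{v,x}c_{v,x}$ by at most $\log A_v$. We absorb this difference by perturbing the diagonal constants to $c'_{v,x,x}$ with $|c'_{v,x,x}-c_{v,x,x}|\leq \log A_v/(Ns_x\deg x)<\varepsilon$ once $N\geq N'$; the off-diagonal constants stay unchanged. We must keep $c'_{v,x,x}$ inside the subinterval allowed by Proposition \ref{prop:loc-sec}. At finite places, $\phi^{(0)}_{N,v}$ is the global element, and it lies within $B_v$ of $\phi^{\mathrm{init}}_{N,v}$ as required. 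Monicity at each $x\in X$ holds because $a_x$ is an $S$-unit and $h_{x,m}$ is monic; integrality holds because $a_x\in O_{K(x),S}$.

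\textbf{Step 2: the lower bands, $j\geq1$.} Each $D_N^{(j)}-D_N^{(j+1)}$ is a single prime $x$ with $d=\operatorname{ord}_x D_N^{(j)}$. Suppose the global partial sum $G^{(j-1)}$ is fixed. For each $v\in S$, the patching-map property determines which class $\phi^{(j)}_{N,v}$ would be needed locally, namely the discrepancy between $\psi_{N,v}$ of the vector built so far and $G^{(j-1)}$ in $H^0(D^{(0)})/H^0(D^{(j+1)})$. We then choose $b\in O_{K(x),S}$, so that $b\,h_{x,d}$ is integral, with
\[
|b-\beta_v|_v\leq B_v e^{b_x-(Ns_x-d)c'_x}
\]
at each $v$ above $S$. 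This is possible by Lemma \ref{lem:apr-int} provided
\[
\prod_v B_v e^{b_{v,x}-(Ns_x-d)c'_{v,x}}\geq A.
\]
Using the Step 1 relation $\sum_v (b_{v,x}-Ns_xc'_{v,x})=\sum_v Na_{v,x}$ and $\sum_v d\,c'_{v,x}\approx -d\,a_x/s_x$, the exponents sum to
\[
N a_x - d\,a_x/s_x=(Ns_x-d)\,a_x/s_x\geq 0.
\]
So it suffices to take $\prod_{v\in S_\infty}B_v\geq A\prod_{v\in S_f}B_v^{-1}$ plus a margin for the $\varepsilon$-perturbation. That margin is harmless because in Proposition \ref{prop:loc-sec} the archimedean $B_v$ may be taken arbitrarily large. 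The local element $\phi^{(j)}_{N,v}$ is then the difference class, and it is bounded as required. We set $G^{(j)}=G^{(j-1)}+b\,h_{x,d}$. By the patching-map property, $\psi_{N,v}(\boldsymbol\phi_{N,v})\equiv G^{(j)}$ modulo $H^0(D^{(j+1)})$ for all $v$ simultaneously. At the end, $G_N=G^{(j_N-1)}$ represents every $\psi_{N,v}(\boldsymbol\phi_{N,v})$ modulo $H^0(D^{(j_N)})$, which is the quotient in which patching maps are valued. $G_N$ is integral, and it is monic because the later terms have lower pole order.

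\textbf{Main obstacle.} The hardest part is the bookkeeping in Step 2. The local target $\beta_v$ depends on the previously chosen global coefficients. So the patching vectors must be defined recursively, place by place, and we must check at each stage that the local bounds match the exponents $b_x-(Ns_x-d)c'_x$ demanded by Proposition \ref{prop:loc-sec}. The other delicate point is that the perturbation of $c'_{v,x,x}$ in Step 1 must leave enough slack in the Minkowski-type product inequality. We handle this by choosing $\varepsilon$ small relative to $\min_x a_x>0$, which ensures
\[
(Ns_x-d)\,a_x/s_x-\varepsilon(Ns_x-d)\geq0.
\]
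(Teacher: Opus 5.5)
Your two-step plan is the paper's proof. The leading coefficients are patched with Lemma~\ref{lem:apr-uni} over $K(x)$, shifting the archimedean diagonal constants $c'_{v,x,x}$ by $O(1/N)$ to absorb the multiplicative discrepancy of the $S$-unit. The lower bands are then patched one prime at a time with Lemma~\ref{lem:apr-int}, using that the boxes $B_v e^{Na_{v,x}+dc'_{v,x}}$ have weighted product $e^{\deg(x)(Na_x+dc'_x)}\prod_{v\in S}B_v^{\deg(x)}$.

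What you should correct is the bookkeeping you single out as the main obstacle. The identity $a_x+s_xc'_x=0$ is exact, not approximate.

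\textbf{Why it is exact.} Call your $S$-unit $\beta_x$; you also call it $a_x$, which clashes with the real numbers $a_x$ of the statement. At every $v\in S$ and $x'\in x_v$ one has $Na_{v,x'}+Ns_xc'_{v,x'}=\log|\beta_x|_{x'}$. At finite places this holds because $|\beta_x-\alpha_{v,x'}|_{x'}<|\alpha_{v,x'}|_{x'}$ once the approximation constant is taken below $1$, so the two absolute values agree. At archimedean places it holds by your own definition of $c'_{v,x,x}$. The product formula for $\beta_x$ then gives zero. The paper phrases this as ``$G_N^{(0)}$ is monic at $x$''.

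\textbf{Why exactness matters.} It is what makes the top bands work. If $\eta=c'_x+a_x/s_x$ were nonzero, the exponent sum would be $(Ns_x-d)a_x/s_x+d\eta$. The error is weighted by $d$, not by $Ns_x-d$ as in your inequality $(Ns_x-d)a_x/s_x-\varepsilon(Ns_x-d)\geq0$.
\begin{itemize}
\item Take $d=Ns_x-1$ and a negative $\eta$ of fixed size $\varepsilon$. The exponent sum is then about $a_x/s_x-Ns_x\varepsilon\to-\infty$.
\item Your inequality is still satisfied in this case whenever $\varepsilon<a_x/s_x$, so it certifies a failing case.
\item No $N$-independent choice of $\varepsilon$, and no fixed enlargement of the archimedean $B_v$, restores the hypothesis of Lemma~\ref{lem:apr-int}. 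So the fix you propose via ``$\varepsilon$ small relative to $\min_x a_x$'' does not work.
\end{itemize}

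Your own bound gives $|c'_{v,x,x}-c_{v,x,x}|=O(1/N)$. So even a non-exact version would only cost $d|\eta|=O(1)$, which a fixed enlargement of the archimedean $B_v$ absorbs. The clean statement, however, is simply $\eta=0$.
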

\begin{proof}
  The proof is in two steps.
  Firstly, we patch the top coefficients.
  We apply Lemma \ref{lem:apr-uni} to $O_{K(x), S}^\times$ for each $x\in {X}$.
  A place $x'$ of $K(x)$ is the same as a place $v$ of $K$ and a point $x'\in x_v$.
  We write $\left|\cdot\right|_{x'}$ for the norm given by the point $x'$.
  Note that the norm of $x'$ viewed as a place of $K(x)$ is $\left|\cdot\right|_{x'}^{\deg(x')}$.
  Set $B_{x'}=B_v^{\deg(x')}$ where $x'\in x_v$.
  Given $\{B'_v\}_{v\in S_f}$, we get $\{B'_v\}_{v\in S_{\infty}}$ and $n$.
  Take $N'$ large enough that $e^{Ns_x\varepsilon}>B'_v$ for each $x\in {X}$ and $v\in S_{\infty}$.
  Set $\alpha_{v, x'}=(\phi_{N,v}^{\mathrm{init}}h_{x, Ns_x}^{-1})(x')$ for each $x\in {X}, v\in S_f, x'\in x_v$.
  Note that by taking $N'$ appropriately, we can make $Ns_x$ a multiple of $2g+2$ for each $x$.
  Then $\alpha_{v, x'}$ is an $n$-th power.
  Also, $\log\left|\alpha_{v, x'}\right|_{x'}=Na_{v, x'}+Ns_{v, x'}c'_{v, x'}$.
  For archimedean $v$, let $\alpha_{v, x'}$ be a number with $\log\left|\alpha_{v, x'}\right|_{x'}=Na_{v, x'}+Ns_{v, x'}c'_{v, x'}$.
  If $K(x')={\mathbb {R}}$, we take $\alpha_{v, x'}$ to be positive.

  Now we have $\prod_{v\in S, x'\in x_v}\left|\alpha_{v, x'}\right|_{x'}^{\deg(x')}=1$.
  By Lemma \ref{lem:apr-uni}, there exists $\beta_x\in K(x)$ such that for $v$ non-archimedean, $\left|\alpha_{v, x'}-\beta_{x}\right|_{x'}\leq B_v\left|\alpha_{v, x'}\right|_{x'}$.
  Also, for $v$ archimedean, $1/B_v\leq \left|\alpha_{v, x'}/\beta_{x}\right|_{x'}\leq B_v$.
  Lastly, $\beta_{x}$ is positive at each real place.

  For each $v\in S_{\infty}$ and $x'\in x_v$, set $c'_{v, x', x'}=c_{v, x', x'}-(N\deg(x')s_{x})^{-1}\log\left|\alpha_{v, x'}/\beta_{x}\right|_{x'}$.
  Then $\left|c'_{v, x', x'}-c_{v, x', x'}\right|<\varepsilon$.
  Also, $\log\left|\beta_x h_{x, Ns_x}\right|_{x', Ns_x}=\log\left|\beta_x\right|=Na_{v,x'}+Ns_xc'_{v, x'}$.
  Now, each $\beta_x h_{x, Ns_x}$ can be viewed as an element in $H^0(Ns_xx)/H^0(Ns_xx-x)$, and is represented by a function $G_x\in H^0(Ns_xx)$ monic at $x$ and integral on ${\mathcal {C}}_S$.
  We take $G_N^{(0)}=\sum_{x\in {X}}G_x$.
  Then $G_N^{(0)}$ is monic at each point in ${X}$.
  Let $\phi_{N, v}^{(0)}$ be the class containing $G_{N, v}^{(0)}$.

  Now we patch the lower coefficients.
  We take $\{B_v\}_{v\in S_\infty}$ large enough that $\prod_{v\in S}B_v^{\deg(x)}\geq A_x$ for each $x\in {X}$ where $A_x$ is the constant $A$ in Lemma \ref{lem:apr-int} for $O_{K(x), S}$.
  Given $\phi_{N, v}^{(j)}$ for $j\leq j'$, $\psi_{N, v}$ gives us a class in $H^0(D_N^{(0)})/H^0(D_N^{(j'+1)})$.
  We construct $\phi_{N, v}^{(j)}$ for $j\leq j'$ together with a monic function $G_N^{(j')}$ integral on ${\mathcal {C}}_S$ whose reduction at each $v\in S$ is in the class.
  We prove by induction.
  Assume we have completed the construction for $j'$.
  For $j'+1$, let $x$ be the prime divisor $D_N^{(j'+1)}-D_N^{(j'+2)}$.

  Set $H_{N, v}^{(j')}=\psi_{N, v}(\phi_{N, v}^{(0)}, \ldots, \phi_{N, v}^{(j')}, 0, \ldots, 0)$.
  Set $d={\mathrm{ord}}_{x}(D_N^{(j'+1)})$.
  Set $\alpha'_{v, x'}=((G_{N, v}^{(j')}-H_{N, v}^{(j')})h_{x, d}^{-1})(x')$ for each $v\in S, x'\in x_v$.
  Set $b_{v, x'}=B_ve^{Na_{v,x'}+dc'_{v,x'}}/\left|h_{x, d}\right|_{x', d}$.
  Set $c'_x=\sum_{v\in S, x'\in x_v}\deg(x')c'_{v, x'}/\deg(x)$.
  Since $G_{N}^{(0)}$ is monic at $x$, $a_x+s_xc'_x=0$.
  Then
  $$
    \prod_{v\in S, x'\in x_v}b_{v, x'}^{\deg(x')}=e^{\deg(x)(Na_x+dc'_x)}\prod_{v\in S}B_{v}^{\deg(x)}\geq A_x.
  $$
  Therefore, there exists $\beta'\in O_{K(x), S}$ such that $\left|\alpha'_{v, x'}-\beta'\right|_{x'}\leq b_{v, x'}$.
  Note that $\beta'h_{x, d}$, viewed as an element of $H^0(dx)/H^0(dx-x)$, is represented by an integral function $f$ on ${\mathcal {C}}_S$.
  Take $G_{N}^{(j'+1)}=G_{N}^{(j')}-f$.
  Take $\phi_{N, v}^{(j'+1)}$ to be the class of $G_{N, v}^{(j'+1)}-H_{N, v}^{(j')}$.
  Then $\left|\phi_{N, v}^{(j'+1)}\right|_{x', d}\leq B_ve^{Na_{v,x'}+dc'_{v,x'}}$.
  The proposition follows.

\end{proof}

Choose $1<c_0<\operatorname{cap}_X({\mathbb{E}})$, and assume that $E_v$ is disjoint from ${\mathrm{supp}}(D'_v)$ for each $v\in S$.
Let $\{\mu_v\}_{v\in S}$ be a collection of measures such that for each archimedean $v$, ${\mathrm{supp}}(\mu_v)$ lies in the ${\mathbb {R}}$-interior of $E_v$ and that for each non-archimedean $v$, $\mu_v$ is a finite linear combination of Dirac measures of divisorial points in $E_v$.
Also, let $\mu_{v_0}$ be the Dirac measure at the divisorial point corresponding to the special fiber ${\mathcal {C}}_{k_{v_0}}$.
Let $g_{D',v}$ be a Green function of $D'_v$ with $c_1(g_{D',v})=d_D\mu_v$ for each $v\in S$.
For $v\in S$ and $x'\in {X}_v$, set $a_{v,x'}=d_D^{-1}g_{D',v}(x')$.
For each $x\in {X}$, set $a_x=\sum_{v\in S}\sum_{x'\in x_v}\deg(x')a_{v,x'}/\deg(x)$.

\begin{proposition}\label{prop:glo-sec}
  Let $\delta'$ be a positive real number.
  Assume that $0<a_{x}<\delta'$.
  Fix a closed point $p_0$ in the regular locus of the special fiber ${\mathcal {C}}_{k_{v_0}}$ which is not contained in the reductions of ${X}$ and ${\mathrm{supp}}(D')$.
  For each $\varepsilon>0$, there exist a positive integer $k$ and a rational function $f_k$ on $C$, with the following properties:
  \begin{enumerate}
    \item the pole divisor of $f_k$ is
      $$
        \operatorname{div}_\infty(f_k)=kD'+\sum\limits_{x\in {X}} \ell_xx
      $$
      where $\{\ell_x\}_{x\in {X}}$ are positive integers such that
      $$
        \ell_x\leq \frac{3\delta'}{\log c_0}\deg(D')k
      $$
      for each $x\in {X}$;
    \item there is a rational function $f'_k$, such that $f_k-f'_k\in H^0(kD'+2g\sum_{x\in {X}}x)$ and that on ${\mathcal {C}}_{S}$, $f'_k$ is integral and monic at every point in ${X}$;
    \item every zero of $f_k$ reduces to $p_0$ at $v_0$;
    \item \label{item:global-zero-distribution} for each $v \in S$ and connected component $A$ of $E_v$, the number of zeros of $f_{k,v}$ in $A$, counted with multiplicity, is at least $\deg(D')k(\mu_v(A)-\varepsilon)$;
    \item \label{item:global-boundary-dominance} for each $v\in S_\infty$, $(f_{k,v}, kg_{D',v}-1)$ satisfies the boundary dominance condition on $E_v$, and for each $v\in S_f$, $(f_{k,v}, kg_{D',v})$ satisfies the boundary dominance condition on $E_v$.
  \end{enumerate}
  Also, $k$ can be taken arbitrarily large.
\end{proposition}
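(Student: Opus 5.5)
The plan is the Cantor--Rumely patching scheme, assembled from Proposition \ref{prop:loc-sec} at each place of $S$ and Proposition \ref{prop:pat-con} globally. First I choose the weights $(s_x)_{x\in X}$. Since $\operatorname{cap}_X(\mathbb E)>c_0>1$, the Green's matrix $\Gamma_X(\mathbb E)$ is negative definite. I solve $a_x+\sum_y s_y\deg(y)c_{x,y}=0$, i.e.\ $\mathbf s=-\Gamma^{-1}\mathbf a$ (with the degree weights absorbed). By Lemma \ref{lem:mat-inv} the solution is positive because $a_x>0$. By Lemma \ref{lem:can-cap-sol}, applied to $\mathbf x=-\mathbf a$ with $c=\delta'$ as an upper bound for the entries of $\mathbf a$, one gets $s_x\le \delta'/\log\operatorname{cap}_X(\mathbb E)<\delta'/\log c_0$. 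The local constants $c'_{x,y}$, $a'_x$ will only be $\varepsilon$-perturbations of $c_{x,y}$, $a_x$. So I take $\varepsilon$ small, solve the perturbed system instead, and round $s_x$ to rationals, keeping $s_x\le 2\delta'/\log c_0$ and preserving the exact identity needed in Proposition \ref{prop:pat-con}. The archimedean diagonal entries $c'_{v,x,x}$ are free in an interval, so they can absorb the adjustment Proposition \ref{prop:pat-con} makes at infinite places. After scaling by $\deg(D')k=N$ and adding rounding slack, this gives $\ell_x=Ns_x\le \frac{3\delta'}{\log c_0}\deg(D')k$.

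Next come the local inputs. For each $v\in S$ I apply Proposition \ref{prop:loc-sec} with $E=E_v$, $D=D'/d_D$, $g_D=g_{D',v}/d_D$ and the given $\mu_v$. At the archimedean places I use a common subsequence $\{\kappa'_i\}$ of factorials, so one $N$ works for all places simultaneously; the nonarchimedean places only need divisibility. This yields initial elements $\phi^{\mathrm{init}}_{N,v}$ that are $n$-th powers with the prescribed leading sizes, patching maps $\psi_{N,v}$, and bounds $B_v$. At $v_0$, with $\mu_{v_0}$ the Dirac mass at the special fiber, Corollary \ref{cor:red-sam-pt} makes every zero reduce to $p_0$. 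I choose $B_v$ for $v\in S_f$ from Proposition \ref{prop:loc-sec}; Proposition \ref{prop:pat-con} then supplies $B_v$ for $v\in S_\infty$, which Proposition \ref{prop:loc-sec} permits to be arbitrarily large. I fix a band filtration whose successive quotients are prime divisors, which is possible by ordering the points of $X$ one step at a time.

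For the global patching, Proposition \ref{prop:pat-con} produces patching vectors bounded by $B_v$ and a single $G_N\in H^0(D_N^{(0)})$ that is monic at $X$ and integral on $\mathcal C_S$, and that represents $\psi_{N,v}(\boldsymbol\phi_{N,v})$ modulo $H^0(2g\sum x)$ at every $v\in S$. I then take $f_k$ to be the common representative with pole divisor $kD'+\sum\ell_x x$; this requires choosing the local representatives $G_{N,v}$ consistently. The function $f'_k=G_N$ then satisfies (2). Properties (3), (4) and (5) are inherited from Proposition \ref{prop:loc-sec}, with the constant $-1$ at archimedean places coming from the $\log 8$ margins. Since $N$ ranges over multiples along the subsequence, $k$ can be taken arbitrarily large.

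The main obstacle is compatibility. The local representatives differ by elements of $H^0(kD'+2g\sum x)$, and these must not destroy boundary dominance or the zero counts. I would handle this by fixing the common tail contribution through the patching construction itself, since the archimedean construction already builds $G_N$ as $f\cdot(\cdots)$ with the last band included. A second delicate point is the consistent choice of the archimedean diagonal $c'_{v,x,x}$ inside the intervals $I'_x$ before $N$ is fixed. The interval freedom in Proposition \ref{prop:loc-sec} is exactly designed for this.
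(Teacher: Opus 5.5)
Most of your setup matches the paper. This includes the choice of the weights $s_x$ (negative definiteness of $\Gamma_X(\mathbb E)$, positivity via Lemma \ref{lem:mat-inv}, rationality absorbed by the free archimedean diagonal entries $c'_{v,x,x}$). It also includes the order in which constants are fixed ($B_v$ for $v\in S_f$ from Proposition \ref{prop:loc-sec}, then $n$ and the archimedean $B_v$ from Proposition \ref{prop:pat-con}) and the common factorial subsequence. One small slip: Lemma \ref{lem:can-cap-sol} must be applied to $\mathbf x=\mathbf a$ with $c=\delta'$, not to $-\mathbf a$, to get the upper bound on $s_x\deg(x)$.

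The genuine gap is the last step, where you take $f_k$ to be ``the common representative''. No such representative exists. At each $v\in S$, Proposition \ref{prop:loc-sec} gives a function $G_{N,v}\in H^0(kD'_v+D^{(0)}_{N,v})$ with coefficients in $K_v$. At an archimedean place it is a product $f_v\cdot(\cdots)$, where $f_v$ is a $K_v$-rational function with polar divisor $kD'$ and $\log|f_v|$ approximating $kg_{D',v}$; at non-archimedean places it is built from local $f$, $f_x$ as well. Proposition \ref{prop:pat-con} only produces a global $G_N\in H^0(D^{(0)}_N)$ whose Laurent tails at $X$ agree with those of every $G_{N,v}$ down to order $2g$. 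The differences $G_{N,v}-G_N$ lie in $H^0(kD'+2g\sum_{x\in X}x)\otimes_K K_v$, and no band of the filtration sees this piece. The filtration stops at $2g\sum_{x\in X}x$ because Riemann--Roch gives no control below it, and the poles along $D'$ are not part of it at all. So no choice within the patching construction makes the local representatives coincide. Properties (3)--(5), which Proposition \ref{prop:loc-sec} gives for the $G_{N,v}$, are therefore not yet properties of any element of $K(C)$.

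What is missing is an approximation step. $H^0(kD'+2g\sum_{x\in X}x)$ is a finite-dimensional $K$-vector space and $K$ is dense in $\prod_{v\in S}K_v$. Weak approximation therefore gives $h$ in this space such that $G_N+h-G_{N,v}$ is uniformly as small as you like on $E_v$, for all $v\in S$ at once; the basis functions are bounded on $E_v$ because their poles lie off $E_v$. Put $f_k=G_N+h$ and $f'_k=G_N$, which gives (2).

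Properties (1) and (3)--(5) must then be transferred from $G_{N,v}$ to $f_k$ by perturbation arguments. At the non-archimedean places, including the residue disc of $p_0$ at $v_0$ where Corollary \ref{cor:red-sam-pt} was used, this is Rouch\'e's theorem (Proposition \ref{prop:rou-thm}). At the archimedean places it is the perturbation statement in the boundary-dominance lemma following Lemma \ref{lem:che-low}. A degree count then shows that no zeros escape $E_v$ and that $\operatorname{div}_\infty(f_k)$ is exactly $kD'+\sum_{x}\ell_x x$.

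The archimedean perturbation costs $\log 2<1$, and that is what the $kg_{D',v}-1$ in (5) pays for. It is not a leftover $\log 8$ margin: Proposition \ref{prop:loc-sec} only records boundary dominance with respect to $kg_{D',v}$ itself.
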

\begin{proof}

  Set $m=\# S$.
  Let $(c_{x, y})_{x, y\in {X}}=\Gamma=\Gamma_X(\mathbb E)$ be the Cantor matrix.
  Set $a_{min}=\min_{x\in {X}}\{a_x\}>0$.
  For each vector $\mathbf{a}$ with each coordinate between $a_{min}/(2\delta')$ and $2$, by Lemma \ref{lem:can-cap-sol} and Lemma \ref{lem:mat-inv}, each coordinate of $\Gamma^{-1}\mathbf{a}$ is a negative number at least $-2/\log c_0$.
  By the compactness of the set of all such vectors $\mathbf{a}$, we see that there exist $\varepsilon_1>0, \varepsilon_2>0$, such that for each $\Gamma'=(c'_{x, y})_{x, y\in {X}}$ with $\left|c'_{x, y}-c_{x, y}\right|\leq \varepsilon_1$ for each $x, y\in {X}$, each coordinate of $\Gamma'^{-1}\mathbf{a}$ is between $-3/\log c_0$ and $-\varepsilon_2$.
  We may take $\varepsilon_1<a_{min}/2$.

  For $v\in M_K$ and $x'\neq y'\in {X}_v$, set $c_{v,x',y'}=g_{E_v, x'}(y')$ and $c_{v,x',x'}=\lim_{x''\to x'}(g_{E_v, x'}(x'')-g_{x'}(x'')/
    \deg(x'))$.
  Note that for $v\notin S$, $c_{v,x',y'}=0$.
  Thus,
  $$
    c_{x, y}=\frac{1}{\deg(x)\deg(y)}\sum\limits_{v\in M_K}\sum\limits_{x'\in x_v, y'\in y_v}\deg(x')\deg(y')c_{v,x',y'}.
  $$
  Set $\kappa_i=i!=\prod_{j=1}^ij$.
  By Proposition \ref{prop:loc-sec}, applied to $D'/d_D$ and $g_{D',v}/d_D$ at each archimedean place recursively and each non-archimedean place in $S$, we get a subsequence $\kappa'_i$, real numbers $c'_{v,x',y'}$ with $\left|c'_{v,x',y'}-c_{v,x',y'}\right|\leq \varepsilon_1/m$ and $a'_{v,x'}$ with $\left|a'_{v,x'}-a_{v,x'}\right|\leq \varepsilon_1/m$ for each $v\in S$ and $x', y'\in {X}_v$.
  Moreover, for archimedean $v$, $c'_{v, x, x}$ can be taken as any real number in an open interval.
  Set $c'_{x, y}=\sum_{v\in S, x'\in x_v, y'\in y_v}\deg(x')\deg(y')c'_{v,x',y'}/(\deg(x)\deg(y))$ and $\Gamma'=(c'_{x, y})_{x, y\in {X}}$.
  Set $a'_x=\sum_{v\in S, x'\in x_v}a'_{v, x'}\deg(x')/\deg(x)$ and ${\mathbf{a}}'=(a_x')_{x\in {X}}$.
  Since $c'_{v, x, x}$ can be taken as any real number in an open interval, we can choose these numbers so that $(s_x\deg(x))_{x\in {X}}=-{\Gamma'}^{-1}\mathbf{a'}$ has rational coefficients.
  Note that $a_{min}/2\leq a'_x\leq2\delta'$, $\left|c'_{x, y}-c_{x, y}\right|\leq \varepsilon_1$.
  Thus, $0<s_{x}\leq 3\delta'/\log c_0$.
  Set $s_{v, x'}=s_{x}$ for each $x'\in x_v$ and ${\mathbf{s}}_v=(s_{v, x'})_{x'\in {X}_v}$.
  Set
  $$
    c'_{v, x}=\sum_{y\in {X}_v}c'_{v, x, y}s_{v,y}\deg(y)/s_{v,x}
  $$
  and ${\mathbf{c}}'_v=(c'_{v, x})_{x\in {X}_v}$.

  For each $N=\kappa'_i$ large enough and divisible by $d_D$, set $k=N/d_D$. For each patching divisor $D_{N, v}^{(j)}$ for ${\mathbf{s}}_v$ and $N$, we have a patching map $\psi_{N, v}$.
  Also, we get initial elements $\phi_{N,v}^{\mathrm{init}}$ for non-archimedean $v$ and $B_v$ for non-archimedean $v$.
  By Proposition \ref{prop:pat-con} and possibly changing the choice of $c'_{v, x, x}$ for archimedean $v$, we get initial elements $\phi_{N,v}^{\mathrm{init}}$ for archimedean $v$, $B_v$ for archimedean $v$, and patching vectors $\boldsymbol{\phi}_{N, v}$ for each $v\in S$ with initial elements $\phi_{N,v}^{\mathrm{init}}$ bounded by $B_v$ and ${\mathbf{c}}'_v$, such that $\psi_{N, v}(\boldsymbol{\phi}_{N, v})$ is represented by the same function $G_N\in H^0(D_N^{(0)})$ for each $v$.
  Set $f'_k=G_N$.

  Also, we have $G_{N, v}\in H^0(kD'_v+D_{N, v}^{(0)})$ such that $(G_{N, v}, kg_{D',v})$ satisfies the boundary dominance condition.
  By Corollary \ref{cor:red-sam-pt}, we can make all zeros of $G_{N, v_0}$ reduce to $p_0$.
  By the weak approximation theorem, there is a rational function $f_k\in H^0(kD'+D_{N}^{(0)})$, such that $f_{k,v}-G_{N, v}\in H^0(kD'_v+2g\sum_{x\in {X}_v}x)$ and that $f_k-G_{N, v}$ can be arbitrarily small on $\partial E_v$.
  Thus, we can ensure that $(f_{k,v}, kg_{D',v}-1)$ satisfies the boundary dominance condition for archimedean $v$, and $(f_{k,v}, kg_{D',v})$ satisfies the boundary dominance condition for non-archimedean $v$.
  Also, the number of zeros of $f_{k,v}$ in each connected component of $E_v$ is exactly the same as that of $G_{N, v}$ and every zero of $f_{k,v_0}$ reduces to $p_0$.
  Now, $\ell_x=Ns_x=d_Dks_x$.
  Since $s_x\leq 3\delta'/\log c_0$, $\ell_x\leq 3d_Dk\delta'/\log c_0$.
  The proposition follows.

\end{proof}
\section{Construction of algebraic integral points}
\label{sec:integral-point-construction}

In this section we prove the following result.
\begin{theorem} \label{thm:integral-point-construction}
  Let $K,C,X,U$ be as in Theorem~\ref{thm:adelic-sos}.
  Let $\mathbb{E}=(E_v)_{v\in M_K}$ be an admissible adelic set on
  $U$ with $\operatorname{cap}_X(\mathbb E)>1$. For every
  $v\in M_K$, let $\mu_v$ be a probability measure supported on $E_v$.
  Let $D$ be an $\mathbb R$-divisor on $C$ of degree $1$. Suppose that, for every flat adelic divisor
  $\overline F=(F,(g_{F,v})_{v\in M_K})$ for which $F|_U$ is
  effective, we have
  \begin{equation} \label{eq:integral-point-construction-hypothesis}
    \sum_{v\in M_K}\int g_{F,v}\,\mathrm d\mu_v
    \leq D\cdot\overline F.
  \end{equation}
  Then there exists a sequence of distinct algebraic points
  $(\alpha_n)$ in $C(\overline K)$ such that $\alpha'_{n, v} \subseteq E_v$ and $\delta_{\alpha_n, v} \longrightarrow \mu_v$ at every place $v \in M_K$, and $\alpha_n'/\deg(\alpha_n') \longrightarrow D$ in $\operatorname{Pic}^1(C)_\mathbb{R}$.
\end{theorem}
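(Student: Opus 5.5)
We follow the Fekete--Szeg\H{o} pattern: build global rational functions $f_k$ with Proposition~\ref{prop:glo-sec}, take $\alpha_n$ to be the zeros of an irreducible factor of $f_k$, and then pass to a diagonal sequence. First we reduce to a regular situation.

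\textbf{Reduction.} By Lemma~\ref{lem:arch-mea-no-atoms}, Proposition~\ref{prop:com-exh-nor} and Corollary~\ref{cor:arc-mea}, at each archimedean place we replace $E_v$ by a simple set $H_n$ inside its $\mathbb R$-interior, with small components. We also replace $\mu_v$ by a measure $\mu'_v$ with continuous potential, at $W_1$-distance $\le\delta$ from $\mu_v$. This costs only $\delta\deg(F_-)$ in \eqref{eq:integral-point-construction-hypothesis} and keeps $\operatorname{cap}_X>1$ after a small loss. At the finitely many non-archimedean places where $E_v$ is not simple, we use Proposition~\ref{prop:non-mea} to replace $\mu_v$ by finite combinations of Dirac masses at divisorial points. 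This is harmless, since Green functions of effective divisors are subharmonic on $E_v$, so the hypothesis is preserved. At an auxiliary place $v_0$ where $E_{v_0}$ is simple, $\mu_{v_0}$ is already the Dirac mass at the Shilov point. We take $S$ to be the archimedean places, the exceptional places and $v_0$, enlarged as required in \S\ref{sec:global-patching}.

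\textbf{Choice of $D'$.} Next we choose an effective divisor $D'$ of large degree $d_D$, supported off $X$ and the $E_v$, such that $D'/d_D$ is close to $D$ in $\operatorname{Pic}^1(C)_\mathbb R$. We take Green functions $g_{D',v}$ with $c_1=d_D\mu_v$ at $v\in S$, and at $v\notin S$ the Green functions given by the model. We then need the numbers $a_x$ of Proposition~\ref{prop:glo-sec} to satisfy $0<a_x<\delta'$.

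The key step is to derive this from the hypothesis. Test \eqref{eq:integral-point-construction-hypothesis} on the flat divisors $\overline F$ with $F=D'/d_D-x/\deg(x)$, which is effective on $U$. The flat divisor $\overline F$ differs from $(D'/d_D,g_{D',v}/d_D)-(x,g_x)/\deg(x)$ by a vertical part. Using the induction formula, its intersection with $D$ makes $\sum_v\int g\,\mathrm d\mu_v-D\cdot\overline F$ equal, up to errors tending to $0$ as $D'/d_D\to D$, to $-a_x$ plus constants. The positivity $a_x>0$ should come from perturbing $D'$ by a small multiple of points of $X$. I expect that adjusting $D'$ (and possibly the constant part of the Green functions at $v_0$) is what forces $a_x\in(0,\delta')$.

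\textbf{Global functions and their roots.} Proposition~\ref{prop:glo-sec} then gives $f_k$ with the following properties:
\begin{itemize}
\item its poles are $kD'+\sum_x\ell_x x$, with $\ell_x=O(\delta' k)$;
\item it is integral and monic on $\mathcal C_S$ up to lower-order terms;
\item all its zeros reduce to $p_0$ at $v_0$;
\item boundary dominance holds at every $v\in S$;
\item it has enough zeros in each component.
\end{itemize}
Integrality and monicity at $X$ imply that the zeros of $f_k$ lie in $E_v$ for $v\notin S$. Rouch\'e's theorem (Proposition~\ref{prop:rou-thm}) and the oscillation of $f_k$ put all zeros in $E_v$ for $v\in S$. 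Since all zeros reduce to $p_0$ and $f_k$ has only finitely many irreducible factors, we take an irreducible factor to get a closed point $\alpha$.

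To control distributions we argue as follows. The zero divisor $Z_k$ is linearly equivalent to $kD'+\sum\ell_x x$, so $Z_k/\deg Z_k\to D'/d_D$ in $\operatorname{Pic}^1$ as $\delta'\to0$. The component counts in \eqref{item:global-zero-distribution}, together with the fact that the components have diameter $<\delta$, give equidistribution to within $O(\delta)$.

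\textbf{Main obstacle.} The main difficulty is passing from the full divisor $Z_k$ to a single irreducible factor, and keeping degrees going to infinity. Factors might distribute unevenly. I would first apply the construction with $\phi^{\mathrm{init}}$ an $n$-th power and a Hilbert-irreducibility-style perturbation of the lowest patching coefficients, within the allowed freedom $B_v$, so that $f_k$ itself is irreducible. Alternatively, one can use the trick of reducing to the single point $p_0$ at $v_0$ together with an Eisenstein-type choice. Finally, letting $\delta,\delta'\to0$ and $D'/d_D\to D$ and taking a diagonal sequence yields distinct $\alpha_n$ with $\delta_{\alpha_n,v}\to\mu_v$ weakly at every place and $\alpha'_n/\deg\alpha'_n\to D$.
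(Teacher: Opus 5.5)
Your skeleton matches the paper (regularize, apply Proposition~\ref{prop:glo-sec}, use an Eisenstein trick at $p_0$, take a diagonal sequence), but there is a genuine gap in passing from $f_k$ to integral points. Proposition~\ref{prop:glo-sec} does not give you an integral $f_k$. Only $f'_k$ (in the proof, $G_N\in H^0(D_N^{(0)})$) is integral and monic on $\mathcal C_S$. The difference $f_k-f'_k\in H^0(kD'+D_0)$ comes from weak approximation, and its coefficients are uncontrolled outside $S$. So your claim that integrality and monicity put the zeros of $f_k$ in $E_v$ for $v\notin S$ is unjustified, and the points you extract need not be integral.

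The paper repairs this in two steps.
\begin{itemize}
\item It writes $f_k-f'_k=\sum_i a_i\varphi_i$ in a basis of small sections. These are integral outside $S$ and satisfy $\log|\varphi_i|_v\le kg_{D',v}+g_{D_0,v}-k\rho/\#S$ at $v\in S$, by arithmetic Nakai--Moishezon \cite[Theorem 1.5]{Zha92}.
\item It replaces each $a_i$ by $A_i\in O_{K,S}$ with $|A_i-a_i|_v\le M$ (Lemma~\ref{lem:apr-int}).
\end{itemize}
Since $\dim H^0(kD'+D_0)=O(k)$ is beaten by the margin $e^{-k\rho/\#S}$, the function $f'_k+\sum_iA_i\varphi_i$ is integral outside $S$. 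At each $v\in S$ it has the same zero counts as $f_k$ in every component of $E_v$. A second choice $B_i$, with $\sum_i(B_i-A_i)\varphi_i$ close to $\varpi_0$ at $v_0$, gives two candidates. One of them has $\operatorname{ord}_{E_0}=1$ on the blow-up at $p_0$, so it is Eisenstein. Its entire zero divisor is then a single closed point, which carries the controlled distribution and divisor class. Taking an arbitrary irreducible factor, as you propose, loses exactly this control, and your Hilbert-irreducibility alternative is not substantiated.

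This also shows what your ``Choice of $D'$'' step must deliver. You need more than $0<a_x<\delta'$: you need $\operatorname{abs}(h_{\overline{D}'})>0$ on all of $C(\overline K)$, because the small-section step requires it. You leave this step as an expectation, and it does not work as set up.
\begin{itemize}
\item Testing on $F=D'/d_D-x/\deg(x)$ only bounds $h_{\overline D}(x)$ from above by the height of $D'/d_D$.
\item Perturbing $D'$ by points of $X$ is not allowed, since $D'$ must avoid $X$.
\end{itemize}
The missing observation is to test the regularized hypothesis \eqref{eq:delta-condition} on $F=x/\deg(x)-x_0/\deg(x_0)$, with $x_0\in X$ and $x\in C$ arbitrary; this $F$ is effective on $U$. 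By the induction formula this gives $h_{\overline D}(x)\ge h_{\overline D}(x_0)-\delta$ for all $x\in C$. So the points of $X$ are approximate minimizers of $h_{\overline D}$, with nearly equal heights.

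Now use the free global constant in the Green functions to set $h_{\overline D}(x_0)=\delta$. This gives $h_{\overline D}\ge0$ on $C$ and $h_{\overline D}\le 2\delta$ on $X$. Then take $D'=m(D+\sum_i r_iP_i)$ with $P_i\in\operatorname{supp}(D)$, small $r_i$, and lifts $\overline P_i$ of curvature $\deg(P_i)\mu_v$ and positive absolute minimum. This yields $\operatorname{abs}(h_{\overline{D}'})>0$ and $h_{\overline{D}'}(x)\le\delta'\deg(D')$ on $X$.

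Finally, $\mu_{v_0}$ is not automatically the Shilov Dirac mass. You must first reduce to equidistribution on a finite set $T$ of places, via the diagonal argument. Then replace $\mu_v$ by its balayage for $v\notin S_0\supseteq T$; this preserves the hypothesis.
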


By Lemma~\ref{lem:gvf-on-curves} and Lemma~\ref{lem:gvf-point-convergence}, Theorem \ref{thm:int-existential-closedness-t-0} is a direct consequence.

\subsection{First reductions - need only to construct equidistribution at a finite set of places $T \subseteq M_K$}
Note that
\begin{itemize}
  \item since changing $D$ up to linear equivalence does not affect the inequality, we may assume that $D$ is effective and supported outside $X$.
  \item it suffices to establish equidistribution at any finite set of places $T \subseteq M_K$. Theorem \ref{thm:integral-point-construction} then follows by a standard diagonal argument.
\end{itemize}

For each $v\in M_K$, fix a compatible metric $\rho_v$ on $C_v^{\mathrm{an}}$. All diameters and Wasserstein distances at $v$ below are taken with respect to $\rho_v$.

\subsection{Archimedean regularization - choose $\delta,\eta>0$, and $c_0>1$}
Choose $\delta,\eta>0$ and $\operatorname{cap}_X(\mathbb{E})>c_0>1$. By Proposition \ref{prop:com-exh-nor} and Corollary \ref{cor:arc-mea}, we may assume at every archimedean place $v \in M_{K,\infty}$
\begin{itemize}
  \item $\mu_v$ has a continuous potential,
  \item $E_v \cap \operatorname{supp}(D)_v^\mathrm{an}=\emptyset$,
  \item $E_v$ is a disjoint union of small domains, each of diameter $<\eta$.
\end{itemize}
This archimedean regularization process introduces errors:
\begin{itemize}
  \item we can keep the margin $\operatorname{cap}_X(\mathbb{E})>c_0>1$,
  \item we have, for each degree zero divisor $F=F_+-F_-$ on $C$ with $F_-$ supported on $X$,
    \begin{equation}
      \sum \int g_{F,v} \,\mathrm{d}\mu_v \leq \overline{F} \cdot D + \delta \deg(F_-). \label{eq:delta-condition}
    \end{equation}
\end{itemize}

\subsection{Non-archimedean regularization and an adelic $\mathbb{R}$-divisor - make $S \supseteq T$ and construct $\overline{D}$}
Let $\mathcal{C}/O_K$ be an integral model of $C/K$ and $\mathcal{U}$ be an open subscheme of $\mathcal{C}$ with $\mathcal{U}_K=U$, such that $E_v=\operatorname{red}^{-1}(\mathcal{U}_{k_v})$ at every non-archimedean place $v \in M_{K,f}$.

Choose a bigger set of places $S_0 \supseteq T$ such that $S_0$ contains all archimedean places and the bad places where $E_v$ is not simple with respect to $(\mathcal C,X)$. Enlarge $S_0$, if necessary, so that it satisfies all the conditions for global patching as in Proposition \ref{prop:glo-sec}.

By Proposition \ref{prop:non-sub-avg}, we may also assume $\mu_v$ is the Dirac measure at the (unique) Shilov boundary of $E_v$ for each $v\notin S_0$.

By Proposition \ref{prop:non-mea} and Proposition \ref{prop:non-exh}, we may assume at every place $v \in S_0 \cap M_{K,f}$
\begin{itemize}
  \item $\mu_v$ is a finite convex combination of Dirac measures at divisorial points,
  \item $E_v \cap \operatorname{supp}(D)_v^\mathrm{an} = \emptyset$,
  \item $E_v$ is a disjoint union of locally admissible compact sets, each of diameter $<\eta$.
\end{itemize}

We carry out this replacement inside the original $E_v$ while retaining its entire Shilov boundary. Since the equilibrium measures are supported on that boundary, the local Green matrix (and hence $\operatorname{cap}_X(\mathbb E)$) remains unchanged.

Choose $v_0 \notin S_0$. We may shrink $E_{v_0}=\operatorname{red}^{-1} \big( \mathcal{C}_{k_{v_0}}-\overline{X}_{k_{v_0}} \big)$ to a smaller set $\operatorname{red}^{-1}\big( \mathcal{C}_{k_{v_0}}-\overline{X \cup \operatorname{supp}(D)}_{k_{v_0}} \big)$. Make $S=S_0 \cup \{v_0\}$.

Let $\overline{D}=(D,(g_{D,v})_{v\in M_K})$ be an adelic divisor such that $c_1(\overline{D})_v=\mu_v$ at every place $v \in M_K$. Then the local Green functions $(g_{D,v})_{v\in M_K}$ are determined up to constants, and we may choose these constants so that
\begin{itemize}
  \item $g_{D,v}(\xi_v)=0$ at $v \notin S_0$, where $\xi_v$ is the Berkovich point corresponding to the divisor $\mathcal{C}_{k_v}$. (Let $\mathcal{D}_{S_0}$ be the Zariski closure of $D$ in $\mathcal{C}_{S_0}$. Then $g_{D,v}$ is exactly the model Green function induced by $\mathcal{D}_{S_0}$.)
  \item $h_{\overline{D}}(x_0)=\delta$ for some $x_0 \in X$.
\end{itemize}

For any $x \in C$, applying \eqref{eq:delta-condition} to $F=x/\deg(x)-x_0/\deg(x_0)$ gives
\[
  h_{\overline{D}}(x) - h_{\overline{D}}(x_0) = \overline{F} \cdot D - \sum \int g_{F,v} \,\mathrm{d}\mu_v \geq -\delta.
\] It follows that $h_{\overline{D}}(x) \geq 0$ for all $x \in C$.
Similarly, for any $x \in X$, applying \eqref{eq:delta-condition} to $F=x_0/\deg(x_0)-x/\deg(x)$ gives $h_{\overline{D}}(x) \leq 2\delta$ for all $x \in X$.

\subsection{Adelic $\mathbb{Q}$-divisor - construct $\overline{D}^\prime$, choose $\delta^\prime>2\delta$}
Say $\operatorname{supp}(D)=\big\{ P_1,\dots,P_n \big\}$. Let $r_1,\dots,r_n \in \mathbb{R}_{>0}$ be such that $D+\sum r_i P_i$ is a $\mathbb{Q}$-divisor. Take an integer $m$ that clears the denominator of $D+\sum r_i P_i$. Then $D^\prime=m(D+\sum r_iP_i)$ is an integral divisor.

For a height function $h$ on $C(\overline K)$, write $\operatorname{abs}(h)=\inf_{\alpha\in C(\overline K)}h(\alpha)$ for its absolute minimum.
For any $i=1,\dots,n$, let $\overline{P}_i$ be an adelic divisor such that
\begin{itemize}
  \item $c_1(\overline{P}_i)_v=\deg(P_i) \mu_v$ at every place $v \in M_K$,
  \item it is induced outside $S_0$ by the Zariski closure of $P_i$ in $\mathcal{C}_{S_0}$,
  \item $\operatorname{abs}(h_{\overline{P}_i})>0$.
\end{itemize}
Let $\overline{D}^\prime=m(\overline{D}+\sum r_i\overline{P}_i)=(D^\prime,(g_{D^\prime,v})_{v\in M_K})$. Then clearly $\operatorname{abs}(h_{\overline{D}^\prime})>0$.
Choose $\delta^\prime>2\delta$. The coefficients $r_1,\dots,r_n$ can be made small enough so that $h_{\overline{D}^\prime}(x) \leq \delta^\prime \deg(D^\prime)$ for all $x \in X$.

\subsection{Global construction - choose $0<\rho<\operatorname{abs}(h_{\overline{D}^\prime})$, $\varepsilon>0$, and construct $f_k,f_k^\prime$}
Let $D_0=2g\sum_{x \in X}x$ and $\overline{D}_0=(D_0,(g_{D_0,v})_{v\in M_K})$ be an adelic divisor induced outside $S$ by the Zariski closure of $D_0$ in $\mathcal C_S$, with $g_{D_0,v}<-1$ on $E_v$ at every $v \in S$.

Choose $0<\rho<\operatorname{abs}(h_{\overline{D}^\prime})$. By the arithmetic Nakai--Moishezon theorem \cite[Theorem 1.5]{Zha92}, for any $k \gg 0$, there exists a $K$-basis $\big\{ \varphi_i \big\}$ of $H^0(kD^\prime+D_0)$ such that
\begin{itemize}
  \item at every $v \in S$, $\log|\varphi_i|_v \leq k g_{D^\prime,v} + g_{D_0,v} - k\rho/\#S$ for every $i$,
  \item at every $v \notin S$, $\log|\varphi_i|_v \leq k g_{D^\prime,v} + g_{D_0,v}$ for every $i$.
\end{itemize}

Choose a regular closed point $p_0$ in the special fiber of $\mathcal{C}$ over $v_0$ that is away from the reduction of $\operatorname{supp}(D) \cup X$. Choose $\varepsilon>0$. For suitable $k \gg 0$, Proposition \ref{prop:glo-sec} yields rational functions $f_k,f_k^\prime$ on $C$ such that
\begin{itemize}
  \item $f^\prime_k$ is integral and monic at every $x \in X$ on $\mathcal{C}_S$, and $f_k-f_k^\prime \in H^0(kD^\prime+D_0)$,
  \item the pole divisor of $f_k$ is $\operatorname{div}_\infty(f_k)=k D^\prime+\sum_{x \in X} \ell_x x$ with $\ell_x \leq \frac{3 \delta^\prime}{\log c_0} \deg(D^\prime) k$,
  \item at every $v \in S$, in any connected component $A$ of $E_v$, there are at least
    \[
      (\mu_v(A)-\nobreak\varepsilon) \deg(D^\prime) k
    \]
    zeros of $f_k$ (counted with multiplicity),
  \item at $v_0$, $|f_k|_{\mathcal{C}_{k_{v_0}}}=1$ and all zeros of $f_k$ reduce to one point $p_0$,
  \item at every non-archimedean place $v \in S$, $(f_{k,v},kg_{D^\prime,v})$ satisfies the boundary dominance condition on $E_v$,
  \item at every archimedean place $v \in S$, $(f_{k,v},kg_{D^\prime,v}-1)$ satisfies the boundary dominance condition on $E_v$.
\end{itemize}

\subsection{Eisenstein perturbation - construct $h_k$}
Let $\varpi_0$ be a uniformizer in $O_{K,v_0}$. Let $E_{0}$ be the exceptional divisor of the blow-up of $\mathcal{C}$ at $p_0$. Clearly we have $\operatorname{ord}_{E_{0}}(\varpi_0)=\operatorname{ord}_{v_0}(\varpi_0)=1$.

Say $f_k-f_k^\prime=\sum a_i \varphi_i$ and $\varpi_0=\sum b_i \varphi_i$. By Lemma \ref{lem:apr-int}, there exist a constant $M>0$ (depending only on $S$) and elements $A_i,B_i \in O_{K,S}$ such that
\begin{itemize}
  \item $|A_i-a_i|_v \leq M$ at every $v \in S$,
  \item $|B_i-a_i|_v \leq M$ at every $v \in S_0=S \backslash \{v_0\}$,
  \item $|B_i-a_i-b_i|_{v_0} \leq M$ at $v_0$.
\end{itemize}
We define
\begin{itemize}
  \item $u_1=\sum (A_i-a_i) \varphi_i$ and $h_1=f_k+u_1=f_k^\prime+\sum A_i \varphi_i$,
  \item $u_2=\sum (B_i-a_i) \varphi_i$ and $h_2=f_k+u_2=f_k^\prime+\sum B_i \varphi_i$.
\end{itemize}
Let $h \in \{h_{1},h_{2}\}$. Then when $k$ is large,
\begin{itemize}
  \item all zeros of $h$ are inside $E_v$ at every $v \in M_K$. Moreover, $h$ has exactly the same number of zeros as $f_k$ in every component of $E_v$ at every $v \in S$,
  \item all zeros of $h$ reduce to $p_0$ at $v_0$,
  \item At least one of $\{h_1,h_2\}$ satisfies $\operatorname{ord}_{E_0}(h_i)=1$. Denote it by $h_k$. It is Eisenstein at $p_0$; thus $\operatorname{div}_0(h_k)$ is a single closed point $\alpha_k'$.
\end{itemize}
Choose a geometric point $\alpha_k\in C(\overline K)$ above $\alpha_k'$.

\subsubsection{All zeros of $h$ are inside $E_v$ at every $v \in M_K$. Moreover, $h$ has exactly the same number of zeros as $f_k$ in every component of $E_v$ at every $v \in S$}

Set
$
  r_k=\dim_K H^0(kD^\prime+D_0)$ and
$
  c=\frac{\rho}{\#S}
$.
By Riemann--Roch, $r_k=O(k)$. Recall that
\[
  \log|\varphi_i|_v
  \leq
  k g_{D^\prime,v}+g_{D_0,v}-kc \quad \text{and} \quad g_{D_0,v}<-1 \text{ on $E_v$}, \quad \text{at every $v\in S$}.
\]

For $u_1$, we have at every archimedean $v\in S$
\begin{align*}
  |u_1|_v
  &\leq
  \sum_i |A_i-a_i|_v|\varphi_i|_v\\
  &\leq
  Mr_k
  \exp\left(
  k g_{D^\prime,v}+g_{D_0,v}-kc
  \right).
\end{align*}
Since $r_k=O(k)$, when $k$ is large we have
\[
  \log|u_1|_v<k g_{D^\prime,v}-1 \text{ on $E_v$}.
\]
At a non-archimedean $v\in S$, we have a stronger estimate
\[
  |u_1|_v \leq M\max_i|\varphi_i|_v,
\]
and therefore
\[
  \log|u_1|_v<k g_{D^\prime,v} \text{ on $E_v$, for $k$ sufficiently large}.
\]

The same estimates hold for $u_2$ at every $v\in S_0$, since
\[
  |B_i-a_i|_v\leq M \quad \text{ at every $v \in S_0$}.
\]

At $v_0$, set
\[
  e_2=u_2-\varpi_0
  =\sum_i(B_i-a_i-b_i)\varphi_i.
\]
Since $g_{D^\prime,v_0}=0$ on $E_{v_0}$, we have
\[
  |e_2|_{v_0} \leq M\max_i|\varphi_i|_{v_0} < |\varpi_0|_{v_0}, \text{ for $k$ sufficiently large}.
\]
Consequently,
\[
  |u_2|_{v_0} = |\varpi_0+e_2|_{v_0} = |\varpi_0|_{v_0} <1 = \exp\bigl(k g_{D^\prime,v_0}\bigr) \; \text{ on $E_{v_0}$.}
\]
It follows that, for $j=1,2$,
\[
  \log|h_j-f_k|_v < k g_{D^\prime,v}-1, \; \text{ at every archimedean $v\in S$,}
\]
and
\[
  \log|h_j-f_k|_v < k g_{D^\prime,v}, \; \text{at every non-archimedean $v\in S$.}
\]
By the boundary dominance condition for $f_k$, $h_j$ and $f_k$ have the same number of zeros in every connected component of $E_v$.

The global construction gives $\ell_x>2g$ for $k$ sufficiently
large. Since
\[
  u_j\in H^0(kD^\prime+D_0), \quad D_0=2g\sum_{x\in X}x,
\]
we have
$
  \operatorname{div}_\infty(h_j)\leq \operatorname{div}_\infty(f_k).
$
All zeros of $f_k$ lie in $E_v$, and the preceding argument
produces in $E_v$ at least the same number of zeros of $h_j$ as $f_k$. Since
\[
  \deg\operatorname{div}_0(h_j)
  =
  \deg\operatorname{div}_\infty(h_j)
  \leq \deg \operatorname{div}_\infty(f_k)
  =
  \deg\operatorname{div}_0(f_k),
\]
equality must hold. Thus $h_j$ has exactly the same
number of zeros as $f_k$ in every connected component of $E_v$,
and it has no zeros outside $E_v$, for every $v\in S$.

For $v \notin S$, the estimate
\[
  \log|\varphi_i|_v
  \leq k g_{D^\prime,v}+g_{D_0,v}
\]
means exactly that each $\varphi_i$ is integral as a section of
$kD^\prime+D_0$ at $v$. Since $A_i,B_i\in O_{K,S}$, the
functions
\[
  h_1=f_k^\prime+\sum_iA_i\varphi_i,
  \quad
  h_2=f_k^\prime+\sum_iB_i\varphi_i \; \text{ are integral at $v$.}
\]
Moreover, the added terms have pole order at
most $2g<\ell_x$ at every $x\in X$, so they do not change the
leading term of $f_k^\prime$. Hence $h_1$ and $h_2$ remain
monic at every $x\in X$. Since $E_v$ is simple with respect to $(\mathcal C,X)$, it follows
that all their zeros lie in $E_v$.

\subsubsection{All zeros of $h_j$ reduce to $p_0$ at $v_0$.}
Let $\xi_{v_0}$ be the divisorial point corresponding to
the special fiber of $\mathcal C$ at $v_0$. Note that it is the boundary of the open residue disk $\operatorname{red}^{-1}(p_0)$. The preceding estimates give $|u_1|_{\xi_{v_0}}<1$ and $|u_2|_{\xi_{v_0}}<1$. Since $|f_k|_{\xi_{v_0}}=1$ and all zeros of $f_k$ are in $\operatorname{red}^{-1}(p_0)$, we may apply Rouch\'e's theorem to the perturbation $h_j=f_k+u_j$ and conclude that all zeros of $h_j$ lie in $\operatorname{red}^{-1}(p_0)$.

\subsubsection{At least one of $\{h_1,h_2\}$ satisfies $\operatorname{ord}_{E_0}(h_i)=1$}
In fact, for $k$ sufficiently large, $|u_1|_{E_0}<|\varpi_0|_{E_0}$ and $|u_2-\varpi_0|_{E_0}<|\varpi_0|_{E_0}$.
Hence
\[
  \big|(h_2-h_1)-\varpi_0\big|_{E_0}=\big|(u_2-\varpi_0)-u_1\big|_{E_0}<|\varpi_0|_{E_0}\quad\text{and}\quad \operatorname{ord}_{E_0}(h_2-h_1)=\operatorname{ord}_{E_0}(\varpi_0)=1.
\]
Since all zeros of $h_1,h_2$ reduce to $p_0$, while no pole does, we have
$\operatorname{ord}_{E_0}(h_i)\geq1$ for $i=1,2$. Therefore
\[
  1=\operatorname{ord}_{E_0}(h_2-h_1)
  \geq
  \min\bigl\{\operatorname{ord}_{E_0}(h_1),
  \operatorname{ord}_{E_0}(h_2)\bigr\},
\]
and consequently at least one of $h_1,h_2$ has order exactly $1$ along $E_0$. Denote it by $h_k$.
It is Eisenstein at $p_0$: all zeros of $h_k$ reduce to $p_0$ at $v_0$, no pole of $h_k$ reduces to $p_0$, $\operatorname{ord}_{\mathcal{C}_{k_{v_0}}}(h_k)=0$, and $\operatorname{ord}_{E_0}(h_k)=1$. As a result, $\operatorname{div}_0(h_k)$ is a single closed point $\alpha_k'$.

\subsection{Take the limit and conclude}
Note that
\[
  \deg(\alpha_k')=k \deg(D^\prime) + \sum_{x \in X} \ell_x \deg(x) \leq k \deg(D^\prime) \left( 1+ C_X \delta^\prime \right), \text{ where } C_X \coloneqq \frac{3 \sum_x \deg(x)}{\log c_0}.
\] By construction, we have in every component $A$ of $E_v$ at every $v \in S_0$,
\[
  \delta_{\alpha_k,v}(A)=\frac{\sum_{\beta\in\alpha'_{k,v}\cap A}\deg(\beta)}{\deg(\alpha_k')} \geq \frac{\mu_v(A)-\varepsilon}{1+C_X\delta^\prime}.
\]

Thus, if $N_v$ denotes the (finite) number of components of $E_v$, then
\[
  \sum_A \Big| \delta_{\alpha_k,v}(A)-\mu_v(A) \Big|
  = 2 \sum_{\mu_v(A)>\delta_{\alpha_k,v}(A)}
  \Big( \mu_v(A)-\delta_{\alpha_k,v}(A) \Big)
  \leq 2\frac{C_X\delta^\prime}{1+C_X\delta^\prime}
  + 2N_v \varepsilon.
\]

Denote by
$\mathbb E^{(0)}=(E_v^{(0)})_{v\in M_K}$ and $\mu_v^{(0)}$ the adelic set
and the measures before the regularizations above. First fix, once and for all,
\[
  1<c_0<\operatorname{cap}_X(\mathbb E^{(0)}),
\]
and at every stage choose the regularizations so that the resulting adelic
set still has capacity greater than $c_0$. Consequently
$C_X=3\sum_{x\in X}\deg(x)/\log c_0$ is independent of $n$.

Choose $\delta_n,\eta_n \longrightarrow 0$ and
$\delta_n^\prime>2\delta_n$ with $\delta_n^\prime \longrightarrow 0$. At the
$n$-th stage, carry out the regularizations above, and denote by
$E_{n,v}\subseteq E_v^{(0)}$ and $\widetilde\mu_{n,v}$ the set and measure there.
\begin{itemize}
  \item At each
    archimedean $v\in T$, Proposition \ref{prop:com-exh-nor} and Corollary \ref{cor:arc-mea}
    allow us to arrange $W_1\bigl(\widetilde\mu_{n,v},\mu_v^{(0)}\bigr)\leq\delta_n$
    and to arrange for every component of $E_{n,v}$ to have diameter
    less than $\eta_n$.
  \item At each non-archimedean $v\in T$, take divisorial
    approximations as in Proposition~\ref{prop:non-mea}, and choose model neighborhoods supplied by Proposition~\ref{prop:non-exh} with component diameter less than $\eta_n$.
\end{itemize}

Let $P_{n,v}$ be the family of these
components. Then for every $v\in T$,
$
  \max_{A\in P_{n,v}}\operatorname{diam}_{\rho_v}(A)<\eta_n.
$
Since $T$ is finite, the integer
$
  N_n=\max_{v\in T}\# P_{n,v}
$
is finite.
After the regularized sets have been fixed, choose
$\varepsilon_n>0$ so that $N_n\varepsilon_n\longrightarrow0$.

For every $v \in T$, the preceding estimate gives
\[
  \sum_{A\in P_{n,v}}
  \bigl|\delta_{\alpha_n,v}(A)-\widetilde\mu_{n,v}(A)\bigr|
  \leq
  2\frac{C_X\delta_n^\prime}{1+C_X\delta_n^\prime}
  +2N_n\varepsilon_n
  \longrightarrow0.
\]

By construction, $\widetilde\mu_{n,v}\longrightarrow\mu_v^{(0)}$ weakly for every
$v\in T$. Applying Lemma \ref{lem:shrinking-partition}
below (with $Y=C_v^{\mathrm{an}}$,
$\widetilde\mu_n=\widetilde\mu_{n,v}$, $\nu_n=\delta_{\alpha_n,v}$, and
$P_n=P_{n,v}$) yields $\delta_{\alpha_n,v}\longrightarrow\mu_v^{(0)}$
weakly at every $v\in T$.
Since $\alpha'_{n,v}\subseteq E_{n,v}\subseteq E_v^{(0)}$ and $\alpha_n'/\deg(\alpha_n') \longrightarrow D$ in $\operatorname{Pic}^1(C)_\mathbb{R}$ by construction, Theorem \ref{thm:integral-point-construction} follows.

\begin{lemma} \label{lem:shrinking-partition}
  Let $(Y,d)$ be a compact metric space, and let
  $\widetilde\mu_n,\nu_n,\mu$ be probability measures on $Y$. For each $n$, let $P_n$ be a finite family of pairwise disjoint Borel sets whose union has full $\widetilde\mu_n$-measure and $\nu_n$-measure.
  Assume $\widetilde\mu_n\longrightarrow\mu$ weakly, $d_n=\max_{A\in P_n}\operatorname{diam}_d(A) \longrightarrow0$, and
  $\Delta_n=\sum_{A\in P_n}
    \bigl|\nu_n(A)-\widetilde\mu_n(A)\bigr| \longrightarrow0$. Then $\nu_n\longrightarrow\mu$ weakly.
\end{lemma}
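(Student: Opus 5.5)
To show $\nu_n\to\mu$ weakly, I will test against a fixed continuous $f$ on $Y$ and prove $\int f\,\mathrm d\nu_n-\int f\,\mathrm d\widetilde\mu_n\to0$. Since $\int f\,\mathrm d\widetilde\mu_n\to\int f\,\mathrm d\mu$ by hypothesis, this is enough. Because $Y$ is compact, $f$ is bounded by $\lVert f\rVert_\infty$ and uniformly continuous; let $\omega$ be its modulus of continuity, so $\omega(t)\to0$ as $t\to0$.

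The key step is to replace $f$ on each cell by a constant. For each nonempty $A\in P_n$, choose a point $y_A\in A$ and set $f_n=\sum_{A\in P_n}f(y_A)\mathbf 1_A$. This is a bounded Borel step function. For $y\in A$ we have $d(y,y_A)\le d_n$, so $|f-f_n|\le\omega(d_n)$ on $\bigcup P_n$. The union has full $\widetilde\mu_n$- and $\nu_n$-measure, and the cells are pairwise disjoint. It follows that
\[
  \Bigl|\int f\,\mathrm d\nu_n-\sum_{A}f(y_A)\nu_n(A)\Bigr|\le\omega(d_n),
\]
and the same bound holds with $\widetilde\mu_n$ in place of $\nu_n$.

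Next I compare the two discrete sums directly:
\[
  \Bigl|\sum_A f(y_A)\bigl(\nu_n(A)-\widetilde\mu_n(A)\bigr)\Bigr|\le\lVert f\rVert_\infty\Delta_n .
\]
Combining this with the two approximation bounds gives
\[
  \Bigl|\int f\,\mathrm d\nu_n-\int f\,\mathrm d\widetilde\mu_n\Bigr|\le 2\omega(d_n)+\lVert f\rVert_\infty\Delta_n\longrightarrow0 .
\]
This finishes the argument.

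There is no real obstacle here. The only point needing care is the measure-theoretic bookkeeping: the cells must be Borel and disjoint and must carry all the mass of both measures. These are exactly the hypotheses of the lemma, and they justify writing each integral as a sum over cells. Nothing beyond uniform continuity on a compact metric space is needed.
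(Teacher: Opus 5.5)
Your proposal is correct and is essentially the paper's argument: compare $\int f\,\mathrm d\nu_n$ and $\int f\,\mathrm d\widetilde\mu_n$ cell by cell, using a cell-diameter term together with a $\sup|f|\cdot\Delta_n$ term. The only difference is that the paper first reduces to Lipschitz test functions by density and uses the Lipschitz constant, whereas you use the modulus of uniform continuity directly for arbitrary continuous $f$, which makes the density step unnecessary.
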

\begin{proof}
  Since Lipschitz continuous functions are dense, we may test only Lipschitz continuous functions.
  Let $f$ be a continuous function such that $\left|f(y_1)-f(y_2)\right|\leq m d(y_1, y_2)$.
  Set $M=\sup_{Y}\left|f\right|$.
  Then,
  \[
    \left|\int f\,\mathrm d \nu_n-\int f\,\mathrm d \widetilde\mu_n\right|\leq md_n+2M\Delta_n.
  \]
  Therefore,
  \[
    \lim_{n\to \infty}\int f\,\mathrm d \nu_n=\lim_{n\to \infty}\int f\,\mathrm d \widetilde\mu_n=\int f\,\mathrm d \mu
  \]
\end{proof}
\section{Proof of main theorems}
\label{sec:main-theorem-proofs}

\subsection{Proof of Theorem \ref{thm:prescribed-places}}
In this section, we prove the main theorem.

\begin{theorem} \label{thm:adelic-sos-main-text}
  Let $K,C,X,U$ be as in Theorem~\ref{thm:adelic-sos}.
  Let $\mathbb{E}=(E_v)_{v\in M_K}$ be an admissible adelic set on $U$ with $\operatorname{cap}_X(\mathbb{E})>1$.
  Let $S$ be a subset of places, and for each $v\in S$, let $\mu_v$ be a
  probability measure supported on $E_v$.
  The following are equivalent:
  \begin{enumerate}
    \item there exists a sequence of distinct algebraic points $(\alpha_n)$ in $U(\overline{K})$ such that $\alpha'_{n,v}\subseteq E_v$ for every $n$ and every $v\in M_K$, and $\delta_{\alpha_n,v}\longrightarrow\mu_v$ weakly at every place $v\in S$,
    \item $\displaystyle \sum_{v \in S} \int \log|Q|_v \,\mathrm{d}\mu_v + \sum_{v \notin S} \log\lVert Q\rVert_{E_v, \mathrm{sup}}  \geq 0$ for all $0 \neq Q \in H^0(U, \mathcal O_U)$.
  \end{enumerate}
\end{theorem}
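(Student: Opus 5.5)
The direction (1)$\Rightarrow$(2) is the routine one. Given $0\neq Q\in H^0(U,\mathcal O_U)$, the points $\alpha_n$ are distinct, so $Q(\alpha_n)\neq 0$ for $n$ large, and the product formula gives $\sum_{v\in M_K}\int\log|Q|_v\,\mathrm d\delta_{\alpha_n,v}=0$. For $v\notin S$ we bound $\int\log|Q|_v\,\mathrm d\delta_{\alpha_n,v}\le\log\lVert Q\rVert_{E_v,\sup}$, since $\alpha'_{n,v}\subseteq E_v$. Only finitely many of these sup-norm terms are positive: for all but finitely many $v$ one has $E_v=\operatorname{red}_v^{-1}(\mathcal U_{k_v})$ and $Q$ is integral there. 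For $v\in S$ we pass to the limit using upper semicontinuity of $\log|Q|_v$ on the compact set $E_v$, which gives $\limsup_n\int\log|Q|_v\,\mathrm d\delta_{\alpha_n,v}\le\int\log|Q|_v\,\mathrm d\mu_v$. The places in $S$ with $\log\lVert Q\rVert_{E_v,\sup}\le 0$ need some care when $S$ is infinite; Fatou-type reverse inequalities handle them because every term is bounded above by $\log\lVert Q\rVert_{E_v,\sup}$, and that quantity is $\le 0$ outside a finite set. Combining these gives (2).

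For (2)$\Rightarrow$(1) we reduce to Theorem~\ref{thm:integral-point-construction}. The first step is to complete $\{\mu_v\}_{v\in S}$ to a full collection. For $v\notin S$ we set $\mu_v$ to be the equilibrium-type measure that realizes the sup norm. Concretely, we pick a measure on $\partial E_v$ (the Shilov point for almost all non-archimedean $v$) such that $\int\log|Q|_v\,\mathrm d\mu_v\ge\log\lVert Q\rVert_{E_v,\sup}$ fails by little. This cannot hold simultaneously for all $Q$ with one measure, so instead we argue by duality, in the spirit of Ballaÿ/linear programming. The plan is to show that the set of admissible completions $(\mu_v)_{v\notin S}$, probability measures on $E_v$ (with all but finitely many forced to be Shilov Dirac masses), contains one satisfying $\sum_v\int\log|Q|_v\,\mathrm d\mu_v\ge 0$ for all $Q$. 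We do this by a minimax (Sion/Hahn--Banach) argument. The space of completions is compact and convex, and the functional $Q\mapsto\sum\int\log|Q|$ is concave in the appropriate sense after taking convex combinations $\sum t_i\log|Q_i|$. The minimax swap turns the hypothesis (2) (the sup over completions of the integral equals the sup norm) into existence of a single completion.

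The second step is to convert the resulting condition on regular functions $Q\in H^0(U,\mathcal O_U)$ into the hypothesis~\eqref{eq:integral-point-construction-hypothesis} of Theorem~\ref{thm:integral-point-construction} for some degree-one $\mathbb R$-divisor $D$. Here we use the GVF description: the data $(\mu_v)$ together with a choice of $D_\ell$ and $m$ should define a normalized GVF functional with $m(x)=0$ for $x\in X$ (Lemma~\ref{lem:gvf-on-curves}). The positivity required there only involves divisors $F$ effective on $U$ with negative part on $X$. We must produce a class $D\in\operatorname{Pic}^1(C)_\mathbb R$ such that $D\cdot\overline F\ge\sum\int g_{F,v}\,\mathrm d\mu_v$ for all flat $\overline F$ with $F|_U$ effective. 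Testing principal divisors $\widehat{\operatorname{div}}(Q^{-1})$ gives exactly (2) for that cone. The existence of $D$ then follows from a Hahn--Banach separation in the finite-dimensional space $\operatorname{Pic}^0(C)_\mathbb R$, after quotienting by principal divisors. We use that the cone of flat divisors with $F|_U$ effective modulo principal ones is compactly generated in $\operatorname{Pic}^0$ directions, by Faltings--Hriljac nondegeneracy, and that $\operatorname{cap}_X(\mathbb E)>1$ (Lemma~\ref{lem:can-cap-sol}) controls the $X$-coefficients.

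\textbf{Main obstacle.} The main difficulty will be this duality step, which needs a closedness or compactness statement for the relevant cone so that the separation does not lose the boundary case. I will first try to handle it by perturbation: replace $\mu_v$ at one archimedean place by a small mixture with the equilibrium measure, which uses the strict capacity margin to gain a slack $\delta>0$. I then apply Theorem~\ref{thm:integral-point-construction} to the perturbed data and finish by a diagonal argument as $\delta\to0$, using Lemma~\ref{lem:shrinking-partition}-style weak convergence.
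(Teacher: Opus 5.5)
Your (1)$\Rightarrow$(2) is the paper's argument, and your reverse-Fatou remark for infinite $S$ fills in a point the paper passes over quickly.

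For (2)$\Rightarrow$(1) you use the same two tools as the paper, but in the opposite order. The tools are Sion's minimax theorem, to complete the measures off $S$, and Hahn--Banach together with nondegeneracy of the N\'eron--Tate pairing on the finite-dimensional $\operatorname{Pic}^0(C)_{\mathbb R}$, to produce $D$. The paper first applies Hahn--Banach (Proposition~\ref{prop:choose-divisor-class}), keeping the sup-norm terms as superadditive infima $\inf_{E_v}g_{F,v}$. It then runs Sion against all flat $\overline F$ with $F|_U$ effective, for the now fixed $D$ (Proposition~\ref{prop:complete-local-measures}).

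You instead run the minimax against principal divisors. With the paper's sign convention these are $\widehat{\operatorname{div}}(Q)$, $0\neq Q\in H^0(U,\mathcal O_U)$, not $\widehat{\operatorname{div}}(Q^{-1})$, since it is $\widehat{\operatorname{div}}(Q)$ that is effective on $U$. This reduces everything to $S=M_K$, and only then do you apply Hahn--Banach. The reordering is legitimate:
\begin{itemize}
\item First put Shilov Dirac masses at the cofinitely many $v\notin S$ where $\partial E_v$ is a point.
\item The test family is then closed under products. So for rational weights $t$, $\sup_{\boldsymbol\mu}\sum_i t_i\sum_v\int\log|Q_i|_v\,\mathrm d\mu_v$ is $\frac1N$ times the left side of (2) for $\prod_iQ_i^{Nt_i}$, hence is nonnegative.
\item Convexity in $t$ handles real $t$.
\item Truncating $\log|Q_i|_v$ below at $-M$ (as the paper truncates $g_{F,v}$ above) makes Sion applicable.
\item Upper semicontinuity passes to the limit over an enumeration of $H^0(U,\mathcal O_U)$.
\end{itemize}
The paper's order yields the fixed-class Theorem~\ref{thm:adelic-sos-fixed-class-main-text} along the way; yours makes the minimax step slightly more transparent. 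The GVF detour is unnecessary, since Theorem~\ref{thm:integral-point-construction} takes the flat-divisor inequality directly.

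Your ``main obstacle'' is not an obstacle, and you should drop the proposed remedy. Use the dominated-extension form of Hahn--Banach. Define $f(L)=\sup\{\sum_v\int g_{F,v}\,\mathrm d\mu_v-D_0\cdot\overline F\}$, the supremum taken over flat $\overline F$ with $[F]=L$ and $F|_U$ effective. This $f$ is positively homogeneous and superadditive.
\begin{itemize}
\item $f(0)\le 0$ is exactly your completed condition. An $\mathbb R$-divisor that is $\mathbb R$-linearly equivalent to $0$ and effective on $U$ lies in a rational polyhedral cone, so it is a nonnegative combination of divisors $\frac1N\operatorname{div}(Q)$ with $Q\in H^0(U,\mathcal O_U)$.
\item Because $X\neq\varnothing$, every class has a representative effective on $U$, so $f>-\infty$.
\item Superadditivity then gives $f(L)\le f(0)-f(-L)<\infty$.
\end{itemize}
Hence $-f$ is a finite sublinear function, and some linear $\lambda\ge f$ exists. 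Nondegeneracy converts $\lambda$ into $D_1$, and $D=D_0+D_1$. No closedness or compact generation is involved, and $\operatorname{cap}_X(\mathbb E)>1$ is used only inside Theorem~\ref{thm:integral-point-construction}.

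Your fix would also not work as stated. Mixing with an equilibrium measure at a single archimedean place $v_1$ changes $\sum_v\int\log|Q|_v\,\mathrm d\mu_v$ by $t\int\log|Q|_{v_1}\,\mathrm d(\nu_{v_1}-\mu_{v_1})$, which has no sign in general. So it gives no uniform slack and is not even guaranteed to preserve (2). Slack from the capacity margin comes from mixing at every place with a global equilibrium collection $\bigl(\sum_x p_x\mu_{E_v,x_v}\bigr)_v$, and none of this is needed here.
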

If one cares not only about the local measures, but also about the asymptotic behavior of the geometric divisor class, a variant of the theorem applies.

\begin{theorem}\label{thm:adelic-sos-fixed-class-main-text}
  Let $K,C,X,U$ be as in Theorem~\ref{thm:adelic-sos}.
  Let $\mathbb E=(E_v)_{v\in M_K}$ be an admissible adelic set on $U$
  with $\operatorname{cap}_X(\mathbb E)>1$.
  Let $S$ be a subset of places, and for each $v\in S$, let $\mu_v$ be a
  probability measure supported on $E_v$.
  Fix $D\in\operatorname{Pic}^1(C)_{\mathbb R}$.
  The following are equivalent:
  \begin{enumerate}
    \item there exists a sequence of distinct algebraic points
      $(\alpha_n)$ in $U(\overline K)$ such that
      \[
        \alpha'_{n,v}\subset E_v \qquad\text{for every }n\text{ and every }v\in M_K,
      \]
      \[
        \delta_{\alpha_n,v}\longrightarrow\mu_v \quad\text{weakly for every }v\in S,
      \]

      \[
        \text{and } \frac{\alpha_n'}{\deg(\alpha_n')} \longrightarrow D \quad\text{in }\operatorname{Pic}^1(C)_{\mathbb R},
      \]

    \item for every flat adelic divisor
      $\overline F=(F,(g_{F,v})_{v\in M_K})$ such that $F|_U$ is effective, we have
      \[
        \sum_{v\in S}\int_{E_v}g_{F,v}\,\mathrm d\mu_v + \sum_{v\notin S} \inf_{z\in E_v}g_{F,v}(z) \leq D\cdot\overline F.
      \]
  \end{enumerate}
\end{theorem}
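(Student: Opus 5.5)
The plan is to deduce Theorem~\ref{thm:adelic-sos-fixed-class-main-text} from Theorem~\ref{thm:integral-point-construction}. The work is in showing that condition (2) lets us extend the partial data $\{\mu_v\}_{v\in S}$ to a full family $\{\mu_v\}_{v\in M_K}$ satisfying \eqref{eq:integral-point-construction-hypothesis}.

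\emph{Necessity (1)$\Rightarrow$(2).} Let $\overline F$ be flat with $F|_U$ effective. Then $F_-$ is supported on $X$, so $\alpha_n\notin\operatorname{supp}F$ for large $n$, and
\[
  h_{\overline F}(\alpha_n)=\frac{\alpha'_n}{\deg(\alpha'_n)}\cdot\overline F=\sum_{v}\int g_{F,v}\,\mathrm d\delta_{\alpha_n,v}.
\]
The left side tends to $D\cdot\overline F$, because flat pairings depend only on the divisor class and are continuous on $\operatorname{Pic}^1(C)_{\mathbb R}$. On the right side:
\begin{itemize}
  \item for $v\notin S$, use $\int g_{F,v}\,\mathrm d\delta_{\alpha_n,v}\ge\inf_{E_v}g_{F,v}$;
  \item for $v\in S$, note that $g_{F,v}$ is lower semicontinuous on $E_v$ (its singularities there are only $+\infty$ at zeros of $F$), so the weak-limit liminf inequality holds;
  \item only finitely many places contribute nonzero terms, since $g_{F,v}=0$ on $E_v$ for almost all $v$ by admissibility.
\end{itemize}
Combining these gives (2).

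\emph{Sufficiency.} For $v\notin S$, the plan is to pick $\mu_v$ as a minimax choice. Consider the convex cone $\mathcal F$ of flat adelic divisors with $F|_U$ effective, modulo constants, and the convex set $\prod_{v\notin S}\mathscr P(E_v)$. The functional
\[
  \Phi(\overline F,(\mu_v))=\sum_{v\in S}\int g_{F,v}\,\mathrm d\mu_v+\sum_{v\notin S}\int g_{F,v}\,\mathrm d\mu_v-D\cdot\overline F
\]
is linear in $\overline F$ and affine in $(\mu_v)$. Condition (2) says that for each $\overline F$ some choice of point masses makes $\Phi\le 0$. We want one family $(\mu_v)$ that works for all $\overline F$. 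For almost all $v$ we simply take $\mu_v$ to be the Dirac mass at the Shilov point, where $g_{F,v}=0$ for all relevant $F$ (after first restricting to divisors supported on a fixed finite set). This reduces the problem to a finite product of compact convex sets of measures. Sion's minimax theorem then applies, using weak-$*$ compactness of $\mathscr P(E_v)$, lower semicontinuity of $\mu\mapsto\int g\,\mathrm d\mu$, and concavity in $\overline F$. It gives a family $(\mu_v)$ with $\Phi\le 0$ for every $\overline F$ in any finitely generated subcone.

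A compactness/diagonal argument over an exhausting sequence of finite sets of divisors, again using weak-$*$ compactness at the finitely many nontrivial places, then yields a single family. Theorem~\ref{thm:integral-point-construction} applied to $\{\mu_v\}_{v\in M_K}$ and $D$ produces the required points. These points are distinct, so after discarding finitely many, none lies in $X$, and they lie in $U$.

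\emph{Main obstacle.} I expect the hardest step to be the minimax step. The cone $\mathcal F$ is infinite-dimensional and unbounded, and $g_{F,v}$ may be $+\infty$ on $E_v$, so the semicontinuity and compactness hypotheses of Sion's theorem need care. I would first try normalizing $\deg F_-=1$ to get a bounded convex base. If that fails to close the argument, I would handle the finitely many nontrivial places via the explicit Green-function decomposition $F=\sum a_z z$ and treat point masses at the $\inf$ directly.

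Finally, Theorem~\ref{thm:adelic-sos-main-text} should follow by choosing $D$ as a limit class: either take a subsequential limit of $\alpha'_n/\deg(\alpha'_n)$ for necessity, or apply Lemma~\ref{lem:gvf-d-l} to a suitable GVF functional for sufficiency.
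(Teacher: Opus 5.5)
Your architecture matches the paper's. Necessity uses lower semicontinuity and the continuity of flat pairings in the divisor class. Sufficiency extends $\{\mu_v\}_{v\in S}$ to all places with Sion's theorem (Proposition~\ref{prop:complete-local-measures}) and then invokes Theorem~\ref{thm:integral-point-construction}.

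However, the minimax step, which you flag but leave open, is a genuine gap, and neither of your suggested remedies closes it. At $v\notin S$, $g_{F,v}=+\infty$ at points of $\operatorname{supp}F_+$ lying in $E_v$, so $\phi_i(\boldsymbol\mu)=\sum_{v\notin S}\int g_{F_i,v}\,\mathrm d\mu_v$ can be $+\infty$. On a simplex of test divisors, $\mathbf r\mapsto\sum_i r_i\phi_i(\boldsymbol\mu)$ then jumps from a finite value to $+\infty$ as soon as the corresponding weight becomes positive. So your function is neither real-valued nor upper semicontinuous in the divisor variable, and Sion's theorem does not apply. Normalizing $\deg F_-=1$ only treats unboundedness, which the paper sidesteps anyway by using convex combinations of finitely many fixed test divisors $\overline F_1,\dots,\overline F_n$. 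Point masses at minimizers do not help either, since the minimizer depends on $F$; that is exactly why a minimax is needed.

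The missing device is truncation: at the finitely many places outside $S$, replace $g_{F_i,v}$ by $g^{(M)}_{F_i,v}=\min\{g_{F_i,v},M\}$. Then:
\begin{enumerate}
  \item The resulting $f_{n,M}(\mathbf r,\boldsymbol\mu)$ is real-valued, jointly continuous, and affine in each variable on $\Delta_n\times\prod_{v\notin S}\mathscr P(E_v)$.
  \item Condition (2) gives $\inf_{\boldsymbol\mu}f_{n,M}(\mathbf r,\cdot)\leq0$ for rational $\mathbf r$, since truncation only lowers the integrand. Continuity extends this to all $\mathbf r$, which also handles the fact that (2) is assumed only for individual divisors, not real combinations.
  \item Sion's theorem then yields $\boldsymbol\mu_{n,M}$.
  \item Letting $M\to\infty$ along a weakly convergent subsequence, with monotone convergence and $g^{(M_j)}\leq g^{(M_l)}$ for $l\geq j$, recovers the untruncated inequalities for $\overline F_1,\dots,\overline F_n$.
\end{enumerate}

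Your reduction to finitely many places is also not uniform in $F$. You justify $\delta_{\xi_v}$ by vanishing of the Green function; this yields an exceptional set of places depending on the chosen divisors. So the set of ``nontrivial'' places grows along your diagonal argument and need not stay finite. Moreover, what vanishes for almost all $v$ is $\inf_{E_v}g_{F,v}$, not $g_{F,v}$ itself, which is positive on residue discs meeting $\operatorname{supp}F_+$.

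The paper uses the minimum principle instead. At every non-archimedean $v$ where $\partial E_v$ is the single Shilov point $\xi_v$ (all but finitely many $v$, independently of $F$), flatness gives $\mathrm{dd}^c g_{F,v}=-\delta_{F_v}$ with $F_-$ outside $E_v$. Hence $g_{F,v}$ is superharmonic on $E_v$, and $\inf_{E_v}g_{F,v}=g_{F,v}(\xi_v)$ for every $F$ at once. One therefore enlarges $S$ once and for all so that $M_K\setminus S$ is finite, and the compactness and diagonal arguments run over a fixed finite product. The same correction (vanishing of $\inf_{E_v}g_{F,v}$ rather than of $g_{F,v}$) applies to your necessity argument, which is otherwise fine.
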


The key is to use Sion's minimax theorem \cite[Theorem 3.4]{Sio58} to determine the measure at each place.
In particular, we prove the following result.
\begin{proposition}\label{prop:complete-local-measures}
  Let $S$ be a subset of places, and for each $v\in S$, let $\mu_v$ be a
  probability measure supported on $E_v$.
  Fix $D\in\operatorname{Pic}^1(C)_{\mathbb R}$.
  Assume that for every flat adelic divisor
  $\overline F=(F,(g_{F,v})_{v\in M_K})$ such that $F|_U$ is effective, we have
  \[
    \sum_{v\in S}\int_{E_v}g_{F,v}\,\mathrm d\mu_v + \sum_{v\notin S} \inf_{z\in E_v}g_{F,v}(z) \leq D\cdot\overline F.
  \]
  Then, there exist probability measures $\mu_v$ supported on $E_v$ for each $v\notin S$, such that for every flat adelic divisor
  $\overline F=(F,(g_{F,v})_{v\in M_K})$ such that $F|_U$ is effective, we have
  \[
    \sum_{v\in M_K}\int_{E_v}g_{F,v}\,\mathrm d\mu_v\leq D\cdot\overline F.
  \]
\end{proposition}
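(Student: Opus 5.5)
The plan is to phrase the statement as a minimax problem and apply Sion's minimax theorem. Let $T=M_K\setminus S$ be the set of places where no measure is given. The unknowns are measures $\nu=(\nu_v)_{v\in T}$ with each $\nu_v\in\mathscr P(E_v)$. Let $\mathcal F$ be the set of flat adelic divisors $\overline F$ with $F|_U$ effective. For $\overline F\in\mathcal F$ and such a $\nu$, define
\[
  \Phi(\nu,\overline F)=\sum_{v\in S}\int g_{F,v}\,\mathrm d\mu_v+\sum_{v\in T}\int g_{F,v}\,\mathrm d\nu_v-D\cdot\overline F.
\]
We want some $\nu$ with $\sup_{\overline F}\Phi(\nu,\overline F)\le 0$. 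The hypothesis says exactly that $\sup_{\overline F}\inf_\nu\Phi\le 0$, because for fixed $\overline F$ the infimum over each $\nu_v$ is $\inf_{E_v}g_{F,v}$, attained by a Dirac mass. So it suffices to exchange $\sup$ and $\inf$.

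The key steps are as follows.

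\textbf{Compactness.} $\prod_{v\in T}\mathscr P(E_v)$ is a compact convex set by Tychonoff and Prokhorov, since each $E_v$ is compact. The function $\Phi$ is affine in $\nu$. It is also affine in $\overline F$ on the convex cone $\mathcal F$, because flatness and effectivity on $U$ are preserved under positive combinations; here we pass to $\mathbb R$-divisors and adjust by the pullback of $\widehat{\operatorname{Div}}(\mathbb Z)$ as needed.

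\textbf{Semicontinuity in $\nu$.} Sion requires lower semicontinuity in $\nu$. On $E_v$ the Green function $g_{F,v}$ has poles only where $F$ has negative coefficients, hence only at $X$, which lies outside $E_v$. Poles from $F|_U\ge 0$ at points of $U$ may lie in $E_v$, but they make $g_{F,v}$ tend to $+\infty$, not $-\infty$. So $g_{F,v}$ is lower semicontinuous, bounded below and continuous away from finitely many points. For each $v$, $\nu_v\mapsto\int g_{F,v}\,\mathrm d\nu_v$ is therefore weakly lower semicontinuous.

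\textbf{Infinite sum.} For all but finitely many $v\in T$, $E_v=\operatorname{red}_v^{-1}(\mathcal U_{k_v})$ and $g_{F,v}$ is a model Green function of an effective divisor on $\mathcal U$. Thus $g_{F,v}\ge 0$ on $E_v$, with $\inf_{E_v}g_{F,v}=0$ attained at the Shilov point. The sum over $T$ is then a sum of lower semicontinuous functions that are nonnegative except at finitely many places, so it is lower semicontinuous for the product topology. Sion's theorem then gives $\inf_\nu\sup_{\overline F}\Phi=\sup_{\overline F}\inf_\nu\Phi\le 0$. Since $\sup_{\overline F}\Phi(\cdot,\overline F)$ is a supremum of lower semicontinuous functions, it is lower semicontinuous, so the infimum over the compact set is attained at some $\nu$. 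This $\nu$ gives the required measures at the places in $T$.

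\textbf{Main obstacle.} I expect the main difficulty to be the technical part: justifying that $\Phi$ takes values in $(-\infty,+\infty]$ with no cancellation problems, and that $\mathcal F$ is a genuine convex set on which $\Phi$ is affine. A vertical adelic constant with $\widehat{\deg}=0$ may shift the $g_{F,v}$ at different places. $\Phi$ is invariant under this shift, because $\sum_v\int c_v\,\mathrm d\mu_v=\sum_v c_v$ for probability measures. I would normalize such shifts away, using the non-uniqueness of flat lifts modulo $\pi^*\widehat{\operatorname{Div}}(\mathbb Z)$ together with the identity $D\cdot\pi^*\overline A=\widehat{\deg}\,\overline A$ for $\deg D=1$. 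If Sion's semicontinuity becomes delicate because of the $+\infty$ values, the fallback is to truncate $g_{F,v}$ by $\min(g_{F,v},N)$, apply Sion with continuous functions, and let $N\to\infty$ by monotone convergence.
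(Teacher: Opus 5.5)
Your overall strategy (Sion's minimax theorem on a product of spaces of probability measures, truncation by $\min(g_{F,v},N)$, then monotone convergence) is the one the paper uses. However, the central step, applying Sion to $\Phi$ on $\prod_{v\in T}\mathscr P(E_v)\times\mathcal F$, does not go through as set up, and your fallback does not yet repair it.

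\textbf{Infinite values.} Sion's theorem is for real-valued functions, and $\Phi$ takes the value $+\infty$ whenever some $\nu_v$ charges a point of $\operatorname{supp}(F)\cap E_v$. Truncation alone does not fix this while $T=M_K\setminus S$ is infinite. Take $F=z/\deg(z)-x/\deg(x)$ with $z\in U$. At almost every place, the residue disc of the reduction of $z$ lies in $E_v$ and $g_{F,v}>0$ on it. Taking each $\nu_v$ to be a Dirac mass at a point of that disc with $g_{F,v}\geq 1$ gives $\sum_{v\in T}\int\min(g_{F,v},N)\,\mathrm d\nu_v=+\infty$. On the infinite product you also only have lower semicontinuity in $\nu$, not continuity.

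The missing first step is the paper's. At the cofinitely many non-archimedean $v$ where $\partial E_v$ is a single point $\xi_v$, fix $\mu_v=\delta_{\xi_v}$. You already observed that $g_{F,v}(\xi_v)=\inf_{E_v}g_{F,v}$ there for every admissible $\overline F$. So the hypothesis is unchanged when these places are moved into $S$, and $T$ becomes finite.

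\textbf{Loss of affineness after truncation.} After truncation, $\Phi_N$ is no longer affine in $\overline F$, only concave. It also need not be upper semicontinuous along faces of the cone. For example, if $\nu_v=\delta_z$ with $z\in\operatorname{supp}(F_1)\cap E_v$ and $g_{F_0,v}(z)<N$, then $t\mapsto\Phi_N(\nu,\overline F_0+t\overline F_1)$ jumps down at $t=0$. Sion requires upper semicontinuity on the non-compact side.

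The paper sidesteps this by never playing against the whole cone:
\begin{enumerate}
\item It fixes finitely many test divisors $\overline F_1,\dots,\overline F_n$ and uses mixed strategies $\mathbf r\in\Delta_n$ paired with $\sum_i r_i\min(g_{F_i,v},M)$. This is affine and continuous in $\mathbf r$.
\item It gets $\inf_{\boldsymbol\mu}\leq 0$ from the hypothesis via $\sum_i r_i\min(g_{F_i,v},M)\leq g_{F_{\mathbf r},v}$, first for rational $\mathbf r$ and then by continuity. This also replaces your unexplained passage to $\mathbb R$-coefficients.
\item It lets $M\to\infty$ using lower semicontinuity and monotone convergence.
\item It lets $n\to\infty$ along an enumeration of the countably many degree-zero divisors effective on $U$, using compactness of $\mathscr P$ and lower semicontinuity once more.
\end{enumerate}

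Alternatively, once $T$ is finite, your one-shot minimax over the whole cone can be salvaged with Ky Fan's minimax theorem. It needs only concavity, and no topology, on the non-compact side, which would spare you the diagonal argument. You would still need to check separately that the hypothesis extends to real combinations of test divisors.
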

\begin{proof}
  First, for each non-archimedean $v\notin S$ with the boundary $\partial E_v$ a point, let $\mu_v$ be the Dirac measure at $\partial E_v$.
  Then, $\inf_{z\in E_v}g_{F,v}(z)=\int_{E_v}g_{F,v}\,\mathrm d\mu_v$ by the maximum principle.
  Since for all but finitely many places $v$, the boundary $\partial E_v$ is a point, enlarging $S$ by these places, we may assume that $M_K\setminus S$ is a finite set.

  We now treat the case of finitely many test divisors.
  Let $(\overline F_i=(F_i,(g_{F_i,v})_{v\in M_K}))_{1\leq i\leq n}$ be finitely many flat adelic divisors such that each $F_i|_U$ is effective.
  For a real number $M$ and each $v\notin S$, set
  $g^{(M)}_{F_i,v}=\min\{g_{F_i,v},M\}$.
  Set
  \[
    \Delta_n=\left\{\mathbf r=(r_i)_{1\leq i\leq n}\in\mathbb R^n
    \mid r_i\geq0,\ \sum_{i=1}^n r_i=1\right\}.
  \]
  Let
  \[
    \mathscr P=\prod_{v\notin S}\mathscr P(E_v),
  \]
  where $\mathscr P(E_v)$ is the space of probability measures on $E_v$ with the weak topology.
  For $\mathbf r\in\Delta_n$ and
  $\boldsymbol\mu=(\mu_v)_{v\notin S}\in\mathscr P$, set
  \[
    f_{n,M}(\mathbf r,\boldsymbol\mu)
    \coloneqq\sum_{i=1}^n r_i\bigg(
    \sum_{v\in S}\int_{E_v}g_{F_i,v}\,\mathrm d\mu_v
    +\sum_{v\notin S}\int_{E_v}g^{(M)}_{F_i,v}\,\mathrm d\mu_v
    -D\cdot\overline F_i\bigg).
  \]
  For each $\mathbf r\in\Delta_n\cap\mathbb Q^n$ and
  $\overline F_{\mathbf r}=\sum_i r_i\overline F_i$, by the hypothesis,
  \[
    \begin{aligned}
      \inf_{\boldsymbol\mu\in\mathscr P}f_{n,M}(\mathbf r,\boldsymbol\mu)
      &=\sum_{v\in S}\int_{E_v}g_{F_{\mathbf r},v}\,\mathrm d\mu_v+\sum_{v\notin S}\inf_{z\in E_v}\sum_{i=1}^n r_i g^{(M)}_{F_i,v}(z)
      -D\cdot\overline F_{\mathbf r}\\
      &\leq\sum_{v\in S}\int_{E_v}g_{F_{\mathbf r},v}\,\mathrm d\mu_v+\sum_{v\notin S}\inf_{z\in E_v}g_{F_{\mathbf r},v}(z)-D\cdot\overline F_{\mathbf r}\leq0.
    \end{aligned}
  \]
  Since the left-hand side is continuous in $\mathbf r$, the same inequality holds for every $\mathbf r\in\Delta_n$.
  Since $\Delta_n$ and $\mathscr P$ are compact and convex, and $f_{n,M}$ is continuous and affine in each variable separately, by Sion's minimax theorem \cite[Theorem 3.4]{Sio58}, we have
  \[
    \inf_{\boldsymbol\mu\in\mathscr P} \sup_{\mathbf r\in\Delta_n}f_{n,M}(\mathbf r,\boldsymbol\mu) = \sup_{\mathbf r\in\Delta_n}\inf_{\boldsymbol\mu\in\mathscr P}f_{n,M}(\mathbf r,\boldsymbol\mu)\leq0.
  \]
  Therefore, there exists $\boldsymbol\mu_{n,M} = (\mu_{v,n,M})_{v\notin S}\in\mathscr P$ such that, for each $1\leq i\leq n$,
  \[
    \sum_{v\in S}\int_{E_v}g_{F_i,v}\,\mathrm d\mu_v +\sum_{v\notin S} \int_{E_v} g^{(M)}_{F_i,v}\,\mathrm d\mu_{v,n,M} \leq D\cdot\overline F_i.
  \]

  Since $\mathscr P$ is compact and first countable, take $\boldsymbol\mu_n=(\mu_{v,n})_{v\notin S}$ to be a limit of $\boldsymbol \mu_{n,M_j}$ for some $M_j\to+\infty$.
  By the monotone convergence theorem and the lower semicontinuity of $g_{F_i, w}$ on $E_w$, for each $1\leq i\leq n$, we have
  \[
    \begin{aligned}
      \sum_{v\notin S}\int_{E_v}g_{F_i,v}\,\mathrm d\mu_{v,n}
      &\leq \liminf_{j\to\infty} \sum_{v\notin S} \int_{E_v}g^{(M_j)}_{F_i,v}\,\mathrm d\mu_{v,n}\\
      &\leq \liminf_{j\to\infty}\liminf_{l\to\infty} \sum_{v\notin S} \int_{E_v}g^{(M_j)}_{F_i,v}\,\mathrm d\mu_{v,n,M_l}\\
      &\leq \liminf_{l\to\infty} \sum_{v\notin S} \int_{E_v}g^{(M_l)}_{F_i,v} \,\mathrm d\mu_{v,n,M_l}.
    \end{aligned}
  \]
  Thus, for each $1\leq i\leq n$,
  \[
    \sum_{v\in S}\int_{E_v}g_{F_i,v}\,\mathrm d\mu_v
    +\sum_{v\notin S}\int_{E_v}g_{F_i,v}\,\mathrm d\mu_{v,n}
    \leq D\cdot\overline F_i.
  \]

  Since $C$ has countably many closed points, the set of degree zero divisors on $C$ that are effective on $U$ is countable.
  Take an enumeration $(F_i)_{i\geq1}$.
  Choose a flat lift $\overline F_i=(F_i,(g_{F_i,v})_{v\in M_K})$ for each $i\geq1$.
  Applying the preceding construction to the first $n$ lifts gives $\boldsymbol\mu_n\in\mathscr P$.
  Take $\boldsymbol\mu=(\mu_v)_{v\notin S}$ to be a limit of
  $\boldsymbol\mu_{n_j}$ for some $n_j\to\infty$.
  For every fixed $i$, lower semicontinuity gives
  \[
    \sum_{v\notin S}\int_{E_v}g_{F_i,v}\,\mathrm d\mu_v \leq \liminf_{j\to\infty} \sum_{v\notin S}\int_{E_v}g_{F_i,v}\,\mathrm d\mu_{v,n_j}.
  \]
  Hence,
  \[
    \sum_{v\in M_K}\int_{E_v}g_{F_i,v}\,\mathrm d\mu_v\leq D\cdot\overline F_i.
  \]
  Every flat adelic divisor $\overline F$ with $F|_U$ effective has geometric part $F_i$ for some $i$, and differs from $\overline F_i$ by local constants, which change both sides of the inequality by the same amount.
  The proposition follows.
\end{proof}

We next remove the divisor class from the hypothesis.
\begin{proposition}\label{prop:choose-divisor-class}
  Let $\mathbb E=(E_v)_{v\in M_K}$ be an admissible adelic set on $U$.
  Let $S\subseteq M_K$, and for each $v\in S$, let $\mu_v$ be a probability measure supported on $E_v$.
  Assume that
  \[
    \sum_{v\in S}\int_{E_v}\log|Q|_v\,\mathrm d\mu_v
    +\sum_{v\notin S}\log\lVert Q\rVert_{E_v,\mathrm{sup}}\geq0
    \qquad\text{for every }0\neq Q\in H^0(U,\mathcal O_U).
  \]
  Then there exists $D\in\operatorname{Pic}^1(C)_{\mathbb R}$ such that, for every flat adelic divisor $\overline F=(F,(g_{F,v})_{v\in M_K})$ with $F|_U$ effective, we have
  \[
    \sum_{v\in S}\int_{E_v}g_{F,v}\,\mathrm d\mu_v
    +\sum_{v\notin S}\inf_{z\in E_v}g_{F,v}(z)
    \leq D\cdot\overline F.
  \]
\end{proposition}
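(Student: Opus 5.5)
We plan to construct $D$ by a separation (Hahn--Banach / linear programming duality) argument in the finite-dimensional space $\operatorname{Pic}^0(C)_{\mathbb R}$, equipped via Faltings--Hriljac with the nondegenerate pairing $E\mapsto \overline{E}\cdot(\cdot)$.

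\emph{Reformulation.} Fix $D_1\in\operatorname{Pic}^1(C)_{\mathbb R}$ and write $D=D_1+D'$ with $D'\in\operatorname{Pic}^0(C)_{\mathbb R}$. For a flat $\overline F$ with $F|_U$ effective, set
\[
  \Phi(\overline F)=\sum_{v\in S}\int g_{F,v}\,\mathrm d\mu_v+\sum_{v\notin S}\inf_{E_v}g_{F,v}-D_1\cdot\overline F .
\]
This quantity is invariant under adding local constants to $\overline F$ (flatness forces $\sum_v$ of the constants to be compensated by $D_1\cdot$, exactly as at the end of Proposition~\ref{prop:complete-local-measures}), so it descends to a function of $F$. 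The desired inequality reads $\Phi(F)\le D'\cdot \overline F$. By Faltings--Hriljac, $D'\cdot\overline F=-2[K:\mathbb Q]\langle D',[F]\rangle_{NT}$, a linear functional of the class $[F]\in\operatorname{Pic}^0(C)_{\mathbb R}$. Moreover $\Phi$ is superadditive (sums of infima) and positively homogeneous on the cone $\mathcal K$ of degree-zero $\mathbb R$-divisors effective on $U$.

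\emph{Key steps.} First, check that $\Phi$ is bounded above on fibres of the class map: if $[F]=0$, then $F=\operatorname{div}(Q)$ with $Q\in H^0(U,\mathcal O_U)$ (this needs $F|_U$ effective, after clearing denominators and passing to $\mathbb Q$, then approximating). Here $\overline F=\widehat{\operatorname{div}}(Q)$ up to constants, so $g_{F,v}=-\log|Q|_v$ and
\[
  \Phi(F)=-\Bigl(\sum_{v\in S}\int\log|Q|_v\,\mathrm d\mu_v+\sum_{v\notin S}\log\lVert Q\rVert_{E_v,\sup}\Bigr)\le 0
\]
by the hypothesis. Second, define on $\operatorname{Pic}^0(C)_{\mathbb R}$ the function
\[
  p(\xi)=\sup\{\Phi(F): F\in\mathcal K,\ [F]=\xi\}.
\]
Superadditivity of $\Phi$ makes $p$ superadditive and homogeneous, hence concave, on the convex cone $[\mathcal K]$. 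The first step gives $p(0)\le 0$, so $p(0)=0$. Since $X\neq\emptyset$, every class is represented by some $F\in\mathcal K$ (add enough points of $X$ with negative coefficients after shifting), so $[\mathcal K]$ is the whole space. A concave function that is finite and positively homogeneous on a finite-dimensional space is dominated by a linear functional: take a supergradient at $0$, which exists because $p$ is concave and finite everywhere, hence continuous. Representing that linear functional as $\langle D',\cdot\rangle$ via Faltings--Hriljac gives $D'$.

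\emph{Main obstacle.} The hard part is showing that $p$ is finite, i.e.\ $p(\xi)<+\infty$, together with the rational approximation needed in the first step. If $p$ could be $+\infty$ at some classes, the supergradient argument fails. To bound $p(\xi)$, we fix $F_0$ with $[F_0]=\xi$ and, for any $F$ with $[F]=\xi$, write $F-F_0=\operatorname{div}(Q)$. Superadditivity gives $\Phi(F)\le\Phi(F_0)+\ldots$ in the wrong direction, so this naive comparison does not suffice. Instead we would use concavity: $p(\xi)+p(-\xi)\le p(0)=0$, with $p(-\xi)>-\infty$ because the class is represented. This yields $p(\xi)\le -p(-\xi)<\infty$. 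Passing from $\mathbb Q$-divisors to $\mathbb R$-divisors requires continuity of $\Phi$ in the coefficients along a fixed finite support, which holds because each $g_{F,v}$ depends linearly on the coefficients and only finitely many places are nontrivial.
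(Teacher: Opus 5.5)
Your proposal is correct and is essentially the paper's argument: the paper defines the same superadditive, positively homogeneous function on $\operatorname{Pic}^0(C)_{\mathbb R}$ (a supremum over $\mathbb R$-divisors in a given class that are effective on $U$), gets its value $0$ at the origin from the hypothesis and finiteness from $f(0)\geq f(L)+f(-L)$, dominates it by a linear functional via Hahn--Banach (your supergradient at $0$), and represents that functional using Mordell--Weil and the nondegeneracy of the N\'eron--Tate pairing. Your extra care about representing every class by a divisor effective on $U$ and about passing from $\mathbb Q$- to $\mathbb R$-divisors fills in points the paper leaves implicit; in the latter step, what you actually need is upper semicontinuity of $\Phi$ in the coefficients, not linearity over finitely many places, because the infima are concave and, when $S$ is infinite, infinitely many places can contribute.
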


\begin{proof}
  Fix $D_0\in\operatorname{Pic}^1(C)_{\mathbb R}$.
  Set $V=\operatorname{Pic}^0(C)_{\mathbb R}$.
  Consider the function $f: V\to \mathbb R\bigcup\{+\infty\}$ such that
  \[
    f(L)=\sup_{\overline F}\left\{\sum_{v\in S}\int_{E_v}g_{F,v}\,\mathrm d\mu_v+\sum_{v\notin S} \inf_{z\in E_v}g_{F,v}(z) -D_0\cdot\overline F\right\}
  \]
  where $F$ is taken among all $\mathbb R$-divisors that are $\mathbb R$-linearly equivalent to $L$ and are effective on $U$, and $\overline F$ is a flat lift of $F$.
  By the assumption, $f(0)=0$.
  It is easy to see that for each $r>0$, $f(rL)=rf(L)$.
  Also, note that the function $\overline F\mapsto \sum_{v\in S}\int_{E_v}g_{F,v}\,\mathrm d\mu_v+\sum_{v\notin S} \inf_{z\in E_v}g_{F,v}(z) -D_0\cdot\overline F$ is superadditive.
  If $f(L)=+\infty$, $f(0)\geq f(-L)+f(L)=\infty$ gives a contradiction.
  Thus, $f$ takes finite values and is a superlinear function.
  By the Hahn--Banach theorem, there is a linear function $\lambda:V\to \mathbb R$ such that $\lambda(L)\geq f(L)$.

  By the Mordell-Weil theorem, $V$ is finite-dimensional.
  Moreover, the N\'eron--Tate height pairing is nondegenerate.
  Therefore, there exists $D_1\in \operatorname{Div}^0(C)_{\mathbb R}$ such that $\lambda(L)=-2[K:\mathbb Q]\langle D_1, L\rangle_{NT}$.
  Take $D=D_0+D_1$. The proposition follows.
\end{proof}
Now we are ready to prove the theorems.
\begin{proof}[Proof of Theorem~\ref{thm:adelic-sos-fixed-class-main-text}]
  $(1)\Longrightarrow(2)$.
  Fix a flat adelic divisor $\overline F$ with $F|_U$ effective.
  We have
  \[
    \begin{aligned}
      \lim_{n\to\infty}h_{\overline F}(\alpha_n)&=\lim_{n\to\infty}\overline F\cdot \alpha_n'/\deg(\alpha_n')\\
      &=\lim_{n\to\infty}\overline F\cdot (\alpha_n'/\deg(\alpha_n')-D) + \overline F\cdot D=\overline F\cdot D.
    \end{aligned}
  \]
  Also, we have
  \[
    \begin{aligned}
      \overline F\cdot D=\lim_{n\to\infty}h_{\overline F}(\alpha_n)&=\lim_{n\to\infty}\sum_{v\in M_K}\int_{E_v}g_{F, v}\,\mathrm d\delta_{\alpha_n, v}\\
      &\geq \liminf_{n\to\infty}\sum_{v\in S}\int_{E_v}g_{F, v}\,\mathrm d\delta_{\alpha_n, v} + \sum_{v\notin S}\inf_{z\in E_v}g_{F,v}(z)\\
      &\geq \sum_{v\in S}\int_{E_v}g_{F, v}\,\mathrm d\mu_v + \sum_{v\notin S}\inf_{z\in E_v}g_{F,v}(z).
    \end{aligned}
  \]
  Note that the last inequality holds since each $g_{F, v}$ is lower semicontinuous on $E_v$ and
  \[
    \inf_{z\in E_v}g_{F,v}(z)=0
  \]
  for all but finitely many $v$.
  This gives the inequality in $(2)$.

  $(2)\Longrightarrow(1)$.
  By Proposition~\ref{prop:complete-local-measures}, we can choose probability measures $\mu_v$ on $E_v$ for $v\notin S$ such that
  \[
    \sum_{v\in M_K}\int_{E_v}g_{F,v}\,\mathrm d\mu_v \leq D\cdot\overline F
  \]
  for every flat adelic divisor $\overline F$ with $F|_U$ effective.
  Theorem~\ref{thm:integral-point-construction} now gives the required sequence, with weak convergence at every place
  and normalized divisor classes converging to $D$.
\end{proof}

\begin{proof}[Proof of Theorem~\ref{thm:adelic-sos-main-text}]
  $(1)\Longrightarrow(2)$.
  Fix $0\neq Q\in H^0(U,\mathcal O_U)$.
  Since the points $\alpha_n$ are distinct, we have $Q(\alpha_n)\neq0$ for all sufficiently large $n$.
  For all such $n$, the product formula gives
  \[
    \sum_{v\in M_K}\int_{E_v}\log|Q|_v\,\mathrm d\delta_{\alpha_n,v}=0.
  \]
  Since each $\log\left|Q\right|_v$ is upper semicontinuous on $E_v$ and $\log\lVert Q\rVert_{E_v,\mathrm{sup}}=0$ for all but finitely many $v$, we obtain
  \[
    \begin{aligned}
      0=\lim_{n\to\infty}\sum_{v\in M_K}\int_{E_v}\log|Q|_v\,\mathrm d\delta_{\alpha_n,v}\\
      &\leq \limsup_{n\to\infty}\sum_{v\in S}\int_{E_v}\log|Q|_v\,\mathrm d\delta_{\alpha_n, v} + \sum_{v\notin S}\log\lVert Q\rVert_{E_v,\mathrm{sup}}\\
      &\leq \sum_{v\in S}\int_{E_v}\log|Q|_v\,\mathrm d\mu_v + \sum_{v\notin S}\log\lVert Q\rVert_{E_v,\mathrm{sup}}.
    \end{aligned}
  \]
  We get the inequality in $(2)$.

  $(2)\Longrightarrow(1)$.
  Proposition~\ref{prop:choose-divisor-class} gives a class $D\in\operatorname{Pic}^1(C)_{\mathbb R}$ satisfying condition $(2)$ of
  Theorem~\ref{thm:adelic-sos-fixed-class-main-text}.
  The theorem follows.
\end{proof}

\subsection{Proof of Theorem \ref{thm:archimedean-main}}\label{sec:archimedean-main-proof}
We prove an $\mathbb R$-neighborhood version of Theorem \ref{thm:archimedean-main}.
Let $\mathcal U/\mathbb Z$ be a quasi-projective arithmetic surface whose generic fiber is normal but not projective.
For a compact set $E$ in $\mathcal U(\mathbb C)$ an $\mathbb R$-neighborhood of $E$ is the union of a neighborhood of the closure of $E\setminus \mathcal U(\mathbb R)$  and a neighborhood of $E\bigcap \mathcal U(\mathbb R)$.
We say that an $\mathbb R$-neighborhood $E'$ of $E$ is \emph{strictly bigger} than $E$ if either $E'\setminus E$ contains an interior point or $(E'\setminus E)\bigcap \mathcal U(\mathbb R)$ contains an interior point in $\mathcal U(\mathbb R)$.
If $E$ is not a union of some connected components of $\mathcal U(\mathbb R)$, the strictly bigger assumption holds automatically.
Otherwise the assumption is necessary, to ensure the capacity is strictly greater than $1$.
For example, on $\overline{\mathbb C}\setminus \{\pm i\}$, the real line has capacity exactly $1$.
It is unknown whether the Fekete--Szeg\H{o} theorem holds in the capacity $1$ case.
\begin{theorem}\label{thm:adelic-sos-archimedean-main-text}
  Let $\mathcal U/\mathbb Z$ be a normal quasi-projective arithmetic surface whose generic fiber is not projective.
  Let $\mu$ be a probability measure compactly supported in $\mathcal U(\mathbb C)$.
  Assume that $\mu$ is invariant under complex conjugation.
  Let $E$ be a strictly bigger compact $\mathbb R$-neighborhood of the support $\operatorname{supp}(\mu)$ of $\mu$ which is stable under complex conjugation.
  Then, the following are equivalent:
  \begin{enumerate}
    \item there exists a sequence $\{\alpha_n\}$ of distinct algebraic integral points on $\mathcal U$ whose Galois orbits lie in $E$, such that the Dirac measures $\delta_{\alpha_n, \mathbb C}$ of the Galois orbits $\alpha_{n, \mathbb C}'$ of $\alpha_n$ in $\mathcal U(\mathbb C)$ converge weakly to $\mu$,
    \item for each $0\neq Q\in H^0(\mathcal{U}, \mathcal{O}_{\mathcal{U}})$, we have
      \[
        \int_{\mathcal U(\mathbb{C})}\log\left|Q\right| \,\mathrm{d}\mu\geq 0.
      \]
  \end{enumerate}
\end{theorem}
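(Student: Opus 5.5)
The plan is to reduce Theorem \ref{thm:adelic-sos-archimedean-main-text} to Theorem \ref{thm:adelic-sos-main-text}, taking $K=\mathbb Q$ and $S=\{\infty\}$ (working with the normalization of the generic fiber if needed).

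\textbf{Setup.} Let $U$ be the generic fiber of $\mathcal U$ and $C$ its smooth projective completion over $\mathbb Q$. If $U$ is not geometrically integral, replace $\mathbb Q$ by the algebraic closure of $\mathbb Q$ in the function field. Set $X=C\setminus U$, which is non-empty because $U$ is not projective. Choose a projective model $\mathcal C$ containing $\mathcal U$ as an open subscheme. Define the adelic set $\mathbb E$ by taking $E_\infty=E$, or its image modulo conjugation in the real case, and $E_p=\operatorname{red}_p^{-1}(\mathcal U_{\mathbb F_p})$ at every prime $p$. Algebraic integral points on $\mathcal U$ whose orbits lie in $E$ are then exactly the algebraic points totally in $\mathbb E$.

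\textbf{Admissibility.} The sets $E_p$ are locally admissible by construction. At $\infty$, $E$ is generally not a finite union of closures of connected open sets and real intervals, so I would shrink it. The key point is that $E$ is an $\mathbb R$-neighborhood of $\operatorname{supp}(\mu)$. By compactness, I would cover $\operatorname{supp}(\mu)$ by finitely many closed discs or closed real intervals contained in $E$, and take $E'$ to be their union. I would also add one small extra disc or interval lying in $E\setminus\operatorname{supp}(\mu)$, which is possible because $E$ is strictly bigger. The resulting $E'$ is locally admissible, conjugation-stable, contained in $E$, and still contains $\operatorname{supp}(\mu)$ in its $\mathbb R$-interior.

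\textbf{Capacity.} I expect this to be the main obstacle, since Theorem \ref{thm:adelic-sos-main-text} requires $\operatorname{cap}_X(\mathbb E')>1$. I would get it from Lemma \ref{lem:gre-def}. The hypothesis (2) for regular functions $Q$ gives Lemma \ref{lem:cap-mod-nbhd}, and then Proposition \ref{prop:gre-semi}, in the setting with measures $\mu_p$. To set up that setting, I would first use Proposition \ref{prop:choose-divisor-class} and Proposition \ref{prop:complete-local-measures} to produce $D$ and measures $\mu_p$ on $E_p$ for which \eqref{eq:regularization-hypothesis} holds. Then I need the connectivity condition of Lemma \ref{lem:gre-def}: each component of $C(\mathbb C)\setminus\operatorname{supp}(\mu)$ that contains a point of $X$ must meet $E'\setminus\operatorname{supp}(\mu)$ in a non-polar set. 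I would arrange this by placing extra small discs or intervals of $E\setminus \operatorname{supp}(\mu)$ in each such component. The worry is that $E$ may not meet every such component. In that case I would argue directly that the strict enlargement perturbs the Green matrix, making the diagonal entries strictly smaller, and conclude negative definiteness from negative semidefiniteness as in the proof of Lemma \ref{lem:gre-def}.

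\textbf{Conclusion.} Apply Theorem \ref{thm:adelic-sos-main-text} with $S=\{\infty\}$ and $\mu_\infty=\mu$. At each finite prime, the sup-norm term is $\log\lVert Q\rVert_{E_p,\mathrm{sup}}=0$, since $Q$ is integral on $\mathcal U$ and has a unit value at the Shilov point of a horizontal component. Condition (2) of that theorem therefore becomes exactly $\int\log|Q|\,\mathrm d\mu\ge0$. The direction (1)$\Rightarrow$(2) is the semicontinuity argument already given in the introduction. For (2)$\Rightarrow$(1), the theorem yields distinct points totally in $\mathbb E'\subseteq\mathbb E$ whose measures converge to $\mu$, and these lift from the conjugation quotient to $\mathcal U(\mathbb C)$ by conjugation invariance of $\mu$.
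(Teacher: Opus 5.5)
Your reduction to Theorem~\ref{thm:adelic-sos-main-text} follows the paper's route. However, the step you flag as the crux, $\operatorname{cap}_X(\mathbb E)>1$, is not actually proved, and your fallback cannot work.

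Suppose $E_v$ misses the component $W$ of $C_v^{\mathrm{an}}\setminus\operatorname{supp}(\mu_v)$ that contains some $x\in X$. Then for every admissible $E''$ with $\operatorname{supp}(\mu_v)\subseteq E''\subseteq E_v$, the component of the complement containing $x$ is $W$ itself. So $g_{E'',x}$ and $g_{E_v,x}$ are both the Green function of $W$ with pole $x$, and $c_{x,x}$ does not move. Lemma~\ref{lem:cap-mod-nbhd} and Proposition~\ref{prop:gre-semi} give semidefiniteness only for adelic sets containing the supports. Hence there is nothing to compare against, and semidefiniteness cannot be upgraded to definiteness.

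What is missing is the topological fact that this never happens at a place $v_0$ with $E_{v_0}\neq\operatorname{supp}(\mu_{v_0})$. This is exactly where the hypotheses ``$\mathbb R$-neighborhood'' and ``strictly bigger'' are used. The paper's argument runs as follows. The nonreal locus of $C_{v_0}^{\mathrm{an}}$ (taken modulo conjugation at a real place) is connected.
\begin{itemize}
\item If $E_{v_0}$ is real, so is the support. Then $C_{v_0}^{\mathrm{an}}\setminus\operatorname{supp}(\mu_{v_0})$ is connected, and it meets $E_{v_0}\setminus\operatorname{supp}(\mu_{v_0})\neq\varnothing$.
\item Otherwise, every component $W'$ of $C_{v_0}^{\mathrm{an}}\setminus E_{v_0}$ has a nonreal boundary point $q$; if not, $W'$ would be clopen in the nonreal locus. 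Moreover $q\notin\operatorname{supp}(\mu_{v_0})$, because $E$ contains a genuine neighborhood of the closure of the nonreal part of the support while $q\in\partial E_{v_0}$. Hence $q\in E_{v_0}$ lies in the component of $C_{v_0}^{\mathrm{an}}\setminus\operatorname{supp}(\mu_{v_0})$ containing $W'$.
\end{itemize}
With this in hand, Lemma~\ref{lem:gre-def} applies. Your idea of first producing $D$ and the non-archimedean measures via Propositions~\ref{prop:choose-divisor-class} and~\ref{prop:complete-local-measures}, so that Lemma~\ref{lem:gre-def} is applicable at all, is a point the paper leaves implicit.

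Two further steps of the translation are also wrong or missing.

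First, $\log\lVert Q\rVert_{E_p,\mathrm{sup}}=0$ fails for integral $Q$ vanishing on all of $\mathcal U_{\mathbb F_p}$ (for example $Q=p$). Also, condition (2) of Theorem~\ref{thm:adelic-sos-main-text} ranges over all $Q\in H^0(U,\mathcal O_U)$, which need not be integral. So hypothesis (2) does not translate ``exactly.'' The paper uses finiteness of the class group to find $n\geq 1$ and $\beta\in K^\times$ with $\lVert Q^n/\beta\rVert_{E_v,\mathrm{sup}}=1$ at every finite $v$. By normality $Q^n/\beta\in H^0(\mathcal U,\mathcal O_{\mathcal U})$, and (2) together with the product formula for $\beta$ then gives the adelic inequality.

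Second, when $K\neq\mathbb Q$ you need $\mu$ to give mass $1/[K:\mathbb Q]$ to each connected component of $\mathcal U(\mathbb C)$, so that the local $\mu_v$ are probability measures. This follows by testing (2) on $u^{\pm1}$ for $u\in O_K^\times$ and applying Dirichlet's unit theorem.
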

\begin{proof}
  First, we reduce to the adelic setting.
  By taking finitely many blow-ups on special fibers, we may assume that $\mathcal U$ is an open subscheme of a regular arithmetic surface $\mathcal C$.
  This does not change the theorem since these blow-ups do not affect algebraic integral points by the valuative criterion of properness, and do not affect the ring of regular functions since $\mathcal U$ is normal.
  Let $C$ be the generic fiber of $\mathcal C$.
  By Stein factorization, $C$ is geometrically integral over a number field $K$.
  We also normalize $E$.
  Recall that $E$ contains an open neighborhood $E_{\mathbb C}$ of the closure of $\operatorname{supp}(\mu)\setminus \mathcal U(\mathbb R)$.
  The open neighborhood $E_{\mathbb C}$ can be assumed to be a finite union of connected open sets by compactness.
  For the same reason, $E$ contains a neighborhood $E_{\mathbb R}$ of $\operatorname{supp}(\mu)\bigcap \mathcal U(\mathbb R)$ which is a finite union of open intervals.
  Since $E$ is strictly bigger than $\operatorname{supp}(\mu)$, we may assume that $E_{\mathbb C}\bigcup E_{\mathbb R}$ is as well.
  Replace $E$ by the closure of $E_{\mathbb C}\bigcup\sigma(E_{\mathbb C})\bigcup E_{\mathbb R}$ where $\sigma$ is complex conjugation.

  Now, $\mathcal U(\mathbb C)$ is a finite union of connected complex manifolds.
  Moreover, $\mathcal U(\mathbb C)$ modulo complex conjugation is the union $\bigcup_{v\in M_{K, \infty}}U_v^{\mathrm{an}}$.
  Let $\lambda$ be the map $\mathcal U(\mathbb C)\to \bigcup_{v\in M_{K, \infty}}U_v^{\mathrm{an}}$.
  By the Dirichlet unit theorem, testing $(2)$ with units in $O_K^\times$ gives that $\mu$ has the same mass on each connected complex manifold.
  Set $\mu_v=[K:\mathbb Q]\lambda_*{\mu}|_{U_v^{\mathrm{an}}}/[K_v:\mathbb R]$.
  Set $E_v=\lambda(E)|_{U_v^{\mathrm{an}}}$ for $v$ archimedean.
  Set $E_v=\operatorname{red}_v^{-1}(\mathcal U_{k_v})$ for $v$ non-archimedean.
  Then, $(E_v)_{v\in M_K}$ is admissible.

  Since the class group of $O_K$ is finite, for each non-zero regular function $Q$ on $U$, there is a positive integer $n$ and a non-zero element $\beta\in K^\times$ such that $\lVert Q^n/\beta\rVert_{E_v,\mathrm{sup}}=1$ for each non-archimedean $v$.
  Therefore, $(2)$ is equivalent to
  \[
    \sum_{v \in S} \int \log|Q|_v \,\mathrm{d}\mu_v + \sum_{v \notin S} \log\lVert Q\rVert_{E_v, \mathrm{sup}}  \geq 0\text{ for all } 0 \neq Q \in H^0(U, \mathcal O_U).
  \]

  Now, we translate to the case $S=M_{K, \infty}$, $\{\mu_v\}_{v\in M_{K, \infty}}$ and $(E_v)_{v\in M_K}$ in Theorem~\ref{thm:adelic-sos-main-text}.
  The part $(1)\Longrightarrow(2)$ just follows from $(1)\Longrightarrow(2)$ in Theorem~\ref{thm:adelic-sos-main-text}.
  For the other half, by Theorem~\ref{thm:adelic-sos-main-text}, we need to prove that the capacity of $(E_v)_{v\in M_K}$ is strictly greater than $1$.

  Take $v_0$ such that $E_{v_0}\neq \operatorname{supp}(\mu_{v_0})$.
  Recall that $U_{v_0}^\mathrm{an}\setminus U_{v_0}(\mathbb R)$ is connected.
  If $E_{v_0}\subset U_{v_0}(\mathbb R)$, $U_{v_0}^\mathrm{an}\setminus E_{v_0}$ is connected.
  Thus, the requirement in Lemma~\ref{lem:gre-def} holds automatically.
  Otherwise each connected component of $U_{v_0}^\mathrm{an}\setminus E_{v_0}$ has a boundary point not in $U_{v_0}(\mathbb R)$, and the requirement in Lemma~\ref{lem:gre-def} holds.
  Therefore, the capacity is strictly greater than $1$ by Lemma~\ref{lem:gre-def}.
  The theorem follows.
\end{proof}

\subsection{Proof of GVF theorems}
First, we prove Theorem \ref{thm:int-existential-closedness-t-0}.
\begin{theorem}\label{thm:int-existential-closedness-t-0-main-text}
  Let $K,C,X,U$ be as in Theorem~\ref{thm:adelic-sos}.
  Let $\mathbb{E}=(E_v)_{v\in M_K}$ be an admissible adelic set on $U$ with $\operatorname{cap}_X(\mathbb{E})>1$. Let $\ell$ be a normalized GVF functional on $K(C)$ that is integral with respect to $\mathbb E$. Assume $m_x=0$ for all $x \in X$.
  Then there exists a sequence of distinct algebraic points $(\alpha_n)$ in $C(\overline{K})$ such that the Galois orbit $\alpha'_{n, v} \subseteq E_v$ for each place $v \in M_K$ and $\alpha_n \longrightarrow \ell$ in the GVF topology.
\end{theorem}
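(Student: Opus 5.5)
The plan is to reduce the statement to Theorem~\ref{thm:integral-point-construction} and then convert the resulting convergence into GVF convergence via Lemma~\ref{lem:gvf-point-convergence}.

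\emph{Data.} Write $\ell=(m,\{\mu_v\})$. By integrality, each $\mu_v$ is supported on $E_v$. Let $D=D_\ell\in\operatorname{Pic}^1(C)_{\mathbb R}$ be the class of Lemma~\ref{lem:gvf-d-l}, so that $\ell(\overline F)=D\cdot\overline F$ for every flat adelic divisor $\overline F$.

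\emph{Step 1: the hypothesis \eqref{eq:integral-point-construction-hypothesis}.} Let $\overline F=(F,(g_{F,v}))$ be flat with $F|_U$ effective. The evaluation formula \eqref{eq:evaluation} gives
\[
  D\cdot\overline F=\ell(\overline F)=m(F)+\sum_{v}\int g_{F,v}\,\mathrm d\mu_v .
\]
Write $F=F_U+F_X$, where $F_U$ is effective and supported in $U$ and $F_X$ is supported on $X$. Since $m_x=0$ for all $x\in X$, we have $m(F_X)=0$. Positivity of $m$ on effective divisors gives $m(F_U)\geq 0$. Hence $m(F)\geq 0$, which is exactly \eqref{eq:integral-point-construction-hypothesis}.

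There is a subtlety here: $\sum_v\int g_{F,v}\,\mathrm d\mu_v$ must be meaningful. It is, because the $L^1$-integrability of $\ell$ makes the sum absolutely convergent. The Green functions $g_{F,v}$ may have log singularities at points of $F_U$ lying in $E_v$, but they are integrable against $\mu_v$ by the same $L^1$ condition.

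\emph{Step 2: construct the points.} Theorem~\ref{thm:integral-point-construction} applies and produces distinct $\alpha_n$ with $\alpha'_{n,v}\subseteq E_v$ for all $v$, $\delta_{\alpha_n,v}\to\mu_v$ weakly at every place, and $\alpha_n'/\deg(\alpha_n')\to D_\ell$ in $\operatorname{Pic}^1(C)_{\mathbb R}$. Since $E_v\subset U_v^{\mathrm{an}}$, the points $\alpha_n$ lie in $U(\overline K)$.

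\emph{Step 3: upgrade to GVF convergence.} Fix $x_0\in X$. Then $m_{x_0}=0$, both $\alpha'_{n,v}$ and $\operatorname{supp}\mu_v$ lie in $E_v$, and $\mathbb E$ is admissible. The "if" direction of Lemma~\ref{lem:gvf-point-convergence} then gives $\alpha_n\to\ell$ in the GVF topology.

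\emph{Main obstacle.} The delicate point is Step~1, specifically the splitting of $m(F)$. Lemma~\ref{lem:gvf-on-curves} shows that $\ell$ is determined by $(D_\ell, m(x_0), \{\mu_v\})$. So the vanishing $m_x=0$ on all of $X$ is precisely what makes the boundary contribution disappear and leaves only the nonnegative term $m(F_U)$. I would check carefully that $m$ is defined consistently for $\mathbb R$-divisors, including those whose support meets $U$ inside $E_v$. This follows from linearity of $m$ and its independence of the chosen adelic lift.
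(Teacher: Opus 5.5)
Your proof is correct and follows the paper's argument. Like the paper, you take $D=D_\ell$ and verify hypothesis \eqref{eq:integral-point-construction-hypothesis} of Theorem~\ref{thm:integral-point-construction}; the paper cites Lemma~\ref{lem:gvf-on-curves} for this, which is equivalent to your direct check that $m(F)=m(F_U)\geq 0$ via the evaluation formula and $m_x=0$ on $X$. You then upgrade to GVF convergence by the ``if'' direction of Lemma~\ref{lem:gvf-point-convergence}, exactly as the paper does.
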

\begin{proof}
  By Lemma~\ref{lem:gvf-on-curves} and Lemma~\ref{lem:gvf-point-convergence}, Theorem \ref{thm:integral-point-construction} applies.
\end{proof}
Then, we prove Theorem \ref{thm:int-existential-closedness}.
\begin{lemma} \label{lem:loose-place-truncation}
  Let $\widetilde{\mathbb{E}}=(\widetilde{E}_v)_{v\in M_K}$ be a loose admissible adelic set on $U$. Let $\ell$ be a normalized GVF functional on $K(C)$ that is integral with respect to $\widetilde{\mathbb{E}}$. There exist admissible adelic sets $\mathbb{E}_n$ and normalized GVF functionals $\ell_n$ such that
  \begin{itemize}
    \item $\mathbb{E}_n \subseteq \widetilde{\mathbb{E}}$ and $\operatorname{cap}_X(\mathbb{E}_n) \longrightarrow \infty$,
    \item $\ell_n$ is integral with respect to $\mathbb{E}_n$ and $m_x(\ell_n)=0$ for all $x \in X$,
    \item $\ell_n \longrightarrow \ell$ in the GVF topology.
  \end{itemize}
\end{lemma}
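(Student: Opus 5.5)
The plan is to move the mass that $\ell$ places at the loose places near $X$, and the masses $m_x$, into slightly shrunken sets, keeping control of the GVF values. Let $S$ be the finite set of loose places. For each $n$ and each $v\in S$, let $E_{n,v}\subseteq\widetilde E_v$ be locally admissible in $C_v^{\mathrm{an}}$ with $E_{n,v}\cap X_v^{\mathrm{an}}=\varnothing$, removing shrinking neighborhoods $W_{n,v}$ of $X_v^{\mathrm{an}}$ (small discs at archimedean places, residue-type disks $\operatorname{red}^{-1}$ of the points on finer models at non-archimedean ones). At the remaining places, take $E_{n,v}$ to be $\widetilde E_v$, enlarged to $\operatorname{red}_v^{-1}$ of an open subset of $\mathcal U_{k_v}$ for a suitable model $\mathcal U$ where necessary. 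The resulting $\mathbb E_n$ is admissible and contained in $\widetilde{\mathbb E}$. Since $\widetilde E_v$ meets $x_v$ at some loose place for every $x\in X$, the diagonal Green's-matrix entries $c_{x,x}$ tend to $-\infty$ as $W_{n,v}$ shrinks (the Green function of a set containing $x$ blows up), while off-diagonal entries stay bounded. Hence $\operatorname{val}(\Gamma_X(\mathbb E_n))\to-\infty$ and $\operatorname{cap}_X(\mathbb E_n)\to\infty$; this follows the comparison argument of Lemma \ref{lem:gre-def}.

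Next, define $\ell_n$ via balayage. Set $\ell'_n=\ell^{\mathbb E_n}$ from Lemma \ref{lem:balayage-gvf}. Then $\ell'_n$ is a normalized GVF functional with measures $\mu_v^{E_{n,v}}$ supported on $E_{n,v}$, and it equals $\ell$ at every place $v\notin S$ because $\mu_v$ is already supported there. The requirement $m_x=0$ is not yet arranged. The masses $m_x(\ell'_n)=m_x(\ell)+\sum_v u_v(x_v)/\deg(x)$ must be absorbed as well. The idea is that $m$ corresponds to mass ``at the trivial place,'' and can be traded, via the product formula, for mass on $\partial W_{n,v}$ near $x$. Concretely, the plan is to replace $\ell'_n$ by $\ell_n=\ell'_n$ with $m$ set to zero on $X$ and with $\mu_{v}$ at one loose place $v_x$ for $x$ augmented by a measure $t_{n,x}\nu_{n,x}$ on $\partial W_{n,v_x}$. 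Here $\nu_{n,x}$ is the equilibrium measure of $\partial W_{n,v_x}$ relative to $x$, and the weights are chosen so that the total stays a probability measure: rescale by removing mass from $\mu_{v_x}$ proportionally, or better, first pass to $(1-\epsilon_n)\ell+\epsilon_n\ell_{\mathrm{aux}}$ with an auxiliary integral functional. Because $g_{x,v_x}$ is of size $\log(1/\operatorname{radius})$ on $\partial W_{n,v_x}$, a weight $t_{n,x}\approx m(x)/\log(1/r_n)\to0$ suffices to reproduce the contribution $m(x)$ to $\ell(\overline D)$ for adelic lifts $\overline D$ of divisors supported at $x$. The positivity and product formula conditions of Lemma \ref{lem:gvf-on-curves} are then checked using Poincar\'e--Lelong for $\log|f|_v$ near $x$.

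Finally, prove $\ell_n\to\ell$. Because $\widehat{\operatorname{Div}}$ is the completion of model divisors under the boundary norm and the $\ell_n$ are positive with uniformly bounded norms, it suffices to test model adelic divisors, and the argument then proceeds as in Lemma \ref{lem:gvf-point-convergence}. For a model divisor, write $\ell_n(\overline D)-\ell(\overline D)$ as a sum over $v\in S$ of integrals of $g_{D,v}$ against $\mu_v^{E_{n,v}}-\mu_v$ plus the added masses, adjusted by the changes in $m$. The balayage only moves $\mu_v|_{W_{n,v}}$, whose mass tends to $0$ except for the atom-free contribution at $x$. The genuine obstacle is the singular behavior of $g_{D,v}$ at points of $X$: the convergence $\int g_{D,v}\,\mathrm d\mu_v^{E_{n,v}}\to\int g_{D,v}\,\mathrm d\mu_v$, together with the defect $u_v(x_v)$ exactly cancelling against the logarithmic growth, is where the argument must be done carefully. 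I would handle it by splitting $g_{D,v}=\varphi+a\,g_x$ with $\varphi$ continuous, treating the continuous part by weak convergence, and computing the $g_x$ part exactly via the potential/balayage identity used in Lemma \ref{lem:balayage-inequality}.
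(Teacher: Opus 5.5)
Your choice of $\mathbb E_n$ (cutting shrinking neighbourhoods of $X_v^{\mathrm{an}}$ out of $\widetilde E_v$ at the loose places), the capacity argument and the balayage step $\ell^{\mathbb E_n}$ are sound. Because you keep $\partial\widetilde E_v$, balayage even works at the archimedean loose places, where the paper instead uses Proposition~\ref{prop:mea-com-exh} on an interior exhaustion. (At the non-loose places simply keep $\widetilde E_v$; ``enlarging'' it would violate $\mathbb E_n\subseteq\widetilde{\mathbb E}$.)

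The gap is the step that forces $m_x(\ell_n)=0$. You cannot just ``set $m$ to zero on $X$'': once the measures are fixed, the product formula determines $m$ on principal divisors. By Lemma~\ref{lem:gvf-on-curves} it then determines $m$ everywhere, up to the class $D_\ell$ and one constant.

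Consider your one-place modification $\mu_v\mapsto(1-t)\mu'_v+t\nu$, where $\nu$ is the harmonic measure of a coordinate disc $|w|<r$ about $x$ (say $\deg x=1$). For $f$ with no other zeros or poles in the disc, Jensen's formula gives $\int\log|f|_v\,\mathrm d\nu=\operatorname{ord}_x(f)\log r+\log|a_f(x)|_v$. So with $t\log(1/r)=m(x)$, the product formula fails by $t\bigl(\log|a_f(x)|_v-\int\log|f|_v\,\mathrm d\mu'_v\bigr)$. This is nonzero for general $f$, already when $\operatorname{ord}_xf=0$. It tends to $0$ as $n\to\infty$, so you only obtain approximate GVF functionals. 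But each $\ell_n$ must be a genuine normalized GVF functional to be fed into Theorem~\ref{thm:int-existential-closedness-t-0-main-text}.

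If instead you let the product formula define the new $m$, two things go wrong. First, the values $m(y)$ for $y\in X\setminus\{x\}$ pick up cross terms of size $O(t)$, so the $m_y$ cannot be zeroed independently. Second, for closed points $z$ with a $v$-conjugate inside the disc near $x$, the value $m(z)$ drops by roughly $m(x)$. Nothing in the axioms forces $m(z)\ge m(x)$ there, so positivity is not guaranteed.

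Your fallback $(1-\epsilon_n)\ell+\epsilon_n\ell_{\mathrm{aux}}$ cannot work at all. Every GVF functional has $m\ge0$, so any such combination has $m_x\ge(1-\epsilon_n)m_x(\ell)>0$.

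The missing idea is to add objects with \emph{negative} $m$ on $X$ that satisfy the product formula exactly and have nonnegative $m$ off $X$. These are the equilibrium intersection functionals $\ell_{x,n}(\overline A)=\overline D_x^{\mathbb E_n}\cdot\overline A$, built from the equilibrium Green functions $g_x^{E_{n,v}}\ge0$ at \emph{every} place. They are not GVF functionals themselves, since $m_x(\ell_{x,n})=c^{(n)}_{x,x}<0$. However, their $m$-values on $X$ are exactly the columns of $\Gamma_X(\mathbb E_n)$. One therefore solves $\Gamma_X(\mathbb E_n)p^{(n)}=-m_X^{(n)}$ and sets $\ell_n=(\ell^{(n)}+\sum_xp^{(n)}_x\ell_{x,n})/(1+\sum_xp^{(n)}_x)$.

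Lemma~\ref{lem:mat-inv} gives $p^{(n)}\ge0$, which restores positivity of $\ell_n$. Lemma~\ref{lem:can-cap-sol} gives $p^{(n)}_x\le a_n/\log\operatorname{cap}_X(\mathbb E_n)\to0$. Together with $\sup_n|\ell_{x,n}(\overline A)|<\infty$ for fixed model $\overline A$, this yields $\ell_n\to\ell$. This is where $\operatorname{cap}_X(\mathbb E_n)\to\infty$ is actually used; in your plan it is proved but never enters the construction of $\ell_n$.
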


\begin{proof}
  Let $S=\{v\in M_K:\widetilde E_v\cap X_v^{\mathrm{an}}\neq\varnothing\}$ be the finite set of loose places, and put $F_v=\widetilde E_v$ for $v\in S$.
  For archimedean $v\in S$, let $V_v$ be the union of the connected open sets and the relative interiors of the real intervals defining $F_v$, with $X_v^{\mathrm{an}}$ removed.
  Choose an increasing admissible compact exhaustion
  \[
    H_{v,1}\subseteq H_{v,2}\subseteq\cdots\subseteq V_v, \qquad \bigcup_{n\geq1}H_{v,n}=V_v,\qquad \overline{V_v}=F_v.
  \]
  For non-archimedean $v\in S$, choose an increasing exhaustion of $F_v\setminus X_v^{\mathrm{an}}$ by model compact sets $H_{v,n}$.

  Consider
  \[
    E_{n,v}=
    \begin{cases}
      H_{v,n}, & v \in S,\\
      \widetilde{E}_v, & v \notin S,
    \end{cases}
    \qquad \mathbb{E}_n=(E_{n,v})_{v\in M_K}.
  \]
  Then $\mathbb{E}_n$ is admissible, $\mathbb{E}_n \subseteq \widetilde{\mathbb{E}}$, and
  \begin{equation}\label{eq:loose-capacity}
    \operatorname{cap}_X(\mathbb{E}_n)\longrightarrow+\infty.
  \end{equation}

  To justify \eqref{eq:loose-capacity}, for each $x\in X$ choose $v\in S$ and a point $a\in x_v$ lying in $F_v$.
  At an archimedean place, local admissibility provides compact arcs $K_j\subseteq V_v$ joining a fixed $b\neq a$ to points $z_j\to a$.
  Since $\eta_a$ is bounded below, Lemma \ref{lem:cap-low-bd} gives, for a constant $c>0$ independent of $j$,
  \[
    \operatorname{cap}_a(K_j)
    \geq c\operatorname{diam}_{\rho_a}(K_j)
    \geq c\rho_a(b,z_j)
    \longrightarrow+\infty.
  \]
  Each $K_j$ is eventually contained in $H_{v,n}$, so $V_a(H_{v,n})\to-\infty$.
  At a non-archimedean place, $F_v$ contains a coordinate disc about $a$; the same conclusion follows by testing the energy on its type II points $\zeta_r$, since $I_a(\delta_{\zeta_r})\to+\infty$ as $r\to0$.

  Writing $\Gamma_n=\Gamma_X(\mathbb E_n)=(c_{x,y}^{(n)})$, Galois averaging and monotonicity therefore give
  \[
    c_{x,x}^{(n)}\longrightarrow-\infty
    \quad(x\in X),\qquad
    0\leq c_{x,y}^{(n)}\leq c_{x,y}^{(1)}\quad(x\neq y).
  \]
  Using the uniform probability vector in the definition of $\operatorname{val}$, we obtain
  \[
    \operatorname{val}(\Gamma_n)
    \leq\frac1{|X|}\max_{x\in X}\sum_{y\in X}c_{x,y}^{(n)}
    \longrightarrow-\infty.
  \]
  Discarding finitely many terms, we assume that $\operatorname{cap}_X(\mathbb E_n)>1$.

  Write $\ell=(m,\{\mu_v\})$.
  Fix $\varepsilon_n\downarrow0$.
  At each archimedean $v\in S$, the measure $\mu_v$ has no atoms by Lemma \ref{lem:arch-mea-no-atoms}.
  Decompose $\mu_v$ among the finitely many closures defining $F_v$ and apply Proposition \ref{prop:mea-com-exh} to their compact exhaustions.

  After passing to a subsequence of the exhaustions, we obtain probability measures $\mu_{v,n}$ supported on $H_{v,n}$, with continuous potentials, and normalized potential differences $u_{v,n}$ satisfying
  \[
    \mathrm{dd}^cu_{v,n}=\mu_{v,n}-\mu_v,\qquad u_{v,n}\geq-\varepsilon_n,\qquad \mu_{v,n}\longrightarrow\mu_v.
  \]
  At each non-archimedean $v\in S$, take $\mu_{v,n}=\mu_v^{H_{v,n}}$ and $u_{v,n}=u_{\mu_v}^{H_{v,n}}$, the balayage of $\mu_v$ onto $H_{v,n}$ and its defect, as in Section \ref{sec:balayage}.
  Set $\mu_{v,n}=\mu_v$ for $v\notin S$.
  If $r$ is the number of archimedean places in $S$, set
  \[
    m^{(n)}(D)=m(D)+\sum_{v\in S}u_{v,n}(D_v)+r\varepsilon_n\deg(D)
    \qquad (D\in\operatorname{Div}(C)_{\mathbb R}),
  \]
  with $u_{v,n}(D_v)$ as in Section~\ref{sec:balayage}.
  Then $m^{(n)}$ is nonnegative on effective divisors.
  Green reciprocity gives the product formula; the extra constant contributes $r\varepsilon_n\deg(\operatorname{div}(f))=0$.
  The local logarithmic integrability and normalization are preserved, so these data define a normalized GVF functional $\ell^{(n)}$ integral with respect to $\mathbb E_n$.

  For a closed point $y$, let $\delta_{y,v}$ be its normalized local orbit measure.
  The lower bound for $u_{v,n}$, weak convergence and the GVF logarithmic integrability imply $\int u_{v,n}\,\mathrm d\delta_{y,v}\to0$ for every closed point $y$. Indeed, at an archimedean place the upper bound follows from upper semicontinuity of the logarithmic kernel; at a non-archimedean place use the decreasing balayage defects. For a fixed model adelic divisor $\overline A$, Green reciprocity also gives
  \[
    \ell^{(n)}(\overline A)-\ell(\overline A)=r\varepsilon_n\deg(A)+\sum_{v\in S}\int u_{v,n}\,\mathrm dc_1(\overline A)_v\longrightarrow0.
  \]
  Here the curvature measures have continuous local potentials, so the integral convergence follows again from the construction and Green reciprocity. As in Lemma \ref{lem:gvf-point-convergence}, positivity and approximation in the boundary norm give

  \begin{equation}\label{eq:balayage-convergence}
    \ell^{(n)}\longrightarrow\ell.
  \end{equation}

  For $x\in X$, put $m_x^{(n)}=m^{(n)}(x)/\deg(x)$, let $m_X^{(n)}=(m_x^{(n)})_{x\in X}$, and set
  \[
    p^{(n)}=-\Gamma_X(\mathbb{E}_n)^{-1}m_X^{(n)}\in\mathbb{R}_{\geq0}^{X}.
  \]
  If $a_n=\max_{x\in X}m_x^{(n)}$, then $(a_n)$ is bounded, since $m_x^{(n)}\to m_x$ for every $x\in X$ and $X$ is finite.
  Lemma \ref{lem:can-cap-sol}, together with \eqref{eq:loose-capacity}, gives
  \[
    0\leq p_x^{(n)}\leq
    \frac{a_n}{\log\operatorname{cap}_X(\mathbb{E}_n)}
    \longrightarrow0
  \]
  for every $x \in X$.

  For $x \in X$, let
  \[
    \overline{D}_x^{\mathbb{E}_n}
    =\left(\frac{x}{\deg(x)},\left(g_x^{E_{n,v}}\right)_{v\in M_K}\right),
    \qquad
    g_x^{E_{n,v}}(z)=\frac1{\deg(x)}\sum_{t \in x_v}\deg(t)g_{E_{n,v},t}(z),
  \]
  and let $\ell_{x,n}$ be the functional
  \(
  \overline{A} \longmapsto \ell_{x,n}(\overline{A})= \overline{D}_x^{\mathbb{E}_n}\cdot\overline{A}.
  \)
  The induction formula and the monotonicity of the equilibrium Green functions along the chosen exhaustions give
  \begin{equation}\label{eq:loose-equilibrium-bound}
    \sup_n\left|\ell_{x,n}(\overline{A})\right|<+\infty, \quad \text{for every $x \in X$ and every fixed model adelic divisor $\overline{A}$.}
  \end{equation}

  Finally, consider
  \[
    \ell_n=
    \frac{\ell^{(n)}+\sum_{x \in X}p_x^{(n)}\ell_{x,n}}
    {1+\sum_{x \in X}p_x^{(n)}}.
  \]
  It is a normalized GVF functional integral with respect to $\mathbb{E}_n$, and $m_x(\ell_n)=0$ for every $x \in X$. Equations \eqref{eq:balayage-convergence} and \eqref{eq:loose-equilibrium-bound}, together with $p_x^{(n)}\longrightarrow 0$, show that $\ell_n(\overline A)\longrightarrow\ell(\overline A)$ for every model adelic divisor $\overline A$. Since $\ell_n$ and $\ell$ are positive, approximation in the boundary norm extends this convergence to every adelic divisor, as in the proof of Lemma \ref{lem:gvf-point-convergence}. Thus $\ell_n\longrightarrow\ell$ in the GVF topology.
\end{proof}

\begin{theorem} \label{thm:int-existential-closedness-main-text}
  Let $K,C,X,U$ be as in Theorem~\ref{thm:adelic-sos}.
  Let $\widetilde{\mathbb{E}}=(\widetilde{E}_v)_{v\in M_K}$ be a loose admissible adelic set on $U$. Let $\ell$ be a normalized GVF functional on $K(C)$ that is integral with respect to $\widetilde{\mathbb{E}}$. Then there exists a sequence of distinct algebraic points $(\alpha_n)$ in $U(\overline{K})$ such that $\alpha'_{n,v} \subseteq \widetilde{E}_v$ at every place $v \in M_K$ and $\alpha_n \longrightarrow \ell$ in the GVF topology.
\end{theorem}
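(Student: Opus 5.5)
The plan is to combine Lemma~\ref{lem:loose-place-truncation} with Theorem~\ref{thm:int-existential-closedness-t-0-main-text} through a diagonal argument. Lemma~\ref{lem:loose-place-truncation} supplies admissible adelic sets $\mathbb E_n\subseteq\widetilde{\mathbb E}$ with $\operatorname{cap}_X(\mathbb E_n)>1$, together with normalized GVF functionals $\ell_n$ such that each $\ell_n$ is integral with respect to $\mathbb E_n$, $m_x(\ell_n)=0$ for all $x\in X$, and $\ell_n\to\ell$ in the GVF topology. For each $n$, Theorem~\ref{thm:int-existential-closedness-t-0-main-text} then gives a sequence $(\alpha_{n,k})_k$ of distinct points in $U(\overline K)$. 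These points are totally in $\mathbb E_n$, hence totally in $\widetilde{\mathbb E}$, and they satisfy $\alpha_{n,k}\to\ell_n$ as $k\to\infty$.

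To extract a single sequence, I fix a countable family of test adelic divisors $\overline A_1,\overline A_2,\dots$ that determines GVF convergence. The natural choice is a countable set of model adelic divisors: for each place a countable dense set of model Green functions, together with a countable generating set of $\operatorname{Div}(C)$ and rational multiples. By the sandwich argument in the proof of Lemma~\ref{lem:gvf-point-convergence}, convergence on model divisors already implies convergence on all adelic divisors. Indeed, for any $\overline D_1$ and any $\epsilon>0$ there are model divisors $\overline D_2\le\overline D_1\le\overline D_3$ with $\ell(\overline D_3-\overline D_2)<\epsilon$, obtained by approximation in the boundary norm. For $n$ large the points avoid the support, so heights are monotone along the sandwich. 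It is enough for these $\overline D_2,\overline D_3$ to be chosen from the countable family, which holds once the family is dense for the boundary norm (model divisors with rational Green-function data at finitely many places, modulo constants).

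For each $n$ I choose $k_n$ so that $|h_{\overline A_i}(\alpha_{n,k_n})-\ell_n(\overline A_i)|<1/n$ for $i\le n$, and so that $\alpha_{n,k_n}$ differs from all earlier chosen points. This is possible because the $\alpha_{n,k}$ are distinct in $k$. Setting $\beta_n=\alpha_{n,k_n}$, the points $\beta_n$ are distinct and lie totally in $\widetilde{\mathbb E}$. For each fixed $i$, $h_{\overline A_i}(\beta_n)-\ell(\overline A_i)\to0$, since $\ell_n(\overline A_i)\to\ell(\overline A_i)$.

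The main obstacle is the final passage from the countable test family to all of $\widehat{\operatorname{Div}}(K(C)/\mathbb Z)$, namely confirming that a countable boundary-norm-dense family of model divisors exists so that the upper and lower sandwich bounds of Lemma~\ref{lem:gvf-point-convergence} apply. I would handle this by noting that each adelic divisor lives on some $V\subseteq C$ with a fixed boundary divisor. Model Green functions at finitely many places can be approximated uniformly by ones with rational coefficients of model functions (Gubler's density at non-archimedean places, and smooth functions with a countable dense subset at archimedean places). Uniform perturbation by $\epsilon$ is dominated by $\epsilon$ times the boundary divisor, which gives the needed sandwich.
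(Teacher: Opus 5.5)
Your proposal is correct and follows the paper's route. The paper proves the theorem simply by citing Lemma~\ref{lem:loose-place-truncation} and Theorem~\ref{thm:int-existential-closedness-t-0-main-text}, leaving implicit the diagonal extraction that you spell out; your extraction is sound: it uses a countable test family of model adelic divisors (automatically countable at the finite places, with separable Green-function data at the archimedean places and a fixed boundary divisor for each open $V$) together with the sandwich argument from Lemma~\ref{lem:gvf-point-convergence}.
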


\begin{proof}
  Lemma \ref{lem:loose-place-truncation} and Theorem \ref{thm:int-existential-closedness-t-0-main-text}.
\end{proof}
\section{The essential minimum of height functions}
\label{sec:essential-minimum}

\begin{lemma}\label{lem:ess-min-gvf-inequality}
  Let $\overline D\in\operatorname{\widehat{Div}}(K(C)/\mathbb Z)$.
  Assume that $\deg(\widetilde{D})>0$.
  Then, $\operatorname{ess}(\overline{D})\geq \inf \Big\{ \ell(\overline{D}): \ell \text{ is a normalized GVF} \Big\}$.
\end{lemma}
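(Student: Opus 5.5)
Our plan is to produce, from a sequence of points realizing the essential minimum, a normalized GVF functional $\ell$ with $\ell(\overline D)\le\operatorname{ess}(\overline D)$. This gives the inequality immediately.

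First we reduce to sequences. If $\operatorname{ess}(\overline D)=+\infty$ there is nothing to prove. Otherwise, for each $\epsilon>0$ we pick a sequence of distinct algebraic points $(\alpha_n)$ in $U(\overline K)$ with $\liminf_n h_{\overline D}(\alpha_n)\le\operatorname{ess}(\overline D)+\epsilon$. Passing to a subsequence, we may assume $h_{\overline D}(\alpha_n)$ converges to a number $\le\operatorname{ess}(\overline D)+\epsilon$. A diagonal choice over $\epsilon\to0$ then gives a single sequence with $\lim h_{\overline D}(\alpha_n)\le\operatorname{ess}(\overline D)$.

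Next, each distinct point $\alpha_n$ defines a functional $\ell_{\alpha_n}\colon\overline A\mapsto h_{\overline A}(\alpha_n)$ on $\operatorname{\widehat{Div}}(K(C)/\mathbb Z)$. This is defined on adelic divisors whose support avoids $\alpha_n$, and it is linear. It is nonnegative on effective divisors once $\alpha_n$ lies outside their support, which holds for $n$ large because the points are distinct. It vanishes on principal divisors by the product formula, and it satisfies the normalization \eqref{eq:normalization}.

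We then extract a limit along an ultrafilter, or a subnet. Setting $\ell(\overline A)=\lim_{\mathcal F} h_{\overline A}(\alpha_n)$ requires each $h_{\overline A}(\alpha_n)$ to be eventually bounded. We get this from the boundary norm. Since $\deg\widetilde D>0$, some multiple of $\overline D$ plus a constant dominates $\pm\overline A$ in the sense of effectivity, at least after passing to a quasi-projective model where $\overline A$ is defined; this is the standard fact that a big divisor absorbs the boundary divisor up to small error. Then $|h_{\overline A}(\alpha_n)|\le c\,h_{\overline D}(\alpha_n)+c'$ for large $n$, and this is bounded because $h_{\overline D}(\alpha_n)$ converges.

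Finally, the limit $\ell$ is linear, nonnegative on effective divisors, zero on principal divisors, and normalized, so it is a normalized GVF functional. By construction $\ell(\overline D)=\lim h_{\overline D}(\alpha_n)\le\operatorname{ess}(\overline D)$.

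The main obstacle is the boundedness step. We must show that every adelic divisor, including non-model divisors on $\operatorname{Spec}K(C)$, is bounded by a multiple of $\overline D$ plus a constant along points eventually outside any fixed proper closed set. Our first attempt: write $\overline A=\overline A_0+\overline R$ with $\overline A_0$ a model divisor and $\overline R$ small in boundary norm. Bound $\overline A_0$ using ampleness of a multiple of $\widetilde D$: choose sections of $m\widetilde D\pm A_0$, obtaining $h_{\pm\overline A_0}\le m h_{\overline D}+c$ away from their zeros. Control $\overline R$ by $\epsilon\,\overline{\mathcal E}$, where $\overline{\mathcal E}$ is the boundary divisor, which is itself bounded the same way.
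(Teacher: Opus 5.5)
Your proposal is correct and follows essentially the paper's route: take distinct points with $h_{\overline D}(\alpha_n)\to\operatorname{ess}(\overline D)$, use $\deg\widetilde D>0$ together with boundary-norm approximation to bound every height along the sequence, and extract a limit functional, which is then a normalized GVF with $\ell(\overline D)=\operatorname{ess}(\overline D)$. The only difference is how the limit is extracted. You take an ultralimit over all of $\widehat{\operatorname{Div}}(K(C)/\mathbb Z)$ at once, whereas the paper diagonalizes over a countable set $\mathscr S$ of model divisors and then extends convergence to all adelic divisors by sandwiching between model divisors, as in Lemma~\ref{lem:gvf-point-convergence}; this gives an actual subsequence converging in the GVF topology rather than just a cluster point.
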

\begin{proof}
  Let $(x_n)$ be a sequence of distinct algebraic points such that $h_{\overline D}(x_n)\longrightarrow\operatorname{ess}(\overline D)$.
  Then, the sequence $(x_n)$ has bounded Weil height with respect to any very ample line bundle on $C$.

  We take a countable dense subset $\mathscr F_v$ of the real-valued continuous functions on $C_v^{\mathrm{an}}$ in the $\lVert\cdot\rVert_\infty$ topology for each $v\in M_{K,\infty}$.
  Let $\mathscr D$ be the countable collection of divisors on all regular projective models of $C$ over $O_K$, and choose one model adelic lift of each member.
  Let $\mathscr S$ be the $\mathbb Q$-linear span of these lifts and the metric divisors $(0,f_v)$, where $f_v\in\mathscr F_v$ and all other Green functions are zero.

  For every model adelic divisor $\overline E$, since the sequence $(x_n)$ has bounded Weil height, $h_{\overline E}(x_n)$ is bounded.
  By a diagonal argument, after passing to a subsequence, $h_{\overline E}(x_n)$ converges for every model adelic divisor $\overline E\in\mathscr S$.

  The same approximation from above and below as in the proof of Lemma~\ref{lem:gvf-point-convergence} extends this convergence to every $\overline E\in\operatorname{\widehat{Div}}(K(C)/\mathbb Z)$.
  Thus we get a normalized GVF $\ell$ which is the limit of $x_n$.
  Then, $\ell(\overline D)=\operatorname{ess}(\overline D)$, proving the inequality.
\end{proof}

\begin{theorem} \label{thm:ess-attained-by-integral-gvf-main-text}
  Let $K,C,X,U$ be as in Theorem~\ref{thm:adelic-sos}.
  Let $\overline L$ be an adelic line bundle on $\operatorname{Spec} K(C)$ whose geometric part $\widetilde L$ has positive degree.
  Let $\mathbb{E}=(E_v)_{v\in M_K}$ be a loose admissible adelic set on $U$. Assume $c_1(\overline{L})_v \leq 0$ outside $E_v$ at every place $v \in M_K$. Then
  \[
    \operatorname{ess}(\overline{L})=\inf \Big\{ \ell(\overline{L}): \ell \text{ is a normalized GVF, integral with respect to $\mathbb{E}$} \Big\}.
  \]
\end{theorem}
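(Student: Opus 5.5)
We prove the two inequalities separately. For ``$\leq$'': let $\ell$ be a normalized GVF functional integral with respect to $\mathbb E$. By Theorem~\ref{thm:int-existential-closedness-main-text}, there is a sequence of distinct algebraic points $(\alpha_n)$ in $U(\overline K)$ with $\alpha_n\to\ell$ in the GVF topology. In particular $h_{\overline L}(\alpha_n)\to\ell(\overline L)$, so $\operatorname{ess}(\overline L)\leq\ell(\overline L)$. Taking the infimum over such $\ell$ gives the inequality. (If $\overline L$ is only given as an adelic line bundle, we first pass to an adelic divisor by choosing a rational section; heights and $\ell$ do not depend on this choice by the product formula.)

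For ``$\geq$'': by Lemma~\ref{lem:ess-min-gvf-inequality} there is a normalized GVF functional $\ell$ with $\ell(\overline L)=\operatorname{ess}(\overline L)$. Indeed, the proof of that lemma produces $\ell$ as the GVF limit of a sequence of distinct points realizing the essential minimum, so the infimum over all normalized GVFs is attained by such an $\ell$. We then replace $\ell$ by its balayage $\ell^{\mathbb E}=(m^{\mathbb E},\{\mu_v^{E_v}\})$ from Lemma~\ref{lem:balayage-gvf}. That lemma applies to loose admissible adelic sets, as remarked after Lemma~\ref{lem:balayage-inequality}, and it shows $\ell^{\mathbb E}$ is again a normalized GVF functional. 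Its local measures $\mu_v^{E_v}$ are supported on $E_v$ by construction, so $\ell^{\mathbb E}$ is integral with respect to $\mathbb E$. Since $\deg\widetilde L>0$ and $c_1(\overline L)_v\leq0$ outside $E_v$, Lemma~\ref{lem:balayage-inequality} gives
\[
\ell^{\mathbb E}(\overline L)\leq\ell(\overline L)=\operatorname{ess}(\overline L).
\]
Hence the infimum over integral GVFs is at most $\operatorname{ess}(\overline L)$.

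The main obstacle is checking that Lemma~\ref{lem:balayage-inequality} applies as stated. Two points need care. First, the reduction ``take powers so that $\widetilde L$ is effective'' must be valid for $\mathbb R$-divisors on $\operatorname{Spec}K(C)$; here we use that $\deg\widetilde L>0$, so some multiple of $\widetilde L$ is linearly equivalent to an effective divisor, together with linearity of $\ell$ and $\ell^{\mathbb E}$. Second, at loose places $E_v$ may meet $X_v$, and we need $\int g_{D,v}\,d\mu_v^{E_v}$ to be finite. If $\operatorname{supp}(D)$ meets $\partial E_v$, we first move $D$ within its linear equivalence class so that its support avoids the finitely many problematic points; this uses only the vanishing of $\ell$ and $\ell^{\mathbb E}$ on principal divisors. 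With these two checks in place, the two inequalities combine to give the stated equality.
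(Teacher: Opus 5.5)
Your proposal is correct and is essentially the paper's proof: the upper bound comes from approximating any $\mathbb E$-integral normalized GVF by points via Theorem~\ref{thm:int-existential-closedness-main-text}, and the lower bound combines Lemma~\ref{lem:ess-min-gvf-inequality} with the balayage Lemmas~\ref{lem:balayage-gvf} and~\ref{lem:balayage-inequality}. The two ``obstacles'' you flag concern the internal proof of Lemma~\ref{lem:balayage-inequality}; its hypotheses are exactly those of the theorem, and the paper notes it applies to loose admissible sets, so you can simply cite it.
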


\begin{proof}
  Let $\ell$ be a normalized GVF. By Lemma~\ref{lem:balayage-gvf}, its balayage $\ell^{\mathbb E}$ is integral with respect to $\mathbb E$. Lemma~\ref{lem:balayage-inequality} gives
  \[
    \ell^{\mathbb E}(\overline L)\leq\ell(\overline L).
  \]
  Taking infima and using Theorem~\ref{thm:int-existential-closedness-main-text} and Lemma~\ref{lem:ess-min-gvf-inequality}, we obtain
  \[
    \begin{aligned}
      \operatorname{ess}(\overline L)
      &\leq\inf_{\substack{\ell\text{ normalized GVF}\\\ell\text{ integral with respect to }\mathbb E}}\ell(\overline L)\\
      &\leq\inf_{\ell\text{ normalized GVF}}\ell(\overline L)
      \leq\operatorname{ess}(\overline L).
    \end{aligned}
  \]
\end{proof}

\begin{theorem}[Theorem {\ref{thm:ess-min-attained-by-integers}}] \label{thm:ess-min-attained-by-integers-main-text}
  Let $K,C,X,U$ be as in Theorem~\ref{thm:adelic-sos}.
  Let $\overline L$ be an adelic line bundle on $\operatorname{Spec} K(C)$ whose geometric part $\widetilde L$ has positive degree.
  Let $\mathbb{E}=(E_v)_{v\in M_K}$ be a loose admissible adelic set on $U$. Assume $c_1(\overline{L})_v \leq 0$ outside $E_v$ at every place $v \in M_K$. Then there exists a sequence of distinct algebraic points $(\alpha_n)$ in $U(\overline{K})$ such that $\alpha'_{n,v} \subseteq E_v$ at every place $v \in M_K$ and $h_{\overline{L}}(\alpha_n) \longrightarrow \operatorname{ess}(\overline{L})$.
\end{theorem}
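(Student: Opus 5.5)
The plan is to combine Theorem~\ref{thm:ess-attained-by-integral-gvf-main-text} with Theorem~\ref{thm:int-existential-closedness-main-text} and a diagonal argument. By Theorem~\ref{thm:ess-attained-by-integral-gvf-main-text}, under the stated curvature hypothesis,
\[
  \operatorname{ess}(\overline L)=\inf\Big\{\ell(\overline L):\ell\text{ normalized GVF, integral with respect to }\mathbb E\Big\}.
\]
So for each $j\geq1$ we can choose a normalized GVF functional $\ell_j$, integral with respect to $\mathbb E$, with $\ell_j(\overline L)\leq\operatorname{ess}(\overline L)+1/j$. In fact the proof of Theorem~\ref{thm:ess-attained-by-integral-gvf-main-text} gives more: Lemma~\ref{lem:ess-min-gvf-inequality} produces a GVF $\ell$ with $\ell(\overline L)=\operatorname{ess}(\overline L)$ exactly. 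Its balayage $\ell^{\mathbb E}$ then satisfies $\ell^{\mathbb E}(\overline L)\leq\operatorname{ess}(\overline L)$ by Lemma~\ref{lem:balayage-inequality}. So a single integral GVF $\ell_\ast=\ell^{\mathbb E}$ with $\ell_\ast(\overline L)\leq\operatorname{ess}(\overline L)$ should exist, and I would use it if available to avoid the diagonal step.

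Next, apply Theorem~\ref{thm:int-existential-closedness-main-text} to $\ell_\ast$, or to each $\ell_j$. This gives a sequence of distinct points $(\alpha^{(j)}_n)_n$ in $U(\overline K)$ with $\alpha'^{(j)}_{n,v}\subseteq E_v$ at every place and $\alpha^{(j)}_n\to\ell_j$ in the GVF topology. In particular $h_{\overline L}(\alpha^{(j)}_n)\to\ell_j(\overline L)$, since GVF convergence is tested against every element of $\operatorname{\widehat{Div}}(K(C)/\mathbb Z)$ and $\overline L$ is such an element. In the diagonal version, I pick $n_j$ so that $|h_{\overline L}(\alpha^{(j)}_{n_j})-\ell_j(\overline L)|<1/j$ and $\alpha_j:=\alpha^{(j)}_{n_j}$ differs from $\alpha_1,\dots,\alpha_{j-1}$. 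This is possible because each inner sequence consists of distinct points. Then $\limsup_j h_{\overline L}(\alpha_j)\leq\operatorname{ess}(\overline L)$.

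Finally, since the $\alpha_j$ are distinct, the definition of the essential minimum gives $\liminf_j h_{\overline L}(\alpha_j)\geq\operatorname{ess}(\overline L)$. Hence $h_{\overline L}(\alpha_j)\to\operatorname{ess}(\overline L)$, and every $\alpha_j$ has all its Galois orbits in $E_v$.

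I expect the main obstacle to be the finiteness of $\operatorname{ess}(\overline L)$ and of $\ell_j(\overline L)$, which the diagonal step uses implicitly. Finiteness from above follows from Lemma~\ref{lem:ess-min-gvf-inequality}, using that $\deg\widetilde L>0$. Finiteness from below uses $\ell(\overline L)\geq0$ after adding a multiple of a positive arithmetic divisor. Everything else is a direct assembly of the results already established.
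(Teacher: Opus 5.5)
Your proposal is correct and follows the paper's proof, which is exactly the one-line combination of Theorem~\ref{thm:ess-attained-by-integral-gvf-main-text} with Theorem~\ref{thm:int-existential-closedness-main-text}; both your single-functional route and your diagonal argument are natural unpackings of that line (the single-functional route takes the balayage $\ell^{\mathbb E}$ of the GVF built in the proof of Lemma~\ref{lem:ess-min-gvf-inequality} and then applies Lemma~\ref{lem:balayage-inequality}). The finiteness issue you flag is not a real obstacle, since each $\ell_j(\overline L)$ is a real number, which already gives $\operatorname{ess}(\overline L)<+\infty$, and the lower bound is the standard consequence of $\deg\widetilde L>0$ (note that Lemma~\ref{lem:ess-min-gvf-inequality} bounds $\operatorname{ess}(\overline L)$ from below, not from above as you state).
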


\begin{proof}
  Theorem \ref{thm:ess-attained-by-integral-gvf-main-text} and Theorem \ref{thm:int-existential-closedness-main-text}.
\end{proof}
\section{The integral Chebyshev problem}
\label{sec:integer-chebyshev-problem}

\begin{theorem} \label{thm:montgomery-conj-is-true-main-text}
  Montgomery's conjecture (Conjecture~\ref{conj:montgomery}) is true: $t_{\mathbb Z}([0,1])=s^{-1}$.
\end{theorem}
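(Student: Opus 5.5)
The plan is to combine Theorem~\ref{thm:ess-min-and-int-chebyshev-constant} with Theorem~\ref{thm:ess-min-attained-by-integers-main-text}. We work on $C=\mathbb P^1_{\mathbb Q}$ with $X=\{\infty\}$ and $U=\mathbb A^1$. Let $E=[0,1]$ and let $\overline L$ be the adelic divisor $(\infty,(g_v)_v)$, where $g_\infty=g_{[0,1]}$ is the Green function of $[0,1]$ with pole at $\infty$ and $g_p=\log^+|z|_p$ at every prime $p$.

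First, identify the height. For an algebraic number $\alpha$ with primitive minimal polynomial $P_\alpha$ of leading coefficient $a_\alpha$, the non-archimedean contributions sum to $\frac{1}{\deg\alpha}\log|a_\alpha|$ by Gauss's lemma. Hence $h_{\overline L}=h_{[0,1]}$ as in \eqref{eq:height-e}, and Theorem~\ref{thm:ess-min-and-int-chebyshev-constant} gives $\operatorname{ess}(\overline L)=-\log t_{\mathbb Z}([0,1])$. The geometric part is $\mathcal O(1)$, which has positive degree.

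Next, build the loose admissible adelic set. Take $E_\infty=[0,1]\subset\mathbb P^1(\mathbb R)$, which is a closed interval and hence locally admissible. At every prime $p$, take $E_p=\mathbb P^{1,\mathrm{an}}_p$, the whole space, which contains $\infty$; this is the reduction preimage of the full special fiber of $\mathbb P^1_{\mathbb Z_p}$. The definition of \emph{loose admissible} requires agreement with an admissible set at all but finitely many places. With $E_p$ the whole space at every prime this fails, since there are infinitely many loose places.

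The fix is to view the point $\alpha$ as $\infty$-free only at the archimedean place. Since leading coefficients are allowed to be arbitrary, we should instead use $X=\{0\}$ or a similar device. Concretely, work with $1/\alpha$: the reciprocals of the roots of $P_\alpha$ are algebraic integers exactly when $P_\alpha$ has constant term $\pm1$, and that is not what we want either. The cleaner route uses the curvature condition, as follows.

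We need $c_1(\overline L)_v\le 0$ outside $E_v$. At $\infty$, $c_1=\mu_{[0,1]}$ is supported on $E_\infty$. At each $p$, $c_1$ is the Dirac mass at the Gauss point, so we need the Gauss point to lie in $E_p$. If we take $E_p=\mathbb P^{1,\mathrm{an}}_p$ at all $p$, the set $\{E_v\}$ is an adelic set on $C$ in which every place is loose. I would handle this by checking that the proof of Lemma~\ref{lem:loose-place-truncation} only needs finitely many loose places in order to control the matrix $\Gamma_n$, and by approximating: truncate to finitely many loose primes $p\le P$, apply Theorem~\ref{thm:ess-min-attained-by-integers-main-text} for each $P$, and pass to the limit.

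Alternatively, and more simply, Theorem~\ref{thm:ess-min-attained-by-integers-main-text} only needs the orbits to lie totally in $E_\infty=[0,1]$. Requiring integrality at no prime just means the non-archimedean sets are unrestricted. So I would take $E_p=\operatorname{red}^{-1}$ of the whole special fiber of $\mathbb P^1_{\mathbb Z}$, which is admissible for $\mathcal U=\mathbb P^1_{\mathbb Z}$ minus the archimedean condition. The only obstacle is that $X$ must be nonempty.

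The output is a sequence of distinct $\alpha_n$, totally in $[0,1]$, with $h_{[0,1]}(\alpha_n)=\frac{1}{\deg\alpha_n}\log a_{\alpha_n}\to -\log t_{\mathbb Z}([0,1])$. The minimal polynomials of these $\alpha_n$ lie in $\mathcal F_{[0,1]}(\deg\alpha_n)$. Since the degrees tend to infinity (there are finitely many $\alpha$ of bounded degree and height), this gives $s\le t_{\mathbb Z}([0,1])^{-1}$. The reverse inequality $t_{\mathbb Z}([0,1])\ge s^{-1}$ is \cite[Proposition 1.7]{Prit05}, so equality follows.

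The main obstacle is setting up the admissibility bookkeeping: the loose places at the primes, and the choice of $X$. I expect to resolve it by taking $X=\{\infty\}$, $E_\infty=[0,1]$ (not loose), and $E_p=\mathbb P^{1,\mathrm{an}}_p$, and then arguing that the finitely-many-loose-places hypothesis can be replaced by a limiting argument over $P$ as above.
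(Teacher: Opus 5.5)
Your argument is correct and is essentially the paper's: you apply Theorem~\ref{thm:ess-min-attained-by-integers-main-text} to $h_{[0,1]}$ with $X=\{\infty\}$, $E_\infty=[0,1]$ and the Gauss-point curvature check, convert via Theorem~\ref{thm:ess-min-and-int-chebyshev-constant}, and use \cite[Proposition 1.7]{Prit05} for the reverse inequality. The bookkeeping you call the main obstacle is not one: any single truncation is already loose admissible and satisfies the curvature hypothesis (the paper takes one loose prime $p_0$, with $E_{p_0}=\mathbb{P}^{1,\mathrm{an}}_{p_0}$ and the Berkovich unit disk at all other primes), and since the theorem's conclusion is convergence to $\operatorname{ess}(\overline L)$ itself, integrality at the remaining primes costs nothing---so the limit over $P$, any reworking of Lemma~\ref{lem:loose-place-truncation}, and the $X=\varnothing$ variant (which the theory does not permit) are all unnecessary.
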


\begin{proof}
  Pick your favorite prime number $p_0$. Apply Theorem \ref{thm:ess-min-attained-by-integers-main-text} to the height $h_{[0,1]}$ and the loose admissible adelic set $\mathbb{E}=(E_p)_{p\leq\infty}$, where $E_{p_0}=\mathbb{P}^{1,\mathrm{an}}_{p_0}$ is loose at $p_0$, $E_p \subseteq \mathbb{A}^{1,\mathrm{an}}_p$ is the Berkovich unit disk for all $p_0 \neq p < \infty$, and $E_\infty=[0,1]$ at $p=\infty$.
\end{proof}

\begin{remark}
  In fact, what we have proved above is stronger than Montgomery's conjecture: the essential minimum of $h_{[0,1]}$ can be attained by algebraic numbers totally contained in $[0,1]$ and integral outside $p_0$.
\end{remark}
\section{The Schur--Siegel--Smyth trace problem}
\label{sec:sss-trace-problem}

In this section, we connect our result with the Schur--Siegel--Smyth trace problem.
In particular, we answer \cite[Question 3.7(1)]{LL25}.
We also prove a result related to \cite[Question 3.7(2)]{LL25}, which rules out an earlier version of the question.

\begin{lemma}
  Fix $x\in\mathbb{R}_{\geq 0}$.
  For each $\varepsilon>0$, there exists a compactly supported probability measure
  $\mu_{x,\varepsilon}$ on $\mathbb{R}_{\geq 0}$ such that for
  each $y\in\mathbb C\setminus\{x\}$,
  \[
    \int\log\left|z-y\right|\,\mathrm{d}\mu_{x,\varepsilon}(z)
    \geq \log\left|y-x\right|-\varepsilon,
  \]

  \[
    \int\log\left|z-x\right|\,\mathrm{d}\mu_{x,\varepsilon}(z)>-\infty,
  \]
  and
  \[
    \int z\,\mathrm{d}\mu_{x,\varepsilon}(z)\leq x+\varepsilon.
  \]
\end{lemma}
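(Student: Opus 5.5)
Take for $\mu_{x,\varepsilon}$ the equilibrium measure of a short interval sitting just to the right of $x$, where $x$ is a boundary point. Put $r=\varepsilon$ (shrunk if needed), $I=[x,x+4r]\subset\mathbb{R}_{\geq0}$, and $\mu_{x,\varepsilon}=\mu_I$, the arcsine measure of $I$. Its barycenter is $x+2r$, so $\int z\,\mathrm{d}\mu_{x,\varepsilon}\le x+\varepsilon$ as soon as $2r\le\varepsilon$. The potential $p(y)=\int\log|z-y|\,\mathrm{d}\mu_I(z)=\log\operatorname{cap}(I)+g_I(y)=\log r+g_I(y)$ is continuous and finite everywhere. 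In particular $\int\log|z-x|\,\mathrm{d}\mu_I=\log r>-\infty$, which settles the second condition.

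The main step is the inequality $p(y)\ge\log|y-x|-\varepsilon$ for all $y\ne x$. This splits into two regimes.

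For $|y-x|\le Ar$, with $A$ a fixed constant, we use $p(y)\ge\log r$, which holds because $g_I\ge0$. This gives $p(y)-\log|y-x|\ge-\log A$. That is only a constant loss, not an $\varepsilon$ loss, so the interval alone will not suffice near $x$.

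For $|y-x|\ge Ar$, we have $|z-y|\ge|y-x|-4r\ge|y-x|(1-4/A)$ for every $z\in I$. Hence $p(y)\ge\log|y-x|+\log(1-4/A)$, and choosing $A$ large makes this loss smaller than $\varepsilon$.

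To remove the near-field problem we let the scale vary. Replace $\mu_I$ by the average $\nu=\frac1J\sum_{j=1}^{J}\mu_{I_j}$, where $I_j=[x,x+4r_j]$ and $r_j=rB^{-j}$ with $B$ large. For a given $y$, at most one index $j$ has $|y-x|$ comparable to $r_j$, up to the factor $A$.

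Consider the indices with $r_j\ll|y-x|$. These contribute at least $\log|y-x|-\varepsilon/2$ by the far-field bound.

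Now consider the indices with $r_j\gg|y-x|$. Each such $I_j$ is large compared with $|y-x|$, so $p_j(y)=\log r_j+g_{I_j}(y)\ge\log r_j\ge\log|y-x|$. These indices lose nothing.

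The single comparable index loses at most $\log A$, so after averaging its contribution is at most $\log A/J$. Taking $J$ large makes this less than $\varepsilon/2$. Since the same index can be charged the near-field bound $p_j(y)\ge\log r_j$ instead, it never costs more than $\log A$, and the total deficit is bounded by $\varepsilon/2+\log A/J<\varepsilon$.

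The remaining conditions survive the averaging:
\[
\int z\,\mathrm{d}\nu\le x+2r\le x+\varepsilon,\qquad \int\log|z-x|\,\mathrm{d}\nu=\frac1J\sum_j\log r_j>-\infty .
\]
The support of $\nu$ is $[x,x+4r]\subset\mathbb{R}_{\ge0}$, which is compact, and $\nu$ is a probability measure.

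The delicate part is bookkeeping the transition zone, so that for each $y$ only $O(1)$ scales are comparable to $|y-x|$. With geometric ratio $B\ge A^2$ at most two indices fall in the window $r_j\in[|y-x|/A,\,A|y-x|]$. The estimate therefore still holds with loss $2\log A/J$, which can again be made smaller than $\varepsilon/2$ by taking $J$ large.
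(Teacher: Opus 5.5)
Your proof is correct, but it follows a different route from the paper's.

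\textbf{The paper's route.} The paper uses a single measure: the equilibrium measure $\nu$ of $[x+\varepsilon_1,x+\varepsilon_2]$ with respect to the pole $x$. Equivalently, $\nu$ is the pull-back of the arcsine measure of $[a,b]=[1/\varepsilon_2,1/\varepsilon_1]$ under $w=1/(z-x)$. Writing $u=1/(y-x)$, one gets exactly
\[
\int\log|z-y|\,\mathrm{d}\nu(z)-\log|y-x|=U_{[a,b]}(u)-U_{[a,b]}(0)\geq\log\frac{b-a}{a+b+2\sqrt{ab}},
\]
and the right-hand side tends to $0$ as $\varepsilon_1\to0$ with $\varepsilon_2=\varepsilon$. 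Since the support avoids $x$, the other two conditions are immediate.

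\textbf{Your route.} You keep the pole at infinity and anchor the intervals at $x$. Your observation that a single such interval loses $\log 4$ at $y=x+4r$ is precisely the obstruction the paper removes by moving the pole to $x$. Your geometric average over $J$ scales is your substitute for that move, and the bookkeeping closes as follows:
\begin{itemize}
\item Indices with $|y-x|/A<r_j<|y-x|$ each lose at most $\log A$. There are at most $\lceil\log A/\log B\rceil$ of them, a bound independent of $y$ and $J$.
\item Indices with $r_j\le|y-x|/A$ each lose at most $-\log(1-4/A)$.
\item Indices with $r_j\geq|y-x|$ lose nothing.
\end{itemize}
So the deficit is at most $-\log(1-4/A)+\lceil\log A/\log B\rceil\log A/J$, uniformly in $y$. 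This makes your ``at most one'' versus ``at most two'' count immaterial; even $B=2$ works.

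\textbf{What each buys.} Your approach uses only $g_I\geq0$, $\operatorname{cap}([a,b])=(b-a)/4$ and the triangle inequality, and it makes the multiscale mechanism visible. The paper's approach is shorter and gives a closed-form error. The price is invoking conformal invariance and the explicit value of the interval potential at an exterior point. Your measure does charge $x$ itself, but this is harmless because $\int\log|z-x|\,\mathrm{d}\nu=\frac1J\sum_j\log r_j$ is finite; the paper's measure stays away from $x$ altogether.
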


\begin{proof}
  Let $0<\varepsilon_1<\varepsilon_2$.
  Let $\nu$ be the equilibrium measure on the closed interval
  $[x+\varepsilon_1,x+\varepsilon_2]$ with respect to the point $x$.
  Under the isomorphism
  $\phi:\overline{\mathbb{C}}\longrightarrow\overline{\mathbb{C}}$,
  $z\longmapsto 1/(z-x)$, the push-forward $\phi_*\nu$ is the equilibrium measure
  on $[1/\varepsilon_2,1/\varepsilon_1]$.
  Also,
  $\phi_*(U_{\nu}(z)-\log\left|z-x\right|)
    =U_{[1/\varepsilon_2,1/\varepsilon_1]}(z)
    -U_{[1/\varepsilon_2,1/\varepsilon_1]}(0)$, where $U_{[a,b]}$ denotes the potential
  of the equilibrium measure on $[a,b]$.
  It is well known that $U_{[a,b]}$ attains its infimum
  $\log(b-a)-\log 4$ and
  $U_{[a,b]}(0)=\log(a+b+2\sqrt{ab})-\log 4$ as long as $a>0$.
  Therefore,
  \begin{align*}
    \int\log\left|z-y\right|\,\mathrm{d}\nu(z)-\log\left|y-x\right|
    &=U_{\nu}(y)-\log\left|y-x\right|\\
    &\geq -\log\frac{\varepsilon_1+\varepsilon_2+2\sqrt{\varepsilon_1\varepsilon_2}}
    {\varepsilon_2-\varepsilon_1}.
  \end{align*}
  As we take $\varepsilon_2=\varepsilon$ and $\varepsilon_1$ sufficiently small, we get
  \[
    \log\frac{\varepsilon_1+\varepsilon_2+2\sqrt{\varepsilon_1\varepsilon_2}}
    {\varepsilon_2-\varepsilon_1}<\varepsilon.
  \]
\end{proof}

Let $\mu$ be a compactly supported probability measure on
$\mathbb{R}_{\geq 0}$ such that
$\displaystyle \int\log\left|Q\right|\,\mathrm{d}\mu\geq 0$ for each non-zero polynomial
$Q$ with integer coefficients.

Let $X_{\mathrm{SSS}}$ be the exceptional set for the Schur--Siegel--Smyth trace problem, i.e.
\[
  X_{\mathrm{SSS}} := \left\{ x \in \overline{\mathbb{Z}}_{\mathrm{tp}}: \frac{\operatorname{tr}(x)}{\deg(x)} < \lambda_{\mathrm{SSS}} \right\}.
\]

\begin{proposition}\label{prop:mea-min-poly}
  Let $P_\beta$ be the minimal polynomial of $\beta$, where
  $\beta \in X_{\mathrm{SSS}}$.
  Then there is a constant $c_{P_\beta}>0$, depending only on $P_\beta$, such that
  \[
    \int z\,\mathrm{d}\mu(z)
    \geq c_{P_\beta}\int\log\left|P_\beta\right|\,\mathrm{d}\mu+\lambda_{\mathrm{SSS}}.
  \]
\end{proposition}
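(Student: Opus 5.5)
The plan is to perturb $\mu$ towards the conjugates of $\beta$ until the inequality $\int\log|P_\beta|\ge0$ becomes tight, and then compare traces using the extremal property of $\lambda_{\mathrm{SSS}}$. Write $d=\deg P_\beta$ and let $x_1,\dots,x_d\in\mathbb R_{\geq0}$ be the conjugates of $\beta$. Put $L=\int\log|P_\beta|\,\mathrm d\mu\geq0$. If $L=+\infty$ there is nothing to prove, so assume $L<\infty$.

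First I record the fact that any compactly supported probability measure $\nu$ on $\mathbb R_{\geq0}$ satisfying $\int\log|Q|\,\mathrm d\nu\ge0$ for all $0\ne Q\in\mathbb Z[x]$ has $\int z\,\mathrm d\nu\ge\lambda_{\mathrm{SSS}}$. To see this, apply Theorem~\ref{thm:adelic-sos-archimedean-main-text} with $\mathcal U=\mathbb A^1$ and $E=[0,R]$ for $R$ larger than the support, which is a strictly bigger $\mathbb R$-neighborhood. This gives distinct totally nonnegative algebraic integers $\alpha_n$ with $\delta_{\alpha_n,\mathbb C}\to\nu$ inside $E$. Since $z$ is continuous on $E$, $\operatorname{tr}(\alpha_n)/\deg(\alpha_n)\to\int z\,\mathrm d\nu$, and the claim follows from the definition of $\lambda_{\mathrm{SSS}}$.

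Next, using the preceding lemma, set $\nu_\varepsilon=\frac1d\sum_i\mu_{x_i,\varepsilon}$. This measure has mean at most $\operatorname{tr}(\beta)/d+\varepsilon<\lambda_{\mathrm{SSS}}$ for small $\varepsilon$, and $A_\varepsilon:=\int\log|P_\beta|\,\mathrm d\nu_\varepsilon$ is finite. I expect $A_\varepsilon$ to be negative for small $\varepsilon$; otherwise I enlarge $\varepsilon_1$ in the lemma to arrange it. Define $\mu_t=(1-t)\mu+t\nu_\varepsilon$ with $t=L/(L-A_\varepsilon)$, so that $\int\log|P_\beta|\,\mathrm d\mu_t=0$. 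Once $\mu_t$ is shown admissible, the first step gives
\[
(1-t)\int z\,\mathrm d\mu+t\Bigl(\tfrac{\operatorname{tr}\beta}{d}+\varepsilon\Bigr)\ge\lambda_{\mathrm{SSS}} .
\]
Rearranging yields $\int z\,\mathrm d\mu-\lambda_{\mathrm{SSS}}\ge \frac{t}{1-t}\bigl(\lambda_{\mathrm{SSS}}-\operatorname{tr}\beta/d-\varepsilon\bigr)= \frac{L}{-A_\varepsilon}\bigl(\lambda_{\mathrm{SSS}}-\operatorname{tr}\beta/d-\varepsilon\bigr)$. This is the desired inequality with $c_{P_\beta}=(\lambda_{\mathrm{SSS}}-\operatorname{tr}\beta/d-\varepsilon)/(-A_\varepsilon)$, for a fixed small $\varepsilon$, and this constant depends only on $P_\beta$.

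The main obstacle is verifying that $\mu_t$ satisfies $\int\log|Q|\,\mathrm d\mu_t\ge0$ for every nonzero $Q\in\mathbb Z[x]$. Write $Q=P_\beta^kR$ with $R$ coprime to $P_\beta$. The $P_\beta^k$ part contributes exactly $0$. For $R$, the lemma gives
\[
\int\log|R|\,\mathrm d\nu_\varepsilon\ge\frac1d\log|\operatorname{Res}(R,P_\beta)|-\varepsilon\deg R\ge-\varepsilon\deg R,
\]
using $\operatorname{Res}(R,P_\beta)\in\mathbb Z\setminus\{0\}$. The resulting error $-t\varepsilon\deg R$ is not uniform in $Q$.

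I would first try to remove this error by choosing $\nu$ with exact domination $U_\nu(y)\ge\log|y-x_i|$ away from the $x_i$, which would give an error of $0$. If that fails, I would let $\varepsilon$ depend on $Q$ through a compactness and limiting argument: for each finite family of test polynomials, pick $\varepsilon$ small; obtain the inequality with constant $c_\varepsilon$; and check that $c_\varepsilon$ stays bounded below as $\varepsilon\to0$, since $-A_\varepsilon$ grows only like $\log(1/\varepsilon_1)$ and $\varepsilon_1$ can be chosen in terms of $\varepsilon$. Alternatively, one could absorb the degree term using the slack in the capacity margin of $E$.
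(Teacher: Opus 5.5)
You have the paper's architecture: perturb towards the conjugates of $\beta$ with the measures of the preceding lemma, fix the weight so that $\int\log|P_\beta|=0$, invoke minimality of $\lambda_{\mathrm{SSS}}$, and rearrange. The step you flag as the main obstacle is a genuine gap, however, and your first two remedies cannot close it.

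Exact domination is impossible. Suppose $\nu$ is a compactly supported probability measure on $\mathbb R$ and $h=U_\nu-\log|\cdot-x|\geq 0$, even only off some neighbourhood of $x$. Then $h$ is harmonic at $\infty$ with $h(\infty)=0$. The minimum principle, followed by unique continuation on the connected set $\mathbb C\setminus(\operatorname{supp}\nu\cup\{x\})$, gives $h\equiv0$ off a Lebesgue-null set. Hence $\nu=\delta_x$, which is useless since then $A=-\infty$.

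Letting $\varepsilon$ depend on the test polynomials fails as well, because the defect of $\nu_\varepsilon$ can have the wrong sign for every $\varepsilon$. Take $\beta=0\in X_{\mathrm{SSS}}$ (so $P_\beta=z$) and $R=z-1$, whose resultant with $P_\beta$ is $\pm1$. Let $\mu$ be the equilibrium measure of $[1,5]$. Its potential is $\geq0$ everywhere with equality on $[1,5]$, so $\mu$ passes every test, $\int\log|z-1|\,\mathrm d\mu=0$, and $L=\int\log|z|\,\mathrm d\mu>0$. Since $\operatorname{supp}\nu_\varepsilon\subset[\varepsilon_1,\varepsilon]\subset(0,2)$ (recall $\varepsilon<\lambda_{\mathrm{SSS}}<2$), you get $\int\log|z-1|\,\mathrm d\mu_t=t\int\log|z-1|\,\mathrm d\nu_\varepsilon<0$ for every $t>0$. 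So no choice of $\varepsilon$, however it depends on the test polynomials, makes $(1-t)\mu+t\nu_\varepsilon$ admissible.

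The missing idea, which your last sentence only gestures at, is to make the perturbing measure itself pass every test with $Q(\beta)\neq0$, uniformly in $Q$, by mixing in a little mass of capacity $>1$. If $\nu'$ is the equilibrium measure of $[0,8]$, then $U_{\nu'}\geq\log2$, hence $\int\log|Q|\,\mathrm d\nu'\geq\deg(Q)\log2$ for all $0\neq Q\in\mathbb Z[x]$. The paper's
\[
\mu_\varepsilon=\frac{\log 2\,\nu_\varepsilon+\varepsilon\,\nu'}{\log 2+\varepsilon}
\]
then satisfies $\int\log|Q|\,\mathrm d\mu_\varepsilon\geq0$ whenever $Q(\beta)\neq0$, because the loss $-\varepsilon\deg Q$ is cancelled exactly. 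At the same time $\int z\,\mathrm d\mu_\varepsilon\leq\bigl(\log2\,(\operatorname{tr}\beta/\deg\beta+\varepsilon)+4\varepsilon\bigr)/(\log2+\varepsilon)<\lambda_{\mathrm{SSS}}$ for small $\varepsilon$.

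This also removes your unproved sign assumption on $A_\varepsilon$ (which, incidentally, would require shrinking rather than enlarging $\varepsilon_1$). If $\int\log|P_\beta|\,\mathrm d\mu_\varepsilon\geq0$, then $\mu_\varepsilon$ would pass every test with mean below $\lambda_{\mathrm{SSS}}$. So this integral is automatically negative, and it is finite. With $\mu_\varepsilon$ in place of $\nu_\varepsilon$, your mixing and rearrangement go through verbatim and give $c_{P_\beta}=\bigl(\int z\,\mathrm d\mu_\varepsilon-\lambda_{\mathrm{SSS}}\bigr)/\int\log|P_\beta|\,\mathrm d\mu_\varepsilon>0$.

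A smaller point concerns your first step. When $0\in\operatorname{supp}\nu$ (e.g.\ for $\nu'$), $[0,R]$ is not an $\mathbb R$-neighbourhood of $\operatorname{supp}\nu$, so Theorem~\ref{thm:adelic-sos-archimedean-main-text} does not literally apply. Use Theorem~\ref{thm:adelic-sos-main-text} instead, with $S=\{\infty\}$, $E_\infty=[0,R]$ for $R>4$, and unit discs at the finite places; or use Smith's theorem.
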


\begin{proof}
  If $\int\log\left|P_\beta\right|\,\mathrm{d}\mu=0$, the inequality holds automatically.
  Assume $\int\log\left|P_\beta\right|\,\mathrm{d}\mu>0$.
  Let $\beta_1,\beta_2,\ldots,\beta_m$ be all conjugates of $\beta$.
  Let $\nu_\varepsilon$ be the measure
  $\sum_{i=1}^m\mu_{\beta_i,\varepsilon}/m$.
  Then for each non-zero polynomial $Q$ with integer coefficients such that
  $Q(\beta)\neq 0$, we have
  \[
    \int\log\left|Q\right|\,\mathrm{d}\nu_\varepsilon
    \geq \frac{\sum_{i=1}^m\log\left|Q(\beta_i)\right|}{m}-\deg(Q)\varepsilon
    \geq -\deg(Q)\varepsilon.
  \]
  Let $\nu'$ be the equilibrium measure on $[0,8]$.
  Then for each non-zero polynomial $Q$ with integer coefficients, we obtain
  \[
    \int\log\left|Q\right|\,\mathrm{d}\nu'\geq\deg(Q)\log 2.
  \]
  Let $\mu_\varepsilon$ be the measure
  $(\log 2\,\nu_\varepsilon+\varepsilon\nu')/(\log 2+\varepsilon)$.
  Then we have
  \[
    \int\log\left|Q\right|\,\mathrm{d}\mu_\varepsilon\geq 0 \text{ for each } Q\in \mathbb Z[x] \text{ with } Q(\beta)\neq 0
  \]
  and
  \[
    \int z\,\mathrm{d}\mu_\varepsilon(z)
    \leq
    \frac{\log 2\left(\frac{\operatorname{tr}(\beta)}{\deg(\beta)}
      +\varepsilon\right)+4\varepsilon}{\log 2+\varepsilon}.
  \]
  For sufficiently small $\varepsilon$,
  $\int z\,\mathrm{d}\mu_\varepsilon<\lambda_{\mathrm{SSS}}$.
  This also forces
  \[
    \int\log\left|P_\beta\right|\,\mathrm{d}\mu_\varepsilon< 0.
  \]

  Let $\varepsilon'>0$.
  Let $\mu'=(\varepsilon'\mu_\varepsilon+\mu)/(1+\varepsilon')$.
  Then for each non-zero polynomial $Q$ with integer coefficients such that
  $Q(\beta)\neq 0$, we have
  \[
    \int\log\left|Q\right|\,\mathrm{d}\mu'\geq 0,
  \]
  \[
    \int\log\left|P_\beta\right|\,\mathrm{d}\mu'
    =\frac{\int\log\left|P_\beta\right|\,\mathrm{d}\mu
      +\varepsilon'\int\log\left|P_\beta\right|\,\mathrm{d}\mu_\varepsilon}
    {1+\varepsilon'},
  \]
  and
  \[
    \int z\,\mathrm{d}\mu'(z)
    =\frac{\int z\,\mathrm{d}\mu(z)
      +\varepsilon'\int z\,\mathrm{d}\mu_\varepsilon(z)}{1+\varepsilon'}.
  \]
  Set
  $\varepsilon'=-(\int\log\left|P_\beta\right|\,\mathrm{d}\mu)
    /(\int\log\left|P_\beta\right|\,\mathrm{d}\mu_\varepsilon)$. Then
  $\int z\,\mathrm{d}\mu'(z)\geq\lambda_{\mathrm{SSS}}$.
  Therefore,
  \[
    \int z\,\mathrm{d}\mu(z)-\lambda_{\mathrm{SSS}}
    \geq-\varepsilon'
    \left(\int z\,\mathrm{d}\mu_\varepsilon(z)-\lambda_{\mathrm{SSS}}\right).
  \]
  The proposition follows by taking
  \[
    c_{P_\beta}=
    \frac{\int z\,\mathrm{d}\mu_\varepsilon(z)-\lambda_{\mathrm{SSS}}}
    {\int\log\left|P_\beta\right|\,\mathrm{d}\mu_\varepsilon}.
  \]
\end{proof}

Take a probability measure $\mu_{\mathrm{SSS}}$ compactly supported on $\mathbb{R}_{\geq 0}$ satisfying the polynomial test such that
\[
  \int z\,\mathrm{d}\mu_{\mathrm{SSS}}(z)=\lambda_{\mathrm{SSS}}.
\]
By \cite[Lemma 5.10]{Smi24}, such a measure exists.

\begin{proposition}\label{prop:mea-avo-pt}
  For each $\beta\in X_{\mathrm{SSS}}$, $\beta$ is not contained in the support of
  $\mu_{\mathrm{SSS}}$.
\end{proposition}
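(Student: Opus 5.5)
The plan is to argue by contradiction, combining Proposition \ref{prop:mea-min-poly} with the minimality of $\mu_{\mathrm{SSS}}$ and a local perturbation near $\beta$. Suppose $\beta\in X_{\mathrm{SSS}}$ lies in $\operatorname{supp}(\mu_{\mathrm{SSS}})$.

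First I apply Proposition \ref{prop:mea-min-poly} to $\mu=\mu_{\mathrm{SSS}}$. Since $\int z\,\mathrm d\mu_{\mathrm{SSS}}=\lambda_{\mathrm{SSS}}$ and $c_{P_\beta}>0$, this gives $\int\log|P_\beta|\,\mathrm d\mu_{\mathrm{SSS}}\le 0$. Together with the polynomial test, this forces $\int\log|P_\beta|\,\mathrm d\mu_{\mathrm{SSS}}=0$. So $P_\beta$ is an extremal test polynomial for $\mu_{\mathrm{SSS}}$: its constraint is tight. This is the only constraint that can become dangerous when mass is moved near $\beta$, because $\log|P_\beta|\to-\infty$ there.

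Next I exploit the support condition. Because $\beta\in\operatorname{supp}(\mu_{\mathrm{SSS}})$, every neighbourhood $I_r=[\beta-r,\beta+r]\cap\mathbb R_{\ge0}$ has positive mass $m_r=\mu_{\mathrm{SSS}}(I_r)$. Here $\log|P_\beta|$ is very negative on $I_r$, roughly $\le\log r+C$. I construct a competitor $\mu^{(r)}$ by removing $\mu_{\mathrm{SSS}}|_{I_r}$ and redistributing the mass $m_r$. One option is to move it slightly to the left, towards $0$, where the trace functional $z$ decreases. Another is to replace it with the measure $m_r\mu_{\beta,\varepsilon}$ from the preceding lemma, which has mean at most $\beta+\varepsilon$ and nearly the same logarithmic potential away from $\beta$. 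The effect is to raise $\int\log|P_\beta|$ by roughly $m_r(|\log r|-C)$, a gain that beats $m_r$ by an unbounded factor. For all other integer polynomials $Q$, the lemma bounds the loss of $\int\log|Q|$ by $m_r\deg(Q)\varepsilon$, and this loss is repaired by mixing in a tiny multiple of the $[0,8]$ equilibrium measure $\nu'$, as in the proof of Proposition \ref{prop:mea-min-poly}.

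Finally I compare with Proposition \ref{prop:mea-min-poly}, which gives $\int z\,\mathrm d\mu'\ge c_{P_\beta}\int\log|P_\beta|\,\mathrm d\mu'+\lambda_{\mathrm{SSS}}$ for admissible $\mu'$. The trace of the competitor changes only by $O(m_r(r+\varepsilon))$ plus the $\nu'$-correction, which is $O(m_r\varepsilon)$. But $c_{P_\beta}\int\log|P_\beta|$ increases by about $c_{P_\beta}m_r|\log r|$. For $r$ small this violates the inequality, which is the desired contradiction.

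The main obstacle is the competitor step: the perturbed measure must still pass every integer polynomial test, not just $P_\beta$. The delicate case is polynomials $Q$ whose roots lie near $\beta$ and that share factors with $P_\beta$. The plan is to first handle $Q$ coprime to $P_\beta$ via the potential estimate of the lemma. Then I write a general $Q=P_\beta^kR$ and use additivity, so the gain on $P_\beta$ dominates for such $Q$. If the error bookkeeping resists, I will instead argue through the tightness $\int\log|P_\beta|\,\mathrm d\mu_{\mathrm{SSS}}=0$ together with lower semicontinuity. A positive mass arbitrarily near $\beta$ should make $\int\log|P_\beta|\,\mathrm d\mu_{\mathrm{SSS}}$ strictly less than the value forced by Proposition \ref{prop:mea-min-poly} applied to a restricted measure.
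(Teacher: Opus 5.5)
Your outline has the same skeleton as the paper's proof: remove the mass of $\mu_{\mathrm{SSS}}$ near $\beta$, repair all integer-polynomial tests by mixing in a multiple of the $[0,8]$ equilibrium measure, and apply Proposition~\ref{prop:mea-min-poly} to the competitor, so that a gain of order $m_r|\log r|$ in $c_{P_\beta}\int\log|P_\beta|$ beats a trace cost of order $m_r$. However, the step you yourself flag as the main obstacle is not closed, and your plan for it would fail.

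The $[0,8]$ repair needs a bound $U_{\mathrm{new}}(y)-U_{\mathrm{old}}(y)\ge -c\,m_r$ for \emph{every} $y\in\mathbb C$, with $c$ independent of $y$. With such a bound, each test $Q$ loses at most $c\,m_r\deg Q$. The preceding lemma does not supply this. It compares $\mu_{\beta,\varepsilon}$ with $\delta_\beta$, giving $U_{\mu_{\beta,\varepsilon}}(y)\ge\log|y-\beta|-\varepsilon$. The potential of the piece you remove, $\mu_{\mathrm{SSS}}|_{I_r}$, is only bounded above by $m_r\log(|y-\beta|+r)$, which near $\beta$ can be of size $m_r\log r$. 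So for $|y-\beta|\ll r$ the lemma bounds the loss only by about $m_r\log(r/|y-\beta|)$, which is unbounded as $y\to\beta$. Such $y$ occur as roots of polynomials coprime to $P_\beta$, for example $Nx-M$ with $M/N$ near $\beta$.

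Consequently, the claimed loss bound $m_r\deg(Q)\varepsilon$ is unjustified exactly for the coprime $Q$ you intend to treat by the lemma. Factoring $Q=P_\beta^kR$ is beside the point, since the dangerous roots belong to $R$. Your other option, shifting mass slightly to the left, gives no logarithmic gain: mass kept within $O(r)$ of $\beta$ still contributes about $m_r\log r$ to $\int\log|z-\beta|$. For $\beta=0$ there is also no room to the left. The fallback via tightness and semicontinuity is not an argument. Tightness of $P_\beta$ is never needed; only $\int\log|P_\beta|\,\mathrm d\mu_{\mathrm{SSS}}\ge0$ and $\int z\,\mathrm d\mu_{\mathrm{SSS}}=\lambda_{\mathrm{SSS}}$ are used.

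The paper gets the uniform bound for free by spreading the removed mass $b=\mu_{\mathrm{SSS}}([\beta-\varepsilon,\beta+\varepsilon])$ globally as $b\,\mu_{[0,t]}$, where $[0,t]\supset\operatorname{supp}\mu_{\mathrm{SSS}}$. Since $\log|z-y|\le U_{[0,t]}(y)+\log4$ for $z\in[0,t]$ and all $y\in\mathbb C$, one has $bU_{[0,t]}-U_{\mu_{\mathrm{SSS}}|_{[\beta-\varepsilon,\beta+\varepsilon]}}\ge -b\log4$ on all of $\mathbb C$. Adding $2b\,\mu_{[0,8]}$ repairs this exactly, while at $\beta$ the gain is at least $b(\log t-\log 4\varepsilon)$. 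The resulting $O(b)$ trace cost is harmless, so there is no need to keep the mass near $\beta$.

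Your local competitor can also be rescued, but only with an extra argument you did not give. Fix $\varepsilon$ (hence the $\varepsilon_1$ in the lemma's construction) and let $r\to0$. Since $\operatorname{supp}\mu_{\beta,\varepsilon}\subset[\beta+\varepsilon_1,\beta+\varepsilon]$, for $|y-\beta|\le\varepsilon_1/2$ one has $U_{\mu_{\beta,\varepsilon}}(y)\ge\log(\varepsilon_1/2)$, against at most $\log(\varepsilon_1/2+r)$ per unit of removed mass. For $|y-\beta|\ge\varepsilon_1/2$ the lemma applies. Together these give a uniform loss of at most $m_r\bigl(\varepsilon+\log(1+2r/\varepsilon_1)\bigr)$ per root, after which your bookkeeping goes through.
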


\begin{proof}
  Let $t$ be a real number such that $[0,t]$ contains the support of $\mu_{\mathrm{SSS}}$.
  Let $P_\beta$ be the minimal polynomial of $\beta$.
  Let $\varepsilon>0$.
  Let
  $\nu=\mu_{\mathrm{SSS}}|_{\mathbb{R}\setminus[\beta-\varepsilon,\beta+\varepsilon]}
    +b\mu_{[0,t]}$, where $b$ is the mass of $\mu_{\mathrm{SSS}}$ on the closed interval
  $[\beta-\varepsilon,\beta+\varepsilon]$ and $\mu_{[0,t]}$ is the equilibrium
  measure on $[0,t]$.
  Then for each $y\in\mathbb{C}$,
  \[
    \int\log\left|z-y\right|\,\mathrm{d}\nu(z) -\int\log\left|z-y\right| \,\mathrm{d} \mu_{\mathrm{SSS}}(z) =bU_{[0,t]}(y)-U_{\mu_{\mathrm{SSS}}|_{[\beta-\varepsilon,\beta+\varepsilon]}}(y)
    \geq-b\log 4.
  \]
  Also, we have
  \[
    \int\log\left|z-\beta\right|\,\mathrm{d}\nu(z)
    -\int\log\left|z-\beta\right|\,\mathrm{d}\mu_{\mathrm{SSS}}(z)
    \geq b(\log t-\log 4\varepsilon).
  \]

  Let $\nu'=(\nu+2b\mu_{[0,8]})/(1+2b)$.
  Then $\int\log\left|Q\right| \,\mathrm{d}\nu'\geq 0$ for every non-zero polynomial $Q$ with
  integer coefficients.
  Also,
  $\int\log\left|P_\beta\right| \,\mathrm{d}\nu'\geq
    b(\log t-\log\varepsilon)/(\deg(\beta)(1+2b))$.
  By Proposition~\ref{prop:mea-min-poly}, we get
  \begin{align*}
    \int z\,\mathrm{d}\nu'(z)
    &\geq c_{P_\beta}\int\log\left|P_\beta\right|\,\mathrm{d}\nu'+\lambda_{\mathrm{SSS}}\\
    &\geq c_{P_\beta}b\frac{\log t-\log\varepsilon}
    {(1+2b)\deg(\beta)}+\lambda_{\mathrm{SSS}}.
  \end{align*}
  On the other hand,
  \[
    \int z\,\mathrm{d}\nu'(z)
    \leq
    \frac{\int z\,\mathrm{d}\mu_{\mathrm{SSS}}(z)
      +b\int z\,\mathrm{d}\mu_{[0,t]}
      +2b\int z\,\mathrm{d}\mu_{[0,8]}(z)}{1+2b}
    =\frac{\lambda_{\mathrm{SSS}}+\frac{tb}{2}+8b}{1+2b}.
  \]
  Thus, if $b\neq 0$,
  \[
    \frac{c_{P_\beta}(\log t-\log\varepsilon)}{\deg(\beta)}
    \leq\frac{t}{2}+8-2\lambda_{\mathrm{SSS}}.
  \]
  When $\varepsilon$ is sufficiently small, the above inequality cannot hold.
  Therefore, $\mu_{\mathrm{SSS}}$ is zero on $(\beta-\varepsilon,\beta+\varepsilon)$ for
  $\varepsilon$ small enough.
\end{proof}

\begin{theorem} \label{thm:levenberg-question-main-text}
  Let $X \subseteq X_{\mathrm{SSS}}$ be a finite subset.
  Let $\mathcal D_X$ be the Zariski closure in $\mathbb P^1_{\mathbb Z}$ of $\infty$ and the closed points of $\mathbb P^1_{\mathbb Q}$ corresponding to the elements of $X$, and put $\mathcal U_X=\mathbb P^1_{\mathbb Z}\setminus\mathcal D_X$.
  Then there exists a sequence of distinct points $(\alpha_n)$ in $\mathcal{U}_X(\overline{\mathbb{Z}})\cap\overline{\mathbb{Z}}_{\mathrm{tp}}$ such that $ \frac{\operatorname{tr}(\alpha_n)}{\deg(\alpha_n)} \longrightarrow \lambda_{\mathrm{SSS}}$.
\end{theorem}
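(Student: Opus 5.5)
We will apply the archimedean theorem, Theorem~\ref{thm:adelic-sos-archimedean-main-text}, in its $\mathbb R$-neighborhood form. The curve is $C=\mathbb P^1_{\mathbb Q}$ with $X$ replaced by $\{\infty\}\cup X'$, and the surface is $\mathcal U_X$. The target measure is $\mu_{\mathrm{SSS}}$, compactly supported on $\mathbb R_{\geq0}$ with $\int z\,\mathrm d\mu_{\mathrm{SSS}}=\lambda_{\mathrm{SSS}}$, which exists by \cite[Lemma 5.10]{Smi24}.

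\textbf{Step 1: the support of $\mu_{\mathrm{SSS}}$ avoids $X$.} By Proposition~\ref{prop:mea-avo-pt}, no $\beta\in X$, nor any Galois conjugate of such a $\beta$ (these also lie in $X_{\mathrm{SSS}}$), belongs to $\operatorname{supp}(\mu_{\mathrm{SSS}})$. Since $X$ is finite, there is some $\eta>0$ such that every conjugate of every element of $X$ lies at distance greater than $2\eta$ from the support. We then choose $E\subset[0,t]$ to be a finite union of closed intervals. It should contain an $\mathbb R$-neighborhood of $\operatorname{supp}(\mu_{\mathrm{SSS}})$ and avoid the $\eta$-neighborhoods of the conjugates of $X$. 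It must also be strictly bigger than the support, which we arrange by adding a small extra interval inside $[0,t]$ away from $X$. Since $E\subset\mathbb R_{\geq0}$, points with Galois orbit in $E$ are automatically totally nonnegative.

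\textbf{Step 2: the logarithmic test.} Condition (2) of Theorem~\ref{thm:adelic-sos-archimedean-main-text} requires $\int\log|Q|\,\mathrm d\mu_{\mathrm{SSS}}\geq0$ for every nonzero $Q\in H^0(\mathcal U_X,\mathcal O)$. These regular functions have the form $Q=P/\prod_{\beta}P_\beta^{k_\beta}$ with $P\in\mathbb Z[x]$, up to units, where $P_\beta$ are the primitive minimal polynomials of the elements of $X$. Indeed, removing the Zariski closure of the closed point of $\beta$ inverts exactly $P_\beta$, because $\beta$ is an algebraic integer and $P_\beta$ is monic. This gives
\[
\int\log|Q|\,\mathrm d\mu_{\mathrm{SSS}}=\int\log|P|\,\mathrm d\mu_{\mathrm{SSS}}-\sum_\beta k_\beta\int\log|P_\beta|\,\mathrm d\mu_{\mathrm{SSS}}.
\]
The first term is nonnegative because $\mu_{\mathrm{SSS}}$ satisfies the polynomial test. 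For the second term, take $\mu=\mu_{\mathrm{SSS}}$ in Proposition~\ref{prop:mea-min-poly}. This gives
\[
\lambda_{\mathrm{SSS}}\geq c_{P_\beta}\int\log|P_\beta|\,\mathrm d\mu_{\mathrm{SSS}}+\lambda_{\mathrm{SSS}},
\]
and since $c_{P_\beta}>0$ we get $\int\log|P_\beta|\,\mathrm d\mu_{\mathrm{SSS}}\leq0$. Integrality of the support, via the polynomial test itself, gives $\geq0$. Hence each of these integrals vanishes, and the test holds for all $Q$.

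\textbf{Step 3: conclusion.} Theorem~\ref{thm:adelic-sos-archimedean-main-text} now produces distinct points $\alpha_n\in\mathcal U_X(\overline{\mathbb Z})$ whose Galois orbits lie in $E$ and with $\delta_{\alpha_n,\mathbb C}\to\mu_{\mathrm{SSS}}$ weakly. Since $E$ is compact, the function $z$ is bounded and continuous on $E$. Therefore
\[
\frac{\operatorname{tr}(\alpha_n)}{\deg(\alpha_n)}=\int z\,\mathrm d\delta_{\alpha_n,\mathbb C}\longrightarrow\lambda_{\mathrm{SSS}}.
\]
Integrality on $\mathcal U_X$ means $\alpha_n$ is an algebraic integer and $P_\beta(\alpha_n)$ is a unit for each $\beta$. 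In particular, for $X=\{0\}$ the $\alpha_n$ are totally positive units.

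\textbf{Main obstacle.} The delicate points are technical hypotheses. The first is making sure the theorem applies with a generic fiber that is normal and not projective, and with the correct description of $H^0(\mathcal U_X,\mathcal O)$. The second is the strictly-bigger $\mathbb R$-neighborhood condition when $\operatorname{supp}(\mu_{\mathrm{SSS}})$ is a full interval; there we rely on Lemma~\ref{lem:gre-def} through the $\mathbb R$-neighborhood version to secure capacity $>1$. The sign argument in Step 2 is the key new input, and it is short.
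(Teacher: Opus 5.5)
Your proposal is correct and follows the paper's proof. You apply Theorem~\ref{thm:adelic-sos-archimedean-main-text} to $\mu_{\mathrm{SSS}}$ with a compact real neighborhood $E$ of its support that avoids the conjugates of $X$ (via Proposition~\ref{prop:mea-avo-pt}), and you reduce the logarithmic test on $H^0(\mathcal U_X,\mathcal O_{\mathcal U_X})=\mathbb Z[x,(\prod_\beta P_\beta)^{-1}]$ to the integer-polynomial test plus $\int\log|P_\beta|\,\mathrm d\mu_{\mathrm{SSS}}=0$, exactly as the paper does with Proposition~\ref{prop:mea-min-poly}; the one point worth stating explicitly is that $0\in X_{\mathrm{SSS}}$, so Proposition~\ref{prop:mea-avo-pt} also keeps the support away from $0$ and lets $E$ lie in $\mathbb R_{>0}$.
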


\begin{proof}
  We would like to apply Theorem \ref{thm:adelic-sos-archimedean-main-text} to the measure $\mu_{\mathrm{SSS}}$ compactly supported on $\mathcal{U}_X(\mathbb R)=\mathbb{R}-\cup_{\beta \in X} \beta'_{\infty}$.
  We take $E$ to be a compact neighborhood of $\operatorname{supp}(\mu_{\mathrm{SSS}})$ in $\mathcal{U}_X(\mathbb R)$ which is contained in $\mathbb{R}_+$.
  We choose $E\ne\operatorname{supp}(\mu_{\mathrm{SSS}})$.
  All we need to do is to check the condition
  \[
    \int \log|Q| \,\mathrm{d}\mu_{\mathrm{SSS}} \geq 0 \quad \text{ for all $0\neq Q \in H^0(\mathcal{U}_X,\mathcal O_{\mathcal{U}_X})$},
  \] which boils down to
  \[
    \int \log|Q| \,\mathrm{d}\mu_{\mathrm{SSS}} \geq 0 \quad \text{ for all $0\neq Q \in \mathbb{Z}[x]$}
  \] and
  \[
    \int \log|P_\beta| \,\mathrm{d}\mu_{\mathrm{SSS}} = 0 \quad \text{ for every $\beta \in X$}.
  \]

  Now, $\mu_{\mathrm{SSS}}$ naturally satisfies the first condition, and the extra tests for the minimal polynomials $P_\beta$ with $\beta \in X$ are guaranteed by Proposition \ref{prop:mea-min-poly}.
\end{proof}
By taking $X=\{0\}$ in Theorem \ref{thm:levenberg-question-main-text}, we answer \cite[Question 3.7(1)]{LL25}, asserting that $\lambda_{\mathrm{SSS}}$ is achieved by a sequence of algebraic units.
However, an earlier formulation of \cite[Question 3.7(2)]{LL25}, asserting that $\lambda_{\mathrm{SSS}}$ can be reached by equilibrium measures with respect to $0$ and $\infty$, is not true.
Let $X$ be a finite Galois-stable set of totally nonnegative algebraic integers such that for each $\alpha\in X$, $\deg(\alpha)^{-1}\operatorname{tr}(\alpha)<\lambda_{\mathrm{SSS}}$.
Let $Y$ be the set of closed points on $\mathbb{P}^1_{\mathbb{Q}}$ corresponding to algebraic integers in $X$ or $\infty$.
Let $\mathcal U_X=\mathbb P^1_{\mathbb Z}\setminus\overline Y$, where $\overline Y$ is the Zariski closure of $Y$ in $\mathbb P^1_{\mathbb Z}$.
For a compact set $\Sigma\subset [0,\infty)\setminus X$, put $\mathbb E_\Sigma=(E_{\Sigma,v})_{v\in M_{\mathbb Q}}$, where
\[
  E_{\Sigma,\infty}=\Sigma,\qquad
  E_{\Sigma,p}=\operatorname{red}_p^{-1}((\mathcal U_X)_{\mathbb F_p})\quad\text{for every prime }p,
\]
and write $\operatorname{cap}_Y(\Sigma):=\operatorname{cap}_Y(\mathbb E_\Sigma)$.
Let $\Sigma$ be a compact subset of $[0,\infty)\setminus X$ with capacity $\operatorname{cap}_{Y}(\Sigma)=1$.
Note that in this case, the Cantor matrix $\Gamma=\Gamma_Y(\mathbb E_\Sigma)=(c_{y_1, y_2})_{y_1, y_2\in Y}$ has positive off-diagonal elements.
Therefore, $\Gamma$ is negative semidefinite, and there is a unique probability vector $\mathbf{p}=(p_y)_{y\in Y}$ such that $\Gamma\mathbf{p}=0$.
This enables us to talk about the equilibrium measure $\mu_{\Sigma}$, defined as follows.
Set
\[
  \mu_{\Sigma}=\sum\limits_{y\in Y}p_y\mu_{\Sigma,y_\infty}
\]
where $\mu_{\Sigma,y_\infty}$ is the equilibrium measure of $\Sigma$ with respect to $y$.
\begin{proposition}\label{prop:mea-not-equi}
  Assume that $X_{\mathrm{SSS}}\nsubseteq X$. Then
  \[
    \int z\,\mathrm{d}\mu_{\Sigma}(z)>\lambda_{\mathrm{SSS}}.
  \]
\end{proposition}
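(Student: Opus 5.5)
The plan is to argue by contradiction. Suppose $\int z\,\mathrm{d}\mu_\Sigma(z)\le\lambda_{\mathrm{SSS}}$, fix some $\beta\in X_{\mathrm{SSS}}\setminus X$, and play Proposition~\ref{prop:mea-min-poly} against the argument of Proposition~\ref{prop:mea-avo-pt}.

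\textbf{Step 1: a potential formula for $\mu_\Sigma$.} Put
\[
  G=\sum_{y\in Y}p_y\, g_{\Sigma,y_\infty},
\]
so that $G\ge 0$ and $G$ vanishes on $\Sigma$ outside a polar set. Since $\Sigma\subset\mathbb{R}$, every point of $\Sigma$ is a boundary point, and the equilibrium potentials are continuous. The relation $\Gamma\mathbf p=0$ says that the adelic divisor $\sum_y p_y\overline{D}_y^{\mathbb E_\Sigma}$, built from the equilibrium Green functions as in the proof of Lemma~\ref{lem:loose-place-truncation}, has height zero at every $y\in Y$.

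For $0\ne Q\in H^0(\mathcal U_X,\mathcal O)$, with pole divisor supported on $Y$, I would apply Green reciprocity / the induction formula to this divisor and $\widehat{\operatorname{div}}(Q)$. The pole contributions come out as $\Gamma\mathbf p$-combinations of the orders of $Q$ along $Y$, and these vanish. This should give
\[
  \int\log|Q|\,\mathrm d\mu_\Sigma=\sum_{z\in\operatorname{div}_0(Q)}G(z)+\sum_{p}(\text{local terms}),
\]
where each local term is $\ge 0$ because $Q$ is integral on $E_{\Sigma,p}$.

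Consequences of the formula:
\begin{itemize}
  \item $\mu_\Sigma$ passes the test $\int\log|Q|\,\mathrm d\mu_\Sigma\ge0$ for all $0\ne Q\in\mathbb Z[x]$.
  \item For $P_\beta$, which is monic since $\beta$ is an algebraic integer, we get $\int\log|P_\beta|\,\mathrm d\mu_\Sigma\ge\sum_i G(\beta_i)$, the sum running over the conjugates $\beta_i$ of $\beta$.
\end{itemize}
I expect this step to be the main obstacle. The difficulty is in getting the normalizations right: the constants $c_{x,x}$, the averaging of conjugate points, and the finite places. The sign must come out as stated, and the vanishing of pole terms must use exactly $\Gamma\mathbf p=0$.

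\textbf{Step 2: the support argument forces $\beta\notin\Sigma$.} By Step 1, $\mu_\Sigma$ is compactly supported on $\mathbb R_{\ge0}$ and passes the polynomial test. Hence $\int z\,\mathrm d\mu_\Sigma\ge\lambda_{\mathrm{SSS}}$, and under our assumption equality holds. The proof of Proposition~\ref{prop:mea-avo-pt} used only these two facts about $\mu_{\mathrm{SSS}}$, so it applies verbatim to $\mu_\Sigma$: no point of $X_{\mathrm{SSS}}$, in particular no $\beta_i$, lies in $\operatorname{supp}\mu_\Sigma$.

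Each $\mu_{\Sigma,y_\infty}$ charges every relatively open subset of $\Sigma$ that is non-polar, and $\mathbf p$ is a probability vector. Therefore $\operatorname{supp}\mu_\Sigma$ is the non-polar part of $\Sigma$. It follows that each $\beta_i$ lies outside $\Sigma$, or at worst in a polar subset of it.

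\textbf{Step 3: the contradiction.} Off $\Sigma$ we have $G>0$. Indeed each $g_{\Sigma,y}$ is positive on $\mathbb P^1(\mathbb C)\setminus\Sigma$, because that set is connected. The polar exceptional points can be handled by moving to the nearby regular points used in Step 2. Hence $\int\log|P_\beta|\,\mathrm d\mu_\Sigma>0$. Proposition~\ref{prop:mea-min-poly}, with $c_{P_\beta}>0$, then gives
\[
  \int z\,\mathrm d\mu_\Sigma(z)\ \ge\ \lambda_{\mathrm{SSS}}+c_{P_\beta}\int\log|P_\beta|\,\mathrm d\mu_\Sigma\ >\ \lambda_{\mathrm{SSS}},
\]
which contradicts equality. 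Hence $\int z\,\mathrm d\mu_\Sigma(z)>\lambda_{\mathrm{SSS}}$.
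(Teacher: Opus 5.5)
Your proposal is correct and follows essentially the paper's argument: the decomposition $U_{\mu_\Sigma}=\sum_{y}p_y\log|P_y|/\deg(y)+G$ with the constants killed by $\Gamma\mathbf p=0$, Proposition~\ref{prop:mea-avo-pt} to keep the conjugates of $\beta$ off $\operatorname{supp}\mu_\Sigma$, positivity of $G$ off the support and of the resultants to get $\int\log|P_\beta|\,\mathrm d\mu_\Sigma>0$, and Proposition~\ref{prop:mea-min-poly} for the contradiction (you only invoke it at the end rather than at the start). You also usefully make explicit that $\mu_\Sigma$ passes the polynomial test, which the paper uses tacitly. The only loose phrase is ``moving to nearby regular points'': at a conjugate lying in a locally polar piece of $\Sigma$, $G$ is harmonic nearby, so $G>0$ there directly by the minimum principle on the connected set $\mathbb P^1(\mathbb C)\setminus\operatorname{supp}\mu_\Sigma$ (equivalently, as in the paper, delete that polar piece, which changes neither $\mu_\Sigma$ nor $G$).
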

\begin{proof}
  We prove by contradiction.
  Assume that
  \[
    \int z\,\mathrm{d}\mu_{\Sigma}(z)=\lambda_{\mathrm{SSS}}.
  \]
  Take $\beta\in X_{\mathrm{SSS}}\setminus X$.
  Let $P_\beta$ be the minimal polynomial of $\beta$.
  Then, by Proposition \ref{prop:mea-min-poly},
  \[
    \int \log\left|P_\beta(z)\right|\,\mathrm{d}\mu_{\Sigma}(z)=0.
  \]
  By Proposition \ref{prop:mea-avo-pt}, either $\beta\notin\Sigma$, or $\beta$ is contained in the polar component of $\Sigma$.
  If the latter happens, we remove the polar component; this does not affect the equilibrium measure.

  Let $P_y$ be the minimal polynomial of $y$ for each $y\in Y\setminus\{\infty\}$.
  Set $P_{\infty}=1$.
  In what follows, all potentials $U$ are normalized such that $U-\log\left|z\right|$ extends continuously to $\infty$ with value $0$.
  Then, $U_{\mu_{\Sigma,y_\infty}}\geq \log\left|P_y\right|/\deg(y)-c_{\infty, y}$ for each $y\in Y$, and the inequality is strict outside $\Sigma$.
  Write $\delta_{\beta,\infty}$ for the average of Dirac measures at every conjugate of $\beta$.
  Then,
  \begin{align*}
    \frac1{\deg(\beta)}\int \log\left|P_\beta(z)\right|\,\mathrm{d}\mu_{\Sigma}(z)
    &=\int U_{\mu_{\Sigma}}\,\mathrm{d}\delta_{\beta,\infty}\\
    &> \sum_{y\in Y}p_y\int\left(\log\left|P_y\right|/\deg(y)-c_{\infty, y}\right)\,\mathrm{d}\delta_{\beta,\infty}\\
    &=\sum_{y\in Y}p_y\int\log\left|P_y\right|/\deg(y)\,\mathrm{d}\delta_{\beta,\infty}\\
    &\geq 0.
  \end{align*}
  This leads to a contradiction.
\end{proof}
In an earlier version of \cite[Question 3.7(2)]{LL25}, they asked whether there is a set $E\subset \mathbb R_{\geq 0}$ of capacity exactly $1$ with respect to $0, \infty$ whose equilibrium measure $\mu_E$ (with respect to $\{0, \infty\}$) satisfies
\[
  \int z\,\mathrm d\mu_E(z)=\lambda_{\mathrm{SSS}}.
\]
Taking $X=\{0\}$, our result shows that this never happens.

\end{document}